\setlist[1]{leftmargin=*}
\setlist[enumerate,1]{label=\normalfont{(\alph*)}}
\setlist[enumerate,2]{label=\normalfont{(\roman*)}, ref=\normalfont{(\alph{enumi}.\roman*)}}
\newcommand{\mm}{\mathscr{m}}
\renewcommand{\ll}{\mathscr{l}}
\newcommand{\uu}{\mathscr{u}}
\newcommand{\lts}[2]{\mathcal{T}_{#1}(-\log #2)}
\newcommand{\etop}{e_{\mathrm{top}}}
\renewcommand{\to}{\longrightarrow}
\newcommand{\map}{\dashrightarrow}
\newcommand{\into}{\hookrightarrow}
\def\Spec{\operatorname{Spec}}
\def\Aut{\operatorname{Aut}}
\def\Sing{\operatorname{Sing}}
\newcommand{\F}{\mathbb{F}}
\newcommand{\C}{\mathbb{C}}
\newcommand{\Z}{\mathbb{Z}}
\newcommand{\Q}{\mathbb{Q}}
\newcommand{\J}{\mathcal{J}}
\renewcommand{\P}{\mathbb{P}}
\renewcommand{\O}{\mathcal{O}}
\renewcommand{\tilde}{\widetilde}
\renewcommand{\hat}{\widehat}
\renewcommand{\epsilon}{\varepsilon}
\renewcommand{\phi}{\varphi}
\renewcommand{\theta}{\vartheta}
\newcommand{\id}{\mathrm{id}}
\newcommand{\ind}{\mathrm{ind}}
\newcommand{\ftip}[1]{\mathrm{tip}^{+}(#1)}
\newcommand{\ltip}[1]{\mathrm{tip}^{-}(#1)}
\newcommand{\de}{:=}
\newcommand{\toin}[1]{\overset{#1}{\to}}
\def\NS{\operatorname{NS}}
\def\Exc{\operatorname{Exc}}
\def\Supp{\operatorname{Supp}}
\def\Bk{\operatorname{Bk}}
\newcommand{\redd}{_{\mathrm{red}}}
\newcommand{\rev}[1]{#1^{t}}
\newcommand{\s}{^{\dagger}\ \!\!}
\renewcommand{\leq}{\leqslant}
\renewcommand{\geq}{\geqslant}
\newcommand{\hor}{_{\mathrm{hor}}}
\renewcommand{\vert}{_{\mathrm{vert}}}
\newcommand{\FZa}{\mathcal{FZ}_{1}}  
\newcommand{\cA}{\mathcal{A}} 
\newcommand{\cB}{\mathcal{B}} 
\newcommand{\cC}{\mathcal{C}} 
\newcommand{\cD}{\mathcal{D}} 
\newcommand{\cE}{\mathcal{E}}
\newcommand{\cF}{\mathcal{F}}
\newcommand{\cG}{\mathcal{G}}
\newcommand{\ORa}{\mathcal{OR}_{1}}
\newcommand{\ORb}{\mathcal{OR}_{2}}
\newcommand{\Qa}{\mathcal{Q}_{4}}
\newcommand{\Qb}{\mathcal{Q}_{3}}
\newcommand{\FZb}{\mathcal{FZ}_{2}}
\newcommand{\FE}{\mathcal{FE}}
\newcommand{\cH}{\mathcal{H}}
\newcommand{\cI}{\mathcal{I}}
\newcommand{\cJ}{\mathcal{J}}
\theoremstyle{plain}
\newtheorem{tw}{Theorem}[section]
\theoremstyle{definition}
\newtheorem{dfn}[tw]{Definition}
\newtheorem{lem}[tw]{Lemma}
\newtheorem{prop}[tw]{Proposition}
\newtheorem{ex}[tw]{Example}
\newtheorem{conjecture}[tw]{Conjecture}
\newtheorem{notation}[tw]{Notation}
\newtheorem{rem}[tw]{Remark} 
\theoremstyle{remark}
\newtheorem*{uw}{Remark} 
\newtheorem{claim}{Claim}
\newtheorem*{claim*}{Claim}
\def\subsection{\@startsection{subsection}{3}
	\z@{.5\linespacing\@plus.7\linespacing}{.5\linespacing}
	{\bfseries\itshape}} \makeatother
\def\@tocline#1#2#3#4#5#6#7{\relax \ifnum #1>\c@tocdepth \else \par \addpenalty\@secpenalty\addvspace{#2} \begingroup \hyphenpenalty\@M \@ifempty{#4}{\@tempdima\csname r@tocindent\number#1\endcsname\relax}{\@tempdima#4\relax} \parindent\z@ \leftskip#3\relax \advance\leftskip\@tempdima\relax \rightskip\@pnumwidth plus4em \parfillskip-\@pnumwidth #5\leavevmode\hskip-\@tempdima \ifcase #1 \or\or \hskip 1em \or \hskip 2em \else \hskip 3em \fi #6\nobreak\relax \dotfill\hbox to\@pnumwidth{\@tocpagenum{#7}}\par \nobreak \endgroup  \fi}
\makeatletter \renewenvironment{proof}[1][\proofname]{
	\par\pushQED{\qed}\normalfont
	\topsep6\p@\@plus6\p@\relax
	\trivlist\item[\hskip\labelsep\bfseries#1\@addpunct{.}]
	\ignorespaces}{
	\popQED\endtrivlist\@endpefalse} \makeatother
\def\:{\colon}
\numberwithin{equation}{section}
\def\8{\infty}
\newsavebox{\fmbox}
\newenvironment{fmpage}[1]
{\begin{lrbox}{\fmbox}\begin{minipage}{#1}}
		{\end{minipage}\end{lrbox}\fbox{\usebox{\fmbox}}}
\begin{document}

\title[Planar rational cuspidal curves\ \ \  II. Log del Pezzo models]{Classification of planar rational cuspidal curves \\ II. Log del Pezzo models}
\subjclass[2010]{Primary: 14H50; Secondary: 14J17, 14R05}
\author{Karol Palka}
\author{Tomasz Pełka}
\address{Institute of Mathematics, Polish Academy of Sciences, ul. Śniadeckich 8, 00-656 Warsaw, Poland}
\email{palka@impan.pl}
\email{tpelka@impan.pl}
\thanks{Research project funded by the National Science Centre, Poland, grant No.\ 2015/18/E/ST1/00562.}
\begin{abstract}Let $E\subseteq \P^2$ be a complex curve homeomorphic to the projective line. The Negativity Conjecture asserts that the Kodaira--Iitaka dimension of $K_X+\frac{1}{2}D$, where $(X,D)\to (\P^{2},E)$ is a minimal log resolution, is negative. We prove structure theorems for curves satisfying this conjecture and we finish their classification up to a projective equivalence by describing the ones whose complements admit no $\C^{**}$-fibration. As a consequence, we show that they satisfy the Strong Rigidity Conjecture of Flenner--Zaidenberg. The proofs are based on the almost minimal model program. The obtained list contains one new series of bicuspidal curves.
\end{abstract}

\maketitle

\section{Main result}

\subsection{Main results and corollaries.} 
We work with complex algebraic surfaces. Let $\bar{E}\subseteq \P^{2}$ be a curve homeomorphic, in the Euclidean topology, to the projective line. Such a curve is rational and \emph{cuspidal}, because its singularities are locally analytically irreducible. In particular, its complement $\P^{2}\setminus \bar{E}$ is $\Q$-acyclic. If this complement is not of log general type then there is a classification, for a summary see, for example, \cite[\S 2.2]{Bodnar_two-pairs} or \cite[Lemma 2.14]{PaPe_Cstst-fibrations_singularities} and the references there. In particular, in this case $\bar{E}$ has at most two cusps \cite{Wakabayashi-cusp} and $\P^{2}\setminus \bar{E}$ has a $\C^{1}$- or a $\C^{*}$-fibration \cite[Proposition 2.6]{Palka-minimal_models}. Therefore, we shall assume that $\P^{2}\setminus \bar{E}$ is of log general type, that is, $\kappa(K_{X}+D)=2$, where $(X,D)$ is a log smooth completion of $\P^{2}\setminus \bar{E}$, and $\kappa$ stands for the Kodaira--Iitaka dimension. As it was shown in loc.\ cit., in this case $\kappa(K_{X}+\tfrac{1}{2}D)$ plays a crucial role and one can study $\bar E$ using the modification of the logarithmic Minimal Model Program, the so-called \emph{almost Minimal Model Program} (see Section \ref{sec:MMP}), applying it to the pair $(X,\frac{1}{2}D)$. The guiding principle is the following conjecture, which strengthens the Coolidge--Nagata conjecture proved recently by M.\ Koras and the first author \cite{Palka-Coolidge_Nagata1,KoPa-CooligeNagata2}.

\begin{conjecture}[The Negativity Conjecture, {\cite[Conjecture 4.7]{Palka-minimal_models}}]\label{conj}
	If $(X,D)$ is a log smooth completion of a smooth $\Q$-acyclic surface then $\kappa(K_{X}+\frac{1}{2}D)=-\infty$. 
\end{conjecture}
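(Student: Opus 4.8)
The plan is to run the almost Minimal Model Program on $(X,\tfrac12 D)$ and to contradict $\kappa(K_X+\tfrac12 D)\geq 0$ by confronting a logarithmic Bogomolov--Miyaoka--Yau inequality with the constraint $\etop(S)=1$ forced by $\Q$-acyclicity, where $S\de X\setminus D$. First I would settle the cases of low log Kodaira dimension. Since $\tfrac12 D\leq D$, monotonicity of the Iitaka dimension gives $\kappa(K_X+\tfrac12 D)\leq\kappa(K_X+D)$, so the statement is immediate when $\kappa(K_X+D)=-\infty$. In the remaining non--general--type range $\kappa(K_X+D)\leq 1$ the $\Q$-acyclic surfaces are classified and carry an $\A^{1}$- or $\C^{*}$-fibration; restricting $K_X+\tfrac12 D$ to a general fibre and accounting for the degenerate and multiple fibres shows directly that it is not pseudo-effective, so $\kappa(K_X+\tfrac12 D)=-\infty$. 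This reduces everything to the essential case $\kappa(K_X+D)=2$, in which $K_X+D$ is big.

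So assume $\kappa(K_X+D)=2$ and, for contradiction, $\kappa(K_X+\tfrac12 D)\geq 0$. Running the almost MMP of Section~\ref{sec:MMP} for $(X,\tfrac12 D)$ produces an almost minimal model $(\bar X,\tfrac12\bar D)$ together with its bark $\Bk$, so that the positive part $P\de K_{\bar X}+\tfrac12\bar D-\Bk$ is nef. Every contraction performed is along a curve on which $K+\tfrac12 D$ is negative, so the Iitaka dimension is preserved: $\kappa(P)=\kappa(K_X+\tfrac12 D)\geq 0$, whence $P$ is $\Q$-effective. Being nef it satisfies $P^{2}\geq 0$, and $P\cdot(K_{\bar X}+\tfrac12\bar D)=P^{2}+P\cdot\Bk\geq 0$ because $\Bk\geq 0$. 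The bookkeeping is arranged so that the open part keeps $\etop=1$ throughout the process.

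The crux, and the step I expect to be the genuine obstacle, is a Bogomolov--Miyaoka--Yau inequality tailored to the half-coefficient pair. The standard orbifold inequalities bound $(K_{\bar X}+\bar D)^{2}$ by three times an orbifold Euler number assembled from $\etop(S)$ and the boundary singularities; what the argument needs instead is a bound on $P^{2}$, equivalently on $P\cdot(K_{\bar X}+\tfrac12\bar D)$, in which the correction terms contributed by $\Bk$ and by the fibres of any auxiliary fibration are estimated explicitly. Granting such an inequality, the value $\etop(S)=1$ makes the relevant orbifold Euler number too small to coexist with $P^{2}\geq 0$ and $P\not\equiv 0$, giving the contradiction. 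Pinning down the correct half-integral inequality valid on almost minimal models, and controlling the bark and cusp contributions uniformly, is precisely the delicate ingredient still missing, which is why the assertion remains conjectural.
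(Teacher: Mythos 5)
This statement is Conjecture \ref{conj} itself: it is an open conjecture, and the paper contains no proof of it. The paper only (i) verifies it in special situations (e.g.\ it holds automatically when the complement admits a $\C^{**}$-fibration, by \cite[Lemma 2.4(iii)]{Palka-minimal_models}, and it is checked for the specific types of Definition \ref{def:our_curves} in Proposition \ref{prop:finding_C3st}), and (ii) proves classification results \emph{conditional} on it. So there is no ``paper's own proof'' to compare against, and any purported blind proof must be examined on its own merits.

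Your proposal is not a proof, as you yourself concede in the last sentence, and the gap is not a technical detail but the entire mathematical content of the conjecture. Your reduction to the case $\kappa(K_X+D)=2$ is fine: monotonicity handles $\kappa(K_X+D)=-\infty$, and for $\kappa(K_X+D)\in\{0,1\}$ the known structure theory ($\C^1$- or $\C^*$-fibrations) does give negativity. But in the log general type case your strategy runs the almost MMP of Section \ref{sec:MMP}, assumes $\kappa(K_X+\tfrac12 D)\geq 0$, and hopes to contradict a ``half-integral'' Bogomolov--Miyaoka--Yau inequality against $\etop(X\setminus D)=1$. No such inequality producing a contradiction is known, and the way the existing machinery is actually used is in the \emph{opposite} logical direction: in this paper (and in \cite[Lemma 4.4]{Palka-minimal_models}) one \emph{assumes} $\kappa(K_{X_0}+\tfrac12 D_0)=-\infty$, concludes via Proposition \ref{prop:MMP} that $(X_{\min},\tfrac12 D_{\min})$ is a log del Pezzo of Picard rank one, and then exploits $(2K_{X_{\min}}+D_{\min})^2>0$ to derive the bound \eqref{eq:lambdaineq} on the cusp contributions. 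If instead $\kappa(K_X+\tfrac12 D)\geq 0$, Proposition \ref{prop:MMP}\ref{item:MMP_nef} only tells you that $K_{X_{\min}}+\tfrac12 D_{\min}$ is nef, so $P^2\geq 0$ holds for free and the log BMY inequality (in any of its established forms, with or without bark corrections) is perfectly consistent with this; $\etop=1$ does not make the relevant orbifold Euler number ``too small''. In short, the inequality you would need does not follow from, and is not analogous to, any available BMY-type statement --- establishing it would amount to proving the conjecture, which remains open even for complements of rational cuspidal curves.
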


For further motivation and evidence toward the Negativity Conjecture see Conjecture 2.5 in loc.\ cit. In \cite{PaPe_Cstst-fibrations_singularities} we have classified, up to a projective equivalence, rational cuspidal curves with complements admitting a $\C^{**}$-fibration, where $\C^{**}=\C^{1}\setminus \{0,1\}$, in which case  Conjecture \ref{conj} holds automatically (see \cite[Lemma 2.4(iii)]{Palka-minimal_models}). The goal of the current article is to complete the classification, up to a projective equivalence, of rational cuspidal curves for which  Conjecture \ref{conj} holds.

\begin{tw}
	\label{thm:main}
	Let $\bar{E}\subseteq \P^{2}$ be a complex curve homeomorphic to $\P^{1}$, such that $\P^{2}\setminus \bar{E}$ is of log general type. Then $\P^{2}\setminus \bar{E}$ satisfies Negativity Conjecture \ref{conj} if and only if either:
\begin{enumerate}
	\item\label{item:thm_C**} $\P^{2}\setminus \bar{E}$ has a $\C^{**}$-fibration, hence $\bar{E}$ is of one of the types listed in \cite[Theorem 1.3]{PaPe_Cstst-fibrations_singularities}, or
	\item\label{item:thm_del-Pezzo} $\bar{E}$ is of one of the types $\Qb$, $\Qa$, $\FE
	(\gamma)$, $\FZb(\gamma)$, $\cH(\gamma)$, $\cI$ or $\cJ(k)$ listed in Definition \ref{def:our_curves}.
\end{enumerate}
Each of the above types is realized by a curve which is unique up to a projective equivalence.
\end{tw}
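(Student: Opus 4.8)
The statement bundles an equivalence with a realization-and-uniqueness clause, which I would prove in three stages; the ``if'' direction is the simplest. That every curve in the list satisfies the conjecture is immediate in the $\C^{**}$-fibration alternative \eqref{item:thm_C**} by \cite[Lemma 2.4(iii)]{Palka-minimal_models}. For the finitely many types of \eqref{item:thm_del-Pezzo} I would instead verify $\kappa(K_X+\tfrac12 D)=-\8$ by a direct computation on the explicit minimal log resolution recorded for each type in Definition \ref{def:our_curves}, e.g.\ by exhibiting a suitable pencil witnessing negativity. This is a finite, type-by-type check.

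\textbf{Reduction via the almost MMP.} The substance is the converse. Assuming $\kappa(K_X+\tfrac12 D)=-\8$, I would run the almost Minimal Model Program of Section \ref{sec:MMP} on the pair $(X,\tfrac12 D)$. Since $\P^2\setminus\bar E$ is $\Q$-acyclic and the Kodaira--Iitaka dimension is $-\8$, the process cannot reach an almost minimal model with nef log canonical divisor; it must produce a Mori fibration, namely either a $\P^1$-fibration over a curve or a $\Q$-factorial log del Pezzo surface of Picard rank one. In the fibration case I would transport the fibration back to $\P^2\setminus\bar E$ and use the log general type hypothesis to see that it restricts to a $\C^{**}$-fibration, placing $\bar E$ in alternative \eqref{item:thm_C**} and invoking \cite[Theorem 1.3]{PaPe_Cstst-fibrations_singularities}. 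Otherwise I obtain a log del Pezzo model $(\bar X,\tfrac12\bar D)$ with $\rho(\bar X)=1$, and everything reduces to classifying these models.

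\textbf{Classifying the del Pezzo models.} The rank-one condition is rigid: $-(K_{\bar X}+\tfrac12\bar D)$ is ample and spans $\NS(\bar X)\otimes\Q$, so the class of $\bar E$ and of each remaining boundary component is a positive multiple of it. Together with the constraints that $\Q$-acyclicity imposes on the configuration and on the discrepancies at the log terminal singularities of $\bar X$, and with the logarithmic Bogomolov--Miyaoka--Yau inequality bounding the number and multiplicities of the cusps, this reduces the possibilities to a finite list. For each admissible model I would reverse the recorded blow-ups of the almost MMP run to reconstruct $\bar E\subseteq\P^2$ and identify it with one of $\Qb$, $\Qa$, $\FE(\gamma)$, $\FZb(\gamma)$, $\cH(\gamma)$, $\cI$, $\cJ(k)$.

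\textbf{Realization, uniqueness, and the main difficulty.} Realization is built into this reconstruction, and projective uniqueness follows because the successive blow-up centers are forced, so that $(\P^2,\bar E)$ carries no moduli under $\Aut(\P^2)$ (for the $\C^{**}$ types it is already contained in \cite[Theorem 1.3]{PaPe_Cstst-fibrations_singularities}). The main difficulty is the exactness of the del Pezzo classification: ruling out the numerous a priori admissible singularity-and-boundary configurations, and securing uniqueness in each surviving case, is where the delicate intersection-theoretic case analysis is concentrated.
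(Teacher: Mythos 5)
Your outline reproduces the paper's strategy in all essential steps: the backward implication via an explicit fibration witnessing negativity (the paper's Proposition~\ref{prop:finding_C3st} exhibits a $\P^1$-fibration with $D\cdot F=4$ and a horizontal $(-2)$-twig, then applies the Zariski--Fujita argument of Lemma~\ref{lem:C3st}; note the types form infinite series, so this check must be uniform in $\gamma$ and $k$, not literally finite); the forward implication via the almost MMP dichotomy of Proposition~\ref{prop:MMP} ($\C^{**}$-fibration versus log del Pezzo of Picard rank one, the paper running the process on the minimal \emph{weak} resolution $(X_0,\tfrac12 D_0)$ rather than on $(X,\tfrac12 D)$); classification of the del Pezzo models by a case analysis in Section~\ref{sec:possible_HN-types}, where the finiteness comes from the specific inequality $\lambda_1+\cdots+\lambda_c\leq 6$ of \eqref{eq:lambdaineq}, derived from $(2K_{X_{\min}}+D_{\min})^2>0$, rather than from generic $\Q$-acyclicity/BMY constraints; and finally reconstruction by reversing the blow-ups, which is indeed how the paper proves existence and uniqueness for $\cI$ and $\cJ$ (Section~\ref{sec:IJ}), while it uses Cremona transformations from the Steiner quartic for $\Qb,\Qa$ and quotes the literature for $\FZb$, $\FE$, $\cH$.

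The one step that fails as written is your uniqueness argument: \enquote{projective uniqueness follows because the successive blow-up centers are forced, so that $(\P^{2},\bar{E})$ carries no moduli.} The paper's own Example~\ref{ex:different_weak} shows this premise is false: for a curve of type $\cJ(2)$ there are two genuinely different runs of the almost MMP, yielding non-isomorphic almost minimal models with different Picard ranks (see also Remark~\ref{rem:J_twice}). Hence the assignment \enquote{curve $\mapsto$ del Pezzo model} is not canonical, and uniqueness cannot be read off from forcedness of the contractions. The paper's repair (Proposition~\ref{prop:IJ_existence}) is to fix one process and prove that the almost log exceptional curves $A,A'$ it contracts are unique \emph{as curves}: their intersection numbers with the components of $D_{0}$ are prescribed, these components freely generate $\NS_{\Q}(X_{0})$, so the numerical classes of $A,A'$ are determined, and $A^{2}<0$, $(A')^{2}<0$ then force uniqueness; only after this does the correspondence with the models $(Z,D_Z)$ become bijective. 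Moreover, the existence and uniqueness of the target configurations $(Z,D_Z)$ themselves (Lemma~\ref{lem:Z}, and the analogous statements behind $\Qb,\Qa$) is a genuine geometric input, proved with pencils of conics and the Hurwitz formula, not a formal consequence of the classification; your sketch treats it as automatic.
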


\noindent For a summary of numerical characteristics of the above curves, see Table \ref{table:nofibrations} at the end of this article and \cite[Table 1]{PaPe_Cstst-fibrations_singularities}. Note that cases \ref{item:thm_C**} and \ref{item:thm_del-Pezzo} correspond to the possible outcome of the birational part of the log MMP for $(X,\tfrac{1}{2}D)$,  which is a log Mori fiber space over a curve in case \ref{item:thm_C**} and a log del Pezzo surface of Picard rank one in case \ref{item:thm_del-Pezzo}, see \cite[Theorem 4.5(4)]{Palka-minimal_models}.

The \emph{multiplicity sequence} of a cusp $q\in \bar{E}$ consists of multiplicities of all proper transforms of the germ of $\bar{E}$ at $q$ under consecutive blowups within the minimal log resolution of $q$ (often one omits $1$'s at the end). We write $(m)_{k}$ for the sequence $( m,\dots, m )$ of length $k$.

\begin{dfn}[Log del Pezzo series]\label{def:our_curves} Let $\bar{E}\subseteq \P^{2}$ be a curve homeomorphic to $\P^1$ (hence rational cuspidal). We say that it is \emph{of type} 
 $\Qb$, $\Qa$, $\FE$, $\FZb$, $\cH$, $\cI$, $\cJ$ if the multiplicity sequences of its cusps are, respectively,
 	\begin{flalign*}
	\Qb &: \quad (2,2),\  (2,2),\  (2,2),  &\\
	\Qa &:\quad   (2,2,2),\  (2),\  (2),\  (2),   & \\
	\FE(\gamma)&:\quad  (3(\gamma-3),(3)_{\gamma-3}),\   ((4)_{\gamma-3},2,2),\  (2) \mbox{ for some integer } \gamma \geq 5,  &\\
	\FZb(\gamma)&:\quad  (2(\gamma-2),(2)_{\gamma-2}),\  ((3)_{\gamma-2}), \  (2) \mbox{ for some integer } \gamma \geq 4,  &\\
	\cH(\gamma)&:\quad   (3(\gamma-1),(3)_{\gamma-1}),\  ((4)_{\gamma-1},2,2,2) \mbox{ for some integer } \gamma \geq 3,  &\\
	\cI&:\quad  (6,6,3,3),\  (8,4,4,2,2),&\\
	\cJ(k)&:\quad   (2k,2k,2k,(2)_{k}),\   (2k,(2)_{k}) \mbox{ for some integer }  k \geq 2. &
 	\end{flalign*}
\end{dfn}

By Lemma \ref{lem:HN-equations}, the integers $\deg \bar{E}$ and $E^{2}$, where $E$ is the proper transform of $\bar{E}$ on $X$, are uniquely determined by the multiplicity sequences, see Table \ref{table:nofibrations} at the end of the article.
 
As a consequence of our classification we infer that all rational cuspidal curves with complements of log general type which satisfy  Negativity Conjecture \ref{conj}  share some unexpected geometric properties. We summarize these properties in Theorems \ref{thm:geometric} and \ref{thm:geom_conseq}, see Section \ref{sec:corollaries} for proofs. We use the names of series $\FZa$ and $\cA-\cG$ from \cite{PaPe_Cstst-fibrations_singularities} and the ones from Definition \ref{def:our_curves} above. For $k\geq 1$ we denote by $\ORa(k)$ and $\ORb(k)$ the curves $C_{4k}$ and $C_{4k}^{*}$, a part of the series constructed by Orevkov \cite{OrevkovCurves} for which the complements are of log general type.

\begin{tw}[Existence of special lines]\label{thm:geometric}
	Let $\bar{E}\subseteq \P^{2}$ be a rational cuspidal curve with a complement of log general type, and which is not one of the Orevkov unicuspidal curves $\ORa$, $\ORb$. Assume that Negativity Conjecture \ref{conj} holds for $\P^{2}\setminus \bar{E}$. Then $\bar E$ has two, three or four cusps and there exists a line $\ll$ through a cusp $q_{1}\in\bar{E}$ with the largest multiplicity sequence (in the lexicographic order) meeting $\bar{E}$ only in two points. Hence, $\bar E$ is the closure of the image of a proper injective morphism $\C^*\to \C^2$ given by $\bar{E}\setminus \ll \subseteq \P^{2}\setminus \ll$.
 		\begin{figure}[h]
 			\begin{subfigure}{0.17\textwidth}\centering
 				\includegraphics[scale=0.25]{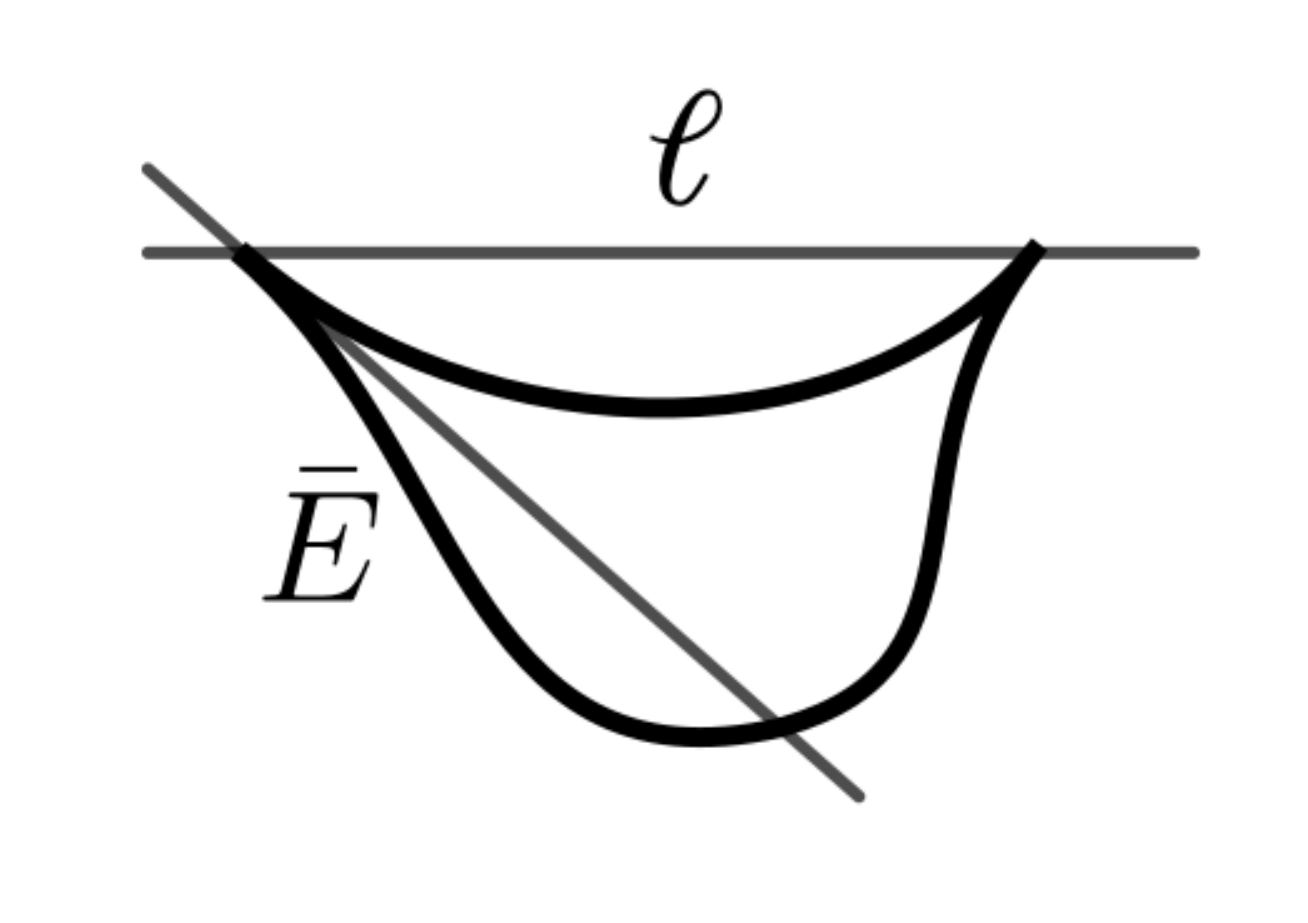}
 				\caption{$\cA,\cB,\cC,\cD,\cH$}
 			\end{subfigure}
 			\hspace{.5em}
 			\begin{subfigure}{0.17\textwidth}\centering
 				\includegraphics[scale=0.252]{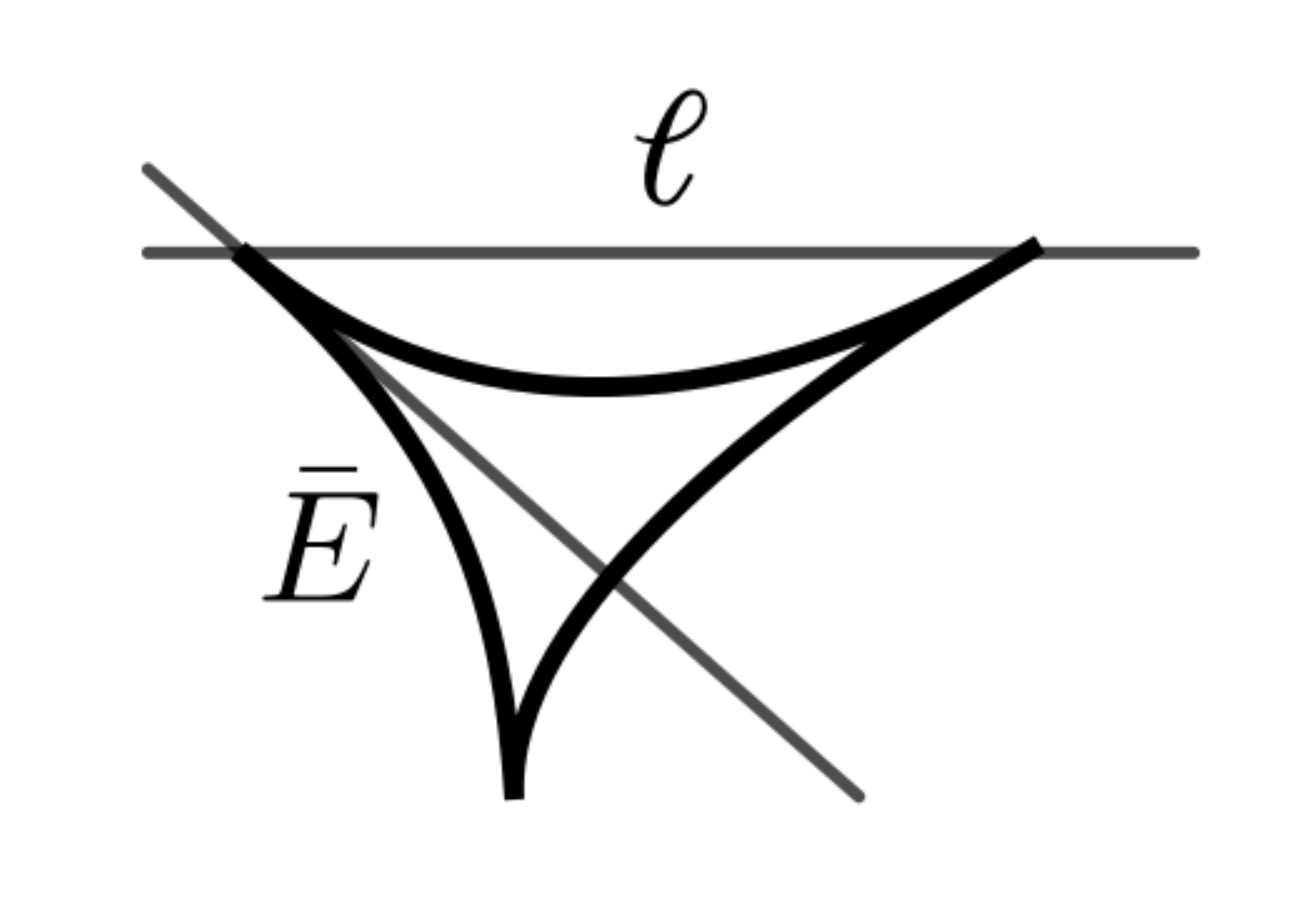}
 				\caption{$\FZa,\FZb,\FE$}
 			\end{subfigure}
 			\hspace{.5em}
 			\begin{subfigure}{0.17\textwidth}\centering
 				\includegraphics[scale=0.25]{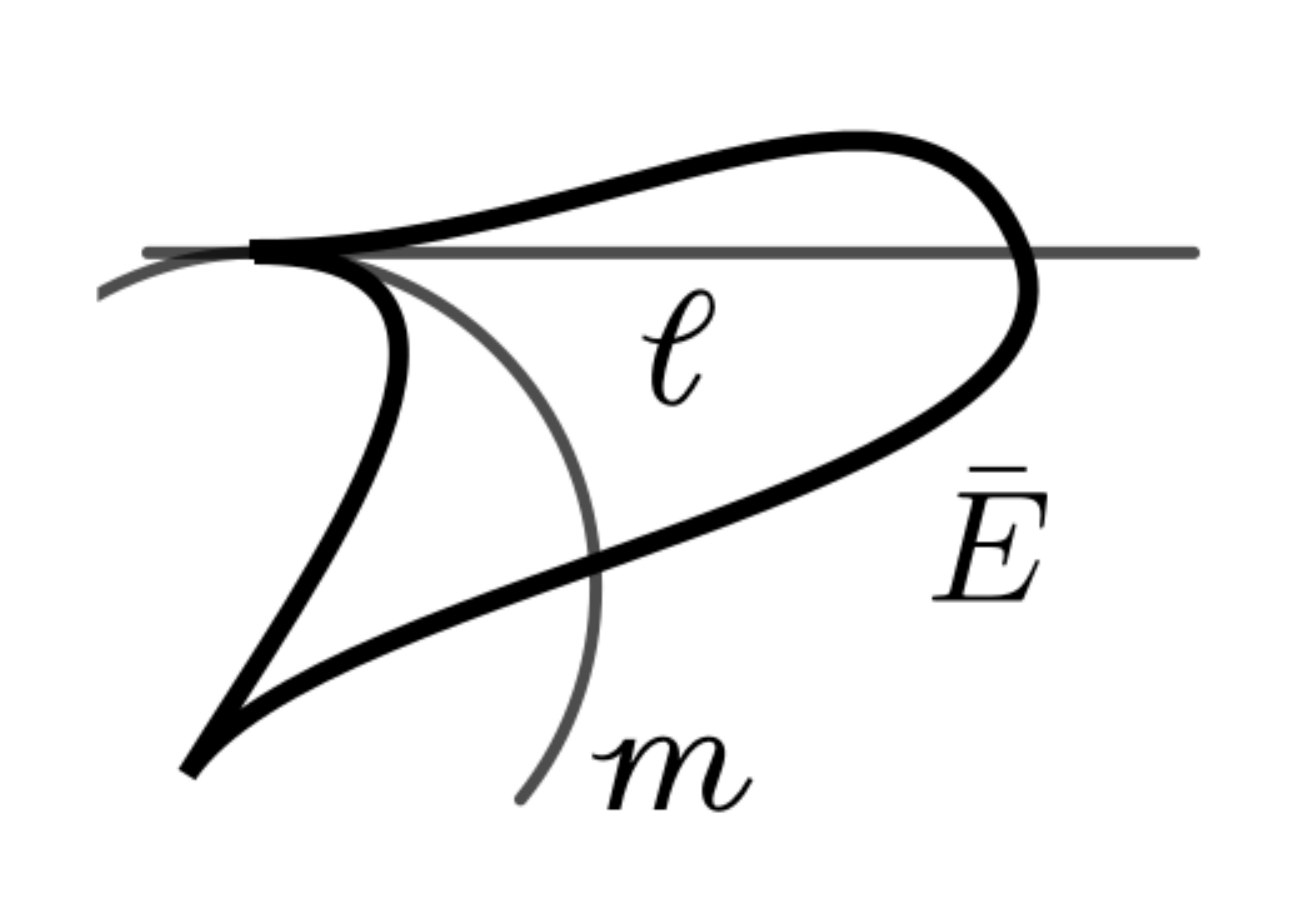}
 				\caption{$\cG$}
 			\end{subfigure}
			\hspace{.5em}
			\begin{subfigure}{0.17\textwidth}\centering
				\includegraphics[scale=0.25]{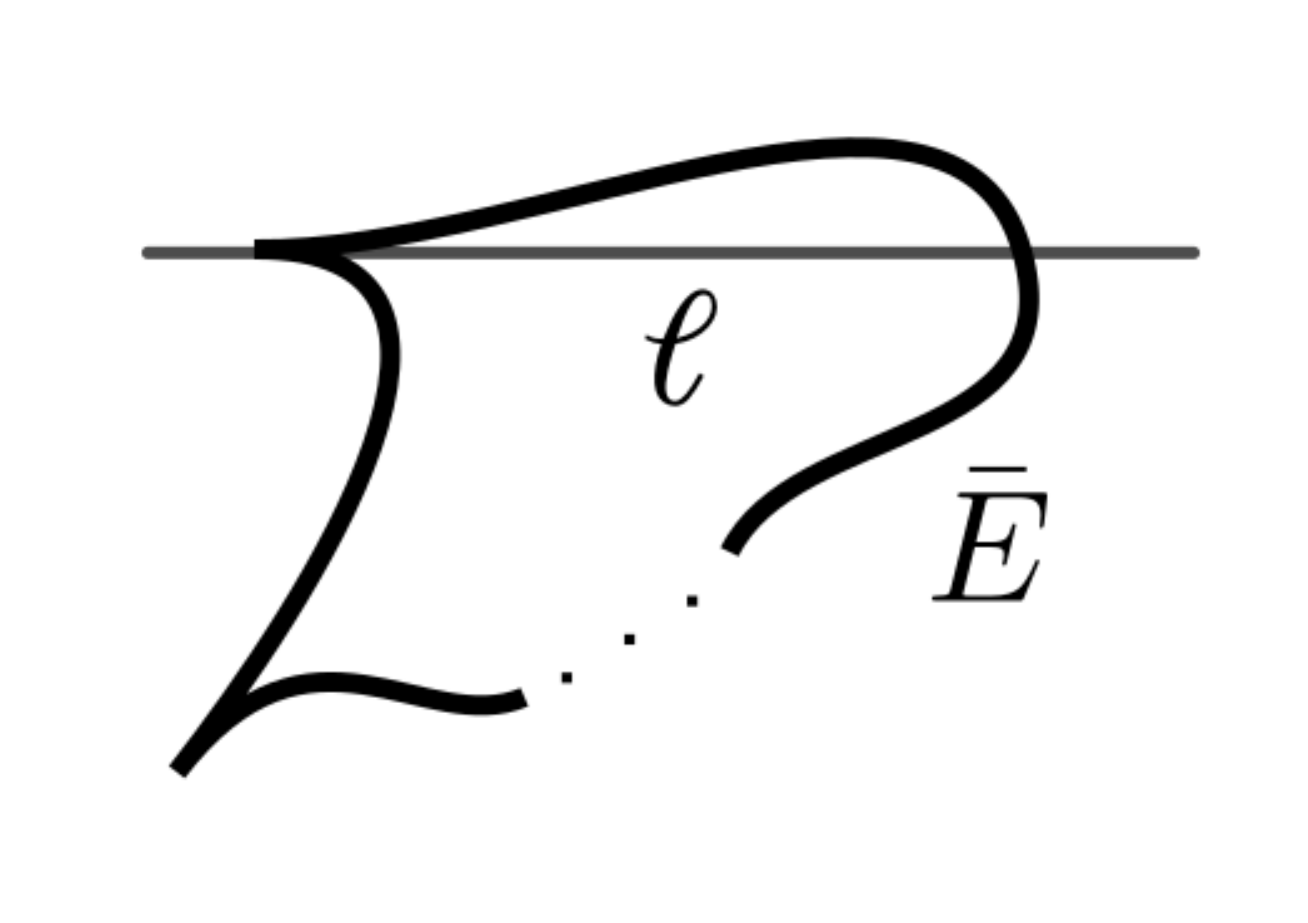}
				\caption{$\Qb,\Qa,\cJ$}
			\end{subfigure}
 				\hspace{.5em}
 			\begin{subfigure}{0.17\textwidth}
 				\centering
 				\includegraphics[scale=0.25]{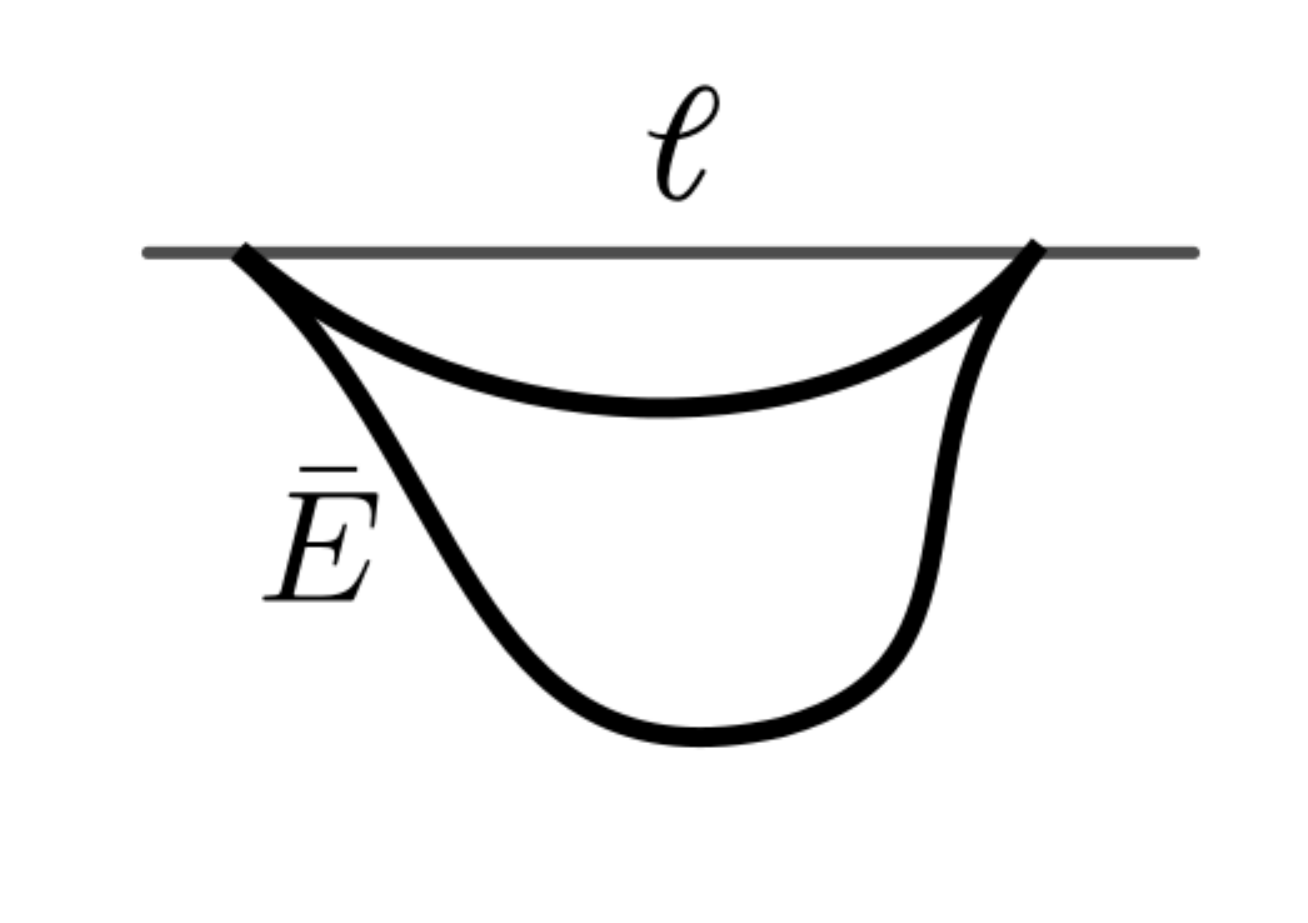}
 				\caption{$\cE,\cF,\cI$}
 			\end{subfigure}
 			\caption{Configurations $(\P^{2}, \bar{E}+\ll)$ from Theorem \ref{thm:geometric}.}
 			\label{fig:thm_geometric}
 		\end{figure}
 	
 	More precisely,	exactly one of one of the following cases \ref{item:cor-asymptote}--\ref{item:cor-no-asymptote} hold, see Figure \ref{fig:thm_geometric}.
	\begin{enumerate}
	\item\label{item:cor-asymptote} There are at least two lines $\ll$ as above. In this case, one of these lines passes through two cusps of $\bar{E}$ and some other meets $\bar{E}\setminus \{q_{1}\}$ transversally. Moreover, $\bar{E}$ is of type $\cA,\cB,\cC,\cD$ or $\cH$ if it has two cusps and is of type $\FZa$, $\FZb$ or $\FE$ otherwise.
	\item\label{item:cor-G} There exists a unique line $\ll$ as above and there exists a  curve $\mm$ such that $\mm\setminus \ll\cong \C^{1}$ and $\mm$ meets $\bar{E}\setminus \ll$ in one point, transversally. In this case, $\ll$ meets $\bar{E}\setminus \{q_{1}\}$ transversally, and there is exactly one curve $\mm$ as above. That $\mm$ is a conic. Moreover, $\bar{E}$ is of type $\cG$.
	\item\label{item:cor-no-asymptote} There is a unique line $\ll$ as above and every curve in $\P^{2}\setminus \ll$ isomorphic to $\C^{1}$ meets $\bar{E}\setminus \ll$ at least twice in the sense of intersection theory. In this case, $\bar{E}$ is of type $\Qb$, $\Qa$ or $\cJ$ if $\ll$ passes through exactly one cusp and is of type $\cE$, $\cF$ or $\cI$ otherwise.
	\end{enumerate}
\end{tw}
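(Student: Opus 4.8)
The plan is to derive Theorem \ref{thm:geometric} from the classification obtained in Theorem \ref{thm:main}. By that theorem, once the Orevkov curves $\ORa$, $\ORb$ are excluded, $\bar E$ is projectively equivalent to one of the finitely many explicit curves of types $\cA$--$\cG$, $\FZa$ from \cite{PaPe_Cstst-fibrations_singularities} or $\Qb,\Qa,\FE(\gamma),\FZb(\gamma),\cH(\gamma),\cI,\cJ(k)$ from Definition \ref{def:our_curves}. The claim that $\bar E$ has two, three or four cusps is then immediate from the multiplicity sequences: the only unicuspidal curves on this list are precisely the excluded $\ORa$, $\ORb$, and none of the remaining types has more than four cusps. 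In each case I take $q_1$ to be the cusp whose multiplicity sequence comes first in the lexicographic order, i.e.\ the one heading the corresponding line of Definition \ref{def:our_curves} or its analogue in \cite{PaPe_Cstst-fibrations_singularities}.

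Next I would produce the line $\ll$ and the induced $\C^*$-parametrization for every type. Here I use the explicit projective model of $\bar E$ (available because Theorem \ref{thm:main} asserts uniqueness up to projective equivalence) together with the value of $d=\deg\bar E$ supplied by Lemma \ref{lem:HN-equations}. Choosing $\ll$ through $q_1$ suitably --- either a line through $q_1$ tangent to the cusp there, or the line joining $q_1$ to a second cusp --- one computes the local intersection numbers of $\ll$ with $\bar E$ at $q_1$ and at the one remaining point and checks that they already sum to $d$; by Bézout these are then all the intersections, so $\ll\cap\bar E$ is supported on exactly two points. Because $\bar E\cong\P^1$ is cuspidal, its normalization $\nu\colon\P^1\to\bar E$ is a homeomorphism, so restricting $\nu$ to the complement of the two preimages of $\ll\cap\bar E$ gives a morphism $\C^*\to\bar E\setminus\ll\subseteq\P^2\setminus\ll=\C^2$ that is injective and, since $\bar E\setminus\ll$ is closed in this affine chart, proper.

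It remains to place each curve into exactly one of the cases \ref{item:cor-asymptote}--\ref{item:cor-no-asymptote}; since the defining conditions (at least two admissible lines / a unique admissible line together with the conic $\mm$ / a unique admissible line and no $\C^1$ meeting $\bar E\setminus\ll$ once) are mutually exclusive, it suffices to check membership type by type. The affirmative parts are handled by exhibiting the required auxiliary curves: in case \ref{item:cor-asymptote} a second line through $q_1$ meeting $\bar E$ in two points (one admissible line joining $q_1$ to a second cusp, the other cutting $\bar E\setminus\{q_1\}$ transversally), and in case \ref{item:cor-G}, for type $\cG$, the conic $\mm$ with $\mm\setminus\ll\cong\C^1$ touching $\bar E\setminus\ll$ in a single transversal point. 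Each of these is found and its intersection with $\bar E$ verified by the same degree-and-multiplicity bookkeeping as above.

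The genuinely hard content lies in the negative assertions: that $\ll$ is unique in cases \ref{item:cor-G}, \ref{item:cor-no-asymptote}, and that in case \ref{item:cor-no-asymptote} \emph{no} curve in $\P^2\setminus\ll$ isomorphic to $\C^1$ meets $\bar E\setminus\ll$ only once. The mechanism I expect to use is that either an extra line $\ll'$ through $q_1$ meeting $\bar E$ twice, or a curve $\mm\cong\C^1$ meeting $\bar E\setminus\ll$ in a single point, produces inside $\P^2\setminus(\bar E\cup\ll)$ a curve isomorphic to $\C^*$ of small boundary intersection; transported to the minimal log resolution $(X,D)$ and fed into the almost minimal model program of Section \ref{sec:MMP}, this would yield numerical constraints incompatible with the invariants $d$ and $E^2$ of the given type --- for the del Pezzo types sharpened by the Picard rank one of the almost minimal model, which leaves no room for the resulting affine ruling. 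Making this obstruction quantitative and uniform across the families parametrized by $\gamma$ and $k$ (where a direct verification from the explicit $\C^*$-parametrization is also possible but tedious) is the step I anticipate to be the main obstacle; the remaining bookkeeping for each individual type is routine.
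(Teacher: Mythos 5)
Your reduction of the cusp count and of the existence statements to the classification in Theorem \ref{thm:main} is exactly what the paper does: Lemma \ref{lem:special_lines} carries out precisely the degree-and-multiplicity bookkeeping you describe (checking $\mu_{1}+\mu_{2}=\deg\bar{E}$, resp.\ $\mu_{1}+\mu_{1}'=\deg\bar{E}-1$, type by type) to produce the line $\ll_{12}$ joining two cusps and the tangent line $\ll_{1}$, and the proper $\C^{*}$-parametrization then follows as you say. The genuine gap is in the negative assertions, which you correctly flag as the hard content but for which your proposed mechanism does not work. An extra admissible line $\ll'$, or a curve $\uu\cong\C^{1}$ in $\P^{2}\setminus\ll$ meeting $\bar{E}\setminus\ll$ once, cannot be \enquote{fed into} the almost MMP of Section \ref{sec:MMP}: that machinery is run on the pair $(X_{0},\tfrac{1}{2}D_{0})$ whose boundary comes from $\bar{E}$ alone, and the mere presence of an additional curve isomorphic to $\C^{*}$ with small boundary intersection in $\P^{2}\setminus(\bar{E}\cup\ll)$ contradicts nothing --- by Remark \ref{rem:geometric}\ref{item:C*} such curves always exist under the hypotheses of the theorem ($\ll\setminus\bar{E}$ is one of them), and the log del Pezzo structure of the minimal model has already been exhausted in proving Theorem \ref{thm:main}, so it imposes no further constraint on curves not contained in $D_{0}$.

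What the paper actually uses for these parts is different and substantially heavier. For the uniqueness of $\ll$ in cases \ref{item:cor-G} and \ref{item:cor-no-asymptote}, it applies the logarithmic Bogomolov--Miyaoka--Yau inequality to the snc-minimalization of a log resolution of $D$ enlarged by the proper transforms of $\ll_{1}$, $\ll'$ (and, for type $\cG$, an auxiliary conic $\mm'$), computing the correction term $\ind(\tilde{D})$ from barks of $(-2)$-twigs and playing it against explicit self-intersection counts of the transforms of the lines; this is combined with Hurwitz-formula arguments and, for type $\Qb$, with the computation $\Aut(\P^{2},\Qb)\cong\Z_{3}$ of Remark \ref{rem:Q3}. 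For case \ref{item:cor-no-asymptote} one must exclude curves $\uu\cong\C^{1}$ of \emph{arbitrary} degree, which Bézout bookkeeping cannot reach: the paper first reduces to $\uu$ singular and tangent to $\bar{E}$ at $q_{1}$, then invokes Tono's structure theorem for curves of Abhyankar--Moh--Suzuki type (applicable because $\uu$ has one place on $\ll_{1}$) to pin down the multiplicity sequence of the cusp of $\uu$, and feeds the result into a second BMY computation to rule out everything except the conic $\mm$ for type $\cG$. None of these ingredients --- BMY with bark computations, Tono's theorem, the automorphism groups of the quintics --- appear in your outline, and without them, or a genuine substitute, the proof is incomplete precisely at the step you identify as the main obstacle.
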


\begin{rem}[Existence of $\C^{*}$ in the complement]\label{rem:geometric}\ 
\begin{enumerate}
\item\label{item:C*} 
Let $\bar E\subseteq \P^{2}$ be any rational cuspidal curve. It is known (see \cite[Lemma 2.4]{PaPe_Cstst-fibrations_singularities}) that $\P^2\setminus \bar E$ is of log general type if and only if it does not contain a curve isomorphic to $\C^1$. But in the latter case (under our assumption that the Negativity Conjecture holds) it turns out that $\P^2\setminus \bar E$ always contains a curve isomorphic to $\C^{*}$. Indeed, for the Orevkov curves it is the affine part of a certain nodal cubic with a node at the cusp of $\bar{E}$, see \cite[\S 6]{OrevkovCurves}, and for the remaining types it is  $\ll\setminus \bar{E}$. Another example of $\C^{*}\subseteq \P^{2}\setminus \bar{E}$ is $\ll_{1}\setminus \bar{E}$ or $\mm\setminus \bar{E}$,  where $\ll_{1},\mm$ are the tangent line and the conic from Theorem \ref{thm:geometric}\ref{item:cor-asymptote},\ref{item:cor-G}, respectively (see Remark \ref{rem:AA'} for other examples).
\item\label{item:asymptote} For the bicuspidal curves in Theorem \ref{thm:geometric}\ref{item:cor-asymptote} the line meeting $\bar{E}\setminus\ll$ once  is a \emph{good asymptote} in the sense of \cite[Defintion 1.1]{CKR-Cstar_good_asymptote} for the $\C^{*}$-embedding $\bar{E}\setminus \ll\subseteq \P^{2}\setminus \ll$.
\end{enumerate}
\end{rem}

Using the above classification together with known results for smooth $\Q$-acyclic surfaces we deduce the following important geometric consequences, parts \ref{item:fibr} and \ref{item:c>=4} being results toward the tom Dieck conjecture \cite[Conjecture 2.14]{tDieck_optimal-curves} and the Orevkov--Piontkowski conjecture \cite{Piontkowski-number_of_cusps}, respectively. 
Recall that the logarithmic tangent sheaf of $(X,D)$, denoted by $\mathcal{T}_{X}(-\log D)$, is the sheaf of the $\O_{X}$-derivations which preserve the ideal sheaf of $D$.

\begin{tw}[Geometric properties of planar rational cuspidal curves]\label{thm:geom_conseq}
Let $\bar{E}\subseteq \P^{2}$ be a curve homeomorphic to $\P^{1}$ and let $(X,D)\to (\P^2,\bar E)$ be a minimal log resolution. In case $\P^2\setminus \bar E$ is of log general type assume that it satisfies Negativity Conjecture \ref{conj}. Then the following hold.
\begin{enumerate}
\item\label{item:fibr} \emph{(Fibrations)} $\P^2\setminus \bar E$ has a fibration over $\P^1$ or $\C^1$ with general fiber isomorphic to $\C^1$, $\C^*$, $\C^{**}$ or $\C^{***}$. In the three latter cases one can choose a fibration with no base point on $X$.
\item\label{item:c>=4} \emph{(The number of cusps)} $\bar E$ has at most four singular points (cusps), and if it has exactly four then it is projectively equivalent to the quintic which is the closure of 
	\begin{equation*}\label{eq:q4}
	\C^{1}\ni t\mapsto [t:t^{3}-1:t^{5}+2t^{2}] \in \P^2.
	\end{equation*}	
\item\label{item:C1_or_C*} \emph{(A special line)} If $\bar{E}$ has at least two cusps then there is a line meeting $\bar{E}$ in at most two points.
\item\label{item:rig} \emph{(Strong Rigidity)} We have $H^{2}(\lts{X}{D})=0$. In particular, for planar rational cuspidal curves the Negativity Conjecture implies the Flenner--Zaidenberg Strong Rigidity Conjecture (see Conjecture \ref{conj:rig}).
\end{enumerate}
\end{tw}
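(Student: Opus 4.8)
The plan is to deduce all four assertions from the complete classification of Theorem~\ref{thm:main}, organising the curves into three families: (i) those whose complement is not of log general type; (ii) those of log general type admitting a $\C^{**}$-fibration, namely the types $\FZa$, $\cA$--$\cG$ and the Orevkov curves $\ORa$, $\ORb$ of \cite[Theorem 1.3]{PaPe_Cstst-fibrations_singularities}; and (iii) the log del Pezzo types $\Qb$, $\Qa$, $\FE(\gamma)$, $\FZb(\gamma)$, $\cH(\gamma)$, $\cI$, $\cJ(k)$ of Definition~\ref{def:our_curves}. Since families (ii) and (iii) form a finite list up to the discrete parameters $\gamma$ and $k$, the geometric assertions \ref{item:fibr}--\ref{item:C1_or_C*} can be verified by direct inspection, whereas (i) is covered by the classical structure theory.

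For part~\ref{item:fibr}, the complement in family (i) carries a $\C^{1}$- or $\C^{*}$-fibration by \cite[Proposition 2.6]{Palka-minimal_models}, and in family (ii) the $\C^{**}$-fibration is given by definition. For family (iii) I would construct the fibration from the pencil of lines through the distinguished cusp $q_{1}$ supplied by Theorem~\ref{thm:geometric}: after resolving the base point this pencil becomes a $\P^{1}$-fibration of $X$, and inspecting how a general member meets $\bar E$ (at $q_{1}$ and at the points cut out away from the special line $\ll$) identifies the general fibre of the induced fibration of the complement as $\C^{*}$ or $\C^{***}$; one then checks on the chosen resolution that no base point survives. A $\C^{1}$-fibration cannot occur in the log general type case, so the list of fibre types matches the claim.

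Parts~\ref{item:c>=4} and~\ref{item:C1_or_C*} are then read off the classification. Counting cusps in Definition~\ref{def:our_curves} and in \cite[Theorem 1.3]{PaPe_Cstst-fibrations_singularities} shows every type has at most three cusps except $\Qa$, which has four, while family (i) has at most two cusps by \cite{Wakabayashi-cusp}. To finish~\ref{item:c>=4} I would check that the quintic parametrised by $t\mapsto [t:t^{3}-1:t^{5}+2t^{2}]$ has degree $5$ and exactly the cusps with multiplicity sequences $(2,2,2),(2),(2),(2)$, so that the uniqueness clause of Theorem~\ref{thm:main} identifies it with $\Qa$ up to projective equivalence. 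For~\ref{item:C1_or_C*}, a log general type curve with at least two cusps falls under Theorem~\ref{thm:geometric} (the excluded Orevkov curves being unicuspidal), which furnishes the line $\ll$ meeting $\bar E$ in two points; the two-cuspidal curves of family (i) are handled directly from their classification.

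The delicate statement is part~\ref{item:rig}. I would begin with Serre duality, which gives $H^{2}(\lts{X}{D})\cong H^{0}(X,\Omega^{1}_{X}(\log D)\otimes\O_{X}(K_{X}))^{\vee}$, reducing the claim to the vanishing of global sections of the twisted log cotangent sheaf. Tensoring the residue sequence $0\to\Omega^{1}_{X}\to\Omega^{1}_{X}(\log D)\to\bigoplus_{i}\O_{D_{i}}\to 0$ with $\O_{X}(K_{X})$ and passing to cohomology, the term $H^{0}(\Omega^{1}_{X}\otimes\O_{X}(K_{X}))\cong H^{2}(\mathcal{T}_{X})^{\vee}$ vanishes because $X$ is rational, so a hypothetical global section is determined by its residues, which can be nonzero only along the components $D_{i}$ with $D_{i}^{2}\leq -2$. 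To rule these out I would use the fibration $f\colon X\to\P^{1}$ of part~\ref{item:fibr}: restricting such a section to a general fibre $F\cong\P^{1}$ meeting $D$ in $r$ points, the restriction $(\Omega^{1}_{X}(\log D)\otimes\O_{X}(K_{X}))|_{F}$ splits into line bundles of degrees $-2$ and $r-4$, so it has no sections and the restriction vanishes whenever $r\leq 3$, that is for $\C^{1}$-, $\C^{*}$- and $\C^{**}$-fibres. \emph{The main obstacle} is the boundary case $r=4$ of $\C^{***}$-fibres, where the summand $\O_{F}$ survives the twist: here one must analyse the behaviour of the section along the finitely many special fibres and the vertical components of $D$, over the explicit boundary configurations of families (ii) and (iii), to force it to vanish. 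Since the resulting vanishing $H^{2}(\lts{X}{D})=0$ is precisely the input required by Conjecture~\ref{conj:rig}, this completes the reduction of the Flenner--Zaidenberg Strong Rigidity Conjecture to the Negativity Conjecture.
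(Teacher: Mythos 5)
Your proposal has two genuine gaps, both at the heart of the matter.

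First, in part \ref{item:fibr} the fibration you construct for the log del Pezzo types does not have the required fibers. A general member of the pencil of lines through $q_{1}$ meets $\bar{E}$ at $q_{1}$ with multiplicity $\mu_{1}$ and meets $\bar{E}\setminus\{q_{1}\}$ in $\deg\bar{E}-\mu_{1}$ further points, so the induced fibration of $\P^{2}\setminus\bar{E}$ has general fiber $\C^{1}$ minus $\deg\bar{E}-\mu_{1}$ points. This number equals $3$ for $\Qb$, $\Qa$ and $\FZb$, but it equals $4$ for $\FE(\gamma)$ and $\cH(\gamma)$, equals $6$ for $\cI$, and equals $2k+1\geq 5$ for $\cJ(k)$; as the paper itself notes, the difference $\deg\bar{E}-\mu$ is unbounded along the series $\cJ$. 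The existence of the single special line $\ll$ of Theorem \ref{thm:geometric} says nothing about the general member of the pencil, so no pencil of lines can produce a $\C^{***}$-fibration for these types. The paper's Proposition \ref{prop:finding_C3st} instead builds, type by type, a $\P^{1}$-fibration with $F\cdot D=4$ from a linear system supported on the boundary: for $\cI$ and $\cJ$ the chain $V_{2}+C_{2}'+E+C_{1}'=[2,1,3,1]$ supports a fiber, and for $\FE$, $\FZb$, $\cH$ the fibers involve the proper transforms of the lines of Lemma \ref{lem:special_lines}. Some construction of this kind is unavoidable. (You also do not address why the $\C^{**}$-fibrations of family (ii) can be chosen without base points on $X$, nor why the base of each fibration is $\P^1$ or $\C^1$, though these are minor by comparison.)

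Second, in part \ref{item:rig} your argument stops exactly where the proof has to start. Your Serre-duality and fiber-restriction computation is correct and yields vanishing over fibrations with $F\cdot D\leq 3$, but for the del Pezzo types the only available fibrations have $F\cdot D=4$, and you defer the case $r=4$ to an unspecified analysis of degenerate fibers --- that analysis is the missing proof, not a routine check. The paper sidesteps $r=4$ entirely by boundary surgery: by Lemma \ref{lem:rig}\ref{item:rig_surg} (Flenner--Zaidenberg) one may add to, or delete from, the boundary a $(-1)$-curve without changing $h^{i}(\lts{X}{D})$, and for each type one exhibits a modified boundary $T$ together with a $\P^{1}$-fibration satisfying $F\cdot T=3$ (delete $C_{2}'$ for $\cI$, $\cJ$; add the lines $A$, $A'$ and delete a $(-1)$-curve after snc-minimalization for $\FE$, $\cH$; quote \cite{FlZa_cusps_d-3} for $\Qb$, $\Qa$, $\FZb$), whence Lemma \ref{lem:rig}\ref{item:rig_C**} gives $H^{2}=0$. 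Finally, Conjecture \ref{conj:rig} asserts $H^{i}=0$ for all $i\geq 0$, so the ``in particular'' clause is not implied by $H^{2}=0$ alone: one still needs $H^{0}(\lts{X}{D})=0$, which follows from finiteness of $\Aut(X,D)$ in the log general type case, and $h^{1}=h^{2}-\chi(\lts{X}{D})$ with $\chi(\lts{X}{D})=h^{0}(2K_{X}+D)=0$ under Conjecture \ref{conj}; this step, carried out in Proposition \ref{prop:rig}, is absent from your write-up.
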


\subsection{Discussion of some results in the literature.}
We now comment on the curves from Definition \ref{def:our_curves}. The curves $\Qb$ and $\Qa$ appear as a part of a classification of planar quintics \cite[Theorem 2.3.10]{Namba_geometry_of_curves}. 
The curves $\FZb$ were constructed by Flenner and Zaidenberg \cite{FlZa_cusps_d-3}, who showed that they are the only rational tricuspidal curves with $\mu=\deg \bar{E}-3\geq 3$, where $\mu$ is the maximal multiplicity of a cusp of $\bar{E}$. The curves $\FE$ were constructed by Fenske \cite{Fenske_cusp_d-4}, who showed that they are the only rational tricuspidal curves with $\mu=\deg \bar{E}-4\geq 3$ and $\chi(\lts{X}{D})\leq 0$. For both series projective uniqueness has been settled in these articles.  Note that in general the difference $\deg\bar{E}-\mu$ can be arbitrarily large, for example for the curves $\cJ(k)$ it equals $2k+1$.

The curves $\cH$ and $\cI$ are closures in $\P^2$ of specific proper embeddings of $\C^{*}$ into $\C^{2}$. The classification of such embeddings was initiated by Cassou-Nogues, Koras and Russell \cite{CKR-Cstar_good_asymptote} and will be completed in a forthcoming article of Koras and the first author \cite{KoPa-SporadicCstar2}. Type $\cH$ corresponds to \cite[Theorem 8.2(ii.3)]{CKR-Cstar_good_asymptote}. See Remarks  \ref{rem:BZ_quintics}, \ref{rem:FZb-FE-H}\ref{item:FZb},\ref{item:FE},\ref{item:H_BZ} and \ref{rem:IJ}\ref{item:BZ_IJ} for a comparison with the conditional classification of Borodzik and Żołądek \cite{BoZo-annuli}.

It turns out that almost all curves in our classification have been already discovered. However, in Section \ref{sec:IJ} we construct one new series of rational bicuspidal curves with complements of log general type, the series $\cJ(k)$, depending on a natural number $k\geq 2$. Independently from us, it was recently described by Bodnár \cite[Theorem 3.1(c)]{Bodnar_type_G_and_J}. In fact, as we were told by M.\ Zaidenberg, it was most likely known to T.\ tom Dieck in 1995, see Remark \ref{rem:IJ}\ref{item:Bodnar_IJ}.

\subsection{Scheme of the proof.}
In Section \ref{sec:possible_HN-types} we show that if $\P^{2}\setminus \bar{E}$ satisfies  Negativity Conjecture \ref{conj} but admits no $\C^{**}$-fibration then $\bar{E}\subseteq \P^{2}$ is of one of the types described in Definition \ref{def:our_curves}. The main tool is the \enquote{almost MMP} for the pair $(X_0,\frac{1}{2}D_0)$, where $(X_{0},D_{0})\to (\P^{2},\bar{E})$ is the minimal weak resolution of singularities (see Section \ref{sec:resolutions}). In this article, by a minimal model we mean the outcome of a birational part of the log MMP. Under our assumptions some minimal model (in fact, every minimal model) $(X_{\min},\tfrac{1}{2}D_{\min})$ of $(X_0,\frac{1}{2}D_0)$ is a log del Pezzo surface of Picard rank one, which strongly restricts the geometry of $D_{\min}$. In particular, $D_{\min}$ has at most six components (see \cite[Theorem 4.5(4)]{Palka-minimal_models}). In Section \ref{sec:picture} we find further restrictions by exploiting the fact that the connected components of $D_0-E_0$ can be contracted to smooth points. The proof is then divided into two parts, depending on whether $X_{\min}$ itself is singular or not. The first case is treated in Section \ref{sec:F2}: it turns out that $X_{\min}$ is the quadric cone and $\bar{E}$ is of type $\FE$ or $\cI$. In the second case, treated in Section \ref{sec:P2}, we have $X_{\min}\cong \P^{2}$ and $D_{\min}$ is a simple configuration of at most four curves. These configurations lead to types $\Qb$ or $\Qa$ if $(\P^{2},\tfrac{1}{2}\bar{E})$ is already minimal (see Proposition \ref{prop:n0}) and to types $\FZb$, $\cH$ and $\cJ$ otherwise.

In Section \ref{sec:kappa} we prove that, conversely, if the singularity type of $\bar{E}$ is as in Definition \ref{def:our_curves} then $\P^{2}\setminus \bar{E}$ is a surface of log general type which satisfies Negativity Conjecture \ref{conj} and has no $\C^{**}$-fibration. In Sections \ref{sec:q4} - \ref{sec:IJ} we prove the existence and the projective uniqueness of such curves. In cases $\FZb$, $\FE$ and $\cH$ this result is already known. In the remaining cases we show how to deduce the uniqueness using the original constructions or the almost MMP described in Section \ref{sec:possible_HN-types}. Theorems \ref{thm:geometric} and \ref{thm:geom_conseq} are proved in Section \ref{sec:corollaries}.

\tableofcontents

\section{Preliminaries}\label{sec:preliminaries}

\subsection{Log surfaces.}\label{sec:log_surfaces}

This article is a continuation of \cite{PaPe_Cstst-fibrations_singularities}. We use the terminology and notation introduced there, which we now briefly recall.

Let $D$ be a reduced effective divisor on a smooth projective surface $X$. By a \emph{component} of $D$ we mean an irreducible component. We denote the number of such components by $\#D$. If $D'$ is an effective \emph{subdivisor} of $D$, that is,  $D-D'$ is effective, we define its \emph{branching number} as 
\begin{equation*}
\beta_{D}(D')=D'\cdot (D-D').
\end{equation*}
 We say that a component $T$ of $D$ is a \emph{tip} of $D$ if $\beta_{D}(T)\leq 1$. If $\beta_{D}(T)\geq 3$ we say that $T$ is a branching component of $D$. 

By a \emph{curve} we mean an irreducible and reduced variety of dimension $1$. A smooth complete rational curve $L$ on $X$ with self-intersection number $L^{2}=n$ is called an $n$-curve. For such a curve $K_{X}\cdot L=-n-2$ by adjunction.  We say that $D$ has \emph{simple normal crossings} (is \emph{snc}) if all its components are smooth and meet transversally, at most two in one point. If $D$ is snc, a $(-1)$-curve $L\subseteq D$ is called \emph{superfluous} for $D$ if $0<\beta_{D}(L)\leq 2$ and $L$ meets two different components of $D-L$ in case $\beta_{D}(L)=2$. Note that a $(-1)$-curve $L\subseteq D$ is superfluous if and only if it is not a connected component of $D$ and after its contraction the image of $D$ remains snc.

Let $D_{1},\dots, D_{n}$ be the components of $D$. We say that $D$ is negative definite if its intersection matrix $[D_{i}\cdot D_{j}]_{1\leq i,j \leq n}$ is negative-definite. A \emph{discriminant} of $D$ is defined as 
\begin{equation*}
d(D)=\det [-D_{i}\cdot D_{j}]_{1\leq i,j \leq n} \quad\mbox{ if } D\neq 0 \quad \mbox{and}\quad d(0)=1,
\end{equation*}
see \cite[Section 3]{Fujita-noncomplete_surfaces} for its elementary properties.  
If $D$ has a connected support and for all its components $\beta_D\leq 2$ then we say that $D$ is a \emph{chain} in case at least one inequality is strict and that $D$ is \emph{circular} otherwise. An snc-divisor is a \emph{(rational) tree} if it has a connected support and contains no circular subdivisor (and its components are rational). The components $T_{1},\dots, T_{m}$ of a chain $T$ can be ordered in such a way that $T_{i}\cdot T_{i+1}=1$ for $i=1,\dots, m-1$. Then $T_{1}$ and $T_{m}$ are, respectively, the \emph{first} and the \emph{last} tip of $T$. We denote them by
\begin{equation*}
\ftip{T} \mbox{ - the first tip of }T, \qquad \ltip{T} \mbox{ - the last tip of }T.
\end{equation*}
A \emph{type} of such an ordered chain is the sequence of integers $[-T_{1}^{2},\dots, -T_{m}^{2}]$. We will often abuse the notation and write $T=[-T_{1}^{2},\dots, -T_{m}^{2}]$. We denote by $\rev{T}$ the same chain with an opposite ordering. 
A non-zero ordered chain $T\subseteq D$ is called a \emph{twig} of $D$ if $\ftip{T}$ is a tip of $D$ and the components of $T$ are non-branching in $D$. A twig $T$ of $D$ is \emph{maximal} if it is maximal in the set of twigs of $D$ ordered by inclusion. A rational tree with one branching component and three maximal twigs is called a \emph{fork}.

Let us recall the notion of a \emph{bark}. Let $D$ be a reduced effective divisor with no superfluous $(-1)$-curves and let $T$ be a rational negative definite twig of $D$. The \emph{bark} of $T$ in $D$, denoted by $\Bk_{D}(T)$, is defined in \cite[Section II.3.3]{Miyan-OpenSurf} as a unique $\Q$-divisor supported on $T$ such that for every component $T_{0}$ of $T$ one has
\begin{equation}\label{eq:bark}
T_{0}\cdot \Bk_{D}(T)=\beta_{D}(T_{0})-2,
\end{equation}
see Lemma II.3.3.4 loc.cit. Equivalently, $T_{0}\cdot \Bk_{D}(T)=-1$ if $T_{0}=\ftip{T}$ and $T_{0}\cdot \Bk_{D}(T)=0$ otherwise. One shows that for twigs the coefficients of barks are positive and smaller than $1$. 

If $T$ is a disjoint sum of some twigs of $D$ we define its bark $\Bk_{D}(T)$ as the sum of respective barks. In this article, we will use mostly the barks of $(-2)$-twigs, that is, of twigs whose components are $(-2)$-curves. If $T=T_1+\ldots+T_k$ is a $(-2)$-twig of $D$ then we check that
\begin{equation}\label{eq:bark_of_a_2-twig}
\Bk_{D}(T)=\sum_{i=1}^k\frac{k-i+1}{k+1}T_{i}.
\end{equation}	

\smallskip
We have the following result on chains contractible to smooth points. A similar description was given in \cite[Lemma 3.7]{Palka-Coolidge_Nagata1} and \cite[Proposition 10]{Tono_nie_bicuspidal}.

\begin{lem}[Chains contractible to smooth points]\label{lem:shape_of_contractible_chains}
	For every chain which has a unique $(-1)$-curve and is contractible to a smooth point there is a unique choice of an ordering and unique integers $l\geq 0$, $m_{1}, m_{2},\dots, m_{l},x\geq 0$, such that the type of the ordered chain is:
	\begin{equation*}\begin{split}
	&[(2)_{m_{l}},m_{l-1}+3,\dots,m_{2}+3,(2)_{m_{1}+1},1,m_{1}+3,(2)_{m_{2}},\dots, m_{l}+3,(2)_x], \text{\ \ where\ } 2\nmid l \\ \mbox {or}\quad & [(2)_{m_{l}},m_{l-1}+3,\dots,m_{1}+3,1,(2)_{m_{1}+1},m_{2}+3,(2)_{m_{3}},\dots, m_{l}+3,(2)_x], \text{\ \ where\ } 2\mid l.
	\end{split}\end{equation*}
\end{lem}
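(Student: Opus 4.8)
The plan is to induct on $m=\#T$, at each step contracting the unique $(-1)$-curve of $T$ and invoking the inductive hypothesis for the resulting shorter chain. The one conceptual input, which I would isolate first, is the structural fact that \emph{a chain contractible to a smooth point contains no two adjacent $(-1)$-curves}. To prove it, factor the contraction $T\to(\text{smooth point})$ into blowdowns, i.e. reconstruct $T$ from the empty divisor by $\#T$ blowups. Since contracting a $(-1)$-curve inside a chain again yields a chain, all intermediate divisors are chains, so each blowup is centred at a node or at a free point of a tip (an interior free point would create a branching component). Every blowup raises $-C^2$ of each component $C$ through its centre by one, and later blowups only decrease self-intersections; hence for two adjacent components the earlier-created one had its $-C^2$ raised to at least $2$ when the later one was born, so it is not a $(-1)$-curve of $T$. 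In particular a nonempty chain contractible to a smooth point has at least one $(-1)$-curve, namely the one contracted by the first blowdown.

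The base case $m=1$ is $T=[1]$, which is the displayed form with $l=0$, $x=0$. For $m\geq 2$ let $C$ be the unique $(-1)$-curve of $T$ and $T'$ its contraction, a chain contractible to a smooth point with $\#T'=m-1$. I claim $T'$ again has a unique $(-1)$-curve. If $C$ is a tip with neighbour $[a]$, contracting $C$ only changes that neighbour, to $[a-1]$; since $T'$ contains a $(-1)$-curve we get either $a=2$, making the neighbour the unique $(-1)$-tip of $T'$, or $T'=0$. If $C$ is inner with neighbours $[a]$ and $[b]$, then contraction changes only these, to $[a-1]$ and $[b-1]$; were $a=b=2$, the two former neighbours would be adjacent $(-1)$-curves of $T'$, contradicting the structural fact. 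Hence at most one of $a-1,b-1$ equals $1$ and $T'$ has exactly one $(-1)$-curve, as needed.

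By the inductive hypothesis $T'$ is one of the two displayed chains, for some data $(l';m'_1,\dots;x')$, and its unique $(-1)$-curve occupies the central position of that form. To finish I reinsert $C$ by the unique blowup of $T'$ whose exceptional curve is $C$ (at the tip of $T'$ when $C$ was a tip, at the node adjacent to the $(-1)$-curve of $T'$ determined by the contraction when $C$ was inner) and verify that the outcome is again displayed. This is a finite computation with the two templates: in the tip case it sends $[1,(2)_{x'}]$ to $[1,(2)_{x'+1}]$, giving $l=0$, $x=x'+1$; in the inner case, away from the block boundary it raises $m_1$ by one while preserving the parity of $l$, whereas at the boundary value $m_1=0$ it increases $l$ by one, interchanges the two templates and shifts the indices $m_i$ by one. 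In every case the result is exactly one of the asserted types. Uniqueness of the ordering and of the integers then follows because the whole reduction of $T$ to the empty chain is forced: at each stage the $(-1)$-curve is unique, hence so is its contraction, and the only remaining freedom, the orientation, is pinned down by the shape of the two patterns near the $(-1)$-curve (weight $2$ on one side, weight $\geq 3$ on the other in the inner case), so reading the forced sequence of moves backwards recovers $(l;m_1,\dots,m_l,x)$ without any choice.

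The main obstacle is the reconstruction bookkeeping in the third paragraph: checking that reinserting $C$ carries each template to a template, and in particular that the parity of $l$ flips precisely at the boundary $m_1=0$ together with the index shift, requires matching the two displayed patterns term by term and is the only place where genuine case analysis appears. By contrast the conceptual heart of the argument is short: the no-adjacent-$(-1)$-curves property guarantees that contracting the unique $(-1)$-curve again lands in a chain with a single $(-1)$-curve, which is exactly what keeps the induction inside the stated class.
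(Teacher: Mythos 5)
Your proof is correct, and for the existence half it takes essentially the paper's route: induct on $\#T$, contract the unique $(-1)$-curve to get $T'$, apply the inductive hypothesis, and check that the displayed set of types is closed under the inverse blowup. You add one genuine ingredient that the paper uses only tacitly: the inductive hypothesis applies to $T'$ only if $T'$ again has a \emph{unique} $(-1)$-curve, and your structural fact (a chain contractible to a smooth point contains no two adjacent $(-1)$-curves, proved by tracking self-intersections through the reconstructing blowups) is exactly what makes this step airtight; the paper invokes the induction for $T'$ without comment. Where you genuinely diverge is uniqueness. The paper argues statically: $l$ is intrinsic, being the number of components of $T$ that are neither $(-1)$- nor $(-2)$-curves; the parity of $l$ then dictates on which side of the $1$ the nearest $2$ stands, which fixes the ordering; and, with the ordering fixed, deleting all entries equal to $1$ or $2$ and subtracting $3$ from the remaining ones recovers $m_1,\dots,m_l$ and then $x$. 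You argue dynamically: the whole contraction sequence is forced, so reading the forced moves backwards determines the data. The static argument is quicker to make rigorous; yours has the merit of exhibiting the presentation as a deterministic function of the canonical contraction process.

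One step of your uniqueness paragraph fails as stated, though it is repairable from within your own argument: the claim that the orientation ``is pinned down by the shape of the two patterns near the $(-1)$-curve'' is circular. Both templates are admissible --- in the odd-$l$ form the $2$-block lies to the left of the $1$, in the even-$l$ form to the right --- so a priori $T$ could carry an odd-$l$ presentation in one ordering and an even-$l$ presentation in the reversed one, and the local picture near the $1$ cannot distinguish these two possibilities. What rules this out is precisely that the parity of $l$ is intrinsic to $T$: this is the paper's counting observation, and in your setup it must instead be extracted from the forced move sequence (each move type --- free point of a tip, node toward the neighbour of weight $\geq 3$, node toward the $2$-neighbour --- is intrinsic, and the parameters evolve deterministically under these moves starting from $[1]$, i.e.\ from $(l;x)=(0;0)$). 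Once the ordering claim is routed through that observation rather than through the local shape alone, your uniqueness proof closes. A smaller caveat: your reinsertion table omits the sub-case $l'=0$ with a node blowup, where $[1,(2)_{x'}]$ becomes $[2,1,3,(2)_{x'-1}]$, so $x$ drops by one as $l$ becomes $1$; harmless, but the ``finite computation'' does need all of its cases.
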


\begin{proof}Let $T$ be a chain which has a unique $(-1)$-curve and contracts to a smooth point. We may assume $T\neq [1]$ and $T\neq [1,2]$, for otherwise the type of $T$ is one of the above sequences. The contraction of $T$ can be decomposed into a sequence of contractions of $(-1)$-curves in $T$ and its successive images. Let $T'$ be the image of $T$ after the first contraction. By induction we may assume that with some choice of an ordering the type of $T'$ is one of the above sequences. It contains a unique subsequence $[a,1,b]$ for some $b\geq 2$ and $a\geq 2$ or $a=-\infty$, where we put $[-\8,1]=[1]$. Note that $a=-\infty$ if and only if $l=0$. The type of $T$ can be obtained from the type of $T'$ by replacing $[a,1,b]$ with $[a+1,1,2,b]$ or $[a,2,1,b+1]$. Since the set of the above sequences is closed under such replacements, it contains the type of $T$ for some choice of an ordering on $T$. Moreover, the number $l$ is the number of components of $T$ which are not $(-1)$- or $(-2)$-curves. For $l\neq 0$ we have $m_1+1\geq 1$, so the parity of $l$ determines the side of $1$ on which the nearest $2$ stands in the sequence. It follows that $T$ has a unique ordering such that its type is one of the above sequences.
	
Now, assume that two presentations with $m_1,\ldots,m_l,x$ and $m_1',\ldots,m_l,x'$ as above give the same sequence. By deleting terms equal to $1$ or $2$ and subtracting $3$ from the remaining terms we get $$[m_{l-1},m_{l-3},\ldots,m_2,m_1,\ldots,m_l]=[m_{l-1}',m_{l-3}',\ldots,m_2',m_1',\ldots,m_l']$$ for $l$ odd and $$[m_{l-1},m_{l-3},\ldots,m_1,m_2,\ldots,m_l]=[m_{l-1}',m_{l-3}',\ldots,m_1',m_2',\ldots,m_l']$$  for $l$ even, hence $m_i=m_i'$ for $i=1,\ldots,l$ and then $x=x'$.
\end{proof}

\smallskip
Let $\sigma\colon X \to X'$ be a birational morphism between smooth projective surfaces. The reduced exceptional divisor of $\sigma$ will be denoted by $\Exc\sigma$. A point of $X'$ is called a \emph{center} of $\sigma$ if it is a base point of $\sigma^{-1}$. We define the \emph{rank} of $\sigma$ as $\rho(\sigma)=\rho(X)-\rho(X')$, which is the number of curves contracted by $\sigma$. A \emph{part} of $\sigma$ is any morphism $\sigma'\colon X\to X''$ such that there is a factorization  $\sigma=\sigma''\circ \sigma'$. 

We can write $\sigma$ as a composition of blowups $\sigma=\sigma_{1}\circ \dots \circ \sigma_{z}$ for some $z\geq 0$.  If $L$ is a reduced effective divisor on $X$ then we say that $\sigma$ \emph{touches} $L$ if it is not an isomorphism in every neighborhood of $L$ (in particular, $\sigma$ touches $\Exc\sigma$). We say that $\sigma$ \emph{touches $L$ $n$ times} for some $n\geq 0$ if $L\not\subseteq \Exc\sigma$, exactly $n$ of the blowups $\sigma_{z},\dots, \sigma_{1}$ touch the image of $L$ and each exceptional divisor meets the respective image of $L$ in one point, with multiplicity one. In this case,  $(\sigma_{*}L)^{2}=L^{2}+n$.

Given two divisors $Z_{1}$ and $Z_{2}$ on the same surface we denote by $Z_{1}\wedge Z_{2}$ the divisor which is the sum of their common components. By $Z_{1}\cap Z_{2}$ we denote the intersection of their supports, which may contain components of codimension $2$.

\subsection{Fibrations.}\label{sec:fibrations}
A \emph{fibration} of a smooth surface $X$ is a surjective morphism $X\to B$ onto a curve with a connected, reduced and irreducible scheme-theoretic general fiber. For a given fibration with general fiber $F$, we say that an (irreducible) curve $C$ is \emph{vertical} (resp. \emph{horizontal}) if $C\cdot F=0$ (resp. $C\cdot F\neq 0$). A horizontal curve $C$ with $C\cdot F=n$ is called an \emph{$n$-section}. For a vertical curve $C$ we denote by $\mu(C)$ the multiplicity of $C$ in the fiber containing $C$. Every divisor $T$ can be uniquely decomposed as $T=T\vert+T\hor$, where all components of $T\vert$ are vertical and all components of $T\hor$ are horizontal.

A $\P^{1}$- (respectively, $\C^{1}$-, $\C^{*}$-, $\C^{**}$-) fibration is a fibration with general fiber isomorphic to $\P^{1}$ (respectively, $\C^{1}$, $\C^{*}=\C^{1}\setminus \{0\}$, $\C^{**}=\C^{1}\setminus \{0,1\}$). A fiber non-isomorphic to the general one is called a \emph{degenerate fiber}. Every degenerate fiber of a $\P^{1}$-fibration can be contracted to a $0$-curve by iterated contractions of $(-1)$-curves. By induction one easily gets the following result (see \cite[Section 4]{Fujita-noncomplete_surfaces}).

\begin{lem}[Degenerate fibers]\label{lem:singular_P1-fibers}Let $F$ be a degenerate fiber of a $\P^{1}$-fibration of a smooth projective surface. Then $F$ is a rational tree and its $(-1)$-curves are non-branching in $F\redd$. Furthermore,
	\begin{enumerate}
		\item\label{item:unique_-1-curve} If a $(-1)$-curve $L$ is a component of $F$ and $\mu(L)=1$ then $\beta_{F\redd}(L)=1$ and $F$ contains another $(-1)$-curve.
		\item\label{item:adjoint_chain} If $F$ has a unique $(-1)$-curve $L$ then $F$ has exactly two components of multiplicity one and they are tips of $F$. If these components belong to different connected components of $F\redd - L$ then $F\redd$ is a chain $U+[1]+U^{*}$, where $U^{*}$ is \emph{adjoint} to $U$ \cite[3.9]{Fujita-noncomplete_surfaces}; in particular, $d(U)=d(U^{*})$.
	\end{enumerate}
\end{lem}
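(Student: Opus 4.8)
The plan is to argue by induction on the number of blow-ups needed to produce $F$. Since every degenerate fiber is contractible to a $0$-curve by successive contractions of $(-1)$-curves, we may run the process backwards: $F$ is obtained from a smooth fiber $F^{(0)}$, which is a $0$-curve, by a sequence of blow-ups $\tau_1,\dots,\tau_n$ with $n\geq 1$, each intermediate divisor being a fiber of a $\P^1$-fibration. The base case $n=1$ yields two $(-1)$-curves of multiplicity one meeting transversally, for which all assertions are immediate. For the step, write $G$ for the fiber preceding the last blow-up $\tau$ and note that $\tau$ is of exactly one of two types: either \emph{(i)} its center is a smooth point of a single component $C$ of $G$, so that the exceptional curve $E$ is a new tip with $E^2=-1$, $\beta_{F\redd}(E)=1$ and $\mu(E)=\mu(C)$; or \emph{(ii)} its center is the node $C_1\cap C_2$ of two components, so that $E$ subdivides that edge with $E^2=-1$, $\beta_{F\redd}(E)=2$ and $\mu(E)=\mu(C_1)+\mu(C_2)\geq 2$. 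In both cases strict transforms keep their multiplicities.

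First I would establish that $F\redd$ is a rational tree whose $(-1)$-curves are non-branching. A blow-up of type (i) attaches a leaf and one of type (ii) subdivides an edge, so each preserves rationality and the tree structure. The essential point for the second claim is that blowing up a point lying on a curve lowers that curve's self-intersection by one; hence a component can be a $(-1)$-curve of $F$ only if no later blow-up is centered on it, so its branching number is frozen at its value at birth, namely $1$ in type (i) and $2$ in type (ii). Consequently every $(-1)$-curve $L$ of $F$ satisfies $\beta_{F\redd}(L)\leq 2$. I also record that the last exceptional curve is always a $(-1)$-curve of $F$.

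For assertion (a), let $L$ be a $(-1)$-curve with $\mu(L)=1$. A type (ii) blow-up creates an exceptional curve of multiplicity at least two, and multiplicities of strict transforms are preserved, so $L$ was born by a type (i) blow-up, hence as a tip; since it stays a $(-1)$-curve it is never again blown up, whence $\beta_{F\redd}(L)=1$. For the second $(-1)$-curve, the last exceptional curve serves unless it equals $L$; in that case $\mu(C)=\mu(L)=1$ for the component $C$ carrying its center, and the degenerate fiber $G$ contains a $(-1)$-curve. If one can be chosen distinct from $C$, its strict transform is a $(-1)$-curve of $F$ different from $L$; and if $C$ is the only $(-1)$-curve of $G$, then $\mu_G(C)=1$ forces, by the inductive form of (a), a second $(-1)$-curve on $G$, whose strict transform again works.

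Finally, for assertion (b) suppose $F$ has a unique $(-1)$-curve $L$. By (a) one has $\mu(L)\geq 2$, and since uniqueness of the $(-1)$-curve excludes a multiplicity-one exceptional curve by (a), the last blow-up has $\mu(E)\geq 2$; hence the multiplicity-one components of $F$ are precisely the strict transforms of those of $G$. The inductive hypothesis then gives exactly two of them, and a short check that the last blow-up keeps them tips yields the first part of (b). For the refined statement, the hypothesis that the two multiplicity-one tips lie in different components of $F\redd-L$ forces $\beta_{F\redd}(L)=2$, hence $L=E$ was born by a type (ii) blow-up; tracing back, either $G$ is the base configuration or its unique $(-1)$-curve separates its two multiplicity-one tips as well, so by induction $G\redd$ is a chain and the node blow-up keeps $F\redd$ a chain $U+[1]+U^*$ with $L$ the central $(-1)$-curve. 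Adjointness of $U$ and $U^*$ is the content of \cite[3.9]{Fujita-noncomplete_surfaces}; the equality $d(U)=d(U^*)$ can be seen directly, since solving $F\cdot C=0$ along a wing with a multiplicity-one tip and all self-intersections at most $-2$ runs the discriminant recurrence $\mu_{i+1}=(-C_i^2)\mu_i-\mu_{i-1}$, giving $\mu(L)=d(U)$ and, symmetrically, $\mu(L)=d(U^*)$. The main obstacle is precisely this last part -- verifying that the chain-and-adjoint structure is triggered exactly by the ``different components'' hypothesis and is propagated by type (ii) blow-ups -- which is handled by the bookkeeping above together with \cite[\S 4]{Fujita-noncomplete_surfaces}.
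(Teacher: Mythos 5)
Your induction, reconstructing $F$ from a smooth fiber by blow-ups of type (i) (centered at a smooth point of the reduced fiber) and type (ii) (centered at a node), is precisely the argument the paper leaves to the reader: its own justification is the one-line remark that $F$ contracts to a $0$-curve by iterated $(-1)$-contractions, plus a reference to Fujita, Section 4. Your treatment of the tree structure, of the non-branching of $(-1)$-curves, of part (a), and of the identity $d(U)=d(U^{*})$ via the recurrence $\mu_{i+1}=(-U_i^{2})\mu_i-\mu_{i-1}$ (which correctly gives $\mu(L)=d(U)=d(U^{*})$) are all sound.

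There is, however, a genuine gap in the inductive step for (b). After undoing the last blow-up, which is of type (ii) centered at a node $C_1\cap C_2$ of the previous fiber $G$, you write that ``the inductive hypothesis then gives exactly two of them''; but hypothesis (b) applies to $G$ only if $G$ has a \emph{unique} $(-1)$-curve. Uniqueness of $L$ in $F$ only yields that every $(-1)$-curve of $G$ lies in $\{C_1,C_2\}$, and the case where both $C_1$ and $C_2$ are $(-1)$-curves of $G$ is not covered --- and it does occur: for $F=[2,1,2]$ one has $G=[1,1]$, so your step fails already there. The same unproved dichotomy (``either $G$ is the base configuration or its unique $(-1)$-curve\dots'') is invoked again in the refined claim. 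The fix is short and stays inside your bookkeeping: if both $C_1$ and $C_2$ are $(-1)$-curves of $G$, then the exceptional curve of the blow-up producing $G$ is itself a $(-1)$-curve of $G$, hence equals, say, $C_2$; since $C_1$ meets $C_2$, the center of that blow-up lies on the image of $C_1$, whose self-intersection was therefore $0$; but every component of a degenerate intermediate fiber in your tower has negative self-intersection (born at $-1$ and only decreasing), so the previous fiber is the smooth one, $G=[1,1]$, and then $F=[2,1,2]$ satisfies (b) by inspection. (Alternatively, Zariski's lemma applied to $(C_1+C_2)^{2}=0$ forces $G\redd=C_1+C_2$ and $G=[1,1]$.) A smaller gloss in the refined claim: the assertion that the unique $(-1)$-curve of $G$ ``separates its two multiplicity-one tips as well'' needs the observation that these tips are distinct from that curve, which holds because its multiplicity is at least $2$ by (a) applied to $G$.
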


\begin{notation}[$\P^{1}$-fibrations]\label{not:fibrations_h_and_nu}
	Let $D$ be a reduced effective divisor on a smooth projective surface $X$ and let $p\colon X\to B$ be a $\P^{1}$-fibration. Let $\nu$ be the number of fibers contained in $D$. For $b\in B$, let $\sigma(F_{b})$ denote the number of components of $F_{b}=p^{-1}(b)$ which are not contained in $D$.
\end{notation}

\begin{lem}[{\cite[4.16]{Fujita-noncomplete_surfaces}}, cf. {\cite[Lemma 2]{Palka-AMS_LZ}}]\label{lem:fibrations-Sigma-chi}
	Fix the notation as above and put 
	$B^{*}=\{b\in B: F_{b}\not\subseteq D\}$. Then
	\begin{equation*}\#D\hor+\nu+\rho(X)=\#D+2+\sum_{b\in B^{*}}(\sigma(F_{b})-1).\end{equation*}
\end{lem}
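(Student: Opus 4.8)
The plan is to reduce the identity to two independent ingredients: a formula for the Picard rank in terms of the fibers, and a bookkeeping of the vertical components of $D$. Throughout I would write $c(F_{b})$ for the number of irreducible components of the fiber $F_{b}$, noting that $c(F_{b})=1$ for all but finitely many $b$. By definition $b\in B^{*}$ means precisely that $F_{b}$ has a component outside $D$, so $\sigma(F_{b})\geq 1$ there, whereas $b\notin B^{*}$ means $F_{b}\subseteq D$, i.e.\ $\sigma(F_{b})=0$; the latter happens for exactly $\nu$ points, so $B=B^{*}\sqcup\{b:F_{b}\subseteq D\}$ with the second set finite of cardinality $\nu$.

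First I would establish the Picard rank formula
\begin{equation*}
\rho(X)=2+\sum_{b\in B}\bigl(c(F_{b})-1\bigr).
\end{equation*}
To see this, pass to a relatively minimal model $\pi\colon X\to X_{0}$ of the $\P^{1}$-fibration by successively contracting $(-1)$-curves contained in fibers; this is possible since every degenerate fiber can be contracted to a $0$-curve by iterated contractions of $(-1)$-curves, as recalled before Lemma \ref{lem:singular_P1-fibers}. After all such contractions every fiber is irreducible, so $X_{0}\to B$ is a geometrically ruled surface with $\rho(X_{0})=2$, which is the formula for $X_{0}$. Running the blow-ups of $\pi$ in reverse, each center lies on a single fiber, hence raises $\rho$ by one and raises the number of components of that one fiber by one, changing both sides of the displayed equality by the same amount. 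Induction on $\rho(\pi)$ gives the formula for $X$.

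Next I would count the vertical components of $D$. Every component of $D\vert$ lies in a unique fiber, and the number of components of $F_{b}$ contained in $D$ is $c(F_{b})-\sigma(F_{b})$; summing over $b$ and isolating the $\nu$ fibers contained in $D$ gives
\begin{equation*}
\#D-\#D\hor=\#D\vert=\sum_{b\notin B^{*}}c(F_{b})+\sum_{b\in B^{*}}\bigl(c(F_{b})-\sigma(F_{b})\bigr).
\end{equation*}
It then remains to substitute the two displayed formulas into the claimed identity and simplify. Substituting the Picard rank formula into the left-hand side $\#D\hor+\nu+\rho(X)$ and using $\nu=\#\{b\notin B^{*}\}=\sum_{b\notin B^{*}}1$ to absorb $\nu$ into the fiber sum shows it equals $\#D\hor+2+\sum_{b\notin B^{*}}c(F_{b})+\sum_{b\in B^{*}}\bigl(c(F_{b})-1\bigr)$. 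Substituting the count of $\#D\vert$ into the right-hand side $\#D+2+\sum_{b\in B^{*}}(\sigma(F_{b})-1)$ and cancelling the $\sigma(F_{b})$ terms between the two sums over $B^{*}$ shows it equals the same quantity. This is a routine rearrangement of finite sums.

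The only genuine content is the Picard rank formula, and the point needing care there is that the reduction of the fibration terminates at a geometrically ruled surface of Picard rank $2$; this is exactly where the structure of degenerate fibers from Lemma \ref{lem:singular_P1-fibers} and the standard classification of relatively minimal $\P^{1}$-fibrations enter. Everything after that is additive bookkeeping, so I expect no obstacle beyond keeping the finiteness of the sums and the partition of $B$ straight.
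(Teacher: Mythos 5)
Your proof is correct. The paper does not actually prove this lemma --- it is quoted directly from Fujita \cite[4.16]{Fujita-noncomplete_surfaces} --- and your argument (the Picard-rank formula $\rho(X)=2+\sum_{b\in B}(c(F_{b})-1)$, obtained by contracting down to a relatively minimal ruled model and reversing the blowups, combined with the fiber-by-fiber count of the vertical components of $D$) is precisely the standard argument behind the cited result, with the two necessary bookkeeping points (finiteness of the sums and the partition of $B$ into $B^{*}$ and the $\nu$ fibers contained in $D$) handled correctly.
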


\medskip\subsection{Log resolutions of rational cuspidal curves.}\label{sec:resolutions}

We now fix some notation for the remaining part of the article. By $\bar{E}\subseteq \P^{2}$ we denote a rational cuspidal curve. Let 
\begin{equation*}
\pi_{0}\colon (X_{0},D_{0})\to (\P^{2},\bar{E})
\end{equation*}
be the minimal weak resolution of singularities, that is, a composition of a minimal number of blowups such that the proper transform $E_{0}$ of $\bar{E}$ on $X_{0}$ is smooth. We will also use the minimal log resolution $$\pi\colon (X,D)\to (\P^{2},\bar{E}),$$ that is, a composition of a minimal number of blowups such that $D=(\pi^{*}\bar{E})\redd$ is snc. It factors as $\pi=\pi_0\circ \psi_{0}$. We put $E=(\pi^{-1})_*\bar E\subseteq X$. We assume that the surface $X\setminus D$ is of log general type. Of course, $X\setminus D=X_{0}\setminus D_{0}=\P^{2}\setminus \bar{E}$. 
By the Poincar\'{e}-Lefschetz duality $\P^{2}\setminus \bar{E}$ is \emph{$\Q$-acyclic}, that is, $b_{i}(\P^{2}\setminus \bar{E})=0$ for $i>0$. We will frequently use the following consequence of the logarithmic version of the Bogomolov-Miyaoka-Yau inequality:

\begin{lem}[No affine lines, \cite{MiTs-lines_on_qhp}]\label{lem:Qhp_has_no_lines}
	A smooth $\Q$-acyclic surface of log general type contains no curve isomorphic to $\C^{1}$.
\end{lem}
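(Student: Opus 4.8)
The plan is to argue by contradiction and reduce the whole statement to a single application of the logarithmic Bogomolov--Miyaoka--Yau inequality. Suppose $U$ is a smooth $\Q$-acyclic surface of log general type which contains a curve $C\cong \C^{1}$; here $C$ is a closed subvariety of $U$. The idea is to pass from $U$ to the smaller open surface $V\de U\setminus C$ and to show that $V$ would violate log BMY.

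First I would verify that $V$ is again of log general type. Passing to an open subset does not decrease the log Kodaira dimension, so $V=U\setminus C$ is of log general type, like $U$. Concretely, if $(X,D)$ is a log smooth completion of $U$ and $\bar C$ is the closure of $C$ in $X$, then after blowing up only over points of $D$ to make $D+\bar C$ snc I obtain a log smooth completion $(X',D')$ of $V$ with $K_{X'}+D'$ big --- it is big because $K_{X}+D$ is already big and one only adds the effective curve $\bar C$ to the boundary. Next comes the decisive computation: since $U$ is $\Q$-acyclic we have $\etop(U)=1$, while $\etop(C)=\etop(\C^{1})=1$, and as $C$ is closed in $U$ the additivity of the topological Euler characteristic gives
$$\etop(V)=\etop(U)-\etop(C)=1-1=0.$$
Finally I invoke the logarithmic BMY inequality for the log general type surface $V$: writing $K_{X'}+D'=P+N$ for its Zariski decomposition, one has $P^{2}\leq 3\,\etop(V)$. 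Since $V$ is of log general type we have $P^{2}>0$, whence $0<P^{2}\leq 3\cdot 0=0$, a contradiction. Thus $U$ contains no curve isomorphic to $\C^{1}$.

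I expect the entire substance to sit in the logarithmic BMY inequality $P^{2}\leq 3\,\etop(V)$, which is the deep external input and the real source of difficulty; it is obtained through the peeling/almost-minimal-model machinery, and the only point requiring care is that the orbifold correction terms are non-negative, so that the relevant $\bar c_{2}$ is bounded above by $\etop(V)$. Everything else --- monotonicity of the log Kodaira dimension, the additivity giving $\etop(V)=0$, and the positivity $P^{2}>0$ in the log general type case --- is routine. The mechanism that makes the proof succeed is the numerical coincidence that an affine line and a $\Q$-acyclic surface have the same Euler characteristic $1$, so that excising the line forces $\etop$ down to exactly $0$.
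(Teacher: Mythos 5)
Your proof is correct and follows exactly the route the paper indicates: the paper gives no proof of Lemma \ref{lem:Qhp_has_no_lines}, citing it from Miyanishi--Tsunoda and introducing it precisely as a consequence of the logarithmic version of the Bogomolov--Miyaoka--Yau inequality, which is your derivation. The combination of $\kappa$-monotonicity for the open subset $V=U\setminus C$, the excision computation $\etop(V)=\etop(U)-\etop(\C^{1})=1-1=0$, and the Zariski-decomposition form $P^{2}\leq 3\,\etop(V)$ of log BMY (the same form the paper itself invokes later in \eqref{eq:BMY}), set against $P^{2}>0$ in the log general type case, is the standard argument behind the cited result.
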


By $E_0$ we denote the proper transform of $\bar E$ on $X_0$. For simplicity assume for now that $\bar E$ has only one cusp $q\in \bar E$ and let $Q$ be the reduced preimage of $q$ on $X_0$. We have a unique decomposition $\pi_{0}=\sigma_1\circ\ldots\circ \sigma_k$, where $\sigma_i$ are blowdowns (note the order of indices), or equivalently
\begin{equation}
\label{eq:Q-ordering} \pi_{0}^{-1}=\sigma_k^{-1}\circ\ldots\circ \sigma_1^{-1}. 
\end{equation}
The latter decomposition orders linearly the components of $Q$ as exceptional divisors of the successive blowups (the first component is the one created by the first blowup, or equivalently, contracted last by the resolution morphism). Similarly, the decomposition of $\pi^{-1}$ orders linearly the components of the exceptional divisor of the minimal log resolution over $q$ and this order extends the one on the proper transform of $Q$. If $Z\neq 0$ is a reduced snc-divisor then a blowup of a point on $Z$ is called \emph{inner for $Z$} if it is centered at a singular point of $Z$, otherwise it is called \emph{outer}. The first blowup over $q$ is neither outer nor inner. 

\begin{figure}[ht]
	\begin{subfigure}{\textwidth}\centering
		\includegraphics[scale=0.35]{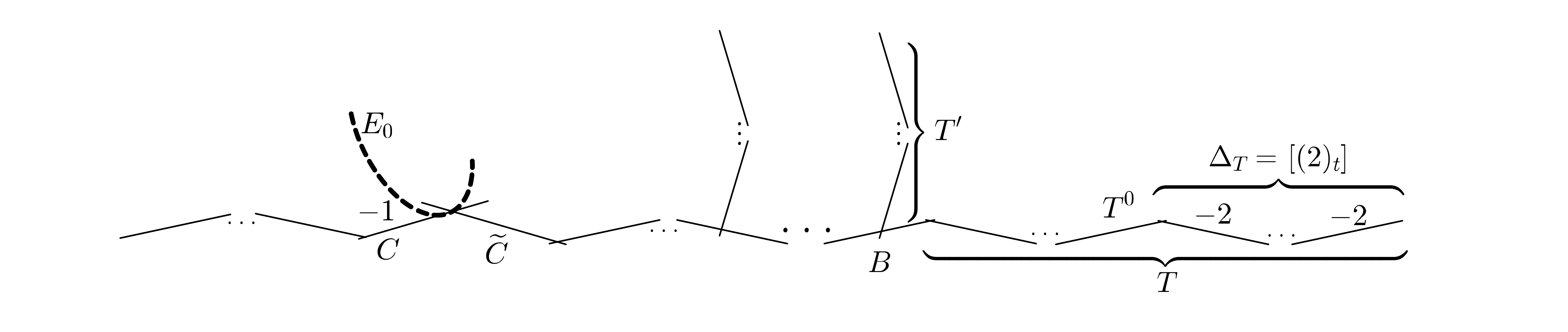}
	\end{subfigure}
	\caption{Exceptional divisor $Q$ of the minimal weak resolution of a cusp $q\in \bar{E}$.}
	\label{fig:treeQ}
\end{figure}

Recall that a \emph{maximal $(-2)$-twig} is a $(-2)$-twig which is not properly contained in any other $(-2)$-twig. In particular, a maximal $(-2)$-twig is not necessarily a maximal twig.

\begin{notation}[The geometry of the minimal weak resolution]\label{not:graphs} We define the following quantities describing the geometry of the cusp $q\in \bar E$ (see Figure \ref{fig:treeQ}):
	\begin{enumerate}
		\item\label{item:not_C} $C$ is the last component of $Q$, that is, the unique $(-1)$-curve in $Q$.	
		\item\label{item:not_tau} $\tau:=C\cdot E_0\geq 2$.
		\item\label{item:not_tildeC} The component of $Q-C$ meeting $E_{0}$ is denoted by $\tilde C$. We put $\tilde C=0$ if there is no such. 
		\item\label{item:not_s} $s=1$ if $\tilde C=0$ and $s=0$ otherwise.
		\item\label{item:not_B} $B$ is the proper transform of the exceptional curve of the last blowup for which the total exceptional divisor over $q$ is still a chain.
		\item\label{item:not_CT} $T$ is the twig of $Q$ meeting (and not containing) $B$ which contains the first component of $Q$. We put $T=0$ if there is no such (then $Q=B=C$).
		\item\label{item:not_Delta_T} If $T$ contains no $(-2)$-twigs of $D$ we put $\Delta_{T}=0$, otherwise we denote by $\Delta_T$ the maximal $(-2)$-twig of $D_{0}$ contained in $T$. We put $t=\#\Delta_T$.
		\item\label{item:not_T^0} $T^{0}$ is the exceptional curve of the $(t+1)$-st blowup.
		\item\label{item:not_T'} $T'$ is the second (not contained in $T$) twig of $Q$ meeting $B$. We put $T'=0$ if there is no such (then $B$ is a tip of $Q$).
	\end{enumerate}
\end{notation}

It is known, and easy to see by induction, see for example \cite[Lemmas 2.11 and 2.12]{PaPe_Cstst-fibrations_singularities}, that the multiplicity sequence of $q\in \bar{E}$ uniquely determines and is determined by the weighted graph of $\pi^{-1}(q)$; or, equivalently, by the weighted graph of $Q$ and the numbers $\tau$, $s$.

The exceptional divisor $Q$ contracts to a smooth point and has a unique $(-1)$-curve $C$. If $Q\neq C$ then the contraction of $C$ leads to a divisor with the same properties. By induction on $\#Q$ it follows that $\beta_{Q}(C)\leq 2$ and $\beta_{Q}(G)\leq 3$ for every component $G$ of $Q$, and if $\beta_{Q}(G)=3$ then $G$ meets a twig of $Q$. Similarly, we see that if we remove from $Q$ the maximal twigs of $D_0$ contained in $Q$ then what remains is a chain (see Figure \ref{fig:treeQ}). Components of this chain are of particular interest, because they are not contracted by $\psi$. Indeed, $\Exc \psi \wedge D_{0}$ is the sum of proper transforms of $\Exc\psi_{i}\wedge D_{i}$, the latter being contained in the sum of twigs of $D_{i}$ by Lemma \ref{lem:MMP-properties}\ref{item:Exc-psi_i}. Here is a list of some elementary properties of $Q$.

\begin{lem}[The geometry of the minimal weak resolution]\label{lem:notation}
	With the above notation the following hold:
	\begin{enumerate}
		\item\label{item:first_exc} If $\#Q>1$ then $T\neq 0$ and $\ftip{T}$ is the first component of $Q$.
		\item\label{item:B} If $Q$ is a chain then $B=C$, otherwise $B$ is the first branching component of $Q$. Every component of $Q-B$ meets at most one twig of $D_{0}$.
		\item\label{item:T^0=C} We have $T^{0}=C$ if and only if  $Q=[(2)_{t},1]$ and $\tilde C=0$, equivalently, if and only if $q\in \bar{E}$ has multiplicity sequence $(\tau)_{t+1}$. If $T^{0}\neq C$ then $T^{0}\subseteq T$, so $\ftip{T}=\ftip{\Delta_{T}+T^{0}}$.
		\item\label{item:blowups} The first blowup over $q$ is (by definition) neither inner nor outer, the next $t$ blowups are outer. If $T^{0}\neq C$ then the $(t+2)$-nd blowup is also outer, and the $(t+3)$-rd one, if occurs, is the first inner one.
	\end{enumerate}
\end{lem}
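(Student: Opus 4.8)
The plan is to prove the four statements together by following the weak resolution $\pi_0$ blowup by blowup, reading off the shape of $Q$ from the standard description of the resolution of a unibranch germ. The dictionary I will use is: a blowup centred at a point lying on a single component of the current (snc) exceptional divisor is outer and attaches the new curve as a tip, while a blowup centred at a node is inner and subdivides that node; in either case the self-intersection of each component through the centre drops by one, and the first blowup, centred at $q$, is neither. Since $\bar E$ is unibranch, after the first blowup its proper transform meets the exceptional locus in a single point, and the key elementary fact is that the centres lying on (successive proper transforms of) a fixed exceptional curve form a consecutive run: once the germ leaves a component it never meets it again, because a newly created curve meets an old one exactly when the preceding centre lay on that old one.

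First I would establish (a). For the first component $F$ of $Q$ this run of centres either attaches the unique neighbour of $F$ (at the first, free, step) or, at each subsequent satellite step, replaces that neighbour by a new one; hence in the final divisor $F$ has exactly one neighbour as soon as $\#Q>1$, so $F$ is a tip. Following the maximal twig issuing from $F$ until it meets a branching component then produces the twig $T$ with $\ftip{T}=F$, and $T\neq 0$. For (c) and (d) I would continue along the initial run of blowups: as long as the germ meets each freshly created curve at a free point, every such outer blowup turns the previous curve into a $(-2)$-curve, producing the initial $(-2)$-chain $\Delta_T$ and, at the $(t+1)$-st blowup, the curve $T^0$. If the resolution stops here, $Q$ is the chain $[(2)_t,1]$ with $\tilde C=0$, and via the multiplicity-sequence/graph dictionary recalled above this is precisely the constant sequence $(\tau)_{t+1}$; this is the case $T^0=C$ and gives the equivalences in (c). Otherwise $T^0\neq C$. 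Since by definition the last curve of $\Delta_T$ is a $(-2)$-curve it is not touched again, which forces the $(t+2)$-nd centre to be a free point of $T^0$, so that blowup is outer and makes $T^0$ a $(-2)$-curve; as $T^0\not\subseteq\Delta_T$ it cannot stay a $(-2)$-curve, so it must be blown up once more, which is possible only at the node it now carries — the $(t+3)$-rd blowup, inner, lowering $T^0$ to self-intersection $\le -3$. This proves (d), shows $T^0\subseteq T$, and gives $\ftip{T}=\ftip{\Delta_T+T^0}$.

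It remains to prove (b), which I expect to be the main obstacle since it is the one genuinely global statement. Inner blowups and outer blowups at tips keep the exceptional divisor a chain, whereas the first outer blowup centred at an interior point destroys this; because the germ always sits on the most recently created curve, that curve is $B$, which is therefore the first branching component, and $B=C$ exactly when no such interior outer blowup ever occurs, i.e. when $Q$ is a chain. For the final assertion I would combine this local picture with the two facts recalled before the lemma: that $Q$ with its maximal $(-2)$-twigs removed is a chain, and that $\beta_Q\le 3$ with equality only at components meeting a twig. An interior component of that central chain meeting two twigs would have $\beta_Q\ge 4$, which is excluded, so the only component that can carry both twigs $T$ and $T'$ is the branch point $B$; hence every component of $Q-B$ meets at most one twig. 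The delicate part throughout is to keep the free/satellite bookkeeping consistent with these global constraints, and in particular to rule out a second branching component supporting two twigs.
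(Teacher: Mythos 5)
Most of your proposal is sound and, for parts (c)--(d), close in spirit to the paper's own proof (which likewise analyses the first $t+1$ blowups and exploits maximality of $\Delta_T$); your dynamic arguments for (a) and for the first half of (b) are also correct. The genuine gap is exactly where you yourself flag ``the delicate part'': the last assertion of (b), that every component of $Q-B$ meets at most one twig of $D_0$. Your exclusion via $\beta_Q\geq 4$ only bites when the component has \emph{two} neighbours inside the central chain; it says nothing about a component sitting at an end of that chain, where meeting two twigs gives only $\beta_Q=3$, perfectly compatible with the structural facts you invoke (indeed $\beta_Q=3$ forces the component to meet a twig, which it does). Two uncovered cases remain. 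A \emph{non-branching} such component $G$ is handled cheaply: if $G$ meets two twigs $W_1,W_2$ of $D_0$, then every component of $W_1+W_2$ already has its full complement of neighbours inside $W_1+G+W_2$, and $C\not\subseteq W_1+W_2$ because $\beta_{D_0}(C)\geq \tau+1\geq 3$ when $\#Q>1$; so by connectedness $Q=W_1+G+W_2$ is a chain, whence $G=C=B$ and $G\notin Q-B$. The serious case is a \emph{branching} $G\neq B$ meeting two twigs, and to exclude it you need the creation order, not just the graph-theoretic facts: since $B$ is the first branching component, $G$ is created later, by an inner blowup at a node of two earlier curves, and an outer blowup on $G$ follows. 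Then the two ``old'' branches of $Q$ at $G$ contain all curves created before $G$ -- in particular $B$ -- while the ``new'' branch contains all later curves, in particular $C$. The new branch is not a twig of $D_0$ (it contains $C$, which is branching in $D_0$), so both twigs at $G$ would have to be the old branches; but one of those contains $B$, which satisfies $\beta_{D_0}(B)\geq\beta_Q(B)=3$ and hence cannot lie in a twig -- a contradiction. This argument (or the paper's induction obtained by repeatedly contracting the unique $(-1)$-curve $C$, which yields the full spine-plus-twigs structure at once) is the missing step.

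A secondary slip occurs in (d): you claim that since $T^0\not\subseteq\Delta_T$ it ``cannot stay a $(-2)$-curve, so it must be blown up once more.'' This overlooks the case $T^0=\tilde C$, in which $T^0$ does remain a $(-2)$-curve yet lies in no $(-2)$-twig of $D_0$ because it meets $E_0$; the paper's proof says precisely ``it equals $\tilde C$ or is touched at least once more.'' The conclusion of (d) survives: if a $(t+3)$-rd blowup occurs, its centre must be the node of $T^0$ and the $(t+2)$-nd exceptional curve, either by your maximality argument (when $T^0\neq\tilde C$) or because $E_0\cdot T^0>0$ forces the germ to sit at that node (when $T^0=\tilde C$). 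But as written, your intermediate statement is false, so this step needs the same case distinction the paper makes.
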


\begin{proof}
	Parts \ref{item:first_exc}, \ref{item:B} follow from the inductive structure of $Q$ described above. For the proof of \ref{item:T^0=C}, \ref{item:blowups} note that after the first $t+1$ blowups over $q\in\bar{E}$ the exceptional divisor, which is an image of $\Delta_{T}+T^{0}$, is a chain $[(2)_{t},1]$, so all these blowups (except for the first one) are outer. Because $\Delta_{T}$ is zero or a $(-2)$-twig, it is not touched by the remaining part of the resolution, so the proper transform of $\bar{E}$ meets this chain only in the last component. If there are no more blowups then $T^{0}=C$, so $E_{0}\cdot T^{0}=\tau$, and since each blowup is outer, each contracted curve meets the image of $E_{0}$ with multiplicity $\tau$, so $q\in\bar{E}$ has multiplicity sequence $(\tau)_{t+1}$. On the other hand, if $T^{0}\neq C$ then the $(t+2)$-nd blowup is outer, so $T^{0}\subseteq T$ meets $\Delta_{T}$.  This shows \ref{item:T^0=C}. Because $T^{0}$ is not a part of a $(-2)$-twig of $D_{0}$, it equals $\tilde{C}$ or is touched at least once more, so the proper transform of $\bar{E}$ meets the exceptional divisor in a common point of the images of $T^{0}$ and the next component of $Q$. It follows that the $(t+3)$-rd blowup, if occurs, is inner. This shows \ref{item:blowups}.
\end{proof}

\smallskip 
From now on we denote the cusps of $\bar{E}$ by $q_{1},\dots, q_{c}$ and we write $Q_{j}, C_{j}, T_{j}, t_{j},\dots$ for the quantities $Q,C,T,t,\dots$ as above defined for the cusp $q_{j}\in \bar{E}$, $j\in \{1,\dots,c\}$.

\begin{ex}[Semi-ordinary cusps]\label{ex:ordinary_cusp}A cusp of multiplicity $2$ is called \emph{semi-ordinary}. It is locally analytically isomorphic to the singular point of $\{x^{2}=y^{2m+3}\}$ at $(0,0)\in \Spec \C[x,y]$ for some $m\geq 0$. Its multiplicity sequence equals $(2)_{m+1}$. The exceptional divisor of its minimal log resolution is a chain $[(2)_{m},3,1,2]$. Hence, $Q=[(2)_{m},1]$, $T=\Delta_{T}$, $t=m$, $T^{0}=B=C$, $T'=0$, $\tau=2$ and $s=1$. A semi-ordinary cusp with $m=0$ (type $A_2$) is called \emph{ordinary}. 
\end{ex}

\begin{ex}
	\begin{figure}[ht]
		\includegraphics[scale=0.23, trim={0 1.5cm 0 1.5cm}, clip]{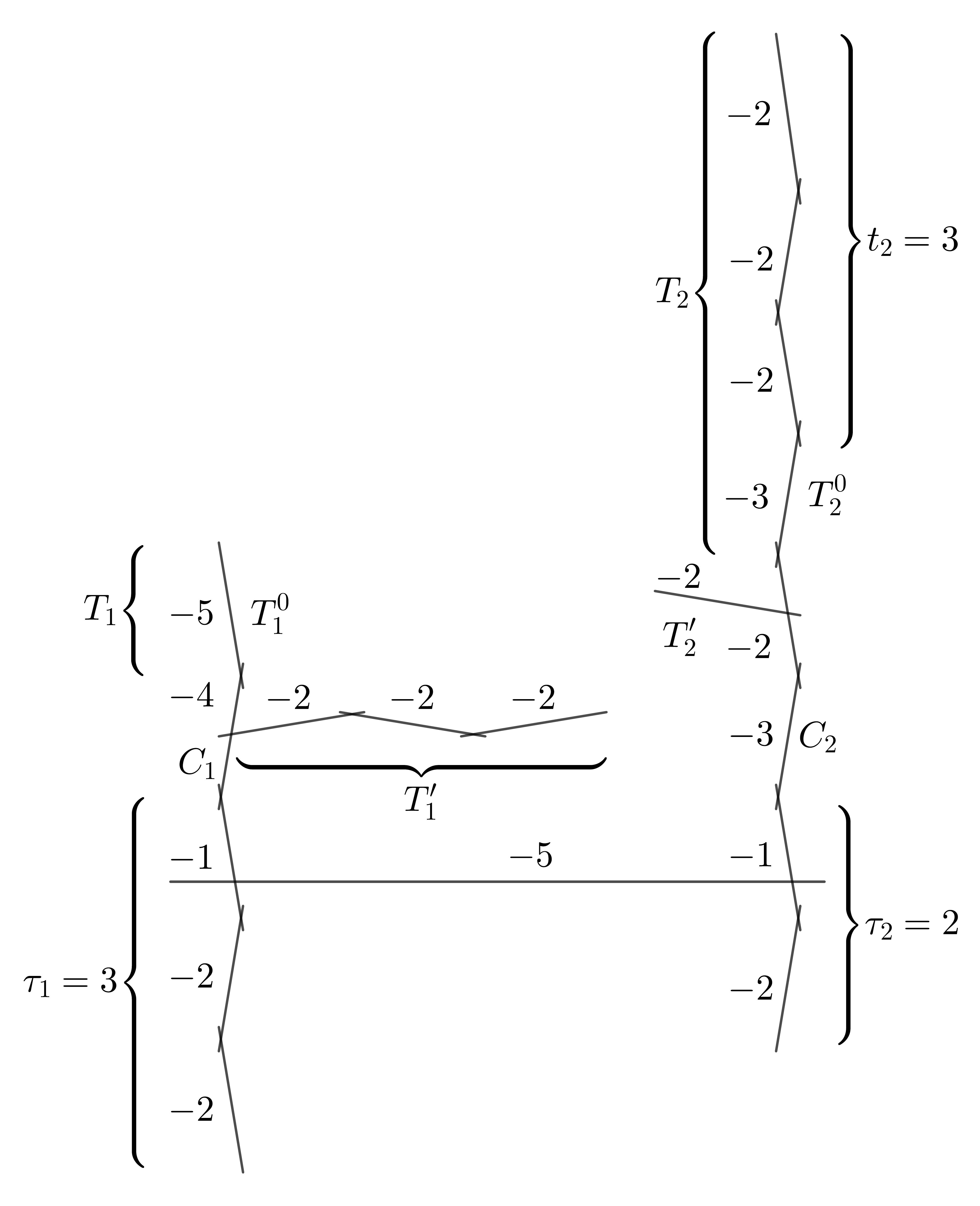}
		\caption{The graph of $D$ for a curve of type $\cH(5)$}
		\label{fig:exH}
	\end{figure}
	
	Let $\bar{E}$ be of type $\cH(5)$ (see Definition \ref{def:our_curves}). Its cusps have multiplicity sequences $(12,(3)_{4})$ and $((4)_{4},(2)_{3})$, respectively. The divisor $D$ is shown in Figure \ref{fig:exH}.  We have $t_{1}=0$ ($T_{1}=[5]$ has no $(-2)$-twig), $\tau_{1}=3$ and $s_{1}=1$. For the second cusp we have $t_{2}=3$, $\tau_{2}=2$ and $s_{2}=1$. The standard HN-pairs of the cusps of $\bar{E}$ (see \cite[Section 2D]{PaPe_Cstst-fibrations_singularities}) are $\binom{15}{12}\binom{3}{1}$ and $\binom{18}{4}\binom{2}{3}$, respectively.
\end{ex}

We recall the relation between integers $\deg\bar{E}$ and $E^{2}$ in terms of multiplicity sequences of the cusps of $\bar{E}$. For a cusp $q\in \bar{E}$ we define $M(q)$ as the sum of all terms of the multiplicity sequence of $q$ (including $1$'s in the end) and $I(q)$ as the sum of their squares. 

\begin{lem}[Equations for multiplicity sequences]\label{lem:HN-equations}
	Let $\bar{E}\subseteq \P^{2}$ be a rational cuspidal curve with cusps $q_{1},\dots, q_{c}\in \bar{E}$. Then
	\begin{align}
	3\deg\bar{E}-E^{2}-2&=\sum_{j=1}^{c} M(q_{j}),\tag{a}\label{eq:3deg}\\
	(\deg\bar{E})^{2}-E^{2}&=\sum_{j=1}^{c} I(q_{j}),\tag{b}\label{eq:deg^2}\\
	(\deg\bar{E}-1)(\deg\bar{E}-2)&=\displaystyle\sum_{i=1}^{c}(I(q_{j})-M(q_{j}))\tag{c}\label{eq:genus-degree}
	\end{align}
	Given multiplicity sequences, these formulas 
	determine $\deg\bar{E}$ and $E^{2}$ uniquely provided $\deg\bar{E}\geq 3$, that is, when $\bar{E}$ is singular.
\end{lem}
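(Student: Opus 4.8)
The plan is to derive all three identities by tracking how $E^{2}$ and $K\cdot E$ change under the blowups constituting the minimal log resolution $\pi$, and then to read off \eqref{eq:genus-degree} and the uniqueness statement as formal consequences. First I would record the two elementary blowup formulas: if $\sigma\colon \tilde X\to X$ is the blowup of a point at which a curve $C$ has multiplicity $m$, and $C'=(\sigma^{-1})_{*}C$ is its proper transform, then $\sigma^{*}C=C'+m\Exc\sigma$ gives
\begin{equation*}
(C')^{2}=C^{2}-m^{2},\qquad K_{\tilde X}\cdot C'=K_{X}\cdot C+m,
\end{equation*}
using $(\Exc\sigma)^{2}=-1$, $K_{\tilde X}=\sigma^{*}K_{X}+\Exc\sigma$, and the fact that $\Exc\sigma$ is numerically orthogonal to every pullback $\sigma^{*}(-)$.

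Now I would write $\pi$ as a sequence of blowups. Each blowup centered on the current proper transform of $\bar E$ contributes a term of exactly one multiplicity sequence $q_{j}$ (the trailing $1$'s corresponding precisely to the blowups that separate the smooth branch from the exceptional locus to achieve the snc condition), while blowups with center off the curve contribute multiplicity $0$ and are invisible to both quantities. Starting from $\bar E^{2}=(\deg\bar E)^{2}$ and $K_{\P^{2}}\cdot\bar E=-3\deg\bar E$ and summing the two displayed formulas over all blowups yields
\begin{equation*}
E^{2}=(\deg\bar E)^{2}-\sum_{j=1}^{c} I(q_{j}),\qquad K_{X}\cdot E=-3\deg\bar E+\sum_{j=1}^{c} M(q_{j}),
\end{equation*}
the first of which is already \eqref{eq:deg^2}. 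Since $\bar E$ is rational and $E=(\pi^{-1})_{*}\bar E$ is smooth, $E\cong\P^{1}$, so adjunction gives $E^{2}+K_{X}\cdot E=2g(E)-2=-2$; substituting the second formula above and rearranging yields \eqref{eq:3deg}. Subtracting \eqref{eq:3deg} from \eqref{eq:deg^2} eliminates $E^{2}$ and produces $(\deg\bar E)^{2}-3\deg\bar E+2=\sum_{j}(I(q_{j})-M(q_{j}))$, i.e.\ \eqref{eq:genus-degree}; note this is just the genus--degree formula, since $I(q_{j})-M(q_{j})=\sum_{i}m_{j,i}(m_{j,i}-1)$ is twice the $\delta$-invariant of the cusp $q_{j}$.

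Finally I would settle uniqueness from \eqref{eq:genus-degree}. Write $N=\sum_{j}(I(q_{j})-M(q_{j}))$, which is determined by the multiplicity sequences. As $\bar E$ is singular, it has a cusp whose first multiplicity is $\geq 2$, so $N\geq 2$. Viewing \eqref{eq:genus-degree} as $d^{2}-3d+(2-N)=0$ with $d=\deg\bar E$, the roots are $\tfrac12(3\pm\sqrt{1+4N})$; the inequality $N\geq 2$ is equivalent to $\sqrt{1+4N}\geq 3$, so exactly one root is $\geq 3$ while the other is $\leq 0$. Hence $\deg\bar E$ is forced to be the unique root $\geq 3$, after which \eqref{eq:3deg} (equivalently \eqref{eq:deg^2}) determines $E^{2}$. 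The only step requiring genuine care is the middle one: confirming that the terms of the multiplicity sequences, including the trailing $1$'s dictated by passing to the log resolution, match up exactly with the blowups touching the successive proper transforms of $\bar E$, so that the per-blowup increments assemble into $\sum_{j}I(q_{j})$ and $\sum_{j}M(q_{j})$; once this bookkeeping is granted, everything else is formal.
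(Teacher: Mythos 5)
Your proof is correct and follows essentially the same route as the paper: the same two blowup formulas summed inductively over the resolution, adjunction for the smooth rational curve $E$ to pass between \eqref{eq:3deg} and \eqref{eq:deg^2}, and \eqref{eq:genus-degree} obtained by elimination. The only cosmetic difference is in the uniqueness step, where you solve the quadratic $d^{2}-3d+(2-N)=0$ explicitly, while the paper factors the difference of two solutions as $(d-d')(d+d'-3)=0$; both arguments are equivalent.
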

\begin{proof} Let $B$ be a curve on a smooth projective surface $Y$ and let $Y'\to Y$ be a blowup of a point of multiplicity $\mu$ on $B$. Denote by $B'$ the proper transform of $B$ on $Y'$. Then 
\begin{equation*}
K_{Y'}\cdot B'-K_{Y}\cdot B=\mu\quad \mbox{and} \quad B^{2}-(B')^{2}=\mu^{2}.
\end{equation*}
We have $K_{X}\cdot E-K_{\P^{2}}\cdot \bar{E}=-(E^{2}+2)+3\deg\bar{E}$ and $\bar{E}^{2}-E^{2}=(\deg \bar{E})^{2}-E^{2}$, so by induction one gets respectively \eqref{eq:3deg} and \eqref{eq:deg^2}. Part \eqref{eq:genus-degree} is their direct consequence.
	
If two rational cuspidal curves of degrees $d,d'\geq 3$ have the same multiplicity sequences of their cusps then \eqref{eq:genus-degree} gives $0=(d-1)(d-2)-(d'-1)(d'-2)=(d-d')(d+d'-3)$. Since $d+d'\neq 3$, it follows that $d=d'$. Thus multiplicity sequences of the cusps of $\bar{E}$ determine uniquely $\deg \bar{E}$, and hence $E^{2}$ by \eqref{eq:3deg}.
\end{proof}

\begin{lem}[Upper bounds on $E^{2}$, cf.\ \cite{Tono-cusps_self-intersections}, {\cite[Lemma 2.16]{PaPe_Cstst-fibrations_singularities}}]\label{lem:Tono_E2}\ \\
	Assume that $\P^{2}\setminus \bar{E}$ is of log general type and has no $\C^{**}$-fibration. Then:
	\begin{enumerate}
		\item\label{item:c=1} if $c=1$ then $E^{2}\leq -3$,
		\item\label{item:c=2} if $c=2$ then $E^{2}\leq -2$,
		\item\label{item:c=2,tau=1} if $c=2$ and $(\tau_{j},s_{j})=(2,1)$ for some $j\in \{1,2\}$ then $E^{2}\leq -3$.
	\end{enumerate}
\end{lem}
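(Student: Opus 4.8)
## Proof Strategy for Lemma \ref{lem:Tono_E2}

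The plan is to bound $E^2$ from above by producing, for each value of the number of cusps $c$, a suitable effective divisor or fibration whose existence would contradict the hypotheses (log general type, no $\C^{**}$-fibration) unless $E^2$ is sufficiently negative. The governing principle is that $E^2$ measures how much the resolution $\pi_0$ \emph{touches} $E_0$; a large $E^2$ forces the cusps to be \enquote{mild} and the complement to be \enquote{small} in a way that is incompatible with log general type. I would work on the minimal weak resolution $(X_0,D_0)$, using Notation \ref{not:graphs} and Lemma \ref{lem:notation} throughout, and I expect the core of the argument to reduce to an analysis of $\P^1$-fibrations via Lemmas \ref{lem:singular_P1-fibers} and \ref{lem:fibrations-Sigma-chi}.

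First I would treat the contrapositive: suppose $E^2$ is larger than the asserted bound and derive a $\C^1$- or $\C^{**}$-fibration, contradicting Lemma \ref{lem:Qhp_has_no_lines} or the standing hypothesis. The natural tool is the existence of a $\P^1$-fibration $p\colon X_0\to \P^1$ for which $E_0$ is a section or $2$-section of small degree. Concretely, for $c=1$, one chooses a fibration coming from a pencil of lines (or the pencil associated to the $(-1)$-curve $C$ and its image), so that $E_0$ becomes an $n$-section with $n$ controlled by $E_0\cdot C=\tau$ and $E^2$. Applying Lemma \ref{lem:fibrations-Sigma-chi} with $D=D_0$ then converts the numerics of $E^2$ into a count of the number $\sum_{b\in B^*}(\sigma(F_b)-1)$ of \enquote{extra} components in degenerate fibers; if $E^2$ is too large, this count is forced down to the point where a general fiber, or a fiber minus $D_0$, is isomorphic to $\C^1$ or $\C^{**}$, yielding the contradiction. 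For part \ref{item:c=1} the threshold $-3$ should emerge from the fact that a section through a single cusp with $E^2\geq -2$ leaves too little room in the degenerate fibers to avoid a $\C^1$ in the complement.

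For \ref{item:c=2} and \ref{item:c=2,tau=1} the same fibration-theoretic mechanism applies, but now with two cusps the complement $\P^2\setminus\bar E$ is \enquote{larger} (its components of $D_0-E_0$ split between the two cusps), so the existence of an affine line is harder to force and the bound relaxes to $-2$. The finer statement \ref{item:c=2,tau=1} exploits the extra structure when one cusp is semi-ordinary in the strong sense $(\tau_j,s_j)=(2,1)$: by Example \ref{ex:ordinary_cusp} such a cusp contributes a chain $[(2)_m,1]$ with $C_j=B_j=T^0_j$ meeting $E_0$ with multiplicity exactly $\tau_j=2$, which pins down one vertical or sectional piece of the fibration precisely and improves the count in Lemma \ref{lem:fibrations-Sigma-chi} by one, tightening $-2$ to $-3$. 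I would verify that the relevant $(-2)$-twigs (barks, via \eqref{eq:bark_of_a_2-twig}) are disjoint from the section so that the adjoint-chain description in Lemma \ref{lem:singular_P1-fibers}\ref{item:adjoint_chain} applies cleanly.

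The main obstacle I anticipate is \emph{producing the right fibration} and ensuring that the degenerate-fiber bookkeeping is sharp rather than merely an inequality: Lemma \ref{lem:fibrations-Sigma-chi} gives an identity, but translating \enquote{$E^2$ large} into \enquote{some fiber minus $D_0$ is $\C^1$ or $\C^{**}$} requires controlling $\#D_0\hor$, the section multiplicity of $E_0$, and the number $\nu$ of fibers inside $D_0$ simultaneously. In particular, one must rule out the possibility that all the \enquote{slack} is absorbed by multiple sections or by fibers entirely contained in $D_0$; this is exactly where the hypothesis \enquote{no $\C^{**}$-fibration} is used to exclude the borderline case, and where the cited results \cite{Tono-cusps_self-intersections} and \cite[Lemma 2.16]{PaPe_Cstst-fibrations_singularities} presumably supply the decisive combinatorial input. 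I would lean on those references for the delicate casework rather than reprove the fibration existence from scratch.
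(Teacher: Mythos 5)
There is a genuine gap, and it lies exactly where you flag it yourself: your mechanism never converts the hypothesis on $E^{2}$ into a fibration statement. In your setup $E_{0}$ stays \emph{horizontal} (a multisection of a pencil of lines through a cusp), and the counting identity of Lemma \ref{lem:fibrations-Sigma-chi} simply does not see $E^{2}$: for a line pencil, $E_{0}^{2}$ enters neither $\#D\hor$, nor $\nu$, nor $\sum(\sigma(F_{b})-1)$, so no amount of bookkeeping with that identity alone can distinguish $E^{2}=-2$ from $E^{2}=-3$. The paper's proof does the opposite: it makes $E$ \emph{vertical}. Assuming the bound fails, one blows up over $C_{1}'\cap E$ (on the minimal log resolution) until the proper transform $\hat{E}$ has $\hat{E}^{2}=-2$ when $c=1$, resp.\ $\hat{E}^{2}=-1$ when $c=2$; writing $\hat{C}$ for the last exceptional curve and $U$ for a $(-2)$-curve meeting it inside the transform of $Q_{1}$, the explicit divisors $\hat{E}+2\hat{C}+U$ (for $c=1$) and $\hat{C}+\hat{E}$ (for $c=2$) have self-intersection zero and arithmetic genus zero, hence their linear systems induce $\P^{1}$-fibrations. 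The assumption on $E^{2}$ is precisely what guarantees these divisors exist after finitely many blowups, i.e.\ it is the bridge between $E^{2}$ and the fibration. The restriction of such a fibration to $\P^{2}\setminus\bar{E}$ is a $\C^{1}$-, $\C^{*}$- or $\C^{**}$-fibration; the last is excluded by hypothesis, and the first two give $\kappa(\P^{2}\setminus\bar{E})\leq 1$ by Iitaka's Easy Addition, contradicting log general type. Part \ref{item:c=2,tau=1} is even more direct: $(\tau_{j},s_{j})=(2,1)$ means $C_{j}'$ meets a $(-2)$-twig $U=[2]$ of $D$, so if $E^{2}=-2$ then $|E+2C_{j}'+U|$ already induces a $\P^{1}$-fibration restricting to a $\C^{**}$-fibration of the complement, contradicting the hypothesis outright (your reading of this case as an improvement in the counting formula for a semi-ordinary cusp is not the mechanism, and $(\tau_{j},s_{j})=(2,1)$ does not by itself make the cusp semi-ordinary).

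The second, related problem is methodological: you conclude by deferring the \enquote{delicate casework} to \cite{Tono-cusps_self-intersections} and \cite[Lemma 2.16]{PaPe_Cstst-fibrations_singularities}, but those references are cited in the statement with \enquote{cf.} because the lemma essentially \emph{is} their content adapted to the present setting; leaning on them for the decisive step is not a proof of the lemma. The obstacle you name --- ruling out that the slack is absorbed by multiple sections or by fibers inside $D_{0}$ --- is the entire argument, and it dissolves only once $E$ is placed inside a fiber as above, at which point no casework with Lemma \ref{lem:fibrations-Sigma-chi} is needed at all.
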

\begin{proof}
	\ref{item:c=1},\ref{item:c=2} Suppose the contrary.  Let $C_{j}'$ be the last exceptional curve over $q_{j}\in \bar{E}$ on the minimal log resolution. Blow up over $C_{1}'\cap E$ until the proper transform $\hat{E}$ of $E$ has self-intersection $-2$ if $c=1$ and $-1$ if $c=2$. Call $\hat{C}$ the exceptional curve of the last blowup, or put $\hat{C}=C_{1}'$ if no blowups were needed. Then $\hat{C}$ meets some $(-2)$-curve $U$ which is a non-branching component of the total transform of $Q_{1}$. If $c=1$ then $|\hat{E}+2\hat{C}+U|$ induces a $\P^{1}$-fibration which restricts to a $\C^{1}$-, $\C^{*}$- or a  $\C^{**}$-fibration of $\P^{2}\setminus \bar{E}$. Similarly, if $c=2$ then so does $|\hat{C}+\hat{E}|$. Since $\C^{**}$ is excluded by assumption, we get $\kappa(\P^{2}\setminus \bar{E})\leq 1$ by Iitaka's Easy Addition Theorem; a contradiction.
	
	\ref{item:c=2,tau=1} We have $E^{2}\leq -2$ by \ref{item:c=2}. Suppose that $E^{2}=-2$. The assumption $\tau_{j}=2$, $s_{j}=1$ means that $C_{j}'$ meets a twig $U=[2]$ of $D$. Then $|E+2C_{j}'+U|$ induces a $\P^{1}$-fibration of $X$ which restricts to a $\C^{**}$-fibration of $\P^{2}\setminus \bar{E}$; a contradiction.
\end{proof}

\smallskip

\subsection{Almost minimal models with half-integral boundaries.}\label{sec:MMP}

Given a log surface $(X,B)$ we say that an irreducible curve $\ll$ is \emph{log exceptional} if $\ll^2<0$ and $(K_X+B)\cdot \ll<0$. The contraction of such curves leads to a model with strong properties given by Mori theory. Below we recall a definition of an \emph{almost log exceptional curve} and the construction of an almost minimal model of the pair $(X_{0},\frac{1}{2}D_{0})$  given in \cite[Definition 3.6]{Palka-minimal_models}, which allows to avoid introducing singularities. The notation introduced here is used also in Sections \ref{sec:consequences_of_noCstst} and \ref{sec:possible_HN-types}. 

\begin{dfn}[The morphism $\psi_{A}$] \label{def:psi_A}
Let $D$ be a reduced connected divisor on a smooth projective surface $X$ such that $\kappa(X\setminus D)=2$. Assume that $A\subseteq X$ is a $(-1)$-curve such that 
	\begin{equation}\label{eq:bubble}
	A\not\subseteq D,\quad A\cdot D=2 \quad\mbox{and $A$ meets $D$ in two different components,}
	\end{equation}
	so $A$ is a superfluous $(-1)$-curve in $A+D$. For such $A$ we denote by $\psi_{A}$ the composition of contractions of $A$ and all superfluous $(-1)$-curves in the subsequent images of $D$ which pass through the image of $A$.
\end{dfn} 	
	The morphism $\psi_{A}$ is well defined, that is, uniquely determined by $A$. Indeed, if after some number of contractions the image of $A$ is a common point of two superfluous curves in the image of $D$ then it is the only common point of these $(-1)$-curves, so the linear system of their sum induces a $\C^{*}$-fibration of an open subset of $X\setminus D$, contrary to the fact that $\kappa(X\setminus D)=2$. Therefore, in each step the contracted $(-1)$-curve is unique.
	
	Note also that since $D$ is connected, the center of each blowup in the decomposition of $\psi_{A}$ is a common point of two components of the image of $D$.
	\smallskip

We now return to the study of $(X_{0},D_{0})$, that is, of the minimal weak resolution of $(\P^{2},\bar{E})$. Following \cite[Section 3]{Palka-minimal_models} we define inductively a sequence of contractions between smooth projective surfaces
\begin{equation}\label{eq:MMP}
(X_{0},\tfrac{1}{2}D_{0})\toin{\psi_{1}}(X_{1},\tfrac{1}{2}D_{1})\toin{\psi_{2}}\dots \toin{\psi_{n}}(X_{n},\tfrac{1}{2}D_{n}).
\end{equation}
First, we define inductively the following divisors on $X_{i}$. Recall that a \emph{maximal $(-2)$-twig} is a twig consisting of $(-2)$-curves which is not properly contained in any other twig consisting of $(-2)$-curves. 
\begin{notation}[{\cite[Notation 3.3]{Palka-minimal_models}}]\label{not:MMP}
	Let $(X_{i},D_{i})$ be as in \eqref{eq:MMP}. Assume that $X_{i}$ is smooth (cf.\ Lemma \ref{lem:MMP-properties}\ref{item:Xi_smooth}). 
	\begin{enumerate}
		\item $\Delta_{i}$ is the sum of all maximal $(-2)$-twigs of $D_{i}$.
		\item\label{item:not_Ups} $\Upsilon_{i}$ is the sum of all $(-1)$-curves $U$ in $D_{i}$ such that $\beta_{D_{i}}(U)=3$ and $U\cdot \Delta_{i}=1$ or $\beta_{D_{i}}(U)=2$ and $U$ meets exactly one component of $D_{i}$ (see Figure \ref{fig:Upsilon}).
		\item\label{item:Delta^-} $\Delta_{i}^{+}$ is the sum of all $(-2)$-twigs of $D_{i}$ meeting $\Upsilon_{i}$; and
		$\Delta_{i}^{-}:=\Delta_{i}-\Delta_{i}^{+}$.
		\item\label{item:Upsilon^0} $\Upsilon_{i}^{0}$ is the sum of those components of $\Upsilon_{i}$ which do not meet $\Delta_{i}^{+}$.
		\item $R_{i}=D_{i}-\Delta_{i}-\Upsilon_{i}$.
		\item $E_{i}$ is the proper transform of $\bar{E}$ on $X_{i}$.
		\item $D_{i}^{\flat}=D_{i}-\Upsilon_{i}-\Delta_{i}^{+}-\Bk_{D_{i}}(\Delta_{i}^{-})$, see \eqref{eq:bark_of_a_2-twig}.
	\end{enumerate}	
\end{notation}
\begin{figure}[ht]
	\begin{subfigure}{0.3\textwidth}\centering
		\includegraphics[scale=0.25]{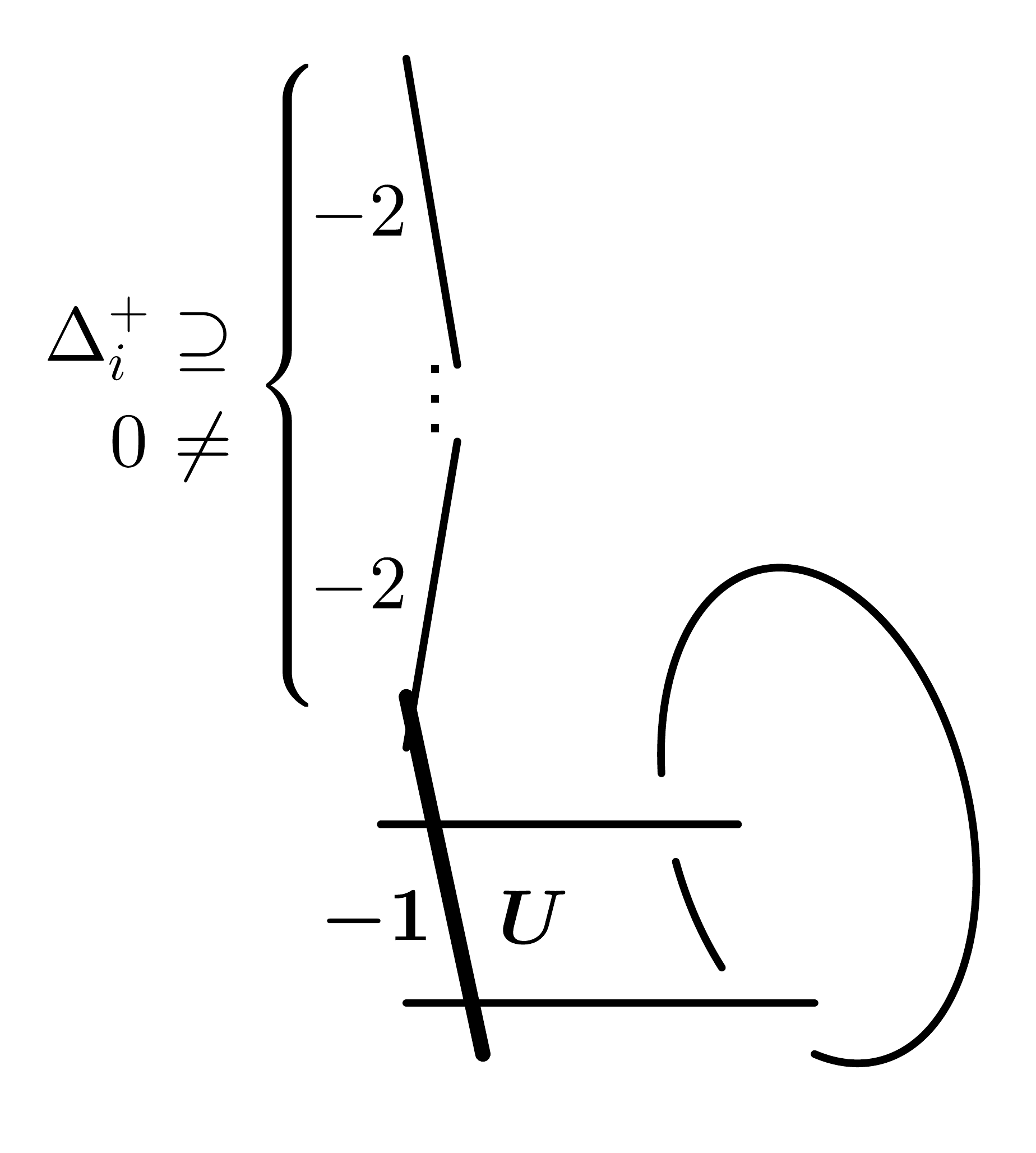}
	\end{subfigure}
	\begin{subfigure}{0.3\textwidth}\centering
		\includegraphics[scale=0.25]{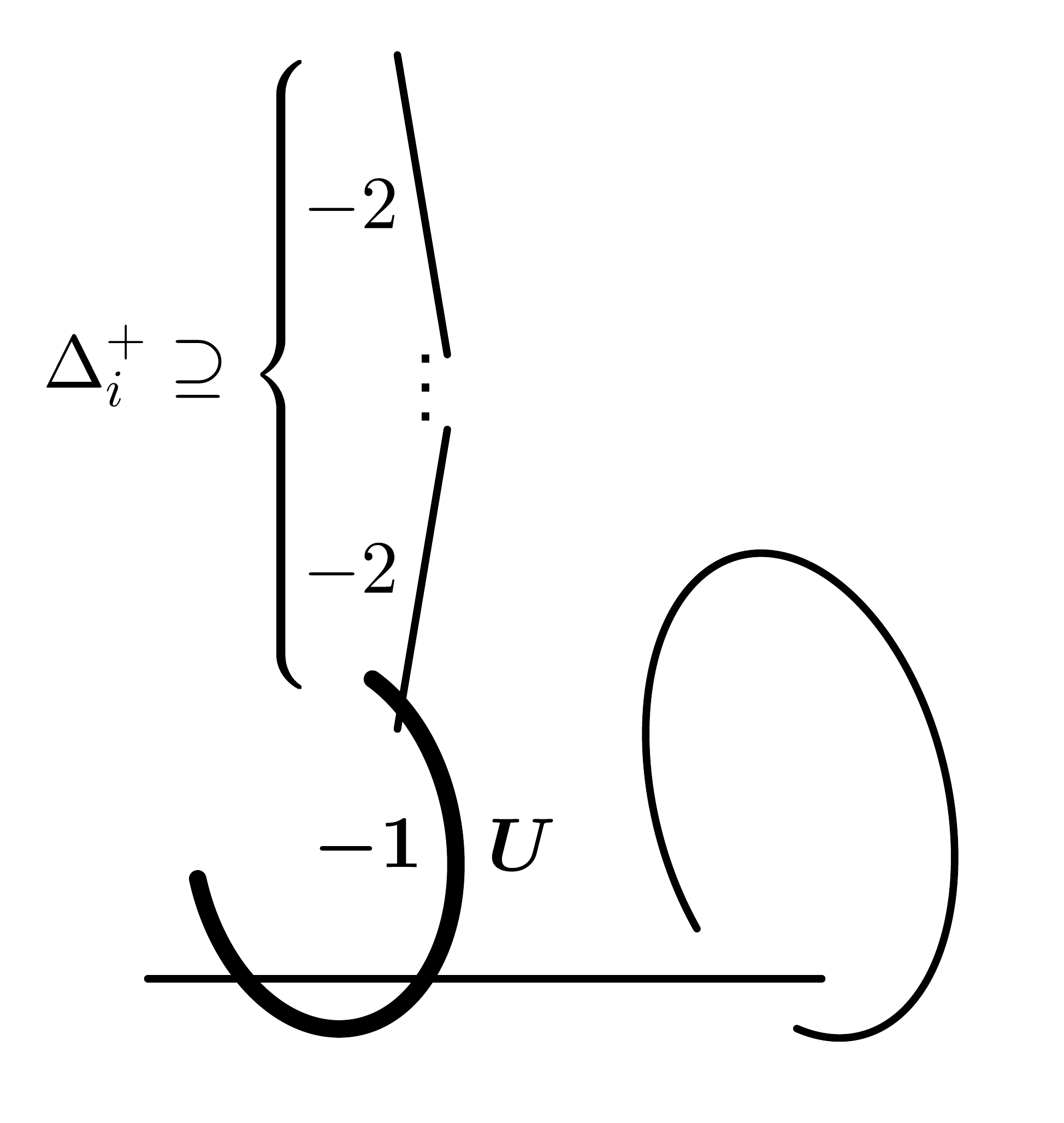}
	\end{subfigure}
	\begin{subfigure}{0.3\textwidth}\centering
		\includegraphics[scale=0.25]{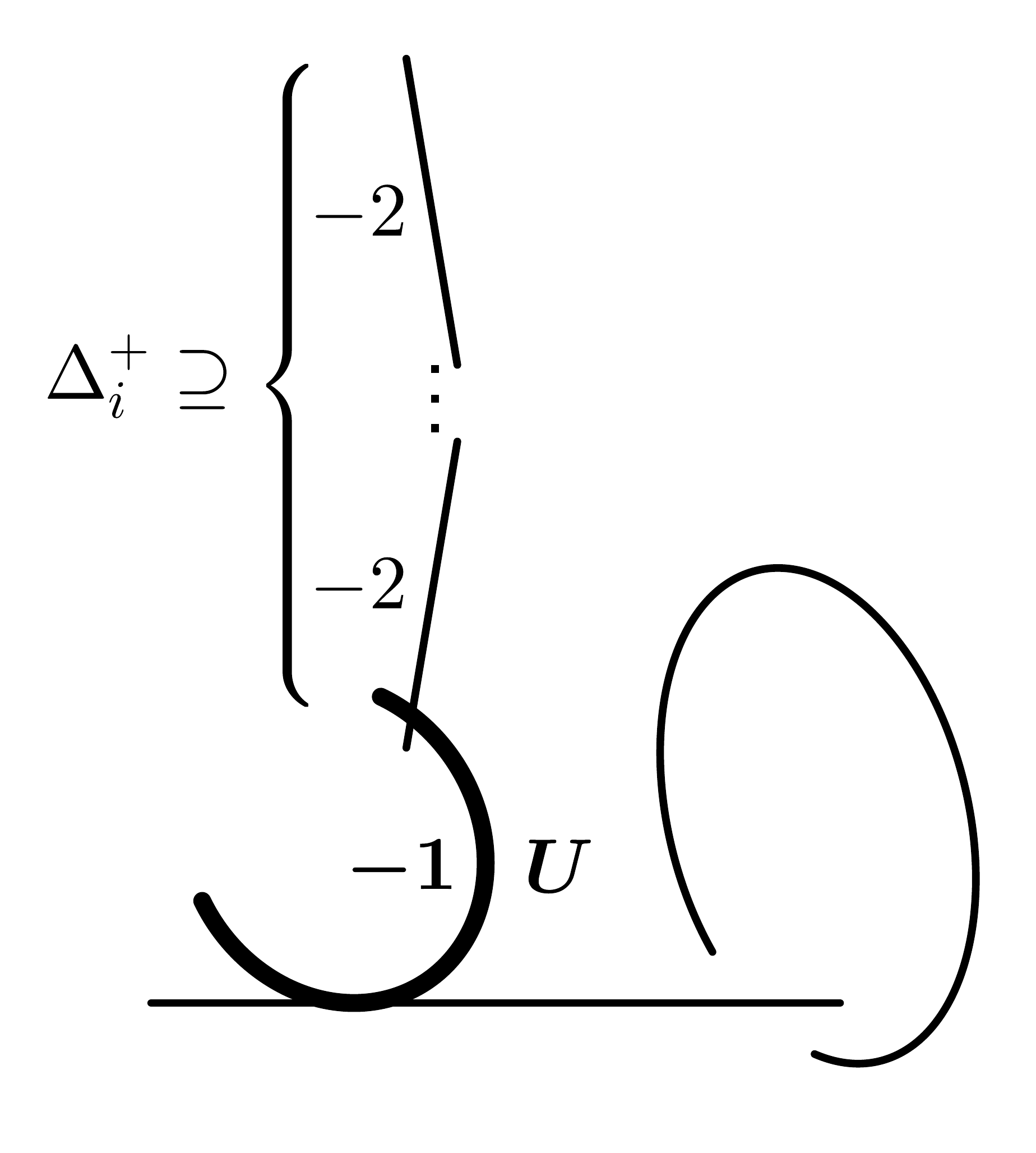}
	\end{subfigure}
	\caption{Possible arrangements of a component $U\subseteq\Upsilon_{i}$ inside $D_{i}$.}
	\label{fig:Upsilon}
\end{figure}

\begin{dfn}[Almost log exceptional curves]
	\label{def:ale}
	A $(-1)$-curve $A$ on $(X_{i},D_{i})$ satisfying \eqref{eq:bubble} is an \emph{almost log exceptional curve} on $(X_{i},\frac{1}{2}D_{i})$ if 
	\begin{equation}\label{eq:ale}
	A\cdot \Delta_{i}=1,  \quad A \mbox{ meets a tip of } \Delta_{i}\quad \mbox{ and } \quad  A\cdot (\Upsilon_{i}+\Delta_{i}^{+})=0.
	\end{equation}
\end{dfn}

\begin{uw}[see Figure \ref{fig:placement-of-A}] Let $A$ be almost log exceptional on $(X_{i},\tfrac{1}{2}D_{i})$ and let $\Delta_{A}$ be the maximal $(-2)$-twig of $D_{i}$ meeting $A$. If the tip of $\Delta_{A}$ meeting $A$ is not a tip of $D_{i}$ then $A=\Exc \psi_{A}$. In the other case, the inclusion $A\subseteq \Exc\psi_{A}$ is strict unless the component of $D_{i}$  meeting $\ftip{\Delta_A}$ meets $A$, too.
\end{uw}

\begin{figure}[ht]
	\begin{subfigure}{0.45\textwidth}\centering
		\includegraphics[scale=0.25]{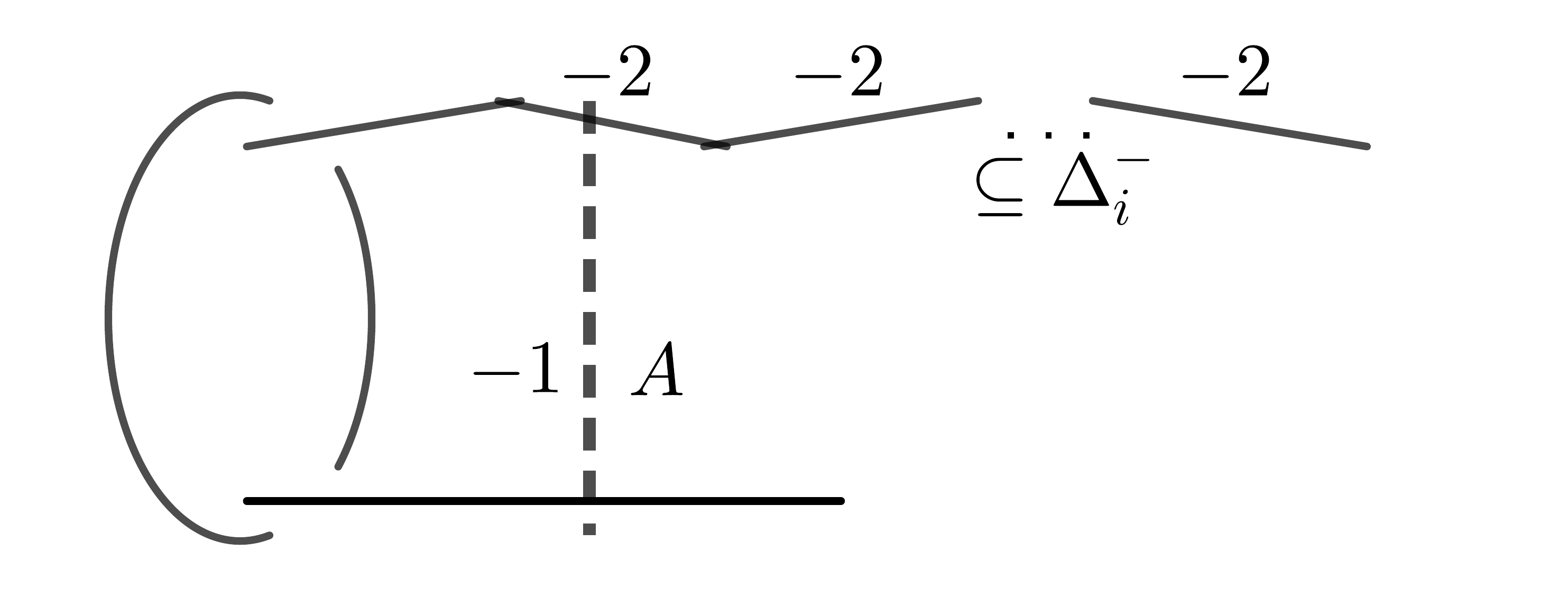}
		\caption{$A=\Exc\psi_{A}$}
	\end{subfigure}
	\hfill
	\begin{subfigure}{0.45\textwidth}\centering
		\includegraphics[scale=0.25]{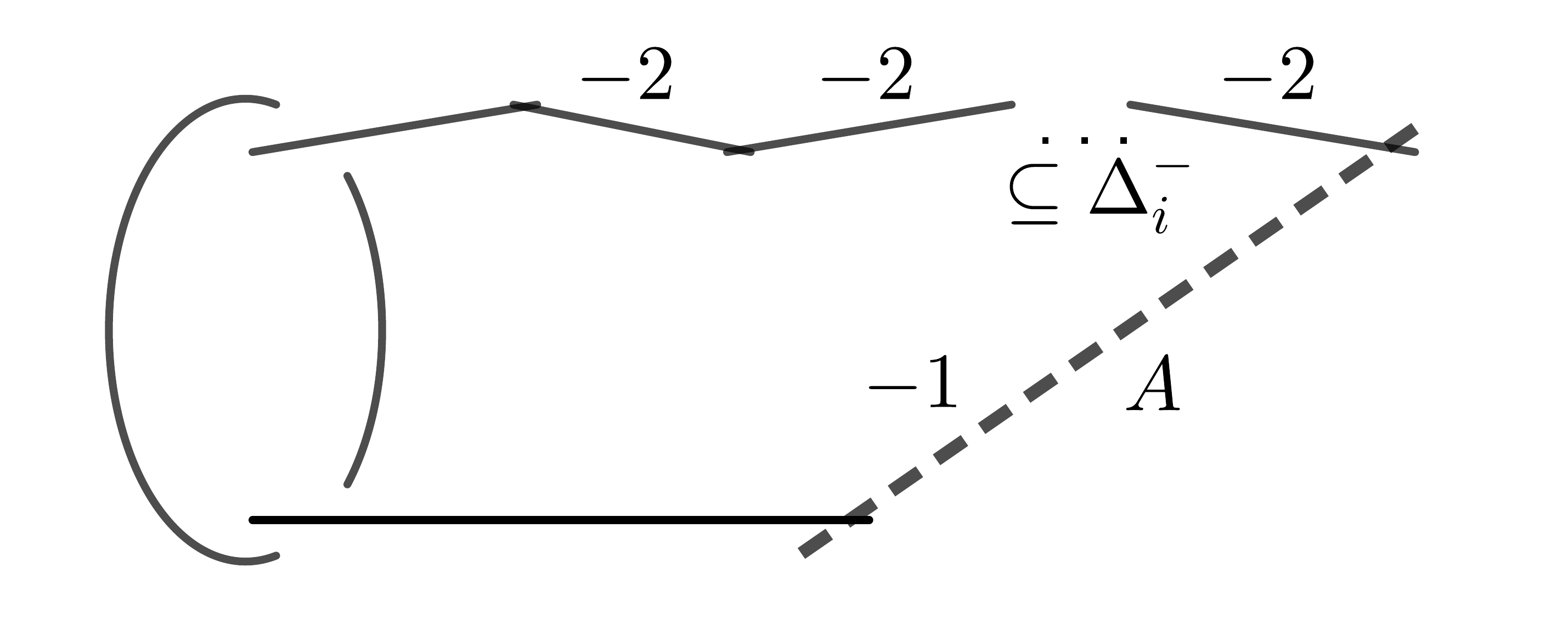}
		\caption{$A\subsetneq \Exc\psi_{A}$}
	\end{subfigure}
	\caption{Possible arrangements of an almost log exceptional curve $A$ on $(X_{i},\tfrac{1}{2}D_{i})$.}
	\label{fig:placement-of-A}
\end{figure}

\smallskip If $X_{i}$ contains an almost log exceptional curve, say  $A_{i+1}\subseteq X_{i}$, we put 
\begin{equation*}
\psi_{i+1}=\psi_{A_{i+1}}\colon X_{i}\to X_{i+1}\quad\mbox{and}\quad D_{i+1}=(\psi_{i+1})_{*}D_{i}.
\end{equation*}
Knowing that $X_{i+1}$ is smooth (see Lemma \ref{lem:MMP-properties}\ref{item:Xi_smooth}), we proceed by induction. The final pair $(X_{i},\frac{1}{2}D_{i})$, the one with $i=n$, is the first such that there is no almost log exceptional curve on $(X_{i},\frac{1}{2}D_{i})$. We call $(X_{n},\frac{1}{2}D_{n})$ an \emph{almost minimal model} of $(X_{0},\frac{1}{2}D_{0})$ and we put 
\begin{equation}
\psi\de \psi_{n}\circ\dots \circ \psi_{1}.
\end{equation}
We call the number $n$ the \emph{length} of the process ($\psi$) of almost minimalization. It equals the number of irreducible curves not contained in $D_{0}$ contracted by $\psi$.

\begin{uw}
	We do not claim that $\psi$ or $n$ is uniquely determined by $(X_{0},\tfrac{1}{2}D_{0})$. In general, they depend on the choices of the curves $A_{i}$, see Example \ref{ex:different_weak}. We will simply work with a fixed choice of $\psi$.
\end{uw}

For $(X_{i},\frac{1}{2}D_{i})$ as above we define the \emph{peeling morphism}
\begin{equation}\label{eq:peeling}
\alpha_{i}\colon\, (X_{i},\tfrac{1}{2}D_{i})\to (Y_{i},\tfrac{1}{2}D_{Y_{i}})
\end{equation}
where $D_{Y_{i}}=(\alpha_{i})_{*}D_{i}$, as the contraction of $\Delta_{i}+\Upsilon_{i}$ (it exists by Lemma \ref{lem:MMP-properties}\ref{item:Ui_disjoint}). 

\begin{uw}
	The motivation behind Definition \ref{def:ale} is \cite[Corollary 3.5]{Palka-minimal_models} which says that there are no log exceptional curves on $(Y_{i},\frac{1}{2}D_{Y_{i}})$ contained in $D_{Y_{i}}$ and that $A\subseteq X_{i}$ is almost log exceptional on $(X_{i},\frac{1}{2}D_{i})$ if and only if $\alpha_{i}(A)$ is log exceptional on $(Y_{i},\frac{1}{2}D_{Y_{i}})$. In particular, $(X_{\min},\tfrac{1}{2}D_{\min})\de (Y_{n},\tfrac{1}{2}D_{Y_{n}})$ is a minimal model of $(X_{0},\tfrac{1}{2}D_{0})$.
\end{uw}

As a consequence of basic theorems of the log minimal model program and of the construction of an almost minimal model for $(X_{0},\tfrac{1}{2}D_{0})$ in \cite[Section 3]{Palka-minimal_models}, we have the following result.

\begin{prop}[Properties of minimal models]\label{prop:MMP}
	Let $(X_{0},\frac{1}{2}D_{0})\toin{\psi_{1}} \dots \toin{\psi_{n}} (X_{n},\frac{1}{2}D_{n})$ be some almost minimalization of $(X_{0},\frac{1}{2}D_{0})$ as defined above and let $\alpha_{n}\colon (X_{n},\frac{1}{2}D_{n})\to (X_{\min},\frac{1}{2}D_{\min})$ be a peeling morphism. Then $\kappa(K_{X_{i}}+\frac{1}{2}D_{i})=\kappa(K_{X}+\frac{1}{2}D)$ and the following hold: 
	\begin{enumerate}
		\item\label{item:MMP_nef} If $\kappa(K_{X}+\frac{1}{2}D)\geq 0$ then $K_{X_{\min}}+\frac{1}{2}D_{\min}$ is numerically effective.
		\item\label{item:MMP_MFS} If $\kappa(K_{X}+\frac{1}{2}D)=-\infty$ then $X\setminus D$ has a $\C^{**}$-fibration or $ (X_{\min},\frac{1}{2}D_{\min})$ is a log del Pezzo surface of Picard rank one, that is,  $-(K_{X_{\min}}+\frac{1}{2}D_{\min})$ is ample and $\rho(X_{\min})=1$.
	\end{enumerate}
\end{prop}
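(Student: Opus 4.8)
The plan is to reduce everything to two pieces: first the invariance of the Kodaira--Iitaka dimension along the almost minimalization sequence \eqref{eq:MMP}, and then the two dichotomy statements \ref{item:MMP_nef} and \ref{item:MMP_MFS}, which are essentially the output of running the standard log MMP on the peeled model $(Y_i,\tfrac12 D_{Y_i})$. The key structural fact we exploit is the remark following \eqref{eq:peeling}, namely \cite[Corollary 3.5]{Palka-minimal_models}: the peeling morphism $\alpha_i$ carries almost log exceptional curves on $(X_i,\tfrac12 D_i)$ exactly to genuine log exceptional curves on $(Y_i,\tfrac12 D_{Y_i})$, and no log exceptional curve on $(Y_i,\tfrac12 D_{Y_i})$ lies inside $D_{Y_i}$. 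This is what lets us treat the $\psi_{i+1}=\psi_{A_{i+1}}$ as honest steps of a log MMP for the peeled pairs, rather than for $(X_i,\tfrac12 D_i)$ directly.

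For the equality $\kappa(K_{X_i}+\tfrac12 D_i)=\kappa(K_X+\tfrac12 D)$, I would first note that $\psi_0\colon X\to X_0$ contracts only curves inside $D$ (it is a part of the minimal log resolution refining the weak resolution), so passing between $(X,\tfrac12 D)$ and $(X_0,\tfrac12 D_0)$ does not change the relevant log Kodaira dimension; this is a standard invariance under blowing down components of the boundary, using that the discrepancies for a boundary with coefficient $\tfrac12$ along $(-2)$-curves are nonpositive. Then, for each step $\psi_{i+1}$, the curve $A_{i+1}$ together with the superfluous $(-1)$-curves contracted by $\psi_{A_{i+1}}$ are contracted to points through which $D_i$ passes, and the whole point of Definition \ref{def:ale} and \eqref{eq:ale} is that $K+\tfrac12 D$ is preserved numerically upstairs; invariance of $\kappa$ under such \enquote{almost minimal} contractions is \cite[Proposition/Section 3]{Palka-minimal_models}, which I would simply cite.

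For \ref{item:MMP_nef}: by construction the process terminates exactly when there is no almost log exceptional curve on $(X_n,\tfrac12 D_n)$, equivalently (by the cited Corollary 3.5) no log exceptional curve on $(X_{\min},\tfrac12 D_{\min})=(Y_n,\tfrac12 D_{Y_n})$. Since $\kappa(K_{X_{\min}}+\tfrac12 D_{\min})=\kappa(K_X+\tfrac12 D)\geq 0$, the log canonical divisor is pseudoeffective, and the absence of curves on which it is negative forces $K_{X_{\min}}+\tfrac12 D_{\min}$ to be nef by the Cone Theorem: a pseudoeffective divisor that is nonnegative on every extremal ray of the cone of curves is nef. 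For \ref{item:MMP_MFS}: if $\kappa(K_X+\tfrac12 D)=-\infty$ then $K_{X_{\min}}+\tfrac12 D_{\min}$ is not pseudoeffective, so running the log MMP on $(X_{\min},\tfrac12 D_{\min})$ (a log surface with at worst the mild quotient singularities produced by peeling, hence log terminal for the $\tfrac12$-boundary) produces a log Mori fiber space. The output is either a fibration over a curve, in which case the general fiber analysis recovers a $\C^{**}$-fibration of $X\setminus D$, or it is a del Pezzo fibration over a point, i.e. $\rho(X_{\min})=1$ with $-(K_{X_{\min}}+\tfrac12 D_{\min})$ ample; this is precisely the trichotomy recorded in \cite[Theorem 4.5]{Palka-minimal_models}, which I would invoke for the identification of the fibered case with a $\C^{**}$-fibration.

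The main obstacle I anticipate is not the nef/MFS dichotomy, which is formal given the Cone Theorem and the cited structure theorem, but rather making the invariance of $\kappa$ and the \enquote{log exceptional $\leftrightarrow$ almost log exceptional} correspondence fully rigorous across the peeling. In particular one must check that the singularities introduced by contracting $\Delta_i+\Upsilon_i$ stay log terminal for the half-integral boundary (so that the MMP applies and discrepancy computations behave), and that contracting the superfluous $(-1)$-curves in $\psi_{A_{i+1}}$ genuinely preserves smoothness of $X_{i+1}$ (Lemma \ref{lem:MMP-properties}\ref{item:Xi_smooth}) as well as the numerical class of $K+\tfrac12 D$. Since all of these are established in \cite[Section 3]{Palka-minimal_models}, my proof would be short: cite the invariance of $\kappa$ and Corollary 3.5, then deduce \ref{item:MMP_nef} from the Cone Theorem and \ref{item:MMP_MFS} from \cite[Theorem 4.5]{Palka-minimal_models}.
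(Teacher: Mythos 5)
Your proposal is correct and follows essentially the same route as the paper: the paper gives no argument beyond stating that the proposition is "a consequence of basic theorems of the log minimal model program and of the construction of an almost minimal model in \cite[Section 3]{Palka-minimal_models}", which is exactly the reduction you perform (invariance of $\kappa$ and the correspondence of \cite[Corollary 3.5]{Palka-minimal_models} for the birational steps, then the nef/Mori-fiber-space dichotomy, with the identification of the fibered case as a $\C^{**}$-fibration taken from \cite[Theorem 4.5]{Palka-minimal_models}). Your extra details — pseudoeffectivity ruling out negative intersection with curves of nonnegative self-intersection, and the log terminality of the peeled pair — are sound and consistent with what the cited reference establishes.
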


\begin{rem}[The shape of $\Upsilon_{0}+\Delta_{0}^{+}$]\label{rem:semi-ordinary}\
	\begin{enumerate}
		\item\label{item:Ei_Ri} Let us note that $E_{i}\subseteq R_{i}$. This fact is implicitly used in the proof of \cite[Theorem 4.5(6)]{Palka-minimal_models} (in the form of the equality $c_{0}'=\#\Upsilon_{0}^{0}$). Let us give a proof. Clearly $E_{i}\not \subseteq \Delta_{i}$ because $E_{0}$, and hence each $E_{i}$ for $i\in \{1,\dots, n\}$, is not contained in any twig of $D_{i}$. Suppose that $E_{i}\subseteq \Upsilon_{i}$. Then, since $\beta_{D_{0}}(E_{0})=\sum_{j=1}^{c}(\tau_{j}+1-s_{j})\geq 2$, we get $E_{0}\subseteq \Upsilon_{0}$, so  $(c,\tau_{1},s_{1})=(1,2,1)$, that is, the multiplicity sequence of the unique cusp $q_{1}\in \bar{E}$ consists of even terms followed by $(1,1)$ at the end, and $E_{0}^{2}=-1$, so $E^{2}=-3$. Then Lemma \ref{lem:HN-equations}\eqref{eq:deg^2} gives $(\deg\bar{E})^{2}\equiv 3\pmod{4}$; a contradiction.
		
		\item\label{item:semiordinary}By \ref{item:Ei_Ri}, $\Upsilon_0\subseteq C_1+\cdots+C_c$, so it is easy to see that the divisor $\Upsilon_{0}+\Delta_{0}^{+}$ equals the sum of exceptional divisors over the semi-ordinary cusps of $\bar{E}$ (see Example \ref{ex:ordinary_cusp}). By Definition \ref{def:ale} its push-forward on $X_{i}$ does not meet almost log exceptional curves. Consequently, if all cusps of $\bar{E}$ are semi-ordinary then $n=0$, that is, $(X_{0},\frac{1}{2}D_{0})$ is already almost minimal.
	\end{enumerate}
\end{rem}

\begin{lem}[Properties of $(X_{i},\frac{1}{2}D_{i})$, \cite{Palka-minimal_models}]\label{lem:MMP-properties} 
	Let $(X_{i},D_{i})$ be as above. Then
	\begin{enumerate}
		\item\label{item:Xi_smooth} $X_{i}$ is smooth, $E_{i}$ is smooth and $D_{i}-E_{i}$ is snc.
		\item\label{item:Ui_disjoint} The components of $\Upsilon_{i}$ are disjoint.
		\item\label{item:K_min-formula} $\alpha_{i}^{*}(K_{Y_{i}}+\frac{1}{2}D_{Y_{i}})=K_{X_{i}}+\frac{1}{2}D_{i}^{\flat}$.
		\item\label{item:Exc-psi_i} $\Exc \psi_{i}$ is a chain and $\Exc \psi_{i}-A_{i}$ is contained in (at most two) maximal twigs of $D_{i-1}$. 
		\item\label{item:center-psi_i} The point $\psi_{i}(A_{i})$ is a point of normal crossings of two components of $D_{i}$. The set $\Supp D_{i} \setminus \psi_{i}(A_{i})$ is connected.
		\item\label{item:Ui_not_touched} 
		$(\psi_i)_*(\Upsilon_{i-1})\subseteq \Upsilon_{i}$ and $(\psi_i)_*(\Delta_{i-1}^{+})\subseteq \Delta_{i}^{+}.$
		\item\label{item:Delta_pr-tr} The morphism $\psi_{i}$ does not touch the proper transforms of the twigs of $D_{i}$. In particular, $(\psi_{i}^{-1})_{*}\Delta_{i}\subseteq \Delta_{i-1}$ and $(\psi_{i}^{-1})_{*}\Delta_{i}^{-}\subseteq \Delta_{i-1}^{-}$.
		\item\label{item:R_pr-tr} $(\psi_{i}^{-1})_{*}R_{i}\subseteq R_{i-1}$.
	\end{enumerate}
\end{lem}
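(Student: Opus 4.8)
The plan is to prove all eight assertions by a single induction on $i$, treating \ref{item:Xi_smooth} as the backbone: the whole of Notation \ref{not:MMP}, and hence the very definition of $\psi_{i+1}$, presupposes that $X_i$ is smooth, so smoothness must be propagated through the induction before anything else can be formulated. For $i=0$ everything is immediate from the minimal weak resolution: $X_0$ is smooth, $E_0$ is smooth by construction, and $D_0-E_0$ is the reduced exceptional locus, hence snc. For the inductive step I would analyze the single map $\psi_i=\psi_{A_i}$, recalling from Definition \ref{def:psi_A} that it is a composition of contractions of superfluous $(-1)$-curves whose centers are, at every stage, nodes of the current image of $D_{i-1}$.

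Parts \ref{item:Xi_smooth}, \ref{item:Exc-psi_i} and \ref{item:center-psi_i} I would settle together from this local picture. Each elementary contraction in $\psi_i$ blows down a $(-1)$-curve meeting the rest of the divisor in at most two points, each a transversal crossing of smooth branches; such a contraction preserves smoothness of the ambient surface, keeps $E_i$ smooth (blowing down a $(-1)$-curve never destroys smoothness of a curve meeting it transversally in one point), and keeps $D_i-E_i$ snc. The chain structure of $\Exc\psi_i$ in \ref{item:Exc-psi_i} follows because at each step there is a \emph{unique} superfluous $(-1)$-curve through the image of $A_i$ (by the same $\kappa=2$ argument given after Definition \ref{def:psi_A}), so $\Exc\psi_i$ grows linearly; and since $A_i$ meets only the tip of the $(-2)$-twig $\Delta_{A_i}$ together with one further component, $\Exc\psi_i-A_i$ is swept out inside these (at most two) maximal twigs of $D_{i-1}$. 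For \ref{item:center-psi_i}, the final center $\psi_i(A_i)$ is a node of $D_i$ because $A_i$ met two distinct components of $D_{i-1}$, and connectivity of $\Supp D_i\setminus\psi_i(A_i)$ is inherited from connectivity of $D_i$ (which descends from $D_0$) together with the center being a single point.

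The compatibilities \ref{item:Ui_not_touched}, \ref{item:Delta_pr-tr} and \ref{item:R_pr-tr} I would read off from the position of the distinguished divisors relative to $\Exc\psi_i$. The key point is that, by Remark \ref{rem:semi-ordinary}\ref{item:semiordinary}, $\Upsilon_{i-1}+\Delta_{i-1}^{+}$ is the exceptional divisor over the semi-ordinary cusps and, by $A_i\cdot(\Upsilon_{i-1}+\Delta_{i-1}^{+})=0$ in \eqref{eq:ale}, is disjoint from $\Exc\psi_i$; hence $\psi_i$ is an isomorphism near every twig of $D_i$ other than those met by $\Exc\psi_i$. This gives \ref{item:Delta_pr-tr} directly (self-intersections of untouched twigs are preserved, so $(-2)$-twigs stay $(-2)$-twigs), and then the inclusions $(\psi_i)_*\Upsilon_{i-1}\subseteq\Upsilon_i$, $(\psi_i)_*\Delta_{i-1}^{+}\subseteq\Delta_i^{+}$ of \ref{item:Ui_not_touched} follow by checking that the defining branching conditions of Notation \ref{not:MMP}\ref{item:not_Ups},\ref{item:Delta^-} survive an isomorphism near these curves; assertion \ref{item:R_pr-tr} is the complementary statement $R=D-\Delta-\Upsilon$. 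For the disjointness \ref{item:Ui_disjoint} I would argue separately: if two of these $(-1)$-curves met, the linear system of their sum would induce a $\C^{*}$- or $\C^{**}$-fibration of an open part of $X\setminus D$, contradicting $\kappa(X\setminus D)=2$; this is precisely what makes the peeling contraction $\alpha_i$ of $\Delta_i+\Upsilon_i$ well defined.

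I expect the genuinely computational step, and the main obstacle, to be the peeling formula \ref{item:K_min-formula}. Here I would verify that $K_{X_i}+\tfrac12 D_i^{\flat}$ is numerically $\alpha_i$-trivial, i.e.\ that $(K_{X_i}+\tfrac12 D_i^{\flat})\cdot G=0$ for every component $G$ of the contracted divisor $\Delta_i+\Upsilon_i$. Since $D_i^{\flat}=D_i-\Upsilon_i-\Delta_i^{+}-\Bk_{D_i}(\Delta_i^{-})$, this reduces on the $(-2)$-curves of $\Delta_i^{-}$ to the defining bark identity \eqref{eq:bark}, and on the $(-1)$-curves of $\Upsilon_i$ (using disjointness \ref{item:Ui_disjoint}) and the $(-2)$-twigs of $\Delta_i^{+}$ to a direct adjunction computation with $\beta_{D_i}$ and the half-integral coefficient. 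Once $\alpha_i$-triviality is in place, the asserted equality follows because the two sides agree off $\Exc\alpha_i$ (there $D_i^{\flat}$ restricts to $R_i$, the proper transform of $D_{Y_i}$), so their difference is $\alpha_i$-exceptional and $\alpha_i$-numerically trivial, hence zero by negative definiteness of the contracted configuration. As all eight statements are recorded in \cite{Palka-minimal_models}, in the final write-up I would cite the precise items there and include in full only the short bark computation for \ref{item:K_min-formula} and the fibration argument for \ref{item:Ui_disjoint}.
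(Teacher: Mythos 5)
Your handling of parts \ref{item:Xi_smooth}, \ref{item:Ui_disjoint}, \ref{item:K_min-formula}, \ref{item:Exc-psi_i}, \ref{item:center-psi_i} is essentially fine (the paper simply cites \cite[Proposition 4.1(i), Lemma 3.4(i)]{Palka-minimal_models} for the first three, and your sketch for \ref{item:Exc-psi_i}--\ref{item:center-psi_i} matches its circular-subdivisor argument), but your proof of \ref{item:Ui_not_touched} has a genuine gap which then propagates to \ref{item:Delta_pr-tr} and \ref{item:R_pr-tr}. Your ``key point'' --- that $\Upsilon_{i-1}+\Delta_{i-1}^{+}$ is the exceptional divisor over the semi-ordinary cusps --- is Remark \ref{rem:semi-ordinary}\ref{item:semiordinary}, which is a statement about the level $i-1=0$ only. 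For $i-1\geq 1$ it is false in general: $\Upsilon_{i-1}$ typically contains components which are not proper transforms of components of $\Upsilon_{0}$, namely images of components of $D_{0}$ that acquire the required $(-1)$-configuration only after earlier contractions. The paper's subsequent analysis depends on exactly such components: Lemma \ref{lem:beta_flat}\ref{item:Ups} is a statement about $\Upsilon_{n}-\psi_{*}\Upsilon_{0}$, and in Lemma \ref{lem:F2_core} and Proposition \ref{prop:J} one finds $\psi(B_{j})\subseteq \Upsilon_{2}$ and $\psi(U)\subseteq \Upsilon_{2}$ for components $B_{j}, U$ lying over cusps that are not semi-ordinary. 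Using the remark at level $i-1$ would also be circular, since transporting its description from $X_{0}$ to $X_{i-1}$ requires iterating \ref{item:Ui_not_touched} itself. Finally, the correct input $A_{i}\cdot(\Upsilon_{i-1}+\Delta_{i-1}^{+})=0$ from \eqref{eq:ale} does not by itself give disjointness of $\Exc\psi_{i}$ from $\Upsilon_{i-1}+\Delta_{i-1}^{+}$: the divisor $\Exc\psi_{i}-A_{i}$ can sweep out a whole maximal twig of $D_{i-1}$ and thereby touch the component it is attached to (this really happens, cf.\ Lemma \ref{lem:line}\ref{item:Exc-psi_1}), and excluding that this component lies in $\Upsilon_{i-1}$ is essentially equivalent to \ref{item:Ui_not_touched}, because touching a $(-1)$-curve raises its self-intersection and so expels its image from $\Upsilon_{i}$. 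The paper does not prove this from scratch either; it quotes \cite[Proposition 4.1(iii)]{Palka-minimal_models}.

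Two downstream steps also do not go through as written. For \ref{item:Delta_pr-tr}, your argument only treats twigs whose proper transforms avoid $\Exc\psi_{i}$, for which the claim is vacuous; the actual content is that no twig of $D_{i}$ passes through $\psi_{i}(A_{i})$, and this is where \ref{item:center-psi_i} is needed: a twig is joined to the rest of $D_{i}$ through a single point, so if a twig of $D_{i}$ contained the nc-point $\psi_{i}(A_{i})$, removing that point would disconnect $\Supp D_{i}$, contradicting \ref{item:center-psi_i}. For \ref{item:R_pr-tr}, the statement is not the formal complement of \ref{item:Ui_not_touched} and \ref{item:Delta_pr-tr}: what must be shown is that the image of a component of $\Delta_{i-1}^{-}$ which is touched but not contracted by $\psi_{i}$ never lies in $R_{i}$, whereas \ref{item:Delta_pr-tr} controls proper transforms of twigs of $D_{i}$, i.e., the opposite direction. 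Such touched-but-not-contracted $(-2)$-twig components do occur and their images become $(-1)$-curves; the paper's separate argument for \ref{item:R_pr-tr} shows, by contradiction and using \ref{item:Exc-psi_i} and \ref{item:Ui_not_touched}, that any such image must land in $\Upsilon_{i}$. This argument, or an equivalent one, has to be supplied in your write-up.
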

\begin{proof}
	Parts \ref{item:Xi_smooth},\ref{item:Ui_disjoint} and \ref{item:K_min-formula} are proved in \cite[Proposition 4.1(i) and Lemma 3.4(i)]{Palka-minimal_models}. Parts \ref{item:Exc-psi_i} and \ref{item:center-psi_i} follow from the fact that $D_{i-1}$ is connected and $A_{i}\cdot D_{i-1}=2$, so $A_{i}$ lies in a circular subdivisor of $D_{i-1}+A_{i}$, and $\psi_{i}$ contracts superfluous $(-1)$-curves, which are non-branching in the images of $D_{i-1}+A_{i}$. 
	By  \cite[Proposition 4.1(iii)]{Palka-minimal_models} $(\psi_i)_*(\Upsilon_{i-1})\subseteq \Upsilon_{i}$, so $\Upsilon_{i-1}$ is not touched by $\psi_i$. Then $\Delta_{i-1}^{+}$ is not touched by $\psi_i$, because $A_i\cdot \Delta_{i-1}^+=0$. This gives  \ref{item:Ui_not_touched}.
	Part \ref{item:Delta_pr-tr} is a direct consequence of \ref{item:center-psi_i} and \ref{item:Ui_not_touched}.
	
	For the proof of \ref{item:R_pr-tr} let $G$ be a component of $D_{i-1}$ such that $G\not\subseteq \Exc\psi_{i}$ and $\psi_i(G)\subseteq R_i$. By \ref{item:Ui_not_touched}, $G$ is not a component of $\Upsilon_{i-1}$. Suppose that it is contained in some $(-2)$-twig $\Delta_G$ of $D_{i-1}$. The morphism $\psi_i$ touches but does not contract $G$, so $A_i$ meets $\Delta_G$. There is no component of $D_{i-1}-\Delta_G$ meeting both $G$ and $A_i$, because otherwise we get $\psi_i(G)\subseteq \Upsilon_i$, contrary to the assumption. It follows that $A_i$ meets $G$. But then either $\psi_i$ contracts $G$ or again $\psi_i(G)\subseteq \Upsilon_i$; a contradiction.
\end{proof}

\smallskip
\subsection{Consequences of non-existence of a $\C^{**}$-fibration.}\label{sec:consequences_of_noCstst}
As before, assume $\bar E\subseteq \P^2$ is a rational cuspidal curve for which  Negativity Conjecture \ref{conj} holds. Let $\pi\:(X,D)\to (\P^2,\bar E)$ and $\pi_0\:(X_0,D_0)\to (\P^2,\bar E)$ denote respectively the minimal log resolution and the minimal weak resolution. We have $\kappa(K_X+\frac{1}{2}D)=\kappa(K_{X_0}+\frac{1}{2}D_0)$ by Proposition \ref{prop:MMP}. As discussed above, by the existing classification results we may reformulate our assumptions as:
\begin{equation}\label{eq:assumption}
\parbox{.9\textwidth}{\bfseries
	$\bar{E}\subseteq \P^{2}$ is a rational cuspidal curve such that $\kappa(\P^{2}\setminus \bar{E})=2$, $\kappa(K_{X_{0}}+\tfrac{1}{2}D_{0})=-\infty$ and $\P^{2}\setminus \bar{E}$ has no $\C^{**}$-fibration}
\end{equation}

By Proposition \ref{prop:MMP}\ref{item:MMP_MFS}, $(X_{\min},\frac{1}{2}D_{\min})$ is a log del Pezzo surface of Picard rank $1$, hence the number $(2K_{X_{\min}}+D_{\min})^{2}$ is positive. This number is computed in \cite[Lemma 4.4]{Palka-minimal_models} and it is shown in Theorem 4.5(6) loc.\ cit.\ that its positivity bounds the number of components of $D_{n}$. The explicit formula for this bound is important for us. It can be conveniently formulated in terms of contributions $\lambda_{j}$, $j\in\{1,\dots, c\}$ of cusps, see \eqref{eq:lambdaineq}, which we define as (see Notation \ref{not:graphs} and \ref{not:MMP}):
\begin{equation}\label{eq:deflambda}
\lambda_{j}=
\tau_{j}-s_{j}+\# (\psi_{*}Q_{j}- \psi_{*}Q_{j}\wedge \Upsilon_{n}^{0})-b_{0}(\psi_{*}Q_{j}\wedge \Delta_{n}).
\end{equation}

\begin{rem}[Properties of $\lambda_{j}$]\label{rem:lambda>1} The following hold:
	\begin{enumerate}
		\item\label{item:lambda_not-ordinary} If $q_{j}\in\bar{E}$ is not ordinary, then
		\begin{equation}\label{eq:lambda>=tau}
		\lambda_{j}\geq \tau_{j}-s_{j}+1\geq \tau_{j}\geq 2.
		\end{equation}
		\item\label{item:lambda_semi-ordinary} If $q_{j}\in \bar{E}$ is semi-ordinary then $\lambda_{j}=1+t_{j}$. In particular, $\lambda_{j}=1$ if and only if $q_{j}\in\bar{E}$ is ordinary.
		\item\label{item:lambda_not-touched} If $q_{j}\in \bar{E}$ is not ordinary and $\psi$ does not touch $Q_{j}$ then $\lambda_{j}=\tau_{j}-s_{j}+\#Q_{j}-b_{0}(Q_{j}\wedge \Delta_{0}).$
	\end{enumerate}
\end{rem}	

\begin{proof}
	\ref{item:lambda_not-ordinary} Assume that $q_{j}$ is not an ordinary cusp. Then $\psi(C_{j})\not\subseteq \Upsilon_{n}^{0}+\Delta_{n}$, so $\#(\psi_{*}Q_{j}-\psi_{*}Q_{j}\wedge (\Upsilon_{n}^{0}+\Delta_{n}))\geq 1$ and hence $\lambda_{j}\geq \tau_{j}-s_{j}+1$, which proves \eqref{eq:lambda>=tau} since $s_{j}\leq 1$ and $\tau_{j}\geq 2$ by definition.

	\ref{item:lambda_semi-ordinary} Assume now that $q_{j}\in \bar{E}$ is a semi-ordinary cusp, that is, $(\tau_{j},s_{j})=(2,1)$ and $Q_{j}=[(2)_{t_{j}},1]$ (see Example \ref{ex:ordinary_cusp}). By Remark \ref{rem:semi-ordinary}\ref{item:semiordinary}, $\psi$ does not touch $Q_{j}$, so $\#\psi_{*}Q_{j}=t_{j}+1$. Moreover, if $t_{j}=0$ then we have $\psi_{*}Q_{j}\wedge \Upsilon_{n}^{0}=\psi(C_{j})$ and $b_{0}(\psi_{*}Q_{j}\wedge \Delta_{n})=0$; otherwise  $\psi_{*}Q_{j}\wedge \Upsilon_{n}^{0}=0$ and $b_{0}(\psi_{*}Q_{j}\wedge \Delta_{n})=1$.
		
	\ref{item:lambda_not-touched} If $\psi$ does not touch $Q_{j}$ then by Lemma \ref{lem:MMP-properties}\ref{item:Ui_not_touched}, $\psi_{*}Q_{j}\wedge \Delta_{n}=\psi_{*}(Q_{j}\wedge \Delta_{0})$ and $\psi_{*}Q_{j}\wedge \Upsilon_{n}^{0}=\psi_{*}(Q_{j}\wedge \Upsilon^{0}_{0})$. The latter divisor is zero if $q_{j}\in\bar{E}$ is not ordinary.
\end{proof}

\begin{lem}[The basic inequality] Let the assumptions be as in \eqref{eq:assumption}. Then
	\begin{equation}\label{eq:lambdaineq}
	\lambda_1+\cdots+\lambda_c \leq 6.
	\end{equation}	
\end{lem}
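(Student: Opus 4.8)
The plan is to derive the inequality from the fact that $(X_{\min},\tfrac12 D_{\min})$ is a log del Pezzo surface of Picard rank one. Under \eqref{eq:assumption} we have $\kappa(K_{X_0}+\tfrac12 D_0)=-\infty$ and no $\C^{**}$-fibration, so Proposition \ref{prop:MMP}\ref{item:MMP_MFS} applies and $-(K_{X_{\min}}+\tfrac12 D_{\min})$ is ample. Ampleness on a normal projective surface forces $(K_{X_{\min}}+\tfrac12 D_{\min})^{2}>0$, hence
\begin{equation*}
(2K_{X_{\min}}+D_{\min})^{2}=4\,(K_{X_{\min}}+\tfrac12 D_{\min})^{2}>0.
\end{equation*}
To make this computable on the smooth surface $X_n$, I would pull the class back along the peeling morphism: by Lemma \ref{lem:MMP-properties}\ref{item:K_min-formula} we have $\alpha_n^{*}(K_{X_{\min}}+\tfrac12 D_{\min})=K_{X_n}+\tfrac12 D_n^{\flat}$, and since $\alpha_n$ is birational the projection formula gives $(2K_{X_n}+D_n^{\flat})^{2}>0$.

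The heart of the argument is the explicit evaluation of this self-intersection. Here I would invoke the computation of \cite[Lemma 4.4]{Palka-minimal_models} together with \cite[Theorem 4.5(6)]{Palka-minimal_models}, which shows that positivity of $(2K_{X_{\min}}+D_{\min})^{2}$ is equivalent to an upper bound on the number of boundary components, expressed through the barks of the $(-2)$-twigs (formula \eqref{eq:bark_of_a_2-twig}) and the combinatorics of $\Upsilon_n$ and $\Delta_n^{\pm}$. It is precisely to package this bound that the quantities $\lambda_j$ in \eqref{eq:deflambda} are introduced.

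It remains to translate the bound on $\#D_n$ into the stated inequality. Since $D_n=E_n+\sum_{j}\psi_{*}Q_j$ with the supports $\psi_{*}Q_j$ pairwise disjoint and $E_n$ not a component of any of them, we have $\sum_{j}\#\psi_{*}Q_j=\#D_n-1$. Moreover, because $E_n\subseteq R_n$ (Remark \ref{rem:semi-ordinary}\ref{item:Ei_Ri}), every component of $\Upsilon_n^{0}$ and every maximal $(-2)$-twig of $\Delta_n$ lies in a single $\psi_{*}Q_j$; hence $\sum_j\#(\psi_{*}Q_j\wedge\Upsilon_n^{0})=\#\Upsilon_n^{0}$ and $\sum_j b_0(\psi_{*}Q_j\wedge\Delta_n)=b_0(\Delta_n)$. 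Substituting these identities into \eqref{eq:deflambda} gives
\begin{equation*}
\sum_{j=1}^{c}\lambda_j=\sum_{j=1}^{c}(\tau_j-s_j)+\#D_n-1-\#\Upsilon_n^{0}-b_0(\Delta_n),
\end{equation*}
so the inequality $\sum_j\lambda_j\leq 6$ is equivalent to the component bound produced by the positivity of $(2K_{X_n}+D_n^{\flat})^{2}$.

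The step I expect to be the main obstacle is this last matching: reconciling the numerical bound of \cite[Theorem 4.5(6)]{Palka-minimal_models}, phrased in the invariants of \emph{loc. cit.} (e.g. the quantity $c_0'=\#\Upsilon_0^{0}$ recalled in Remark \ref{rem:semi-ordinary}\ref{item:Ei_Ri}), with the combination $\sum_j(\tau_j-s_j)+\#D_n-1-\#\Upsilon_n^{0}-b_0(\Delta_n)$. This requires careful bookkeeping of how the peeling distinguishes $\Upsilon_n$ from $\Upsilon_n^{0}$ and $\Delta_n$ from its connected components, together with the relation $\sum_j(\tau_j-s_j)=\beta_{D_0}(E_0)-c$ encoding how $E$ meets each $Q_j$; the properties collected in Lemma \ref{lem:MMP-properties} and Remark \ref{rem:lambda>1} are what should make this bookkeeping go through.
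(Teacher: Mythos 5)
You follow the same route as the paper---positivity of $(2K_{X_{\min}}+D_{\min})^{2}$ for the log del Pezzo model, transfer to $(2K_{X_{n}}+D_{n}^{\flat})^{2}$ via Lemma \ref{lem:MMP-properties}\ref{item:K_min-formula}, evaluation by \cite[Lemma 4.4]{Palka-minimal_models}, and bookkeeping to recover $\sum_{j}\lambda_{j}$---and your global identity $\sum_{j}\lambda_{j}=\sum_{j}(\tau_{j}-s_{j})+\#D_{n}-1-\#\Upsilon_{n}^{0}-b_{0}(\Delta_{n})$ is correct. However, the step you explicitly defer as ``the main obstacle'' is the actual content of the proof, and the tools you propose for it would not close the gap. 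The formula of \cite[Lemma 4.4]{Palka-minimal_models} is expressed in the invariants of the \emph{minimal log resolution} $(X_{n}',D_{n}')\to(X_{n},D_{n})$, not of $(X_{n},D_{n})$ itself: it reads $(2K_{X_{n}}+D_{n}^{\flat})^{2}=3h^{0}(2K_{X}+D)+8+b_{0}(\Delta_{n}')+\#\Upsilon_{n}^{0}-\rho(X_{n}')-n-\sum_{T}1/d(T)$, the sum running over the connected components of $\Delta_{n}^{-}$. Passing to the resolution is unavoidable because $D_{n}$ is not snc: $E_{n}$ is tangent to $\psi_{*}C_{j}$ with multiplicity $\tau_{j}$. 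This is exactly where $\tau_{j}$ and $s_{j}$ enter $\lambda_{j}$: since $\psi$ is an isomorphism near the non-nc points of $D_{0}$ (Lemma \ref{lem:MMP-properties}\ref{item:center-psi_i}), resolving each tangency point adds $\tau_{j}$ components and $s_{j}$ maximal $(-2)$-twigs to the boundary, whence $b_{0}(\Delta_{n}')=b_{0}(\Delta_{n})+\sum_{j}s_{j}$ and $\rho(X_{n}')+n=\#D_{n}'=1+\sum_{j}(\tau_{j}+\#\psi_{*}Q_{j})$. Your suggested identity $\sum_{j}(\tau_{j}-s_{j})=\beta_{D_{0}}(E_{0})-c$, while true, plays no role in this matching and signals that the source of the $\tau_{j}-s_{j}$ terms has not been located.

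Two further ingredients, absent from your proposal, are needed to conclude. First, the discriminant term $\sum_{T}1/d(T)\geq 0$ enters with a negative sign and must be discarded to get a clean inequality. Second, and crucially, $h^{0}(2K_{X}+D)=0$, which holds because $\kappa(K_{X}+\tfrac{1}{2}D)=-\infty$ under \eqref{eq:assumption}; substituting everything then gives $0<(2K_{X_{\min}}+D_{\min})^{2}\leq 3h^{0}(2K_{X}+D)+7-\sum_{j}\lambda_{j}=7-\sum_{j}\lambda_{j}$, which is the claim. Without this vanishing your argument yields only $\sum_{j}\lambda_{j}\leq 6+3h^{0}(2K_{X}+D)$. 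Finally, invoking \cite[Theorem 4.5(6)]{Palka-minimal_models} as a black box producing ``the component bound'' is circular in spirit: the lemma being proved \emph{is} that bound in precise form, and the paper obtains it by carrying out the computation above rather than by citation.
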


\begin{proof}
	By Lemma \ref{lem:MMP-properties}\ref{item:K_min-formula}, we have $0<(2K_{X_{\min}}+D_{X_{\min}})^{2}=(2K_{X_{n}}+D_{n}^{\flat})^{2}$. Let $(X_n',D_n')\to (X_n,D_n)$ be the minimal log resolution of $(X_{n},D_{n})$. By \cite[Lemma 4.4]{Palka-minimal_models}:
	\begin{equation*}
	(2K_{X_{n}}+D_{n}^{\flat})^{2}=3h^{0}(2K_{X}+D)+8+b_{0}(\Delta_{n}')+\#\Upsilon_{n}^{0}-\rho(X_{n}')-n-\sum_{T}\frac{1}{d(T)},
	\end{equation*}
	where the last sum runs over all connected components $T$ of $\Delta_{n}^{-}$ and $\Delta_{n}'$ is the sum of the $(-2)$-twigs of $D_{n}'$. Lemma \ref{lem:MMP-properties}\ref{item:center-psi_i} implies that $\psi$ is an isomorphism near the non-nc points of $D_{0}$, so $(X_{n}',D_{n}')\to (X_{n},D_{n})$ resolves the tangency points of $E_{n}$ and $\psi_{*}C_{j}$. The exceptional divisor over such point contains $\tau_{j}$ components and $s_{j}$ maximal $(-2)$-twigs of $D_{n}'$ (see Section \ref{sec:resolutions}). Thus $b_{0}(\Delta_{n}')=b_{0}(\Delta_{n})+\sum_{j=1}^{c}s_{j}$ and $\rho(X_{n}')+n=\#D_{n}'= 1+\sum_{j=1}^{c}(\tau_{j}+\#\psi(Q_{j})).$ Therefore,
	\begin{equation*}
	0<(2K_{X_{\min}}+D_{X_{\min}})^{2}=3h^{0}(2K_{X}+D)+7-\sum_{j=1}^{c}\lambda_{j}-\sum_{T}\frac{1}{d(T)}\leq3h^{0}(2K_{X}+D)+ 7-\sum_{j=1}^{c}\lambda_{j}.
	\end{equation*}
	Eventually, $h^{0}(2K_{X}+D)=0$ by our assumptions, so \eqref{eq:lambdaineq} follows.
\end{proof}

\begin{lem}[Pullbacks of almost log exceptional curves]\label{lem:ale_pr-tr}
Let $X_k$, $k\in \{0,\ldots, n-1\}$ be one of the surfaces in Proposition \ref{prop:MMP}. For $i\geq k+1$ denote by $A_i'$ the proper transform of $A_{i}$ on $X_k$. Then
	\begin{enumerate}
		\item\label{item:ale_pr-tr} $A_{i}'$ is almost log exceptional on $(X_k,\tfrac{1}{2}D_k)$.
		\item\label{item:Exc-psi_i-disjoint} Total transforms on $X_{k}$ of the divisors $\Exc\psi_i$, $i\geq k+1$ are equal to their proper transforms, hence are pairwise disjoint.
		\item\label{item:Exc-psi_i_pr-tr} If \eqref{eq:assumption} holds then the proper transform of $\Exc\psi_{i}$ on $X_k$ equals $\Exc\psi_{A_{i}'}$.
	\end{enumerate}
\end{lem}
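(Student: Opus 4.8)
The plan is to argue by descending induction on $k$, reducing everything to a single contraction step $\psi_{k+1}\colon X_k\to X_{k+1}$ and transferring the relevant data from $X_{k+1}$ down to $X_k$. The base case $k=n-1$ is immediate: then $i=n$, so $A_n'=A_n$ is almost log exceptional by the very choice made in the construction of \eqref{eq:MMP}, and $\Exc\psi_n$ is its own proper transform, whence \ref{item:ale_pr-tr}--\ref{item:Exc-psi_i_pr-tr} hold trivially.

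The geometric fact driving the induction is that $\psi_{k+1}$ is an isomorphism away from a single point. Indeed, by construction (Definition \ref{def:psi_A}) the whole of $\Exc\psi_{k+1}$ is contracted to the point $p\de\psi_{k+1}(A_{k+1})$, which by Lemma \ref{lem:MMP-properties}\ref{item:center-psi_i} is a normal crossing point of two components of $D_{k+1}$; thus $\psi_{k+1}$ is an isomorphism over $X_{k+1}\setminus\{p\}$. The first point I would establish is that every curve $B$ which is almost log exceptional on $(X_{k+1},\tfrac12 D_{k+1})$ avoids $p$: by \eqref{eq:bubble} such $B$ is superfluous in $B+D_{k+1}$, hence meets $D_{k+1}$ transversally in two distinct \emph{smooth} points of $D_{k+1}$, whereas $p$ is a node of $D_{k+1}$; so $p\notin B\cap D_{k+1}$, and in particular $p\notin B$. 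Consequently $\psi_{k+1}$ is an isomorphism near $B$, and its proper transform $B'$ on $X_k$ is again a $(-1)$-curve. That $B'$ satisfies \eqref{eq:bubble} and \eqref{eq:ale} I would check numerically: writing $D_k=(\psi_{k+1}^{-1})_*D_{k+1}+(\Exc\psi_{k+1}-A_{k+1})$ and noting that the extra components all lie over $p$ and hence miss $B'$, the intersection numbers $B'\cdot D_k$, $B'\cdot\Delta_k$ and $B'\cdot(\Upsilon_k+\Delta_k^{+})$ equal the corresponding numbers computed on $X_{k+1}$, where I invoke Lemma \ref{lem:MMP-properties}\ref{item:Delta_pr-tr},\ref{item:Ui_not_touched} to identify $\Delta_{k+1},\Upsilon_{k+1},\Delta_{k+1}^{+}$ with their (untouched) proper transforms away from $p$.

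Applying this with $B=A_i''$, the proper transform of $A_i$ on $X_{k+1}$ (almost log exceptional there by the inductive hypothesis \ref{item:ale_pr-tr}), yields \ref{item:ale_pr-tr} on $X_k$. For \ref{item:Exc-psi_i-disjoint} and \ref{item:Exc-psi_i_pr-tr} I would use that, by the inductive hypothesis \ref{item:Exc-psi_i_pr-tr} on $X_{k+1}$, the proper transform of $\Exc\psi_i$ on $X_{k+1}$ is $\Exc\psi_{A_i''}=A_i''+(\text{superfluous }(-1)\text{-curves})$, where $\Exc\psi_{A_i''}-A_i''$ consists of components of $D_{k+1}$ contained in twigs (Lemma \ref{lem:MMP-properties}\ref{item:Exc-psi_i}). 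Since $\psi_{k+1}$ does not touch the twigs of $D_{k+1}$ (Lemma \ref{lem:MMP-properties}\ref{item:Delta_pr-tr}) and $A_i''$ avoids $p$, the entire chain $\Exc\psi_{A_i''}$ avoids $p$; hence $\psi_{k+1}$ is an isomorphism near it, so its total transform on $X_k$ equals its proper transform and is disjoint from $\Exc\psi_{k+1}$ (which contracts to $p$). Composing with the disjointness already known on $X_{k+1}$ gives \ref{item:Exc-psi_i-disjoint}.

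Finally, for \ref{item:Exc-psi_i_pr-tr}, assuming \eqref{eq:assumption} (so $\kappa(\P^2\setminus\bar E)=2$ and $\psi_{A_i'}$ is well defined by the remark following Definition \ref{def:psi_A}), the proper transform of $\Exc\psi_i$ on $X_k$ is $A_i'$ together with the proper transforms of the superfluous curves above; as $\psi_{k+1}$ is an isomorphism near this chain it preserves superfluousness and the snc structure, so running $\psi_{A_i'}$ on $X_k$ contracts exactly this chain, that is, $\Exc\psi_{A_i'}$ equals the proper transform of $\Exc\psi_i$. I expect the main obstacle to be precisely this localization bookkeeping in \ref{item:ale_pr-tr} and \ref{item:Exc-psi_i-disjoint}: one must be certain that the maximal $(-2)$-twig of $D_k$ met by $A_i'$ is genuinely the untouched proper transform of the one met by $A_i''$, so that $A_i'$ still meets a \emph{tip} of $\Delta_k$ and acquires no spurious neighbours from the components created over $p$ — which is exactly what the local triviality of $\psi_{k+1}$ near twigs, Lemma \ref{lem:MMP-properties}\ref{item:Delta_pr-tr}, is designed to guarantee.
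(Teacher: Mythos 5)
Your skeleton (descending induction, one contraction step at a time, and the observation that $\psi_{k+1}$ is an isomorphism near $A_i''$) is the same as the paper's, but the heart of part \ref{item:ale_pr-tr} is not the bookkeeping you describe, and Lemma \ref{lem:MMP-properties}\ref{item:Ui_not_touched},\ref{item:Delta_pr-tr} cannot deliver it: those items give only one-sided inclusions ($(\psi_{k+1})_*\Upsilon_k\subseteq\Upsilon_{k+1}$, $(\psi_{k+1}^{-1})_*\Delta_{k+1}\subseteq\Delta_k$), while what you need is control of $\Delta_k$ in the opposite direction. Concretely, let $W$ be the maximal $(-2)$-twig of $D_{k+1}$ met by $A_i''$ and $W'$ its proper transform. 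Lemma \ref{lem:MMP-properties}\ref{item:Delta_pr-tr} guarantees $W'\subseteq\Delta_k$ and that no component of $\Exc\psi_{k+1}$ attaches to $W'$; it does \emph{not} guarantee that $W'$ is a maximal $(-2)$-twig of $D_k$. A pre-existing $(-2)$-curve $W_0'$ of $D_k$ glued to $\ltip{W'}$ can lie in $\Delta_k$ while its image lies outside $\Delta_{k+1}$ (this is consistent with everything you invoke: $W_0'$ meets $\Exc\psi_{k+1}$, so its image is no longer a $(-2)$-curve, and it is not the proper transform of any twig of $D_{k+1}$, so Lemma \ref{lem:MMP-properties}\ref{item:Delta_pr-tr} says nothing about it). If $A_i''$ meets $W$ in $\ltip{W}$, then $A_i'$ meets $\Delta_k$ in a non-tip component and \eqref{eq:ale} fails --- exactly the failure mode your last sentence dismisses as ``designed to be guaranteed'' by Lemma \ref{lem:MMP-properties}\ref{item:Delta_pr-tr}. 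The paper spends the first half of its proof excluding this configuration: it shows, via Lemma \ref{lem:MMP-properties}\ref{item:Exc-psi_i}, that such a $W_0'$ would have to meet $A_{k+1}$, forcing $\Exc\psi_{k+1}=A_{k+1}$, whence $\psi_{k+1}(W_0')\subseteq\Upsilon_{k+1}$ and $W\subseteq\Delta_{k+1}^{+}$, contradicting $A_i''\cdot\Delta_{k+1}^{+}=0$; a parallel contradiction handles a possible second $(-2)$-twig of $D_k$ met by $A_i'$, which is needed to get $A_i'\cdot\Delta_k=1$ rather than merely $\geq 1$ (alternatively one can use Lemma \ref{lem:MMP-properties}\ref{item:R_pr-tr}, which you never invoke).

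There is a second gap in part \ref{item:Exc-psi_i_pr-tr}. Your ``preserves superfluousness'' argument proves only one inclusion: the proper transform $T$ of $\Exc\psi_i$ on $X_k$ satisfies $T\subseteq\Exc\psi_{A_i'}$. It does not show that the contraction stops after $T$. A component $G'$ of $D_k$ adjacent to $T$ may have been touched by $\psi_{k+1}$, so $(G')^2<\psi_{k+1}(G')^2$; after contracting $T$ its image can become a superfluous $(-1)$-curve on $X_k$ even though the image of $\psi_{k+1}(G')$ on $X_{k+2}$ is not, and then $\psi_{A_i'}$ continues and $\Exc\psi_{A_i'}\supsetneq T$. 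This is precisely where \eqref{eq:assumption} enters the paper's proof: supposing the inclusion proper, one finds such a component $V$ meeting $\sigma_*(\Exc\psi_{k+1})$, contracts $(-1)$-curves in the images of $\sigma_*(\Exc\psi_{k+1})$ until the image of $\sigma(V)$ is a $0$-curve of low branching number, and obtains a $\P^1$-fibration restricting to a $\C^{**}$-fibration of an open subset of $\P^2\setminus\bar{E}$, contradicting \eqref{eq:assumption}. In your write-up the hypothesis \eqref{eq:assumption} is used only for the well-definedness of $\psi_{A_i'}$; the fact that \ref{item:Exc-psi_i_pr-tr} carries this hypothesis while \ref{item:ale_pr-tr} and \ref{item:Exc-psi_i-disjoint} do not should have signalled that something substantive, not merely definitional, is required there.
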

\begin{proof}
 By induction we may assume $i=k+2$. Put $A=A'_{k+2}\subseteq X_{k}$.
	
	\ref{item:ale_pr-tr} Lemma \ref{lem:MMP-properties}\ref{item:center-psi_i} implies that $A_{k+2}$ does not pass through $\psi_{k+1}(A_{k+1})$, so $\psi_{k+1}$ does not touch $A$. It follows that $A$ is as in \eqref{eq:bubble}. Let $W\subseteq \Delta_{k+1}^-$ be the maximal $(-2)$-twig of $D_{k+1}$ meeting $A_{k+2}$ and let $W'=(\psi_{k+1}^{-1})_{*}W$.  By Lemma \ref{lem:MMP-properties}\ref{item:Ui_not_touched},\ref{item:Delta_pr-tr} 
	\begin{equation}\label{eq:A_Ups}
	A\cdot (\Upsilon_{k}+\Delta_{k}^+)\leq A_{k+2}\cdot (\Upsilon_{k+1}+\Delta_{k+1}^{+})=0\quad \mbox{and}\quad A\cdot\Delta_{k}\geq A\cdot W'=1.
\end{equation}
Suppose that \ref{item:ale_pr-tr} fails. Suppose further that $W'$ is not a maximal $(-2)$-twig. Then there is a component $W_0'\subseteq \Delta_{k}$  meeting $W'$, such that $\psi_{k+1}(W_0')\nsubseteq \Delta_{k+1}$. By Lemma \ref{lem:MMP-properties}\ref{item:Exc-psi_i} it follows that $A_{k+1}$ meets $W_0'$ and hence $\Exc \psi_{k+1}=A_{k+1}$. But then $\psi_{k+1}(W_0')\subseteq \Upsilon_{k+1}$ and $W\subseteq \Delta_{k+1}^+$; a contradiction. Thus $W'$ is a maximal $(-2)$-twig of $D_{k}$. Since \ref{item:ale_pr-tr} fails, we have $A\cdot \Delta_{k}\geq 2$ by \eqref{eq:A_Ups}. Then $A$ meets a different maximal $(-2)$-twig $W''\subseteq D_{k}$. Since $A_{k+2}$ is almost log exceptional, $(\psi_{k+1})_*(W'')$ is not a $(-2)$-twig. Then $A_{k+1}$ meets $W''$. Since $\psi_{k+1}$ does not touch $A$, it does not contract the component of $W''$ meeting $A$, hence the latter becomes necessarily a component of $\Upsilon_{k+2}$. But then $A_{k+2}$ is not almost log exceptional; a contradiction.
	
\ref{item:Exc-psi_i-disjoint}, \ref{item:Exc-psi_i_pr-tr} By Lemma \ref{lem:MMP-properties}\ref{item:Exc-psi_i} $\Exc\psi_{k+2}-A_{k+2}$ is contained in the sum of twigs of $D_{k+1}$, so by Lemma \ref{lem:MMP-properties}\ref{item:center-psi_i} it does not pass through the image of $A_{k+1}$. Consequently, the proper and the total transforms of $\Exc \psi_{k+2}$ on $X_k$ are equal. Denote them by $T$. The self-intersection and branching numbers of the components of $\Exc \psi_{k+2}$ and of their proper transforms are the same, so $T\subseteq \Exc\psi_{A}$. Suppose that the inclusion is proper and let $\sigma$ be the contraction of $T$ (it is a part of $\psi_{A}$). Then there is a component $V$ of $\Exc \psi_A-T$ such that $\sigma(V)$ is a superfluous $(-1)$-curve in $\sigma_{*}D_{k}$ and the chain $\sigma_{*}(\Exc \psi_{k+1})$ meets $V$. Now contract the $(-1)$-curves in the subsequent images of $\sigma_{*}(\Exc \psi_{k+1})$ until the image of $\sigma(V)$, say $V'$, becomes a $0$-curve. The branching number of $V'$ in the image of $D_{k}$ equals at most $\beta_{\sigma_{*}(D_{k}+A_{k+1})}(\sigma_{*}V)\leq \beta_{\sigma_{*}D_{k}}(\sigma_{*}V)+1\leq 3$. Thus $|V'|$ induces a $\P^{1}$-fibration which restricts to a $\C^{**}$-fibration of some open subset of $\P^{2}\setminus \bar{E}$; a contradiction with \eqref{eq:assumption}.
\end{proof}

\begin{rem}[Changing the process of almost minimalization]
	Lemma \ref{lem:ale_pr-tr}\ref{item:Exc-psi_i_pr-tr} implies that given a process of almost minimalization $\psi$ of $(X_0,\frac{1}{2}D_0)$ as above and two almost log exceptional curves $A_i$, $A_j$, $j>i$ in this process, there exist another process of almost minimalization $\psi$ which agrees with $\psi$ until it reaches $X_{i-1}$ but at the $i$-th step contracts $A$, the proper transform of $A_j$, instead of $A_i$, and moreover that the respective $\Exc\psi_A$ is the proper transform of $\Exc \psi_{j}$. However, after such reordering it may happen that the push-forward of $A_i$ on $X_{i}$ is no longer almost log exceptional (hence at the end we may get a non-isomorphic almost minimal model). Therefore, we cannot freely change the order of contractions of almost log exceptional curves. This is illustrated by the example below.
\end{rem}

\begin{ex}[Non-uniqueness of almost minimal models]\label{ex:different_weak}
	Let us look at a curve of type $\cJ(2)$ (the situation for other $\cJ(k)$ is similar, see Propositions \ref{prop:J}--\ref{prop:n3}). It is constructed in Section \ref{sec:IJ}. 
	Figure \ref{fig:ex2} illustrates two possible courses ($\psi_{3}\circ \psi_{2}\circ \psi_{1}$ and $\tilde\psi_{2}\circ \psi_{1}$) of the almost MMP for the pair $(X_{0},\frac{1}{2}D_{0})$.
	
	\end{ex}
		\begin{figure}[htbp]
			\begin{displaymath}
			\xymatrix @C=5pc{ 
				\begin{subfigure}{0.45\textwidth}
				\includegraphics[scale=0.4]{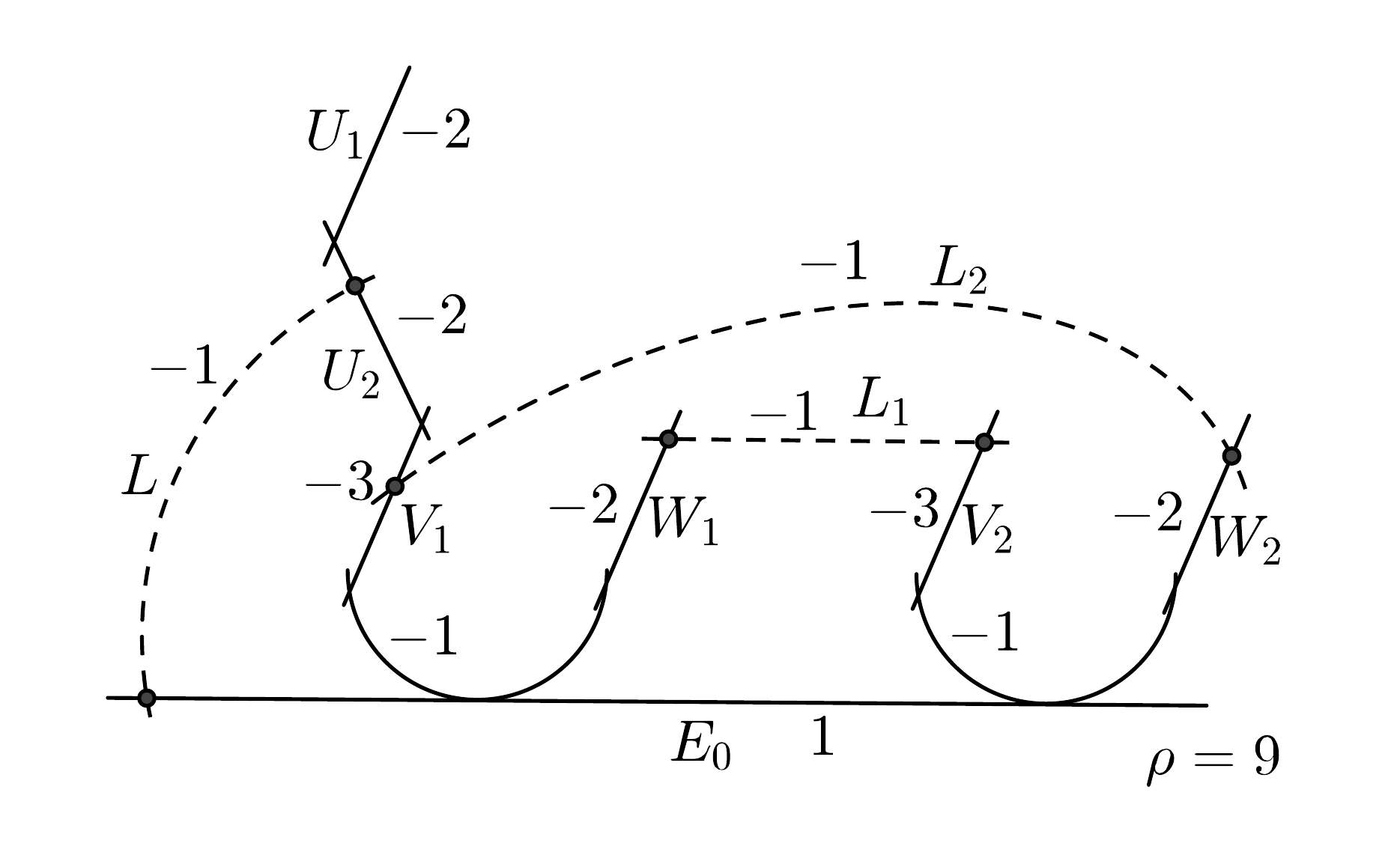}
				\end{subfigure}
				\ar^{\Exc \psi_{1}=L_{1}+W_{1}+V_{2}}[d]
				&
				\\ 
				\begin{subfigure}{0.45\textwidth}
				\includegraphics[scale=0.4]{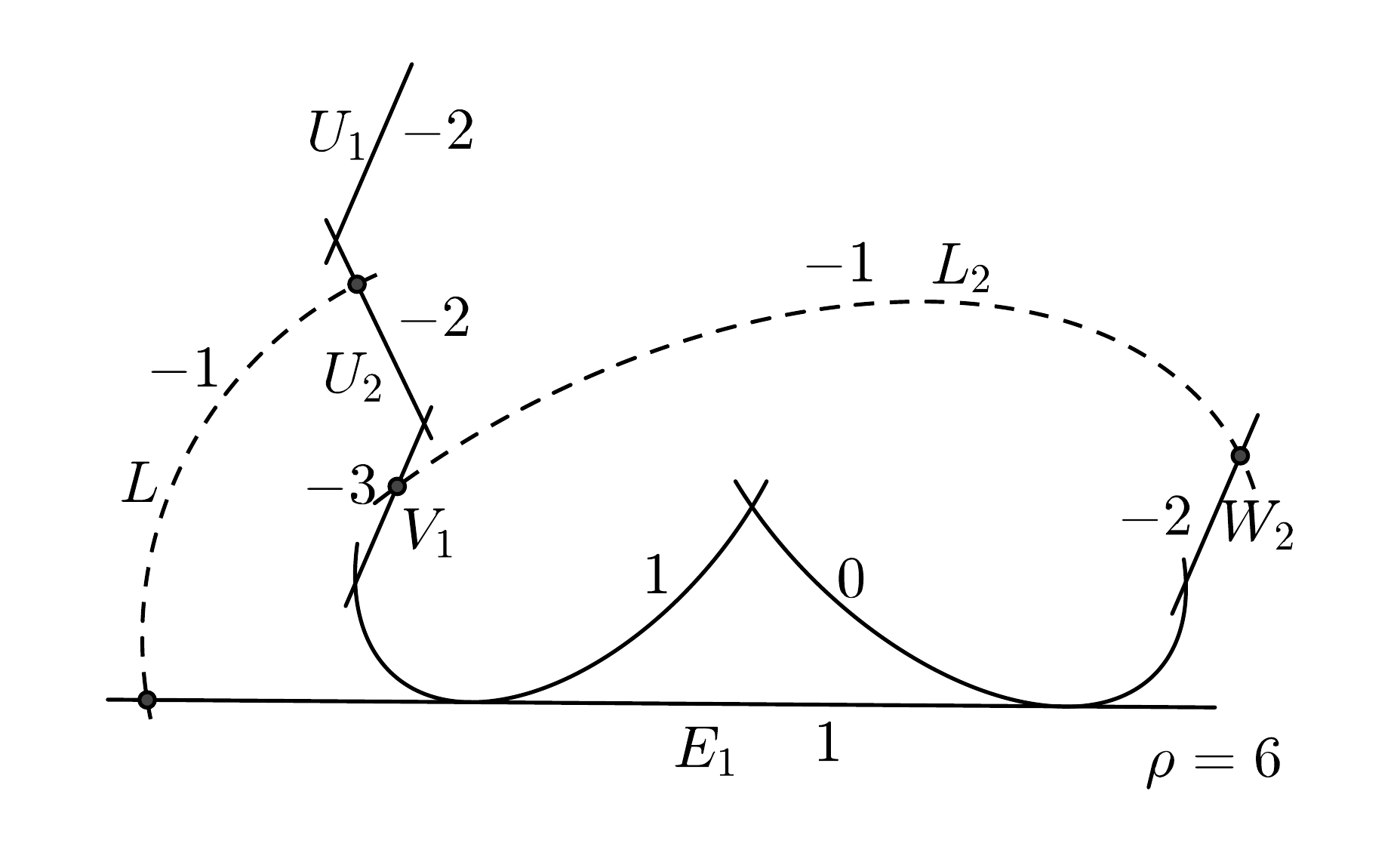}
				\end{subfigure}
				\ar^{\Exc \tilde{\psi}_{2}=L_{2}+W_{2}}[d]
				\ar_{\Exc\psi_{2}=L}[r]
				&
				\begin{subfigure}{0.45\textwidth}
				\includegraphics[scale=0.4]{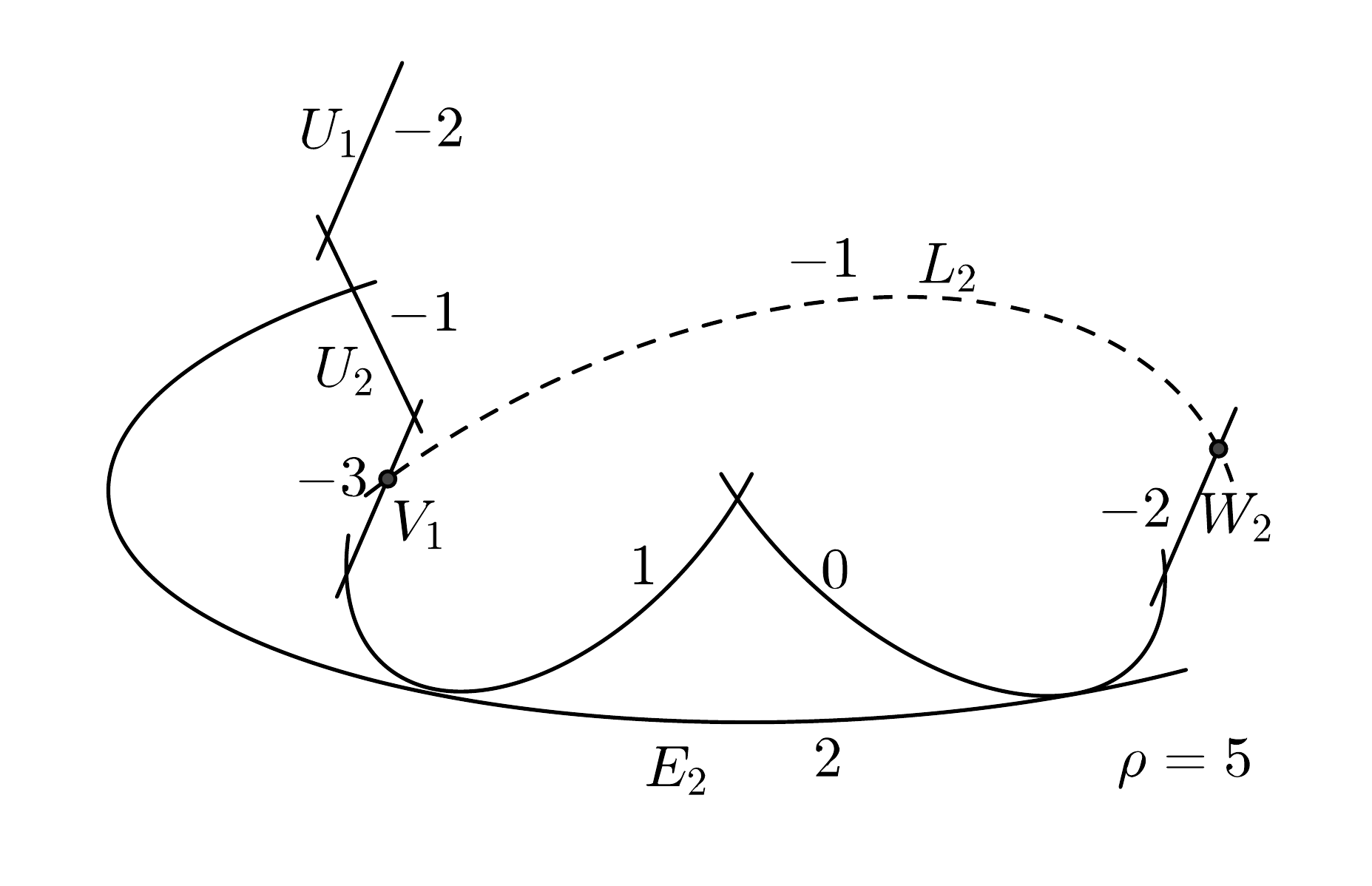}
				\end{subfigure}
				\ar^{\Exc \psi_{3}=L_{2}+W_{2}}[d]
				\\
				\begin{subfigure}{0.45\textwidth}
				\includegraphics[scale=0.4]{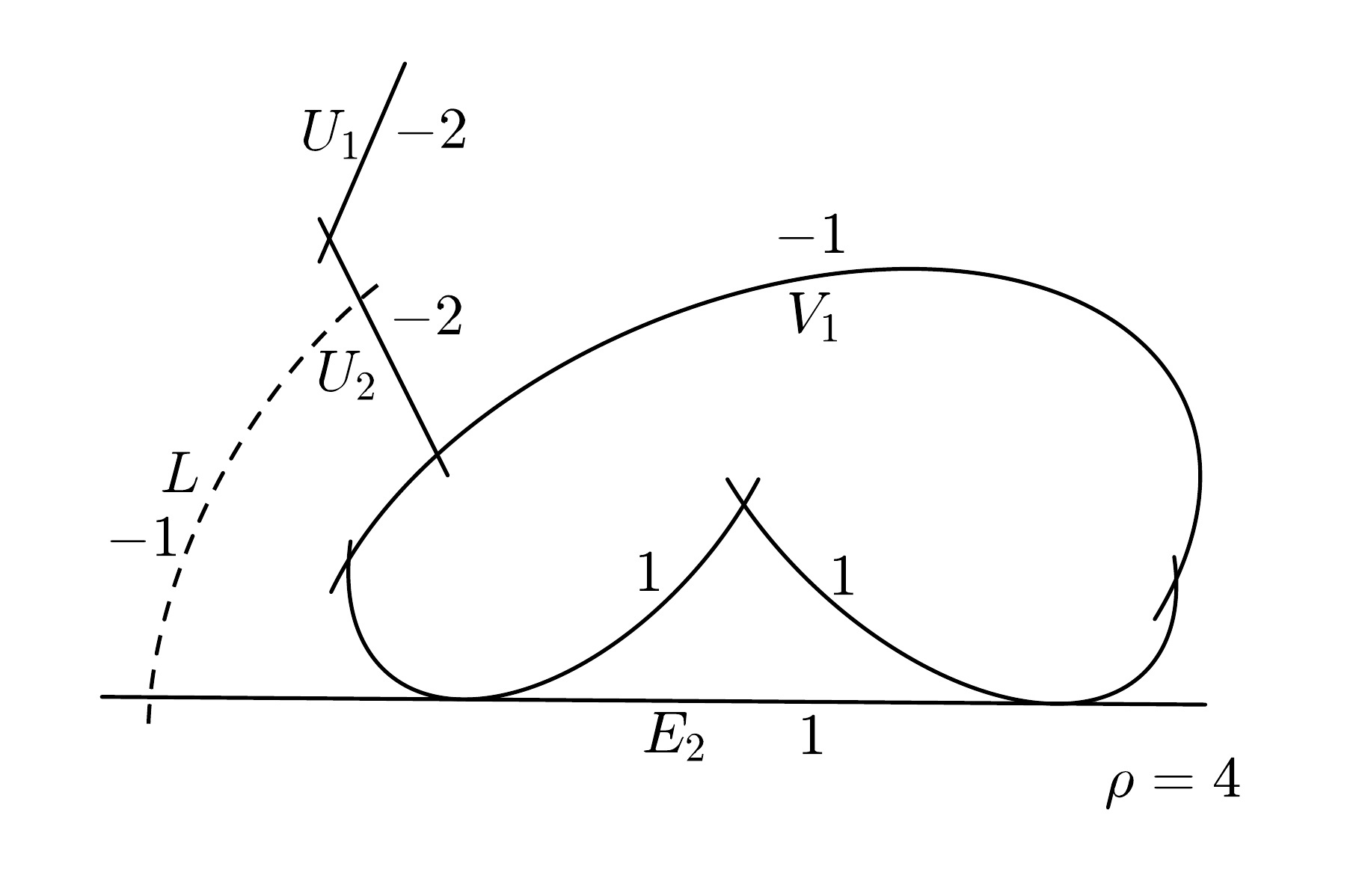}
				\end{subfigure}
				\ar^(.6){\Exc \alpha_{2}=V_{1}+U_{2}+U_{1}}[d]
				&
				\begin{subfigure}{0.45\textwidth}
				\includegraphics[scale=0.4]{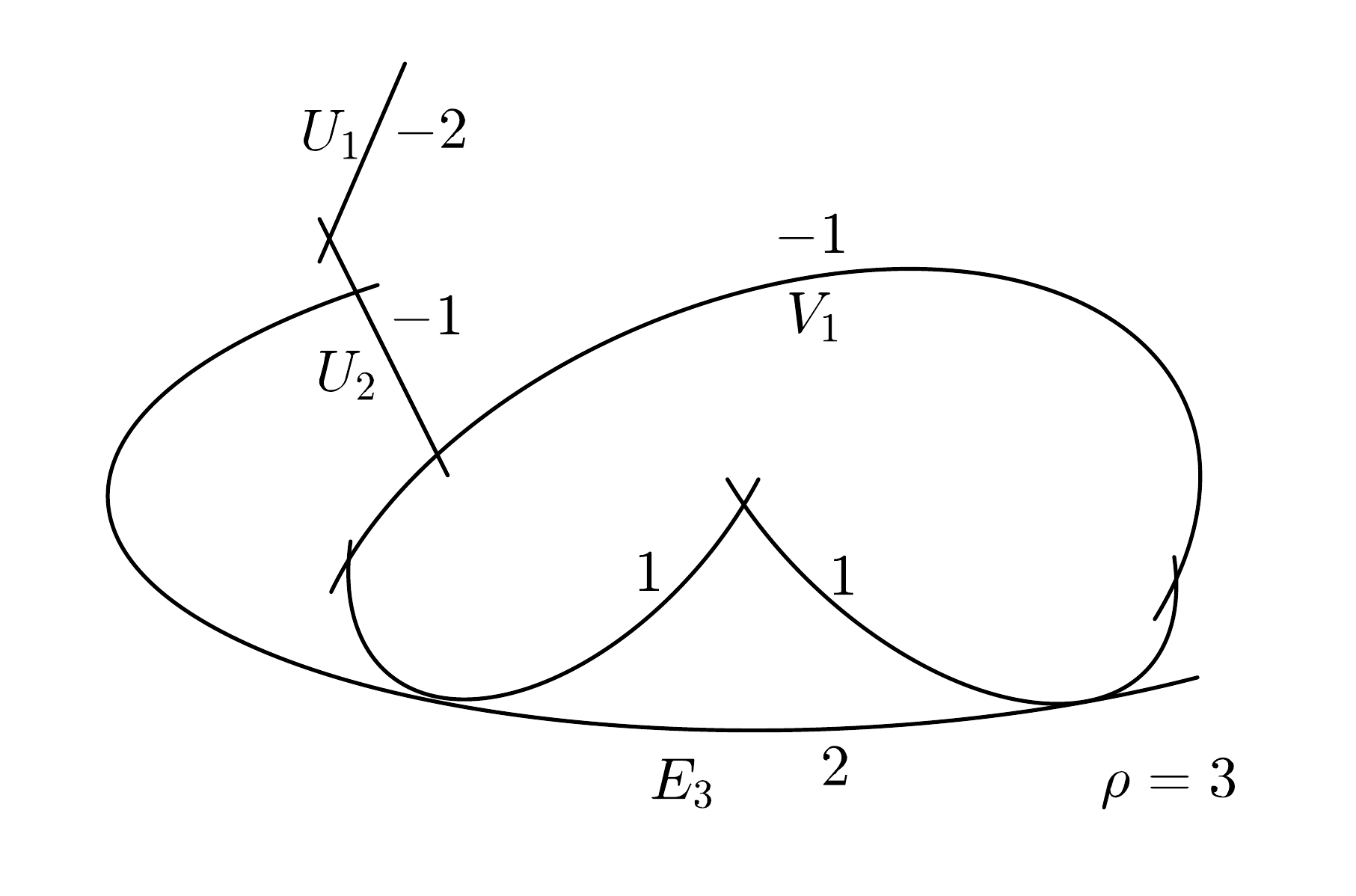}
				\end{subfigure}
				\ar^(.55){\Exc \alpha_{3}=U_{2}+U_{1}}[d]
				\\
				\begin{subfigure}{0.45\textwidth}\centering
				\includegraphics[scale=0.4]{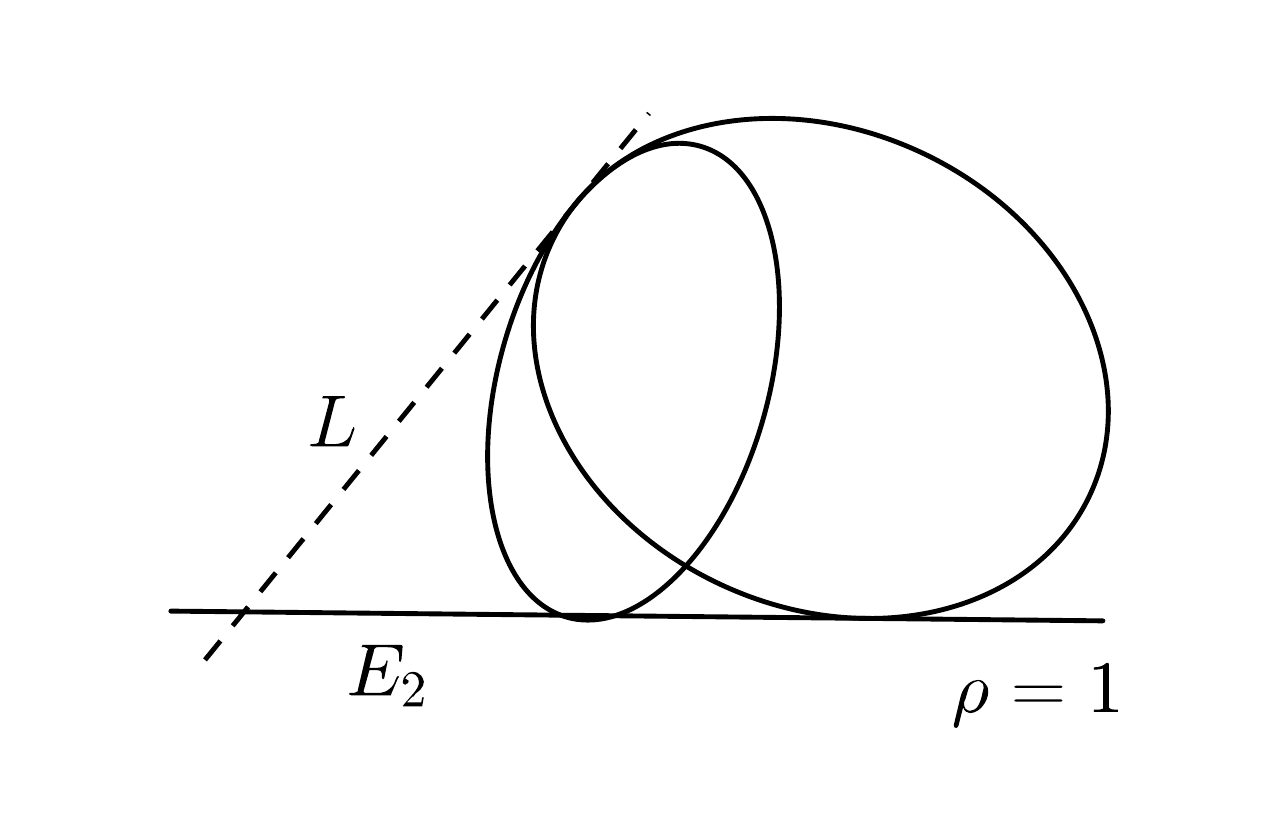}
				\end{subfigure}
				&
				\begin{subfigure}{0.45\textwidth}\centering
				\includegraphics[scale=0.4]{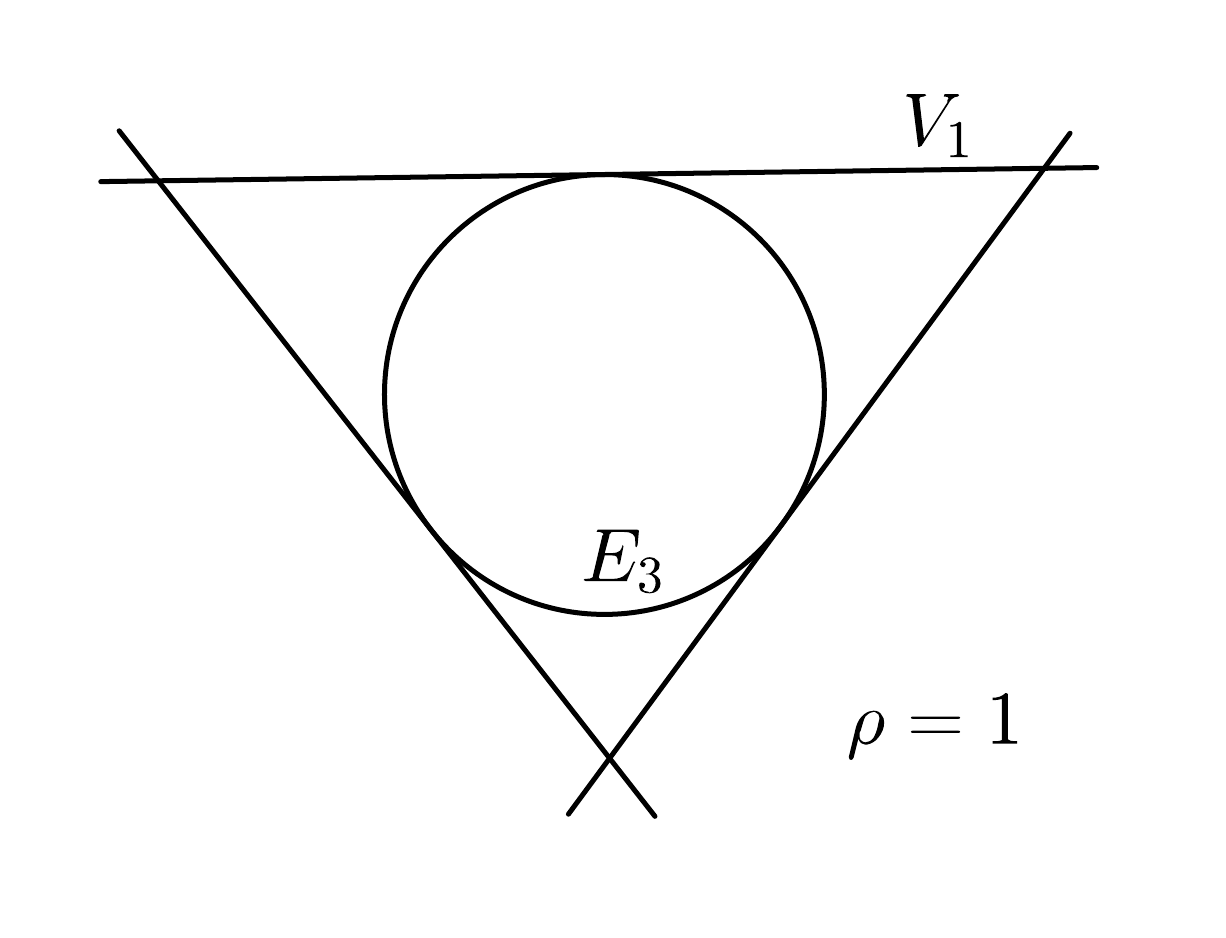}
				\end{subfigure}
			}
			\end{displaymath}	
			\caption{Two minimal models of $(X_0,\frac{1}{2}D_0)$ for $\bar{E}\subseteq \P^{2}$ of type $\cJ(2)$.}
			\label{fig:ex2}
		\end{figure}
	
	First, we find relevant almost log  exceptional curves as in Figure \ref{fig:ex2}. For $j\in\{1,2\}$ denote by $V_{j}$, $W_{j}$ respectively the $(-3)$-curve and the $(-2)$-curve meeting $C_{j}$, and denote the first and the last tip of the long $(-2)$-twig of $Q_{1}$ by $U_{1}$ and $U_{2}$, respectively. Let $L$ be the proper transform on $X_{0}$ of the line $\ll$ tangent to $\bar{E}$ at $q_{1}$. The multiplicity sequence of $q_{1}$ is $(4,4,4,2,2)$, hence the inequality $4< (\ll\cdot \bar{E})_{q_{1}}\leq \deg \bar{E}=9$ implies that $(\ll\cdot \bar{E})_{q_{1}}=8$, so by the projection formula $L\cdot U_{2}=1$ and $\ll\cdot \bar{E}-(\ll\cdot \bar{E})_{q_{1}}=1$. It follows that $L\cdot (D_{0}-U_{2})=1=L\cdot E_{0}$. We now show the existence of $(-1)$-curves $L_{j}$, $j\in \{1,2\}$ satisfying $L_{j}\cdot D_{0}=2$ and meeting $D_{0}$ on $W_{j}$ and $V_{3-j}$. To this end, let $\eta\colon X_{0}\to \tilde {X}_{0}$ be the contraction of $L+(D_{0}-E_{0}-V_{1})$. Then $\rho(\tilde{X}_{0})=1$, so $\tilde X_{0}\cong \P^{2}$ and $\eta(E_{0})$ is a cuspidal curve with cusps $p_{1}$, $p_{2}$ whose multiplicity sequences are $(2,2)$ and $(4,2,2)$, respectively. Moreover, $\eta(V_{1})$ is a line meeting $\eta(E_{0})$ at $\eta(L)$ with multiplicity $2$ and at $p_{1}$ with multiplicity $4$. Thus $\deg \eta(E_{0})=6$. Let $\ll_{2}$ be the line tangent to $\eta (E_{0})$ at $p_{2}$. Then $\ll_{2}\cdot \eta(E_{0})=6$, so $\ll_{2}$ does not meet $\eta(E_{0})$ in any other point and meets the line $\eta(V_{1})$ off $\eta(E_{0})$. Let $\ll_{1}$ be the line joining the two cusps of $\eta(E_{0})$. Then $6=\ll_{1}\cdot \eta(E_{0})\geq (\ll_{1}\cdot \eta(E_{0}))_{p_{1}}+(\ll_{1}\cdot \eta(E_{0}))_{p_{2}}\geq 2+4=6$, so $\ll_{1}$ meets $\eta(E_{0})$ only at $p_{1}$, $p_{2}$ with multiplicities $2$ and $4$. It follows that the proper transforms $L_{1}$, $L_{2}$ of $\ll_{1}$, $\ll_{2}$ are as claimed.
	
	As the first step of the construction of an almost minimal model we take $A_{1}\de L_{1}$, then $\Exc \psi_{1}=W_{1}+L_{1}+V_{2}$ and the images of $L$, $L_{2}$ are both almost log exceptional on $(X_{1},\tfrac{1}{2}D_{1})$. Now we have a choice: we can either take $A_{2}\de \psi_{1}(L)$ or $A_{2}\de \psi_{1}(L_{2})$. To emphasize that, we denote $\psi_{2}$ by $\tilde{\psi}_{2}$ in the second case.
	
	Consider the first choice. Then $\Exc \psi_{2}=\psi_{1}(L)$ and the image of $L_{2}$ is almost log exceptional on $(X_{2},\tfrac{1}{2}D_{2})$. We then take $A_{3}\de (\psi_{2}\circ \psi_{1})(L_{2})$, so $\Exc \psi_{3}=(\psi_{2}\circ \psi_{1})_{*}(L_{2}+W_{2})$. Now $\Delta_{3}^{-}=0$, so $(X_{3},\tfrac{1}{2}D_{3})$ has no almost log exceptional curves,  and we conclude that $n=3$. The peeling morphism \eqref{eq:peeling} contracts the image of $U_{1}+U_{2}$. Thus $X_{\min}\cong \P^{2}$ and $D_{\min}$ is a conic inscribed in a triangle.
	
	Consider the second choice, that is, $A_{2}\de \psi_{1}(L_{2})$. Then  $\Exc \tilde{\psi}_{2}=(\psi_{1})_{*}(L_{2}+W_{2})$. But $(\tilde \psi_{2}\circ\psi_{1})_{*}(V_{1}+U_{2}+U_{1})=\Upsilon_{2}+\Delta_{2}^{+}$, so the image of $L$ meets $\Delta_{2}^{+}$ and thus is not almost log exceptional on $(X_{2},\tfrac{1}{2}D_{2})$. In fact, $\Delta_{2}^{-}=0$, so we conclude that $n=2$. The peeling morphism \eqref{eq:peeling} contracts  the image of $V_{1}+U_{2}+U_{1}$. Again $X_{\min}\cong \P^{2}$, but now $D_{\min}$ consists of two conics meeting with multiplicities $3$ and $1$ and a line tangent to both of them.
	
	Note that although $\psi_{1}(L)$ is almost log exceptional, its push-forward via $\tilde{\psi}_{2}$ is not, even though $\tilde{\psi}_{2}$ does not touch $\psi_{1}(L)$ or the components of $D_{1}$ meeting it. Moreover, the curve $V_{1}$ can become a component of $\Upsilon_{n}$ or not, depending on the choice of $\psi$. Note also that the two almost minimal models constructed above have different Picard ranks: $3$ in the first case and $4$ in the second case.

 \bigskip
\section{Possible types of cusps}\label{sec:possible_HN-types}
In this section we prove the following proposition, which is the main part of Theorem \ref{thm:main}.

\begin{prop}\label{prop:possible_cusp_types} Let $\bar{E}\subseteq \P^{2}$ be as in \eqref{eq:assumption}. Then $\bar{E}$ has one of the singularity types listed in Definition \ref{def:our_curves}.
\end{prop}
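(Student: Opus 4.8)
The plan is to exploit the basic inequality $\lambda_1+\cdots+\lambda_c\leq 6$ from \eqref{eq:lambdaineq} as the central combinatorial constraint, combined with the structural restrictions on $(X_{\min},\tfrac12 D_{\min})$ coming from Proposition \ref{prop:MMP}\ref{item:MMP_MFS}, namely that it is a log del Pezzo surface of Picard rank one. The first step is to record what Remark \ref{rem:lambda>1} tells us: each $\lambda_j\geq 1$, with $\lambda_j=1$ exactly for ordinary cusps, and $\lambda_j\geq \tau_j\geq 2$ for non-ordinary cusps. Since $\sum\lambda_j\leq 6$, this already bounds the number of cusps $c$ by $6$, but more importantly it severely restricts the admissible multisets $(\lambda_1,\dots,\lambda_c)$. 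I would first run through the cases organized by $c$ and by how many cusps are ordinary versus semi-ordinary versus larger, using Lemma \ref{lem:Tono_E2} to control $E^2$ (hence via Lemma \ref{lem:HN-equations} the degree) in each regime.

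The heart of the argument is to translate each surviving numerical possibility for the $\lambda_j$ into an actual multiplicity sequence, and this is where the two-case split announced in the scheme of proof (Section \ref{sec:possible_HN-types} as described, with Sections \ref{sec:F2} and \ref{sec:P2}) becomes essential. Following that outline, I would separate according to whether $X_{\min}$ is singular or smooth. When $X_{\min}$ is singular it must be the quadric cone $\mathbb{F}_2$-type surface, and one analyzes $D_{\min}$ on it to pin down the configurations leading to types $\FE(\gamma)$ and $\cI$; when $X_{\min}\cong\P^2$, the boundary $D_{\min}$ is a simple arrangement of at most four curves (lines/conics) of Picard rank one, and one reconstructs the cusps by pulling back through $\psi$ and the peeling $\alpha_n$. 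Here Lemma \ref{lem:shape_of_contractible_chains} is the workhorse: the connected components of $D_0-E_0$ contract to smooth points, so their chains have the prescribed normal form, and matching these normal forms against the constraints from $\sum\lambda_j\leq 6$ and from the contractibility of $\Upsilon_0+\Delta_0^+$ (Remark \ref{rem:semi-ordinary}) forces the multiplicity sequences into exactly the list of Definition \ref{def:our_curves}.

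Concretely, for each admissible $(\lambda_j)$ I would use \eqref{eq:deflambda} together with Remark \ref{rem:lambda>1}\ref{item:lambda_not-touched} to compute $\lambda_j$ directly in terms of $\tau_j$, $s_j$, $\#Q_j$, and $b_0(Q_j\wedge\Delta_0)$ whenever $\psi$ does not touch $Q_j$, and to handle the touched cusps via Lemma \ref{lem:ale_pr-tr} and the structure of $\Exc\psi_i$ in Lemma \ref{lem:MMP-properties}\ref{item:Exc-psi_i},\ref{item:center-psi_i}. The degree and $E^2$ are then fixed by Lemma \ref{lem:HN-equations}, and the equation \eqref{eq:genus-degree} provides a rigid Diophantine check: given candidate multiplicity sequences one verifies that $(\deg\bar E-1)(\deg\bar E-2)=\sum_j(I(q_j)-M(q_j))$ has a consistent integer solution, discarding the spurious branches. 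This is precisely the kind of filtering that cuts the combinatorially many numerical possibilities down to the finite explicit list.

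The main obstacle, I expect, is not the inequality $\sum\lambda_j\leq 6$ itself — which is already proved — but the reconstruction step: showing that a given value of $\lambda_j$ does not merely bound the complexity of $Q_j$ but actually \emph{determines} the multiplicity sequence up to the finitely many possibilities in the list, and in particular ruling out configurations that satisfy all the coarse numerical constraints yet fail to arise from an honest log del Pezzo model of Picard rank one. The delicate point is that $\psi$ may touch some $Q_j$, so $\lambda_j$ is computed from $\psi_*Q_j$ rather than $Q_j$, and one must carefully track how the almost-minimalization modifies the cusp chains; controlling this interaction — using that each $\Exc\psi_i$ is a chain whose non-$A_i$ part sits in the twigs of $D_{i-1}$, and that the components of $\Upsilon_i$ are disjoint — while simultaneously keeping the del Pezzo geometry of $D_{\min}$ consistent, is where the real work lies, and it is what justifies deferring the two geometric subcases to Sections \ref{sec:F2} and \ref{sec:P2}.
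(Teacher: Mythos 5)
Your proposal reproduces the paper's announced strategy---the inequality \eqref{eq:lambdaineq}, the dichotomy on whether $X_{\min}$ is singular or smooth, and the reconstruction of $(X_{0},D_{0})$ from $D_{\min}$---but it stops exactly where the proof begins, and the part you defer contains ideas that your outline does not supply. The most serious gap: you never identify how the hypothesis that $\P^{2}\setminus \bar{E}$ admits no $\C^{**}$-fibration enters the geometric analysis. It enters through Lemma \ref{lem:orevkov_ending}, which forbids a proper transform of an almost log exceptional curve from closing a loop with $Q_{j}$ by meeting both $\ftip{T_{j}}$ and $T_{j}^{0}+\ftip{T_{j}'}$; this is not a formal consequence of negative definiteness or of \eqref{eq:lambdaineq}, since the Orevkov curves realize precisely this configuration (Remark \ref{rem:Orevkov}) while satisfying the Negativity Conjecture and every numerical constraint you invoke. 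Without this exclusion, and without its consequence Lemma \ref{lem:line} (which converts the contraction of a twig $T_{j}$ into a line through two cusps and yields the bound $b_{0}(\Delta_{n}^{-})\leq 1$ of \eqref{eq:eps}), one cannot even show that $X_{\min}$ has at most one singular point, which is the starting point of the singular case; nor can one establish the key facts of Lemma \ref{lem:P2_lem} in the smooth case ($\deg D_{\min}=5$, $s_{j}=1$ for all $j$, $n\in\{0,2,3\}$), whose proofs again use \eqref{eq:assumption} to exclude low-degree pencil fibrations.

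Second, your proposed mechanism for passing from numerics to multiplicity sequences would not work as described. The values $\lambda_{j}$ are far too coarse to \enquote{determine} the cusps, and organizing the argument by admissible multisets $(\lambda_{1},\dots,\lambda_{c})$ filtered by Lemma \ref{lem:Tono_E2} and Lemma \ref{lem:HN-equations}\eqref{eq:genus-degree} inverts the paper's logic: Lemma \ref{lem:Tono_E2} gives only inequalities (and nothing at all for $c\geq 3$), so it cannot pin down $E^{2}$ or the degree in any \enquote{regime}, and the genus formula is used in the paper only as an auxiliary contradiction at isolated points, never as the primary filter. What actually determines the sequences is the explicit geometry of $D_{\min}$---a degree-$5$ arrangement on $\P^{2}$ whose components meet in exactly $2n$ nodes, or the section/fiber configurations on $Z\cong \F_{2}$ of Lemma \ref{lem:F2_core}---pulled back through $\psi^{+}$, with the contractibility of each $Q_{j}$ to a smooth point then forcing the chain types (for instance $Q_{1}=[(2)_{t_{1}},3,1,2]$ in the $\FE$/$\cI$ case, or $Q_{1}=[2,2,k+1,1,(2)_{k-1}]$ for $\cJ$). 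So your approach coincides with the paper's in outline, but the content---Lemmas \ref{lem:beta_flat}, \ref{lem:orevkov_ending}, \ref{lem:line}, \ref{lem:F2_core}, \ref{lem:P2_lem} and the case-by-case reconstruction of Propositions \ref{prop:F2_FE}--\ref{prop:n3}---is missing, and you acknowledge as much by calling it \enquote{where the real work lies} without indicating how to do it.
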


We use Notation \ref{not:graphs} for the minimal weak resolution $\pi_{0}\colon (X_{0},D_{0})\to (\P^{2},\bar{E})$ and notation from Section \ref{sec:MMP} for a fixed process of almost minimalization
\begin{equation*}
\psi=\psi_{n}\circ\dots\circ\psi_{1} \colon\, (X_{0},\tfrac{1}{2}D_{0}) \to (X_{n},\tfrac{1}{2}D_{n}).
\end{equation*}
In particular, we have a decomposition
\begin{equation*}
\psi_{*}D_{0}=D_{n}=R_{n}+\Upsilon_{n}+\Delta_{n}.
\end{equation*}
The divisors $\Upsilon_{n}$ and $\Delta_{n}$, which are contained in the fixed loci of all systems $|m(2K_{n}+D_{n})|$, $m\geq1$,  consist of $(-1)$-curves and $(-2)$-curves respectively and their geometry is clear; see Notation \ref{not:MMP}. 
Their sum is the exceptional divisor of the peeling morphism \eqref{eq:peeling}
\begin{equation*}
\alpha_{n}\colon(X_{n}, \tfrac{1}{2}D_{n})\to(X_{\min},\tfrac{1}{2}D_{\min}).
\end{equation*}
The goal of Section \ref{sec:picture} is to investigate the geometry of the divisor $R_{n}$ and to impose restrictions on the shape of $D_n$. Recall that the number of components of $D_{\min}=(\alpha_{n})_{*}D_{n}$ is bounded by \eqref{eq:lambdaineq}. In Sections \ref{sec:F2}--\ref{sec:P2} we perform a case-by-case study of possible pairs $(X_{\min},D_{\min})$ and we recover from them the pairs $(X_{0},D_{0})$.

Recall that by Lemma \ref{lem:MMP-properties}\ref{item:Exc-psi_i} all components of $D_{0}$ contracted by a process of almost minimalization are contained in the maximal twigs of $D_{0}$. Let us note that by \cite[Corollary 1.3]{Palka-minimal_models} $D_{0}$ has at most 20 components which are not contained in such twigs. A bound of this type cannot exist if $\kappa(\P^{2}\setminus \bar{E})\neq 2$, see for example,  \cite{Kashiwara,Tono_doctoral_thesis}.

 By Iitaka's Easy Addition Theorem, \eqref{eq:assumption} implies that no open subset of $\P^{2}\setminus \bar{E}$ admits a $\C^{1}$-, $\C^{*}$- or a $\C^{**}$-fibration. In particular, $X_n\setminus D_n$ does not admit such a fibration.

\medskip \subsection{Restrictions on the geometry of almost minimal models.}\label{sec:picture}

Recall that $Q_{j}\subseteq D_{0}$ denotes the exceptional divisor of $\pi_{0}$ over the cusp $q_{j}\in\bar{E}$, $j\in\{1,\dots, c\}$. It contains a unique $(-1)$-curve $C_{j}$, which is the exceptional curve of the last blowup  over $q_{j}$. For more details and notation see Section \ref{sec:resolutions} and Figure \ref{fig:treeQ}. In particular, we use Notation \ref{not:graphs}. 
Additionally, we decompose $\alpha_{n}$ as $\alpha_{n}^{-}\circ\alpha_{n}^{+}$, where $$\alpha_{n}^{+}\colon (X_{n},\tfrac{1}{2}D_{n})\to (Z,\tfrac{1}{2}D_{Z})$$ contracts $\Upsilon_{n}+\Delta_{n}^{+}$ and $$\alpha_{n}^{-}\colon (Z,\tfrac{1}{2}D_{Z})\to (X_{\min},\tfrac{1}{2}D_{\min})$$ contracts the image of $\Delta_{n}^{-}$. We put $\psi^{+}=\alpha_{n}^{+}\circ\psi$.	
We have a factorization $\alpha_{n}\circ \psi=\alpha_{n}^{-}\circ \psi^{+}$, where $\alpha_{n}^{-}\colon Z\to X_{\min}$ is a minimal resolution of singularities:
\begin{equation}\label{eq:theta}
\xymatrix{
	(X_{0},\tfrac{1}{2}D_{0}) \ar[r]^-{\psi} \ar@{->}@/_2pc/[rr]_-{\psi^{+}} & (X_{n},\tfrac{1}{2}D_{n}) \ar[r]^-{\alpha_{n}^{+}} \ar@{->}@/^2pc/[rr]^-{\alpha_{n}} & (Z,\tfrac{1}{2}D_{Z}) \ar[r]^-{\alpha_{n}^{-}} & (X_{\min},\tfrac{1}{2}D_{\min})
	}
\end{equation}

\begin{lem}
	[Properties of $R_{n}$]
	\label{lem:beta_flat}\  
	\begin{enumerate}
		\item\label{item:Ups} Every component of $\Upsilon_{n}-\psi_{*}\Upsilon_{0}$ meets $R_{n}$ in two points, exactly one of which is a center of $\psi$.
		\item\label{item:Rn_connected} $\psi^{-1}_{*}R_{n}$ is a connected subdivisor of $R_{0}$.
		\item\label{item:Rn=n+1} $\#R_{n}=\#D_{\min}=n+1$.
		\item\label{item:Dmin_meet_each_other} Any two distinct components of $D_{\min}$ meet.
		\item \label{item:centers_on_En} For each component $G$ of $D_0-E_0$ we have $\psi^+(G)\cdot \psi^{+}(E_{0})\leq G\cdot E_0+1$. If the equality holds then the unique connected component of $\Exc \psi^+$ meeting $G$ and $E_{0}$ is a chain equal to $A+\Delta_A$, where $\Delta_A$ is a maximal $(-2)$-twig of $D_0$ meeting $G$ and $A$ is an almost log exceptional curve meeting $E_0$, see Figure \ref{fig:Delta_A}.
			\begin{figure}[ht]
				\begin{subfigure}{0.4\textwidth}\centering
					\includegraphics[scale=0.22]{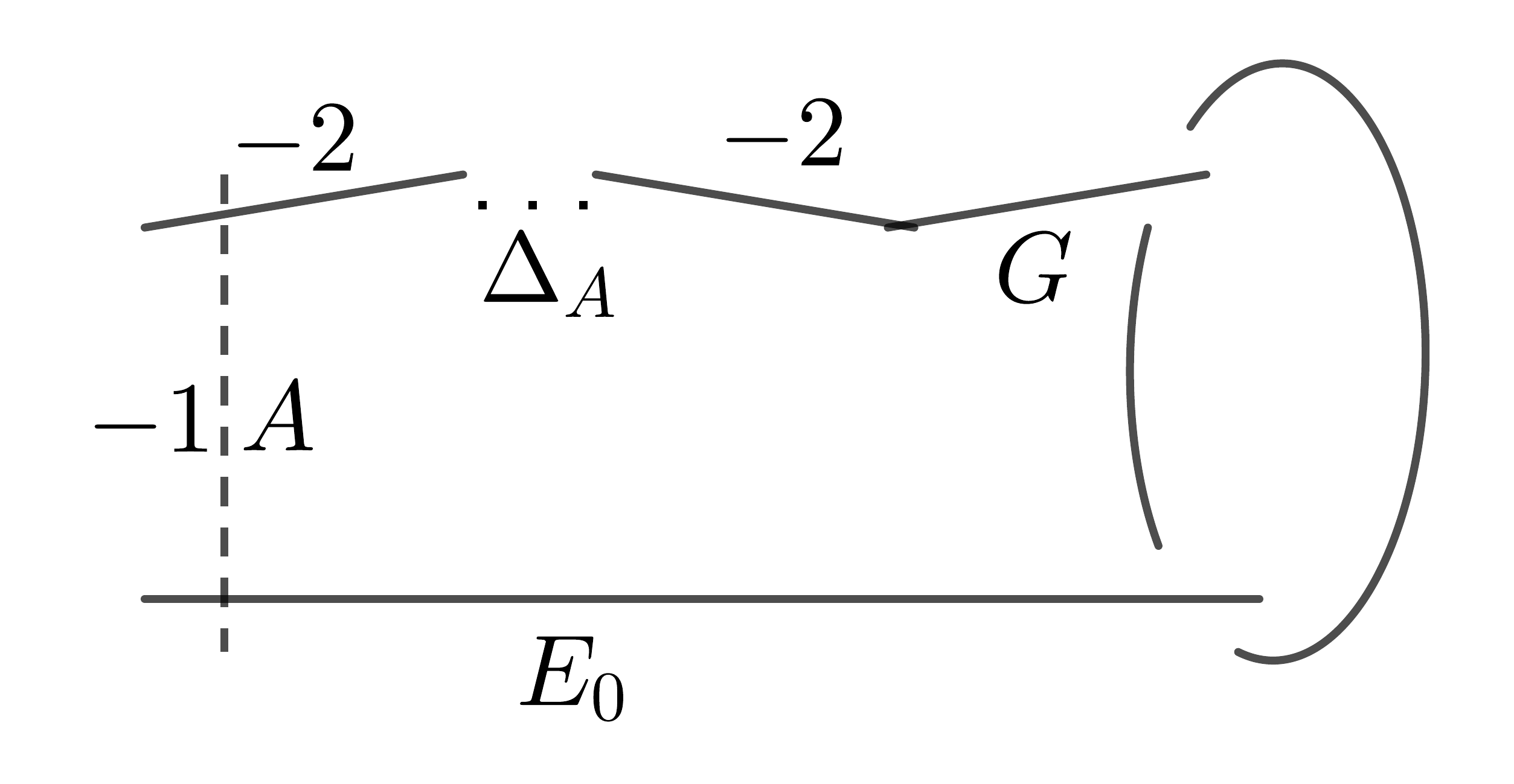}
					\caption{Case $A+\Delta_{A}\subseteq \Exc\psi$.}
				\end{subfigure}
					~\hfill
				\begin{subfigure}{0.4\textwidth}\centering
					\includegraphics[scale=0.22]{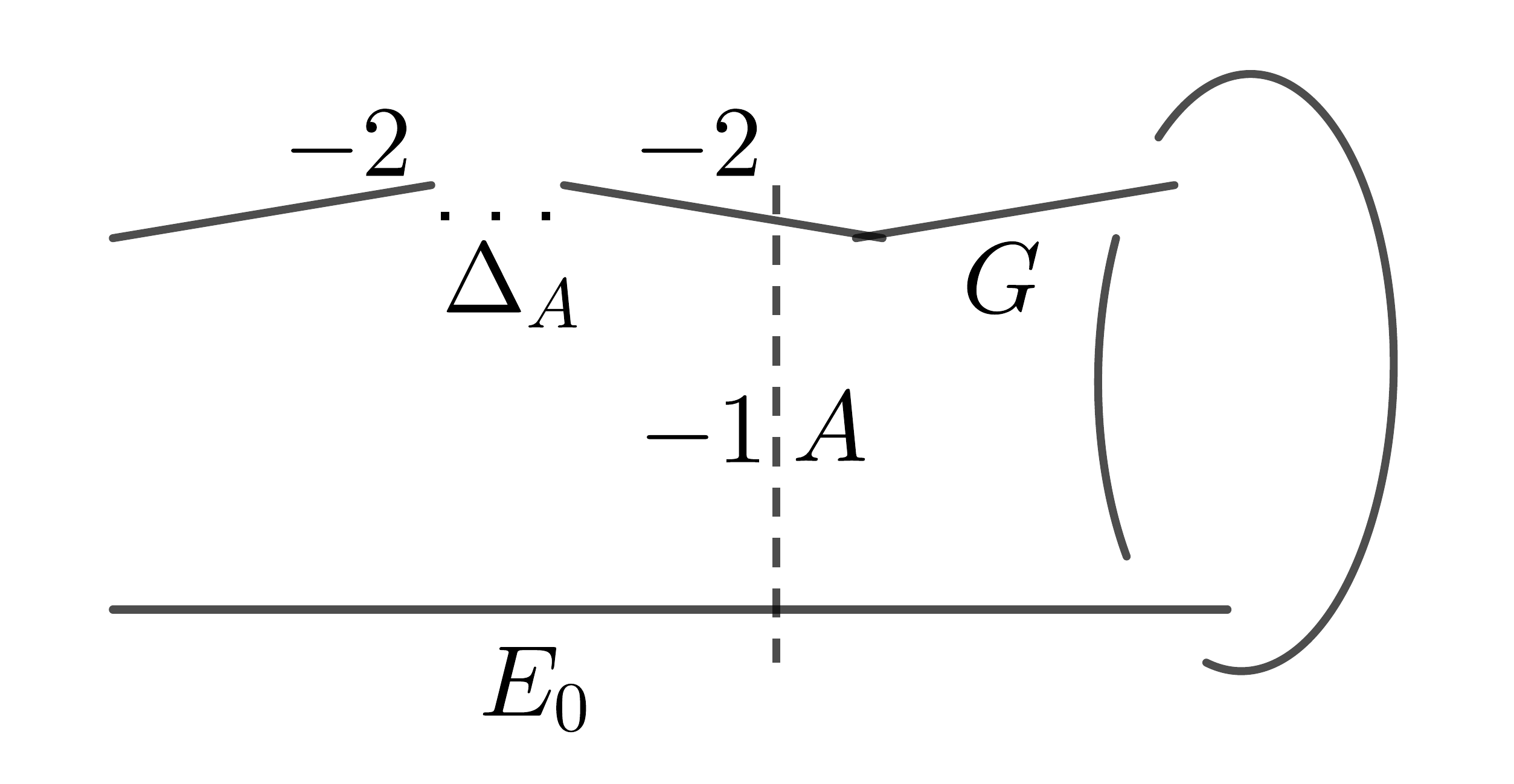}
					\caption{Case $A+\Delta_{A}\not\subseteq \Exc\psi$.}
				\end{subfigure}
				\caption{The preimage on $X_{0}$ of a center of $\psi^{+}$ on $E_{n}$ (Lemma \ref{lem:beta_flat}\ref{item:centers_on_En}).}
				\label{fig:Delta_A}
			\end{figure}
		\item\label{item:meeting_E0} Every component of $\psi^{-1}_{*}R_{n}-E_{0}$ meets $E_{0}+\Delta_{0}-\psi^{-1}_{*}\Delta_{n}$.
		\item\label{item:centers_psi+} The centers of $\psi^{+}$ are the semi-ordinary cusps of $\psi^{+}(E_{0})$ and the images of $A_{1}, \dots, A_{n}$. At each of the latter points exactly two analytic branches of $D_{\min}$ meet.
		\item\label{item:n0} $n=0$ if and only if all cusps of $\bar{E}$ are semi-ordinary.
	\end{enumerate}
\end{lem}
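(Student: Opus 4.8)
The statement to prove is Lemma \ref{lem:beta_flat}, a package of eight structural properties of the divisor $R_n$ and the minimal model $D_{\min}$. Since these parts interlock, the plan is to establish them roughly in the order in which they support one another, using the basic inequality \eqref{eq:lambdaineq}, the properties of the $\psi_i$ collected in Lemma \ref{lem:MMP-properties}, and the pullback results of Lemma \ref{lem:ale_pr-tr}. First I would settle \ref{item:Ups}: a component of $\Upsilon_n$ not coming from $\Upsilon_0$ must have been created or modified by some $\psi_i$, so by Lemma \ref{lem:MMP-properties}\ref{item:center-psi_i} and the definition of $\Upsilon_i$ (Notation \ref{not:MMP}\ref{item:not_Ups}) such a curve is a $(-1)$-curve with $\beta_{D_n}=2$ meeting exactly one component, or with $\beta=3$ meeting $\Delta_n$ once; tracing which of its intersection points is a center of $\psi$ via Lemma \ref{lem:MMP-properties}\ref{item:center-psi_i} should pin down that exactly one of its two meeting points with $R_n$ is such a center.

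\textbf{Core combinatorial steps.} The heart of the lemma is the count in \ref{item:Rn=n+1}, $\#R_n=\#D_{\min}=n+1$, together with \ref{item:Rn_connected} and \ref{item:Dmin_meet_each_other}. For connectedness \ref{item:Rn_connected}, I would use that $D_0$ is connected (it is the resolution of a connected curve) and that $\psi$ contracts only superfluous $(-1)$-curves inside circular or chain configurations; by Lemma \ref{lem:MMP-properties}\ref{item:center-psi_i} each $\psi_i$ keeps $\Supp D_i$ minus the center connected, so $R_n$ stays connected, and pulling this back along $\psi$ (which touches $R$ only along twigs, by Lemma \ref{lem:MMP-properties}\ref{item:R_pr-tr}) gives connectedness of $\psi_*^{-1}R_n$ inside $R_0$. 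For the numerical equalities, the clean route is the identity $\#D_{\min}=\#R_n$ (since $\alpha_n$ contracts exactly $\Delta_n+\Upsilon_n=D_n-R_n$, and by \ref{item:Dmin_meet_each_other} no component of $R_n$ is contracted) and then $\#D_{\min}=n+1$ from the fact that $(X_{\min},\tfrac12 D_{\min})$ is a log del Pezzo surface of Picard rank one (Proposition \ref{prop:MMP}\ref{item:MMP_MFS}), so $\rho(X_{\min})=1$ and a Picard-rank bookkeeping through $\psi^+$ and $\alpha_n^-$ yields the component count. Property \ref{item:Dmin_meet_each_other} — that any two distinct components of $D_{\min}$ meet — I expect to follow from $\rho(X_{\min})=1$ together with ampleness of $-(K_{X_{\min}}+\tfrac12 D_{\min})$: on a surface of Picard rank one every two curves meet, since their classes are positive multiples of a single generator of $\NS$.

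\textbf{The intersection estimate and the centers.} Part \ref{item:centers_on_En} is the most delicate and, I expect, the main obstacle. The inequality $\psi^+(G)\cdot\psi^+(E_0)\le G\cdot E_0+1$ says that $\psi^+$ raises the intersection of $G$ with $E_0$ by at most one, and equality forces a very specific local picture: a single almost log exceptional curve $A$ meeting $E_0$, attached to $G$ through a maximal $(-2)$-twig $\Delta_A$. To prove this I would analyze how the centers of $\psi^+$ lie on $E_n$: by Definition \ref{def:psi_A} and \eqref{eq:bubble} each $A_i$ meets $D$ in two components, and the condition \eqref{eq:ale} that $A_i$ is almost log exceptional forces $A_i\cdot\Delta_i=1$ and $A_i$ to meet a tip of $\Delta_i$; since $E_i\subseteq R_i$ (Remark \ref{rem:semi-ordinary}\ref{item:Ei_Ri}), the only way $\psi^+$ can increase $G\cdot E_0$ is for the contracted chain joining $G$ to $E_0$ to consist of one almost log exceptional curve together with a $(-2)$-twig, which is exactly the configuration in Figure \ref{fig:Delta_A}. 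The bound by one then comes from the fact that each $\psi_i$ touches the relevant image at a single normal-crossing point (Lemma \ref{lem:MMP-properties}\ref{item:center-psi_i}) and each blowup in a chain contributes at most one to a given proper transform's self-intersection. Once \ref{item:centers_on_En} is in hand, part \ref{item:meeting_E0} follows by taking any component of $\psi_*^{-1}R_n$ other than $E_0$ and tracking, via connectedness \ref{item:Rn_connected} and Lemma \ref{lem:MMP-properties}\ref{item:Delta_pr-tr}, that it must attach either directly to $E_0$ or through the part of $\Delta_0$ that gets contracted by $\psi$. Part \ref{item:centers_psi+} identifies the centers of $\psi^+$ as precisely the semi-ordinary cusps of $\psi^+(E_0)$ (these come from $\Upsilon_0+\Delta_0^+$ by Remark \ref{rem:semi-ordinary}\ref{item:semiordinary}) together with the images of $A_1,\dots,A_n$; the claim that only two analytic branches of $D_{\min}$ meet at each $A_i$-image uses \eqref{eq:bubble} ($A_i\cdot D=2$ at two distinct components) propagated through the contractions. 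Finally \ref{item:n0} is essentially a restatement of Remark \ref{rem:semi-ordinary}\ref{item:semiordinary}: if every cusp is semi-ordinary then $\Upsilon_0+\Delta_0^+$ is the whole exceptional divisor and no almost log exceptional curves exist, so $n=0$; conversely if some cusp is not semi-ordinary, then by Remark \ref{rem:lambda>1}\ref{item:lambda_not-ordinary} that cusp contributes a nontrivial $R$-part that must be reached by at least one almost log exceptional contraction, forcing $n\ge 1$.
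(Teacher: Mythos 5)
Most of your plan tracks the paper's own proof: parts \ref{item:Rn=n+1} and \ref{item:Dmin_meet_each_other} (Picard-rank bookkeeping, and positivity of intersections on a rank-one surface) are exactly the paper's arguments, and your outlines for \ref{item:Ups}, \ref{item:Rn_connected}, \ref{item:centers_psi+} and the forward direction of \ref{item:n0} point at the same lemmas the paper uses. The genuine gap is in part \ref{item:centers_on_En}, which you yourself identify as the delicate step but then do not actually prove. Your justification of the bound $\psi^+(G)\cdot\psi^{+}(E_{0})\leq G\cdot E_0+1$ — that each $\psi_i$ touches the relevant image at a single normal-crossing point, and that ``each blowup in a chain contributes at most one to a given proper transform's self-intersection'' — is not an argument about the intersection number of two \emph{different} curves, and it does not exclude the real danger: two disjoint connected components of $\Exc\psi^+$ (arising from two different almost log exceptional curves $A_i$, $A_j$, whose exceptional divisors are pairwise disjoint by Lemma \ref{lem:ale_pr-tr}\ref{item:Exc-psi_i-disjoint}) could each form a chain joining $G$ to $E_0$, and each such chain raises $G\cdot E_0$ by one, giving a total increase of two. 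Nothing in your sketch rules this out, nor the possibility of a single connected component meeting $G$ or $E_0$ more than once; the sentence ``the only way $\psi^+$ can increase $G\cdot E_0$ is \dots'' is a restatement of the conclusion. Note that the paper does not prove this in situ either: it reduces \ref{item:centers_on_En} to the external result \cite[Lemma 4.1(vii)]{Palka-minimal_models}, applied on $X_0$ thanks to Lemma \ref{lem:ale_pr-tr}\ref{item:ale_pr-tr} when the jump occurs at the $\psi$-stage, and applied to the component $U\subseteq \Upsilon_n-\psi_*\Upsilon_0$ with $U\cdot E_n>0$ when the jump occurs at the $\alpha_n^+$-stage. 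You must either cite that lemma or supply a genuine substitute for it.

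Three smaller points. In \ref{item:Ups} you say tracing centers ``should pin down'' that exactly one of the two points is a center of $\psi$; the actual argument for ``exactly one'' is that if both points were centers, $D_0$ would be disconnected, contradicting Lemma \ref{lem:MMP-properties}\ref{item:center-psi_i} — this idea is absent from your sketch. In \ref{item:meeting_E0} you derive the statement from connectedness \ref{item:Rn_connected}, but connectedness only says a component of $\psi^{-1}_*R_n$ meets some other component of $\psi^{-1}_*R_n$, which is not the assertion; the correct route (the paper's) is via \ref{item:Dmin_meet_each_other}, which forces the image of the component to meet the image of $E_0$ in $D_{\min}$, and then \ref{item:centers_on_En} identifies how that intersection arises on $X_0$. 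Finally, in the converse direction of \ref{item:n0}, ``that cusp contributes a nontrivial $R$-part that must be reached by at least one almost log exceptional contraction'' is not an argument — existence of almost log exceptional curves is never automatic; the correct deduction is from your own part \ref{item:Rn=n+1}: a non-semi-ordinary cusp gives $C_j\subseteq R_0$, hence $\#R_0\geq 2$, while $n=0$ would force $\#R_0=1$.
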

\begin{proof}
	\ref{item:Ups} Let $U$ be a component of $\psi^{-1}_{*}\Upsilon_{n}-\Upsilon_{0}$. Because $U\not\subseteq \Upsilon_{0}$, the morphism $\psi$ touches $U$, so $\psi(U)$ contains a center of $\psi$. By Lemma \ref{lem:MMP-properties}\ref{item:Ui_disjoint},\ref{item:Delta_pr-tr}, this center is not contained in $\Delta_{n}+(\Upsilon_{n}-\psi(U))$, so by Lemma \ref{lem:MMP-properties}\ref{item:center-psi_i} it is a point of normal crossings of $\psi(U)$ and $R_{n}$. We have $\psi(U)\cdot R_{n}=2$ by the definition of $\Upsilon_{n}$ and by Lemma \ref{lem:MMP-properties}\ref{item:Ui_disjoint}, so $\psi(U)$ meets $R_{n}$ in exactly two points. Moreover, one of these points is not a center of $\psi$, for otherwise by Lemma \ref{lem:MMP-properties}\ref{item:center-psi_i} $D_{0}$ would not be connected.
	
	\ref{item:Rn_connected} Lemma \ref{lem:MMP-properties}\ref{item:center-psi_i} implies that $\psi^{-1}_{*}D_{n}$ is connected. By Lemma \ref{lem:MMP-properties}\ref{item:Delta_pr-tr}  $\psi^{-1}_{*}\Delta_{n}$ is contained in the sum of twigs of $\psi^{-1}_{*}D_{n}$, hence $ \psi^{-1}_{*}(D_{n}-\Delta_{n})=\psi^{-1}_{*}(R_{n}+\Upsilon_{n})$ is connected. Let $U\subseteq X_{0}$ be a proper transform of a component of $\Upsilon_{n}$. If $U\subseteq \Upsilon_{0}$ then $U$ is the unique $(-1)$-curve over a semi-ordinary cusp (see Remark \ref{rem:semi-ordinary}), and if $U\not\subseteq \Upsilon_{0}$ then $U\cdot \psi^{-1}_{*}R_{n}=1$ by \ref{item:Ups}. In any case, $U$ meets $\psi^{-1}_{*}(D_{n}-\Delta_{n})$ in one point, so $\psi^{-1}_{*}(D_{n}-\Delta_{n})-U$ is connected. Thus $\psi^{-1}_{*}R_{n}=\psi^{-1}_{*}(D_{n}-\Delta_{n}-\Upsilon_{n})$ is connected and $\psi^{-1}_{*}R_{n}\subseteq R_{0}$ by Lemma \ref{lem:MMP-properties}\ref{item:R_pr-tr}.

\ref{item:Rn=n+1} The minimal model $(X_{\min}, \frac{1}{2}D_{\min})$ of $(X_{0},\frac{1}{2}D_{0})$ is a log del Pezzo surface of Picard rank one.  Hence, $$\#D_{\min}-1=\#D_{\min}-\rho(X_{\min})=\#D_{0}-\rho(X_{0})+n=\#\bar E-\rho(\P^2)+n=n.$$ Recall that $R_{n}$ is the proper transform of $D_{\min}$ on $X_{n}$ (see Proposition \ref{prop:MMP}). Therefore, $\#R_{n}=\#D_{\min}=n+1$, as claimed.
	
\ref{item:Dmin_meet_each_other} Because $\rho(X_{\min})=1$, every curve on $X_{\min}$ is numerically equivalent to a positive multiple of an ample divisor on $X_{\min}$, hence every two distinct curves on $X_{\min}$ have a positive intersection number.

\ref{item:centers_on_En} If $G\cdot E_0<\psi(G)\cdot E_n$ then we are done by \cite[Lemma 4.1(vii)]{Palka-minimal_models} and Lemma \ref{lem:ale_pr-tr}\ref{item:ale_pr-tr}, so assume $G\cdot E_0=\psi(G)\cdot E_n<\psi^+(G)\cdot \alpha_n(E_n)$. Then $\psi(G)$ meets a component  $U\subseteq \Upsilon_n-\psi_{*}\Upsilon_0$ for which $U\cdot E_n>0$. The former implies that $\psi^{-1}_{*}U\cdot E_0=0$, so we are done by applying \cite[Lemma 4.1(vii)]{Palka-minimal_models} to $U$.

\ref{item:meeting_E0} This follows immediately from \ref{item:Dmin_meet_each_other} and \ref{item:centers_on_En}.

\ref{item:centers_psi+} This follows from Lemma \ref{lem:MMP-properties}\ref{item:Exc-psi_i} and from the definition of $\Upsilon_{n}$ (see Figure \ref{fig:Upsilon} and Remark \ref{rem:semi-ordinary}).

\ref{item:n0} If $\bar{E}$ has only semi-ordinary cusps then $\Delta_{0}^{-}=0$, so there is no almost log exceptional curve on $(X_{0},\frac{1}{2}D_{0})$ (see \eqref{eq:ale}), hence $n=0$. Conversely, if $n=0$ then \ref{item:Rn=n+1} gives $\#R_{0}=1$, so $R_{0}=E_{0}$. Then $C_{1},\dots, C_{c}\subseteq \Upsilon_{0}$, so $q_{1},\dots, q_{c}$ are semi-ordinary. 
\end{proof}

The following lemma is a step towards Lemma \ref{lem:line}. For the definition of $T_{j}$, $T_{j}^{0}$ and $T_{j}'$ see Notation \ref{not:graphs}\ref{item:not_CT},\ref{item:not_T^0} and \ref{item:not_T'}, respectively.

\setcounter{claim}{0}
\begin{lem}[A restriction on loops in $A_i+Q_j$]\label{lem:orevkov_ending} If a proper transform $A$ of some $A_{i}$, $i\in\{1,\dots, n\}$ meets $\ftip{T_{j}}$ for some $j\in\{1,\dots,n\}$ then $A\cdot (T_{j}^{0}+\ftip{T_{j}'})=0$.
\end{lem}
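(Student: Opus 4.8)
The plan is to assume that the second component of $D_0$ met by $A$ is $T_j^0$ or $\ftip{T_j'}$, produce a closed loop inside $Q_j+A$, and derive from it a fibration of $\P^2\setminus\bar E$ forbidden by \eqref{eq:assumption}. First I would record the intersection data. By Lemma \ref{lem:ale_pr-tr}\ref{item:ale_pr-tr} the proper transform $A\subseteq X_0$ is almost log exceptional on $(X_0,\tfrac12 D_0)$, so $A^2=-1$, $A$ satisfies \eqref{eq:bubble} (hence $A\cdot D_0=2$, meeting two distinct components), and by \eqref{eq:ale} one has $A\cdot\Delta_0=1$ at a tip of $\Delta_0$ and $A\cdot(\Upsilon_0+\Delta_0^+)=0$. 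Since $\Upsilon_0+\Delta_0^+$ is the sum of the exceptional divisors over the semi-ordinary cusps (Remark \ref{rem:semi-ordinary}\ref{item:semiordinary}) and $A$ meets $\ftip{T_j}$, the first component of $Q_j$ and a tip of $D_0$ (Lemma \ref{lem:notation}\ref{item:first_exc}), the cusp $q_j$ cannot be semi-ordinary; moreover $\ftip{T_j}$ must be the $(-2)$-tip $\ftip{\Delta_{T_j}}$ realizing $A\cdot\Delta_0=1$, so $t_j\geq1$, $T_j^0\neq C_j$ (Lemma \ref{lem:notation}\ref{item:T^0=C}), and the second component $G$ of $D_0$ met by $A$ satisfies $G\notin\Delta_0$.

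Next, suppose towards a contradiction that $G\in\{T_j^0,\ftip{T_j'}\}$. Then both feet of $A$ lie on $Q_j$, so $A\cdot E_0=0$. Using Lemma \ref{lem:notation}\ref{item:T^0=C} to place $T_j^0$ immediately after $\Delta_{T_j}$, I would exhibit a chain $P\subseteq Q_j$ joining $\ftip{T_j}$ to $G$: namely $P=\Delta_{T_j}+T_j^0$ if $G=T_j^0$, and the chain $\Delta_{T_j}+T_j^0+\dots+B_j+\dots+T_j'$ running through the first branching component $B_j$ if $G=\ftip{T_j'}$. In either case $\Gamma:=A+P$ is a circular subdivisor of $D_0+A$ contained in $Q_j+A$. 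What distinguishes this from a generic almost log exceptional curve (whose two feet also lie in the tree $D_0$) is that here the whole loop sits inside the single divisor $Q_j$, which is contractible to a smooth point.

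To reach the contradiction I would convert $\Gamma$ into a fibration, following the scheme in the proof of Lemma \ref{lem:ale_pr-tr}\ref{item:Exc-psi_i_pr-tr}. Contracting $A$ and then, in turn, the curves of $\Delta_{T_j}$ (each of which becomes a superfluous $(-1)$-curve) — note that $T_j^0$ is \emph{not} a $(-2)$-curve, by maximality of $\Delta_{T_j}$, so the cycle is not simply collapsed — one arrives, when $G=T_j^0$, at a $(-1)$-curve meeting the image of $T_j^0$ in two points, the seed of a $\C^*$-ruling. Continuing to contract $(-1)$-curves along the image of $Q_j$ produces a $0$-curve $V'$ in the image of $D_0$; since the underlying component is non-branching in $Q_j$ (so $\beta\leq2$) and closing the loop raises its branching number by at most one, $\beta(V')\leq3$. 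Then $|V'|$ induces a $\P^1$-fibration whose general fiber meets the image of $D_0$ in at most three points, and, because $E_0$ is not contracted, it restricts to a $\C^1$-, $\C^*$- or $\C^{**}$-fibration of $\P^2\setminus\bar E$; this contradicts the consequence of \eqref{eq:assumption} recorded at the beginning of this section. Hence $G\notin\{T_j^0,\ftip{T_j'}\}$, that is, $A\cdot(T_j^0+\ftip{T_j'})=0$.

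The hard part will be the uniform extraction of such a $0$-curve $V'$ with $\beta(V')\leq3$. When $G=T_j^0$ the relevant component is non-branching in $Q_j$ and the self-intersection bookkeeping through the contractions is short. When $G=\ftip{T_j'}$ the loop passes through $B_j$, which already has $\beta_{Q_j}(B_j)=3$ and carries a third branch toward $C_j$ and $E_0$; there I expect to need to extract $V'$ on the $T_j$-side before the branching grows, or to check directly that this third branch is horizontal for the resulting fibration. Verifying that at the step where a boundary component becomes a $0$-curve its branching number is still at most three — equivalently, that closing the loop does not force the general fiber to meet $E_0$ together with more than two further boundary points — is the delicate point of the argument.
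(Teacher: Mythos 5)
Your identification of the circular subdivisor $\Gamma=A+P\subseteq Q_j+A$ and of the component where it attaches to the rest of $D_0$ matches the paper's first step, but the mechanism you propose for the contradiction does not work, and the obstruction is exactly at the point you flag as ``delicate''. Contracting $A$ and then the components of $\Delta_{T_j}$ rolls the loop up onto $T_j^0$: the last curve contracted is a $(-1)$-curve meeting the image of $T_j^0$ in \emph{two} points, so after contracting it the image of $T_j^0$ is a \emph{nodal} rational curve, not a smooth one. A nodal rational $0$-curve has arithmetic genus $1$, so its linear system is an elliptic pencil, not a $\P^{1}$-fibration; and an elliptic fibration gives no contradiction, since Easy Addition with (punctured) genus-one fibers only yields $\kappa\leq 2$. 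More structurally, no $\P^1$-fibration can ever absorb the loop into a fiber, because degenerate fibers of $\P^{1}$-fibrations are rational \emph{trees} (Lemma \ref{lem:singular_P1-fibers}). The scheme you cite from the proof of Lemma \ref{lem:ale_pr-tr}\ref{item:Exc-psi_i_pr-tr} works there precisely because the configuration being contracted is a \emph{chain} attached to $V$ at one end, so $V$ is touched once per step and stays smooth; for a circular configuration this fails at the last step, always. (A smaller issue: your claim that $A\cdot\ftip{T_j}=1$ forces $\ftip{T_j}\in\Delta_0$, hence $t_j\geq 1$, is not immediate from \eqref{eq:ale} --- a priori the $\Delta_0$-foot of $A$ could lie elsewhere and $\ftip{T_j}$ could be the non-$(-2)$ foot; ruling this out is part of what the lemma asserts.)

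This gap cannot be repaired by a purely local argument around $Q_j$, because the loop configuration by itself is numerically consistent: the paper's proof kills it only by combining global inputs. What the paper actually does is (i) use the almost-minimal-model structure (Lemma \ref{lem:beta_flat}\ref{item:Rn_connected},\ref{item:Dmin_meet_each_other},\ref{item:centers_on_En}) to show that the unique component $V$ of $P$ meeting $D_0+A-P$ must satisfy $V\cdot E_0>0$, i.e.\ $V\in\{C_1,\tilde C_1\}$; then (ii) contract the loop \emph{and} everything else down to $\P^{2}$, where the image of $V$ (resp.\ $\tilde C_1$) becomes a nodal cubic --- embracing the node rather than avoiding it --- and derive contradictions from Bezout-type degree computations combined with the log del Pezzo bound $\lambda_1\leq 6$ from \eqref{eq:lambdaineq}, Tono's bounds $E^2\leq -3$ (Lemma \ref{lem:Tono_E2}), the genus formula (Lemma \ref{lem:HN-equations}\eqref{eq:genus-degree}), and, in the case $V=\tilde C_1$, the existence of a second almost log exceptional curve. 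Without the bound $\tau_1\leq\lambda_1\leq 6$ the contracted image of $E_0$ could have arbitrarily large degree and no contradiction arises; your proposal never invokes \eqref{eq:lambdaineq} or the $E^2$ bounds at all, so even granting some fibration-type conclusion it could not close the argument.
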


\begin{proof}
	We may assume $j=1$. By Lemma \ref{lem:ale_pr-tr}\ref{item:ale_pr-tr} $A$ is an almost log exceptional curve on $(X_{0},\tfrac{1}{2}D_{0})$. Since $A$ meets $\ftip{T_{1}}$, we have $T_1\neq 0$, $A\cdot\ftip{T_{1}}=1$ and $A\cdot (D_{0}-\ftip{T_{1}})=1$ by \eqref{eq:ale}. Suppose that  $A$ meets $T_{1}^{0}+\ftip{T_{1}'}$. Let $P$ be the circular subdivisor of $D_{0}+A$ and let $V$ be the component of $P$ meeting $D_{0}+A-P$, see Figure \ref{fig:bubble_ind}. Because $D_{n}-R_{n}$ is a sum of disjoint chains, $\psi_{*}P$ contains a component of $R_{n}$. Hence, $V\subseteq \psi^{-1}_{*}R_{n}$ by Lemma \ref{lem:beta_flat}\ref{item:Rn_connected}. The only possible $(-2)$-twig of $D_{0}$ meeting $V$, namely $\Delta_{T_{1}}$ (see Notation \ref{not:graphs}\ref{item:not_Delta_T}), is contained in $P$, so by Lemma \ref{lem:beta_flat}\ref{item:Dmin_meet_each_other},\ref{item:centers_on_En}, $V\cdot E_{0}=\psi^{+}(V)\cdot \psi^{+}(E_{0})>0$. Hence, $V$ equals $C_{1}$ or $\tilde{C}_{1}$.
	\begin{figure}[ht]
		\begin{subfigure}{0.4\textwidth}\centering
			\includegraphics[scale=0.25]{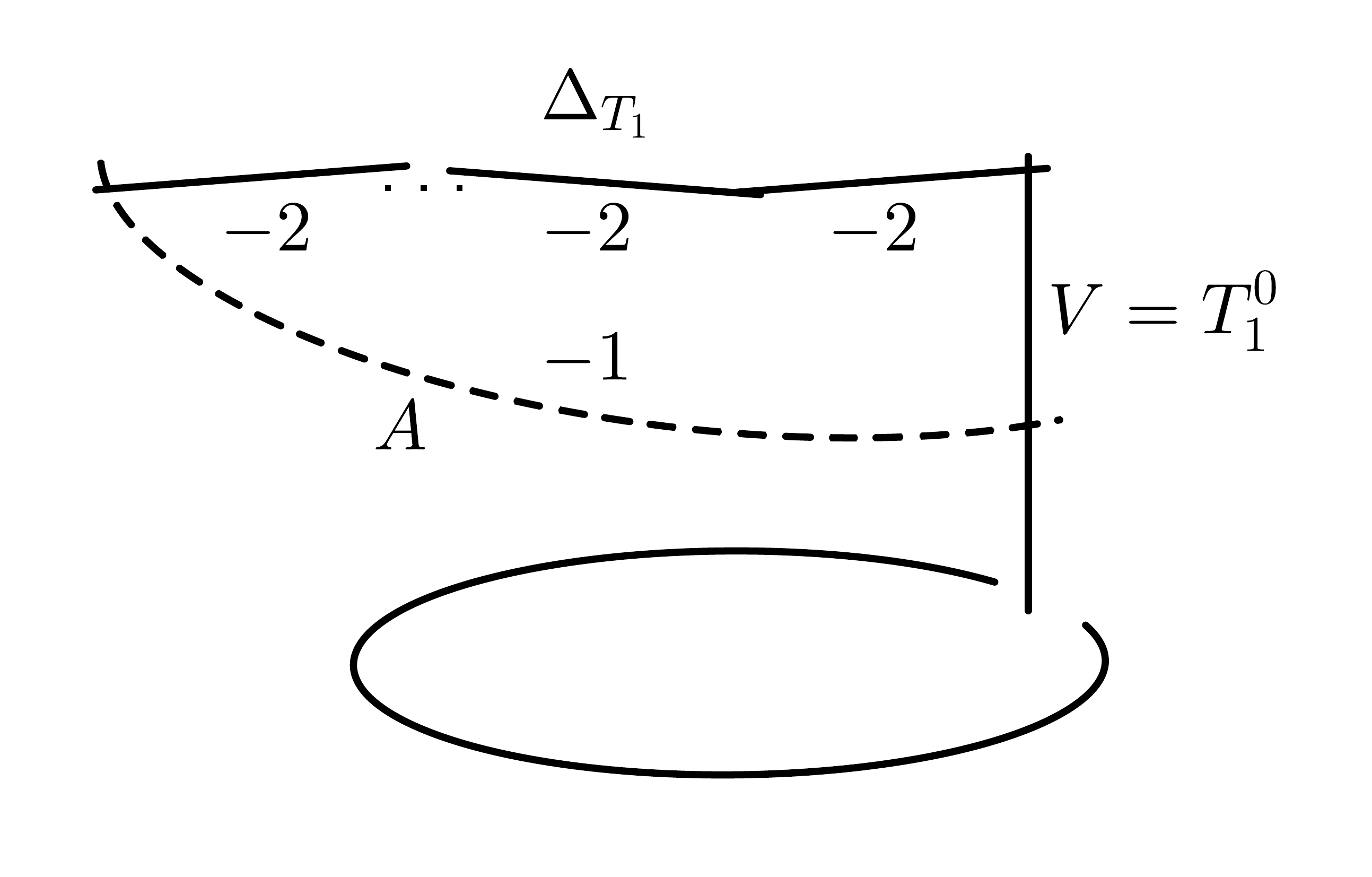}
			\caption{Case $A\cdot T_{1}^{0}=1$.}
		\end{subfigure}
			~\hfill
		\begin{subfigure}{0.4\textwidth}\centering
			\includegraphics[scale=0.25]{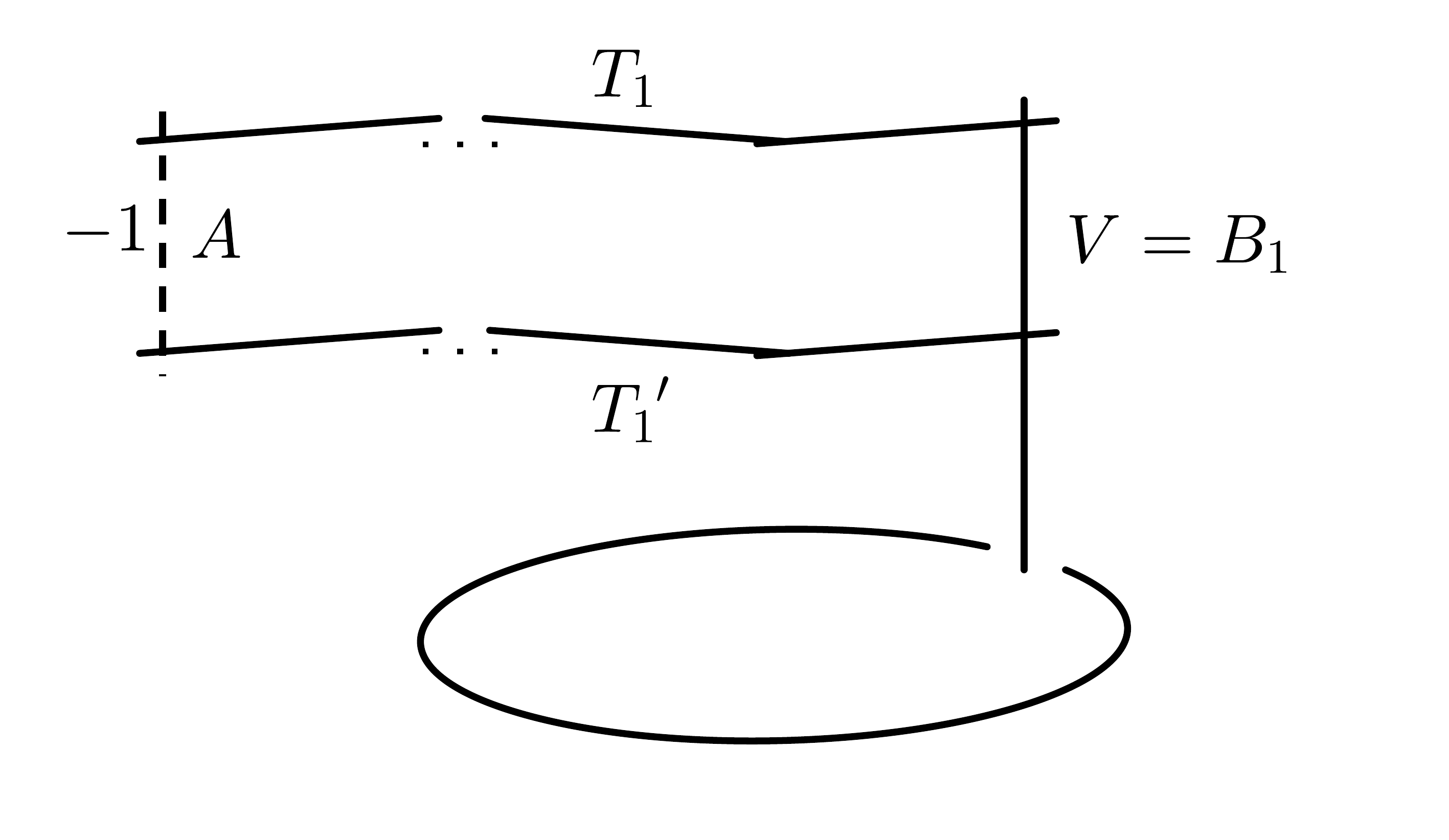}
			\caption{Case $A\cdot \ftip{T_{1}'}=1$.}
		\end{subfigure}
		\caption{The curve $A$ from the proof of Lemma \ref{lem:orevkov_ending}.}
		\label{fig:bubble_ind}
	\end{figure}
	
	Consider the case $V=C_{1}$. Since $C_{1}$ is non-branching in $Q_{1}$, Lemma \ref{lem:notation}\ref{item:B},\ref{item:T^0=C} gives $Q_{1}=P-A$. Let $\sigma$ be the contraction of $P-C_{1}+(Q_{2}+\dots+Q_{c})$. The image of $X_{0}$ is a smooth surface of Picard rank one, so it is isomorphic to $\P^{2}$. The image of $C_{1}$ has one singular point,  which is an ordinary double point, so $\sigma(C_{1})$ is a nodal cubic, and $\sigma(C_{1})\cdot\sigma(E_{0})=C_{1}\cdot E_{0}=\tau_{1}$. Hence, $3\deg\sigma(E_{0})=\tau_{1}$. We have $\tau_{1}\leq \lambda_{1}\leq 6$ by \eqref{eq:lambdaineq}, so $\deg\sigma(E_{0})\leq 2$. In particular, $\sigma(E_{0})$ is smooth, so $c=1$. Moreover, $\sigma$ does not touch $E_{0}$, so 
	\begin{equation*}
	E^{2}=E_{0}^{2}-\tau_{1}=\sigma(E_{0})^{2}-\tau_{1}=(\deg \sigma(E_{0}))^{2}-3\deg\sigma(E_{0})= -2,
	\end{equation*}
	a contradiction with Lemma \ref{lem:Tono_E2}\ref{item:c=1}.
	
	Consider the case $V=\tilde{C}_{1}$. By the definition of $T_{1}^{0}$ the components of $P-A$ are the last components of $Q_{1}$ contracted by $\pi_{0}$, so $Q_{1}=(P-A)+C_{1}+\Delta_{C_{1}}$, where $\Delta_{C_{1}}$ is zero or a $(-2)$-twig of $D_{0}$ meeting $C_{1}$, say $\Delta_{C_{1}}=[(2)_{r-1}]$ for some $r\geq 1$. Let $\sigma$ be the contraction of $(P-\tilde{C}_{1})+(C_{1}+\Delta_{C_{1}})+(Q_{2}+\dots+Q_{c})$. As in the previous case, we see that $\sigma(X_{0})\cong \P^{2}$ and $\sigma(\tilde{C}_{1})$ is a nodal cubic. The curve $\sigma(E_{0})$ has a cusp with multiplicity sequence $((\tau_{1})_{r})$ at $\sigma(C_{1})$, so $3\deg\sigma(E_{0})=\sigma(\tilde{C}_{1})\cdot\sigma(C_{1})=\tau_{1}r+1$. Note that $\psi_{*}Q_{1}$ contains $\psi(C_{1})+\psi(\tilde{C}_{1})$, so 
	\begin{equation*}
	\#\psi_{*}Q_{1}-b_{0}(\Delta_{n}\wedge \psi_{*}Q_{1})-\#\Upsilon^{0}_{n}\wedge \psi_{*}Q_{1}\geq 2.
	\end{equation*}
	We have $s_{1}=0$ (see Notation \ref{not:graphs}\ref{item:not_s}), so $\lambda_{1}\geq \tau_{1}+2$. Thus $\tau_{1}\leq \lambda_{1}-2\leq 4$ by \eqref{eq:lambdaineq}. It follows that $3\deg\sigma(E_{0})\leq 4r+1$. Because $\sigma(E_{0})$ is singular, we have $\deg \sigma(E_{0})\geq 3$, so $r\geq 2$. The intersection multiplicity of a cusp with its tangent line equals the sum of some initial terms of its multiplicity sequence, so in our case $2\tau_{1}\leq \deg\sigma(E_{0})$. Thus $6\tau_{1}\leq 3\deg\sigma(E_{0})=\tau_{1}r+1$, which gives $r\geq 6$. It follows that $\psi$ contracts some components of $\Delta_{C_{1}}$: indeed, otherwise 
	\begin{equation*}
	\#\psi_{*}Q_{1}-b_{0}(\Delta_{n}\wedge \psi_{*}Q_{1})-\#\Upsilon^{0}_{n}\wedge \psi_{*}Q_{1}\geq \#(C_{1}+\tilde{C}_{1}+\Delta_{C_{1}})-1\geq 6,
	\end{equation*}
	so $\lambda_{1}\geq \tau_{1}+6\geq 8$, contrary to \eqref{eq:lambdaineq}. Thus there exists another almost log exceptional curve $A_{i'}$, $i'\neq i$, whose proper transform $A'\subseteq X_{0}$ meets $\ftip{\Delta_{C_{1}}}$. Lemma \ref{lem:ale_pr-tr} implies that $A'$ meets $D_{0}$ in $\ftip{\Delta_{C_{1}}}$ and in 
	\begin{equation*}
	D_{0}-\Delta_{C_{1}}-(\Exc\psi_{A}-A)=(C_{1}+\tilde{C}_{1})+(E_{0}+Q_{2}+\dots+Q_{c}).
	\end{equation*}
	It follows that $A'$ meets $C_{1}+\tilde{C}_{1}$: indeed, otherwise $1=\sigma(\tilde{C}_{1})\cdot\sigma(A')=3\deg\sigma(A')$, which is impossible. Let $\sigma'$ be the contraction of $P-\tilde{C}_{1}+(A'+\Delta_{C_{1}})+(Q_{2}+\dots+Q_{c})$. Again, $\sigma'(X_{0})\cong \P^{2}$ and $\sigma'(\tilde{C}_{1})$ is a nodal cubic. The curve $A'$ meets $C_{1}$, because otherwise $A'$ meets $\tilde{C}_{1}$ and $\sigma'(C_{1})^{2}=C_{1}^{2}+1=0$, which is impossible. Hence, $\sigma'(C_{1})$ is also a nodal cubic, so $9=\sigma'(C_{1})\cdot \sigma'(\tilde{C}_{1})=C_{1}\cdot\tilde{C}_{1}=1$; a contradiction.
\end{proof}

\begin{rem}[Orevkov curves]\label{rem:Orevkov}
The proof of Lemma \ref{lem:orevkov_ending} uses the assumption \eqref{eq:assumption} that $\P^{2}\setminus \bar{E}$ has no $\C^{**}$-fibration, which is necessary and was used to restrict the shape of $T_{1}+T_{1}'$. To see the necessity of this assumption let $\bar{E}\subseteq \P^{2}$ be the Orevkov curve $\ORa(k)$ or $\ORb(k)$ (originally, in \cite{OrevkovCurves} denoted by $C_{4k}$, $C_{4k}^{*}$). The minimal weak resolution of such curve is shown in Figure \ref{fig:Ore}, cf.\ Figures 20--21 in \cite{PaPe_Cstst-fibrations_singularities}. The existence of an almost log exceptional curve $A$ meeting $\ftip{T_{1}}$ and $T_{1}^{0}+\ftip{T_{1}'}$, see Figure \ref{fig:Ore}, is shown, for example, in \cite[Lemma 15]{Tono-on_Orevkov_curves} (see \cite[Example  3.2]{PaPe_Cstst-fibrations_singularities} for the case $k=0$). Here, the twigs $T_{1}$, $T_{1}'$ are complicated (in particular can be arbitrarily long) and $\psi=\psi_{A}$ contracts them.
\begin{figure}[ht]
	\includegraphics[scale=0.25]{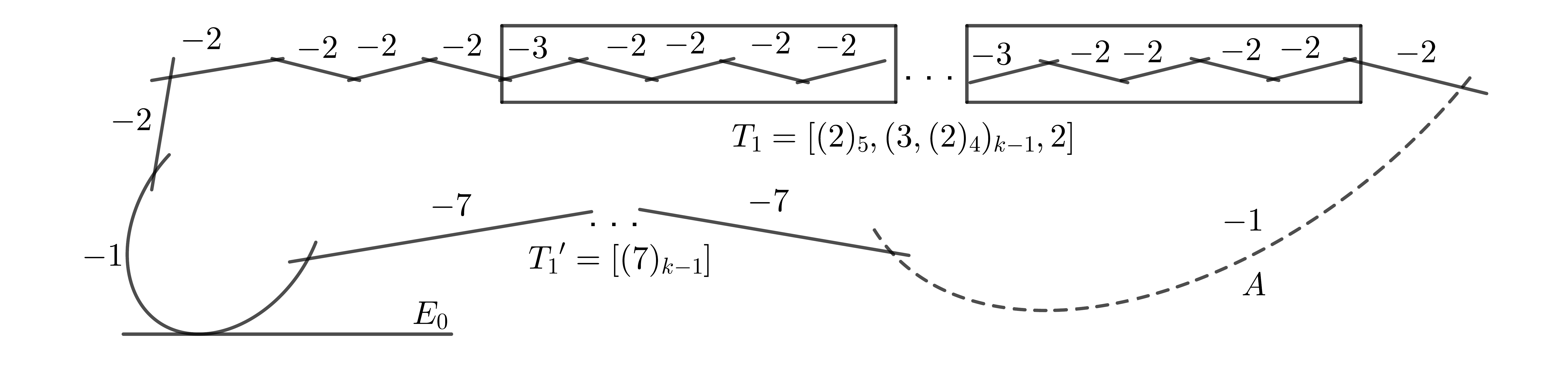}
	\caption{An almost log exceptional curve $A$ excluded by Lemma \ref{lem:orevkov_ending}.}
	\label{fig:Ore}
\end{figure}
\end{rem}

The following lemma will be used to show the existence of a line $\ll\subseteq \P^{2}$ as in Theorem \ref{thm:geometric}.
Note that by Lemma \ref{lem:notation}\ref{item:T^0=C} $T_{j}=\Delta_{T_j}$ (see Notation \ref{not:graphs}\ref{item:not_Delta_T}) if and only if $Q_{j}=[2,\dots,2,1]$ and $\tilde{C}_j=0$, equivalently if and only if $T_j^0=C_{j}$. Note also that if $T_j$ is zero or a twig of $D_0$ then $\tilde{C}_{j}\nsubseteq T_j$, so then the condition $\tilde{C}_j=0$ in the latter equivalence can be omitted.
\begin{lem}[Some almost log exceptional curves give special lines]\label{lem:line}	
	Let $(\mu_{j},\mu_{j}',\dots)$  be the multiplicity sequence of $q_{j}\in \bar{E}$, $j\in\{1,\ldots, c\}$. Assume that for some $j$
\begin{equation}\label{eq:line}
\psi \mbox{ contracts } T_{j} \quad  \mbox{and} \quad T_{j}\neq \Delta_{T_{j}}\neq 0.
\end{equation}
Then we can renumber the cusps of $\bar{E}$ so that the following hold:
\begin{enumerate}
		\item\label{item:A_meets_T1T2} $j=1$ and the proper transform $A\subseteq X_{0}$ of some $A_{i}$ meets $\ftip{T_{1}}$ and $\ftip{T_{2}}$.
		\item\label{item:t1>0} $t_{2}=0$  and $\ftip{T_{2}}\subseteq \Exc\psi_{A}$.
		\item\label{item:Exc-psi_1} $\Exc\psi_{A}=T_{1}+A+T_{2}\wedge (D_{0}-\tilde{C}_{2})$.
		\item\label{item:A_line} $\pi_{0}(A)$ is a line meeting $\bar{E}$ only at $q_{1}$ and $q_{2}$ with multiplicities $\mu_{1}$, $\mu_{2}$, respectively. 
		\item\label{item:deg} $\deg\bar{E}=\mu_{1}+\mu_{2}$.
		\item\label{item:not_semiord} The cusps $q_{1},q_{2}\in \bar{E}$ are not semi-ordinary.
		\item\label{item:tangent} Let $A'\subseteq X_{0}$ be the proper transform of $A_{i'}$ for some $i'\neq i$. Then $A'$ meets $Q_{2}$. If $A'\cdot Q_{1}=0$ then $A'\cdot E_{0}=A'\cdot\ftip{T_{2}'}=1$ and 
		$\deg\bar{E}=\mu_{2}+\mu_{2}'+1$. If additionally $s_{1}=s_{2}=1$ then $\{\tau_{1},\tau_{2}\}=\{2,3\}$.
		\item\label{item:j>3} $\psi$ does not touch $Q_{3}+\dots + Q_{c}$.
\end{enumerate}
Moreover, denoting by $\epsilon$ the number of cusps for which \eqref{eq:line} holds, we have
\begin{equation}\label{eq:eps}
b_{0}(\Delta_{n}^{-})\leq \epsilon\leq 1.
\end{equation}
\end{lem}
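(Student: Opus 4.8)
The plan is to extract from hypothesis \eqref{eq:line} a single almost log exceptional curve whose two feet land on the first tips of two distinct cusps, and then to read off \ref{item:A_meets_T1T2}--\ref{item:j>3} by contracting down to $\P^2$; the concluding bound \eqref{eq:eps} I would obtain from a pairing obstruction. Set $j=1$. Since $\Delta_{T_1}\neq 0$, the first component $\ftip{T_1}$ of $Q_1$ is a $(-2)$-curve and a tip of $D_0$, and by Lemma \ref{lem:MMP-properties}\ref{item:Exc-psi_i},\ref{item:Delta_pr-tr} any component contracted by $\psi$ other than some $A_i$ lies in a twig and is never touched while still a twig; hence $\psi$ can contract $\ftip{T_1}$ only because some $A_i$ meets it. Take $A$ to be the proper transform on $X_0$ of that $A_i$: by Lemma \ref{lem:ale_pr-tr}\ref{item:ale_pr-tr} it is almost log exceptional, so \eqref{eq:ale} gives $A\cdot\Delta_0=1$ with $A$ meeting $\ftip{T_1}$, $A\cdot D_0=2$ and $A\cdot(\Upsilon_0+\Delta_0^+)=0$, while Lemma \ref{lem:orevkov_ending} gives $A\cdot(T_1^0+\ftip{T_1'})=0$. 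Writing $B$ for the second component met by $A$, we have $B\notin\Delta_0\cup\Upsilon_0\cup\Delta_0^+$, so $B$ is non-$(-2)$. As $D_0$ is a tree, $A$ closes a unique loop; if $B$ lay on $Q_1$ I would contract this loop inside $Q_1+A$ exactly as in the proof of Lemma \ref{lem:orevkov_ending}, getting a nodal cubic on $\P^2$ and a numerical contradiction with Lemma \ref{lem:Tono_E2}. Hence $B$ sits on a second cusp $Q_2$ and the loop runs through $E_0$; since $E_0\subseteq R_0$ is never contracted (Remark \ref{rem:semi-ordinary}\ref{item:Ei_Ri}) while the arc at $\ftip{T_1}$ is, the second foot must again be a tip, and after renumbering it is $B=\ftip{T_2}$, whereupon $A\cdot\Delta_0=1$ forces $\Delta_{T_2}=0$, i.e. $t_2=0$ with $\ftip{T_2}\subseteq\Exc\psi_A$. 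Pinning $B$ down precisely is the first delicate point.

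For \ref{item:Exc-psi_1} I would follow the definition of $\psi_A$: contracting $A$ makes the images of $\ftip{T_1}$ and $\ftip{T_2}$ superfluous, and the zip proceeds up $T_1$ and up $T_2$, halting on the $Q_2$ side at $\tilde{C}_2$ (which abuts $C_2$, and $C_2\cdot E_0=\tau_2>0$ blocks further contraction); Lemma \ref{lem:MMP-properties}\ref{item:Exc-psi_i} guarantees that the result is a chain, so $\Exc\psi_A=T_1+A+T_2\wedge(D_0-\tilde{C}_2)$. For \ref{item:A_line},\ref{item:deg}: because $A\cdot D_0=2$ is spent on $\ftip{T_1},\ftip{T_2}$ we get $A\cdot E_0=0$, and contracting $Q_1,\dots,Q_c$ by $\pi_0$ touches $A$ exactly once over each of $q_1,q_2$ (as $A$ meets only the first blow-up divisor there), so $(\pi_0(A))^2=A^2+2=1$ and $\pi_0(A)$ is a smooth rational curve of degree $1$, i.e. a line of multiplicity $1$ through $q_1,q_2$; the projection formula then yields $(\pi_0(A)\cdot\bar E)_{q_i}=\mu_i$, and $A\cdot E_0=0$ gives $\deg\bar E=\mu_1+\mu_2$.

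Part \ref{item:not_semiord} is quick: a semi-ordinary $q_1$ has $T_1=\Delta_{T_1}$, against \eqref{eq:line}, while a semi-ordinary $q_2$ with $t_2=0$ would be ordinary, forcing $\ftip{T_2}=C_2\subseteq\Upsilon_0$ and contradicting $A\cdot\Upsilon_0=0$. For \ref{item:j>3} I would argue that a touched cusp $q_k$, $k\geq 3$, is non-ordinary (ordinary and semi-ordinary cusps lie in $\Upsilon_0+\Delta_0^+$, which $\psi$ never touches by Remark \ref{rem:semi-ordinary}), hence carries an almost log exceptional curve; feeding it through the loop/line analysis of \ref{item:A_line} produces a second special line and a second degree relation, which together with $\lambda_1,\lambda_2\geq 2$ from \eqref{eq:lambda>=tau} and the basic inequality \eqref{eq:lambdaineq} is impossible. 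With \ref{item:j>3} in hand, \ref{item:tangent} follows by running the same machinery on any further almost log exceptional $A'$: it can meet only $Q_1$ and $Q_2$, so it must meet $Q_2$, and when $A'\cdot Q_1=0$ the line $\pi_0(A')$ gives $A'\cdot E_0=A'\cdot\ftip{T_2'}=1$ and $\deg\bar E=\mu_2+\mu_2'+1$; comparing this with $\deg\bar E=\mu_1+\mu_2$ under $s_1=s_2=1$, and bounding against \eqref{eq:lambdaineq}, pins $\{\tau_1,\tau_2\}=\{2,3\}$.

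Finally, for \eqref{eq:eps}: if a second cusp satisfied \eqref{eq:line} then $\psi$ would contract its twig, hence touch its $Q$, so by \ref{item:j>3} it would have to be $q_2$; but $t_2=0$ contradicts $\Delta_{T_2}\neq 0$, giving $\epsilon\leq 1$. I expect the remaining inequality $b_0(\Delta_n^-)\leq\epsilon$ to be the main obstacle. The point is that a maximal $(-2)$-twig $W$ of $D_n$ not meeting $\Upsilon_n$ pulls back, via Lemma \ref{lem:MMP-properties}\ref{item:Delta_pr-tr} and Lemma \ref{lem:beta_flat}, to a $(-2)$-twig of $D_0$ sitting at a first tip $\ftip{T_j}$; one must show that the presence of such a surviving remnant forces the cusp $q_j$ into \eqref{eq:line} (so that $\Delta_n^-\neq 0$ entails $\epsilon\geq 1$), and that the pairing of \ref{item:A_meets_T1T2}--\ref{item:t1>0} permits at most one remnant. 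I would carry this out by analysing the $\alpha_n^-$-contraction of $\Delta_n^-$ to the singular points of the rank-one log del Pezzo surface $X_{\min}$ and matching each such point against the unique \eqref{eq:line}-contraction identified above.
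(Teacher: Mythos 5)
The decisive gap is in part \ref{item:A_meets_T1T2}. You correctly observe that the second foot of $A$ must be a tip $W$ of $D_{0}$ lying outside $\Delta_{0}+\Upsilon_{0}+\Delta_{0}^{+}$, but you then simply declare that ``after renumbering'' $W=\ftip{T_{2}}$. That identification is the hard content of the lemma, and nothing in your sketch delivers it: a tip of $D_{0}$ contained in $Q_{k}$ need not be the first component of $Q_{k}$ --- it can be $\ftip{T_{k}'}$ or the tip of any other twig of the tree $Q_{k}$ --- and Lemma \ref{lem:orevkov_ending} excludes only feet on $T_{1}^{0}+\ftip{T_{1}'}$, so it neither covers all positions inside $Q_{1}$ (your ``exactly as in Lemma \ref{lem:orevkov_ending}'' step) nor says anything about wrong positions inside a second cusp. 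The paper spends a page on precisely this exclusion: it first proves that the proper transform of the subchain $[(2)_{t_{1}}]$ adjacent to $W$ has a component disjoint from $E_{0}$ (itself established via a $\P^{1}$-fibration, the bound \eqref{eq:lambdaineq} and Lemma \ref{lem:HN-equations}\eqref{eq:genus-degree}), then uses Lemma \ref{lem:beta_flat}\ref{item:meeting_E0} to produce a second almost log exceptional curve $A'$ meeting $\ftip{T_{k}}$, applies Lemma \ref{lem:orevkov_ending} to rule out $k=1$, and finally eliminates $W=\ftip{T_{2}'}$ by a conic versus tangent-line degree computation for $\pi_{0}(A)$ and $\pi_{0}(A')$. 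Two related, smaller gaps: in \ref{item:Exc-psi_1} your ``zip'' argument shows only that $\psi_{A}$ does not pass $\tilde{C}_{2}$, not that it actually consumes all of $T_{2}\wedge(D_{0}-\tilde{C}_{2})$ (the paper needs Lemma \ref{lem:beta_flat}\ref{item:meeting_E0} and the structure of $\Upsilon_{n}^{0}$ for this); and your derivation of \ref{item:j>3} before \ref{item:tangent} is circular, since you feed curves meeting $Q_{k}$, $k\geq 3$, ``through the line analysis'' even though that analysis presupposes knowing where their feet lie, which is exactly part \ref{item:A_meets_T1T2} for those curves. The paper proves \ref{item:tangent} first, by Bezout-type arguments against the line $\pi_{0}(A)$ (in particular, assuming $A'\cdot Q_{2}=0$ leads to $t_{1}=0$, contradicting \eqref{eq:line}), and only then is \ref{item:j>3} immediate.

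The second genuine gap is the inequality $b_{0}(\Delta_{n}^{-})\leq\epsilon$, which you yourself flag as the main obstacle. Your plan rests on the premise that a connected component of $\Delta_{n}^{-}$ pulls back to a $(-2)$-twig of $D_{0}$ sitting at a first tip $\ftip{T_{j}}$, and that such a surviving remnant forces \eqref{eq:line} for $q_{j}$. The premise is false in the configurations that actually occur: by Lemma \ref{lem:F2_core}\ref{item:F2_Q1} one has $Q_{1}=[(2)_{t_{1}},3,1,2]$, where the surviving $(-2)$-curve giving the singular point of $X_{\min}$ is the tip adjacent to $C_{1}$, not $\Delta_{T_{1}}$, while \eqref{eq:line} holds for $j=1$ for reasons unrelated to that twig; so no local matching of components of $\Delta_{n}^{-}$ against singular points of $X_{\min}$ can succeed. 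The paper's argument is instead a global count: it introduces the divisor $\hat{\Upsilon}$ of components of $D_{0}-\Delta_{0}-\Upsilon_{0}^{0}$ whose images lie in $\Upsilon_{n}$, shows $\psi_{*}\hat{\Upsilon}\subseteq\Upsilon_{n}-\Upsilon_{n}^{0}$ (using Lemma \ref{lem:orevkov_ending} once more), proves the exact count $b_{0}(\Delta_{0})-b_{0}(\Delta_{n}^{-})=n+\#\hat{\Upsilon}$, and then bounds $b_{0}(\Delta_{0})\leq\#(\psi^{-1}_{*}R_{n}-E_{0}+\hat{\Upsilon})+\epsilon$ by assigning to every connected component of $\Delta_{0}$ the component of $\psi^{-1}_{*}R_{n}-E_{0}+\hat{\Upsilon}$ met by the twig containing it; a collision of this assignment can occur only at a branching component $B_{j}$ whose two twigs are $T_{j}$, $T_{j}'$, in which case \eqref{eq:line} holds for that $j$, so there are at most $\epsilon$ collisions. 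Combining these facts with $\#(\psi^{-1}_{*}R_{n}-E_{0})=n$ from Lemma \ref{lem:beta_flat}\ref{item:Rn=n+1} gives the inequality. Your treatment of the easy parts --- \ref{item:t1>0}, \ref{item:A_line}, \ref{item:deg}, \ref{item:not_semiord} and the bound $\epsilon\leq 1$ --- does follow the paper's lines, but without part \ref{item:A_meets_T1T2} and this counting argument the proof does not stand.
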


\begin{proof}
Without loss of generality we may assume $j=1$.
	
 \ref{item:A_meets_T1T2} Because $T_{1}\subseteq\Exc \psi$, by Lemma \ref{lem:MMP-properties}\ref{item:Exc-psi_i} $T_{1}$ is a twig of $D_{0}$ and the proper transform of some $A_{i}$, say $A$, meets $\ftip{T_{1}}$. Then $T_{1}\subseteq \Exc\psi_{A}$ by Lemma \ref{lem:ale_pr-tr}\ref{item:Exc-psi_i_pr-tr}. By assumption, $T_{1}$ contains a twig of $D_{0}$ of type $[(2)_{t_{1}},a]$ for some $t_{1}\geq 1$, $a\geq 3$, so $A$ meets a tip $W\not\subseteq T_{1}$ of $D_{0}$ contracted by $\psi_{A}$. Hence, $W^{2}=-(t_{1}+2)\leq -3$. We may assume that $W\subseteq Q_{k}$ for some $k\in \{1,2\}$. Contract $C_{k}$ and subsequent $(-1)$-curves in the images of $Q_{k}$ as long as $W$ is not touched and denote this morphism by $\sigma$, see Figure \ref{fig:35_phi}. Because $Q_{k}$ contracts to a smooth point, the divisor $\sigma_{*}Q_{k}$ has a twig $T_{W}=[t_{1}+2,1,(2)_{t_{1}}]$ such that $\sigma(W)=\ftip{T_{W}}$. Clearly, $W\neq \ftip{T_{1}}$.
 
  \begin{figure}[ht]
  	\includegraphics[scale=0.26]{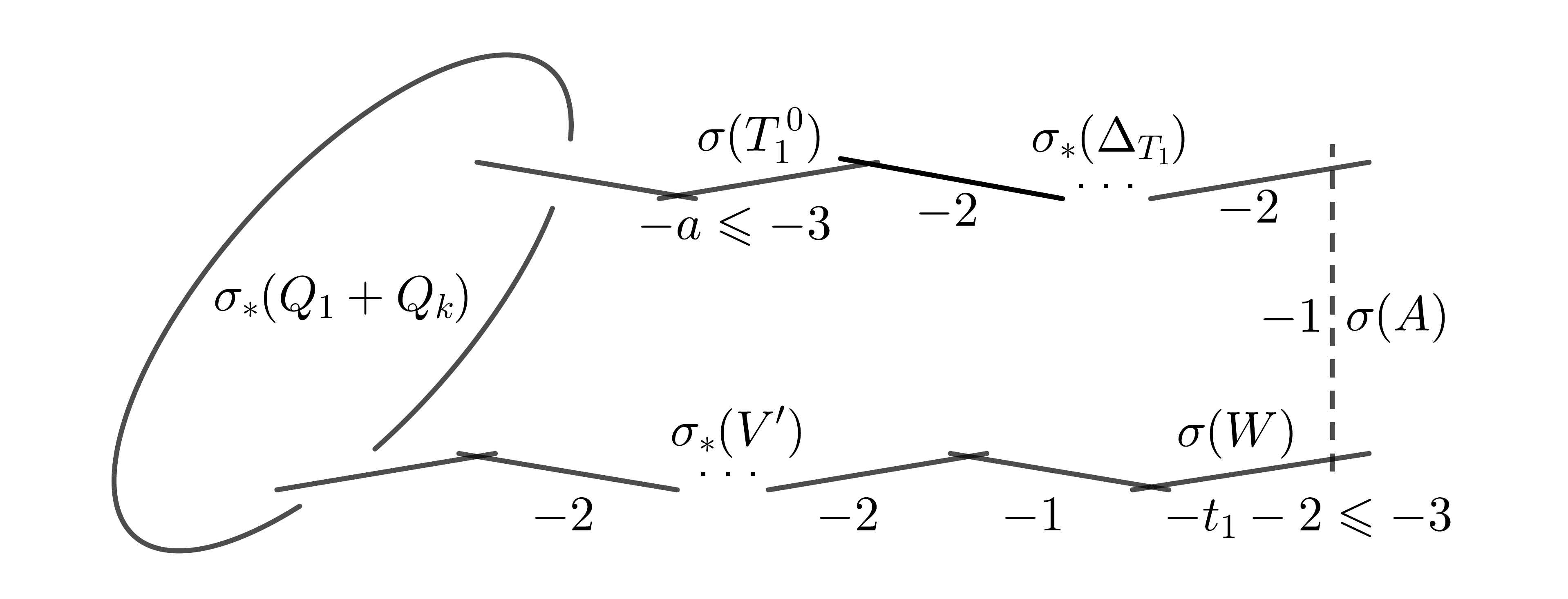}
  	\caption{The divisor $\sigma_{*}(Q_{k}+Q_{1})$ from the proof of Lemma \ref{lem:line}\ref{item:A_meets_T1T2}.}
  	\label{fig:35_phi}
  \end{figure}
  
 Suppose that  $W\neq \ftip{T_{2}}$. Then $T_{W}$ is a proper subdivisor of $\sigma_{*}Q_{k}$. It follows that the proper transform $V'\subseteq X_{0}$ of the subchain $[(2)_{t_{1}}]$ of $T_{W}$ is disjoint from $\Delta_{0}$ and contains no branching component of $Q_{k}$.

 We claim that $V'$ has a component $V$ disjoint from $E_{0}$. Suppose otherwise. Then $V'=\tilde{C}_{k}$ and $W$ meets $C_{k}$, so $\sigma=\id$ and $t_{1}=1$. We have now  $\Exc\psi_{A}=W+A+T_{1}=[3,1,2,3,(2)_{b}]$ for some $b\geq 0$, hence $\psi_{A}$ touches $C_{k}$ exactly $b+2$ times. Let $\eta\colon X_{0}\map X'$ be a composition of $\psi_{A}$ and $b+1$ blowups on the proper transforms of $C_{k}$, of which the first $\min\{\tau_{k},b+1\}$ ones are centered on the proper transforms of $E_{0}$, too. 
 Let $D'$ be the reduced total transform of $D_{0}$. Then $|\eta_{*}C_{k}|$ induces a $\P^{1}$-fibration of $X'$ and, since $X'\setminus D'$ is isomorphic to an open subset of $X_{0}\setminus D_{0}$, \eqref{eq:assumption} implies that $$4\leq \eta_{*}C_{k}\cdot D'=2+\eta_{*}E_{0}\cdot \eta_{*}C_{k}=2+\max\{\tau_{k}-b-1,0\}.$$ Hence, $\tau_{k}\geq 3+b$. The divisor $\psi_{*}Q_{k}-(\Delta_{n}+\Upsilon^{0}_{n})\wedge \psi_{*}Q_{k}$ contains the images of $C_{k}$ and $\tilde{C}_{k}$, so $\lambda_{k}\geq \tau_{k}+2\geq 5+b$. By \eqref{eq:lambda>=tau}, $\lambda_{1}\geq 2$, so \eqref{eq:lambdaineq} implies that $k=1$. The divisor $Q_{1}$ contains a branching component $B_{1}$, because  otherwise $Q_{1}$ is a chain and $W=\ftip{T_{1}'}$, contrary to Lemma \ref{lem:orevkov_ending}. Moreover, $B_{1}\neq \tilde{C}_{1}$ because $\tilde{C}_{1}$ is contained in the twig $T_{W}$ of $Q_{1}$. Hence, $\psi_{*}Q_{1}-(\Delta_{n}+\Upsilon^{0}_{n})\wedge \psi_{*}Q_{1}$ contains the images of $C_{1}$, $\tilde{C}_{1}$ and $B_{1}$, so $\lambda_{1}\geq 3+\tau_{1}\geq 6+b$. Now by  \eqref{eq:lambdaineq} $c=1$, $b=0$, $\tau_{1}=3$ and 
 \begin{equation*}
 \psi_{*}Q_{1}-(\Delta_{n}+\Upsilon^{0}_{n})\wedge \psi_{*}Q_{1} =\psi(C_{1})+\psi(\tilde{C}_{1})+\psi(B_{1}) =R_{n}-E_{n}.
 \end{equation*} The latter implies that $B_{1}$ is the only branching component of $Q_{1}$ and meets $\tilde{C}_{1}\subseteq T_{W}$, so $Q_{1}=B_{1}+T_{1}+T_{1}'+T_{W}$ with $T_{W}=[3,1,2]$ and $T_{1}=[2,3]$. Then $T_{1}'=[2]$ and $B_{1}^{2}=-3$,  since $Q_{1}$ contracts to a smooth point. It follows that $q_{1}\in \bar{E}$ has multiplicity sequence $(22,22,11,11,7,4,3)$, hence $I(q_{1})-M(q_{1})=1287-83=1204$.  This is in contradiction with Lemma \ref{lem:HN-equations}\eqref{eq:genus-degree}.

Thus $V'$ has a component $V$ disjoint from $E_{0}$. Since $V'$ is disjoint from $\Delta_{0}$, Lemma \ref{lem:beta_flat}\ref{item:meeting_E0} gives $V\not\subseteq \psi^{-1}_{*}R_{n}$, so $V\subseteq \Exc\psi$ or $\psi(V)\subseteq \Upsilon_{n}^{0}$. In the second case, $\beta_{D_{n}}(\psi(V))\leq 2$ and $\psi(V)$ contains a center of $\psi$, so its proper transform meets a connected component of $\Exc\psi$. In any case, Lemma \ref{lem:MMP-properties}\ref{item:Exc-psi_i} implies that $V$ is contained in a twig of $D_{0}$ whose first tip meets a proper transform $A'$ of some almost log exceptional curve $A_{i'}$, $i'\neq i$. It follows that $\sigma(W)$ and $\sigma_{*}(Q_{k}-W)\supseteq \sigma(V)$ are twigs of $\sigma_{*}Q_{k}$, so $\sigma_{*}Q_{k}$ is a chain. We have $W\neq\ftip{T_{k}}$ by assumption, so $W=\ftip{T_{k}'}$, hence $V\subseteq T_{k}$ and so $\ftip{T_{k}}$ meets $A'$. Lemma \ref{lem:orevkov_ending} applied to $A$ implies that $k\neq 1$, so $k=2$. Then $\pi_{0}(A)$ is smooth and $\pi_{0}(A)^{2}=A^{2}+1+(t_{2}+2)>1$, so $\pi_{0}(A)$ is a conic. We have $\pi_{0}(A)\cap\pi_{0}(A')\subseteq \{q_{1},q_{2}\}$. Since $A'$ meets the first exceptional curve over $q_{2}$, we have $(\pi_{0}(A')\cdot\pi_{0}(A))_{q_{2}}=1$. Similarly, since $A$ meets the first exceptional curve over $q_{1}$, we have $(\pi_{0}(A')\cdot \pi_{0}(A))_{q_{1}}=\mu$, where $\mu\geq 0$ is the multiplicity of $\pi_{0}(A')$ at $q_{1}$. Therefore, $2\deg\pi_{0}(A')=\pi_{0}(A)\cdot \pi_{0}(A')=\mu+1$, hence $\mu\geq 1$, that is, $q_{1}\in \pi_{0}(A')$. Since $T_{1}\cdot A'=0$, we have $\pi_{0}(A')^{2}>A^2+t_1+2\geq 1$, so $\pi_{0}(A')$ is not a line. Denoting by $\ll_{1}$ the line tangent to $\pi_{0}(A')$ at $q_{1}$, we obtain $\deg\pi_{0}(A')\geq (\pi_{0}(A')\cdot \ll_{1})_{q_{1}}\geq \mu+1=2\deg\pi_{0}(A')$; a contradiction.

\ref{item:t1>0} We have $t_{2}=0$ by \ref{item:A_meets_T1T2}, because $A\cdot \Delta_{0}=1$ by Lemma \ref{lem:ale_pr-tr}\ref{item:ale_pr-tr}. Moreover, since $T_1\neq \Delta_{T_1}$, we have $A+T_1\subsetneq \Exc \psi_A$, so $\psi_A$ contracts $\ftip{T_{2}}$.

\ref{item:Exc-psi_1} We have $\ftip{T_{2}}\subseteq \Exc\psi_{A}$ by \ref{item:t1>0}, and since by definition $T_{2}$ does not contain $C_{2}$, the maximal twig of $D_{0}$ containing $\ftip{T_{2}}$ equals $T_{2}\wedge (D_{0}-\tilde{C}_{2})$. Thus $\Exc\psi_{A}\subseteq T_{1}+A+T_{2}\wedge (D_{0}-\tilde{C}_{2})$, and by \eqref{eq:line} for the opposite inclusion it suffices to show that $T_{2}\wedge (D_{0}-\tilde{C}_{2})\subseteq \Exc\psi$. Suppose that  a component $V$ of $T_{2}\wedge (D_{0}-\tilde{C}_{2})$ is not contracted by $\psi$. Since $V$ is disjoint from $\Delta_0+E_0$, by  Lemmas \ref{lem:MMP-properties}\ref{item:Delta_pr-tr} and \ref{lem:beta_flat}\ref{item:meeting_E0} $\psi(V)\subseteq \Upsilon_{n}^0$. But since $A\cdot Q_1=A\cdot Q_2=1$, the component $\psi(V)$ meets two different components of $D_n-\psi(V)$; a contradiction.

\ref{item:A_line}, \ref{item:deg} Since $A$ meets $D_{0}$ only at $\ftip{T_{j}}$ for $j\in\{1,2\}$, we have $(\pi_{0}(A)\cdot\bar{E})_{q_{j}}=\mu_{j}$ and $\pi_{0}(A)^{2}=A^{2}+2=1$, so $\mu_{1}+\mu_{2}=\pi_{0}(A)\cdot\bar{E}=\deg\bar{E}$.

\ref{item:not_semiord} holds because by Remark \ref{rem:semi-ordinary}\ref{item:semiordinary} $\psi$ does not touch the exceptional divisors over semi-ordinary cusps. 	

\ref{item:tangent} Assume $A'\cdot Q_{j}=0$ for some $j\in\{1,2\}$. Since $\pi_{0}(A)$ is a line, we have $\deg\pi_{0}(A')=\pi_{0}(A)\cdot\pi_{0}(A')=\mu$, where $\mu$ is the multiplicity of $\pi_{0}(A')$ at $q_{3-j}$. In particular, $q_{3-j}\in \pi_{0}(A')$, so $A'$ meets $Q_{3-j}$. Furthermore,  $\pi_{0}(A')$ is a line, because otherwise the line tangent to some branch of $\pi_{0}(A')$ at $q_{3-j}\in\pi_{0}(A')$ meets $\pi_{0}(A')$ with multiplicity bigger than $\mu$, which is impossible, because $\deg\pi_{0}(A')=\mu$. Thus $\pi_{0}(A')^{2}-(A')^{2}=2$, so $\pi_{0}$ touches $A'$ twice. We have $A'\cdot \Exc\psi_{A}=0$ by Lemma \ref{lem:ale_pr-tr}\ref{item:Exc-psi_i-disjoint}, so by \ref{item:t1>0} $A'\cdot (T_{1}+\ftip{T_{2}})=0$. Hence, $A'$ meets $D_{0}-E_{0}$ only in the second component of $Q_{3-j}$, say $V$, and $A'\cdot V=1$. By Lemma \ref{lem:ale_pr-tr}\ref{item:ale_pr-tr}  $A'$ is almost log exceptional on $(X_{0},\tfrac{1}{2}D_{0})$, so $1=A'\cdot (D_{0}-V)=A'\cdot E_{0}$ and $A'\cdot \Delta_{0}=1$, so $V\subseteq \Delta_{0}$. Because $T_{2}$ is disjoint from $\Delta_{0}$ by \ref{item:t1>0}, we have $V\not\subseteq T_{1}+T_{2}$. Hence, $V=\ftip{T_{3-j}'}$ and $t_{3-j}=0$, so $j=1$ by assumption \eqref{eq:line}. It follows that $\pi_{0}(A')$ is a line meeting $\bar{E}$ transversally in the image of $A'\cap E_{0}$ and with multiplicity $\mu_{2}+\mu_{2}'$ at $q_{2}$. The Bezout theorem gives $\deg\bar{E}=\pi_{0}(A')\cdot\bar{E}=\mu_{2}+\mu_{2}'+1$.

Part \ref{item:deg} implies now that $\mu_{1}=\mu_{2}'+1$, so $\mu_{2}'$ and $\mu_{1}$ are coprime.  Note that the inequality $\#Q_{j}\geq 2$ implies that $\mu_{j},\mu_{j}'>1$ for $j\in\{1,2\}$. Assume $s_{1}=s_{2}=1$. Then $Q_{j}\cdot E_{0}=C_{j}\cdot E_{0}=\tau_{j}$, for $j\in\{1,2\}$. It follows that all terms in the multiplicity sequence of $q_{j}\in \bar{E}$, except the $1$'s at the end, are divisible by $\tau_{j}$.  Thus $\tau_{1}$ and $\tau_{2}$ are coprime. By \eqref{eq:lambda>=tau} and \eqref{eq:lambdaineq} $\tau_{1}+\tau_{2}\leq \lambda_{1}+\lambda_{2}\leq 6$, so $\{\tau_{1},\tau_{2}\}=\{2,3\}$.

\ref{item:j>3} Part \ref{item:tangent} implies that for every $i\in\{1,\dots, n\}$, the almost log exceptional curve $A_{i}$ meets the image of $Q_{1}+Q_{2}+E_{0}$ twice, so it does not meet the image of $Q_{3}+\dots +Q_{c}$, hence by Lemma \ref{lem:MMP-properties}\ref{item:Exc-psi_i} the latter is not touched by $\psi_{i}$.
\smallskip

For the proof of \eqref{eq:eps}, note first that $\epsilon \leq 1$. Indeed, if $\epsilon\neq 0$ then, numbering the cusps as above, we have $T_{j}\not\subseteq \Exc\psi$ for $j\geq 3$ by \ref{item:j>3} and $\Delta_{T_{2}}=0$ by \ref{item:t1>0}, so \eqref{eq:line} holds at most for $j=1$. Let $\hat{\Upsilon}$ be the sum of those components of $D_{0}-\Delta_{0}-\Upsilon_{0}^{0}$ whose image lies in  $\Upsilon_{n}$. 

\setcounter{claim}{0}
\begin{claim}\label{cl:Ups_hat}
$\psi_{*}\hat{\Upsilon}\subseteq \Upsilon_{n}-\Upsilon_{n}^{0}$.
\end{claim}
\begin{proof}
Suppose that $U$ is a component of $\hat{\Upsilon}$ such that $\psi(U)\subseteq \Upsilon_{n}^{0}$. Then there is a unique component $V$ of $D_{0}$ such that $\psi(U)$ meets $\psi(V)$ and by Lemma \ref{lem:beta_flat}\ref{item:Ups} one of the points of $\psi(U)\cap \psi(V)$ is a center of $\psi$, and the other is not. By Lemma \ref{lem:MMP-properties}\ref{item:Exc-psi_i} the preimage of the former is a chain $\rev{T_{U}}+A+T_{V}$, where $A_{U}$ is an almost log exceptional curve and $T_{U}$, $T_{V}$ are zero or twigs of $D_{0}$ meeting $U$ and $V$, respectively. In particular, $T_{U}+U$ is a twig of $D_{0}$. We have $T_{V}\neq 0$, for otherwise $T_{U}\subseteq \Delta_{0}$ and $\psi$ touches $U$ once, so $U^{2}=\psi(U)^{2}-1=-2$, that is, $U\subseteq \Delta_{0}$, contrary to the definition of $\hat{\Upsilon}$. By Lemma \ref{lem:notation}\ref{item:B}, $V=B_{j}$ and $\{U+T_{U},T_{V}\}=\{T_{j},T_{j}'\}$ for some $j\in \{1,\dots, c\}$. This is a contradiction with Lemma \ref{lem:orevkov_ending}.
\end{proof}
	
\begin{claim}\label{cl:b0_1}
$b_{0}(\Delta_{0})-b_{0}(\Delta_{n}^{-})=n+\#\hat \Upsilon$.
\end{claim}
\begin{proof}
By definition, $\psi$ touches exactly $n$ connected components of $\Delta_{0}$. Let $W$ be a connected component of $\Delta_{0}$ not touched by $\psi$ and such that $\psi_{*}W$ is not a connected component of $\Delta_{n}^{-}$. Then $\psi_{*}W$ is a connected component of $\Delta_{n}^{+}$, so $W$ meets a unique component $U$ of $D_{0}-\Delta_{0}-\Upsilon_{0}^{0}$ such that $\psi(U)\subseteq \Upsilon_{n}$, that is, $U\subseteq \hat{\Upsilon}$. Conversely, if $U\subseteq \hat{\Upsilon}$ then by Claim \ref{cl:Ups_hat}, $\psi(U)\subseteq \Upsilon_{n}-\Upsilon_{n}^{0}$, so by Lemma \ref{lem:MMP-properties}\ref{item:Delta_pr-tr}, $U$ meets a connected component $W$ of $\Delta_{0}$ such that $\psi_{*}W$ is a connected component of $\Delta_{n}^{+}$ and, because $U\cdot\Delta_{0}\leq 1$, such $W$ is unique. Hence the number of  connected components $W$ as above equals $\#\hat{\Upsilon}$.
\end{proof}

Let $P$ be the sum of components of $D_0$ contained in $\Delta_0$ or contracted by $\psi$. Then 
\begin{equation*}
D_{0}-P=\psi^{-1}_{*}D_{n} \wedge (D_{0}-\Delta_{0})=\psi^{-1}_{*}(R_{n}+\Upsilon_{n}+\Delta_{n}) \wedge (D_{0}-\Delta_{0}).
\end{equation*}
The divisor $\hat{R}=\psi^{-1}_{*}R_{n}$ is contained in $R_{0}$ by Lemma \ref{lem:beta_flat}\ref{item:Rn_connected}, so it has no common component with $\Delta_{0}$. On the other hand, by Lemma \ref{lem:MMP-properties}\ref{item:Delta_pr-tr},  $\psi^{-1}_{*}\Delta_{n}\subseteq \Delta_0$. We get 
\begin{equation*} 
D_0-P=\hat{R}+\psi^{-1}_{*}(\Upsilon_{n}) \wedge (D_{0}-\Delta_{0})=\hat{R}+\hat{\Upsilon}+\Upsilon^{0}_{0}.
\end{equation*}
By Lemma \ref{lem:MMP-properties}\ref{item:Exc-psi_i}, $P$ is a sum of some twigs of $D_{0}$. In particular, it is disjoint from $E_0+\Upsilon_{0}^{0}$ (see Remark \ref{rem:semi-ordinary}\ref{item:semiordinary}), hence every connected component of $P$ meets $\hat{R}-E_{0}+\hat{\Upsilon}$. 

\begin{claim}\label{cl:b0_2}
$b_{0}(\Delta_{0})\leq \#(\hat{R}-E_{0}+\hat{\Upsilon})+\epsilon$.
\end{claim}
\begin{proof}
Let $W$ be a connected component of $\Delta_0$. It is contained in a unique connected component $P_W$ of $P$ (which is a twig of $D_0$), and the latter meets a unique component $V_W$ of $\hat{R}-E_{0}+\hat{\Upsilon}$. Assume $V_{W}=V_{W'}$ for some $W\neq W'$. Then by Lemma \ref{lem:notation}\ref{item:B}, $V_{W}=B_{j}$ and  $\{P_{W},P_{W'}\}=\{T_{j},T_{j}'\}$ for some $j\in \{1,\dots, c\}$. 
It follows that $\ltip{T_{j}}\subseteq P-\Delta_0\subseteq \Exc \psi$, hence by Lemma \ref{lem:MMP-properties}\ref{item:Exc-psi_i} $\psi$ contracts $T_j$ and so \eqref{eq:line} holds for $j$.
\end{proof}

We have $\#(\hat{R}-E_{0})=n$ by Lemma \ref{lem:beta_flat}\ref{item:Rn=n+1}, so Claims \ref{cl:b0_1} and \ref{cl:b0_2} give
\begin{equation*}
b_{0}(\Delta_{0})-\epsilon
\leq
\#(\hat{R}-E_{0}+\hat{\Upsilon})
=
n+\#\hat{\Upsilon}
=
b_{0}(\Delta_{0})-b_{0}(\Delta_{n}^{-}),
\end{equation*}
which proves $b_{0}(\Delta_{n}^{-})\leq \epsilon$ and ends the proof of \eqref{eq:eps}.
\end{proof}

\begin{rem}[Relations with proper $\C^{*}$-embeddings into $\C^2$, cf.\ Theorem \ref{thm:geometric}]\label{rem:line} \ 
	\begin{enumerate}
	\item If $A$ is as in Lemma \ref{lem:line}\ref{item:A_line} then $\bar{E}\setminus \pi_{0}(A)\subseteq \P^{2}\setminus \pi_{0}(A)$ is the image of a proper injective morphism $\C^{*}\to \C^{2}$.
	Such images are classified in  \cite{CKR-Cstar_good_asymptote,KoPaRa-SporadicCstar1} in case when they are smooth and in \cite{BoZo-annuli} under some regularity conditions.
	\item If $A'$ is as in 	Lemma \ref{lem:line}\ref{item:tangent} then $\pi_{0}(A')$ is a line which is a good asymptote in the sense of \cite{CKR-Cstar_good_asymptote} for the above $\C^{*}$-embedding.
	\end{enumerate}
\end{rem}

\smallskip
\subsection{Types with a singular minimal model.}\label{sec:F2}
In this section, we prove the following result on types with a singular minimal model.
\begin{prop}\label{prop:F2}
	If $X_{\min}$ is singular then $\bar{E}$ is of type $\FE$ or $\cI$. 
\end{prop}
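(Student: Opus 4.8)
First I would pin down the singular locus of $X_{\min}$. Since $\alpha_{n}^{-}\colon Z\to X_{\min}$ is the minimal resolution and its exceptional divisor is the image of $\Delta_{n}^{-}$, the assumption that $X_{\min}$ is singular forces $\Delta_{n}^{-}\neq 0$, i.e.\ $b_{0}(\Delta_{n}^{-})\geq 1$. By \eqref{eq:eps} we have $b_{0}(\Delta_{n}^{-})\leq\epsilon\leq 1$, so $b_{0}(\Delta_{n}^{-})=\epsilon=1$. Hence $X_{\min}$ has a unique singular point $P_{0}$, resolved by the single $(-2)$-chain $\Delta_{n}^{-}$ (so $P_{0}$ is a cyclic quotient singularity of type $A_{m}$ with $m=\#\Delta_{n}^{-}$), and the condition \eqref{eq:line} holds for exactly one cusp.

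Next I would apply Lemma~\ref{lem:line}. Renumbering the cusps so that \eqref{eq:line} holds for $q_{1}$, the lemma provides $c\geq 2$, a line $\ll=\pi_{0}(A)$ meeting $\bar E$ only at $q_{1},q_{2}$ with multiplicities $\mu_{1},\mu_{2}$, the equality $\deg\bar E=\mu_{1}+\mu_{2}$ (Lemma~\ref{lem:line}\ref{item:deg}), the vanishing $t_{2}=0$, the fact that $q_{1},q_{2}$ are not semi-ordinary, and that $\psi$ does not touch $Q_{3}+\dots+Q_{c}$. Combined with the basic inequality \eqref{eq:lambdaineq} and Remark~\ref{rem:lambda>1}, the contributions $\lambda_{1},\lambda_{2}$ already consume most of the available total $6$; this bounds $\tau_{1},\tau_{2}$ and forces the remaining cusps $q_{3},\dots,q_{c}$, if any, to be ordinary, leaving $c\in\{2,3\}$.

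The heart of the argument, and the step I expect to be the main obstacle, is to show that $X_{\min}$ is the quadric cone. Here $X_{\min}$ is a log del Pezzo surface of Picard rank one with the single $A_{m}$-singularity $P_{0}$, so its minimal resolution $Z$ is a smooth rational surface with $\rho(Z)=m+1$ carrying the $(-2)$-chain $\Delta_{n}^{-}$. This chain is the $\psi$-image of a $(-2)$-chain of $D_{0}$ untouched by $\psi$, which, since $\Delta_{T_{1}}\subseteq\Exc\psi$ and $t_{2}=0$, is a semi-ordinary tail of one of the two distinguished cusps. I would show $m=1$ by observing that such a surviving chain of length $m$ raises the corresponding $\lambda_{j}$ by $m-1$ relative to the length-one case: it contributes $m$ to the count $\#(\psi_{*}Q_{j}-\psi_{*}Q_{j}\wedge\Upsilon_{n}^{0})$ and $-1$ to $b_{0}(\psi_{*}Q_{j}\wedge\Delta_{n})$ in \eqref{eq:deflambda}, so $m\geq 2$ would violate \eqref{eq:lambdaineq}. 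Once $m=1$ we have $\rho(Z)=2$, and a rational surface of Picard rank two containing an irreducible $(-2)$-curve must be the Hirzebruch surface $\F_{2}$, with $\Delta_{n}^{-}$ its negative section. Contracting that section identifies $X_{\min}\cong\P(1,1,2)$, the quadric cone, with $P_{0}$ its vertex.

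Finally I would reconstruct $(X_{0},D_{0})$, hence $\bar E$, from $(X_{\min},\tfrac12 D_{\min})$. On the cone, $D_{\min}=(\alpha_{n})_{*}R_{n}$ has $\#D_{\min}=n+1$ pairwise meeting components (Lemma~\ref{lem:beta_flat}\ref{item:Rn=n+1},\ref{item:Dmin_meet_each_other}), among them the image $E_{\min}$ of $\bar E$ and the image of the line $\ll$, whose incidences with the vertex $P_{0}$ are controlled by the data above. Pulling back along $\psi^{+}$ and $\pi_{0}$ and bookkeeping the successive blowups recovers the weighted graphs $Q_{j}$, hence the multiplicity sequences. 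This splits into two combinatorial cases for $D_{\min}$: one yields the one-parameter family of type $\FE(\gamma)$, with cusps $(3(\gamma-3),(3)_{\gamma-3})$, $((4)_{\gamma-3},2,2)$, $(2)$, and the other the rigid configuration of type $\cI$, with cusps $(6,6,3,3)$, $(8,4,4,2,2)$. In each case I would check consistency with Lemma~\ref{lem:HN-equations}: for $\FE$ this gives $\deg\bar E=\mu_{1}+\mu_{2}=3(\gamma-3)+4=3\gamma-5$, and for $\cI$ it gives $\deg\bar E=6+8=14$, matching \eqref{eq:genus-degree}.
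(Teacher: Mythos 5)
Your outline does follow the paper's skeleton: \eqref{eq:eps} yields a unique singular point and triggers Lemma \ref{lem:line}, one then identifies $Z\cong\F_{2}$ with the image of $\hat{\Delta}^{-}$ as the negative section, and finally reconstructs $(X_{0},D_{0})$. But there is a genuine gap exactly at the step you call the heart of the argument, namely your proof that $\Delta_{n}^{-}$ is irreducible, i.e.\ that $m=\#\Delta_{n}^{-}=1$. Your accounting of the contribution of the surviving chain to $\lambda_{j}$ is correct ($m$ components counted in $\#(\psi_{*}Q_{j}-\psi_{*}Q_{j}\wedge\Upsilon_{n}^{0})$, one connected component of $\psi_{*}Q_{j}\wedge\Delta_{n}$, net $m-1$), but the conclusion that $m\geq2$ violates \eqref{eq:lambdaineq} does not follow: it would require knowing that the remaining contributions already saturate the budget of $6$, which is precisely what is not yet known at this stage. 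What is actually available here (Lemma \ref{lem:line} plus the fact that $q_{1},q_{2}$ are not semi-ordinary) gives only $\lambda_{1}\geq \tau_{1}-s_{1}+1+(m-1)\geq m+1$ and $\lambda_{2}\geq 2$, hence merely $m\leq 3$; even invoking Lemma \ref{lem:line}\ref{item:tangent} to get $\{\tau_{1},\tau_{2}\}=\{2,3\}$ (which itself presupposes $s_{1}=s_{2}=1$ and a second almost log exceptional curve $A'$ with $A'\cdot Q_{1}=0$) only improves this to $m\leq 2$. Ruling out $m=2$ and the degenerate configurations is where the paper spends most of its effort: Lemma \ref{lem:F2_core} first proves $R_{n}=E_{n}+\psi(C_{1})+\psi(C_{2})$ (the alternative is excluded by a fibration analysis of $D_{Z}$ on $\F_{2}$ combined with Tono's bound from Lemma \ref{lem:Tono_E2}\ref{item:c=2,tau=1}), and only then pins down $Q_{1}=[(2)_{t_{1}},3,1,2]$ and the two possible shapes of $Q_{2}$, using contractibility of the $Q_{j}$ to smooth points, Lemma \ref{lem:orevkov_ending}, and explicit intersection-theoretic computations on $\P^{2}$. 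The equality $\hat{\Delta}^{-}=[2]$, hence $Z\cong\F_{2}$, is a consequence of that structure theorem (part \ref{item:F2_Q1}), not of \eqref{eq:lambdaineq} alone.

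Two secondary points. First, your final paragraph compresses the actual classification into ``bookkeeping'': in the paper the dichotomy is not read off $D_{\min}$ directly but from whether $D_{Z}$ contains a fiber of $\F_{2}$ (type $\FE$, Proposition \ref{prop:F2_FE}) or $D_{Z}$ is horizontal (type $\cI$, Proposition \ref{prop:F2_I}), and this uses $D_{Z}\cdot F=4$, proved in the claim inside Lemma \ref{lem:F2_core} from the no-$\C^{**}$-fibration assumption together with ampleness of $-(2K_{X_{\min}}+D_{\min})$; the values $\tau_{1},\tau_{2}$, the chain-versus-fork shape of $Q_{2}$, and the nature of the third cusp (via $p_{a}(\psi^{+}(E_{0}))=1$) all come from further exclusions there. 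Second, at the stage of your second paragraph the $\lambda$-inequality only yields that $q_{3},\dots,q_{c}$ are semi-ordinary (not ordinary), and it does not by itself exclude $c=4$; both refinements emerge only at the end of the reconstruction.
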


Throughout this section, we assume that $X_{\min}$ is singular. Recall from \eqref{eq:peeling} that each singular point of $X_{\min}$ is the image of a connected component of $\Delta_{n}^{-}$, which is a maximal $(-2)$-twig of $D_{n}$. By \eqref{eq:eps} $\Delta_{n}^{-}$ is connected, so $X_{\min}$ has only one singular point. We denote its preimage on $X_{0}$ by
\begin{equation*}
\hat{\Delta}^{-}\de \psi^{-1}_{*}\Delta_{n}^{-}.
\end{equation*}
Moreover, \eqref{eq:eps} implies that the condition \eqref{eq:line} of Lemma \ref{lem:line} is satisfied, so we can, and will, number the cusps of $\bar{E}$ so that Lemma \ref{lem:line}\ref{item:A_meets_T1T2}--\ref{item:j>3} holds. In particular, we have an almost log exceptional curve $A$ on $(X_{0},\tfrac{1}{2}D_{0})$ meeting $\ftip{T_{1}}$ and $\ftip{T_{2}}$. If $n>1$ then there exists an almost log exceptional curve contracted by $\psi$ other than $A$. In this case we pick one and we denote its proper transform on $X_0$ by $A'$. Such $A'$ is almost log exceptional on $(X_{0},\tfrac{1}{2}D_{0})$ by Lemma \ref{lem:ale_pr-tr}\ref{item:ale_pr-tr}. We denote by $W$ the unique maximal $(-2)$-twig of $D_0$ meeting $A'$.

Recall (see \eqref{eq:theta}) that $(Z,\tfrac{1}{2}D_{Z})$ denotes the image of $(X_{n},\tfrac{1}{2}D_{n})$ after the contraction of $\Upsilon_{n}+\Delta_{n}^{+}$. In particular, $Z$ is smooth.
For $m\geq 0$ we denote by $\F_{m}$ the Hirzebruch surface $\P(\O_{\P^{1}}\oplus\O_{\P^{1}}(m))$ and by $F$ a general fiber of the unique $\P^1$-fibration $p_{\F_{m}}\colon \F_{m}\to \P^{1}$.

\setcounter{claim}{0}
\begin{lem}[The geometry of $D_{0}+A+A'$ in case of a singular minimal model]\label{lem:F2_core}Let $A$, $\hat{\Delta}^{-}$ and $(Z,D_{Z})$ be as above. Then:
\begin{enumerate}
	\item\label{item:F2_semiord}The cusps $q_{j}\in \bar{E}$ for $j\geq 3$ are semi-ordinary and $q_{1},q_{2}\in \bar{E}$ are not.
	\item\label{item:F2_core} $R_{n}=E_{n}+\psi(C_{1})+\psi(C_{2})$.
	\item\label{item:F2_s2} $s_{j}=1$ for $j\in\{1,\dots, c\}$ (see Notation \ref{not:graphs}\ref{item:not_s}).
	\item\label{item:F2_Exc_psi_A} $\Exc\psi_{A}=\rev{T_{1}}+A+T_{2}$.
	\item\label{item:F2_n2}	$n=2$.
	\item\label{item:F2_A'} $A'+W$ is disjoint from $T_{1}+A+T_{2}$.
	\item\label{item:F2_Q1} $Q_{1}=T_{1}+C_{1}+\hat{\Delta}^{-}=[(2)_{t_{1}},3,1,2]$.
	\item\label{item:F2_Q2}  Either $Q_{2}=T_{2}+C_{2}+W=[t_{1}+2,1,(2)_{t_{1}}]$ or $Q_{2}$ is a fork with maximal twigs $T_{2}=[t_{1}+2]$, $W+C_{2}$ and $\hat{\Delta}^{+}=[(2)_{t_{1}}]$, where $\psi_{*}\hat{\Delta}^{+}\subseteq \Delta_{n}^{+}$.
	\item\label{item:F2_Z=F2} $Z\cong \F_{2}$, $\psi^{+}(\hat{\Delta}^{-})$ is the negative section and $D_{Z}\cdot F=4$.
	\item\label{item:F2_hor} $D_{Z}=\psi^{+}(E_{0})+\psi^{+}(C_{1})+\psi^{+}(C_{2})+\psi^{+}(\hat{\Delta}^{-})$ and  $D_{Z}-\psi^{+}(C_{1})$ is horizontal.
	\end{enumerate}	
\end{lem}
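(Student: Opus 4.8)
The plan is to drive the entire argument off the basic inequality \eqref{eq:lambdaineq} together with Lemma~\ref{lem:line}. That lemma is available here: since $X_{\min}$ is singular we have $\Delta_n^-\neq 0$, and \eqref{eq:eps} then forces $b_0(\Delta_n^-)=\epsilon=1$, so condition \eqref{eq:line} holds for exactly one cusp; I keep the numbering of Lemma~\ref{lem:line}, so that $q_1,q_2$ are not semi-ordinary (Lemma~\ref{lem:line}\ref{item:not_semiord}), the almost log exceptional $A$ meets $\ftip{T_1}$ and $\ftip{T_2}$ with $t_2=0$ (Lemma~\ref{lem:line}\ref{item:t1>0}), $\pi_0(A)$ is a line (Lemma~\ref{lem:line}\ref{item:A_line}), and $\psi$ does not touch $Q_3+\dots+Q_c$ (Lemma~\ref{lem:line}\ref{item:j>3}). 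Everything else is extracted by accounting for the contributions $\lambda_j$ of \eqref{eq:deflambda} against the budget $\sum\lambda_j\le 6$.

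The heart is a tight count. First I would prove \ref{item:F2_s2}: if $s_j=0$ for some $j\in\{1,2\}$ then both $\psi(C_j)$ and $\psi(\tilde C_j)$ survive outside $\Upsilon_n^0+\Delta_n$, so \eqref{eq:deflambda} gives $\lambda_j\ge\tau_j+2\ge 4$; together with $\lambda_{3-j}\ge\tau_{3-j}\ge 2$ from \eqref{eq:lambda>=tau} this already breaks \eqref{eq:lambdaineq}. Hence $s_1=s_2=1$, and Lemma~\ref{lem:line}\ref{item:tangent} then yields $\{\tau_1,\tau_2\}=\{2,3\}$, so $\lambda_1+\lambda_2\ge\tau_1+\tau_2=5$. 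Any non-semi-ordinary $q_k$ with $k\ge 3$ would contribute $\lambda_k\ge 2$ by \eqref{eq:lambda>=tau}, pushing the sum to $\ge 7$; this proves \ref{item:F2_semiord}, and the remaining budget $\le 1$ forces at most one further cusp, necessarily ordinary ($t_k=0$, $\lambda_k=1$ by Remark~\ref{rem:lambda>1}\ref{item:lambda_semi-ordinary}). With $\lambda_1+\lambda_2$ pinned to $5$ (up to the ordinary cusp contributing $1$), tightness of \eqref{eq:deflambda} forces each $\psi_*Q_j$, $j\in\{1,2\}$, to leave in $R_n$ only the single curve $\psi(C_j)$, so $R_n=E_n+\psi(C_1)+\psi(C_2)$, which is \ref{item:F2_core}; then $\#R_n=3$ and Lemma~\ref{lem:beta_flat}\ref{item:Rn=n+1} give $n=2$, i.e.\ \ref{item:F2_n2}. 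For \ref{item:F2_Exc_psi_A} I combine $s_2=1$ (so $\tilde C_2=0$) with Lemma~\ref{lem:line}\ref{item:Exc-psi_1} to get $\Exc\psi_A=T_1+A+T_2\wedge(D_0-\tilde C_2)=\rev{T_1}+A+T_2$. Since $n=2$ there is exactly one other almost log exceptional curve $A'$; its $(-2)$-twig $W$ and $A'$ lie in the proper transform of $\Exc\psi_2$, which by Lemma~\ref{lem:ale_pr-tr}\ref{item:Exc-psi_i-disjoint},\ref{item:Exc-psi_i_pr-tr} is disjoint from $\Exc\psi_A=\rev{T_1}+A+T_2$, giving \ref{item:F2_A'}.

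Next I would determine the shapes of $Q_1$ and $Q_2$. For \ref{item:F2_Q1}: $T_1=[(2)_{t_1},3]\subseteq\Exc\psi_A$ is contracted, $C_1$ survives with $s_1=1$, and the $(-2)$-twig on the far side of $C_1$ is $\hat\Delta^-$; tightness of the count (a longer $\Delta_n^-$ would raise $\rho(Z)$ and violate \eqref{eq:eps}/the budget) forces $\hat\Delta^-=[2]$, and contractibility to a smooth point via Lemma~\ref{lem:shape_of_contractible_chains} and Lemma~\ref{lem:notation}\ref{item:B},\ref{item:T^0=C} identifies $Q_1=T_1+C_1+\hat\Delta^-=[(2)_{t_1},3,1,2]$. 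For \ref{item:F2_Q2}: here $t_2=0$, $\tau_2\in\{2,3\}$, $s_2=1$, $A'$ meets a $(-2)$-twig $W$ of $Q_2$ (Lemma~\ref{lem:line}\ref{item:tangent}), and $T_2=[t_1+2]$ is a single curve (this is the tip appearing in the proof of Lemma~\ref{lem:line}\ref{item:A_meets_T1T2}, whose self-intersection is $-(t_1+2)$). Running the contractibility constraints and Lemma~\ref{lem:orevkov_ending} on $Q_2$ produces exactly the two admissible configurations: the chain $[t_1+2,1,(2)_{t_1}]$, or the fork with maximal twigs $T_2=[t_1+2]$, $W+C_2$ and $\hat\Delta^+=[(2)_{t_1}]$.

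Finally I would identify $(Z,D_Z)$. Since $\alpha_n^-$ contracts $\Delta_n^-=\psi^+(\hat\Delta^-)$, a \emph{single} $(-2)$-curve by \ref{item:F2_Q1}, to the unique singular point of the rank-one surface $X_{\min}$, we get $\rho(Z)=2$; $Z$ is smooth and rational, and if it carried a $(-1)$-curve it would be $\F_1$, which has no $(-2)$-curve, contradicting that it contains $\psi^+(\hat\Delta^-)$. Hence $Z$ is relatively minimal, $Z\cong\F_m$, and the negative section $\psi^+(\hat\Delta^-)$ forces $m=2$, giving \ref{item:F2_Z=F2}; then $D_Z=(\alpha_n^+)_*R_n=\psi^+(E_0)+\psi^+(C_1)+\psi^+(C_2)+\psi^+(\hat\Delta^-)$ by \ref{item:F2_core}. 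Reading off the self-intersections of these images from the shapes of $Q_1,Q_2$ and intersecting with the ruling of $\F_2$ shows $\psi^+(C_1)$ is the unique vertical component while the rest are horizontal, and $D_Z\cdot F=4$, which is \ref{item:F2_hor}. I expect the main obstacle to be the tight $\lambda$-bookkeeping of the second paragraph: proving that no additional component of $Q_1$ or $Q_2$ (a branching component or a surviving twig component) leaks into $R_n$, and ruling out $s_j=0$ and a longer $\Delta_n^-$, requires a careful exact evaluation of \eqref{eq:deflambda} rather than the crude bounds used elsewhere; the chain-versus-fork split for $Q_2$ is a secondary technical point handled by the same contractibility analysis.
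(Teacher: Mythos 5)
Your argument hinges on budget arithmetic that is off by exactly the margin which makes this lemma hard: every place where you declare that \eqref{eq:lambdaineq} ``breaks'', the sum in fact closes with equality, and every place where you invoke ``tightness'', there is one unit of slack. Concretely, for \ref{item:F2_s2}: if $s_j=0$ for some $j\in\{1,2\}$, your estimates give $\lambda_j\ge\tau_j+2\ge 4$ and $\lambda_{3-j}\ge 2$, hence $\lambda_1+\lambda_2\ge 6$ --- which is perfectly compatible with $\sum\lambda_j\le 6$, so no contradiction arises. Likewise for \ref{item:F2_core}: when $c=2$ the inequality only gives $\lambda_1+\lambda_2\le 6$ against a baseline of $5$, so one extra component $G$ of $\psi^{-1}_{*}R_{n}$ inside $Q_1+Q_2$ (or equally a two-component $\hat{\Delta}^{-}$, which is what your proof of \ref{item:F2_Q1} would need to exclude) consumes the slack without violating anything; neither \eqref{eq:eps} nor the $\lambda$-count rules it out. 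Eliminating this $G$ is precisely the content of the paper's proof of \ref{item:F2_core}: assuming $G$ exists, one gets $\#\Delta_n=b_0(\Delta_n)$, hence $\hat{\Delta}^{-}=[2]$ and $Z\cong\F_2$, then that $D_Z$ consists of a fiber and four $1$-sections, that $G=\tilde{C}_j$ and that $\psi^{+}$ does not touch $E_0$, whence $E^2=-2$; this contradicts $\tilde{C}_j=0$ when $s_1=s_2=1$, and contradicts Lemma \ref{lem:Tono_E2}\ref{item:c=2,tau=1} when $s_j=0$. No counting argument substitutes for this Hirzebruch-surface geometry plus Tono's bound, and since \ref{item:F2_s2} is deduced from \ref{item:F2_core} (not the other way around), your ordering collapses. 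The same slack invalidates your proof of \ref{item:F2_semiord}: without the unproven bound $\lambda_1+\lambda_2\ge 5$, a non-semi-ordinary $q_3$ only forces $\sum\lambda_j\ge 6$, and the paper must instead analyze the equality case $\lambda_3=\tau_3=2$ and deduce semi-ordinariness via Lemma \ref{lem:notation}\ref{item:T^0=C}.

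Two further problems. First, your derivation of $\{\tau_1,\tau_2\}=\{2,3\}$ from Lemma \ref{lem:line}\ref{item:tangent} presupposes a second almost log exceptional curve $A'$ with $A'\cdot Q_1=0$ (equivalently $A'\cdot E_0=1$); at that stage of your argument no such curve has been produced, and a priori every other $A_{i}$ could meet $Q_1$. In the paper this curve appears only inside the contradiction argument for \ref{item:F2_core}, where $G\cdot E_0=0$ together with Lemma \ref{lem:beta_flat}\ref{item:Dmin_meet_each_other},\ref{item:centers_on_En} forces some $A_i$ to meet $E_0$. Second, your closing claim that $\psi^{+}(C_1)$ is ``the unique vertical component'' is false in general: \ref{item:F2_hor} asserts only that $D_Z-\psi^{+}(C_1)$ is horizontal, and in the type $\cI$ case (Proposition \ref{prop:F2_I}) all of $D_Z$, including $\psi^{+}(C_1)$, is horizontal --- the fiber-versus-section dichotomy for $\psi^{+}(C_1)$ is exactly what separates Propositions \ref{prop:F2_FE} and \ref{prop:F2_I}. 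Finally, the shapes $T_1=[(2)_{t_1},3]$, $T_2=[t_1+2]$ and the exclusion in \ref{item:F2_Q1} of the configuration where the twig of $Q_1$ on the far side of $C_1$ is $W$ rather than $\hat{\Delta}^{-}$ are not ``secondary technical points'': the paper needs the contractibility of $T_j+[1]+T_j'$ and of $\Exc\psi_A$, a fork-exclusion argument, and a degree computation showing that otherwise $\pi_0(A')$ would be a conic with impossible self-intersection.
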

\begin{proof}
\ref{item:F2_semiord} The cusps $q_{1}$, $q_{2}$ are not semi-ordinary by Lemma \ref{lem:line}\ref{item:not_semiord}. Suppose that $c\geq 3$ and $q_{3}\in \bar{E}$ is not ordinary. By Remark \ref{rem:lambda>1} $\sum_{j=1}^{3}\lambda_{j}\geq \sum_{j=1}^{3}\tau_{j}\geq 6$, so by \eqref{eq:lambdaineq} the equalities hold. Therefore, $\lambda_{3}=2$, hence $\tau_{3}=2$ and $\psi_{*}Q_{3}\subseteq \psi(C_{3})+\Upsilon_{n}+\Delta_{n}$. Lemma \ref{lem:line}\ref{item:tangent} implies that $\psi$ does not touch $Q_{3}$, so by Lemma \ref{lem:beta_flat}\ref{item:Ups} $\psi_{*}Q_{3}\subseteq \psi(C_{3})+\Delta_{n}$, and thus $Q_{3}\subseteq C_{1}+\Delta_{0}$ by Lemma \ref{lem:MMP-properties}\ref{item:Delta_pr-tr}. Lemma \ref{lem:notation}\ref{item:T^0=C} shows that $q_{3}$ is semi-ordinary.
\smallskip

Before we prove the remaining parts of Lemma \ref{lem:F2_core}, we make the following observation.

\begin{claim}\label{cl:F2}
If $\hat{\Delta}$ is irreducible then \ref{item:F2_Z=F2} holds and $D_{Z}$ contains at most one fiber of $p_{\F_{2}}$.
\end{claim}
\begin{proof}
	The surface $Z$ is smooth, rational and $\rho(Z)=\rho(X_{\min})+\#\hat{\Delta}^{-}=2$, so $Z$ is a Hirzebruch surface. It is $\F_{2}$, because it contains the $(-2)$-curve $\psi^{+}(\hat{\Delta}^{-})=\alpha_{n}^{+}(\Delta_{n}^{-})$. If $D_{Z}\cdot F\leq 3$ then $Z\setminus D_{Z}$, which is an open subset of $\P^{2}\setminus \bar{E}$, admits a $\C^{1}$-, $\C^{*}$- or a $\C^{**}$-fibration. Thus by \eqref{eq:assumption} $D_{Z}\cdot F\geq 4$. Since $-(2K_{X_{\min}}+D_{\min})$ is ample, we have 
	\begin{equation*}
	0>\alpha_{n}^{-}(F)\cdot (2K_{X_{\min}}+D_{\min})=F\cdot (2K_{Z}+D_{Z}-\tfrac{1}{2}\alpha_{n}^{+}(\Delta_{n}^{-}))=-4+F\cdot D_{Z}-\tfrac{1}{2},
	\end{equation*}	
	so $F\cdot D_{Z}\leq 4$ and \ref{item:F2_Z=F2} follows. Since $\Delta_{n}^{-}$ is a twig of $D_{n}$, it meets a unique component of $D_{n}-\Delta_{n}^{-}$, so $\alpha_{n}^{+}(\Delta_{n}^{-})$ meets a unique component of $D_{Z}-\alpha_{n}^{+}(\Delta_{n}^{-})$. Because $\alpha_{n}^{+}(\Delta_{n}^{-})$ is a section, $D_{Z}$ contains at most one fiber of $p_{\F_{2}}$.
\end{proof}

\ref{item:F2_core} By \ref{item:F2_semiord}, $q_{1},q_{2}\in\bar{E}$ are not semi-ordinary, so 
\begin{equation*}
E_{n}+\psi(C_{1})+\psi(C_{2})\subseteq R_{n}\subseteq \psi_{*}(Q_{1}+Q_{2}).
\end{equation*}
Suppose that  the first inclusion is strict. Let $G$ be a component of $\psi^{-1}_{*}R_{n}-C_{1}-C_{2}$. Then  by \ref{item:F2_semiord}, $G\subseteq Q_{1}+Q_{2}$. We claim that $\tau_{1}-s_{1}+\tau_{2}-s_{2}\geq 3$. Indeed, if $s_{j}=0$ for some $j\in \{1,2\}$ then this follows from the inequalities $\tau_{1},\tau_{2}\geq 2$. Assume $s_{1}=s_{2}=1$. Then $G\cdot E_{0}=0$, so Lemma \ref{lem:beta_flat}\ref{item:Dmin_meet_each_other},\ref{item:centers_on_En} implies that the proper transform on $X_{0}$ of some $A_{i}$ meets $E_{0}$, and thus $\tau_{1}+\tau_{2}=5$ by Lemma \ref{lem:line}\ref{item:tangent}.

The divisor $\psi_{*}(Q_{1}+Q_{2})\wedge R_{n}$ contains images of $C_{1}$, $C_{2}$ and $G$, so $$\sum_{j=1}^{c}\lambda_{j}\geq 3+\tau_{1}-s_{1}+\tau_{2}-s_{2}+\#\Delta_{n}-b_{0}(\Delta_{n})\geq 6.$$ Hence \eqref{eq:lambdaineq} implies that the equalities hold. In particular, $c=2$, $\#\Delta_{n}=b_{0}(\Delta_{n})$ and if $s_{j}=0$ for some $j\in \{1,2\}$ then $s_{3-j}=1$ and $\tau_{1}=\tau_{2}=2$. 

From the equality $\#\Delta_{n}=b_{0}(\Delta_{n})$ we get $\hat{\Delta}^{-}=[2]$, so by Claim \ref{cl:F2}, $Z\cong \F_{2}$ with negative section $\psi^{+}(\hat{\Delta}^{-})$. The divisor $D_{Z}$ contains images of $E_{0}$, $C_{1}$, $C_{2}$, $G$ and $\hat{\Delta}^{-}$. Again from Claim \ref{cl:F2} we infer that $\#D_{Z}=5$ and $D_{Z}$ consists of a fiber and four $1$-sections. We have $\psi^{+}(C_{j})\cdot \psi^{+}(E_{0})\geq C_{j}\cdot E_{0}\geq 2$ for $j\in\{1,2\}$, which implies that $\psi^{+}(C_{j})$ and $\psi^{+}(E_{0})$ are horizontal. Thus $\psi^{+}(G)$ is a fiber, so it meets $\psi^{+}(\hat{\Delta}^{-})$. Because $\psi^{+}$ does not touch $\hat{\Delta}^{-}$, the latter is a unique $(-2)$-twig of $D_{0}$ meeting $G$ (see Lemma \ref{lem:notation}\ref{item:B}). It follows from Lemma \ref{lem:beta_flat}\ref{item:meeting_E0} that $G\cdot E_{0}>0$, so $G=\tilde{C}_{j}$ for some $j\leq 2$. The components of $D_{Z}-\psi^{+}(\hat{\Delta}^{-})-\psi^{+}(G)$ are $1$-sections disjoint from the negative section $\psi^{+}(\hat{\Delta}^{-})$, so they are linearly equivalent to $2\psi^{+}(G)+\psi^{+}(\hat{\Delta}^{-})$. We obtain that $\psi^{+}(E_{0})^{2}=2$ and $\psi^{+}(E_{0})\cdot \psi^{+}(C_{k})=2$ for $k\in\{1,2\}$. The latter implies that $\psi^{+}(E_{0})\cdot \psi^{+}(V)=E_{0}\cdot V$ for every component $V$ of $D_{0}-E_{0}$ not contracted by $\psi^{+}$. Hence by Lemma \ref{lem:MMP-properties}\ref{item:center-psi_i} $\psi^{+}$ does not touch $E_{0}$. Therefore,  $E^{2}=\psi^{+}(E_{0})^{2}-(\tau_{1}+\tau_{2})=-2$. Because $s_{3-j}=1$ and $\tau_{1}=\tau_{2}=2$, this is a contradiction with Lemma \ref{lem:Tono_E2}\ref{item:c=2,tau=1}.

\ref{item:F2_s2} This follows from \ref{item:F2_core}, because $R_{n}$ does not contain $\psi(\tilde{C}_{j})$ for $j\in\{1,\dots,c\}$.

\ref{item:F2_Exc_psi_A} This follows from \ref{item:F2_s2} and Lemma \ref{lem:line}\ref{item:Exc-psi_1}.

\ref{item:F2_n2} This follows from \ref{item:F2_core} and Lemma \ref{lem:beta_flat}\ref{item:Rn=n+1}.

\ref{item:F2_A'} We have $W+A'\subseteq \Exc\psi_{A'}$ and, by \ref{item:F2_Exc_psi_A}, $T_{1}+A+T_{2}=\Exc\psi_{A}$. These divisors are disjoint by Lemma \ref{lem:ale_pr-tr}\ref{item:Exc-psi_i-disjoint}.
\smallskip

Before we prove \ref{item:F2_Q1}, we need some preparation. If $C_{j}$ is not a tip of $Q_{j}$ for some $j\in \{1,2\}$ then,  because $Q_{j}$ contracts to a smooth point, \ref{item:F2_s2} implies that $C_{j}$ meets a twig of $D_{0}$ other than $T_{j}$. Denote this twig by $V_{j}$ and put $V_{j}=0$ if $C_{j}$ is a tip of $Q_{j}$. Fix $j\in \{1,2\}$. We need the following claims.

\begin{claim}\label{cl:Qj_fork}
If $Q_{j}$ is not a chain then $Q_{3-j}$ is a chain, $\psi(B_{j})\subseteq \Upsilon_{2}$ and $Q_{j}$ is a fork with maximal twigs $T_{j}$, $V_{j}+C_{j}$ and some $\hat{\Delta}^{+}$ for which $\psi_{*}\hat{\Delta}^{+}\subseteq \Delta_{2}^{+}$ (see Figure \ref{fig:picture}).
\end{claim}
\begin{proof}
Lemma \ref{lem:notation}\ref{item:B} gives $B_{j}\neq C_{j}$, hence $\psi(B_{j})\not\subseteq R_{2}$ by \ref{item:F2_core}. We have $\psi(B_{j})\not\subseteq \Delta_{2}$ by Lemma \ref{lem:MMP-properties}\ref{item:Delta_pr-tr}, so $\psi(B_{j})\subseteq \Upsilon_{2}$. Since $B_{1}$, $B_{2}$ meet $T_{1}$, $T_{2}$, respectively, from Lemma \ref{lem:line}\ref{item:Exc-psi_1} we infer that $\psi(B_{1})$ meets $\psi(B_{2})$. Then by Lemma \ref{lem:MMP-properties}\ref{item:Ui_disjoint},  $\psi(B_{3-j})\not\subseteq \Upsilon_{2}$, so by \ref{item:F2_core},  $B_{3-j}=C_{3-j}$, that is, $Q_{3-j}$ is a chain. Moreover, $\psi(B_{j})\not\subseteq \Upsilon_{2}^{0}$, because $\psi(B_{j})$ meets $\psi(B_{3-j})\subseteq \psi_{*}Q_{3-j}$ and $\psi_{*}(Q_{j}-B_{j})$. Thus $B_{j}$ meets $\psi^{-1}_{*}\Delta_{2}^{+}$, which proves the claim.
\end{proof}	

	\begin{figure}[htbp]
		\centering
		\begin{subfigure}{0.49\textwidth}
			\centering
			\includegraphics[scale=0.2]{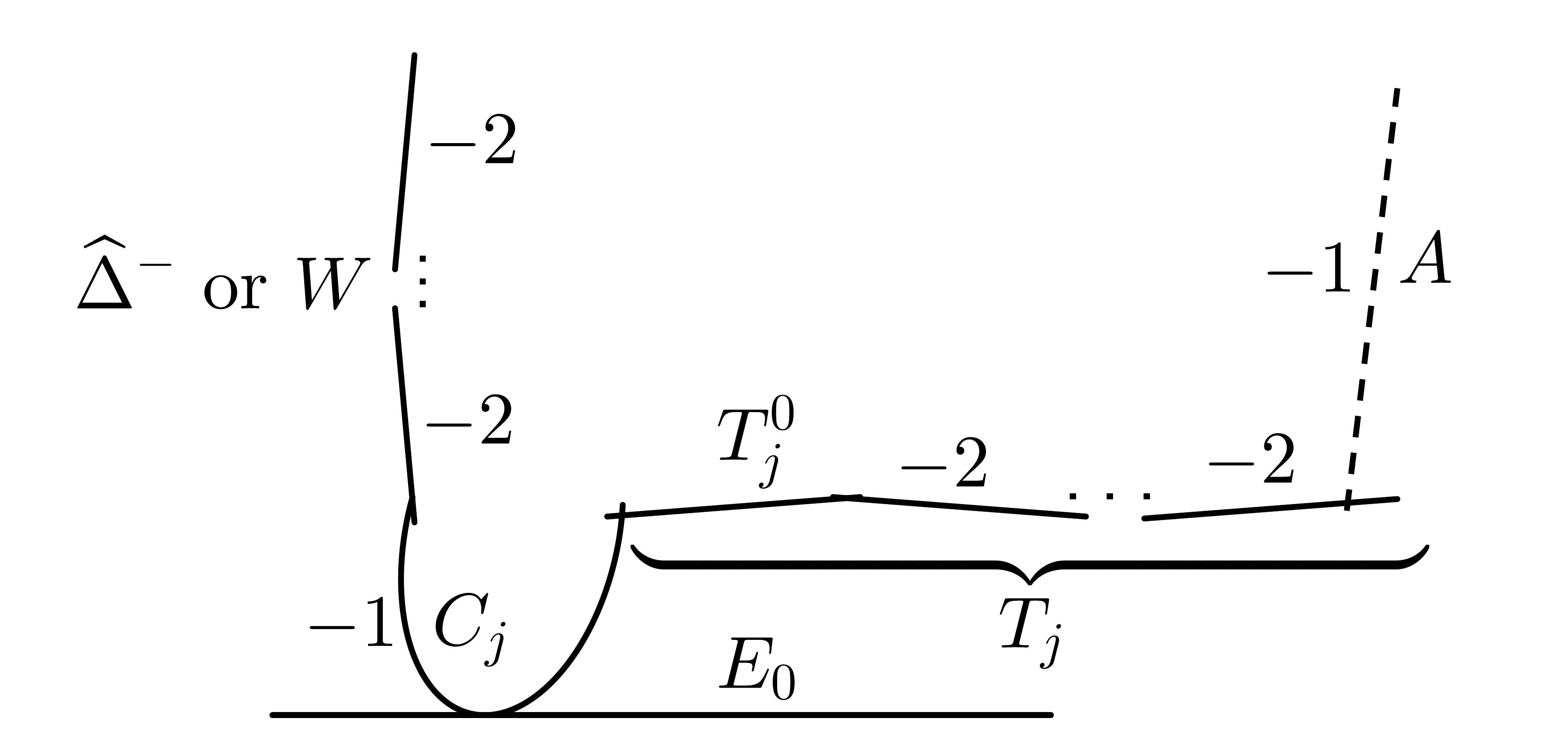}
			\caption{$Q_{j}$ - chain}
		\end{subfigure}
		\hfill
		\begin{subfigure}{0.49\textwidth}
			\centering
			\includegraphics[scale=0.2]{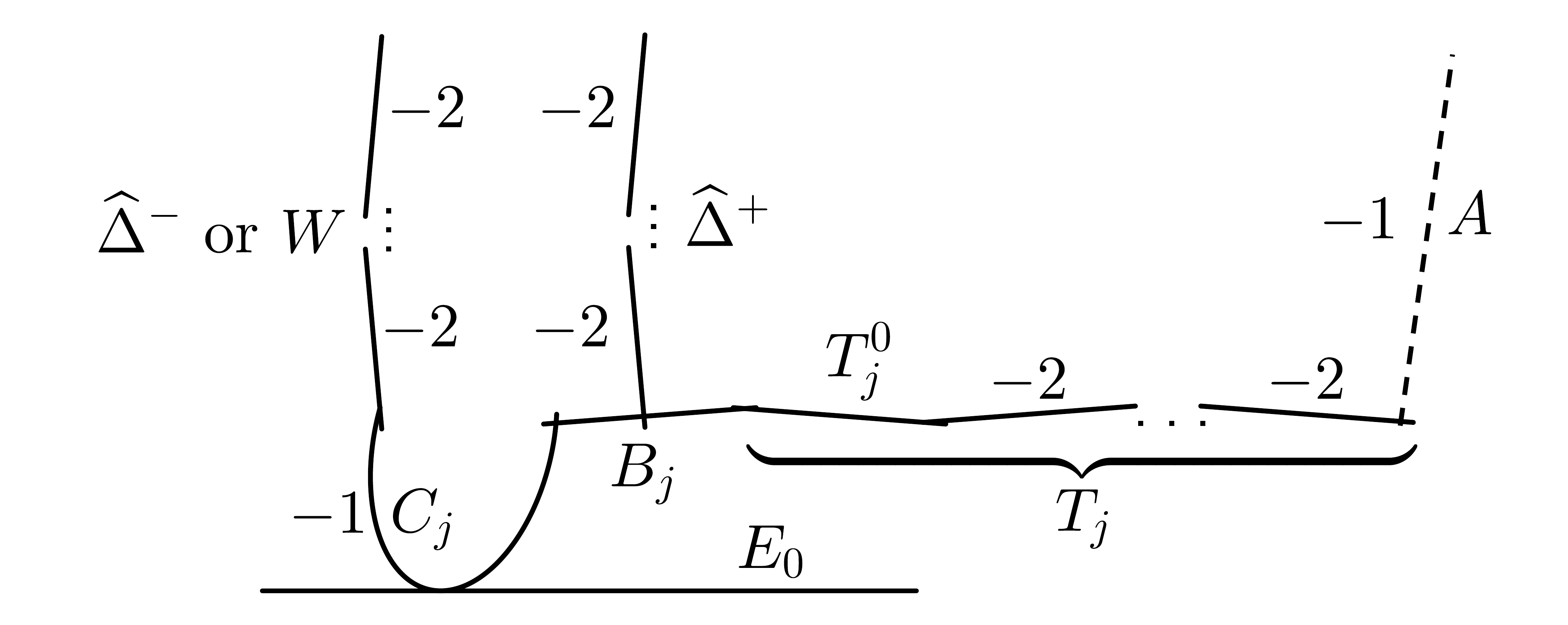}
			\caption{$Q_{j}$ - fork}
		\end{subfigure}
		\caption{Two possible shapes of $Q_{j}$ according to Claims \ref{cl:Qj_fork} and \ref{cl:Cj'} in the proof of Lemma \ref{lem:F2_core}.}
		\label{fig:picture}
	\end{figure}

\begin{claim}\label{cl:Cj'}
The divisor $Q_{1}+Q_{2}$ contains four or five maximal twigs of $D_{0}$, namely $T_{1}$, $T_{2}$, $\hat{\Delta}^{-}$, $W$ and (possibly) $\hat{\Delta}^{+}$.
\end{claim}
\begin{proof}
	By Claim \ref{cl:Qj_fork}, $Q_{1}+Q_{2}$ contains at most five maximal twigs of $D_{0}$. Among them there are $T_{1}$, $T_{2}$, which are not $(-2)$-twigs by Lemma \ref{lem:line}\ref{item:t1>0}, and $(-2)$-twigs: $\hat{\Delta}^{-}$, the $(-2)$-twig $W\neq \hat{\Delta}^{-}$ meeting $A'$ and, in case there are exactly five twigs, also  $\hat{\Delta}^{+}\neq W, \hat{\Delta}^{-}$.
\end{proof}

\begin{claim}\label{cl:T1T2}
	$T_{1}=[(2)_{t_{1}},3]$ and $T_{2}=[t_{1}+2]$.
\end{claim}
\begin{proof}
By Claim \ref{cl:Cj'}, $T_{j}'\subseteq \Delta_{0}$ for $j\in\{1,2\}$ (see Notation \ref{not:graphs}\ref{item:not_T'}), so because $T_{j}+[1]+T_{j}'$ contracts to a smooth point, we get $T_{j}=[(2)_{t_{j}},\#T_{j}'+2]$. We have $t_{1}>0$ and $t_{2}=0$ by Lemma \ref{lem:line}\ref{item:t1>0}, so \ref{item:F2_Exc_psi_A} gives $\Exc\psi_{A}=[\#T_{1}'+2,(2)_{t_{1}},1,\#T_{2}'+2]$. Because $\Exc\psi_{A}$ contracts to a smooth point, we get $\#T_{1}'=1$ and $\#T_{2}'=t_{1}$.
\end{proof}

We return to the proof of Lemma \ref{lem:F2_core}.
\smallskip 

\ref{item:F2_Q1}  Suppose that $Q_{1}$ is not a chain. By Claim \ref{cl:Qj_fork}, $Q_{1}$ is a fork and $\psi(B_{1})\subseteq \Upsilon_{2}$. Part \ref{item:F2_Exc_psi_A} and Claim \ref{cl:T1T2} imply that $\psi_{A}$ touches $B_{1}$ once. By Lemma \ref{lem:beta_flat}\ref{item:Ups} the point $\psi(A)$ is the unique center of $\psi$ on $\psi(B_{1})$, so $B_{1}^{2}=\psi(B_{1})^{2}-1=-2$. Since by Claim \ref{cl:Qj_fork}, $C_{1}$ meets $B_{1}$, the contractibility of $Q_{1}$ to a smooth point implies that $C_{1}$ is a tip of $Q_{1}$, contrary to Claim \ref{cl:Cj'}.

Thus $Q_{1}$ is a chain. Because $\ftip{T_{1}}$ contracts last among the components of $Q_{1}$, Claim \ref{cl:T1T2} shows that $Q_{1}=[(2)_{t_{1}},3,1,2]$. It remains to prove that $V_{1}=\hat{\Delta}^{-}$. Suppose the contrary. Then by Claim \ref{cl:Cj'}, $V_{1}=W$, so $Q_{1}$ meets $A'$. By Lemma \ref{lem:line}\ref{item:tangent}, $A'$ meets $Q_{2}$. We have $A'\cdot T_{2}=0$ by \ref{item:F2_A'} and $A'\cdot V_{2}=0$, because $V_{2}=\hat{\Delta}^{-}$ by Claim \ref{cl:Cj'}. Claim \ref{cl:Qj_fork} implies that $Q_{2}-C_{2}-T_{2}-V_{2}$ is either zero or equal to $B_{2}+\hat{\Delta}^{+}$, which is a proper transform of a connected component of $\Upsilon_{2}+\Delta_{2}^{+}$ containing the point $\psi(A)$. We infer from Lemma \ref{lem:beta_flat}\ref{item:Ups} that $A'\cdot (Q_{2}-C_{2}-T_{2}-V_{2})=0$, so $A'$ meets $C_{2}$. By \ref{item:F2_Exc_psi_A} and \ref{item:F2_A'} the curve $A$ meets $D_{0}+A'$ only in the components of $Q_{1}$ and $Q_{2}$ which are contracted last by $\pi_{0}$. Hence, $\pi_{0}(A)$ is a line meeting $\pi_{0}(A')$ only at $q_{1}$, $q_{2}$ with the least possible multiplicity, which equals respectively $1$ and, say, $\mu$ for some $\mu\geq 2$. Thus $\deg \pi_{0}(A')=\pi_{0}(A)\cdot\pi_{0}(A')=\mu+1$, so the intersection number at $q_{2}$ of $\pi_{0}(A')$ and its tangent line equals at most $\mu+1$. But this number is the sum of at least two initial terms of the multiplicity sequence of $q_{2}\in \pi_{0}(A')$, so this sequence equals $(\mu,1,\dots)$. It follows that $Q_{2}=[\mu+1,1,(2)_{\mu-1}]$. Claim \ref{cl:T1T2} implies that $t_{1}=\mu-1$, so the contraction of $Q_{1}$ touches $A'$ exactly $\mu+1$ times. Therefore, $$(\mu+1)^{2}=\pi_{0}(A')^{2}=(A')^{2}+(\mu+1)+(\mu+\mu^{2})=(\mu+1)^{2}-1;$$ a contradiction.

\ref{item:F2_Q2} Part \ref{item:F2_Q1} implies that $\hat{\Delta}^{-}\not\subseteq Q_{2}$, so by Claim \ref{cl:Cj'}, $V_{2}=W$. Claim \ref{cl:T1T2} gives $T_{2}=[t_{1}+2]$, so $T_{2}'=[(2)_{t_{1}}]$ by the contractibility of $Q_{2}$ to a smooth point. This proves \ref{item:F2_Q2} if $Q_{2}$ is a chain. If $Q_{2}$ is a fork then \ref{item:F2_Q2} follows from Claim \ref{cl:Qj_fork}.

\ref{item:F2_Z=F2} This follows from Claim \ref{cl:F2}, because by \ref{item:F2_Q1}, $\hat{\Delta}^{-}=[2]$.

\ref{item:F2_hor} The first statement follows from \ref{item:F2_core}. To prove the second one, recall that $\psi^{+}$ does not touch $\hat{\Delta}^{-}$, so $\psi^{+}(\hat{\Delta}^{-})\cdot \psi^{+}_{*}(D_{0}-C_{1})=\hat{\Delta}^{-}\cdot (D_{0}-C_{1})=0$ by \ref{item:F2_Q1}. The curve $\psi^{+}(\hat{\Delta}^{-})$ is a section by Claim \ref{cl:F2}, so $\psi^{+}_{*}(D_{0}-C_{1})$ contains no fiber.
\end{proof}

\begin{figure}[htbp]
	\begin{tabular}{c c c}
		\multirow{3}{*}{
			\begin{subfigure}{0.45\textwidth}\centering
				\vspace{-2.8cm}
				\includegraphics[scale=0.25]{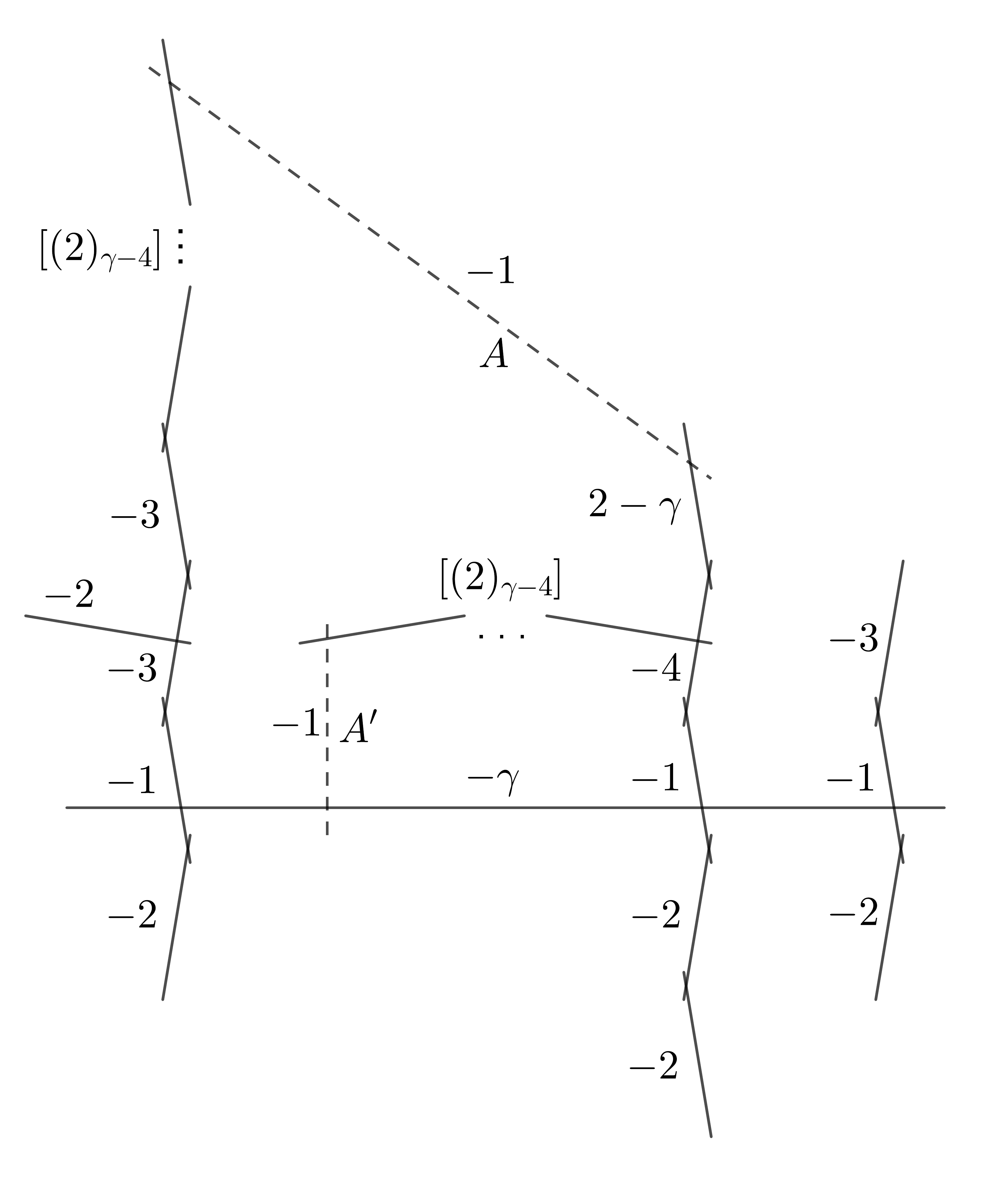}
				\vspace{-1.5cm}
				\caption{$(X,D)$}
				\vspace{.5cm}
				\begin{flushright} \begin{fmpage}{.5\textwidth} \begin{equation*}\begin{split}
						\boldsymbol{\circ}& = \mbox{ image of } A\\
						\boldsymbol{\diamond}&= \mbox{ image of } A'
						\end{split}\end{equation*} \end{fmpage} \end{flushright}					
			\end{subfigure}
		}
		&
		$\xrightarrow{\quad \displaystyle{ \psi\circ\psi_{0}}\quad }$
		& 
		\begin{subfigure}{0.35\textwidth}\centering
			\includegraphics[scale=0.35]{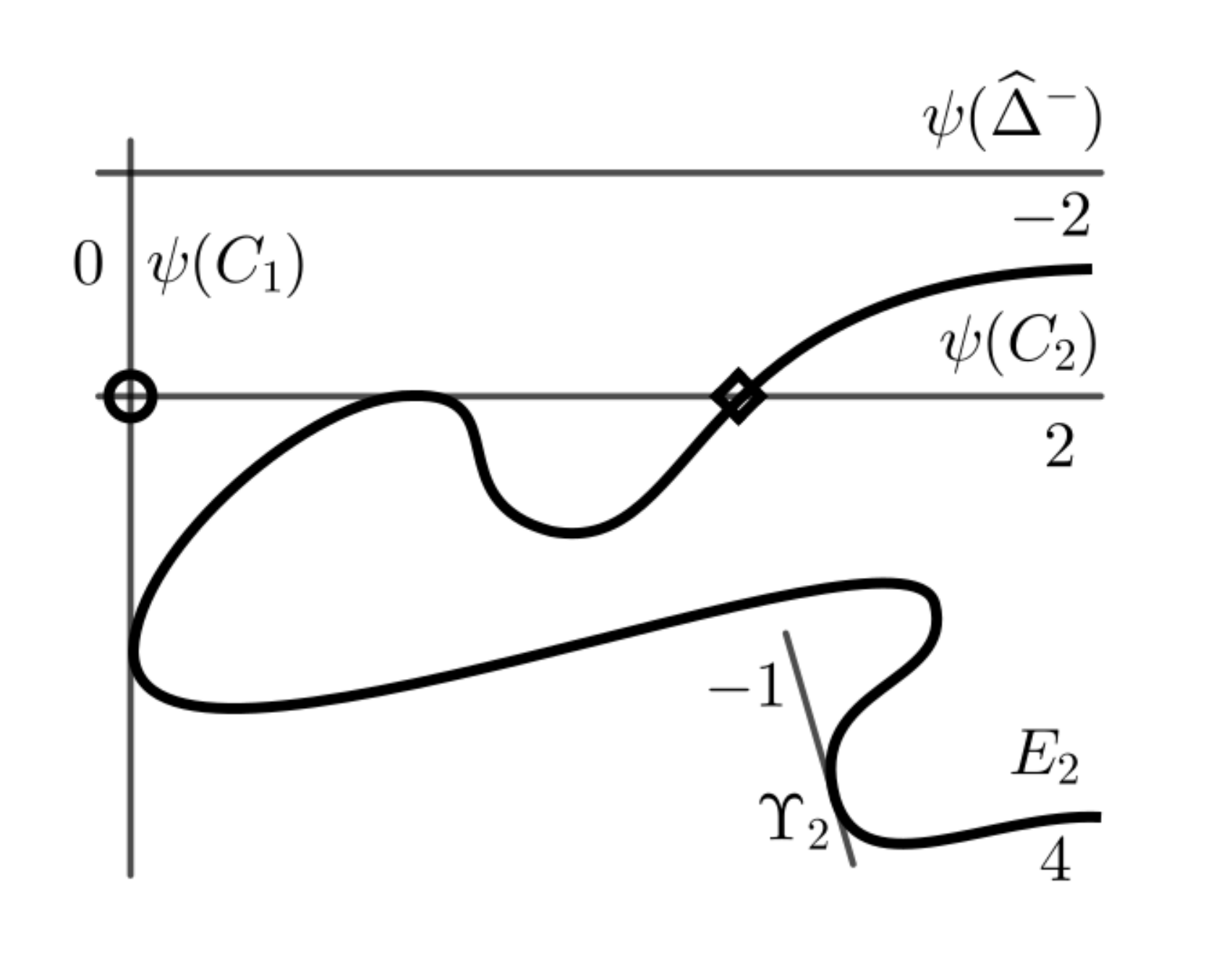}
			\vspace{-.5cm}
			\caption{$(X_{2},D_{2})$}
		\end{subfigure}
		\vspace{0.5cm}
		\\
		&&
		$
		\Bigg\downarrow \alpha_{2}^{+}
		$
		\\
		&&
		\begin{subfigure}{0.35\textwidth}\centering
			\includegraphics[scale=0.35]{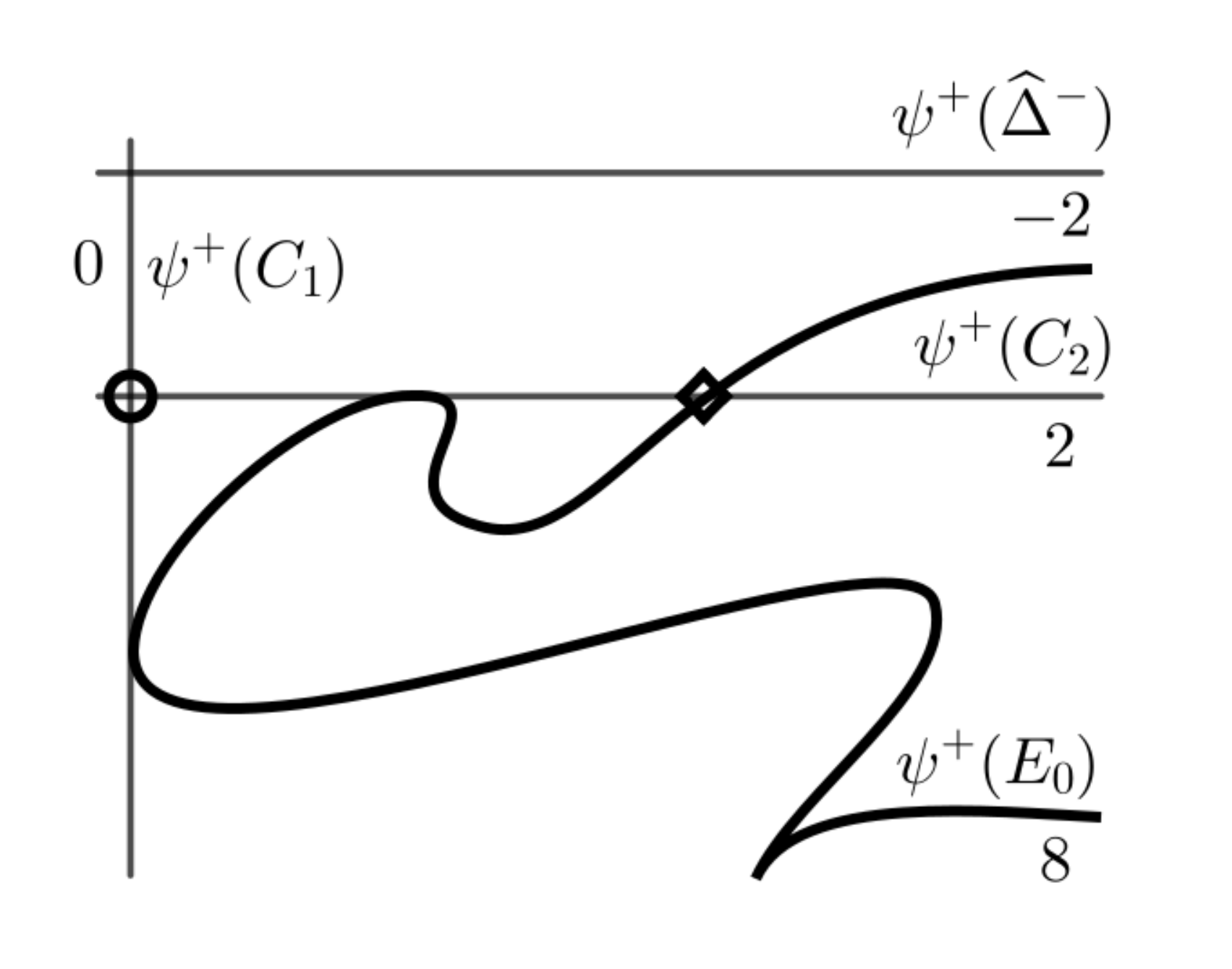}
			\vspace{-1em}
			\caption{$(Z,D_Z)$}
		\end{subfigure}
	\end{tabular}
	\caption{Type $\FE(\gamma)$, $\gamma \geq 5$ (in Def.\ \ref{def:our_curves}, $q_{1},q_{2}\in \bar{E}$ are in the opposite order).}
	\label{fig:FE}
\end{figure}
\begin{prop}\label{prop:F2_FE}
	If $D_{Z}$ contains a fiber then $\bar{E}$ is of type $\FE$ (see Figure \ref{fig:FE}).
\end{prop}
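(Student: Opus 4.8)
The plan is to read the singularity type of $\bar E$ directly off the configuration $(Z,D_Z)=(\F_2,D_Z)$ supplied by Lemma \ref{lem:F2_core}, by reconstructing the multiplicity sequences of the cusps from their weak-resolution graphs together with the numbers $\tau_j,s_j$. First I would locate the fiber. By Lemma \ref{lem:F2_core}\ref{item:F2_hor} the divisor $D_Z-\psi^{+}(C_1)$ is horizontal, so the fiber of $p_{\F_2}$ contained in $D_Z$ (which exists by hypothesis) must be $\psi^{+}(C_1)$. By Lemma \ref{lem:F2_core}\ref{item:F2_Z=F2},\ref{item:F2_Q1} the negative section is $M\de\psi^{+}(\hat{\Delta}^{-})$, and it meets $\psi^{+}(C_1)$ and no other component of $D_Z$; hence the remaining two components $\psi^{+}(E_0)$ and $\psi^{+}(C_2)$ are horizontal sections \emph{disjoint} from $M$, so each lies in a class $a(M+2F)$ with section degree $a=\psi^{+}(\cdot)\cdot F$.

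Next I would fix the numerical invariants. Since $D_Z\cdot F=4$ (Lemma \ref{lem:F2_core}\ref{item:F2_Z=F2}) and $M\cdot F=1$, $\psi^{+}(C_1)\cdot F=0$, the section degrees $a_E,a_C$ of $\psi^{+}(E_0),\psi^{+}(C_2)$ satisfy $a_E+a_C=3$, so $\{a_E,a_C\}=\{1,2\}$. As $\psi^{+}(C_1)$ is a fiber, $\psi^{+}(C_1)\cdot\psi^{+}(E_0)=a_E$, while $\psi^{+}$ is a birational morphism, so $a_E\geq C_1\cdot E_0=\tau_1\geq 2$; this forces $a_E=2$, $a_C=1$ and (using also Lemma \ref{lem:beta_flat}\ref{item:centers_on_En}, $a_E\leq\tau_1+1$) $\tau_1=2$. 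Computing $\psi^{+}(C_2)\cdot\psi^{+}(E_0)=a_Ea_C\,(M+2F)^2=4$ and applying Lemma \ref{lem:beta_flat}\ref{item:centers_on_En} once more gives $4\leq\tau_2+1$, i.e.\ $\tau_2\geq 3$. For the reverse bound I would compute $\lambda_1=2$ and $\lambda_2=\tau_2$ (using Lemma \ref{lem:F2_core}\ref{item:F2_core} that $\psi_{*}Q_1=\psi(C_1)+\psi(\hat{\Delta}^{-})$ and, in the chain case, $\psi_{*}Q_2=\psi(C_2)$), so that the basic inequality \eqref{eq:lambdaineq} yields $\tau_2\leq 4$. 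The value $\tau_2=4$ I would exclude through the equality clause of Lemma \ref{lem:beta_flat}\ref{item:centers_on_En}: the chain $A'+W\subseteq\Exc\psi$ joins $E_0$ to $C_2$, forcing $\psi^{+}(C_2)\cdot\psi^{+}(E_0)\geq\tau_2+1$, so $\tau_2=3$. Along the way I would also rule out the fork alternative of Lemma \ref{lem:F2_core}\ref{item:F2_Q2} (it gives $\lambda_2=\tau_2+1$, hence $c=2$, which is incompatible with the degree bookkeeping below), leaving $Q_2=[t_1+2,1,(2)_{t_1}]$, and record $s_1=s_2=1$ from Lemma \ref{lem:F2_core}\ref{item:F2_s2}.

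Finally I would recover the cusps. Feeding $(Q_1,\tau_1,s_1)=([(2)_{t_1},3,1,2],2,1)$ and $(Q_2,\tau_2,s_2)=([t_1+2,1,(2)_{t_1}],3,1)$ into the correspondence between multiplicity sequences and weighted weak-resolution graphs (cf.\ the discussion after Notation \ref{not:graphs}) produces the sequences $((4)_{t_1+1},2,2)$ and $(3(t_1+1),(3)_{t_1+1})$; with $\gamma\de t_1+4\geq 5$ these are exactly the two non-semi-ordinary cusps of $\FE(\gamma)$, and $\deg\bar E=\mu_1+\mu_2=3\gamma-5$ by Lemma \ref{lem:line}\ref{item:deg}. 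By Lemma \ref{lem:F2_core}\ref{item:F2_semiord} the remaining cusps are semi-ordinary; since $\lambda_1+\lambda_2=5$, the basic inequality \eqref{eq:lambdaineq} permits at most one further cusp, which must be ordinary because $\lambda_j=1+t_j$ there. Comparing $(\deg\bar E-1)(\deg\bar E-2)$ with $\sum_j(I(q_j)-M(q_j))$ via Lemma \ref{lem:HN-equations}\eqref{eq:genus-degree} then shows that exactly one such cusp, of type $(2)$, is present, so $c=3$ and $\bar E$ is of type $\FE(\gamma)$.

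The main obstacle I anticipate is the rigorous determination $\tau_2=3$ — in particular excluding the possibility $\tau_2=4$ with $c=2$ — together with excluding the fork shape of $Q_2$. Both require carefully combining the intersection theory on $\F_2$ with the basic inequality and a B\'ezout-type contradiction in the spirit of Lemmas \ref{lem:orevkov_ending} and \ref{lem:line}, and some care is needed because $\psi$ is not uniquely determined by $(X_0,\tfrac12 D_0)$.
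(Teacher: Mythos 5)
Your strategy is genuinely different from the paper's: you try to separate type $\FE$ from type $\cI$ purely by intersection numerics on $\F_{2}$, the basic inequality \eqref{eq:lambdaineq} and degree/genus bookkeeping, whereas the paper pins down the position of $A'$ geometrically. Your opening computations are correct (the fiber is $\psi^{+}(C_{1})$, the classes $a(M+2F)$, $\tau_{1}=2$, $\psi^{+}(C_{2})\cdot\psi^{+}(E_{0})=4$, hence $\tau_{2}\geq 3$ by Lemma \ref{lem:beta_flat}\ref{item:centers_on_En}, and $\tau_{2}\leq 4$ from $\lambda_{1}=2$, $\lambda_{2}=\tau_{2}$ in the chain case), and your final reconstruction of the multiplicity sequences plus the count of ordinary cusps via Lemma \ref{lem:HN-equations}\eqref{eq:genus-degree} is a valid alternative to the paper's argument that $\psi^{+}(E_{0})\sim -K_{\F_{2}}$. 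However, the two steps that actually distinguish $\FE$ from $\cI$ are exactly the two steps you have not proved. For the exclusion of $\tau_{2}=4$ you assert that ``the chain $A'+W$ joins $E_{0}$ to $C_{2}$'', i.e.\ that $A'$ meets $E_{0}$; nothing you have established implies this, since a priori $A'$ meets $W$ and one further component which could equally well be $C_{1}$ (this is precisely what happens in the horizontal case leading to $\cI$). The equality clause of Lemma \ref{lem:beta_flat}\ref{item:centers_on_En} cannot supply this: it describes the structure of $\Exc\psi^{+}$ \emph{when} the intersection number increases, it does not show that it increases. The missing step is the crux of the paper's proof: since $\psi^{+}(C_{1})^{2}=0=C_{1}^{2}+1$, the morphism $\psi^{+}$ touches $C_{1}$ exactly once, and that touch is used by $\psi_{A}$; hence $A'\cdot C_{1}=0$, so $A'\cdot Q_{1}=0$ by Lemma \ref{lem:F2_core}\ref{item:F2_A'},\ref{item:F2_Q1}, and Lemma \ref{lem:line}\ref{item:tangent} then gives $A'\cdot E_{0}=A'\cdot \ftip{T_{2}'}=1$. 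Note that this single observation closes \emph{both} of your gaps at once: $W=T_{2}'$ forces the chain alternative of Lemma \ref{lem:F2_core}\ref{item:F2_Q2}, and the last clause of Lemma \ref{lem:line}\ref{item:tangent} gives $\{\tau_{1},\tau_{2}\}=\{2,3\}$ outright.

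Your parenthetical disposal of the fork case is the second gap. In the fork case $\lambda_{2}=\tau_{2}+t_{1}$, not $\tau_{2}+1$ (the two agree only after you force $t_{1}=1$), and ``incompatible with the degree bookkeeping below'' is asserted, not checked: the bookkeeping in your last paragraph is set up for the chain shapes, while the fork produces different cusps. The fork data is dangerously close to consistent, being exactly the data of the existing type $\cI$; what saves you is that your fiber-derived constraints force $\tau_{1}=2$, $\tau_{2}=3$, $t_{1}=1$, $c=2$, i.e.\ cusps $(4,4,2,2)$ and $(12,6,6,3,3)$ with $\deg\bar{E}=\mu_{1}+\mu_{2}=16$ by Lemma \ref{lem:line}\ref{item:deg}, and then $\sum_{j}(I(q_{j})-M(q_{j}))=28+204=232\neq 210=(\deg\bar{E}-1)(\deg\bar{E}-2)$, contradicting Lemma \ref{lem:HN-equations}\eqref{eq:genus-degree}. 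So your plan can be pushed through, but that computation is the entire content of the parenthetical and is absent from the proposal. (Alternatively, and more simply: in the fork case $\psi_{A}$ makes $\psi(C_{1})$ meet $\psi(B_{2})\subseteq\Upsilon_{2}$, so the contraction $\alpha_{2}^{+}$ of $\psi(B_{2})$ touches $\psi(C_{1})$ and $\psi^{+}(C_{1})^{2}\geq 1$, contradicting the fiber hypothesis directly.) As written, then, the proposal has a genuine gap precisely where the fiber hypothesis must be injected.
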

\begin{proof}
	By Lemma \ref{lem:F2_core}\ref{item:F2_hor} the unique fiber in $D_{Z}$ equals $\psi^{+}(C_{1})$. Because $C_{1}^{2}=-1=\psi^{+}(C_{1})^{2}-1$ and $\psi_{A}$ touches $C_{1}$ once, we have $A'\cdot C_{1}=0$. In fact, $A'\cdot Q_{1}=0$, because $Q_{1}-C_{1}=T_{1}+\hat{\Delta}^{-}$ by Lemma \ref{lem:F2_core}\ref{item:F2_Q1} and $T_{1}\cdot A'=0$ by Lemma \ref{lem:F2_core}\ref{item:F2_A'}. By Lemma \ref{lem:line}\ref{item:tangent} $A'$ meets $\ftip{T_{2}'}$ and $E_{0}$, so $W=T_{2}'$. Lemma \ref{lem:F2_core}\ref{item:F2_Q2} implies that $Q_{2}=[t_{1}+2,1,(2)_{t_{1}}]$. It follows from Lemma \ref{lem:F2_core}\ref{item:F2_core} that 
	\begin{equation*}
	4=\psi^{+}(C_{1})\cdot D_{Z}=\psi^{+}(C_{1})\cdot (\psi^{+}(\hat{\Delta}^{-})+\psi^{+}(C_{2})+\psi^{+}(E_{0}))=2+\tau_{1},
	\end{equation*}
	so $\tau_{1}=2$. Lemma \ref{lem:line}\ref{item:tangent} gives $\tau_{2}=3$. We have $\psi^{+}(E_{0})\cdot \psi^{+}(C_{1})=\tau_1=2$ and $\psi^{+}(E_{0})\cdot \psi^{+}(\hat{\Delta}^{-})=0$, which by numerical properties of $\F_{2}$ gives $\psi^{+}(E_{0})\sim -K_{\F_{2}}$, hence $p_{a}(\psi^{+}(E_{0}))=1$. It follows (see the proof of Lemma \ref{lem:HN-equations}) that $\Sing \psi^{+}(E_{0})$ is an ordinary cusp, thus $c=3$ and $q_{3}$ is ordinary. Since $\tau_{1}=2$, $\tau_{2}=3$ and $s_{1}=s_{2}=1$, we see that $\bar{E}$ is of type $\FE(\gamma)$, where $\gamma=t_{1}+4\geq 5$ by Lemma \ref{lem:line}. We have $E^{2}=-\gamma$ by Lemma \ref{lem:HN-equations}, see Figure \ref{fig:FE}. Note that the order of cusps $q_{1},q_{2}\in \bar{E}$ in Figure \ref{fig:FE} is different than the one in Definition \ref{def:our_curves}.
\end{proof}

\begin{figure}[htbp]
	\begin{tabular}{c c c}
		\multirow{2}{*}{
			\begin{subfigure}{0.35\textwidth}\centering
				\vspace{-2cm}
				\includegraphics[scale=0.25]{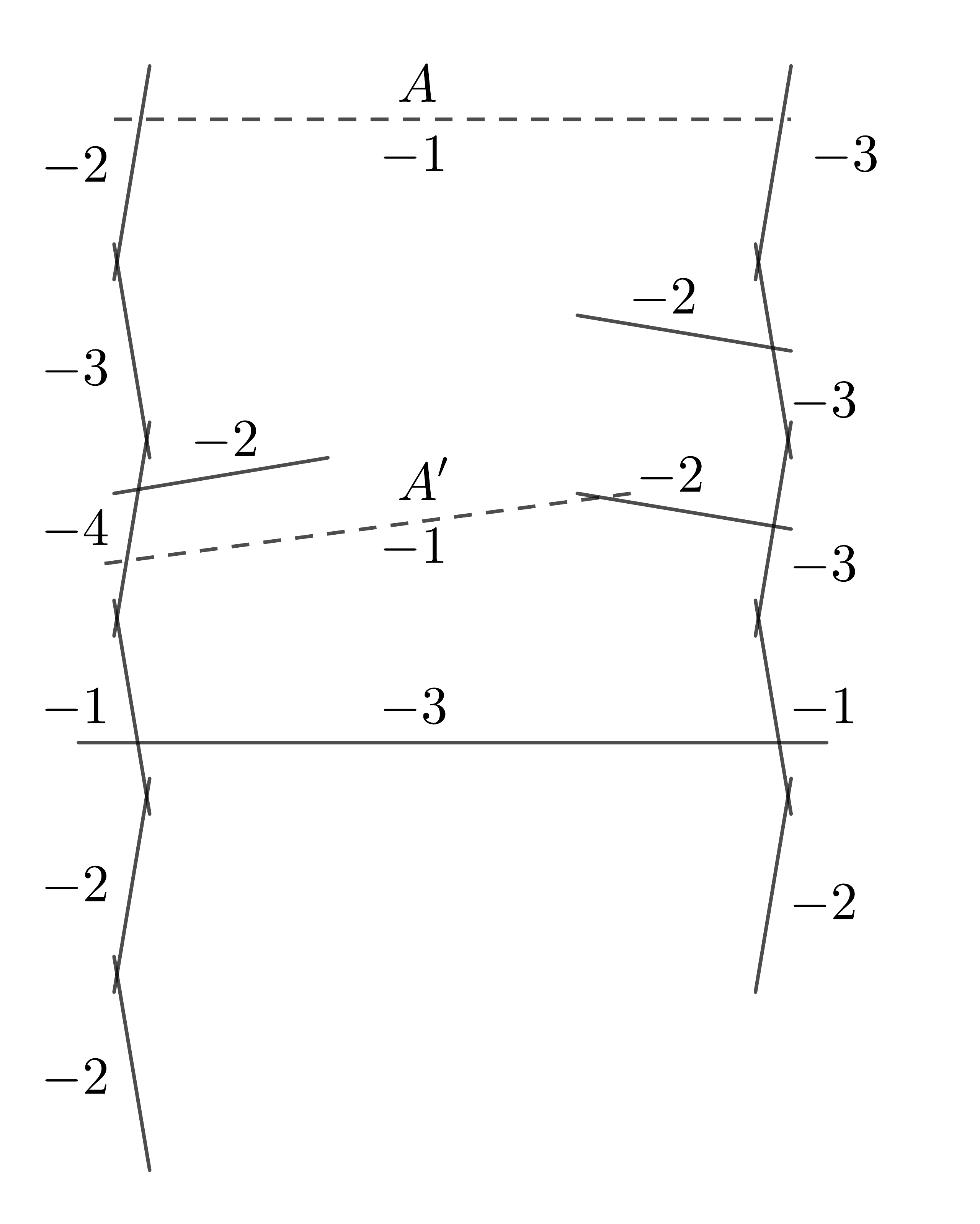}
				\vspace{-1cm}
				\caption{$(X,D)$}
				\vspace{.5cm}
				\begin{flushright} \begin{fmpage}{.5\textwidth} \begin{equation*}\begin{split}
						\boldsymbol{\circ}& = \mbox{ image of } A\\
						\boldsymbol{\diamond}&= \mbox{ image of } A'
						\end{split}\end{equation*} \end{fmpage} \end{flushright}	
			\end{subfigure}
		}
		&
		$\xrightarrow{\quad \displaystyle{ \psi\circ\psi_{0}}\quad }$
		& 
		\begin{subfigure}{0.35\textwidth}\centering
			\includegraphics[scale=0.22]{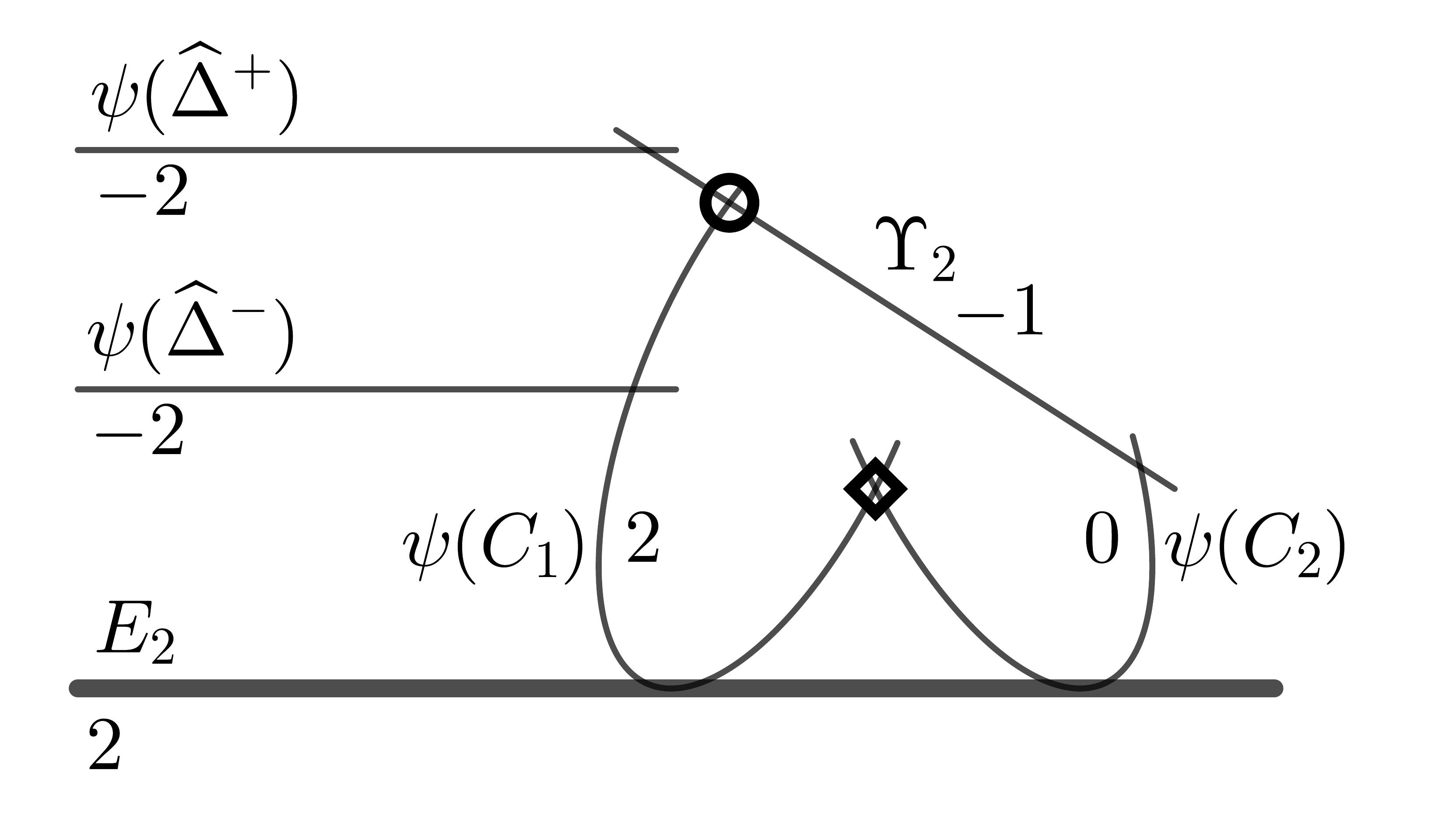}
			\caption{$(X_{2},D_{2})$}
		\end{subfigure}
		\vspace{0.5cm}
		\\
		&&
		$
		\Bigg\downarrow \alpha_{2}^{+}
		$
		\\
		&&
		\begin{subfigure}{0.35\textwidth}\centering
			\includegraphics[scale=0.22]{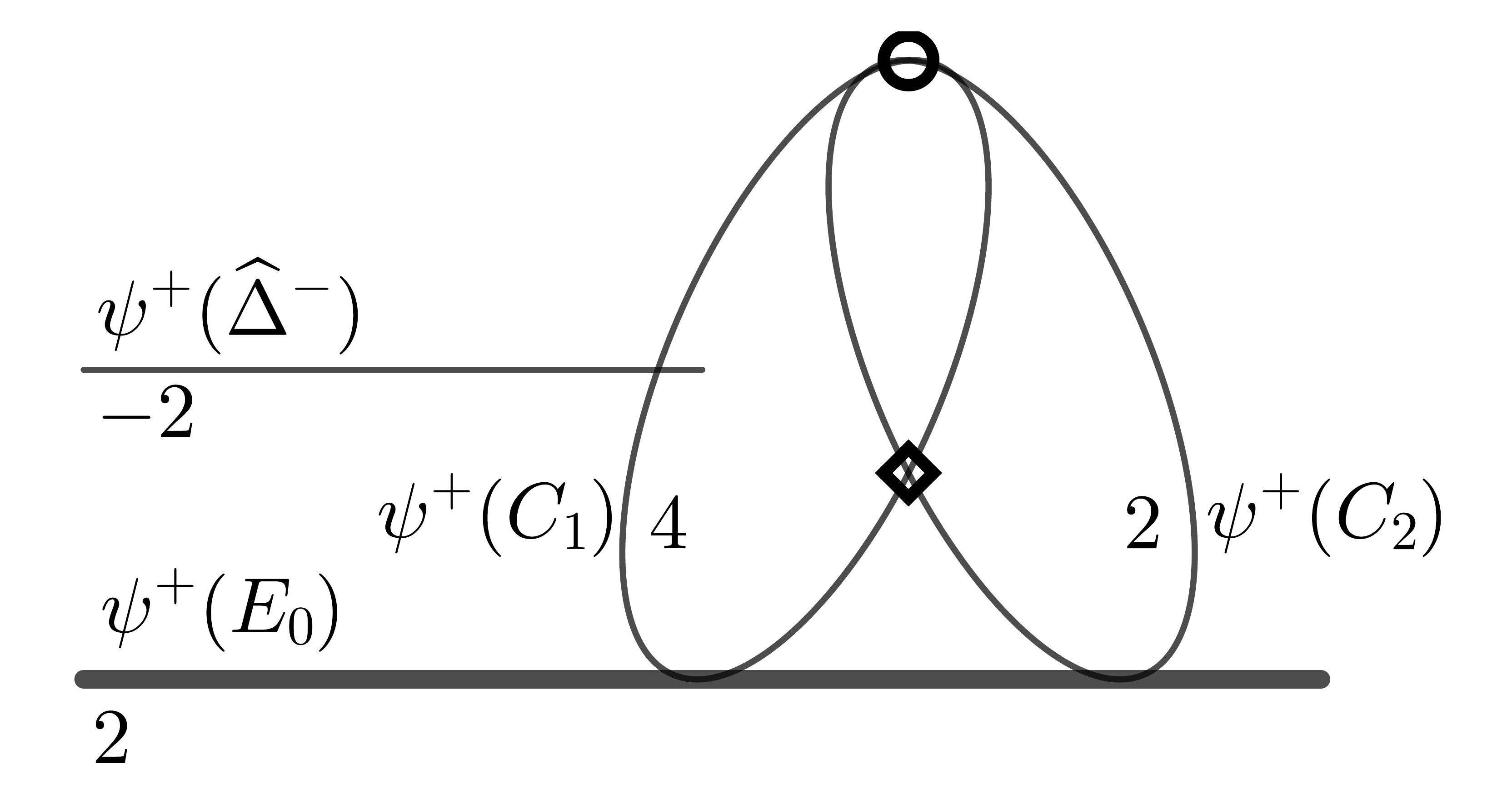}
			\caption{$(Z,D_{Z})$}
		\end{subfigure}
	\end{tabular}
	\caption{Type $\cI$.}
	\label{fig:I}
\end{figure}

\begin{prop}\label{prop:F2_I}
	If $D_{Z}$ is horizontal then $\bar{E}$ is of type $\cI$ (see Figure \ref{fig:I}).
\end{prop}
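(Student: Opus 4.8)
The plan is to read off everything from the explicit configuration on $Z\cong\F_2$ provided by Lemma \ref{lem:F2_core}. By Lemma \ref{lem:F2_core}\ref{item:F2_hor} the divisor $D_Z-\psi^{+}(C_1)$ is horizontal, so the hypothesis that $D_Z$ is horizontal means precisely that $\psi^{+}(C_1)$ is not a fiber. Write $N=\psi^{+}(\hat{\Delta}^{-})$ for the negative section of $Z\cong\F_2$ (Lemma \ref{lem:F2_core}\ref{item:F2_Z=F2}), which is a $1$-section. Since $D_Z\cdot F=4$ and $N\cdot F=1$, the three remaining components $\psi^{+}(E_0),\psi^{+}(C_1),\psi^{+}(C_2)$ of $D_Z$ are horizontal of total $F$-degree $3$, hence each is a $1$-section. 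From $Q_1=[(2)_{t_1},3,1,2]$ (Lemma \ref{lem:F2_core}\ref{item:F2_Q1}) the curve $\hat{\Delta}^{-}$ meets only $C_1$ in $D_0$ and $\psi^{+}$ does not touch $\hat{\Delta}^{-}$, so $N$ meets only $\psi^{+}(C_1)$ in $D_Z$. Therefore $\psi^{+}(E_0),\psi^{+}(C_2)$ are disjoint from $N$, i.e.\ $\psi^{+}(E_0)\sim\psi^{+}(C_2)\sim N+2F$, while $\psi^{+}(C_1)\sim N+3F$, and computing on $\F_2$ gives $\psi^{+}(E_0)\cdot\psi^{+}(C_1)=3$ and $\psi^{+}(E_0)\cdot\psi^{+}(C_2)=2$.

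Next I would pin down the invariants. Since $C_j\cdot E_0=\tau_j\le\psi^{+}(C_j)\cdot\psi^{+}(E_0)\le\tau_j+1$ by Lemma \ref{lem:beta_flat}\ref{item:centers_on_En} and $\tau_j\ge2$, the value $2$ forces $\tau_2=2$, and the value $3$ gives $\tau_1\in\{2,3\}$. If $\tau_1=2$ then equality holds in Lemma \ref{lem:beta_flat}\ref{item:centers_on_En} for $C_1$, so the connected component of $\Exc\psi^{+}$ joining $C_1$ to $E_0$ contains an almost log exceptional curve meeting $E_0$; as $A$ misses $E_0$, this curve is $A'$, whence $A'\cdot Q_1=0$. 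Then Lemma \ref{lem:line}\ref{item:tangent}, together with $s_1=s_2=1$ (Lemma \ref{lem:F2_core}\ref{item:F2_s2}), would give $\{\tau_1,\tau_2\}=\{2,3\}$, contradicting $\tau_1=\tau_2=2$; hence $\tau_1=3$. Moreover $\psi^{+}(E_0)$ is a $1$-section, hence smooth, so by Lemma \ref{lem:beta_flat}\ref{item:centers_psi+} its smoothness forces $\bar E$ to have no semi-ordinary cusps, and Lemma \ref{lem:F2_core}\ref{item:F2_semiord} then yields $c=2$.

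It remains to determine the shape of $Q_2$ and the integer $t_1$. First I would exclude the chain alternative in Lemma \ref{lem:F2_core}\ref{item:F2_Q2}: if $A'\cdot Q_1=0$, then Lemma \ref{lem:line}\ref{item:tangent} gives $W=T_2'$ and $Q_2=[t_1+2,1,(2)_{t_1}]$, and arguing exactly as in the proof of Proposition \ref{prop:F2_FE} the curve $\psi^{+}(C_1)$ would be a fiber of $\F_2$, contrary to horizontality. Thus $A'\cdot Q_1\neq0$ and $Q_2$ is the fork of Lemma \ref{lem:F2_core}\ref{item:F2_Q2}, with branching curve $B_2$ satisfying $\psi(B_2)\subseteq\Upsilon_n$ and twig $\hat{\Delta}^{+}=[(2)_{t_1}]$ mapped into $\Delta_n^{+}$. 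Feeding the two graphs into \eqref{eq:deflambda}: here $\psi_*Q_1=\psi(C_1)+\psi(\hat{\Delta}^{-})$ with $\psi_*Q_1\wedge\Upsilon_n^{0}=0$ and $\psi(\hat{\Delta}^{-})\subseteq\Delta_n$, so $\lambda_1=\tau_1-s_1+2-1=3$; and $\psi_*Q_2=\psi(C_2)+\psi(\hat{\Delta}^{+})+\psi(B_2)$ with $\psi(\hat{\Delta}^{+})\subseteq\Delta_n$ and $\psi(B_2)\notin\Upsilon_n^{0}$ (as it meets $\Delta_n^{+}$), giving $\#\psi_*Q_2=t_1+2$ and $\lambda_2=\tau_2-s_2+(t_1+2)-1=t_1+2$. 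The basic inequality \eqref{eq:lambdaineq} then reads $t_1+5\le6$, so $t_1=1$ (recall $t_1\ge1$ by \eqref{eq:line}).

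Finally, with $t_1=1$ we have $Q_1=[2,3,1,2]$ and $Q_2$ the fork with maximal twigs $[3]$, $[2]$ and $[1,2]$, together with $\tau_1=3$, $\tau_2=2$, $s_1=s_2=1$. The dictionary between $(Q_j,\tau_j,s_j)$ and multiplicity sequences (\cite[Lemmas 2.11, 2.12]{PaPe_Cstst-fibrations_singularities}) yields multiplicity sequences $(6,6,3,3)$ and $(8,4,4,2,2)$, so $\bar E$ is of type $\cI$; Lemma \ref{lem:HN-equations} then gives $\deg\bar E=14$ and $E^{2}=-3$. The step I expect to be the main obstacle is the third one: converting the horizontality of $\psi^{+}(C_1)$ into the branching (fork) structure of $Q_2$—equivalently, ruling out the fiber behaviour handled in Proposition \ref{prop:F2_FE}—and carrying out the $\lambda_2$ bookkeeping correctly, since this is the point at which the global contraction $\psi^{+}$, rather than a purely local computation, must be controlled.
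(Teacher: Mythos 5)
Your overall route agrees with the paper's: compute the intersection numbers on $Z\cong\F_{2}$, deduce $c=2$ and $\tau_{1}=3$, $\tau_{2}=2$, then settle the chain/fork alternative for $Q_{2}$; the opening numerics and your $\lambda$-bookkeeping in the fork case (giving $t_{1}=1$) are correct. However, one step contains a false claim and another has a genuine gap. The false claim is in the exclusion of $\tau_{1}=2$: equality in Lemma \ref{lem:beta_flat}\ref{item:centers_on_En} for $G=C_{1}$ produces a chain $A'+\Delta_{A'}$ in which $\Delta_{A'}$ is a maximal $(-2)$-twig of $D_{0}$ \emph{meeting $C_{1}$}; any such twig lies in $Q_{1}$, so $A'$ meets $Q_{1}$ --- the exact opposite of your ``whence $A'\cdot Q_{1}=0$''. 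The contradiction you want is still available (since $A'\cdot D_{0}=2$ one then gets $A'\cdot Q_{2}=0$, contradicting the assertion of Lemma \ref{lem:line}\ref{item:tangent} that $A'$ meets $Q_{2}$; alternatively, the only $(-2)$-twig of $D_{0}$ meeting $C_{1}$ is $\hat{\Delta}^{-}$, which is not contracted by $\psi^{+}$), so this error is repairable.

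The serious gap is the exclusion of the chain alternative in Lemma \ref{lem:F2_core}\ref{item:F2_Q2}. You rule out only the case $A'\cdot Q_{1}=0$ and then declare $Q_{2}$ to be a fork, i.e.\ you implicitly use ``$Q_{2}$ a chain $\Rightarrow A'\cdot Q_{1}=0$'', which is false. Once $\tau_{1}=3$, $\tau_{2}=2$ (so $\psi^{+}$ does not touch $E_{0}$ and $A'\cdot E_{0}=0$), in the chain case $A'$ still meets $W\subseteq Q_{2}$ and one further component, which (being barred from $E_{0}$, from $T_{1}+T_{2}$ by Lemma \ref{lem:F2_core}\ref{item:F2_A'}, and from a second $(-2)$-twig by $A'\cdot\Delta_{0}=1$) is $C_{1}$ or $C_{2}$; the configuration with $A'$ meeting $C_{1}$ has $A'\cdot Q_{1}=1$ and survives every test in your proposal --- indeed the paper shows that in the chain case $\Upsilon_{2}+\Delta_{2}^{+}=[1,2]$ forces $t_{1}=2$, whence $\lambda_{1}+\lambda_{2}=3+3=6$, still within the bound \eqref{eq:lambdaineq}, so no numerical contradiction can arise from the $\lambda$-count. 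The paper eliminates precisely this configuration by a separate arithmetic argument: with $Q_{1}=[2,2,3,1,2]$, $Q_{2}=[4,1,2,2]$ and $A'$ meeting $C_{1}$ and the third component of $Q_{2}$, one computes $\pi_{0}(A')^{2}=-1+(2+3\cdot 2^{2})+(2+2^{2})=19$, which is not a perfect square --- impossible for a plane curve. Without this (or an equivalent) step your proof does not establish the fork structure of $Q_{2}$. A smaller omission of the same kind occurs at the end: you take the fork's twig through $C_{2}$ to be $[1,2]$, i.e.\ $W=[2]$, and implicitly $B_{2}^{2}=-3$; neither is part of Lemma \ref{lem:F2_core}\ref{item:F2_Q2}, and both require the paper's touching count ($\psi_{A}$ touches $B_{2}$ twice, $\psi_{A'}$ not at all, by Lemma \ref{lem:beta_flat}\ref{item:Ups}) together with the contractibility of $B_{2}+C_{2}+W$ to a smooth point.
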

\begin{proof}
	Lemma \ref{lem:F2_core}\ref{item:F2_Z=F2},\ref{item:F2_hor} shows that $D_{Z}$ consists of four $1$-sections. In particular, $\psi^{+}(E_{0})$ is smooth, so $c=2$ by Lemma \ref{lem:F2_core}\ref{item:F2_semiord} and Remark \ref{rem:semi-ordinary}\ref{item:semiordinary}. Since $\psi^{+}$ does not touch $\hat{\Delta}^{-}$, by Lemma \ref{lem:F2_core}\ref{item:F2_Q1} we have $\psi^{+}(C_{1})\cdot \psi^{+}(\hat{\Delta}^{-})=1$ and $\psi^{+}(C_{2})\cdot\psi^{+}(\hat{\Delta}^{-})=\psi^{+}(E_{0})\cdot\psi^{+}(\hat{\Delta}^{-})=0$. Using elementary numerical properties of Hirzebruch surfaces, we compute that 
	\begin{equation*}
	\psi^{+}(C_{1})^{2}=4,\ \ \psi^{+}(C_{1})\cdot\psi^{+}(C_{2})=\psi^{+}(C_{1})\cdot\psi^{+}(E_{0})=3\ \ \text{and}\ \ \psi^{+}(C_{2})\cdot\psi^{+}(E_{0})=2.
	\end{equation*}
	If $\psi^{+}$ touches $E_{0}$ then we infer that $\tau_{1}<3$ or $\tau_{2}<2$ and that, by Lemma \ref{lem:beta_flat}\ref{item:centers_on_En}, $A'$ meets $E_{0}$. This is impossible by Lemma \ref{lem:line}\ref{item:tangent}. Hence, $\tau_{1}=2$, $\tau_{2}=3$ and  
	both centers of $\psi^{+}$ are contained in $\psi^{+}(C_{1})+\psi^{+}(C_{2})$. We have $\psi^{+}(C_{1})\cdot\psi^{+}(C_{2})=3$ and $C_{1}\cdot C_{2}=0$, so from Lemma \ref{lem:MMP-properties}\ref{item:center-psi_i} we infer that $\psi^{+}(C_{1})$ and $\psi^{+}(C_{2})$ meet in two points, with multiplicity $1$ and $2$. The latter point is the image of  $\Upsilon_{2}+\Delta_{2}^{+}=[1,2]$. Suppose that $Q_{2}$ is a chain. By Lemma \ref{lem:F2_core}\ref{item:F2_Q1},\ref{item:F2_Q2},  $\psi^{-1}_{*}(\Upsilon_{2}+\Delta_{2}^{+})=W=[(2)_{t_{1}}]$, so $t_{1}=2$. Then $Q_{1}=[2,2,3,1,2]$, $Q_{2}=[4,1,2,2]$ and $A'$ meets $D_{0}$ only in $C_{1}$ and in the third component of $Q_{2}$. We compute that $\pi_{0}(A')^{2}=-1+(2+3\cdot 2^{2})+(2+2^{2})=19$; a contradiction. Thus $Q_{2}$ is a fork as in Lemma \ref{lem:F2_core}\ref{item:F2_Q2}. We have $\hat{\Delta}^{+}=\psi^{-1}_{*}\Delta_{2}^{+}=[2]$, so $t_{1}=1$. Now $\psi(B_{2})\subseteq \Upsilon_{2}$ and $\psi_{A}$ touches $B_{2}$ twice by Lemma \ref{lem:F2_core}\ref{item:F2_Exc_psi_A}. Since $\psi_{A'}$ does not touch $B_{2}$ by Lemma \ref{lem:beta_flat}\ref{item:Ups},  $B_{2}^{2}=\psi(B_{2})^{2}-2=-3$. Because $B_{2}+C_{2}+W$ contracts to a smooth point, we get $W=[2]$. This determines the weighted graph of $Q_{1}+Q_{2}$. Since $\tau_{1}=3$, $\tau_{2}=2$ and $s_{1}=s_{2}=1$, we see that $\bar{E}$ is of type $\cI$. Note that $E^{2}=-3$ by Lemma \ref{lem:HN-equations}, see Figure \ref{fig:I}.
\end{proof}

\subsection{Types with a smooth minimal model.}\label{sec:P2}
In this section we prove the following result on types with a smooth minimal model.

\begin{prop}\label{prop:P2}
	If $X_{\min}$ is smooth then $\bar{E}$ is of type $\Qb$, $\Qa$, $\FZb$, $\cH$ or $\cJ$.
\end{prop}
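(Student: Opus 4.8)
The plan is to first pin down $X_{\min}$ and then reduce to a finite combinatorial problem in $\P^2$. Since $(X_{\min},\tfrac12 D_{\min})$ is a log del Pezzo surface of Picard rank one by Proposition \ref{prop:MMP}\ref{item:MMP_MFS}, and $X_{\min}$ is assumed smooth, $X_{\min}$ is a smooth rational surface of Picard rank one, hence $X_{\min}\cong\P^2$. Since $-(K_{\P^2}+\tfrac12 D_{\min})$ is ample, we have $3-\tfrac12\deg D_{\min}>0$, i.e.\ $\deg D_{\min}\le 5$. By Lemma \ref{lem:beta_flat}\ref{item:Rn=n+1} the divisor $D_{\min}$ has $n+1$ components, each of degree at least one, so in particular $n\le 4$; by Lemma \ref{lem:beta_flat}\ref{item:Dmin_meet_each_other} any two of them meet. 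Moreover, since $X_{\min}$ is smooth there are no singular points to resolve, so $\Delta_n^-=0$; hence $\alpha_n^-$ is an isomorphism, $Z\cong\P^2$, and $\psi^{+}\colon X_0\to\P^2$ contracts exactly $\Upsilon_n+\Delta_n^+$ after $\psi$. By Lemma \ref{lem:beta_flat}\ref{item:centers_psi+} the centers of $\psi^{+}$ are the semi-ordinary cusps of the image $E_{\min}\de\psi^{+}(E_0)$ together with the $n$ points $\psi^{+}(A_i)$, at each of which two branches of $D_{\min}$ cross.

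I would then separate the argument according to $n$. If $n=0$, Lemma \ref{lem:beta_flat}\ref{item:n0} shows that every cusp of $\bar E$ is semi-ordinary and that $D_{\min}=E_{\min}=\bar E$ is a single cuspidal rational curve of degree at most $5$. Feeding the semi-ordinary multiplicity sequences into the equations of Lemma \ref{lem:HN-equations} leaves only the quintics with cusps $(2,2),(2,2),(2,2)$ and $(2,2,2),(2),(2),(2)$, i.e.\ types $\Qb$ and $\Qa$; this is the content of the separate Proposition \ref{prop:n0}.

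For $n\ge 1$ the curve $\bar E$ has a non-semi-ordinary cusp, so $\Delta_0^-\neq 0$, and by \eqref{eq:eps} the number $\epsilon$ of cusps satisfying \eqref{eq:line} is either $0$ or $1$. The heart of the proof is to enumerate the admissible boundaries $D_{\min}\subseteq\P^2$: each is a connected, pairwise-meeting configuration of $n+1\le 5$ curves of total degree at most $5$, whose components record the images of $E_0$ and of the ``central'' chains of the $Q_j$. I would use the basic inequality \eqref{eq:lambdaineq} together with Remark \ref{rem:lambda>1} to bound the number and types of cusps (forcing $\bar E$ to have few cusps, with the semi-ordinary ones of controlled length), and Lemma \ref{lem:line} to produce, when $\epsilon=1$, a line $\ll=\pi_0(A)$ through two cusps with $\deg\bar E=\mu_1+\mu_2$ and to fix the shape of $Q_1,Q_2$. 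The remaining freedom is then a short list of configurations — essentially a line and a conic, or triangles of lines, possibly with an inscribed conic — and for each I would reconstruct $(X_0,D_0)$ by reversing $\psi^{+}$, that is, by blowing up the semi-ordinary cusps of $E_{\min}$ and the crossing points $\psi^{+}(A_i)$ in the order dictated by Lemma \ref{lem:MMP-properties}\ref{item:Exc-psi_i}. Matching the resulting multiplicity sequences against Lemma \ref{lem:HN-equations} identifies the three families $\FZb$, $\cH$, and $\cJ$ (these correspond to the sub-cases treated in Propositions \ref{prop:J}--\ref{prop:n3}).

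The main obstacle is precisely this case analysis for $n\ge1$: there is no single slick reduction, and one must carefully rule out the spurious configurations — those producing a $\C^{1}$-, $\C^{*}$-, or $\C^{**}$-fibration forbidden by \eqref{eq:assumption}, or violating the genus--degree relation Lemma \ref{lem:HN-equations}\eqref{eq:genus-degree} — while tracking, via Lemma \ref{lem:ale_pr-tr} and Lemma \ref{lem:beta_flat}, how the contracted curves $\Upsilon_n+\Delta_n^+$ sit inside the $Q_j$. The subtlety illustrated in Example \ref{ex:different_weak}, namely that the same curve can admit non-isomorphic almost minimal models of different Picard ranks, forces the reconstruction to be carried out for a \emph{fixed} process $\psi$, so the bookkeeping cannot be shortcut.
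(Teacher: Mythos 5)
Your skeleton — $X_{\min}\cong\P^2$ by smoothness and Picard rank one, $\deg D_{\min}\le 5$ by ampleness, a case division on $n$, reconstruction of $(X_0,D_0)$ by reversing $\psi^+$ — is the same as the paper's, but the step you treat as routine in the case $n=0$ fails. Lemma \ref{lem:HN-equations} alone does \emph{not} leave only the types $\Qb$ and $\Qa$: for a curve of degree $d$ all of whose cusps are semi-ordinary with sequences $(2)_{t_j+1}$, part \eqref{eq:genus-degree} reads $\sum_j(t_j+1)=(d-1)(d-2)/2$, which admits many other solutions. The tricuspidal quartic (three ordinary cusps, $d=4$) is the sharpest counterexample: the pair $(\P^2,\tfrac12\bar E)$ is then itself a log del Pezzo surface of rank one with all cusps semi-ordinary, so it passes every test in your argument, yet it is of type $\FZa(4,1)$ and its complement carries a $\C^{**}$-fibration; it is excluded only by the no-fibration part of \eqref{eq:assumption}. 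Likewise quintics with cusp data $(2)_6$, or $(2)_4,(2),(2)$, or $(2)_2,(2)_2,(2),(2)$ satisfy your numerics. Eliminating all of these is exactly the geometric content of the paper's $n=0$ analysis: first $\deg D_{\min}=5$ exactly (if $\deg D_{\min}\in\{3,4\}$, projection from a point of multiplicity $\ge 2$ gives a forbidden $\C^1$-, $\C^*$- or $\C^{**}$-fibration, Lemma \ref{lem:P2_lem}\ref{item:P2-deg}), then $t_j\le 2$ (a cusp with $t_j\ge 3$ forces its tangent line to induce a $\C^{**}$-fibration), and finally Hurwitz-formula arguments for the pencil of lines through $q_1$ and for a pencil of conics through $q_1,q_2$, which is how Proposition \ref{prop:n0} actually reaches $\Qa$ and $\Qb$. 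None of this can be replaced by the equations of Lemma \ref{lem:HN-equations}.

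For $n\ge 1$ there is a second gap: you never exclude $n=1$ and $n=4$, yet your ``short list of configurations'' presupposes exactly this. In the paper this is Lemma \ref{lem:P2_lem}\ref{item:P2-n}: $n=4$ dies on the count of crossing points (five pairwise-meeting lines with exactly two branches at each point give ten crossings, not $2n=8$), while $n=1$ requires a delicate argument (it forces $Q_1=C_1+\Delta_{A_1}$, hence multiplicity sequence $(\tau_1)_{t_1+1}$, then $(4)_3$ with an ordinary second cusp, contradicting \eqref{eq:genus-degree}). The structural facts that make your enumeration finite — $\deg D_{\min}=5$ exactly, $s_j=1$ for all $j$ (Lemma \ref{lem:P2_lem}\ref{item:P2-s_j}), smoothness of every component of $D_{\min}-\psi^+(E_0)$, and the fact that components of $D_{\min}$ cross pairwise in exactly $2n$ points — are likewise consequences of the no-fibration hypothesis and are nowhere established in your sketch; without them the configurations ``cubic plus two lines'', ``two conics plus a line'', ``conic inscribed in a triangle'' are not the only candidates. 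Finally, note that invoking Propositions \ref{prop:n0} and \ref{prop:J}--\ref{prop:n3} is circular: in the paper those propositions \emph{are} the proof of the statement at hand, so deferring to them leaves precisely the content unproved.
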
	
	
Throughout this section, we assume that $X_{\min}$ is smooth. The peeling morphism \eqref{eq:peeling} between the almost minimal model $(X_{n},\tfrac{1}{2}D_{n})$ and the minimal model $(X_{\min},\tfrac{1}{2}D_{\min})$ is now just the contraction of $\Upsilon_{n}+\Delta_{n}^{+}$. Equivalently, $\Delta_{n}^{-}=0$.

We collect some properties of $D_{\min}$ which are consequences of \eqref{eq:assumption}.

\begin{lem}[The geometry of $D_{\min}$ in case of a smooth minimal model]\label{lem:P2_lem}
Let $\psi^{+}$ and $D_{\min}$ be as above (see \eqref{eq:theta}).  Then:
\begin{enumerate}
\item\label{item:P2-deg} $X_{\min}\cong \P^{2}$ and $\deg D_{\min}=5$.
\item\label{item:P2-s_j} $s_{j}=1$ for $j\in\{1,\dots, c\}$.
\item\label{item:P2-semiordinary} The only singularities of $\psi^{+}(E_{0})$ are images of semi-ordinary cusps of $\bar{E}$ through $\psi^{+}\circ \pi_{0}^{-1}$, which is an isomorphism on some neighborhood of those cusps.
\item\label{item:P2-smooth} Components of $D_{\min}-\psi^{+}(E_{0})$ are smooth.
\item\label{item:P2_nodes} Components of $D_{\min}$ meet in $2n$ points, in each point exactly two of them.
\item\label{item:P2-n} $n=\#D_{\min}-1\in \{0,2,3\}$
\item\label{item:P2_deg-E} If $n\neq 0$ then $\deg\psi^{+}(E_{0})\leq 3$.
\item\label{item:P2-conics} If $D_{\min}$ contains two conics then they meet in two points, with multiplicities $3$ and $1$. The remaining part of $D_{\min}$ is a line tangent to those conics off their common points.
\end{enumerate}
\end{lem}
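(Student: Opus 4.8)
The plan is to establish the eight assertions in one forward sweep, with the del Pezzo structure of $(X_{\min},\tfrac12 D_{\min})$ (Proposition \ref{prop:MMP}\ref{item:MMP_MFS}) and the identity $\Delta_n^-=0$ (equivalent to smoothness of $X_{\min}$) as the standing hypotheses. For \ref{item:P2-deg}, $X_{\min}$ is smooth, rational and has $\rho(X_{\min})=1$, so $X_{\min}\cong\P^2$. Writing $d=\deg D_{\min}$ and $H$ for the hyperplane class, ampleness of $-(K_{X_{\min}}+\tfrac12 D_{\min})=(3-\tfrac{d}{2})H$ gives $d\le 5$. To compute $d$ I specialize the identity obtained in the proof of \eqref{eq:lambdaineq}: with $\Delta_n^-=0$ and $h^0(2K_X+D)=0$ it becomes $(2K_{X_{\min}}+D_{\min})^2=7-\sum_j\lambda_j$, whereas on $\P^2$ the left-hand side equals $(d-6)^2$; together with $\sum_j\lambda_j\le 6$ this already forces $d\in\{4,5\}$. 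The value $d=4$ (that is, $\sum_j\lambda_j=3$) I will discard only at the end, once the combinatorics of $D_{\min}$ is available, since the offending configurations (chiefly the tricuspidal quartic) have complements admitting a $\C^{**}$-fibration, against \eqref{eq:assumption}. For \ref{item:P2-s_j} I argue as in the singular case treated in Lemma \ref{lem:F2_core}\ref{item:F2_s2}: if $s_j=0$ for some $j$ then $\tilde C_j\neq 0$ meets $E_0$, so by Lemma \ref{lem:beta_flat}\ref{item:meeting_E0} its image is a further component of $R_n$ meeting $E_n$; this overloads the count $\#R_n=n+1$ and the budget \eqref{eq:lambdaineq}, or else produces a forbidden fibration, a contradiction.

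Next I treat \ref{item:P2-semiordinary} and \ref{item:P2-smooth}. Over the semi-ordinary cusps the morphism $\psi$ is an isomorphism (Remark \ref{rem:semi-ordinary}\ref{item:semiordinary}) and $\pi_0$ is a local isomorphism, so these cusps persist unchanged on $\psi^+(E_0)$; away from them $E_n$ is smooth (Lemma \ref{lem:MMP-properties}\ref{item:Xi_smooth}). By Lemma \ref{lem:beta_flat}\ref{item:centers_psi+} the remaining centers of $\psi^+$ are the images of the $A_i$, and at each of these exactly two analytic branches of $D_{\min}$ meet; since $\psi^+(E_0)$ contributes at most one branch, $\psi^+$ creates no new singularity on it, giving \ref{item:P2-semiordinary}. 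The same center analysis, applied to a component $G$ of $R_n-E_n$ (smooth by the snc-ness of $D_n-E_n$), shows that $\psi^+(G)$ stays smooth, which is \ref{item:P2-smooth}.

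For \ref{item:P2_nodes} the statement that each intersection point carries exactly two branches is precisely the output of Lemma \ref{lem:beta_flat}\ref{item:centers_psi+} combined with the snc structure of $R_n$ off $E_n$ and parts \ref{item:P2-semiordinary},\ref{item:P2-smooth}; the count $P=2n$ I obtain from Euler characteristics. Indeed $X_{\min}\setminus D_{\min}\cong\P^2\setminus\bar E$ is $\Q$-acyclic, so $e=1$, while each of the $n+1$ components of $D_{\min}$ is a topological $2$-sphere and each of the $P$ two-branch points lowers $e$ by one, whence $e(X_{\min}\setminus D_{\min})=3-\bigl(2(n+1)-P\bigr)=1+P-2n$; equating to $1$ yields $P=2n$. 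Then \ref{item:P2-n}: the equality $n=\#D_{\min}-1$ is Lemma \ref{lem:beta_flat}\ref{item:Rn=n+1}; the bound $n\le 3$ follows because $n=4$ would make $D_{\min}$ five pairwise-meeting lines, which by \ref{item:P2_nodes} would need $2n=8$ two-fold points, impossible for five lines (they meet in $\binom{5}{2}=10$ points unless concurrent, violating the two-branch property); and $n=1$ is excluded by the pencil through the two components, which restricts to a $\C^{**}$-fibration of the complement, against \eqref{eq:assumption}. Part \ref{item:P2_deg-E} is then pigeonhole: for $n\in\{2,3\}$ each component of the degree-$5$ divisor has degree at most $5-n\le 3$. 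Finally \ref{item:P2-conics}: two conics force the residual component to be a line and $n=2$, so $P=2n=4$; with the two-branch constraint the line can meet the two conics only as a common tangent (off the conic-conic locus), leaving the conics to meet in two points of multiplicities summing to $4$, and matching these against the $n=2$ centers of $\psi^+$ (the high-tangency points) selects $(3,1)$ over $(2,2)$, the latter again yielding a $\C^*$-fibration that \eqref{eq:assumption} forbids. With the full shape of $D_{\min}$ in hand I return to eliminate $d=4$.

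The genuinely delicate steps are exactly those where the del Pezzo/MMP formalism is insufficient and one must invoke \eqref{eq:assumption} directly by exhibiting a $\C^*$- or $\C^{**}$-fibration: the exclusion of $d=4$, of $n=1$, and of the $(2,2)$ alternative in \ref{item:P2-conics}. I expect ruling out $d=4$ to be the hardest, as it requires listing the low-degree cuspidal curves that formally meet the del Pezzo bound $\sum_j\lambda_j=3$ and verifying that each of them carries such a fibration; the Euler-characteristic count and the center bookkeeping of Lemma \ref{lem:beta_flat} do the rest with comparatively little friction.
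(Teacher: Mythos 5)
Your opening reduction ($X_{\min}\cong\P^2$, $d\le 5$ via ampleness, and $d\in\{4,5\}$ via the identity $(d-6)^2=7-\sum_j\lambda_j$) is correct, and your Euler-characteristic count for \ref{item:P2_nodes} is a nice alternative to the paper's tree-combinatorics (the paper instead counts two-component points of the proper transform $R=\psi^{-1}_*R_n$ on $X_0$, gets $\nu(R)=\#R-1=n$, and adds the $n$ centers of $\psi^+$). But several of the steps you flag as routine are exactly where the real work lies, and your replacements for them fail. First, parts \ref{item:P2-semiordinary},\ref{item:P2-smooth}: Lemma \ref{lem:beta_flat}\ref{item:centers_psi+} says that at each image of an $A_i$ exactly two analytic branches of $D_{\min}$ meet, but it does \emph{not} say these branches lie on \emph{different} components; a component of $\Upsilon_n$ meeting $R_n$ twice in the same component $\psi(G)$ produces, after contraction, a component of $D_{\min}$ with two branches at one point. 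Ruling this out is the entire content of \ref{item:P2-semiordinary},\ref{item:P2-smooth}: for $G=E_0$ the paper excludes it by showing $U$ would have to meet two $(-2)$-twigs of $D_0$, and for $G\ne E_0$ it needs a substantial argument (degree bounds forcing $D_{\min}=\psi^+(E_0)+\psi^+(C_1)$, then $p_a$ and self-intersection computations yielding $\deg\psi^+(C_1)=4$, $b=2$, $(\psi(C_1))^2=4$, a contradiction). Your sentence ``since $\psi^+(E_0)$ contributes at most one branch'' assumes precisely what must be proved. Second, your exclusion of $n=1$ does not work: with $\deg D_{\min}=5$ and a point of multiplicity $2$, any pencil of lines (or of low-degree curves through the two components) has general fiber $\C^1$ minus up to three points, i.e.\ possibly $\C^{***}$, which \eqref{eq:assumption} does not forbid. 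The paper excludes $n=1$ by a genuinely different route: Lemma \ref{lem:beta_flat}\ref{item:centers_on_En} forces $\Exc\psi^+=A_1+\Delta_{A_1}$ and $Q_1=C_1+\Delta_{A_1}$, whence $q_1$ has multiplicity sequence $(\tau_1)_{t_1+1}$, degree bookkeeping gives $(\tau_1,t_1)=(4,2)$, and Lemma \ref{lem:HN-equations}\eqref{eq:genus-degree} yields the contradiction.

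Two further gaps. Your plan to defer $d=4$ ``until the combinatorics of $D_{\min}$ is available'' is circular, since \ref{item:P2-n} ($n\le 3$, $n\ne1$), \ref{item:P2_deg-E} and \ref{item:P2-conics} are all derived assuming $d=5$; moreover the $d=4$ case is not ``chiefly the tricuspidal quartic'' (e.g.\ $\sum\lambda_j=3$ also allows a unicuspidal quartic with sequence $(2,2,2)$, a bicuspidal quartic, and non-semi-ordinary configurations with $n\ge1$). The paper disposes of $d\le 4$ and of $s_j=0$ simultaneously and at the outset by one trick you never use: since the components of $D_{\min}$ are rational, if $d\in\{3,4\}$ there is a point $p$ with $\mu_p\ge2$, and if $s_j=0$ the point $\psi^+(C_j\cap E_0\cap\tilde C_j)$ has $\mu_p\ge3$; in either case $d-\mu_p\le2$, so the pencil of lines through $p$ restricts to a $\C^1$-, $\C^*$- or $\C^{**}$-fibration of an open subset of $\P^2\setminus\bar E$, contradicting \eqref{eq:assumption}. (Your own argument for \ref{item:P2-s_j} misreads Lemma \ref{lem:beta_flat}\ref{item:meeting_E0}, which constrains components of $R_n$, not components meeting $E_0$, and ``overloads the budget'' is not a contradiction since $\lambda_j\ge\tau_j+2$ is still compatible with \eqref{eq:lambdaineq}.) Finally, in \ref{item:P2-conics} the claim that the $(2,2)$ intersection pattern ``yields a $\C^*$-fibration'' is false: for both $(2,2)$ and $(3,1)$ the conic pencil has general fiber a conic minus four points. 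The paper first uses that pencil to force $\#G_1\cap G_2=2$ (one common point would give a $\C^{**}$-fibration), and then excludes $(2,2)$ by the Hurwitz formula applied to the degree-two map $g|_{\ll}\colon\ll\to\P^1$, which would be ramified at three points ($\ll\cap G_1$, $\ll\cap G_2$ and $\ll\cap\ll_{12}$, the last because $2\ll_{12}$ is a member of the pencil). Without these ingredients your proof of the lemma does not close.
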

\begin{proof}
\ref{item:P2-deg},\ref{item:P2-s_j} Proposition \ref{prop:MMP}\ref{item:MMP_MFS} and \eqref{eq:assumption} imply that $(X_{\min},\tfrac{1}{2}D_{\min})$ is log del Pezzo surface of Picard rank one. Since $X_{\min}$ is smooth, $X_{\min}\cong \P^{2}$. Since $-(2K_{X_{\min}}+D_{\min})$ is ample, $0>\deg (2K_{X_{\min}}+D_{\min})=-6+\deg D_{\min}$, that is, $\deg D_{\min}\leq 5$. Suppose the inequality is strict or that $s_{j}= 0$ for some $j\in \{1,\dots, c\}$. Let $\mu_{p}$ be the multiplicity of a point $p\in D_{\min}$. If $\deg D_{\min}\in\{3,4\}$ then, because the components of $D_{\min}$ are rational, we can choose $p$ with $\mu_{p}\geq 2$. If $s_{j}=0$ then we have $\mu_{p}\geq 3$ for $\{p\}=\psi^{+}(C_{j}\cap E_{0} \cap \tilde{C}_{j})$. In any case, we have $\deg D_{\min}-\mu_{p}\leq 2$, so the pencil of lines through $p$ induces a $\C^{1}$-, $\C^{*}$- or a $\C^{**}$-fibration of $X_{\min}\setminus D_{\min}$. The latter is isomorphic to an open subset of $\P^{2}\setminus \bar{E}$. This is a contradiction with \eqref{eq:assumption}.

\ref{item:P2-semiordinary},\ref{item:P2-smooth} Let $G$ be a component of $D_{0}$ such that $\psi^{+}(G)$ is singular. Because $\psi(G)$ is smooth, $\Sing \psi^{+}(G)$ consists of images of those components of $\Upsilon_{n}$ which meet  $R_{n}$ only in $\psi(G)$. Let $U$ be a proper transform on $X$ of such a component. If $U\subseteq \Upsilon_{0}$ then by Remark \ref{rem:semi-ordinary}\ref{item:semiordinary}, $G=E_{0}$ and $\psi^{+}(U)$ is an image of a semi-ordinary cusp of $\bar{E}$. Assume $U\not\subseteq \Upsilon_{0}$. We infer from Lemma \ref{lem:beta_flat}\ref{item:Ups} that one of the points of $\psi(G)\cap\psi(U)$ is the image of the point $G\cap U$, and the other is the center of $\psi$ contained in $\psi(U)$. By Lemma \ref{lem:MMP-properties}\ref{item:Exc-psi_i}, the preimage of the latter point is a chain $\rev{T_{U}}+A+T_{G}$, where $A$ is the proper transform of some $A_{i}$ and $T_{U}$, $T_{G}$ are zero or twigs of $D_{0}$ meeting $U$ and $G$, respectively.
Since $\psi(U)\subseteq \Upsilon_{n}$, either $\beta_{D_{0}}(U)=2$, or $U$ meets a connected component of $\psi^{-1}_{*}\Delta_{n}^{+}$, say $\Delta_{U}$. By Lemma \ref{lem:MMP-properties}\ref{item:Delta_pr-tr}, $\Delta_{U}\subseteq \Delta_{0}$. If $G=E_{0}$ then, since $E_{0}$ meets no twigs of $D_{0}$, we have $T_{E_{0}}=0$, so $0\neq T_{U}\subseteq \Delta_{0}$, and $\beta_{D_{0}}(U)\geq 3$. But then $U$ meets two $(-2)$-twigs of $D_{0}$, which is impossible. Hence, $G\neq E_{0}$. In particular, we proved \ref{item:P2-semiordinary}.

Suppose that $D_{0}-E_{0}-G$ contains a component $V$ not contracted by $\psi^{+}$. Because $\psi^{+}(G)$ is singular, $\deg\psi^{+}(G)\geq 3$. We infer from \ref{item:P2-deg} that $\#D_{\min}=3$, $\deg\psi^{+}(G)=3$ and that  $\psi^{+}(E_{0})$ and $\psi^{+}(V)$ are lines. Hence, $V\cdot E_{0}\leq \psi^{+}(V)\cdot\psi^{+}(E_{0})=1$, so $V\neq C_{j}$ for $j\in\{1,\dots, c\}$. Part \ref{item:P2-s_j} implies that $\psi^{+}(V)\cap\psi^{+}(E_{0})$ is a center of $\psi^{+}$, other than $\Sing \psi^{+}(G)$. Moreover, since $\psi^{+}(V)\cdot \psi^{+}(G)=3>V\cdot G$, the set $\psi^{+}(V)\cap\psi^{+}(G)$ contains another center of $\psi^{+}$. Hence, $n\geq 3$. But $n=\#D_{\min}-1=2$ by Lemma \ref{lem:beta_flat}\ref{item:Rn=n+1}; a contradiction.

Thus, $G=C_{1}$ and $D_{\min}=\psi^{+}(E_{0})+\psi^{+}(C_{1})$. Lemma \ref{lem:beta_flat}\ref{item:Rn=n+1} gives $n=1$,  hence
\begin{equation*}
D_{1}=E_{1}+\psi(C_{1})+\psi(U)+\psi_{*}\Delta_{U}+\psi_{*}(\Upsilon_{0}+\Delta_{0}^{+}).
\end{equation*}
Put $b=\#\Delta_{U}$. We have $b\geq 1$, because otherwise  $Q_{1}=T_{C_{1}}+C_{1}+U+\rev{T_{U}}$ is a chain and $A$ meets it in tips, contrary to Lemma \ref{lem:orevkov_ending}. Because $Q_{1}$ contracts to a smooth point, we have $T_{C_{1}}=[(2)_{-U^{2}-2}]\subseteq \Delta_{0}$ and either $T_{U}=0$ or $T_{U}=[(2)_{t_{1}},b+2]$. Hence, 
\begin{equation*} 
\Exc\psi=\rev{T_{U}}+A+T_{C_{1}}=[1,(2)_{-U^{2}-2}]{\ \ \text{or}\ \ } [b+2,(2)_{t_{1}},1,(2)_{-U^{2}-2}].
\end{equation*}
In both cases, $\psi$ touches $C_{1}$ at most twice, so $\psi(C_{1})^{2}\leq C_{1}^{2}+2=1$.

The contraction of each component of $\psi_{*}(U+\Delta_{U})$ increases the arithmetic genus (respectively, the self-intersection number) of the image of $\psi(C_{1})$ by $1$ (respectively, by $4$). Hence,
\begin{equation*}
b+1=p_{a}(\psi^{+}(C_{1}))=\tfrac{1}{2}\deg(\psi^{+}(C_{1}))(\deg\psi^{+}(C_{1})-3)+1\quad\mbox{and}\quad 4(b+1)=(\deg\psi^{+}(C_{1}))^{2}-(\psi(C_{1}))^{2}.
\end{equation*}
Because $b\geq 1$, the first equation gives $\deg\psi^{+}(C_{1})\geq 4$, so by \ref{item:P2-deg}, $\deg\psi^{+}(C_{1})=4$ and $b=2$. Now the second equation gives $(\psi(C_{1}))^{2}=4$; a contradiction.

\ref{item:P2_nodes} For a reduced effective divisor $V$ denote by $\nu(V)$ the number of points where exactly two components of $V$ meet. Let $R$ be the proper transform of $D_{\min}$ on $X_{0}$. We have $R=\psi^{-1}_{*}R_{n}$ (see \eqref{eq:peeling}), so by Lemma \ref{lem:beta_flat}\ref{item:Rn_connected}, $R$ is a connected subdivisor of $R_{0}$. Hence the graph of $R$ has no loops, and it follows from \ref{item:P2-s_j} that no three components of $R$ meet at the same point. As a consequence, $\nu(R)=\#R-1$. From Lemma \ref{lem:beta_flat}\ref{item:Rn=n+1} we obtain $\nu(R)=n$.

Parts \ref{item:P2-semiordinary}, \ref{item:P2-smooth} imply that the components of $D_{\min}$ have no singularities but cusps. Hence by Lemma \ref{lem:beta_flat}\ref{item:centers_psi+} the centers of $\psi^{+}$ are the cusps of $\psi^{+}(E_{0})$ and some $n$ points, at each of which exactly two components of $D_{\min}$ meet. Therefore, $\nu(D_{\min})=\nu(R)+n=2n$.

\ref{item:P2-n} We have $n=\#D_{\min}-1$ by Lemma \ref{lem:beta_flat}\ref{item:Rn=n+1}. If $n\geq 4$ then by \ref{item:P2-deg},  $n=4$ and $D_{\min}$ is a union of five lines, which by \ref{item:P2_nodes} meet in eight points, exactly two at each point. This is impossible, hence $n\leq 3$. Suppose that $n=1$. Then $D_{\min}=\psi^{+}(E_{0})+\psi^{+}(C_{1})$ and $\psi^{+}(C_{1})$ is smooth by \ref{item:P2-smooth}. The point $\psi^{+}(A_{1})$ is a singular point of $D_{\min}$ other than $\psi^{+}(C_{1}\cap E_{0})$ and the cusps of $\psi^{+}(E_{0})$, so it is a common point of $\psi^{+}(C_{1})$ and $\psi^{+}(E_{0})$. Lemma \ref{lem:beta_flat}\ref{item:centers_on_En} implies that $\Exc\psi^{+}=A_{1}+\Delta_{A_{1}}$, where $\Delta_{A_{1}}$ is a $(-2)$-twig of $D_{0}$ meting $C_{1}$. It follows that $Q_{1}=C_{1}+\Delta_{A_{1}}$, so by Lemma \ref{lem:notation}\ref{item:T^0=C},  $\Delta_{A_{1}}=\Delta_{T_{1}}=[(2)_{t_{1}}]$ and $q_{1}\in \bar{E}$ has multiplicity sequence $(\tau_{1})_{t_{1}+1}$. We have $\tau_{1}\geq 3$, because $q_{1}\in \bar{E}$ is not semi-ordinary. Moreover, $\pi_{0}(A_{1})^{2}\geq 1$, so $A_{1}$ meets $\ltip{\Delta_{T_{1}}}$ and $t_{1}\geq 2$. We obtain  $\psi^{+}(E_{0})\cdot\psi^{+}(C_{1})=t_{1}+\tau_{1}\geq 5$. But $\deg\psi^{+}(E_{0})+\deg\psi^{+}(C_{1})=5$ by \ref{item:P2-deg} and $\deg\psi^{+}(C_{1})\leq 2$ by \ref{item:P2-smooth}, so $\deg\psi^{+}(E_{0})=3$, $\deg \psi^{+}(C_{1})=2$ and hence $t_{1}+\tau_{1}=6$. Now  $(\deg\pi_{0}(A_{1}))^{2}=A_{1}^{2}+t_{1}=t_{1}-1\leq 2$, so $t_{1}=2$ and $\tau_{1}=4$. Thus, $q_{1}\in \bar{E}$ has multiplicity sequence $(4)_{3}$, see Lemma \ref{lem:notation}\ref{item:T^0=C}. Because $\psi^{+}(E_{0})$ has one ordinary cusp, $c=2$ and $q_{2}\in \bar{E}$ is ordinary. This is a contradiction with Lemma \ref{lem:HN-equations}\eqref{eq:genus-degree}.

\ref{item:P2_deg-E} Parts \ref{item:P2-deg},\ref{item:P2-n} give $\deg\psi^{+}(E_{0})\leq \deg D_{\min}-(\#D_{\min}-1)=5-n\leq 3$.

\ref{item:P2-conics} Let $G_{1}$, $G_{2}$ be two conics contained in $D_{\min}$. The pencil generated by them gives a map $g\colon \P^{2}\map \P^{1}$. Put $\ll=D_{\min}-G_{1}-G_{2}$. Part \ref{item:P2-deg} gives $\deg\ll=5-2-2=1$, so $\ll$ is a line.  By \ref{item:P2_nodes}, $\ll$ meets $G_{1}$, $G_{2}$ in different points. We have $n=2$ by \ref{item:P2-n}, so by \ref{item:P2_nodes}, the components of $D_{\min}$ meet in four points. Hence, $\#G_{1}\cap G_{2}=4-\#\ll\cap G_{1}-\#\ll\cap G_{2}\leq 2$. In fact, the equality holds, because otherwise $g|_{X_{\min}\setminus D_{\min}}$ is a $\C^{**}$-fibration, which is impossible by \eqref{eq:assumption}. Hence,  $G_{1}\cap G_{2}=\{p_{1},p_{2}\}$ for some $p_{1}\neq p_{2}$ and $\ll$ is tangent to $G_{1}$, $G_{2}$ off $p_{1},p_{2}$. Let $\ll_{12}$ be the line joining $p_{1}$ with $p_{2}$. The morphism $g|_{\ll}\colon \ll\to \P^{1}$ has degree $2$. We have $(G_{1}\cdot G_{2})_{p_{i}}\neq 2$ for some $i\in \{1,2\}$, for otherwise  $g|_{\ll}$ is ramified at $\ll\cap G_{1}$, $\ll\cap G_{2}$ and $\ll\cap\ll_{12}$, contrary to the Hurwitz formula. The result follows.
\end{proof}

We will now consider the cases $n=0,2,3$ separately.

\begin{prop}\label{prop:n0}
	If $n=0$ then $\bar{E}$ is of type $\Qb$ or $\Qa$ (see Figures \ref{fig:Qa}--\ref{fig:Qb}).
\end{prop}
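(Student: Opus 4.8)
The plan is to show that the hypothesis $n=0$ forces $\bar E$ to be a plane quintic all of whose cusps are semi-ordinary, and then to read off the two admissible singularity configurations. First I would record that, by Lemma~\ref{lem:beta_flat}\ref{item:n0}, the condition $n=0$ is equivalent to all cusps of $\bar E$ being semi-ordinary. In particular $\psi=\id$, so $D_n=D_0$, and the peeling morphism $\psi^{+}=\alpha_n\colon X_0\to X_{\min}$ contracts $\Upsilon_0+\Delta_0^{+}$; since every cusp is semi-ordinary, Remark~\ref{rem:semi-ordinary}\ref{item:semiordinary} identifies $\Upsilon_0+\Delta_0^{+}$ with the entire exceptional divisor $Q_1+\dots+Q_c$ of $\pi_0$. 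By Lemma~\ref{lem:P2_lem}\ref{item:P2-deg} we are in the case $X_{\min}\cong\P^2$ with $\deg D_{\min}=5$, while Lemma~\ref{lem:beta_flat}\ref{item:Rn=n+1} gives $\#D_{\min}=n+1=1$. Hence $D_{\min}=\psi^{+}(E_0)$ is an irreducible plane quintic.

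Next I would show that $\bar E$ is itself this quintic up to projective equivalence. Both $\pi_0$ and $\psi^{+}$ are birational morphisms from $X_0$ onto a smooth surface isomorphic to $\P^2$ whose exceptional locus is exactly $Q_1+\dots+Q_c$; by uniqueness of the contraction of a fixed configuration of curves to a smooth surface they differ by an automorphism of $\P^2$, so $\bar E$ is projectively equivalent to $D_{\min}$. (Equivalently, one may invoke Lemma~\ref{lem:P2_lem}\ref{item:P2-semiordinary}, which states that $\psi^{+}\circ\pi_0^{-1}$ is an isomorphism near the cusps and that $D_{\min}$ carries exactly the same cusps as $\bar E$.) Thus $\bar E$ is a rational cuspidal quintic whose cusps are all semi-ordinary, say of multiplicity sequences $(2)_{t_j+1}$ for $j=1,\dots,c$. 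For such a cusp $\delta(q_j)=t_j+1$, so $I(q_j)-M(q_j)=2(t_j+1)$, and the genus--degree formula \eqref{eq:genus-degree} yields $\sum_{j}(t_j+1)=\tfrac{1}{2}(5-1)(5-2)=6$. This is consistent with $\sum_j\lambda_j=\sum_j(t_j+1)\le 6$ coming from \eqref{eq:lambdaineq} and Remark~\ref{rem:lambda>1}\ref{item:lambda_semi-ordinary}, so equality holds throughout.

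It remains to determine the configuration. The equation $\sum_j(t_j+1)=6$ with $t_j\ge 0$ admits many numerical solutions (from a single cusp $(2)_6$ to six ordinary cusps), so the decisive step is to discard every solution except $(t_j)=(1,1,1)$ and $(t_j)=(2,0,0,0)$, i.e.\ three cusps $(2,2)$ and the four cusps $(2,2,2),(2),(2),(2)$, which are precisely types $\Qb$ and $\Qa$. I expect this to be the main obstacle, and I would resolve it by appealing to the classification of rational cuspidal quintics \cite[Theorem 2.3.10]{Namba_geometry_of_curves}: among all such quintics the only ones with purely semi-ordinary cusps are $\Qb$ and $\Qa$. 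A self-contained Bezout analysis with the cuspidal tangent lines is too weak to replace this input — the tangent line of a cusp in special position may have unexpectedly low contact (for instance an $A_6$ cusp on a quintic can have tangent contact only $4$) — so it is cleanest to import the classical quintic classification rather than to reprove the required non-existence of the remaining admissible configurations. Once the type is fixed, the values $\deg\bar E=5$ and $E^2$ are recovered from Lemma~\ref{lem:HN-equations}.
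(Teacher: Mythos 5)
Your reduction is correct up to and including the count $\sum_{j}(t_j+1)=6$: the identification of $(X_{\min},D_{\min})$ with $(\P^2,\bar E)$ (which the paper simply asserts) is justified adequately by your uniqueness-of-contraction argument, and the numerical constraint agrees with the paper's equation \eqref{eq:n0}. The gap is in the final step. The fact you import — that among the rational cuspidal quintics in Namba's classification the only ones with purely semi-ordinary cusps are $\Qb$ and $\Qa$ — is false. A decisive counterexample is the bicuspidal quintic with cusps $(2)_4$ and $(2)_2$: in the notation of the first paper of this series this is the curve of type $\cG(3)$, of degree $2\gamma-1=5$ with multiplicity sequences $(\gamma-1)_4=(2)_4$ and $(2)_{\gamma-1}=(2)_2$ for $\gamma=3$ (see the recollection of type $\cG$ in Section \ref{sec:corollaries}); the unicuspidal quintic with multiplicity sequence $(2)_6$ is another. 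The curve $\cG(3)$ has only semi-ordinary cusps, hence $n=0$ for it as well by Lemma \ref{lem:beta_flat}\ref{item:n0}, its complement is of log general type, and it satisfies the Negativity Conjecture (automatically, since it carries a $\C^{**}$-fibration) — so it satisfies every hypothesis your argument actually uses — yet it is neither $\Qb$ nor $\Qa$. What excludes it, and the $(2)_6$ quintic, is precisely the remaining clause of \eqref{eq:assumption}, namely the nonexistence of a $\C^{**}$-fibration of $\P^2\setminus\bar E$, which your proof never invokes after the numerical setup. So the proposal, as written, proves a false intermediate statement and cannot be repaired by citing Namba alone.

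This is also why the paper does not (and cannot) substitute Namba's classification here. Its proof first establishes the claim $t_j\le 2$ by contradiction: if some $t_1\ge 3$, then the tangent line $\ll_1$ at $q_1$ satisfies $(\ll_1\cdot\bar E)_{q_1}=4$, and the proper transform of $\ll_1$ together with the first three exceptional curves over $q_1$ supports a fiber of a $\P^1$-fibration of $X_0$ restricting to a $\C^{**}$-fibration of $\P^2\setminus\bar E$, contradicting \eqref{eq:assumption}; this is exactly where $\cG(3)$ and the $(2)_6$ quintic are eliminated. Only then does it dispose of the remaining partitions of $6$ into parts $t_j+1\le 3$ (such as $(2)_3+(2)_3$, $(2)_3+(2)_2+(2)$, $(2)_2+(2)_2+(2)+(2)$, six ordinary cusps) by Hurwitz-formula arguments applied to the pencil of lines through $q_1$ and to a pencil of conics through $q_1,q_2$ — those configurations really do not occur as plane quintics, but the ones with a cusp $(2)_m$, $m\ge 4$, do occur and can only be removed using the fibration hypothesis. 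Note that your own correct observation that an $A_6$ cusp on a quintic has tangent contact $4$ is the same phenomenon the paper exploits: rather than treating the low contact as an obstruction to a Bezout argument, it turns the resulting configuration $\ll_1+\bar E$ into the forbidden $\C^{**}$-fibration. Your proof needs a step of exactly this kind before any appeal to a classification of quintics can close the argument.
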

\begin{proof}
	By Lemma \ref{lem:beta_flat}\ref{item:n0} we have $D_{0}-E_{0}=\Upsilon_{0}+\Delta_{0}^{+}$, so  $(X_{\min},D_{\min})=(\P^{2},\bar{E})$. Lemma \ref{lem:P2_lem}\ref{item:P2-deg} implies that $\bar{E}$ is a quintic with only semi-ordinary cusps. Thus every $q_{j}\in \bar{E}$, $j\in\{1,\dots, c\}$, has multiplicity sequence $(2)_{t_{j}+1}$ for some $t_{j}\geq 0$  (see Example \ref{ex:ordinary_cusp}). Lemma \ref{lem:HN-equations}\eqref{eq:genus-degree} in this case reads as
	\begin{equation}\label{eq:n0}
	\sum_{j=1}^{c}(t_{j}+1)=6.
	\end{equation}
	
We claim that $t_{j}\leq 2$ for every $j\in \{1,\dots,c\}$. Suppose that $t_{1}\geq 3$. Let $\ll_{1}\subseteq \P^{2}$ be the line tangent to $\bar{E}$ at $q_{1}\in \bar{E}$. The number $(\ll_{1}\cdot\bar{E})_{q_{1}}$ is the sum of at least two initial terms of the multiplicity sequence of $q_{1}\in \bar{E}$, and since $(\ll\cdot\bar{E})_{q_{1}}\leq \deg\bar{E}=5$, we get $(\ll_{1}\cdot \bar{E})_{q_{1}}=4$. The proper transform $L_{1}$ of $\ll_{1}$ on $X_{0}$ satisfies $L_{1}\cdot Q_{1}=1$, so it meets the unique component of $\pi_{0}^{-1}(q_{1})$ of multiplicity $4$, that is, the second component of $Q_{1}$. Moreover, $L_{1}^{2}=-1$, $L_{1}\cdot D_{0}=2$ and $L$ meets $D_{0}$ in $Q_{1}$ and $E_{0}$. The sum of $L_{1}$ and the first three exceptional curves over $q_{1}$ supports a fiber of a $\P^{1}$-fibration which restricts to a $\C^{**}$-fibration of $\P^{2}\setminus \bar{E}$; a contradiction with \eqref{eq:assumption}.
	
	Assume that $t_{j}=0$ for all $j\geq 2$. Let $\sigma$ be a blowup at $q_{1}$. Then the pencil of lines through $q_{1}\in\bar{E}$ induces a morphism $\sigma^{-1}_{*}\bar{E}\to \P^{1}$ of degree $\deg\bar{E}-2=3$, ramified at every $\sigma^{-1}(q_{j})$, $j\geq 2$ and, if $t_{1}>0$, also at the point infinitely near to $q_{1}$ on $\sigma^{-1}_{*}\bar{E}$. Hence the Hurwitz formula implies that $c\leq 5$ and $c\leq 4$ if $t_{1}>0$. Since $t_{1}\leq 2$ and $t_{1}+c=6$ by \eqref{eq:n0}, it follows that $t_{1}=2$ and that $c=4$, so $\bar{E}$ is of type $\Qa$, see Figure \ref{fig:Qa}. Note that $E^{2}=-7$ by Lemma \ref{lem:HN-equations}.
	\begin{figure}[htbp]
		\centering
		\begin{minipage}[b]{0.45\textwidth}
			\centering
			\includegraphics[scale=0.4,trim={0 0 0 1cm},clip]{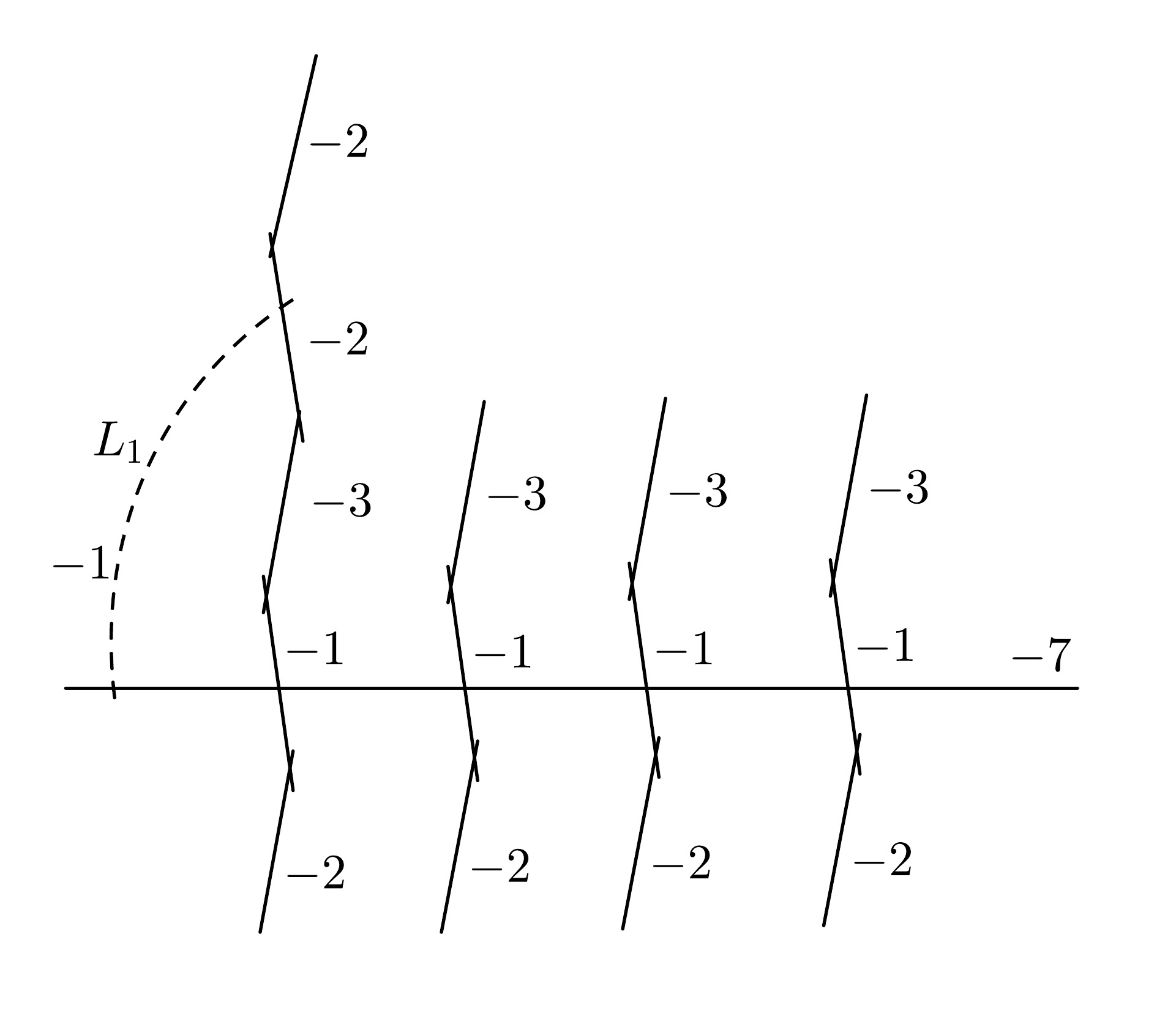}
			\caption{Type $\Qa$.}
			\label{fig:Qa}
		\end{minipage}\hfill
		\begin{minipage}[b]{0.45\textwidth}
			\centering
			\includegraphics[scale=0.4,trim={0 0 0 1cm},clip]{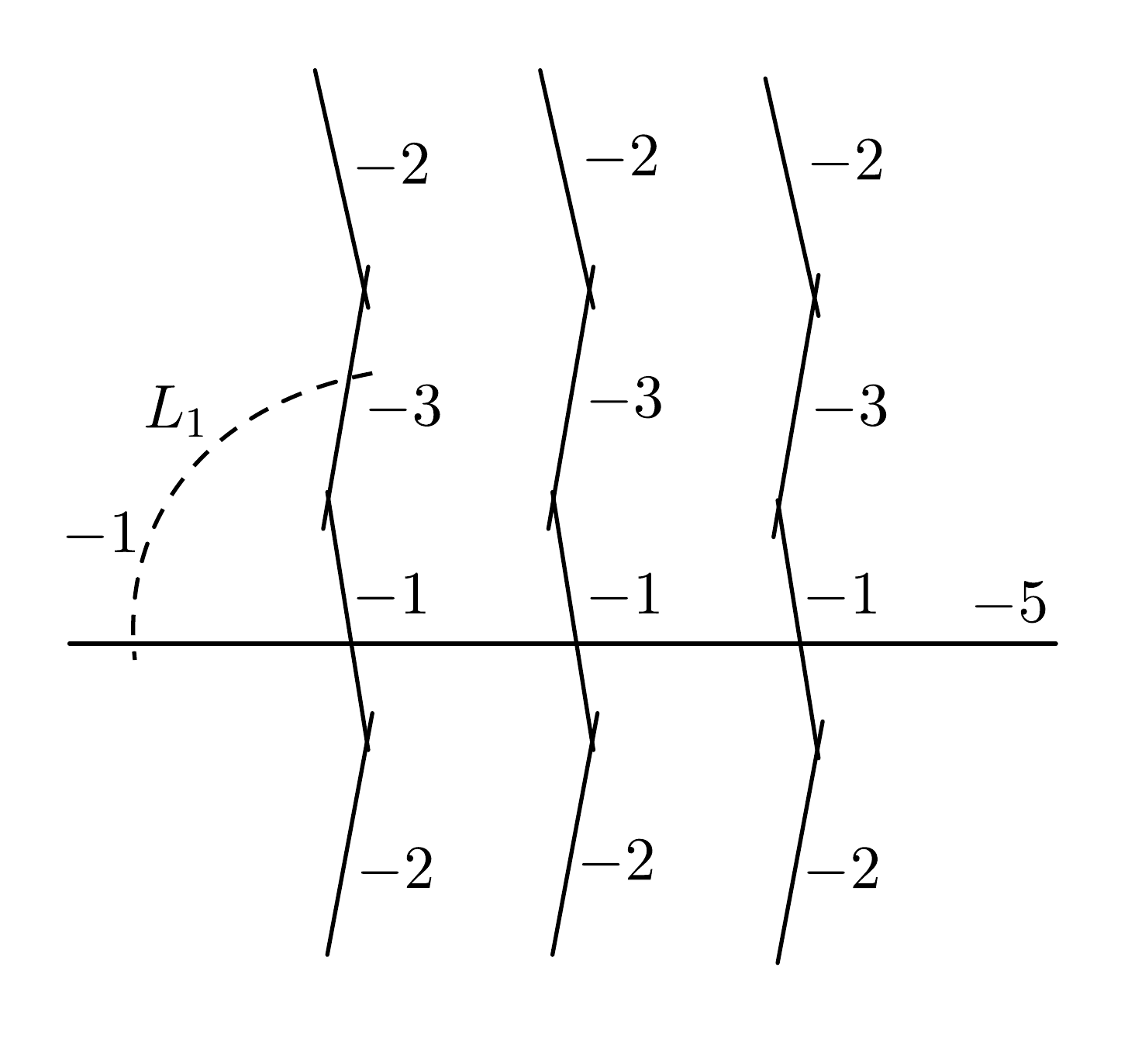}
			\caption{Type $\Qb$.}
			\label{fig:Qb}
		\end{minipage}
	\end{figure}
	
	Therefore, we may assume that at least two numbers $t_{j}$ are non-zero, say $t_{1}\geq t_{2}\geq 1$. Let $\ll_{1},\ll_{2}\subseteq \P^{2}$ be the lines  tangent to $\bar{E}$ at $q_{1}$, $q_{2}$ and let $\ll_{12}\subseteq \P^{2}$ be the line joining $q_{1}$ and $q_{2}$. For $j\in\{1,2\}$, the number $(\ll_{j}\cdot \bar{E})_{q_{j}}$ is the sum of at least two initial terms of the multiplicity sequence of $q_{j}\in \bar{E}$, so, since $\bar{E}$ is a quintic,  $(\ll_{j}\cdot \bar{E})_{q_{j}}= 4$. If $\ll_{j}=\ll_{12}$ then $\ll_{12}\cdot \bar{E}\geq 4+2=6>\deg\bar{E}$, which is false. Hence, $\ll_{1}$, $\ll_{2}$ and $\ll_{12}$ are distinct and $\{p\}\de \ll_{12}\cap\bar{E}\setminus \{q_{1},q_{2}\}$ is a point where $\ll_{12}$ and $\bar{E}$ meet transversally. Denote by $\sigma$ the blowup at $q_{1}$, $q_{2}$ and their infinitely near points on the proper transforms of $\bar{E}$. For $j\in\{1,2\}$ let $q_{j}'$ be the point infinitely near to $q_{j}$ on $\sigma^{-1}_{*}\bar{E}$. The pencil of conics generated by $\ll_{1}+\ll_{2}$ and $2\ll_{12}$ gives a morphism $\sigma^{-1}_{*}\bar{E}\to \P^{1}$  of degree $2\deg\bar{E}-(4+4)=2$, which is ramified at the preimages of $p,q_{3},\dots, q_{c}$ and at $q_{j}'$ for every $j\in \{1,2\}$ such that $t_{j}\geq 2$. By the Hurwitz formula, there are exactly two ramification points. If $t_{1}\geq 2$ then we get $c=2$ and $t_{2}=1$, so $t_{1}=4$ by \eqref{eq:n0}, contrary to our claim. Hence, $t_{1}=t_{2}=1$ and we get $c=3$. Now $t_{3}=1$ by \eqref{eq:n0}, so $\bar{E}$ is of type $\Qb$, see Figure \ref{fig:Qb}. Note that $E^{2}=-5$ by Lemma \ref{lem:HN-equations}.
\end{proof}

By Lemma \ref{lem:P2_lem}\ref{item:P2-n}, we are now left with the cases $n=2$ and $n=3$. By definition, the morphism $\psi$ contracts $n$ curves not contained in $D_{0}$. Denote them by $A$, $A'$ and, if $n=3$, by $A''$. By Lemma \ref{lem:ale_pr-tr}\ref{item:ale_pr-tr} they are almost log exceptional on $(X_{0},\tfrac{1}{2}D_{0})$. By Lemma \ref{lem:P2_lem}\ref{item:P2-semiordinary},\ref{item:P2_deg-E}, $\psi^{+}(E_{0})$ is either a cuspidal cubic or a conic or a line. In the following propositions we treat these cases separately.

				\begin{figure}[htbp]
					\begin{tabular}{c c c}
						\multirow{2}{*}{
							\begin{subfigure}{0.45\textwidth}\centering
								\vspace{-2.5cm}
								\includegraphics[scale=0.25,trim={1cm 0 0 0}, clip]{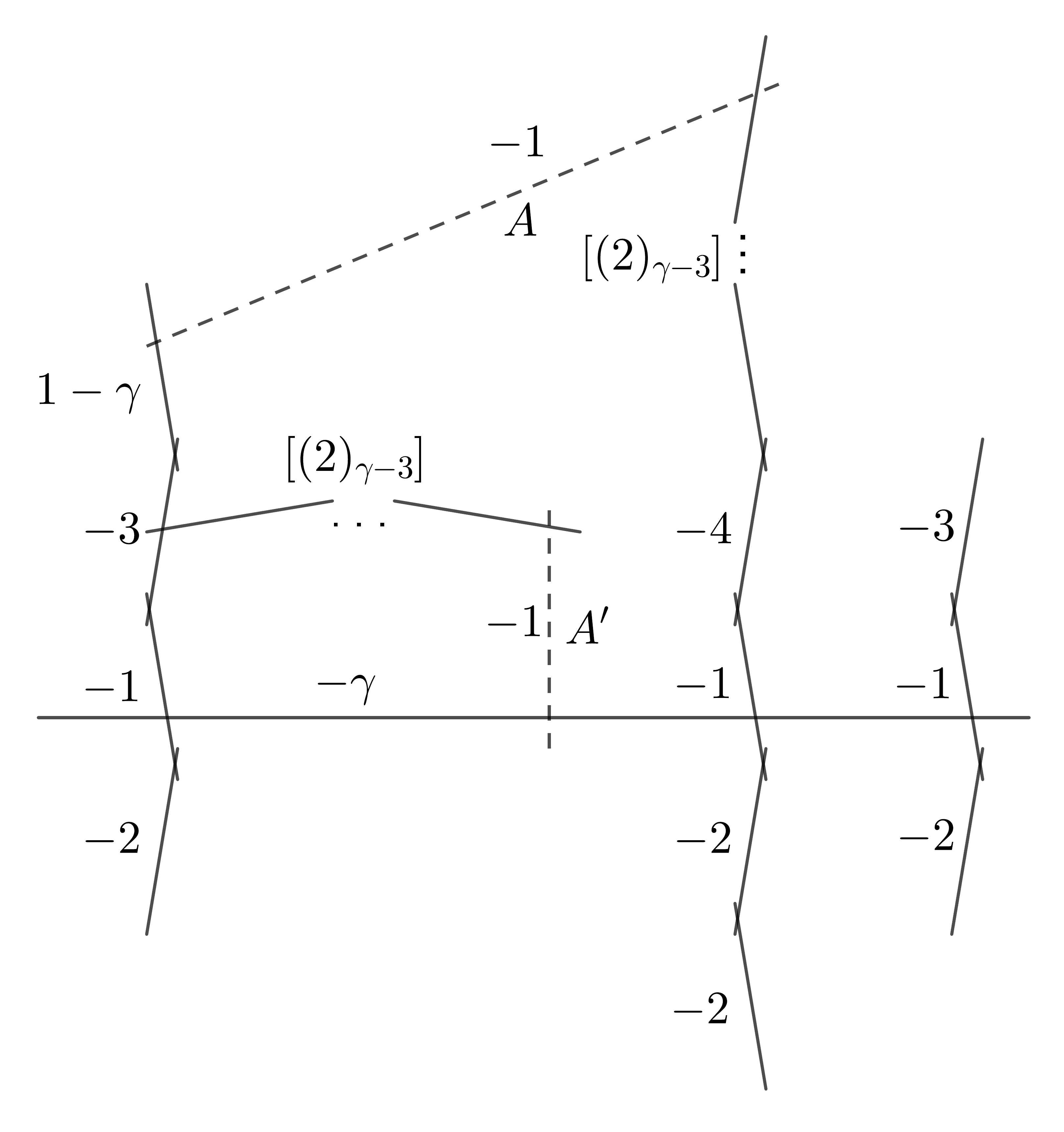}
								\vspace{-1cm}
								\caption{$(X,D)$}
								\vspace{.5cm}
								\begin{flushright} \begin{fmpage}{.5\textwidth} \begin{equation*}\begin{split}
										\boldsymbol{\circ}& = \mbox{ image of } A\\
										\boldsymbol{\diamond}&= \mbox{ image of } A'
										\end{split}\end{equation*} \end{fmpage} \end{flushright}					
							\end{subfigure}						
						}
						&
						$\xrightarrow{\quad \displaystyle{ \psi\circ\psi_{0}}\quad }$
						& 
						\begin{subfigure}{0.35\textwidth}\centering
							\includegraphics[scale=0.35,trim={1cm 0 0 0}, clip]{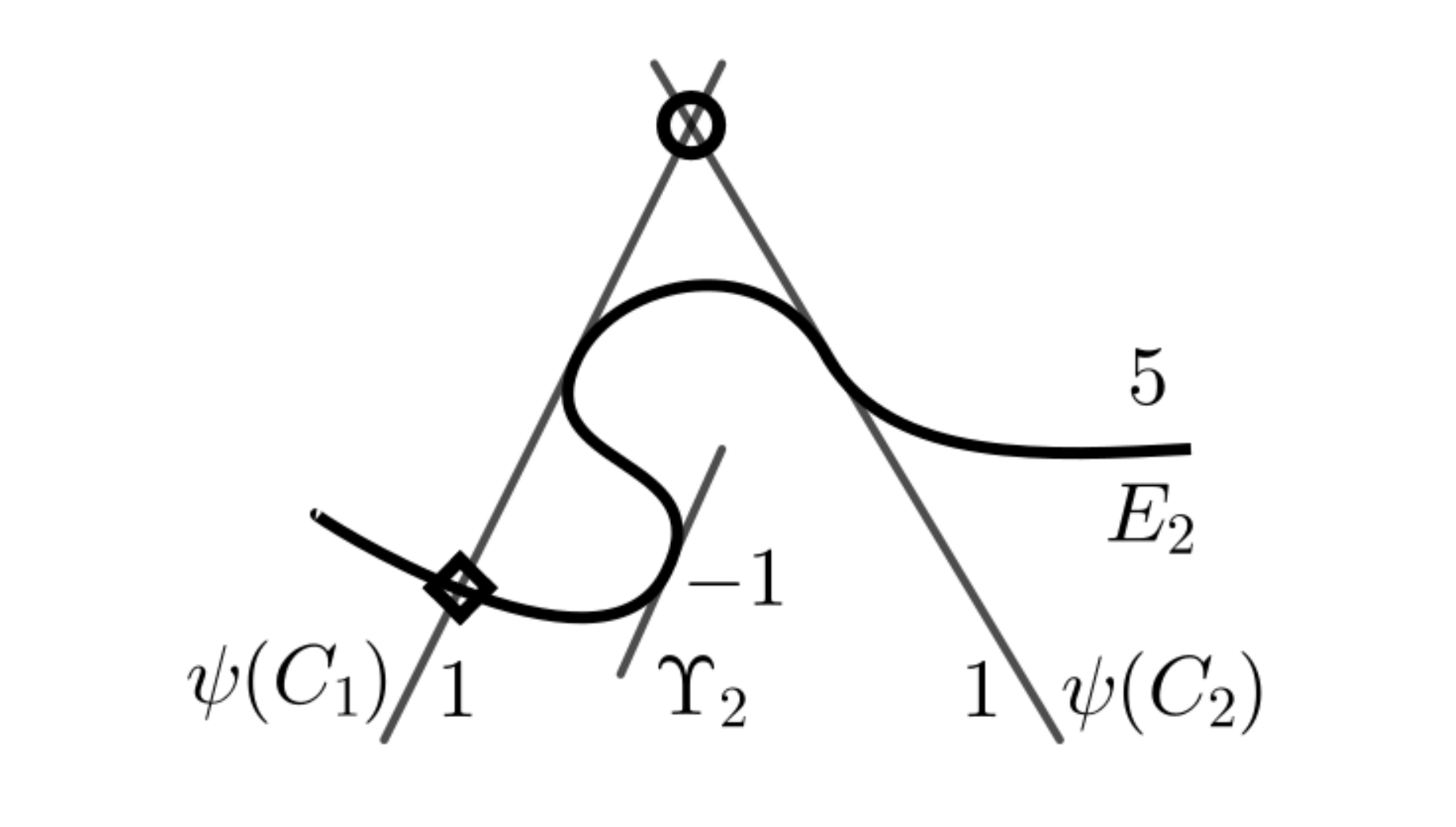}
							\caption{$(X_{2},D_{2})$}
						\end{subfigure}
						\vspace{0.5cm}
						\\
						&&
						$
						\Bigg\downarrow \alpha_{2}
						$
						\\
						&&
						\begin{subfigure}{0.35\textwidth}\centering
							\includegraphics[scale=0.35,trim={1cm 0 0 0}, clip]{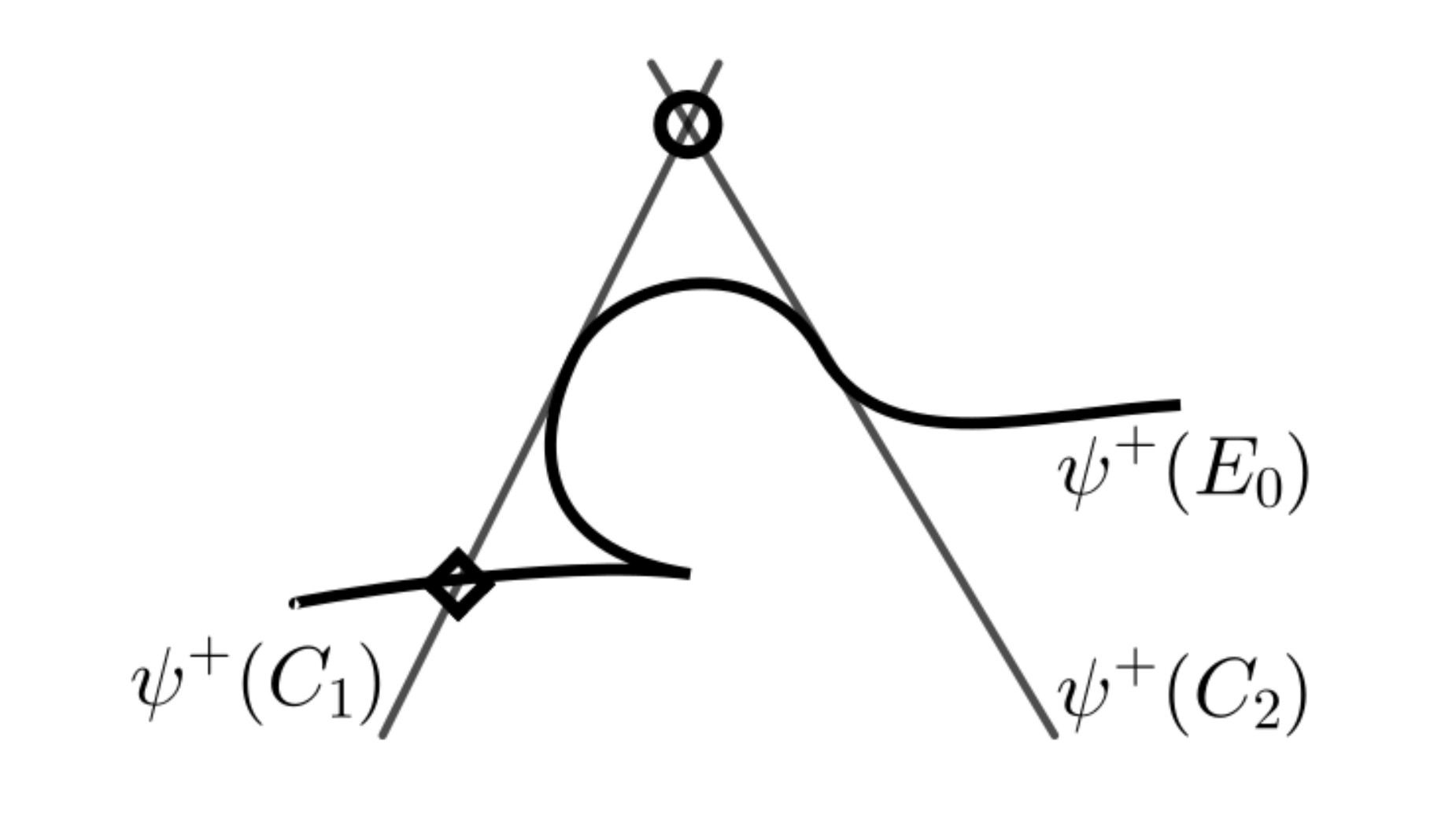}
							\caption{$(X_{\min},D_{\min})$}
						\end{subfigure}
					\end{tabular}
					\caption{Type $\FZb(\gamma)$, $\gamma\geq 4$.}
					\label{fig:FZb}
				\end{figure}			
\begin{prop}\label{prop:FZb} If $n=2$ and $\psi^{+}(E_{0})$ is a cubic then $\bar{E}$ is of type $\FZb$ (see Figure \ref{fig:FZb}).
\end{prop}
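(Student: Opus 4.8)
The plan is to first pin down $(X_{\min},D_{\min})$ and then recover $\bar E$ by pulling it back through $\psi^{+}$. By Lemma \ref{lem:P2_lem}\ref{item:P2-deg},\ref{item:P2-n} we have $X_{\min}\cong\P^{2}$, $\deg D_{\min}=5$ and $\#D_{\min}=n+1=3$. Since $\psi^{+}(E_{0})$ is a cubic, the other two components have total degree $2$ and are smooth by Lemma \ref{lem:P2_lem}\ref{item:P2-smooth}, hence are two lines $\ell_{1},\ell_{2}$. As $\psi^{+}(E_{0})$ is rational it is a cuspidal cubic, and by Lemma \ref{lem:P2_lem}\ref{item:P2-semiordinary} its unique singular point is the image of a semi-ordinary cusp on which $\psi^{+}\circ\pi_{0}^{-1}$ is an isomorphism; since an irreducible rational cubic has $p_{a}=1$, that cusp is ordinary, say $q_{3}$, and it is the only semi-ordinary cusp of $\bar E$. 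In particular $\ell_{1},\ell_{2}$ do not pass through it.

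Next I would determine the mutual position of $\psi^{+}(E_{0}),\ell_{1},\ell_{2}$. By Lemma \ref{lem:beta_flat}\ref{item:Dmin_meet_each_other} the three components pairwise meet, and by Lemma \ref{lem:P2_lem}\ref{item:P2_nodes} they meet in exactly $2n=4$ points, two components at each. The point $\ell_{1}\cap\ell_{2}$ is one of them and lies off $\psi^{+}(E_{0})$ (no triple points), so the remaining three points carry all of $\ell_{1}\cdot\psi^{+}(E_{0})+\ell_{2}\cdot\psi^{+}(E_{0})=6$, each lying on a single line. With each line meeting the cubic at least once, the point count forces the split $(1,2)$: one line, say $\ell_{2}$, meets $\psi^{+}(E_{0})$ at a single smooth point with multiplicity $3$ — an inflectional tangent, the cusp being excluded by the previous paragraph — while $\ell_{1}$ meets it at two points with multiplicities $2$ and $1$; any residual degenerate incidence is ruled out by \eqref{eq:assumption} exactly as in the proof of Lemma \ref{lem:P2_lem}, the alternative producing a pencil that induces a $\C^{1}$-, $\C^{*}$- or $\C^{**}$-fibration of an open subset of $\P^{2}\setminus\bar E$. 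The two tangential contacts are the only non-snc points of $D_{\min}$, so by Lemma \ref{lem:beta_flat}\ref{item:centers_psi+} they are precisely the images of the two almost log exceptional curves contracted by $\psi$.

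Finally I would reconstruct the cusps. The resolution of the contact-$3$ tangency produces, over the associated cusp, a twig $T$ strictly containing its maximal $(-2)$-subtwig $\Delta_{T}\neq 0$ which $\psi$ contracts, so condition \eqref{eq:line} holds; hence $\epsilon=1$ in \eqref{eq:eps} and Lemma \ref{lem:line} applies. Renumbering so that an almost log exceptional curve $A$ meets $\ftip{T_{1}}$ and $\ftip{T_{2}}$, Lemma \ref{lem:line}\ref{item:A_line},\ref{item:deg} gives that $\pi_{0}(A)$ is a line meeting $\bar E$ only at $q_{1},q_{2}$ with $\deg\bar E=\mu_{1}+\mu_{2}$. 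Since $s_{1}=s_{2}=1$ by Lemma \ref{lem:P2_lem}\ref{item:P2-s_j}, the second almost log exceptional curve $A'$ satisfies $A'\cdot Q_{1}=0$ and, by Lemma \ref{lem:line}\ref{item:tangent}, $\{\tau_{1},\tau_{2}\}=\{2,3\}$ together with $\deg\bar E=\mu_{2}+\mu_{2}'+1$, where $\pi_{0}(A')$ is the tangent line (good asymptote) at the cusp with $\tau=2$. Reading the weighted graph of $Q_{1}+Q_{2}$ off the two resolution towers then identifies the multiplicity sequences as $((3)_{\gamma-2})$ and $(2(\gamma-2),(2)_{\gamma-2})$, where $\gamma-2$ is their common length; with the ordinary cusp $q_{3}$ this gives $c=3$ and shows $\bar E$ is of type $\FZb(\gamma)$. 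The bound $\gamma\geq 4$ follows because the $\tau=2$ cusp is not semi-ordinary (Lemma \ref{lem:line}\ref{item:not_semiord}), and then $\deg\bar E=2\gamma-1$ and $E^{2}$ are fixed by Lemma \ref{lem:HN-equations}.

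The hard part will be this last step: extracting the \emph{full} multiplicity sequences — not merely the numbers $\mu_{j},\mu_{j}',\tau_{j},s_{j}$ supplied by Lemma \ref{lem:line} — from the two tangency towers, and in particular verifying that the towers have equal length $\gamma-2$, which is exactly what couples the $(3)$'s of one cusp to the $(2)$'s of the other. The configuration analysis of the second paragraph, namely excluding the alternative contact distributions by the no-fibration hypothesis, is the other delicate point; everything else is the routine numerology of Lemma \ref{lem:HN-equations}.
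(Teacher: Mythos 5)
Your first two paragraphs are essentially correct and parallel the paper's opening moves: $X_{\min}\cong\P^{2}$, $D_{\min}$ is a cuspidal cubic plus two lines, they meet in $2n=4$ points, and the point count forces one line to meet the cubic at a single smooth point with multiplicity $3$ and the other at two points with multiplicities $1$ and $2$. But the sentence launching your third paragraph is false, and the error is structural: the two tangential contacts are \emph{not} the images of the almost log exceptional curves. Recall that $(X_{0},D_{0})$ is only the minimal \emph{weak} resolution, so $D_{0}$ already contains the order-$\tau_{j}$ tangencies $C_{j}\cap E_{0}$; these are never touched by $\psi^{+}$ (centers of $\psi$ are nc points of the boundary, Lemma \ref{lem:MMP-properties}\ref{item:center-psi_i}), and they are exactly what the two tangencies of $D_{\min}$ come from. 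The centers of $\psi$ are instead the two \emph{transversal} double points of $D_{\min}$, namely $\ll_{1}\cap\ll_{2}$ and the transversal point $p_{1}\in\ll_{1}\cap\psi^{+}(E_{0})$. Indeed, $(\psi^{+})^{-1}_{*}\ll_{1}$ meets $E_{0}$ (Lemma \ref{lem:beta_flat}\ref{item:centers_on_En}), hence equals some $C_{j}$ (Lemma \ref{lem:P2_lem}\ref{item:P2-s_j}); but $C_{j}$ meets $E_{0}$ in exactly one point, with multiplicity $\tau_{j}\geq 2$, so a transversal intersection over $p_{1}$ cannot already exist on $X_{0}$ and must be created by a contraction, i.e.\ $p_{1}$ is a center of $\psi^{+}$. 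This is precisely the paper's claim \eqref{eq:FZb_claim}, and your picture is its inversion.

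Because of this inversion the machinery you invoke next is inapplicable. Condition \eqref{eq:line} in fact fails for an $\FZb$ curve: for the cusp with $\tau=3$ one has $T=\Delta_{T}=[(2)_{\gamma-3}]$, while for the cusp with $\tau=2$ one has $\Delta_{T}=0$; so $\epsilon=0$ (consistent with $\Delta_{n}^{-}=0$, as $X_{\min}$ is smooth) and Lemma \ref{lem:line}, in particular parts \ref{item:A_line}, \ref{item:deg}, \ref{item:tangent}, cannot be used. You also misquote \ref{item:tangent}: it \emph{assumes} $A'\cdot Q_{1}=0$ rather than deriving it from $s_{1}=s_{2}=1$, and in the actual geometry $A'$ does meet $Q_{1}$, in the tip of the $(-2)$-twig $\Delta_{A'}$. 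Finally, the step you defer as ``the hard part'' is exactly where the proof lives: with the correct identification of centers one gets $\Exc\psi_{A'}=A'+\Delta_{A'}$ and $\Exc\psi_{A}=(Q_{1}-C_{1}-\Delta_{A'})+A+(Q_{2}-C_{2})$, and then the contractibility of $Q_{1}$, $Q_{2}$ and $\Exc\psi_{A}$ to smooth points forces $Q_{1}=[t_{2}+2,1,(2)_{t_{2}}]$, $Q_{2}=[(2)_{t_{2}},1]$ with $t_{1}=0$ and $t_{2}=\#\Delta_{A'}\geq 1$ --- this is what couples the two cusps and yields the multiplicity sequences of $\FZb(\gamma)$, $\gamma=t_{2}+3\geq 4$. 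Your ``tangency tower'' route would have you resolving points that are never blown up, so the reconstruction cannot be completed along it.
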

\begin{proof}
	Lemma \ref{lem:P2_lem}\ref{item:P2-deg},\ref{item:P2-n} implies that $D_{\min}-\psi^{+}(E_{0})$ is a union of two lines, say $\ll_{1}$, $\ll_{2}$. By Lemma \ref{lem:P2_lem}\ref{item:P2-semiordinary}, $\psi^{+}(E_{0})$ is a cuspidal cubic, so it has a unique singular point, which is an ordinary cusp. Lemma \ref{lem:P2_lem}\ref{item:P2_nodes} gives
	\begin{equation*}
	\#(\ll_{1}+\ll_{2})\cap\psi^{+}(E_{0})=2n-\#\ll_{1}\cap\ll_{2}=3,
	\end{equation*}
	so, say, $\#\ll_{1}\cap \psi^{+}(E_{0})=2$ and  $\#\ll_{2}\cap \psi^{+}(E_{0})=1$. Write $\ll_{1}\cap \psi^{+}(E_{0})=\{p_{1},p_{2}\}$, where  $(\ll_{1}\cdot\psi^{+}(E_{0}))_{p_{1}}=1$ and $(\ll_{1}\cdot\psi^{+}(E_{0}))_{p_{2}}=2$. We claim that 
	\begin{equation}\label{eq:FZb_claim}
		\ll_{1}=\psi^{+}(C_{1}),\quad  \ll_{2}=\psi^{+}(C_{2})\quad \mbox{and}\quad \psi^{+}(A)=\ll_{1}\cap\ll_{2},\quad \psi^{+}(A')=\{p_1\}.
	\end{equation}
	
	Lemma \ref{lem:beta_flat}\ref{item:centers_on_En} implies that $(\psi^{+})^{-1}_{*}\ll_{1}$ meets $E_{0}$, so by Lemma \ref{lem:P2_lem}\ref{item:P2-s_j} $(\psi^{+})^{-1}_{*}\ll_{1}=C_{j}$ for some $j\in \{1,\dots c\}$, say, $j=1$. The point $p_{1}$ is a center of $\psi^{+}$, since otherwise $C_{1}\cdot E_{0}=(\ll_{1}\cdot\psi^{+}(E_{0}))_{p_{1}}=1$, which is false. Say that $\psi^{+}(A')=\{p_{1}\}$. By Lemma \ref{lem:beta_flat}\ref{item:centers_on_En} 
	\begin{equation*}
	(\psi^{+})^{-1}(p_{1})=A'+\Delta_{A'},
	\end{equation*}
	
	where $\Delta_{A'}$ is a $(-2)$-twig meeting $C_{1}$. Since $(\ll_{1}\cdot \psi^{+}(E_{0}))_{p_{1}}=1$,  $A'$ meets $\ftip{\Delta_{A'}}$. It follows that $\psi_{A'}$ touches $C_{1}$ once. But $\psi^{+}(C_{1})^{2}\geq 1=C_{1}^{2}+ 2$, so $\psi^{+}$ touches $C_{1}$ at least twice. Hence, $\psi^{+}(A)\subseteq \psi^{+}(C_{1})$. As a consequence, $\ll_{1}\cap \ll_{2}=\psi^{+}(A)$, so, since $n=2$, $\ll_{2}\cap \psi^{+}(E_{0})$ is not a center of $\psi^{+}$. Thus $(\psi^{+})^{-1}_{*}\ll_{2}\cdot E_{0}=\ll_{2}\cdot \psi^{+}(E_{0})=3$, so  $(\psi^{+})^{-1}_{*}\ll_{2}=C_{2}$. This proves \eqref{eq:FZb_claim}. It follows that
	\begin{equation*}
	\tau_{1}=(\ll_{1}\cdot \psi^{+}(E_{0}))_{p_{2}}=2\quad\mbox{and}\quad\tau_{2}=\ll_{2}\cdot \psi^{+}(E_{0})=3. 
	\end{equation*}

	Since the components of $D_{\min}$ meet transversally at $\psi^{+}(A)$, $\psi^{+}(A')$, these points are not touched by $\alpha_{2}^{-1}$, hence the only center of $\alpha_{2}$ is the ordinary cusp of $\psi^{+}(E_{0})$. Thus 
	\begin{equation*}
	\Exc\psi=\Exc\psi^{+}-\Upsilon_{0}=A+A'+D_{0}-\Upsilon_{0}-(\psi^{+})^{-1}_{*}D_{\min}=A+A'+Q_{1}-C_{1}+Q_{2}-C_{2},
	\end{equation*}
	and hence 
	\begin{equation*}
	\Exc\psi_{A}=\Exc\psi-\Exc\psi_{A'}=(Q_{1}-C_{1}-\Delta_{A'})+A+(Q_{2}-C_{2}).
	\end{equation*}
	
	Lemma \ref{lem:MMP-properties}\ref{item:Exc-psi_i} implies now that $Q_{1}-C_{1}-\Delta_{A'}$ and $Q_{2}-C_{2}$ are zero or twigs of $D_{0}$. Because $Q_{1}$ meets $A$, the cusp $q_{1}\in \bar{E}$ is not semi-ordinary, so since $\tau_{1}=2$, by Lemma \ref{lem:notation}\ref{item:T^0=C} we have $Q_{1}-C_{1}\neq\Delta_{A'}$. Because $Q_{1}$ and $Q_{2}$ contract to smooth points, we obtain $Q_{1}=[(2)_{t_{1}},\#\Delta_{A'}+2,1,(2)_{\#\Delta_{A'}}]$ and $Q_{2}=[(2)_{t_{2}},1]$. Since $\Delta_{A'}\neq 0$, the contractibility of $\Exc\psi_{A}=[\#\Delta_{A'}+2,(2)_{t_{1}},1,(2)_{t_{2}}]$ to a smooth point gives $t_{1}=0$ and $t_{2}=\#\Delta_{A'}\geq 1$.
	
	Thus $Q_{1}=[t_{2}+2,1,(2)_{t_{2}}]$ and  $Q_{2}=[(2)_{t_{2}},1]$. Recall that $\tau_{1}=2$, $\tau_{2}=3$ and, by Lemma \ref{lem:P2_lem}\ref{item:P2-s_j}, $s_{1}=s_{2}=1$. Since $\psi^{+}(E_{0})$ has one ordinary cusp, $c=3$ and $q_{3}\in \bar{E}$ is ordinary. Therefore, $\bar{E}$ is of type $\FZb(\gamma)$, where $\gamma=t_{2}+3\geq 4$, see Figure \ref{fig:FZb}. Note that $E^{2}=-\gamma$ by Lemma \ref{lem:HN-equations}.
\end{proof}

\begin{figure}[htbp]
	\begin{tabular}{c c c}
		\multirow{3}{*}{
			\begin{subfigure}{0.4\textwidth}\centering
				\vspace{-2.5cm}
				\includegraphics[scale=0.25,trim={1cm 0 0 0}, clip]{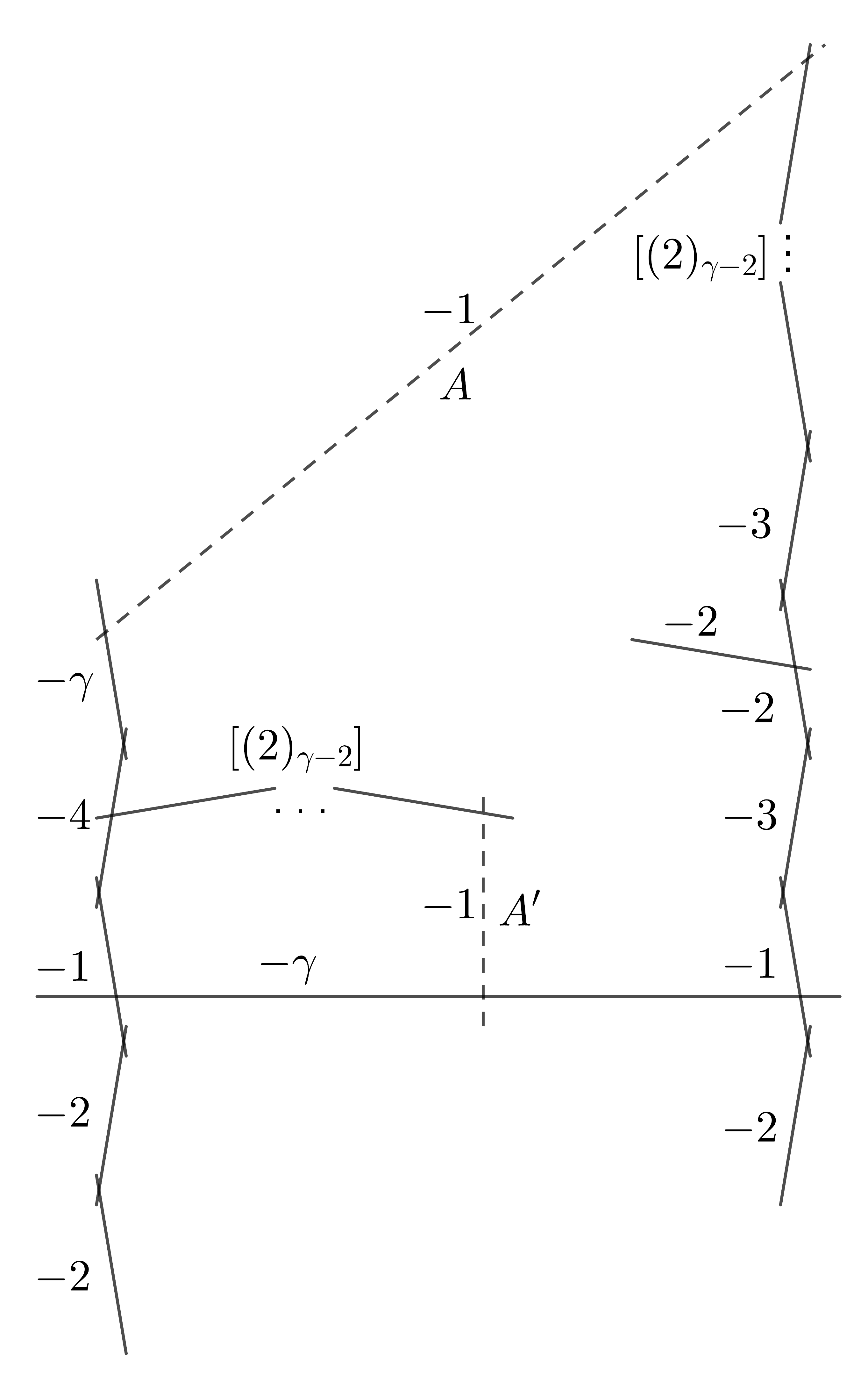}
				\vspace{-1cm}
				\caption{$(X,D)$}
				\vspace{.5cm}
			\end{subfigure}
		}
		&
		$\xrightarrow{\quad \displaystyle{ \psi\circ\psi_{0}}\quad }$
		& 
		\begin{subfigure}{0.35\textwidth}\centering
			\includegraphics[scale=0.22,trim={1cm 0 0 0}, clip]{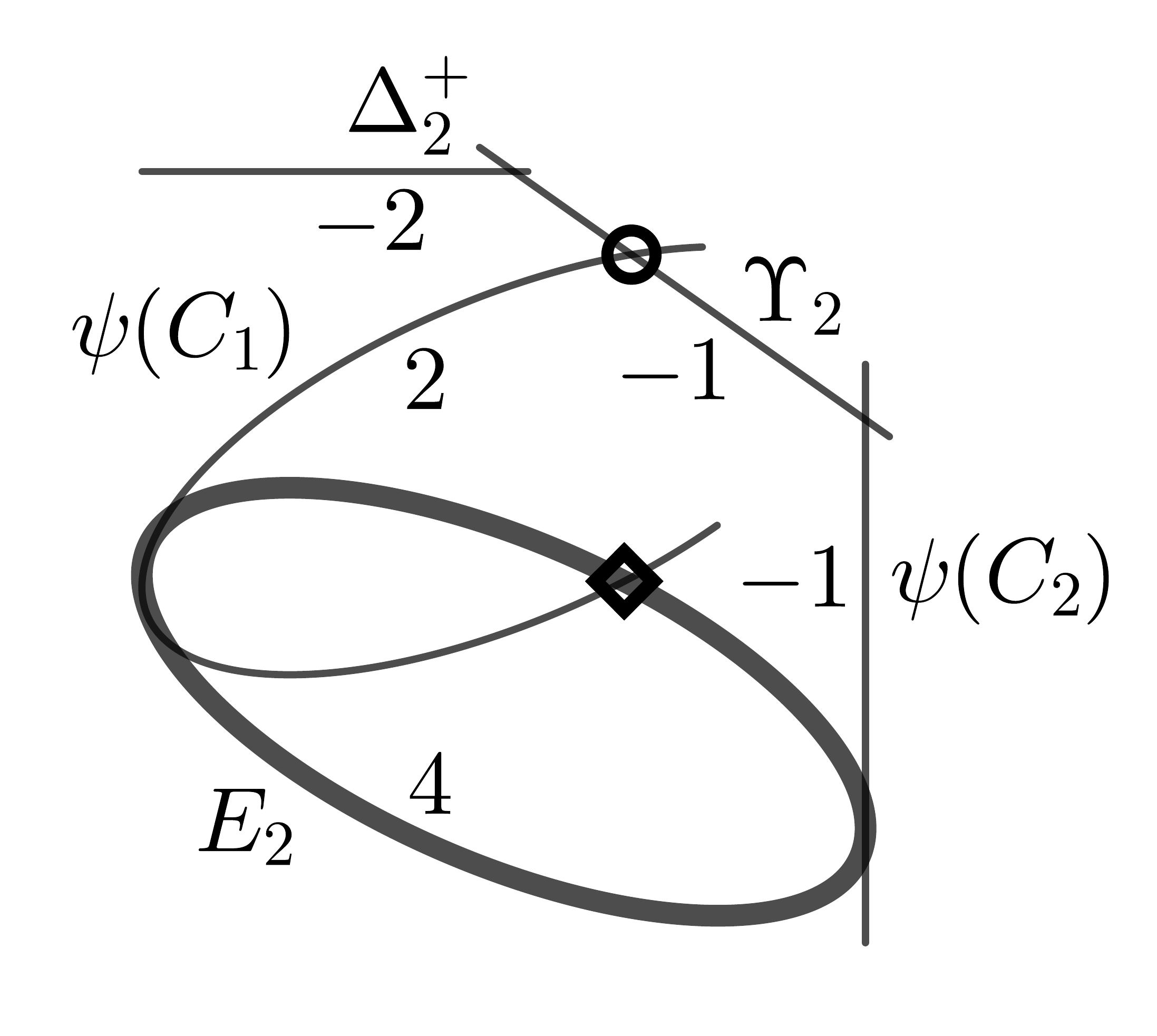}
			\caption{$(X_{2},D_{2})$}
		\end{subfigure}
		\vspace{0.5cm}
		\\
		&&
		$
		\Bigg\downarrow \alpha_{2}
		$
		\\
		&
		\begin{fmpage}{.2\textwidth} \begin{equation*}\begin{split}
			\boldsymbol{\circ} = \mbox{ image of } A\\
			\boldsymbol{\diamond}= \mbox{ image of } A'
			\end{split}\end{equation*} \end{fmpage}				
		&
		\begin{subfigure}{0.35\textwidth}\centering
			\includegraphics[scale=0.22,trim={1cm 0 0 0}, clip]{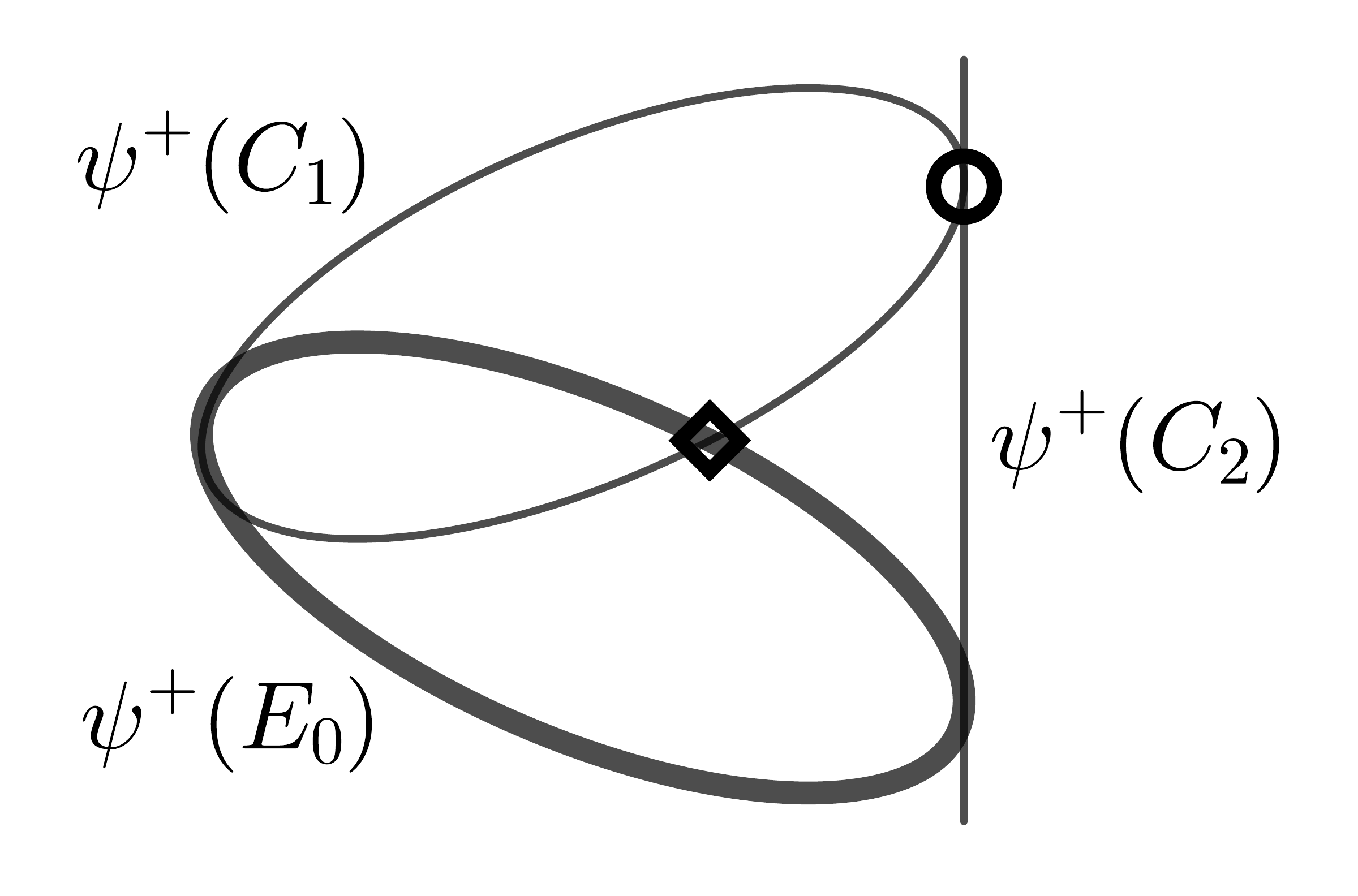}
			\caption{$(X_{\min},D_{\min})$}
		\end{subfigure}
	\end{tabular}
	\caption{Type $\cH(\gamma)$, $\gamma\geq 3$.}
	\label{fig:H}
\end{figure}		
\begin{prop}\label{prop:H} If $n=2$ and $\psi^{+}(E_{0})$ is a conic then $\bar{E}$ is of type $\cH$ (see Figure \ref{fig:H}).
\end{prop}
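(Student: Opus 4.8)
The plan is to reconstruct $(X_{0},D_{0})$, and hence the cusps of $\bar E$, from the configuration $(X_{\min},D_{\min})=(\P^{2},D_{\min})$ by reversing $\psi^{+}$, exactly as in the proof of Proposition \ref{prop:FZb}. First I would fix the shape of $D_{\min}$. By Lemma \ref{lem:P2_lem}\ref{item:P2-deg},\ref{item:P2-n} we have $\#D_{\min}=n+1=3$ and $\deg D_{\min}=5$; since $\psi^{+}(E_{0})$ is a conic and the two remaining components are smooth rational curves (Lemma \ref{lem:P2_lem}\ref{item:P2-smooth}) of total degree $3$, they must be a conic $G$ and a line $\ll$. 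Thus $D_{\min}$ contains two conics and Lemma \ref{lem:P2_lem}\ref{item:P2-conics} applies: writing $G_{1}=\psi^{+}(E_{0})$ and $G_{2}=G$, the conics meet at two points $p_{1},p_{2}$ with $(G_{1}\cdot G_{2})_{p_{1}}=3$, $(G_{1}\cdot G_{2})_{p_{2}}=1$, while $\ll$ is tangent to $G_{1}$ at a single point $r_{1}$ and to $G_{2}$ at a single point $r_{2}$, both off $\{p_{1},p_{2}\}$. Since $G_{1}$ is smooth, Lemma \ref{lem:P2_lem}\ref{item:P2-semiordinary} shows that $\bar E$ has no semi-ordinary cusp; hence every component of $D_{\min}-G_{1}$ meets $G_{1}$ and, by Lemma \ref{lem:beta_flat}\ref{item:centers_on_En} together with $s_{j}=1$ (Lemma \ref{lem:P2_lem}\ref{item:P2-s_j}), its proper transform on $X_{0}$ is one of the curves $C_{j}$. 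As there are exactly two such components, $c=2$ and $\{\psi^{+}(C_{1}),\psi^{+}(C_{2})\}=\{G_{2},\ll\}$.

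Next I would determine the two multiplicities $\tau_{j}$ and the two centres of $\psi^{+}$. Applying Lemma \ref{lem:beta_flat}\ref{item:centers_on_En} to the component mapping to $G_{2}$ gives $4=G_{2}\cdot G_{1}\le\tau+1$, so that cusp, say $q_{1}$, has $\tau_{1}\ge 3$ and $\psi^{+}(C_{1})=G_{2}$; applying it to the line gives $\psi^{+}(C_{2})=\ll$ with $\tau_{2}\le\ll\cdot G_{1}=2$, hence $\tau_{2}=2$ as $q_{2}$ is not semi-ordinary. Because $C_{1}$ meets $E_{0}$ in a single point while $G_{2}$ meets $G_{1}$ in the two points $p_{1},p_{2}$, exactly one of these is the image of $C_{1}\cap E_{0}$ and the other is a centre of $\psi^{+}$ (Lemma \ref{lem:beta_flat}\ref{item:centers_psi+}); the inequality $4\le\tau_{1}+1$ forces $\tau_{1}=3$, the image of $C_{1}\cap E_{0}$ to be $p_{1}$, and the centre to be $p_{2}$. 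The same reasoning shows $r_{1}$ is the image of $C_{2}\cap E_{0}$ and is not a centre, so the second centre is $r_{2}=G_{2}\cap\ll$. In particular condition \eqref{eq:line} holds, so Lemma \ref{lem:line} provides the line $\pi_{0}(A)$ through $q_{1},q_{2}$ with $\deg\bar E=\mu_{1}+\mu_{2}$, and, as in the proof of Proposition \ref{prop:FZb}, the decomposition $\Exc\psi=A+A'+(Q_{1}-C_{1})+(Q_{2}-C_{2})$, where $\psi^{+}(A)=r_{2}$ and $\psi^{+}(A')=p_{2}$.

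Finally I would recover $Q_{1}$ and $Q_{2}$. The centre $p_{2}$ has multiplicity one, so by the equality case of Lemma \ref{lem:beta_flat}\ref{item:centers_on_En} its preimage is the short chain $A'+\Delta_{A'}$ with $\Delta_{A'}$ a $(-2)$-twig of $D_{0}$ meeting $C_{1}$ and $A'$ meeting $E_{0}$; this gives $\Exc\psi_{A'}=A'+\Delta_{A'}$ and hence $\Exc\psi_{A}=(Q_{1}-C_{1}-\Delta_{A'})+A+(Q_{2}-C_{2})$. Using that $Q_{1}$, $Q_{2}$ and $\Exc\psi_{A}$ all contract to smooth points (Lemma \ref{lem:shape_of_contractible_chains} and Lemma \ref{lem:notation}), that $\tau_{1}=3$, $\tau_{2}=2$, and that the tangency of $G_{2}$ and $\ll$ at $r_{2}$ is of order $2$, I would read off the two chains in terms of a single integer parameter $\gamma$; matching self-intersections then yields the multiplicity sequences $(3(\gamma-1),(3)_{\gamma-1})$ and $((4)_{\gamma-1},2,2,2)$ of Definition \ref{def:our_curves}, i.e.\ type $\cH(\gamma)$, and $E^{2}$ follows from Lemma \ref{lem:HN-equations} (see Figure \ref{fig:H}). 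The bound $\gamma\ge 3$ comes from the requirement that $A,A'$ be genuine almost log exceptional curves (so the relevant twigs are non-trivial) together with Lemma \ref{lem:Tono_E2} and the basic inequality \eqref{eq:lambdaineq}.

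The main obstacle will be this last step: showing that the two centres, the order-$2$ tangency at $r_{2}$, and the contractibility-to-smooth-point constraints rigidly propagate through both $Q_{1}$ and $Q_{2}$ and force exactly the \emph{correlated} growth of the two multiplicity sequences with $\gamma$ (unlike $\FZb$, where only one cusp lengthens with the parameter, here both do). Care is also needed to confirm that $p_{2}$ and $r_{2}$ — rather than $p_{1}$ or $r_{1}$ — are the centres, which is precisely where the inequality $\psi^{+}(C_{j})\cdot\psi^{+}(E_{0})\le\tau_{j}+1$ and the single-point nature of each $C_{j}\cap E_{0}$ do the decisive work.
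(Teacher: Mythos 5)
Your first two paragraphs are correct and follow essentially the paper's route (the template of Proposition \ref{prop:FZb}): the identification of $D_{\min}$ via Lemma \ref{lem:P2_lem}\ref{item:P2-conics}, the conclusions $c=2$, $\{\psi^{+}(C_{1}),\psi^{+}(C_{2})\}=\{G_{2},\ll\}$, $\tau_{1}=3$, $\tau_{2}=2$, and the location of the two centers at the transversal intersection point of the two conics and at the tangency point $G_{2}\cap\ll$ all agree with the paper; your way of pinning these down directly from Lemma \ref{lem:beta_flat}\ref{item:centers_on_En} is a legitimate, slightly more compact packaging of the same argument.

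The third paragraph, however, contains a genuine error that the rest of the proof cannot survive. You assert $\Exc\psi=A+A'+(Q_{1}-C_{1})+(Q_{2}-C_{2})$ and deduce $\Exc\psi_{A}=(Q_{1}-C_{1}-\Delta_{A'})+A+(Q_{2}-C_{2})$. This conflates $\Exc\psi$ with $\Exc\psi^{+}$, i.e.\ it tacitly assumes that the peeling $\alpha_{2}^{+}$ contracts nothing, $\Upsilon_{2}+\Delta_{2}^{+}=0$. That is impossible here, by the very tangency you isolated: $D_{2}-E_{2}$ is snc (Lemma \ref{lem:MMP-properties}\ref{item:Xi_smooth}), so if the peeling were trivial then $D_{\min}-\psi^{+}(E_{0})=D_{2}-E_{2}$ would be snc, contradicting $(G_{2}\cdot\ll)_{r_{2}}=2$ from Lemma \ref{lem:P2_lem}\ref{item:P2-conics}. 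In fact $(\alpha_{2}^{+})^{-1}(r_{2})$ is a chain $[1,2]$ equal to $\Upsilon_{2}+\Delta_{2}^{+}$, and its proper transform consists of two components $U\de\psi^{-1}_{*}\Upsilon_{2}$ and $\Delta_{U}\de\psi^{-1}_{*}\Delta_{2}^{+}$ of $Q_{2}$ which are \emph{not} contracted by $\psi$; the correct decomposition is $\Exc\psi_{A}=(Q_{1}-C_{1}-\Delta_{A'})+A+(Q_{2}-C_{2}-U-\Delta_{U})$. The difference is decisive for your final ``read-off'' step: with your decomposition, Lemma \ref{lem:MMP-properties}\ref{item:Exc-psi_i} would make $Q_{2}-C_{2}$ a twig of $D_{0}$, hence $Q_{2}$ a chain, and contracting your $\Exc\psi_{A}$ would make $\psi^{+}(C_{1})$ and $\psi^{+}(C_{2})$ meet transversally at $r_{2}$; so your constraints are mutually inconsistent, and carrying out the computation while dropping the tangency constraint would produce a chain-type $Q_{2}$, not type $\cH$ — whose second cusp $((4)_{\gamma-1},2,2,2)$ has $Q_{2}$ a \emph{fork} with branching component $U$, $U^{2}=-2$. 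Recovering this fork from the nontrivial peeling is precisely what the second half of the paper's proof does, and it is the feature that distinguishes $\cH$ from the chain-only types. (A smaller point: your appeal to \eqref{eq:line} is premature — at that stage you do not yet know that any $T_{j}$ is contracted with $T_{j}\neq\Delta_{T_{j}}\neq 0$; in the end \eqref{eq:line} fails for the cusp with $\tau=3$ (one shows $t_{1}=0$) and holds only for the other one, and the paper invokes Lemma \ref{lem:line}\ref{item:tangent} only contrapositively, exactly to prove $t_{1}=0$.)
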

\begin{proof}
Lemma \ref{lem:P2_lem}\ref{item:P2-deg},\ref{item:P2-n} implies that $D_{\min}-\psi^{+}(E_{0})$ is a union of a line $\ll$ and a conic $\mm$. Hence, $D_{\min}$ is as in Lemma \ref{lem:P2_lem}\ref{item:P2-conics}. We now proceed as in the proof of Proposition \ref{prop:FZb}. Write $\mm\cap \psi^{+}(E_{0})=\{p_{1},p_{2}\}$, where $(\mm\cdot\psi^{+}(E_{0}))_{p_{1}}=1$ and $(\mm\cdot\psi^{+}(E_{0}))_{p_{2}}=3$. We claim that 
 \begin{equation}\label{eq:H_claim}
 \mm=\psi^{+}(C_{1}),\quad \ll=\psi^{+}(C_{2}),\quad\mbox{and}\quad \psi^{+}(A)=\mm\cap\ll,\quad \psi^{+}(A')=\{p_{1}\}.
 \end{equation}
 Lemma \ref{lem:beta_flat}\ref{item:centers_on_En} implies that $(\psi^{+})^{-1}_{*}\mm$ meets $E_{0}$, so by Lemma \ref{lem:P2_lem}\ref{item:P2-s_j} $(\psi^{+})^{-1}_{*}\mm=C_{j}$ for some $j\in \{1,\dots c\}$, say, $j=1$. The point $p_{1}$ is a center of $\psi^{+}$, since otherwise $C_{1}\cdot E_{0}=(\ll_{1}\cdot\psi^{+}(E_{0}))_{p_{1}}=1$, which is false. Say that $\{p_{1}\}=\psi^{+}(A')$. By Lemma \ref{lem:beta_flat}\ref{item:centers_on_En} 
 \begin{equation*}
 (\psi^{+})^{-1}(p_{1})=A'+\Delta_{A'},	
 \end{equation*}
 where $\Delta_{A'}$ is a $(-2)$-twig meeting $C_{1}$. Because $(\mm\cdot \psi^{+}(E_{0}))_{p_1}=1$, $A'$ meets $\ftip{\Delta_{A'}}$. It follows that $\psi_{A'}$ touches $C_{1}$ once. But $\psi^{+}(C_{1})^{2}\geq 1=C_{1}^{2}+ 2$, so $\psi^{+}$ touches $C_{1}$ at least twice. Hence, $\psi^{+}(A)\subseteq \psi^{+}(C_{1})$. As a consequence, $\mm\cap \ll=\psi^{+}(A)$, so, since $n=2$, $\ll\cap \psi^{+}(E_{0})$ is not a center of $\psi^{+}$. Thus $(\psi^{+})^{-1}_{*}\ll\cdot E_{0}=\ll\cdot \psi^{+}(E_{0})=2$, so  $(\psi^{+})^{-1}_{*}\ll=C_{2}$. This proves \eqref{eq:H_claim}. It follows that
 \begin{equation*}
 \tau_{1}=(\mm\cdot \psi^{+}(E_{0}))_{p_{2}}=3,\quad \tau_{2}=\ll\cdot \psi^{+}(E_{0})=2.	
 \end{equation*}

We have $\alpha_{2}^{-1}(\psi^{+}(A))=[1,2]\subseteq \Upsilon_{2}+\Delta_{2}^{+}$, because $(\mm\cdot \ll)_{\psi^{+}(A)}=2$ and $D_{2}-E_{2}$ is snc. The latter inclusion is in fact an equality, because $\psi^{+}(A')$ is not a center of $\alpha_{2}$  and $\Upsilon_{0}+\Delta_{0}^{+}=0$, as $\psi^{+}(E_{0})$ is smooth. Put $U=\psi^{-1}_{*}\Upsilon_{2}$, $\Delta_{U}=\psi^{-1}_{*}\Delta_{2}^{+}$. By Lemma \ref{lem:MMP-properties}\ref{item:Delta_pr-tr} $\Delta_{U}$ is a $(-2)$-tip of $D_{0}$ meeting $U$. We have $\psi(U)\cap\psi(C_{1})=\psi(A)$, because otherwise $U$ meets $C_{1}$ and 
\begin{equation*}
\mm^{2}=\psi(C_{1})^{2}+2=\psi_{A'}(C_{1})^{2}+2=C_{1}^{2}+3=2,
\end{equation*}
which is false, as $\mm^2=4$. The divisor $D_{0}\wedge \Exc\psi$ equals
\begin{equation*}
D_{0}\wedge \Exc\psi^{+}-\psi^{-1}_{*}(\Upsilon_{2}+\Delta_{2}^{+})=D_{0}-(\psi^{+})^{-1}_{*}D_{\min}-U-\Delta_{U}=Q_{1}-C_{1}-U-\Delta_{U}+Q_{2}-C_{2},
\end{equation*} so 
\begin{equation*}
\Exc\psi_{A}=\Exc\psi-\Exc\psi_{A'}=(Q_{1}-C_{1}-\Delta_{A'})+A+(Q_{2}-C_{2}-U-\Delta_{U}).
\end{equation*}
 Lemma \ref{lem:MMP-properties}\ref{item:Exc-psi_i} implies that $V_{1}\de Q_{1}-C_{1}-\Delta_{A'}$ and $V_{2}\de Q_{2}-C_{2}-U-\Delta_{U}$ are zero or twigs of $D_{0}$ meeting $C_{1}$ and $U$, respectively. Because $Q_{1}$ contracts to a smooth point and $\Delta_{A'}\neq 0$, we obtain $V_{1}=T_{1}=[(2)_{t_{1}},\#\Delta_{A'}+2]$. If $t_{1}\neq 0$ then \eqref{eq:line} holds for $j=1$, so $A'$ meets $Q_{2}$ by Lemma \ref{lem:line}\ref{item:tangent}, which is false, because $\{p_{1}\}=\psi^{+}(A')\not\subseteq \ll$. Thus $t_{1}=0$. The contractibility of $Q_{2}$ to a smooth point implies that $V_{2}+[1]+\Delta_{U}$ contracts to a smooth point, so either $V_{2}=0$ or $V_{2}=T_{2}=[(2)_{t_{2}},3]$. Eventually, since $\#\Delta_{A'}>0$, the contractibility of $\Exc\psi_{A}$ to a smooth point gives $V_{2}\neq 0$ and $t_{2}=\#\Delta_{A'}\geq 1$.

Thus $Q_{1}=[t_{2}+2,1,(2)_{t_{2}}]$ and $Q_{2}$ is a fork with maximal twigs $T_{2}=[(2)_{t_{2}},3]$, $\Delta_{U}=[2]$ and $C_{1}=[1]$. Because $Q_{2}$ contracts to a smooth point, $B_{2}^{2}=-2$. Recall that $\tau_{1}=3$, $\tau_{2}=2$ and that,  by Lemma \ref{lem:P2_lem}\ref{item:P2-s_j}, $s_{1}=s_{2}=1$. We have $c=2$ since $\psi^{+}(E_{0})$ is smooth. Therefore, $\bar{E}$ is of type $\cH(\gamma)$, where $\gamma=t_{2}+2\geq 3$, see Figure \ref{fig:H}. Note that $E^{2}=-\gamma$ by Lemma \ref{lem:HN-equations}.
\end{proof}

\begin{figure}[htbp]
	\begin{tabular}{c c c}
		\multirow{2}{*}{
			\begin{subfigure}{0.45\textwidth}\centering
				\vspace{-2cm}
				\includegraphics[scale=0.25,trim={1cm 0 0 0}, clip]{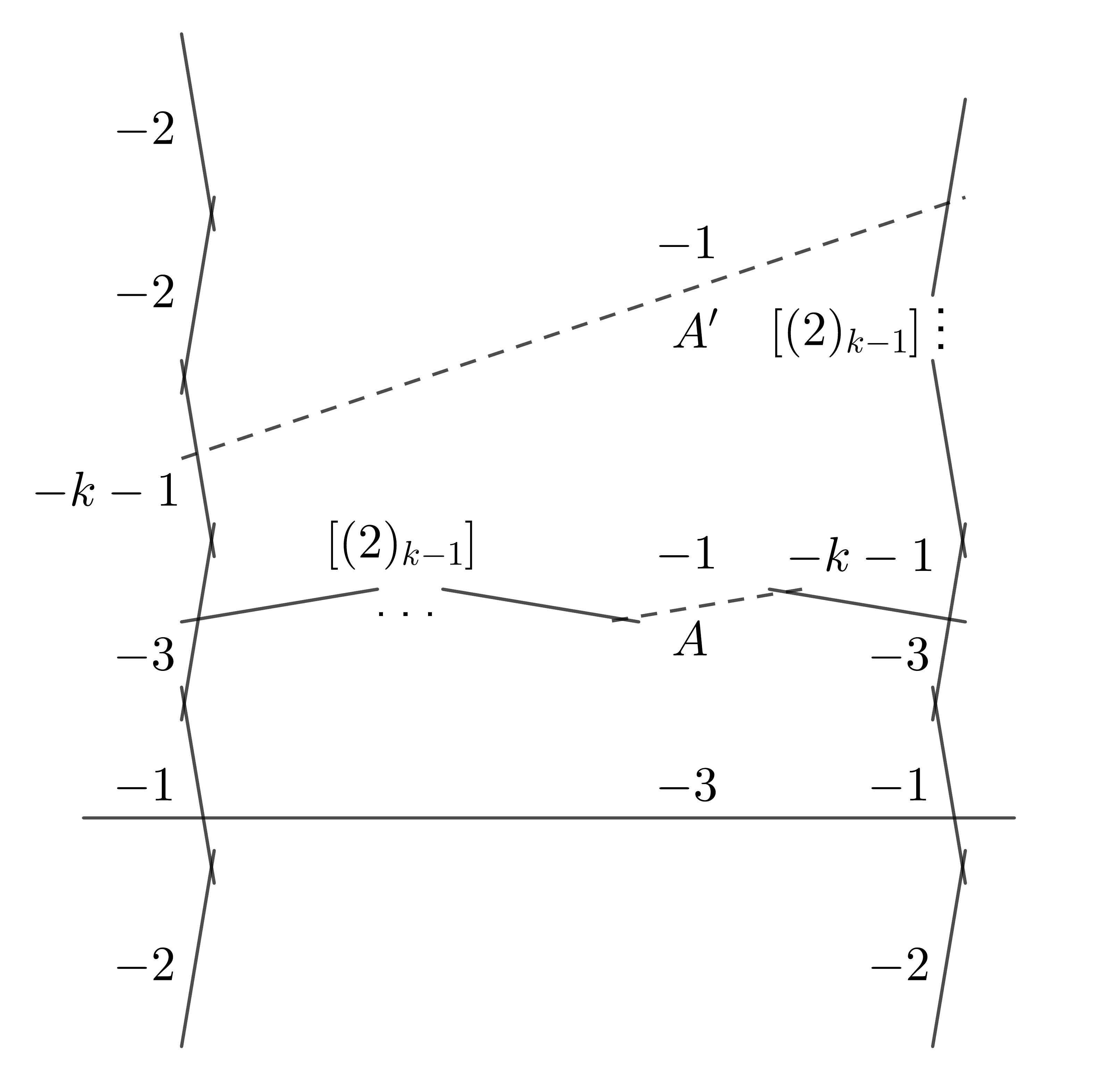}
				\vspace{-.5cm}
				\caption{$(X,D)$}
				\vspace{.5cm}
				\begin{flushright} \begin{fmpage}{.5\textwidth} \begin{equation*}\begin{split}
						\boldsymbol{\circ}& = \mbox{ image of } A\\
						\boldsymbol{\diamond}&= \mbox{ image of } A'
						\end{split}\end{equation*} \end{fmpage} \end{flushright}				
			\end{subfigure}
		}
		&
		$\xrightarrow{\quad \displaystyle{ \psi\circ\psi_{0}}\quad }$
		& 
		\begin{subfigure}{0.35\textwidth}\centering
			\includegraphics[scale=0.22,trim={1cm 0 0 0}, clip]{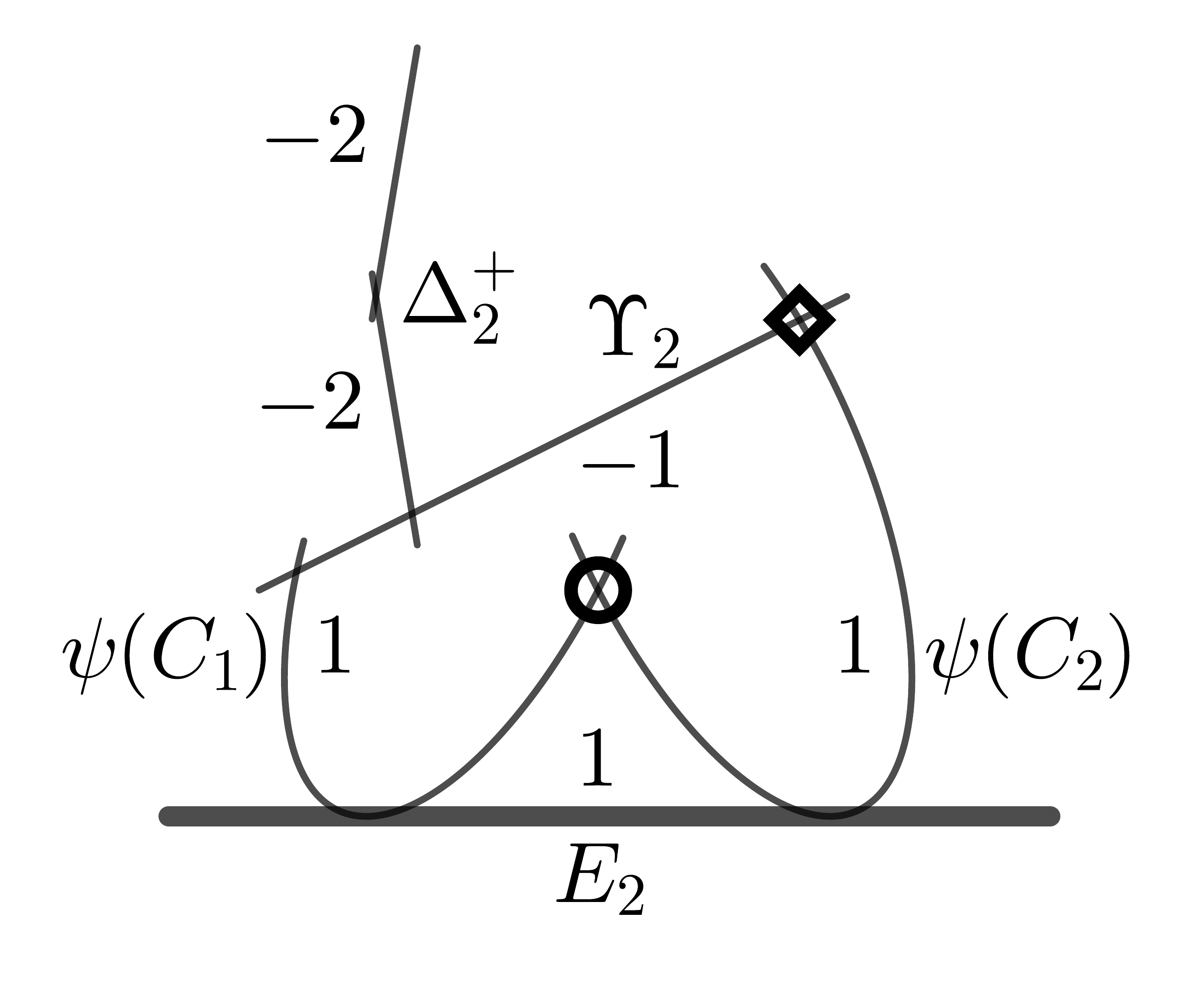}
			\caption{$(X_{2},D_{2})$}
		\end{subfigure}
		\vspace{0.5cm}
		\\
		&&
		$
		\Bigg\downarrow \alpha_{2}
		$
		\\
		&&
		\begin{subfigure}{0.35\textwidth}\centering
			\includegraphics[scale=0.22,trim={1cm 0 0 0}, clip]{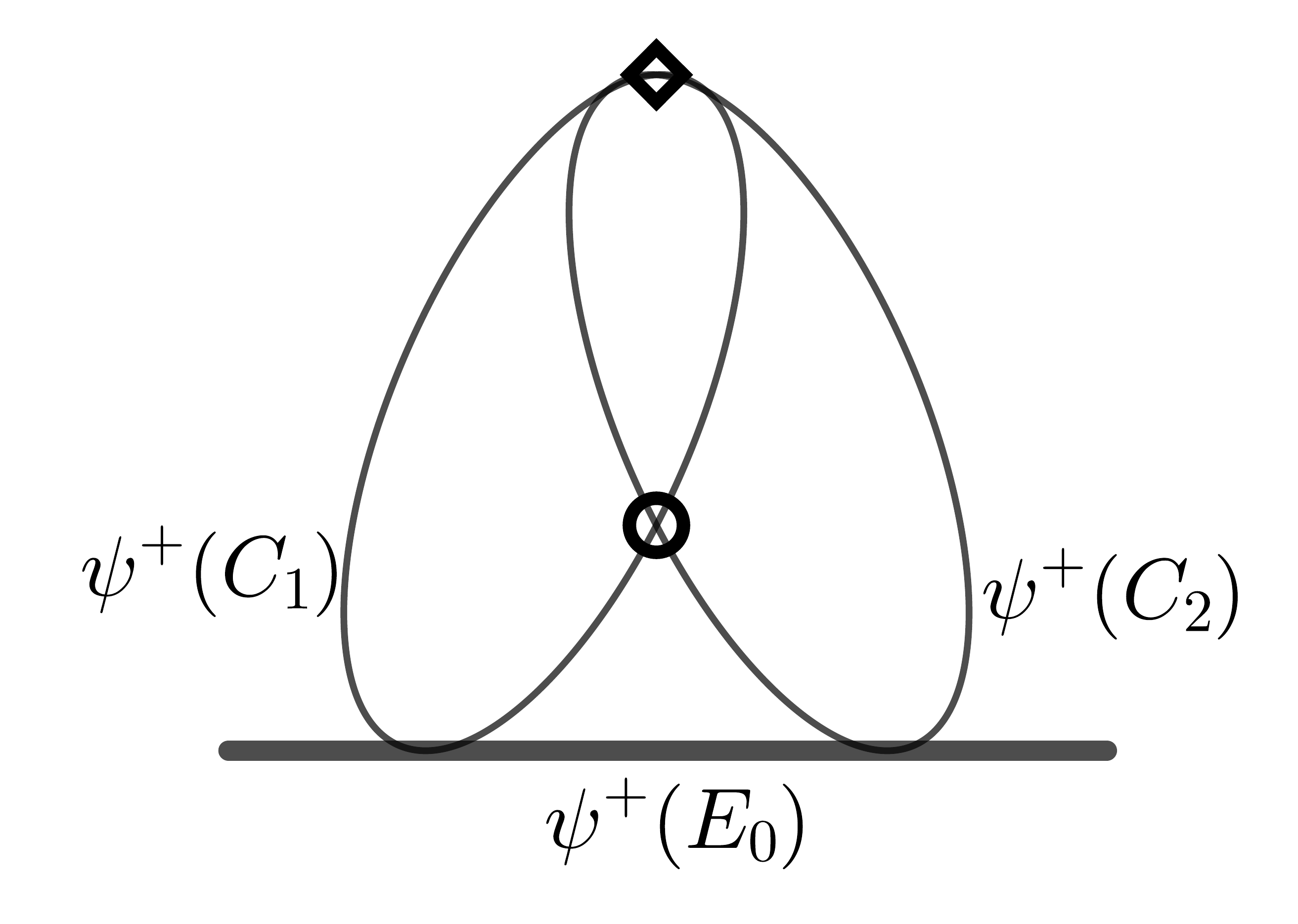}
			\caption{$(X_{\min},D_{\min})$}
		\end{subfigure}
	\end{tabular}
	\caption{Type $\cJ(k)$, $k\geq 2$ for $n=2$ (cf.\ Figure \ref{fig:ex2} for $\cJ(2)$).}
	\label{fig:J}
\end{figure}
\begin{prop}\label{prop:J}
If $n=2$ and $\psi^{+}(E_{0})$ is a line then $\bar{E}$ is of type $\cJ$ (see Figure \ref{fig:J}).
\end{prop}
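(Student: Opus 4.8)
The plan is to reconstruct $(X_0,D_0)$ from the minimal model $(X_{\min},D_{\min})=(\P^2,D_{\min})$ in the spirit of Propositions \ref{prop:FZb} and \ref{prop:H}, the novelty being that now \emph{both} centers of $\psi^+$ lie off $\psi^+(E_0)$. First I would pin down $D_{\min}$. By Lemma \ref{lem:P2_lem}\ref{item:P2-deg},\ref{item:P2-n} we have $\deg D_{\min}=5$ and $\#D_{\min}=n+1=3$, so $D_{\min}$ is the line $\ell_0:=\psi^+(E_0)$ together with two further components of degrees summing to $4$. A component of degree $3$ is impossible: every component of $D_{\min}$ is rational (it is the image of a component of $D_0$, and all components of $D_0$ are rational), while a smooth plane cubic has genus $1$ and a singular one would contradict Lemma \ref{lem:P2_lem}\ref{item:P2-smooth}. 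Hence $D_{\min}=\ell_0+G_1+G_2$ with $G_1,G_2$ conics, and Lemma \ref{lem:P2_lem}\ref{item:P2-conics} applies: $G_1\cap G_2=\{p_1,p_2\}$ with $(G_1\cdot G_2)_{p_1}=3$, $(G_1\cdot G_2)_{p_2}=1$, and $\ell_0$ is tangent to each $G_i$ at a point $r_i\notin\{p_1,p_2\}$.

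Next I would identify the pieces and compute the $\tau_j$. Since $\ell_0$ is smooth, Lemma \ref{lem:P2_lem}\ref{item:P2-semiordinary} and Remark \ref{rem:semi-ordinary}\ref{item:semiordinary} show $\bar E$ has no semi-ordinary cusp, so $\Upsilon_0+\Delta_0^+=0$. By Lemma \ref{lem:P2_lem}\ref{item:P2-s_j} the only components of $D_0$ meeting $E_0$ are $C_1,\dots,C_c$; as the two centers of $\psi^+$ are $\psi^+(A),\psi^+(A')$ (Lemma \ref{lem:beta_flat}\ref{item:centers_psi+}) and neither $p_1$ nor $p_2$ lies on $\ell_0$, no $C_j$ is contracted, whence each $C_j$ maps onto $G_1$ or $G_2$. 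Thus $c=2$ and $\{G_1,G_2\}=\{\psi^+(C_1),\psi^+(C_2)\}$. Because push-forward along the blow-down $\psi^+$ only increases intersection numbers, $\tau_j=C_j\cdot E_0\le \psi^+(C_j)\cdot\psi^+(E_0)=2$ (a conic meets the line $\ell_0$ in two points counted with multiplicity), and $\tau_j\ge 2$, so $\tau_1=\tau_2=2$. Equality forces that no center of $\psi^+$ is a common point of $\psi^+(C_j)$ and $\ell_0$; since at $r_i$ exactly the two branches $G_i,\ell_0$ meet (Lemma \ref{lem:P2_lem}\ref{item:P2_nodes}), the points $r_1,r_2$ are not centers. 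Hence $\{\psi^+(A),\psi^+(A')\}=\{p_1,p_2\}$.

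It then remains to reconstruct $Q_1,Q_2$. Here $\Exc\psi^+=A+A'+(Q_1-C_1)+(Q_2-C_2)$, and over each $p_i$ the fibre $\Exc\psi_A$, resp.\ $\Exc\psi_{A'}$, is a chain joining $C_1$ to $C_2$ whose contraction produces contact of order $3$, resp.\ $1$ (Lemma \ref{lem:MMP-properties}\ref{item:Exc-psi_i},\ref{item:center-psi_i}); by almost-log-exceptionality each such chain has a unique $(-1)$-curve ($A$, resp.\ $A'$) and one arm equal to a maximal $(-2)$-twig of $D_0$. Feeding this into the requirement that $Q_1$, $Q_2$ and the chains $\Exc\psi_A$, $\Exc\psi_{A'}$ all contract to smooth points (Lemma \ref{lem:shape_of_contractible_chains}), together with $\tau_j=2$, $s_j=1$ and the two contact numbers $3$ and $1$, should determine the weighted graphs of $Q_1,Q_2$. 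After renumbering the cusps this yields the multiplicity sequences $(2k,2k,2k,(2)_k)$ and $(2k,(2)_k)$ for an integer $k$; the value $k=1$ is excluded because it would make both cusps semi-ordinary, so $k\ge 2$ and $\bar E$ is of type $\cJ(k)$. Finally $E^2=-3$ by Lemma \ref{lem:HN-equations} (consistently with the boundary case of Lemma \ref{lem:Tono_E2}\ref{item:c=2,tau=1}).

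The main obstacle is this last, combinatorial step: solving simultaneously the smooth-contractibility constraints and the contact conditions $3$ and $1$ to recover the graphs, and in particular showing that the whole configuration depends on the single parameter $k$ and that the three leading entries $2k$ are forced to sit at one cusp. The contact-$3$ fibre over $p_1$ is the delicate one, since (unlike the transverse fibre over $p_2$) its $(-1)$-curve $A$ meets at most one of $C_1,C_2$ directly, so the twig lengths on the two sides must be balanced against the separate contractibility of $Q_1$ and $Q_2$.
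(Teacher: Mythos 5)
Your overall plan coincides with the paper's: read off $D_{\min}$ from Lemma \ref{lem:P2_lem} (a line $\ell_0=\psi^{+}(E_{0})$ plus two conics $G_{1},G_{2}$ meeting with multiplicities $3$ and $1$, the line tangent to both), identify the conics with $\psi^{+}(C_{1}),\psi^{+}(C_{2})$, and then reconstruct $Q_{1},Q_{2}$. However, your argument has a genuine gap at each of the two key steps. The identification step is circular, and its hardest case is missing entirely. You infer \enquote{no $C_{j}$ is contracted} from the centers of $\psi^{+}$ avoiding $\ell_{0}$; but Lemma \ref{lem:beta_flat}\ref{item:centers_psi+} only places the centers at points where two analytic branches of $D_{\min}$ meet, and the tangency points $r_{1},r_{2}\in\ell_{0}$ are such points. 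You exclude $r_{1},r_{2}$ only afterwards, using $\tau_{j}=2$, which presupposes $\psi^{+}(C_{j})=G_{j}$ --- a circle. (A non-circular route to \enquote{no $C_{j}$ is contracted} is the paper's: $q_{j}$ is not semi-ordinary by Lemma \ref{lem:P2_lem}\ref{item:P2-semiordinary}, hence $C_{j}\not\subseteq\Exc\psi^{+}$.) More seriously, even granting that no $C_{j}$ is contracted, your conclusion \enquote{thus $c=2$ and $\{G_{1},G_{2}\}=\{\psi^{+}(C_{1}),\psi^{+}(C_{2})\}$} does not follow: a priori $c=1$ is possible, with the second conic the image of a component $V\subseteq Q_{1}-C_{1}$; since $s_{1}=1$, such a $V$ is disjoint from $E_{0}$, and then the tangency point $G_{2}\cap\ell_{0}$ would be a center of $\psi^{+}$. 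Excluding exactly this is the content of the paper's paragraph beginning \enquote{We claim that $\mm_{2}=\psi^{+}(C_{2})$}: using that $D_{2}-E_{2}$ is snc, the preimage of the order-$3$ point under the peeling is $\Upsilon_{2}+\Delta_{2}^{+}=[1,2,2]$, and a self-intersection count shows $(\psi^{+})^{-1}_{*}G_{2}$ would be a $(-1)$-curve inside some $Q_{j}$ disjoint from $E_{0}$, i.e.\ some $C_{j}$ with $C_{j}\cdot E_{0}=0$, which is absurd. Nothing in your proposal plays this role.

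The combinatorial step you yourself flag as \enquote{the main obstacle} is where the remaining content lies, and your sketch of it starts from a wrong picture of the fibers. Since $\psi$ keeps $D_{i}-E_{i}$ snc (Lemma \ref{lem:MMP-properties}\ref{item:Xi_smooth},\ref{item:center-psi_i}), contracting $\Exc\psi_{A}+\Exc\psi_{A'}$ alone cannot create the order-$3$ tangency of the two conics: that tangency is created by the peeling $\alpha_{2}^{+}$, which contracts $\Upsilon_{2}+\Delta_{2}^{+}=[1,2,2]$. Consequently the fiber of $\psi^{+}$ over the order-$3$ point is not one of your two chains \enquote{joining $C_{1}$ to $C_{2}$}, but the union of one of $\Exc\psi_{A}$, $\Exc\psi_{A'}$ with $U+\Delta_{U}$, where $\psi(U)\subseteq\Upsilon_{2}$ and $\Delta_{U}=[2,2]$; deciding in which $Q_{j}$ the piece $U+\Delta_{U}$ sits (in the paper: exactly one of the points $\psi(U)\cap\psi(C_{j})$ is a center of $\psi$, whence $U+\Delta_{U}\subseteq Q_{1}$) is precisely what produces the head of $Q_{1}=[2,2,k+1,1,(2)_{k-1}]$. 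Moreover, the contractibility constraints alone admit spurious solutions that must be excluded by arguments absent from your sketch: the case $W\de\Exc\psi_{A}-A-T_{2}=0$, excluded via Lemma \ref{lem:notation}\ref{item:T^0=C} because it would make $q_{2}$ semi-ordinary, and the case $W=T_{1}$, in which $\pi_{0}(A)$ is a line through both cusps; the paper kills the latter with a parity argument ($s_{j}=1$ and $\tau_{j}=2$ force $2\mid\mu_{1}+\mu_{2}=\deg\bar{E}$, hence $2\mid E^{2}$ by Lemma \ref{lem:HN-equations}\eqref{eq:deg^2}, contradicting $E^{2}=\psi^{+}(E_{0})^{2}-4=-3$). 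It is this exclusion that forces the three entries $2k$ onto a single cusp --- the very point your proposal leaves open.
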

\begin{proof}
Lemma \ref{lem:P2_lem}\ref{item:P2-deg},\ref{item:P2-smooth},\ref{item:P2-n} implies that $D_{\min}-\psi^{+}(E_{0})$ is a sum of two conics $\mm_{1},\mm_{2}$. Hence, $D_{\min}$ is as in Lemma \ref{lem:P2_lem}\ref{item:P2-conics}. In particular, $\mm_{1}\cap \mm_{2}=\{p_{1},p_{2}\}$, where $(\mm_{1}\cdot \mm_{2})_{p_{1}}=1$ and $(\mm_{1}\cdot \mm_{2})_{p_{2}}=3$. 

Because $D_{2}-E_{2}$ is snc, the point $p_{2}$ is a center of $\alpha_{2}$ and $\alpha^{-1}_{2}(p_{2})=[1,2,2]\subseteq \Upsilon_{2}+\Delta_{2}^{+}$. Let $U$ be the component of $(\psi^{+})^{-1}(p_{2})$ such that $\psi(U)\subseteq \Upsilon_{2}$ and let $\Delta_{U}$ be the connected component of $\psi^{-1}_{*}\Delta_{2}^{+}$ meeting $U$. We have $\Delta_{U}=[2,2]$ by Lemma \ref{lem:MMP-properties}\ref{item:Delta_pr-tr}.

Lemma \ref{lem:P2_lem}\ref{item:P2-semiordinary} implies that $q_{1}\in\bar{E}$ is not semi-ordinary, so $C_{1}\not\subseteq \Exc\psi^{+}$, hence $\psi^{+}(C_{1})=\mm_{j}$ for some $j\in\{1,2\}$. By symmetry we may assume  $\mm_{1}=\psi^{+}(C_{1})$. We claim that $\mm_{2}=\psi^{+}(C_{2})$. Suppose the contrary. Then $(\psi^{+})^{-1}_{*}\mm_{2}\cdot E_{0}<\mm_{2}\cdot\psi^{+}(E_{0})$, so the unique point of  $\psi^{+}(E_{0})\cap \mm_{2}$ is a center of $\psi^{+}$, say, $\psi^{+}(E_{0})\cap \mm_{2}=\psi(A)$. By Lemma \ref{lem:beta_flat}\ref{item:centers_on_En} the preimage on $X_{0}$ of $\psi^{+}(A)$ is a chain $A+\Delta_{A}$ for some $\Delta_{A}\subseteq \Delta_{0}$ meeting $(\psi^{+})^{-1}_{*}\mm_{2}$. Because $(\mm_{2}\cdot\psi^{+}(E_{0}))_{\psi^{+}(A)}> 1$, $A$ meets $\ltip{\Delta_{A}}$ and $\#\Delta_{A}=\mm_{2}\cdot\psi^{+}(E_{0})=2$. The curve $\psi(U)\subseteq \Upsilon_{2}$ meets $D_{2}-\psi_{*}\Delta_{U}$ exactly in two points, belonging to $(\alpha_{2}^{-1})_{*}\mm_{1}=\psi(C_{1})$ and $(\alpha_{2}^{-1})_{*}\mm_{2}$, respectively. One of these points is $\psi(A')$, and the other is not a center of $\psi$ (see Lemma \ref{lem:beta_flat}\ref{item:Ups}). If $\psi(A')\not \subseteq \psi(C_{1})$ then $\mm_{1}^{2}=C_{1}^{2}+3=2$, which is impossible. Hence, $\psi(A')\not \subseteq (\alpha_{2}^{-1})_{*}\mm_{2}$, so $((\psi^{+})_{*}^{-1}\mm_{2})^{2}=\mm_{2}^{2}-3-2=-1$, which implies that  $(\psi^{+})_{*}^{-1}\mm_{2}=C_{2}$. But $(\psi^{+})_{*}^{-1}\mm_{2}$ does not meet $E_{0}$; a contradiction.

Therefore, $\mm_{j}=\psi^{+}(C_{j})$ for $j\in\{1,2\}$. Since $\#(\mm_{j}\cap\psi^{+}(E_{0}))=1$, it follows that 
	$\tau_{j}=(\psi^{+})^{-1}_{*}\mm_{j}\cdot E_{0}=\mm_{j}\cdot\psi^{+}(E_{0})=2$ and that the centers of $\psi^{+}$ are $p_{1}$ and $p_{2}$.

Recall that $\psi(U)\subseteq \Upsilon_{2}$, so $\psi(U)$ meets $D_{2}-\psi_{*}(U+\Delta_{U})$ in two points, namely $\psi(U)\cap \psi(C_{j})$ for $j\in\{1,2\}$. Exactly one of these points is a center of $\psi$. By symmetry we may assume that it is $\psi(U)\cap\psi(C_{2})$. Then $U$ meets $C_{1}$, so $U+\Delta_{U}\subseteq Q_{1}$. We have
\begin{equation*}
D_{0}\wedge \Exc\psi=(Q_{1}-C_{1}-U-\Delta_{U})+(Q_{2}-C_{2}).
\end{equation*}
Lemma \ref{lem:MMP-properties}\ref{item:Exc-psi_i} implies that $Q_{2}$ is a chain and $Q_{1}$ is either a chain or a fork with branching component $U$. Put $k=-U^{2}-1\geq 1$.

Let $A\subseteq \Exc\psi$ be the almost log exceptional curve meeting $T_{2}$. Lemma \ref{lem:MMP-properties}\ref{item:Exc-psi_i} implies that  $W\de \Exc\psi_{A}-A-T_{2}$ is zero or a twig of $D_{0}$. Because $\psi^{+}(A)\subseteq  \mm_{1}$, we have $W\subseteq Q_{1}$. We claim that $W\neq 0$ and $W\neq T_{1}$.  If $W=0$ then $T_{2}\subseteq \Delta_{0}$, so $T_{2}^{0}=C_{1}$ and by Lemma \ref{lem:notation}\ref{item:T^0=C} $q_{2}\in \bar{E}$ is semi-ordinary, which is false. Thus $W\neq 0$. Suppose that $W=T_{1}$. Then $A$ meets $D_{0}$ only in $\ftip{T_{j}}$ for $j\in\{1,2\}$. It follows that $\pi_{0}(A)\cap\bar{E}=\{q_{1},q_{2}\}$, the numbers $(\pi_{0}(A)\cdot \bar{E})_{q_{j}}$ are equal to the multiplicities $\mu_{j}$ of $q_{j}\in \bar{E}$ and $\pi_{0}(A)^{2}=A^{2}+2=1$. Hence, $\deg\bar{E}=\mu_{1}+\mu_{2}$. Because $s_{j}=1$, the multiplicity sequence of $q_{j}$ consists of some number of terms divisible by $\tau_{j}=2$ and the sequence $(1,1)$ at the end. In particular, $2|\mu_{j}$, so $2|\deg\bar{E}$ and Lemma \ref{lem:HN-equations}\eqref{eq:deg^2} implies that $2|E^{2}$.  But $E^{2}=E_{0}^{2}-(\tau_{1}+\tau_{2})=\psi^{+}(E_{0})^{2}-4=-3$; a contradiction. Therefore, $W\neq T_{1}$. 

The contractibility of $Q_{1}$ to a smooth point implies that $W=[(2)_{k-1}]$, so $k\geq 2$, and that $W$ meets $C_{1}$. Since $\psi_{A'}$ does not touch $C_{1}$, we have $\psi_{A}(C_{1})^{2}=\mm_{1}^{2}-3=1=C_{1}^{2}+2$, so $\psi_{A}$ touches $C_{1}$ twice, which implies that $\Exc\psi_{A}=[(2)_{k-1},1,k+1]$. Hence, $T_{2}=[k+1]$. Because $Q_{2}$ contracts to a smooth point, we get  $Q_{2}=[k+1,1,(2)_{k-1}]$. The morphism $\psi_{A'}$ touches $C_{2}$ once, because $\psi_{A'}(C_{2})^{2}=\psi(C_{2})^{2}-1=\mm_{2}^{2}-4=0=C_{2}^{2}+1$. Hence, $\Exc\psi_{A'}=A'+T_{2}'$. It follows that $Q_{1}=\Delta_{U}+U+C_{1}+W=[2,2,k+1,1,(2)_{k-1}]$. 

Recall that for $j\in\{1,2\}$, we have $\tau_{j}=\mm_{j}\cdot\psi^{+}(E_{0})=2$ and $s_{j}=1$ by Lemma \ref{lem:P2_lem}\ref{item:P2-s_j}. Also, $c=2$, because $\psi^{+}(E_{0})$ is smooth. It follows that $\bar{E}$ is of type $\cJ(k)$, see Figure \ref{fig:J}. Note that $E^{2}=-3$ by Lemma \ref{lem:HN-equations}.
\end{proof}

		\begin{figure}[htbp]
			\begin{tabular}{c c c}
				\multirow{2}{*}{
					\begin{subfigure}{0.45\textwidth}\centering
						\vspace{-2cm}
						\includegraphics[scale=0.25,trim={1cm 0 0 0}, clip]{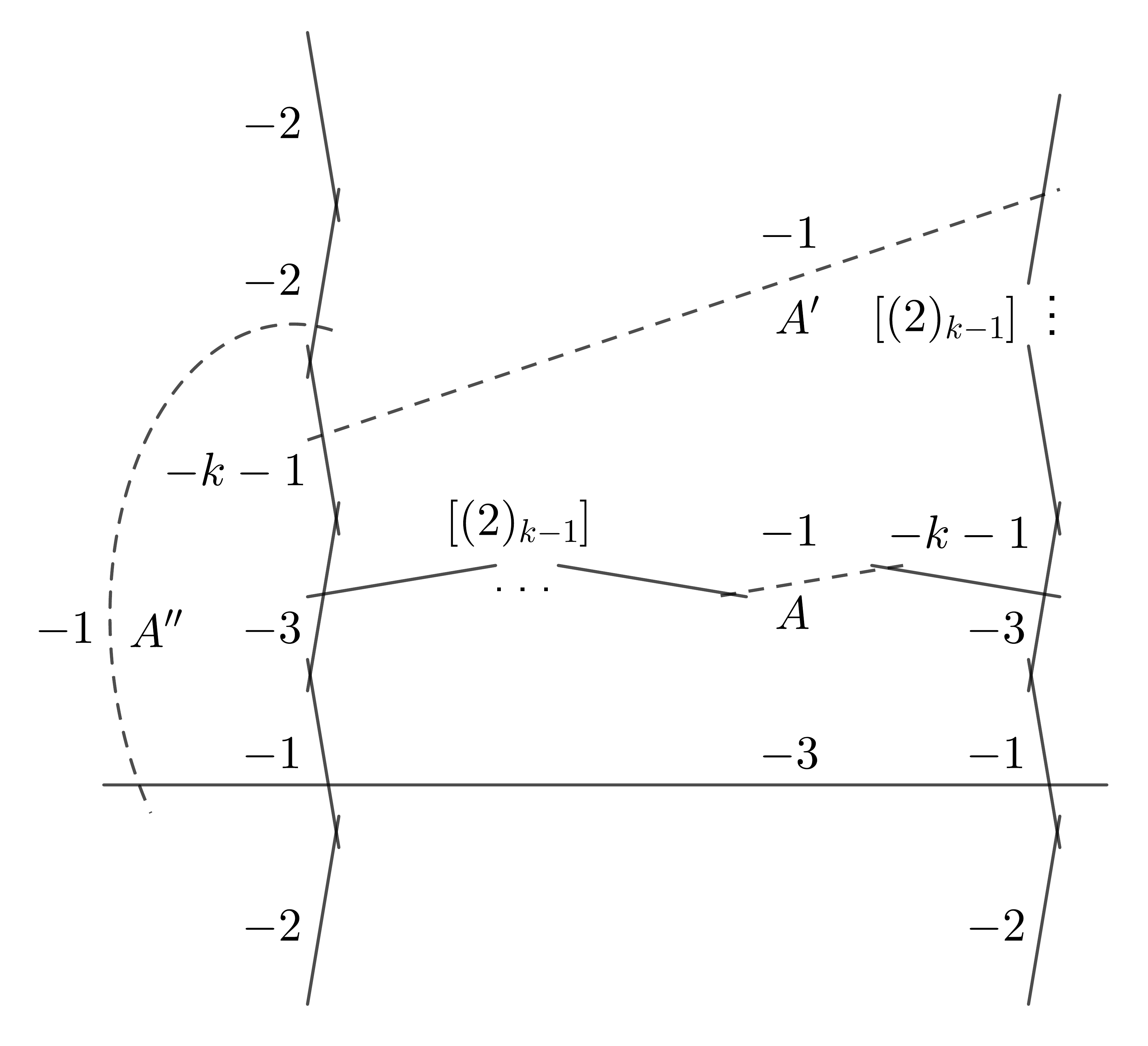}
						\caption{$(X,D)$}
						\vspace{.5cm}
						\begin{flushright} \begin{fmpage}{.5\textwidth} \begin{equation*}\begin{split}
								\boldsymbol{\circ}& = \mbox{ image of } A\\
								\boldsymbol{\diamond}&= \mbox{ image of } A'\\
								\boldsymbol{\scriptstyle{\square}}& = \mbox{ image of } A''
								\end{split}\end{equation*} \end{fmpage} \end{flushright}
					\end{subfigure}
				}
				&
				$\xrightarrow{\quad \displaystyle{ \psi\circ\psi_{0}}\quad }$
				& 
				\begin{subfigure}{0.35\textwidth}\centering
					\includegraphics[scale=0.35,trim={1cm 0 0 0}, clip]{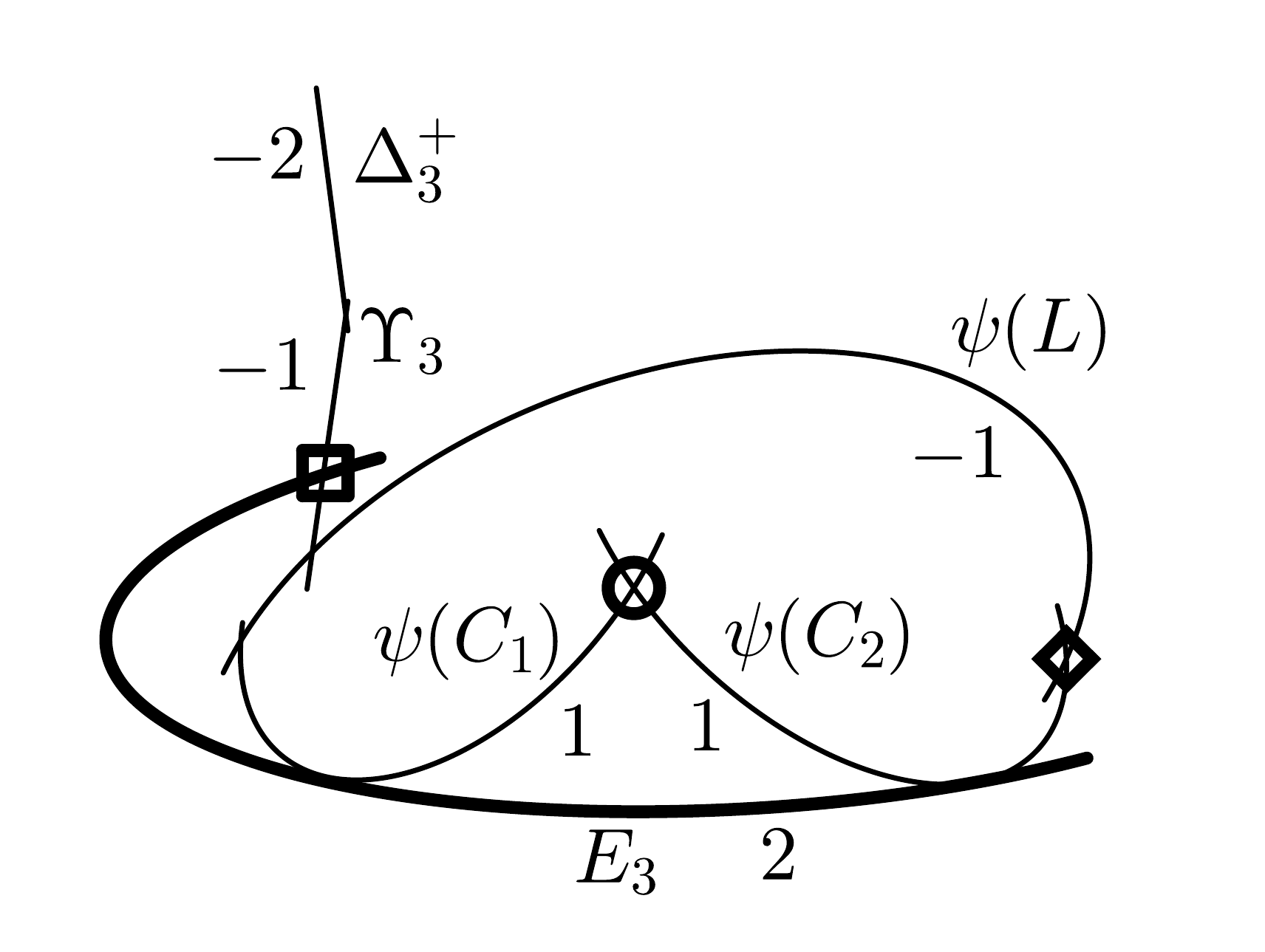}
					\caption{$(X_{3},D_{3})$}
				\end{subfigure}
				\vspace{0.5cm}
				\\
				&&
				$
				\Bigg\downarrow \alpha_{3}
				$
				\\
				&&
				\begin{subfigure}{0.35\textwidth}\centering
					\includegraphics[scale=0.35,trim={1cm 0 0 0}, clip]{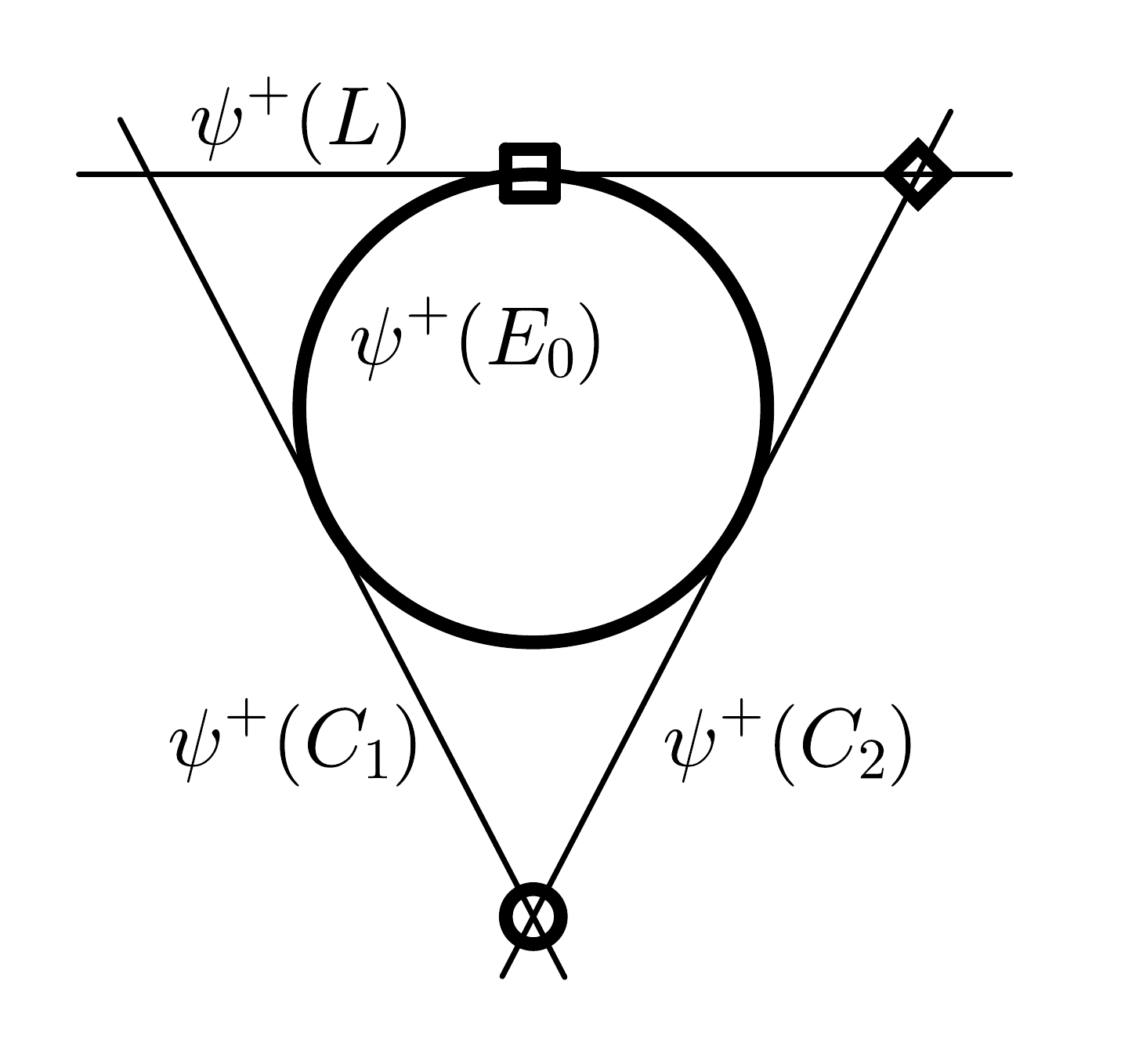}
					\caption{$(X_{\min},D_{\min})$}
				\end{subfigure}
			\end{tabular}
			\caption{Type $\cJ(k)$, $k\geq 2$ for $n=3$ (cf.\ Figure \ref{fig:ex2} for $\cJ(2)$).}
			\label{fig:J_line}
		\end{figure}
\begin{prop}\label{prop:n3}
	If $n=3$ then $\bar{E}$ is of type $\cJ$ (see Figure \ref{fig:J_line}).
\end{prop}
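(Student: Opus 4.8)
The plan is to mimic the structure of Propositions \ref{prop:FZb}--\ref{prop:J}: first pin down the combinatorial type of $D_{\min}$, then identify $\psi^{+}(E_{0})$ and the images $\psi^{+}(C_{j})$ of the $(-1)$-curves over the cusps, locate the three centers of $\psi^{+}$, and finally reconstruct $Q_{1}$ and $Q_{2}$ from the contractibility of the relevant chains to smooth points.

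First I would determine $D_{\min}$. By Lemma \ref{lem:P2_lem}\ref{item:P2-deg},\ref{item:P2-n} we have $X_{\min}\cong\P^{2}$, $\deg D_{\min}=5$ and $\#D_{\min}=4$, so the components have degrees $1,1,1,2$: three lines and one conic. By Lemma \ref{lem:P2_lem}\ref{item:P2_nodes} they meet pairwise in $2n=6$ points, exactly two at each; since three concurrent lines, or a vertex lying on the conic, would produce a triple point, the three lines form a genuine triangle and the conic is tangent to each side off the vertices. Thus $D_{\min}$ is a conic inscribed in a triangle, with the three tangency points and the three vertices as its six nodes. Since every component has degree $\leq 2$ it is smooth, so by Lemma \ref{lem:P2_lem}\ref{item:P2-semiordinary} $\bar{E}$ has no semi-ordinary cusps, and by Lemma \ref{lem:beta_flat}\ref{item:centers_psi+} the three centers of $\psi^{+}$ are exactly the images of $A$, $A'$, $A''$.

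Next I would show that $\psi^{+}(E_{0})$ is the conic and $c=2$. The key inputs are $\psi^{+}(E_{0})\cdot\psi^{+}(C_{j})\geq C_{j}\cdot E_{0}=\tau_{j}\geq 2$, and the fact that $\psi^{+}(C_{i})\cap\psi^{+}(C_{i'})$ can only be created at a center, because $C_{i}\cdot C_{i'}=0$ for $i\neq i'$. If $\psi^{+}(E_{0})$ were a line it would meet each $\psi^{+}(C_{j})$ either along a triangle side or in the conic; a side gives intersection $1<\tau_{j}$, so every $\psi^{+}(C_{j})$ would have to be the unique conic, forcing $c=1$. Such a $c=1$ configuration I would eliminate using Lemma \ref{lem:Tono_E2}\ref{item:c=1} together with the genus--degree relation Lemma \ref{lem:HN-equations}\eqref{eq:genus-degree} after reconstructing the single $Q_{1}$. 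Hence $\psi^{+}(E_{0})$ is the conic, each $\psi^{+}(C_{j})$ is a side tangent to it with $\psi^{+}(E_{0})\cdot\psi^{+}(C_{j})=2=\tau_{j}$, and the vertex $\psi^{+}(C_{1})\cap\psi^{+}(C_{2})$ is a center. The third side is the image of a fourth component $G\subseteq R_{n}$ (a curve over one cusp, which the contractibility constraints will force to be a $(-3)$-curve meeting that $C_{j}$); counting the three centers, namely the $G$--conic tangency, the $C_{1}C_{2}$-vertex, and exactly one of the two $C_{j}G$-vertices, shows that $G$ meets exactly one $C_{j}$, say $C_{1}$, and pins down $c=2$. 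To exclude the numerically competing case $c=3$, where the conic is tangent to three sides $\psi^{+}(C_{1}),\psi^{+}(C_{2}),\psi^{+}(C_{3})$ and all three vertices are centers, I would invoke \eqref{eq:eps} and Lemma \ref{lem:line}\ref{item:j>3}: an almost log exceptional curve realizing a vertex joining two cusps makes condition \eqref{eq:line} hold, so $\psi$ cannot touch $Q_{3}$, contradicting the need for a center on $Q_{3}$.

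Finally, with $c=2$, $\tau_{1}=\tau_{2}=2$, $s_{1}=s_{2}=1$ and the fourth component $G$ meeting $C_{1}$, I would reconstruct $Q_{1}$ and $Q_{2}$. Writing the preimage of each center as a chain $A_{i}+(\text{twigs})$ via Lemma \ref{lem:MMP-properties}\ref{item:Exc-psi_i} and Lemma \ref{lem:beta_flat}\ref{item:centers_on_En}, and imposing that $Q_{1}$, $Q_{2}$ and each $\Exc\psi_{A_{i}}$ contract to smooth points, determines the weighted graphs: setting $k:=-G^{2}-1\geq 2$ one obtains $Q_{1}=[2,2,k+1,1,(2)_{k-1}]$ and $Q_{2}=[k+1,1,(2)_{k-1}]$, that is, the multiplicity sequences $(2k,2k,2k,(2)_{k})$ and $(2k,(2)_{k})$. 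By Lemma \ref{lem:notation}\ref{item:T^0=C} neither cusp is semi-ordinary, so $k\geq 2$, and Lemma \ref{lem:HN-equations} gives $E^{2}=-3$; thus $\bar{E}$ is of type $\cJ(k)$, exactly as in Proposition \ref{prop:J}. The main obstacle is the middle step: correctly identifying which of the four components is $\psi^{+}(E_{0})$ and ruling out the spurious cases $c=1$ and $c=3$, since all of them are consistent with the naive bookkeeping of degrees, components and centers, and only the finer data carried by $\tau_{j}$, the identity $C_{i}\cdot C_{i'}=0$, and the constraint \eqref{eq:eps} separate them.
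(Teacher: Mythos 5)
Your overall strategy (pin down $D_{\min}$ as a conic inscribed in a triangle, identify $\psi^{+}(E_{0})$ with the conic, locate the three centers, reconstruct $Q_{1},Q_{2}$) is the same as the paper's, and your center count in the case where exactly two sides are images of $C_{1},C_{2}$ reproduces the paper's conclusion. However, your exclusion of the case in which all three sides are $\psi^{+}(C_{1}),\psi^{+}(C_{2}),\psi^{+}(C_{3})$ rests on a false claim, and this is a genuine gap. You assert that an almost log exceptional curve realizing a vertex joining two cusps makes condition \eqref{eq:line} hold. This is contradicted by the very configuration the proposition describes: for a curve of type $\cJ(k)$ with $n=3$, the vertex $\ll_{1}\cap\ll_{2}=\psi^{+}(A)$ joins the two cusps, yet \eqref{eq:line} fails for both of them, since $\psi$ does not contract $T_{1}$ (it contains the component $L$ whose image is the third side $\ll_{3}$), while $T_{2}=[k+1]$, so $\Delta_{T_{2}}=0$. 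The same phenomenon defeats the argument inside the excluded case itself: there \eqref{eq:line} holds for $q_{j}$ if and only if $\Delta_{T_{j}}\neq 0$, and nothing in your bookkeeping prevents all three first components of the $Q_{j}$ from having self-intersection $\leq -3$; for instance, three cusps with $Q_{j}=[3,1,2]$ (multiplicity sequence $(4,2,2)$), with each $\Exc\psi_{A_{i}}$ of the form $\rev{T_{j}}+A_{i}+T_{j'}'$, passes every test you impose (degrees, number of components and nodes, location of centers, the a.l.e.\ conditions, contractibility of each $\Exc\psi_{A_{i}}$), and \eqref{eq:line} fails for all $j$, so Lemma \ref{lem:line}\ref{item:j>3} is simply not applicable. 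Such configurations are only killed by finer data, namely that each $\psi(C_{j})$ must become a line, hence be touched exactly twice by $\psi$ (in the example above it is touched three times). This is exactly what the paper's argument exploits: in that case $\psi^{+}=\psi$, the divisor $\psi^{*}(D_{n}-E_{n})\redd$ is circular, so $\pi_{0}(A)+\pi_{0}(A')+\pi_{0}(A'')$ is a triangle of lines, forcing each $A_{i}$ to meet each $Q_{j}$ only in its first component; the circular structure then forces $Q_{j}=C_{j}$, i.e.\ all cusps ordinary, contradicting $n=3>0$.

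Two smaller points. First, your elimination of the $c=1$ subcase (conic $=\psi^{+}(C_{1})$, line $=\psi^{+}(E_{0})$) is only announced, not carried out; note that it follows from the same center count you use elsewhere (the two tangencies of the conic with the sides other than $\psi^{+}(E_{0})$ and the two transversal intersections of $\psi^{+}(E_{0})$ with those sides already give four centers $>n$), which is essentially how the paper proves $\mm=\psi^{+}(E_{0})$ in one stroke, with no case split and no appeal to Lemma \ref{lem:Tono_E2}. Second, in the last step, contractibility to smooth points alone does not determine $Q_{1}$ and $Q_{2}$: the paper must additionally rule out $T_{1}\subseteq\Exc\psi$ (via $\pi_{0}(A'')^{2}=4t_{1}+5$ and Lemma \ref{lem:line}\ref{item:tangent}) and $t_{2}>0$ (again via Lemma \ref{lem:line}\ref{item:tangent}), and it needs the negative definiteness of $\Exc\psi^{+}$ together with Lemma \ref{lem:notation}\ref{item:T^0=C} to place $A'$; moreover, your parenthetical claim that the fourth component $G$ is a $(-3)$-curve is inconsistent with your own normalization $k=-G^{2}-1\geq 2$, since only $\cJ(2)$ has $G^{2}=-3$.
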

\begin{proof}
	Lemma \ref{lem:P2_lem}\ref{item:P2-deg},\ref{item:P2_nodes} implies that $D_{\min}$ is a conic $\mm$ inscribed in a triangle $\ll_{1}+\ll_{2}+\ll_{3}$. We claim that $\mm=\psi^{+}(E_{0})$. Suppose not. Then  $(\psi^{+})^{-1}_{*}\mm$ is contained in $Q_{1}+\dots + Q_{c}$, so it  meets every other component of $D_{0}-E_{0}$ at most once. Hence the two points $\mm\cap (D_{\min}-\psi^{+}(E_{0})-\mm)$ are centers of $\psi^{+}$. Moreover, since $E_{0}$ contains no point of normal crossings on $D_{0}$, the two points $\psi^{+}(E_{0})\cap (D_{\min}-\psi^{+}(E_{0})-\mm)$ are centers of $\psi^{+}$, too. But then $n\geq 4$; a contradiction.
	
	Suppose that $\ll_{j}=\psi^{+}(C_{j})$ for $j\in\{1,2,3\}$. Then at each center of $\psi^{+}$ two components of $D_{\min}$ meet transversally, so from the definition of $\Upsilon_{3}$ (see Figure \ref{fig:Upsilon}) we infer that  $\psi^{+}=\psi$. Lemma \ref{lem:MMP-properties}\ref{item:Exc-psi_i} implies that $\psi^{*}(D_{n}-E_{n})\redd$ is circular, and hence $(\pi_{0})_{*}\psi^{*}(D_{n}-E_{n})\redd$ is circular, too. The latter equals $\pi_{0}(A)+\pi_{0}(A')+\pi_{0}(A'')$ and consists of three smooth components meeting transversally in three points, so it is again a triangle. It follows that $\pi_{0}$ touches $A$, $A'$, $A''$ twice each, so they meet each $Q_{j}$ only in its first component. Since $\psi^{*}(D_{0}-E_{0})\redd$ is circular, this is possible only if $Q_{j}=C_{j}$  for $j\in\{1,2,3\}$. But then, since $\tau_{j}=\psi^{+}(E_{0})\cdot\psi^{+}(C_{j})=2$, every $q_{j}\in \bar{E}$ is ordinary; a contradiction.

	It follows that some $\ll_{k}$, say $\ll_{3}$, is not the image of $C_{j}$ for any $j\in\{1,\dots, c\}$. We now determine the centers of $\psi^{+}$. By Lemma \ref{lem:P2_lem}\ref{item:P2-s_j}, $L\de (\psi^{+})^{-1}_{*}\ll_{3}$ does not meet $E_{0}$. In particular, $\ll_{3}\cap \mm$ is a center of $\psi^{+}$, say $\ll_{3}\cap \mm=\psi^{+}(A'')$. Lemma \ref{lem:beta_flat}\ref{item:centers_on_En} implies that $(\psi^{+})^{-1}(\ll_{3}\cap \mm)$ is a chain $A''+\Delta_{A''}$, where $\Delta_{A''}\subseteq \Delta_{0}$ meets $L$. Because $(\psi^{+}(L)\cdot \mm)_{\psi^{+}(A'')}=2$, $A''$ meets $\ltip{\Delta_{A''}}$ and $\#\Delta_{A''}=2$. Since the contraction of $A''+\Delta_{A''}$ touches $L$ twice and $L^{2}+2\leq 0<\ll_{3}^{2}$, the line $\ll_{3}$ contains another center of $\psi^{+}$, say $\ll_{2}\cap\ll_{3}=\psi^{+}(A')$. Because $\psi^{+}$ touches $\ll_{1}$, we have $\psi^{+}(A)\subseteq \ll_{1}$. If $\psi^{+}(A)\subseteq \mm$ then arguing as above for $\psi^{+}(A'')\subseteq \ll_{3}$ we obtain that $\ll_{1}$ contains another center of $\psi^{+}$, which is false. It follows that for $j\in\{1,2\}$ the curve $(\psi^{+})^{-1}_{*}\ll_{j}$ meets $E_{0}$, so by Lemma \ref{lem:P2_lem}\ref{item:P2-s_j} we have, say, $\ll_{j}=\psi^{+}(C_{j})$ for $j\in\{1,2\}$. Because $C_{1}$ and $C_{2}$ are disjoint, the common point of their images is a center of $\psi^{+}$, so $\ll_{1}\cap\ll_{2}=\psi^{+}(A)$. Moreover, we get $\tau_{j}=C_{j}\cdot E_{0}=\ll_{j}\cdot \mm=2$ for $j\in\{1,2\}$ and $L\cdot C_{1}=\ll_{3}\cdot \ll_{1}=1$. 	

	To describe the shape of $Q_1$ note first that the point $\psi^{+}(A'')$ is the only center of $\alpha_{3}$. Indeed, since $\psi^{+}(E_{0})$ is smooth, the centers of $\alpha_{3}$ are contained in the image of $\Exc\psi$, and the points $\psi^{+}(A)$, $\psi^{+}(A')$ are not centers of $\alpha_{3}$, because they are nc-points of $D_{\min}$. It follows that 
	\begin{equation*}
	D_{0}\wedge \Exc\psi=(Q_{1}-C_{1}-L-\Delta_{A''})+(Q_{2}-C_{2}).
	\end{equation*}
	Lemma \ref{lem:MMP-properties}\ref{item:Exc-psi_i} implies that $Q_{2}$ is a chain and $Q_{1}$ is either a chain or a fork with branching component $L$. Suppose that $T_{1}\subseteq \Exc\psi$. Then, because $Q_{1}$ contracts to a smooth point, $\Delta_{A''}\subseteq T_{1}'$ and $T_{1}=[(2)_{t_{1}},\#\Delta_{A''}+2]=[(2)_{t_{1}},4]$. Hence $$\pi_{0}(A'')^{2}=(A'')^{2}+2+4(t_{1}+1)=4t_{1}+5.$$ In particular, $t_{1}\neq 0$, so \eqref{eq:line} holds for $j=1$ and Lemma \ref{lem:line}\ref{item:tangent} implies that $A''$ meets $Q_{2}$; a contradiction.  It follows that $\Delta_{A''}\subseteq T_{1}$. This inclusion is strict, because otherwise $Q_{1}=C_{1}+\Delta_{A''}$, which is false. Hence $T_{1}=\Delta_{A''}+L$, so $Q_{1}$ is a chain, and because it contracts to a smooth point, $Q_{1}=[2,2,k+1,1,(2)_{k-1}]$, where $k=-L^{2}-1\geq 1$. We have $k\geq 2$, because otherwise Lemma \ref{lem:notation}\ref{item:T^0=C} implies that $q_{1}\in \bar{E}$ is semi-ordinary, which is false.

	We now describe the shape of $Q_2$. Because $\psi^+(A')\subseteq \ll_3$, the curve $L$ meets $\Exc \psi_{A'}$. The divisor $\Exc \psi^+$ has a negative definite intersection matrix, so $A'\cdot (\Delta_{A''}+A'')=0$. Since $\Delta_{A''}$ is the unique twig of $D_{0}$ meeting $L$, it follows from Lemma \ref{lem:MMP-properties}\ref{item:Exc-psi_i} that $A'$ meets $L$ and hence $\Exc \psi_{A'}-A'$ is a $(-2)$-twig. If $\Exc \psi_{A'}-A'=T_{2}$ then by Lemma \ref{lem:notation}\ref{item:T^0=C} $q_{2}\in \bar{E}$ is semi-ordinary, which is false. Thus $\Exc \psi_{A'}=A'+T_2'$. Eventually, $T_{2}'=[(2)_{k-1}]$, because $\psi_{A'}$ touches $L$ exactly $k$ times. Indeed,
	\begin{equation*}
	\psi_{A'}(L)^{2}-L^{2}=\psi(L)^{2}+k+1=\psi^{+}(L)^{2}+k-1=k.
	\end{equation*}
	The contractibility of $Q_{2}$ to a smooth point implies that $T_{2}=[(2)_{t_{2}},k+1]$. We have $t_{2}=0$, for otherwise \eqref{eq:line} holds for $j=2$ and by Lemma \ref{lem:line}\ref{item:tangent} $A''$ meets $\ftip{T_{1}'}$, which is false. Hence, $Q_{2}=[k+1,1,(2)_{k-1}]$.
	
	Recall that for $j\in\{1,2\}$, $s_{j}=1$ by Lemma \ref{lem:P2_lem}\ref{item:P2-s_j} and $\tau_{j}=C_{j}\cdot E_{0}=\ll_{j}\cdot \mm=2$. We have $c=2$ since $\psi^{+}(E_{0})$ is smooth. Therefore, $\bar{E}$ is of type $\cJ(k)$, see Figure \ref{fig:J_line}. Note that $E^{2}=-3$ by Lemma \ref{lem:HN-equations}.
\end{proof}
		
\begin{rem}[Uniqueness of the process of almost minimalization]\label{rem:J_twice}
	As we have indicated in Example \ref{ex:different_weak}, a rational cuspidal curve of type $\cJ$ is obtained both in Propositions \ref{prop:J} and \ref{prop:n3}, which corresponds to two possible choices of the process $\psi$ of almost minimalization. Our proof shows that for every other type in Definition \ref{def:our_curves} this process (as defined in Section \ref{sec:MMP}) is unique.
\end{rem}

\bigskip
\section{Existence and uniqueness}\label{sec:existence}
In this Section we finish the proof of Theorem \ref{thm:main} by proving that each type in the list of Definition \ref{def:our_curves} is realized by a planar rational cuspidal curve which is unique up to a projective equivalence, and that the complement of such curve is a surface of log general type which satisfies \eqref{eq:assumption}. We begin with a proof of condition \eqref{eq:assumption} in Section \ref{sec:kappa}. The nonexistence of $\C^{**}$-fibrations follows from \cite[Theorem 1.3]{PaPe_Cstst-fibrations_singularities}, because the type of $\bar{E}$ is not listed in loc.\ cit.\ and we have  $\kappa(\P^{2}\setminus \bar{E})=2$ by Lemma \ref{lem:kappa=2}. An independent proof of this fact is given in Section \ref{sec:no_Cstst}. Existence and uniqueness of curves of types $\FZb$, $\FE$ and $\cH$ is either established or can be inferred from results in the literature, see Section \ref{sec:discussion}. This is also true for types $\Qb$ and $\Qa$, but we give an independent geometric argument in Section \ref{sec:q4}. For the remaining types $\cI$ and $\cJ$ we prove the existence and uniqueness in Section \ref{sec:IJ} by reverting the process of almost minimalization constructed in Section \ref{sec:possible_HN-types}.

In Section \ref{sec:1.2=>1.1} we prove Theorem \ref{thm:geometric}, which implies that all curves of types listed in Definition \ref{def:our_curves} are closures of images of some proper injective morphisms $\C^{*}\to \C^{2}$. The latter morphisms, which we call \emph{singular embeddings}, are classified in \cite{BoZo-annuli} under some regularity assumptions. In Remarks \ref{rem:BZ_quintics}, \ref{rem:FZb-FE-H}\ref{item:FZb},\ref{item:FE},\ref{item:H_BZ} and \ref{rem:IJ}\ref{item:BZ_IJ} we explain how to obtain curves as in Definition \ref{def:our_curves} from \cite{BoZo-annuli}.

\medskip

 In Section \ref{sec:possible_HN-types} we were working mostly with the minimal weak resolution $\pi_{0}\colon (X_{0},D_{0})\to(\P^{2},\bar{E})$. Here it will be more convenient to work with the minimal log resolution $\pi\colon (X,D)\to (\P^{2},\bar{E})$. For $j\in \{1,\dots, c\}$ we denote by $Q_{j}'\subseteq D$ the reduced preimage of $q_{j}\in \bar{E}$. Its components are naturally ordered as exceptional divisors of the successive blowups in the decomposition of $\pi^{-1}$, see \eqref{eq:Q-ordering}. In particular, the last component of $Q_{j}'$ is the unique $(-1)$-curve in $Q_{j}'$, which we denote by $C_{j}'$. As in Section \ref{sec:resolutions}, we put $E\de (\pi^{-1})_{*}\bar{E}$. If $\bar{E}$ is of one of the types in Definition \ref{def:our_curves} then the number $E^{2}$, computed using Lemma \ref{lem:HN-equations}, is given in Table \ref{table:nofibrations}. In particular, $E^{2}\leq -3$.  The weighted graph of $D$ is shown in one of the Figures \ref{fig:FE}--\ref{fig:J}.	

\subsection{The Kodaira--Iitaka dimension.}\label{sec:kappa}

\begin{lem}[Complements are of log general type]\label{lem:kappa=2} If $\bar{E}\subseteq \P^2$ is of one of the types listed in Definition \ref{def:our_curves} then $\kappa(\P^{2}\setminus \bar{E})=2$.
\end{lem}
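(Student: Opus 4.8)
The plan is to reduce the statement to the existing classification of rational cuspidal curves whose complement is \emph{not} of log general type. Recall that $\P^2\setminus\bar E$ is $\Q$-acyclic, and that one has the following dichotomy, summarized in \cite[\S 2.2]{Bodnar_two-pairs} and \cite[Lemma 2.14]{PaPe_Cstst-fibrations_singularities}: if $\kappa(\P^2\setminus\bar E)\neq 2$, then $\bar E$ has at most two cusps by \cite{Wakabayashi-cusp}, and $\P^2\setminus\bar E$ carries a $\C^1$- or a $\C^*$-fibration by \cite[Proposition 2.6]{Palka-minimal_models}; equivalently (Remark \ref{rem:geometric}, via \cite[Lemma 2.4]{PaPe_Cstst-fibrations_singularities}), the complement is of log general type if and only if it contains no curve isomorphic to $\C^1$. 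So it suffices, for each type in Definition \ref{def:our_curves}, to exclude both a $\C^1$- and a $\C^*$-fibration of $\P^2\setminus\bar E$.

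First I would dispose of the types with at least three cusps: $\Qa$ has four cusps and $\Qb$, $\FZb(\gamma)$, $\FE(\gamma)$ have three. By the bound of \cite{Wakabayashi-cusp} none of these can have a complement of non-general type, so $\kappa(\P^2\setminus\bar E)=2$ for all of them with no further work. This leaves only the bicuspidal types $\cH(\gamma)$, $\cI$ and $\cJ(k)$, whose weighted dual graphs of $D$ are completely determined (Figures \ref{fig:H}, \ref{fig:I}, \ref{fig:J}, \ref{fig:J_line}).

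For the bicuspidal types I would argue by contradiction: assume $\kappa\leq 1$, so that the complement admits a $\C^1$- or $\C^*$-fibration, and extend it to a $\P^1$-fibration $p\colon X\to\P^1$ on the minimal log resolution. Since each blowup in $\pi$ produces exactly one component of $D$ and the remaining component is $E$, one has $\rho(X)=\#D$, so the Fujita equality of Lemma \ref{lem:fibrations-Sigma-chi} simplifies to $\#D\hor+\nu=2+\sum_{b\in B^{*}}(\sigma(F_b)-1)$. A general fiber then meets $D$ once in the $\C^1$ case and twice in the $\C^*$ case, and each degenerate fiber is a rational tree whose $(-1)$-curves are severely constrained by Lemma \ref{lem:singular_P1-fibers}. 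Matching these constraints against the explicit self-intersections and branching numbers in the figures, together with the values of $\deg\bar E$ and $E^2$ computed from the multiplicity sequences by Lemma \ref{lem:HN-equations}, shows that no admissible horizontal/vertical configuration fits inside $D$. The most efficient packaging of this is to note that such a fibration would place $\bar E$ in the cited classification of bicuspidal curves with $\kappa\leq 1$, and then to check from Lemma \ref{lem:HN-equations} that the degree and multiplicity data of $\cH$, $\cI$, $\cJ$ do not appear there.

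The main obstacle is exactly this bicuspidal exclusion: these curves lie at the boundary between $\kappa=1$ and $\kappa=2$, so ruling out a $\C^*$-fibration is not automatic and requires the delicate fiber bookkeeping above. The $\C^1$-case is comparatively easy, since a fiber component lying in the complement and meeting $D$ once would be a $0$- or $(-1)$-curve attached to a tip of $D$ of a very restricted shape, which one verifies directly is absent from the configurations of $\cH$, $\cI$ and $\cJ$. Once both types of fibration are excluded, \cite[Lemma 2.4]{PaPe_Cstst-fibrations_singularities} gives $\kappa(\P^2\setminus\bar E)=2$, as claimed.
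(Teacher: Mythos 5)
Your reduction to the bicuspidal types is exactly the paper's opening move: for $c\geq 2$, \cite{Wakabayashi-cusp} gives $\kappa(\P^{2}\setminus\bar{E})\geq 0$ and for $c\geq 3$ it gives $\kappa=2$, so only $\cH$, $\cI$ and $\cJ$ require an argument; and your framework for those types ($\P^{1}$-fibration, Fujita's equality from Lemma \ref{lem:fibrations-Sigma-chi} with $\rho(X)=\#D$, the values of $E^{2}$ from Lemma \ref{lem:HN-equations}) is also the paper's. The genuine gap is that the decisive step is never carried out, in either of your two packagings. Saying that \enquote{matching these constraints \dots shows that no admissible horizontal/vertical configuration fits inside $D$} asserts the conclusion rather than proving it; the paper's proof of this is a specific chain of deductions: (i) $D$ contains no fiber, since by \cite[7.5]{Fujita-noncomplete_surfaces} such a fiber would be $[2,1,2]$ meeting $D-F\redd$ only in its middle component $C_{j}'$, whence $Q_{j}'$ would contain a whole fiber, contradicting negative definiteness; (ii) hence Lemma \ref{lem:fibrations-Sigma-chi} forces $D\hor$ to consist of two $1$-sections, with every degenerate fiber having a unique component off $D$; (iii) $D\hor=C_{1}'+C_{2}'$, because a vertical $C_{j}'$ (a $(-1)$-curve with $\beta_{D}(C_{j}')=3$) would have to be a tip of its fiber met by both sections, giving $D\hor\subseteq E+Q_{j}'$ for both values of $j$, i.e.\ $D\hor=E$, which is absurd; (iv) therefore $E$ is vertical, connectedness of $D$ gives $F_{E}\wedge D=E$, so $F_{E}=E+L_{F_{E}}=[1,1]$ and $E^{2}=-1$, contradicting $E^{2}\leq -3$ (Table \ref{table:nofibrations}). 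Your fallback packaging --- checking that $\cH$, $\cI$, $\cJ$ do not occur in the classification of bicuspidal curves with $\kappa\leq 1$ (Tsunoda \cite{Tsunoda_cusps_complement_kod_0} for $\kappa=0$, Tono \cite[Theorem 4.1.1]{Tono_doctoral_thesis} for $\kappa=1$, plus the $\kappa=-\infty$ lists) --- could in principle replace (i)--(iv), but you neither identify these results nor perform the comparison, so it too remains a plan.

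Two further points would need repair even if the casework were supplied. First, a $\C^{*}$-fibration of $\P^{2}\setminus\bar{E}$ does not automatically extend to a morphism on $X$: it may have a base point on $D$. The paper removes this obstacle by citing \cite[Proposition 4.2]{PaPe_Cstst-fibrations_singularities}, which allows one to choose a $\C^{*}$-fibration with no base points on $X$; without that (or without passing to a blowup of $X$ and redoing the bookkeeping for the enlarged boundary) your identity $F\cdot D=2$ on $X$ is unjustified. Second, your direct treatment of the $\C^{1}$ case is not a valid argument as written: a general fiber closure meets $D$ once, but nothing places it at \enquote{a tip of $D$ of a very restricted shape}. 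The clean route, which is the one the paper takes, is that a $\C^{1}$-fibration forces $\kappa(\P^{2}\setminus\bar{E})=-\infty$ by Iitaka's Easy Addition Theorem, contradicting $\kappa\geq 0$ from \cite{Wakabayashi-cusp}; no fiber analysis is needed there at all.
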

\begin{proof}
Suppose the contrary. We have  $c\geq 2$, so \cite{Wakabayashi-cusp} implies that $\kappa(\P^{2}\setminus\bar{E})\geq 0$ and $c=2$, hence $\bar{E}$ is of type $\cH$, $\cI$ or $\cJ$. By the Iitaka Easy Addition Theorem, $\P^{2}\setminus \bar{E}$ has no $\C^{1}$-fibration. Hence  \cite[Proposition 2.6]{Palka-minimal_models}  implies that $\P^{2}\setminus \bar{E}$ has a $\C^{*}$-fibration and by \cite[Proposition 4.2]{PaPe_Cstst-fibrations_singularities} we can choose one without base points on $X$. Therefore, $X$ has a $\P^{1}$-fibration such that $F\cdot D=2$ for any fiber $F$.

Suppose that $D$ contains some fiber $F$. Because $D$ contains no $0$-curves, \cite[7.5]{Fujita-noncomplete_surfaces} implies that $F\redd=[2,1,2]$ and $F$ meets $D-F\redd$ only in the middle component. The latter equals $C_{j}'$ for some $j\in \{1,2\}$. Because $c=2$, $E$ meets $D-C_{j}'$, so $E\not\subseteq F$. Then $Q_1'$ contains $F\redd$, so it is not negative definite; a contradiction. By Lemma \ref{lem:fibrations-Sigma-chi} the horizontal part of $D$ consists of two $1$-sections and every fiber $F$ has a unique component $L_{F}$ not contained in $D$. We claim that $D\hor=C_{1}'+C_{2}'$. Suppose that  $C_{j}'$ is vertical for some $j\in \{1,2\}$. A fiber of a $\P^{1}$-fibration cannot contain a branching $(-1)$-curve, so since $\beta_{D}(C_{j}')=3$, $C_{j}'$ meets a section in $D$, hence $C_{j}'$ has multiplicity one in a  fiber. Thus $C_{j}'$ is a tip of that fiber, so both sections in $D$ meet $C_{j}'$. In particular, $D\hor\subseteq E+Q_{j}'$. But then $C_{3-j}'$ is vertical and by the same argument we get $D\hor\subseteq E+Q_{3-j}'$, so $D\hor=E$; a contradiction. It follows that $E$ is a component of some fiber $F_{E}$. Since $E\cdot (D-C_{1}'-C_{2}')=0$, by the connectedness of $D$ we get $F_{E}\wedge D=E$. As a consequence, $F_{E}=E+L_{F_{E}}=[1,1]$, so $E^{2}=-1$. This is a contradiction, because $E^{2}\leq -3$.
\end{proof}

\begin{lem}[Existence of special lines, cf.\ Theorem \ref{thm:geometric}]\label{lem:special_lines}
	Let $\bar{E}\subseteq \P^{2}$ be a rational cuspidal curve of one of the types listed in Definition \ref{def:our_curves} or in \cite[Theorem 1.3, cf.\ Table 1]{PaPe_Cstst-fibrations_singularities}. We order the cusps $q_{1},\dots, q_{c}\in \bar{E}$ in such a way that their multiplicity sequences $(\mu_{j},\mu_{j}',\dots )$, $j\in\{1,\dots, c\}$ form a non-increasing sequence in the lexicographic order. Denote by $\ll_{12}$ the line joining $q_{1},q_{2}\in \bar{E}$ and by $\ll_{1}$ the line tangent to $q_{1}\in \bar{E}$. Then 
		\begin{enumerate}
			\item\label{item:l}
			if $\bar{E}$ is of type $\FZa, \FZb, \FE, \cA-\cF, \cH$ or $\cI$ then $(\ll_{12}\cdot \bar{E})_{q_{j}}=\mu_{j}$ for $j\in\{1,2\}$ and $\ll_{12}$ does not meet $\bar{E}\setminus \{q_{1}, q_{2}\}$,
			\item\label{item:t} if $\bar{E}$ is of type $\FZa,\FZb,\FE,\cA-\cD,\cG,\cH,\cJ,\Qb$ or $\Qa$ then $(\ll_{1}\cdot\bar{E})_{q_{1}}=\mu_{1}+\mu_{1}'$ and $\ll_{1}$ meets $\bar{E}\setminus\{q_{1}\}$ once and transversally, that is, $(\ll_{1}\setminus \{q_{1}\})\cdot (\bar{E}\setminus \{q_{1}\})=1$.
		\end{enumerate}	
	\end{lem}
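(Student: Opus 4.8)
The plan is to verify Lemma \ref{lem:special_lines} by a direct, type-by-type computation, using the fact that for each curve in question the multiplicity sequences are known explicitly (from Definition \ref{def:our_curves} and \cite[Theorem 1.3]{PaPe_Cstst-fibrations_singularities}) and that by Lemma \ref{lem:HN-equations} the degree $\deg\bar{E}$ is determined by these sequences. The two claims \ref{item:l} and \ref{item:t} are Bezout-theoretic statements: a line meets $\bar{E}$ in $\deg\bar{E}$ points counted with multiplicity, and the local intersection multiplicity of a line with a cusp is bounded below by the sum of an initial segment of the multiplicity sequence, with equality governed by how many initial multiplicities the line ``captures'' at that cusp.

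For part \ref{item:t}, first I would recall the standard fact that if $\ll_1$ is the tangent line to a cusp $q_1$ then $(\ll_1\cdot\bar{E})_{q_1}$ equals the sum of at least the first two terms $\mu_1+\mu_1'$ of the multiplicity sequence, and equals exactly $\mu_1+\mu_1'$ precisely when the proper transform of $\ll_1$ separates from $\bar{E}$ after the second blowup over $q_1$, i.e.\ when $\mu_1'\ne\mu_1$ fails to persist (more precisely, when the second and third multiplicities differ). The verification then reduces to checking, for each listed type, the numerical inequality $\mu_1+\mu_1'\le\deg\bar{E}$ together with a compatibility condition excluding a higher-order tangency, and then reading off that the residual intersection $\deg\bar{E}-(\mu_1+\mu_1')$ equals $1$ and is concentrated at a single transversal point away from $q_1$. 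For instance, for $\cJ(k)$ one has $\mu_1=\mu_1'=2k$ and $\deg\bar{E}=4k+1$, so $\mu_1+\mu_1'=4k=\deg\bar{E}-1$, giving exactly one further transversal intersection; the analogous arithmetic holds for $\FZb(\gamma)$, $\cH(\gamma)$ and the quintics $\Qb,\Qa$. The cases $\FZa,\FE,\cA$--$\cD,\cG$ are handled by quoting the degrees and sequences from \cite[Table 1]{PaPe_Cstst-fibrations_singularities} and performing the same check.

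For part \ref{item:l}, the relevant bound is that a \emph{line} $\ll_{12}$ through two cusps satisfies $(\ll_{12}\cdot\bar{E})_{q_j}\ge\mu_j$ at each $q_j$ (a smooth point of the line meets the branch with multiplicity at least the branch multiplicity), and $\ll_{12}\cdot\bar{E}=\deg\bar{E}$ by Bezout. Thus the equalities $(\ll_{12}\cdot\bar{E})_{q_j}=\mu_j$ for $j\in\{1,2\}$ together with the assertion that $\ll_{12}$ meets $\bar{E}$ nowhere else will follow once I check the numerical identity $\mu_1+\mu_2=\deg\bar{E}$ for the types listed in \ref{item:l}. This identity I would confirm directly: e.g.\ for $\cI$ one has $\mu_1=8,\mu_2=6$ and $\deg\bar{E}=14$; for $\FZb(\gamma)$, $\mu_1=2(\gamma-2),\mu_2=3(\gamma-2)/\ldots$ — here I must order the cusps by lexicographic size of the multiplicity sequence as prescribed, and then read off $\mu_1+\mu_2$ from Definition \ref{def:our_curves}, comparing with the degree from Lemma \ref{lem:HN-equations}. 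Whenever $\mu_1+\mu_2=\deg\bar{E}$, Bezout forces both local multiplicities to be exactly $\mu_j$ and no residual intersection, which is the claim.

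The main obstacle I anticipate is bookkeeping rather than conceptual: correctly ordering the cusps lexicographically in each series, reading the first two multiplicities $\mu_j,\mu_j'$ from the sometimes compressed notation $((m)_k)$ in Definition \ref{def:our_curves}, and confirming that the tangency in part \ref{item:t} is of the minimal order $\mu_1+\mu_1'$ and not higher — this last point requires, in the degenerate cases where $\mu_1=\mu_1'$ (as in $\cJ(k)$ and $\Qb,\Qa$), a short argument that the third multiplicity drops, so that the tangent line does not meet $\bar{E}$ to order $\mu_1+\mu_1'+\mu_1''$. I expect this to be settled cusp-by-cusp using the explicit sequences, and for the series $\FZb,\FE,\cH$ I can alternatively appeal to the original constructions in \cite{FlZa_cusps_d-3,Fenske_cusp_d-4,CKR-Cstar_good_asymptote} where these tangent and joining lines are already exhibited.
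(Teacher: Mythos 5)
Your treatment of part \ref{item:l} is correct and is exactly the paper's argument: one verifies case by case that $\mu_{1}+\mu_{2}=\deg\bar{E}$, and then Bezout together with the lower bound $(\ll_{12}\cdot\bar{E})_{q_{j}}\geq\mu_{j}$ forces both equalities and excludes any further intersection point.

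Part \ref{item:t}, however, has a genuine gap. Your criterion for the equality $(\ll_{1}\cdot\bar{E})_{q_{1}}=\mu_{1}+\mu_{1}'$ --- that the tangent line separates after the second blowup exactly when the second and third multiplicities differ --- is false, and, more fundamentally, no criterion phrased in terms of the multiplicity sequence alone can exist: the germs $(t^{2},t^{5})$ and $(t^{2},t^{4}+t^{5})$ both have multiplicity sequence $(2,2)$, yet their tangent lines meet them with multiplicity $5$ and $4$, respectively. So \enquote{the third multiplicity drops} does not prevent a higher-order tangency; the tangent-line intersection is simply not a function of the singularity type. One can partially repair this by Bezout: whenever $\mu_{1}+\mu_{1}'+\mu_{1}''>\deg\bar{E}$, the successive proper transforms of $\ll_{1}$ cannot pass through the third infinitely near point of $\bar{E}$ over $q_{1}$ (that would give $(\ll_{1}\cdot\bar{E})_{q_{1}}\geq\mu_{1}+\mu_{1}'+\mu_{1}''>\deg\bar{E}$), so the equality follows. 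This inequality holds for $\cJ(k)$, $\Qa$, $\FZb$, $\FE$ and $\cH$, but it fails precisely for $\Qb$: there the sequence at $q_{1}$ is $(2,2,1,\dots)$ and $\deg\bar{E}=5$, so $\mu_{1}+\mu_{1}'+\mu_{1}''=5=\deg\bar{E}$, and Bezout allows the tangent line to meet $\bar{E}$ with multiplicity $5$ at $q_{1}$ and nowhere else; the types $\FZa$, $\cA$--$\cD$, $\cG$ from the companion paper would require the same check.

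The missing ingredient is a global one, and it is how the paper closes the argument: if $(\ll_{1}\cdot\bar{E})_{q_{1}}=\deg\bar{E}$ then $\ll_{1}$ meets $\bar{E}$ only at $q_{1}$, so $\ll_{1}\cap(\P^{2}\setminus\bar{E})\cong\C^{1}$, which is impossible by Lemma \ref{lem:Qhp_has_no_lines} since $\P^{2}\setminus\bar{E}$ is $\Q$-acyclic and of log general type by Lemma \ref{lem:kappa=2}. Hence $(\ll_{1}\cdot\bar{E})_{q_{1}}<\deg\bar{E}$ always, and together with the case-by-case identity $\mu_{1}+\mu_{1}'=\deg\bar{E}-1$ and the bound $(\ll_{1}\cdot\bar{E})_{q_{1}}\geq\mu_{1}+\mu_{1}'$ this pins down the intersection multiplicity and leaves a single residual point, which is automatically a transversal intersection at a smooth point of $\bar{E}$. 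Once you insert this step, the rest of your bookkeeping coincides with the paper's proof.
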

	\begin{proof}
		We check case by case that
			\begin{equation*}
			\mu_{1}+\mu_{2}=\deg\bar{E} \mbox{ for types listed in \ref{item:l} and } \mu_{1}+\mu_{1}'=\deg\bar{E}-1 \mbox{ for types listed in \ref{item:t}}
			\end{equation*}
			(for types listed in \cite[Table 1]{PaPe_Cstst-fibrations_singularities} this was done in Section 4F loc.\ cit). The first equation implies \ref{item:l}. We have $(\ll_{1}\cdot \bar{E})_{q_{1}}<\deg\bar{E}$. Indeed, otherwise $\ll_{1}$ does not meet $\bar{E}\setminus \{q_{1}\}$, so $\ll_{1}\cap (\P^{2}\setminus \bar{E})\cong \C^{1}$, which by Lemma \ref{lem:Qhp_has_no_lines} implies that $\kappa(\P^{2}\setminus \bar{E})<2$, contrary to Lemma \ref{lem:kappa=2}. Because the number $(\ll_{1}\cdot\bar{E})_{q_{1}}$ is the sum of at least two initial terms of the multiplicity sequence of $q_{1}$, the second equation implies \ref{item:t}.
	\end{proof}	

\smallskip
We now study the Kodaira--Iitaka dimension of the divisor $K_{X}+\tfrac{1}{2}D$.

\begin{lem}[A criterion for $\kappa_{1/2}=-\infty$]\label{lem:C3st}
	Let $D$ be a reduced effective divisor on a smooth projective surface $X$. Assume that there is a $\P^{1}$-fibration of $X$ such that for a fiber $F$ we have
	\begin{equation}\label{eq:C3st-condition}
	D \cdot F=4 \mbox{ and there exists a $(-2)$-twig of $D$ with a horizontal component.}
	\end{equation}
	Then $\kappa(K_{X}+\tfrac{1}{2}D)=-\infty$.
\end{lem}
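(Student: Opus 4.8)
The plan is to prove the equivalent statement that $h^{0}\bigl(m(2K_{X}+D)\bigr)=0$ for every $m\geq 1$; this suffices because $\kappa(K_{X}+\tfrac{1}{2}D)=\kappa\bigl(2(K_{X}+\tfrac{1}{2}D)\bigr)=\kappa(2K_{X}+D)$. First I would restrict to a general fiber $F\cong\P^{1}$ of the given fibration. By adjunction $K_{X}\cdot F=-2$, so the hypothesis $D\cdot F=4$ yields $(2K_{X}+D)\cdot F=0$. Fix $m\geq 1$ and suppose, for contradiction, that there is an effective divisor $M\sim m(2K_{X}+D)$. Then $M\cdot F=0$; since $M$ is effective and every component meets $F$ non-negatively, this forces $M$ to be \emph{vertical}, i.e.\ $\Supp M$ is a union of fiber components.

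Next I would extract the numerical input from the $(-2)$-twig. Write it as a chain $T=T_{1}+\dots+T_{k}$ with $T_{1}=\ftip{T}$, so $T_{1}$ is a tip of $D$ and all $T_{i}$ are non-branching in $D$. As each $T_{i}$ is a $(-2)$-curve we have $K_{X}\cdot T_{i}=0$, hence $(2K_{X}+D)\cdot T_{i}=D\cdot T_{i}=T_{i}^{2}+\beta_{D}(T_{i})$. This equals $-1$ for $i=1$ (since $\beta_{D}(T_{1})=1$) and $0$ for $2\leq i\leq k$ (since $\beta_{D}(T_{i})=2$); if $T_{k}$ happens to be a tip of $D$ as well, the value there is again $-1$, which only strengthens the argument. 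In particular $M\cdot T_{1}=-m<0$, which forces $T_{1}\subseteq\Supp M$, and since $M$ is vertical, $T_{1}$ is vertical.

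The core step is a propagation along the chain. Writing $M=\sum_{i=1}^{k}c_{i}T_{i}+M'$ with $c_{i}\in\Z_{\geq0}$ and $M'$ effective containing no component of $T$, the inequality $M\cdot T_{1}<0$ gives $c_{1}\geq 1$. For $2\leq i\leq k$ the relation $M\cdot T_{i}=0$ reads $2c_{i}=c_{i-1}+c_{i+1}+M'\cdot T_{i}$ with the convention $c_{k+1}=0$; since $c_{i+1}\geq 0$ and $M'\cdot T_{i}\geq 0$, we get $2c_{i}\geq c_{i-1}\geq 1$, so $c_{i}\geq 1$ by integrality. By induction $c_{i}\geq 1$ for all $i$, whence $T\subseteq\Supp M$. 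But by hypothesis $T$ has a horizontal component, which cannot lie in the vertical divisor $M$; this is the desired contradiction. Therefore no such $M$ exists, $h^{0}\bigl(m(2K_{X}+D)\bigr)=0$ for all $m\geq 1$, and $\kappa(K_{X}+\tfrac{1}{2}D)=-\infty$.

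The argument is essentially formal once $(2K_{X}+D)\cdot F=0$ and the twig intersection numbers are in hand, so I expect the only delicate point to be the bookkeeping of the branching numbers $\beta_{D}(T_{i})$ along the chain, together with the degenerate cases ($k=1$, or $T_{k}$ itself a tip of $D$), and the use of integrality of the $c_{i}$, which is exactly what upgrades $2c_{i}\geq c_{i-1}\geq 1$ to $c_{i}\geq 1$ rather than merely $c_{i}>0$.
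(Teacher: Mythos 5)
Your proof is correct, and it establishes the statement by a genuinely more elementary route than the paper, though the two arguments are structurally parallel. The paper supposes $\kappa(K_X+\tfrac{1}{2}D)\geq 0$ and takes the Zariski--Fujita decomposition $K_X+\tfrac{1}{2}D=\mathcal{P}+\mathcal{N}$: the tip computation $T_1\cdot(K_X+\tfrac{1}{2}D)<0$ puts $T_1$ in $\Supp \mathcal{N}$; induction along the chain, using nefness of $\mathcal{P}$ and $T_{k+1}\cdot(K_X+\tfrac{1}{2}D)=0$, puts the whole twig in $\Supp \mathcal{N}$; and the contradiction is $0<F\cdot \mathcal{N}=-F\cdot \mathcal{P}\leq 0$, the positivity coming from the horizontal component of the twig sitting in the effective divisor $\mathcal{N}$. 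You replace $\mathcal{N}$ by an explicit effective member $M\sim m(2K_X+D)$, deduce from $M\cdot F=0$ that $M$ is vertical, and propagate strict positivity of the coefficients $c_i$ along the twig via $2c_i=c_{i-1}+c_{i+1}+M'\cdot T_i$, so that the horizontal component of $T$ ends up inside the vertical divisor $M$. The correspondence is exact --- your $M$ plays the role of $\mathcal{N}$, and effectivity of the residual divisor $M'$ substitutes for nefness of $\mathcal{P}$ --- but your version needs only effectivity and intersection numbers, not the existence of Zariski--Fujita decompositions, which is a real economy of prerequisites; the paper's phrasing is the natural one in its context, since Zariski decompositions, barks and peeling are the working tools of the whole article. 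Two cosmetic points: integrality of the $c_i$ is not actually needed, since $2c_i\geq c_{i-1}>0$ already propagates strict positivity, which is all you use; and at the tip one should also allow $\beta_D(T_1)=0$ (the case where the $(-2)$-twig is a whole connected component of $D$), which gives $M\cdot T_1=-2m$ instead of $-m$, still negative, so nothing changes.
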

\begin{proof}
	Suppose that $\kappa(K_{X}+\tfrac{1}{2}D)\geq 0$. Write $\mathcal{P}$, $\mathcal{N}$ for the positive and negative part of the Zariski-Fujita decomposition of $K_{X}+\tfrac{1}{2}D$. Let $T=T_{1}+\dots +T_{m}$ be a $(-2)$-twig of $D$, where $T_{k}$ for $1\leq k \leq m$ is the $k$-th component of $T$. We have $T_{1}\cdot (K_{X}+\tfrac{1}{2}D)=-1<0$, so $T_{1}\subseteq \Supp \mathcal{N}$. If for some $1\leq k< m$ we have $T_{1}+\dots+T_{k}\subseteq \Supp \mathcal{N}$ then $$T_{k+1}\cdot \mathcal{N}=T_{k+1} \cdot (K_{X}+\tfrac{1}{2}D)-T_{k+1}\cdot \mathcal{P}=-T_{k+1}\cdot \mathcal{P}\leq 0,$$ so in fact $T_{k+1}\cdot \mathcal{N}<0$, because $T_{k+1}\cdot T_{k}>0$. It follows that $T_{k+1}\subseteq \Supp \mathcal{N}$ and hence by induction $T\subseteq \Supp \mathcal{N}$. Therefore, $$0< F\cdot \mathcal{N}=F\cdot (K_{X}+\tfrac{1}{2}D) - F\cdot \mathcal{P} =-F\cdot \mathcal{P};$$ a contradiction.
\end{proof}

\begin{prop}[Complements are $\C^{***}$-fibered and have $\kappa_{1/2}=-\infty$]\label{prop:finding_C3st}
	Let $\bar{E}\subseteq \P^{2}$ be a rational cuspidal curve of one of the types listed in Definition \ref{def:our_curves}. Then the minimal log resolution $(X,D)$ of $(\P^{2},\bar{E})$ admits a $\P^{1}$-fibration satisfying \eqref{eq:C3st-condition}. In particular,
	\begin{equation*}
	\kappa(K_{X}+\tfrac{1}{2}D)=\kappa(K_{X_{0}}+\tfrac{1}{2}D_{0})=-\infty.
	\end{equation*}
\end{prop}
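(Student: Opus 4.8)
The plan is to reduce everything to Lemma \ref{lem:C3st}: it suffices to produce, for each of the seven types of Definition \ref{def:our_curves}, a single $\P^1$-fibration of the minimal log resolution $X$ whose general fiber $F$ satisfies $D\cdot F=4$ and which has a $(-2)$-twig of $D$ with a horizontal component. Granting such a fibration, Lemma \ref{lem:C3st} gives $\kappa(K_X+\tfrac12 D)=-\infty$, and the final equality $\kappa(K_X+\tfrac12 D)=\kappa(K_{X_0}+\tfrac12 D_0)$ is exactly the invariance recorded in Proposition \ref{prop:MMP}. Moreover a general fiber meets $D$ transversally in $4$ points, so $F\setminus D\cong\P^1\setminus\{4\text{ points}\}=\C^{***}$, which yields the $\C^{***}$-fibration claim. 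Thus the whole statement is reduced to exhibiting the fibration, which I do type by type, reading the configuration off the weighted dual graphs in Figures \ref{fig:FE}--\ref{fig:J}.

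For the two types with singular minimal model, $\FE$ and $\cI$, the construction is uniform and clean: I would take the ruling $p_{\F_2}\colon Z\to\P^1$ of $Z\cong\F_2$ provided by Lemma \ref{lem:F2_core}\ref{item:F2_Z=F2} and pull it back to $X$ along $\psi^{+}$ together with the resolution $\psi_0$. By Lemma \ref{lem:F2_core}\ref{item:F2_Z=F2},\ref{item:F2_hor} one has $D_Z\cdot F=4$, and the negative section is $\psi^{+}(\hat\Delta^{-})$, where $\hat\Delta^{-}=[2]$ is a maximal $(-2)$-twig of $D$ (Lemma \ref{lem:F2_core}\ref{item:F2_Q1}). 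Since this $(-2)$-curve maps to a section of the ruling, its proper transform on $X$ is horizontal, giving the required horizontal $(-2)$-twig. It remains only to check that the extra blowups of the log resolution over the cusps and over the tangency points $C_j\cap E_0$ all lie in special fibers, so that a general fiber on $X$ meets precisely the proper transforms of the horizontal components of $D_Z$; this gives $D\cdot F=4$ on $X$ as well.

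For the five types with smooth minimal model, $\Qb,\Qa,\FZb,\cH,\cJ$, I would instead build the fibration directly on $X$ by writing down an explicit degenerate fiber. Concretely, I would exhibit a connected $0$-curve $F_0$ supported on a sub-chain of the components of $D$ together with the proper transform of a suitable auxiliary line or conic (the special lines of Lemma \ref{lem:special_lines}, or the conics $\mm_1,\mm_2$ from Lemma \ref{lem:P2_lem}\ref{item:P2-conics} in the cases $\cH,\cJ$), chosen so that the pencil $|F_0|$ is base-point-free and a prescribed maximal $(-2)$-twig of $D$ is \emph{disjoint} from $F_0$, hence horizontal for $|F_0|$. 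The numerical data $\deg\bar E$ and $E^2$ computed from Lemma \ref{lem:HN-equations} determine the configuration precisely, and one reads off $D\cdot F=4$ from the graph; Lemma \ref{lem:singular_P1-fibers} and Lemma \ref{lem:fibrations-Sigma-chi} would be used to confirm that $F_0$ is a genuine fiber (i.e.\ $F_0^2=0$, $p_a(F_0)=0$) and to control the horizontal part.

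The main obstacle is the bookkeeping for the five $\P^2$-cases, uniform in the parameters $\gamma$ and $k$. Unlike the $\F_2$-cases, there is no single obvious ruling: the naive pencil of lines through the highest cusp $q_1$ typically gives $D\cdot F\neq 4$ (for instance $\deg\bar E=3\gamma+1$ for $\cH(\gamma)$ and $\deg\bar E=4k+1$ for $\cJ(k)$, so this pencil overshoots), and even when the count is right its horizontal components need not include a $(-2)$-curve. The real work is therefore to pick, for each type, the correct auxiliary curve and the correct sub-chain of $D$ so that exactly $4$ boundary intersections occur \emph{and} a maximal $(-2)$-twig avoids the fiber and becomes a section; the cases $\cH$ and $\cJ$ (where one uses the conic pencil generated by $\psi^{+}(E_0)$ and the second conic of $D_{\min}$, and the horizontal $(-2)$-twig arises from resolving the high-order contact of the two conics) are the delicate ones and must be verified for all $\gamma\geq 3$, respectively $k\geq 2$, at once.
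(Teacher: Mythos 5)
Your reduction to Lemma \ref{lem:C3st}, the use of Proposition \ref{prop:MMP} for the equality $\kappa(K_{X}+\tfrac{1}{2}D)=\kappa(K_{X_{0}}+\tfrac{1}{2}D_{0})$, and the overall plan of exhibiting a fibration type by type all match the paper. But your key constructions are circular. Proposition \ref{prop:finding_C3st} is the step that \emph{establishes} $\kappa(K_{X_{0}}+\tfrac{1}{2}D_{0})=-\infty$ for a curve known only to have the multiplicity sequences of Definition \ref{def:our_curves}; at this point the only available inputs are Lemma \ref{lem:HN-equations}, Lemma \ref{lem:kappa=2} and Lemma \ref{lem:special_lines}. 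Yet for types $\FE$ and $\cI$ you invoke Lemma \ref{lem:F2_core}\ref{item:F2_Z=F2},\ref{item:F2_hor},\ref{item:F2_Q1} (the ruling of $Z\cong\F_{2}$, $D_{Z}\cdot F=4$, $\hat{\Delta}^{-}=[2]$), and for $\cH$, $\cJ$ you invoke Lemma \ref{lem:P2_lem}\ref{item:P2-conics} and the conics of $D_{\min}$. Everything in Section \ref{sec:possible_HN-types} is proved under the standing assumption \eqref{eq:assumption}, which contains exactly the conclusion $\kappa(K_{X_{0}}+\tfrac{1}{2}D_{0})=-\infty$ you are trying to prove (as well as the non-existence of a $\C^{**}$-fibration). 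Without negativity one cannot apply Proposition \ref{prop:MMP}\ref{item:MMP_MFS}, the minimal model need not be a log del Pezzo of Picard rank one, and none of the structure of $(Z,D_{Z})$ or $D_{\min}$ — indeed not even the existence of the almost log exceptional curves whose contraction produces these models — is known. So the fibrations you propose to pull back or assemble are simply not available at this stage of the argument.

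The non-circular route, which is the paper's, builds the fibrations from data determined by the multiplicity sequences alone: for $\Qb,\Qa$ the pencil of lines through the multiplicity-two cusp ($D\cdot F=\deg\bar{E}-1=4$, with the first component of $Q_{1}'$ a horizontal $(-2)$-tip); for $\FE,\FZb,\cH$ explicit degenerate fibers supported on subchains of $D$ together with the proper transforms of the lines $\ll_{12}$, $\ll_{1}$ of Lemma \ref{lem:special_lines}, whose existence is pure Bezout combined with Lemma \ref{lem:Qhp_has_no_lines}; and for $\cI,\cJ$ a fiber supported entirely inside $D$, namely on the chain $[2,1,3,1]$ formed by the $(-2)$-curve meeting $C_{2}'$, then $C_{2}'$, $E$ and $C_{1}'$, which needs no auxiliary curve at all. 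A secondary weakness of your write-up is that even for the cases where your inputs are legitimate (the special lines), the actual type-by-type verification of $D\cdot F=4$ and of the horizontal $(-2)$-twig is deferred rather than carried out; but that is a matter of completion, whereas the reliance on Lemmas \ref{lem:F2_core} and \ref{lem:P2_lem} is a gap that cannot be repaired without redoing the constructions from scratch.
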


\begin{proof}
	Assume that $\bar{E}$ is of type $\Qb$ or $\Qa$. Then $q_{1}\in \bar{E}$ has multiplicity $\mu_{1}=2$, so the pullback of the pencil of lines through $q_{1}$ induces a $\P^{1}$-fibration of $X$ with $D\cdot F=\deg\bar{E}-(\mu_{1}-1)=4$ for a fiber $F$. The first component of $Q_{1}'$ is a horizontal $(-2)$-tip of $D$, so this $\P^{1}$-fibration satisfies \eqref{eq:C3st-condition}. 
	
	Assume now that $\bar{E}$ is of one of the remaining types. For $j\in\{1,2\}$ denote  by $U_{j}$ the maximal twig of $D$ containing the first component of $Q_{j}'$ and by $V_{j}$ the unique $(-2)$-twig of $D$ meeting $C_{j}'$. If  $Q_{j}'$ is not a chain, we denote by $B_{j}$ the branching component of $Q_{j}'$ meeting $U_{j}$ and by $T_{j}'$ the unique twig of $D$ meeting $B_{j}$ which is disjoint from $U_{j}+V_{j}$ (cf.\ Notation \ref{not:graphs}).
	
	Consider the types $\FE(\gamma)$, $\FZb(\gamma)$ and $\cH(\gamma)$. By definition, we have $\gamma\geq 5$, $\gamma\geq 4$ and $\gamma\geq 3$, respectively. Let $A,A'$ be the proper transforms on $X$ of the lines $\ll_{12}$, $\ll_{1}$ from Lemma \ref{lem:special_lines}. We have $(A')^{2}=A^{2}=-1$, $A'\cdot D= A\cdot D=2$, $A$ meets $D$ only in the first components of $Q_{1}'$ and $Q_{2}'$ and $A'$ meets $D$ only in $E$ and in the second component of $Q_{1}'$. 
	
	Consider the types $\FE(\gamma)$ and $\cH(\gamma)$  (see Figures \ref{fig:FE}, \ref{fig:H}). Then $T_{2}'=[2]$ is disjoint from $C_{2}'+A$ and $(Q_{2}'-T_{2}')+A+U_{1}$ is a chain $[2,1,3,3,(2)_{\gamma-4},1,\gamma-2]$ if $\bar{E}$ is of type $\FE(\gamma)$ and $[2,1,3,2,3,(2)_{\gamma-2},1,\gamma]$ if $\bar{E}$ is of type $\cH(\gamma)$. This chain supports a fiber $F$ of a $\P^{1}$-fibration of $X$, which meets $D\hor$ only once in $C_{2}'$, once in $B_{2}$ and once in $U_{1}$. Recall from Section \ref{sec:fibrations} that for a vertical curve $C$ we denote by $\mu(C)$ its multiplicity in the respective fiber. Here we have $\mu(C_{2}')=2$ and $\mu(B_{2})=\mu(U_{1})=1$, so $F\cdot D=4$. Because $T_{2}'$ is a horizontal $(-2)$-tip of $D$, this $\P^{1}$-fibration satisfies \eqref{eq:C3st-condition}.
	
	Consider the type $\FZb(\gamma)$ (see Figure \ref{fig:FZb}). Now $$C_{2}'+U_2+A+U_{1}+B_{1}+T_{1}'+A'=[1,4,(2)_{\gamma-3},1,\gamma-1,3,(2)_{\gamma-3},1]$$ supports a fiber $F$ of a $\P^{1}$-fibration of $X$, which meets $D\hor$ only  twice in $C_{2}'$, once in $A'$ and once in $B_{1}$. We have $\mu(C_{2}')=\mu(B_{1})=\mu(A')=1$, so $F\cdot D=4$. Because $\ltip{V_{2}}$ is a horizontal component of a $(-2)$-twig of $D$, this $\P^{1}$-fibration satisfies \eqref{eq:C3st-condition}.
	
	Finally, consider the types $\cI$ and $\J$ (see Figures \ref{fig:I}, \ref{fig:J}). Then $V_{2}+C_{2}'+E+C_{1}'=[2,1,3,1]$ supports a fiber $F$ of a $\P^{1}$-fibration of $X$, which meets $D\hor$ only once in $C_{2}'$ and twice in $C_{1}'$. We have $\mu(C_{2}')=2$ and $\mu(C_{1}')=1$, so $F\cdot D=4$. Because $\ltip{V_{1}}$ is a horizontal component of a $(-2)$-twig of $D$, this $\P^{1}$-fibration satisfies \eqref{eq:C3st-condition}.
	
	The last statement of the lemma follows from Lemma \ref{lem:C3st} and Proposition \ref{prop:MMP}.
\end{proof}

\smallskip
\subsection{Existence and uniqueness for types $\Qb$ and $\Qa$.}\label{sec:q4}

The existence and uniqueness of curves of types $\Qb$ and $\Qa$ follows from the classification of rational planar quintics \cite[Theorem 2.3.10]{Namba_geometry_of_curves}. The proof sketched in loc.\ cit.\ is based on computations which are left as an exercise, so for completeness we give an independent geometric argument. We construct these curves from the Steiner tricuspidal quartic $\bar{C}=\FZa(4,1)$ using quadratic Cremona transformations. We prove their projective uniqueness, too. The existence and uniqueness of $\bar{C}$ itself follows, for example, from \cite[Theorem 3.5]{FLZa-_class_of_cusp}, but since we need its explicit form \eqref{eq:quartic}, we provide a direct argument.

\begin{lem}[The auxiliary quartic $\FZa(4,1)$] \label{lem:quartic} Up to a projective equivalence there exists a unique  pair $(\bar{C},(p_{1},p_{2},p_{3}))$, where $\bar{C}\subseteq \P^{2}$ is a rational quartic with three ordinary cusps  $p_{1}$, $p_{2}$, $p_{3}$. It has a parameterization $\theta\colon \P^{1}\to\P^{2}$ given by
\begin{equation}\label{eq:quartic}
[u:v]\mapsto [u^{2}v^{2}:v^{2}(u-v)^{2}:u^{2}(u-v)^{2}],
\end{equation}
in which case $p_{1}=[1:0:0]$, $p_{2}=[0:1:0]$ and $p_{3}=[0:0:1]$.
\end{lem}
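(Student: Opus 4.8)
The plan is to establish existence, uniqueness, and the explicit form \eqref{eq:quartic} of the tricuspidal quartic in a single constructive sweep. First I would verify that the map $\theta$ in \eqref{eq:quartic} indeed has image a quartic with three ordinary cusps at the claimed points. Since each coordinate is a degree-$4$ form in $[u:v]$, the image has degree dividing $4$; I would check $\theta$ is birational onto its image (so the image is a rational quartic) by noting that $u/v$ is recoverable from the ratios of coordinates. The three cusps arise at the parameters where two of the three binomials $u$, $v$, $u-v$ vanish to the right orders: at $[u:v]=[0:1]$ one gets $p_{1}=[1:0:0]$, at $[1:0]$ one gets $p_{2}=[0:1:0]$, and at $[1:1]$ one gets $p_{3}=[0:0:1]$. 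A local computation at each of these three parameters (expanding $\theta$ in a local coordinate and reading off that the lowest-order terms give a cusp of type $A_{2}$) confirms each is an ordinary cusp. By the genus--degree relation, or directly by Lemma \ref{lem:HN-equations}\eqref{eq:genus-degree}, a rational quartic ($(d-1)(d-2)=3$) can carry at most three ordinary cusps and three forces $\bar{C}$ to be rational and cuspidal of type $\FZa(4,1)$; this pins down the singularity type.

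For uniqueness, the key idea is to use the three cusps and their cuspidal tangent directions as projective data to normalize coordinates. I would argue that given any rational quartic $\bar{C}'$ with three ordinary cusps $p_{1}',p_{2}',p_{3}'$, these three points are not collinear (otherwise Bezout with the line through them, counting $2$ at each cusp, would force intersection number $\geq 6>4$). Hence a projective transformation sends $(p_{1}',p_{2}',p_{3}')$ to $([1:0:0],[0:1:0],[0:0:1])$; one further coordinate scaling fixes a fourth general point or, more naturally here, normalizes the cuspidal tangent lines. Then I would show that the conditions ``$\bar{C}'$ is a quartic singular with an $A_2$ cusp at each vertex'' cut out a linear system of quartics whose members, after the normalization, reduce to a one- or zero-dimensional family that must coincide with \eqref{eq:quartic}. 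Concretely, each ordinary cusp at a coordinate vertex imposes that the defining form lie in the square of the maximal ideal with a prescribed tangent cone, and solving these symmetric conditions forces the equation of $\bar{C}'$ to be projectively equivalent to the one defined by $\theta$.

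The cleanest route to pin down the equation is to pass to the defining polynomial and exploit the symmetry of the three cusps. I would let $F$ be the defining quartic and write the three cusp conditions at the vertices; by the $S_3$-symmetry permuting the vertices, the space of quartics with an ordinary cusp at all three vertices is small, and a direct check identifies \eqref{eq:quartic} as (up to the residual torus action) the unique member that is actually irreducible and has \emph{ordinary} rather than degenerate cusps. Alternatively, and perhaps more in the spirit of the surrounding sections, one can invoke the cited result \cite[Theorem 3.5]{FLZa-_class_of_cusp} for the bare existence and uniqueness statement and only supply the explicit parameterization by verification, which is the strategy the excerpt signals (``since we need its explicit form \eqref{eq:quartic}, we provide a direct argument'').

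The main obstacle I anticipate is the uniqueness half: ruling out that two projectively inequivalent configurations could realize the same singularity data. The existence and the explicit cusp computation are routine local expansions, but showing that the three ordinary-cusp conditions rigidify the quartic completely requires either a careful dimension count of the relevant linear system of quartics (verifying the conditions are independent and cut the $14$-dimensional space of quartics down to a single orbit under $\mathrm{PGL}_3$) or a clever use of the cuspidal tangents as additional normalizing data. The symmetry under $S_3$ is the lever that makes this tractable, so I would lean on it heavily to reduce the computation to checking a small, explicitly symmetric family.
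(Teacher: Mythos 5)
Your proposal is correct in substance, but it takes a genuinely different route from the paper. The paper's proof is synthetic: it applies the standard quadratic Cremona transformation $\sigma$ centered at the three cusps, observes that $\sigma_{*}\bar{C}$ is a conic tangent to the three exceptional lines of $\sigma^{-1}$, and that conversely the inverse transformation turns any configuration (conic, three tangent lines) back into a tricuspidal quartic; this sets up a bijection between projective equivalence classes of pairs $(\bar{C},(p_{1},p_{2},p_{3}))$ and of pairs $(\mm,(\ll_{1},\ll_{2},\ll_{3}))$, and uniqueness follows from the classical uniqueness of a conic inscribed in a triangle. The explicit formula \eqref{eq:quartic} then drops out by choosing $\mm=\{[(u-v)^{2}:u^{2}:v^{2}]\}$ and the coordinate lines. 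Your route instead normalizes the cusps to the coordinate vertices (your Bezout argument for non-collinearity is exactly right) and works inside the $5$-dimensional linear system of quartics $Ax^{2}y^{2}+Bx^{2}z^{2}+Cy^{2}z^{2}+Px^{2}yz+Qxy^{2}z+Rxyz^{2}$ with double points there; the cusp conditions read $P^{2}=4AB$, $Q^{2}=4AC$, $R^{2}=4BC$, irreducibility forces $ABC\neq 0$, and after torus rescaling to $A=B=C=1$ one gets $P,Q,R\in\{\pm2\}$ with exactly two orbits under the residual diagonal action, distinguished by the sign of $PQR$: the orbit of $(2,2,2)$ is the double conic $(xy+yz+zx)^{2}$ (non-reduced, so excluded), and the orbit of $(-2,-2,-2)$ is the curve \eqref{eq:quartic}, whose equation is $x^{2}y^{2}+x^{2}z^{2}+y^{2}z^{2}-2xyz(x+y+z)=0$. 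So your "obstacle" dissolves into a short explicit computation, but it does require carrying out this orbit analysis and the non-reducedness check, which your sketch anticipates but does not complete; also, your remark that $u/v$ is recoverable from coordinate ratios needs a slightly more careful argument (the ratios directly give only squares, e.g.\ $Z/Y=u^{2}/v^{2}$), though birationality of $\theta$ is easily verified. What each approach buys: the paper's Cremona argument is coordinate-free, shorter, and deliberately introduces the machinery that is immediately reused to construct and prove uniqueness of the quintics $\Qb$ and $\Qa$ in the subsequent propositions, whereas your computation is elementary and self-contained and produces the defining equation of $\FZa(4,1)$ explicitly rather than only its parameterization.
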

\begin{proof}
Let $\bar{C}\subseteq \P^{2}$ be a quartic with three ordinary cusps $p_{1}$, $p_{2}$, $p_{3}$. Let $\sigma\colon \P^{2}\map \P^{2}$ be the standard quadratic transformation centered at $p_{1}$, $p_{2}$, $p_{3}$. Then $\sigma_{*}\bar{C}$ is a conic tangent to the exceptional lines of $\sigma^{-1}$. Conversely, if $\mm$ is a conic and $\ll_{1},\ll_{2},\ll_{3}$ are distinct lines tangent to $\mm$ then the standard quadratic transformation centered at $\ll_{j}\cap\ll_{k}$ for $j\neq k$ maps $\mm$ onto a quartic with ordinary cusps at the points which are images of $\ll_{j}$. Up to an automorphism of $\P^{2}$  this transformation is inverse to $\sigma$. We obtain a one-to-one correspondence between the classes of projective equivalence of pairs $(\bar{C},(p_{1},p_{2},p_{3}))$ and $(\mm,(\ll_{1},\ll_{2},\ll_{3}))$. The latter is unique. Taking $$\mm=\{[(u-v)^{2}:u^{2}:v^{2}]: [u:v]\in \P^{1}\} \text{\ \ and \ \ }\ll_{1}=\{x=0\},\ \ll_{2}=\{y=0\},\ \ll_{3}=\{z=0\}$$ we get $\bar{C}$ given by \eqref{eq:quartic}.
\end{proof}

\begin{prop}
	\label{prop:Qb}
	Up to a projective equivalence there exists a unique rational cuspidal curve (quintic) of type $\Qb$.
\end{prop}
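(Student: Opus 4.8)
The plan is to realize every quintic of type $\Qb$ as the image of the Steiner quartic $\bar C$ of Lemma \ref{lem:quartic} under a quadratic Cremona transformation, and to run this correspondence in both directions. First I would record the numerics: by Lemma \ref{lem:HN-equations} a curve of type $\Qb$ has $\deg\bar E=5$, and each cusp, having multiplicity sequence $(2,2)$, is locally analytically $\{x^2=y^5\}$, so that a single blowup of such a cusp produces an ordinary cusp whose tangent direction records the original cuspidal tangent.

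For uniqueness, let $\bar E$ be any quintic of type $\Qb$ with cusps $q_1,q_2,q_3$ and cuspidal tangents $t_1,t_2,t_3$. By Lemma \ref{lem:special_lines} each $t_j$ meets $\bar E$ at $q_j$ with multiplicity $\mu_j+\mu_j'=4$ and meets $\bar E\setminus\{q_j\}$ transversally in a single point; in particular $q_k\notin t_j$ for $k\neq j$. A Bezout count then shows that the line $\ll_{jk}$ through two cusps meets each of them with multiplicity exactly $2$ (it cannot be tangent to either, since $4+2>5$) and avoids the third cusp. I would take $\sigma$ to be the quadratic Cremona transformation based at $q_1,q_2,q_3$: since each cusp has multiplicity $2$ we get $\deg\sigma_*\bar E=2\cdot 5-2-2-2=4$, and because blowing up a $(2,2)$-cusp once produces an ordinary cusp supported at the direction $t_j\neq\ll_{jk}$, this residual cusp is disjoint from the proper transforms of the contracted lines $\ll_{jk}$. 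Hence $\sigma_*\bar E$ is a quartic with exactly three ordinary cusps, so by Lemma \ref{lem:quartic} it is projectively equivalent to $\bar C$. The base data of $\sigma^{-1}$ — the triangle $r_1r_2r_3$ to which $\ll_{23},\ll_{13},\ll_{12}$ are contracted — consist of three smooth points of $\sigma_*\bar E$ whose opposite sides pass transversally through the three cusps; this configuration is rigid (three incidence conditions on three points of a fixed quartic), so $\bar E$ is recovered uniquely up to projective equivalence.

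For existence I would reverse the construction on the explicit Steiner quartic $\bar C$ of Lemma \ref{lem:quartic}. A direct computation shows that its three cuspidal tangents are $\{y=z\}$, $\{x=z\}$, $\{x=y\}$ and that they are concurrent at $[1:1:1]$; this forces the reconstruction to proceed not through the tangent lines but through a triangle $r_1r_2r_3$ inscribed so that each of its sides passes through the opposite coordinate cusp. I would produce such a triangle explicitly, with vertices lying on $\bar C$ (exploiting the $S_{3}$-symmetry of $\bar C$ to seek a symmetric solution), so that the quadratic Cremona transformation $\sigma'$ based at $r_1,r_2,r_3$ gives $\deg\sigma'_*\bar C=2\cdot 4-1-1-1=5$. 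Local computation at the three cusps would then confirm that each ordinary cusp is elaborated into a $(2,2)$-cusp and that no new singularity is created, so $\sigma'_*\bar C$ is a quintic of type $\Qb$; its complement is of log general type by Lemma \ref{lem:kappa=2}.

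The main obstacle will be the precise bookkeeping of how the quadratic transformation alters the cusp singularities — confirming that the transformation based at the three $(2,2)$-cusps flattens each to an ordinary cusp, and conversely that the inscribed-triangle transformation promotes an ordinary cusp to a $(2,2)$-cusp — together with verifying that the base points never collide with one another or with the cusps and that the inscribed-triangle configuration is both nonempty and rigid. The concurrency of the cuspidal tangents of $\bar C$ at $[1:1:1]$ is the delicate point: it rules out the naive "triangle of tangent lines'' and pins the construction down to the transversal inscribed triangle, which is exactly what leaves no residual moduli and yields uniqueness.
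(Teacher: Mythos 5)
Your overall strategy is the same as the paper's: pass to the Steiner quartic $\bar{C}$ of Lemma \ref{lem:quartic} by the quadratic transformation based at the three cusps, and recover $\bar{E}$ by the inverse transformation based at an inscribed triangle $r_1r_2r_3$ whose sides pass through the cusps of $\bar{C}$. Your Bezout bookkeeping, the non-collinearity/non-tangency of the lines $\ll_{jk}$, and the local analysis of how a quadratic transformation trades a $(2,2)$-cusp for an ordinary cusp (and back) are all correct. The genuine gap is the sentence carrying the whole uniqueness claim: \emph{``this configuration is rigid (three incidence conditions on three points of a fixed quartic), so $\bar{E}$ is recovered uniquely up to projective equivalence.''} A count of three conditions against three parameters at best suggests that the set of admissible triangles is finite; it does not show that all admissible triangles lie in a single orbit of $\Aut(\P^2,\bar{C})\cong S_3$, and finitely many inequivalent triangles would produce finitely many possibly inequivalent quintics. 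Nothing in the dimension count excludes this, so uniqueness is not established.

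The paper closes exactly this gap by an explicit computation on the parameterization \eqref{eq:quartic}: writing $r_1=\theta[\alpha:1]$, the lines joining $r_1$ to $p_2,p_3$ meet $\bar{C}$ again at $r_2=\theta[2-\alpha:1]$ and $r_3=\theta[\alpha:2\alpha-1]$, and the collinearity of $p_1,r_2,r_3$ required by \eqref{eq:Cpr} is equivalent to the single quadratic equation $(\alpha-1)^2=\alpha$. Its two roots $\alpha=(3\pm\sqrt{5})/2$ are interchanged by the involution $[x:y:z]\mapsto[x:z:y]$, which fixes $(\bar{C},p_1)$; hence all solutions form one orbit and uniqueness follows. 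Note that this same computation also \emph{produces} the triangle, i.e., it proves existence, whereas in your proposal existence is deferred to an unperformed explicit search (your $\Z_3$-symmetric ansatz is reasonable --- the solution triangle is indeed invariant under the cyclic group matching $\Aut(\P^2,\Qb)\cong\Z_3$ of Remark \ref{rem:Q3} --- but as written nothing is exhibited). So both halves of your argument ultimately require the one computation you omitted; your observation that the cuspidal tangents of $\bar{C}$ are concurrent at $[1:1:1]$ correctly rules out the naive tangent-triangle construction, but it is not a substitute for determining the admissible triangles.
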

\begin{proof}
We describe a one-to-one correspondence between classes of projective equivalence of pairs $(\bar{E},q_{1})$ and  $(\bar{C},(p_{1},r_{1}))$, where $\bar{E}\subseteq \P^{2}$ is of type $\Qb$, $q_{1}\in\Sing \bar{E}$, $\bar{C}\subseteq \P^{2}$ is a tricuspidal quartic with cusps $p_{1},p_{2},p_{3}$ and $r_{1}\in \bar{C}$ is a point such that
\begin{equation}\label{eq:Cpr}
\parbox{.9\textwidth}{for $j\in\{2,3\}$ the line $\ll_{j}$ joining $r_{1}$ with $p_{j}$ meets $\bar{C}\setminus \{r_{1},p_{j}\}$ in a (unique) point $r_{j}$ such that $p_{1},r_{2},r_{3}$ are collinear (see Figure \ref{fig:Qb_construction}).}
\end{equation}	
	\begin{figure}[htbp]
		\centering
		\begin{subfigure}{0.25\textwidth}
			\centering
			\includegraphics[scale=0.25]{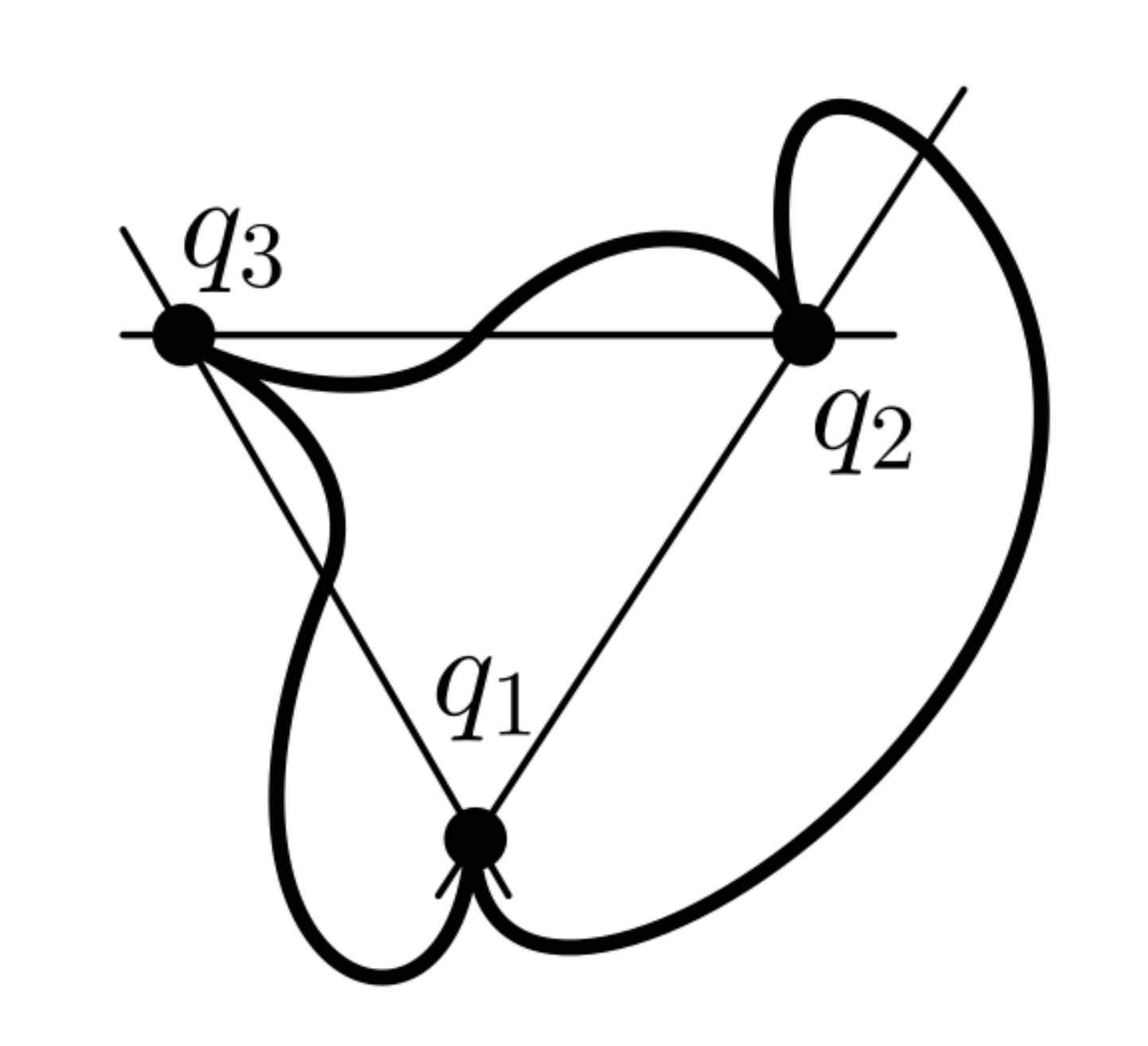}
			\caption{$\Qb$}
		\end{subfigure}$\longleftarrow$\hfill
		\begin{subfigure}{0.35\textwidth}
			\centering
			\includegraphics[scale=0.3]{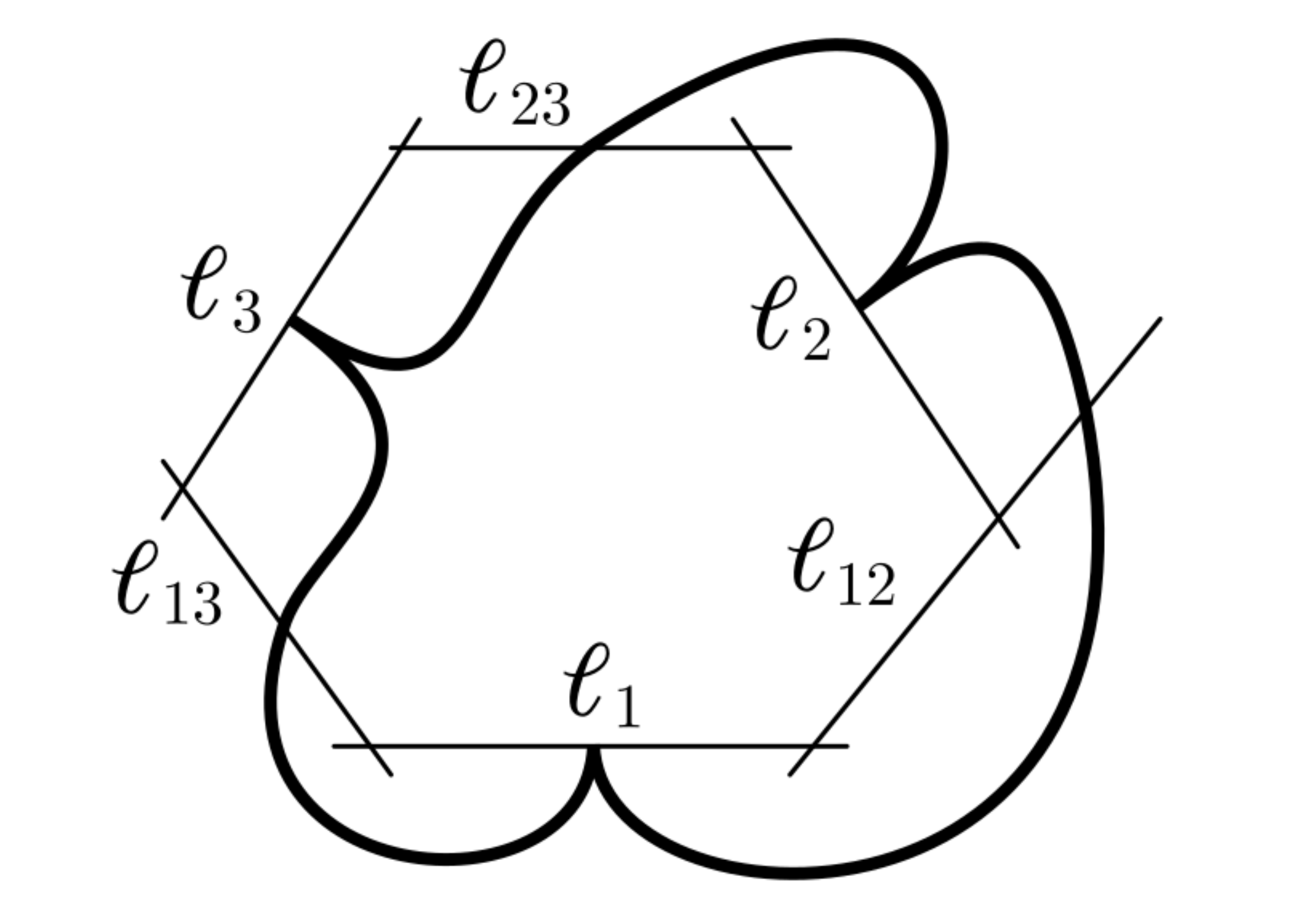}
		\end{subfigure}$\longrightarrow$\hfill
		\begin{subfigure}{0.3\textwidth}
			\centering
			\includegraphics[scale=0.25]{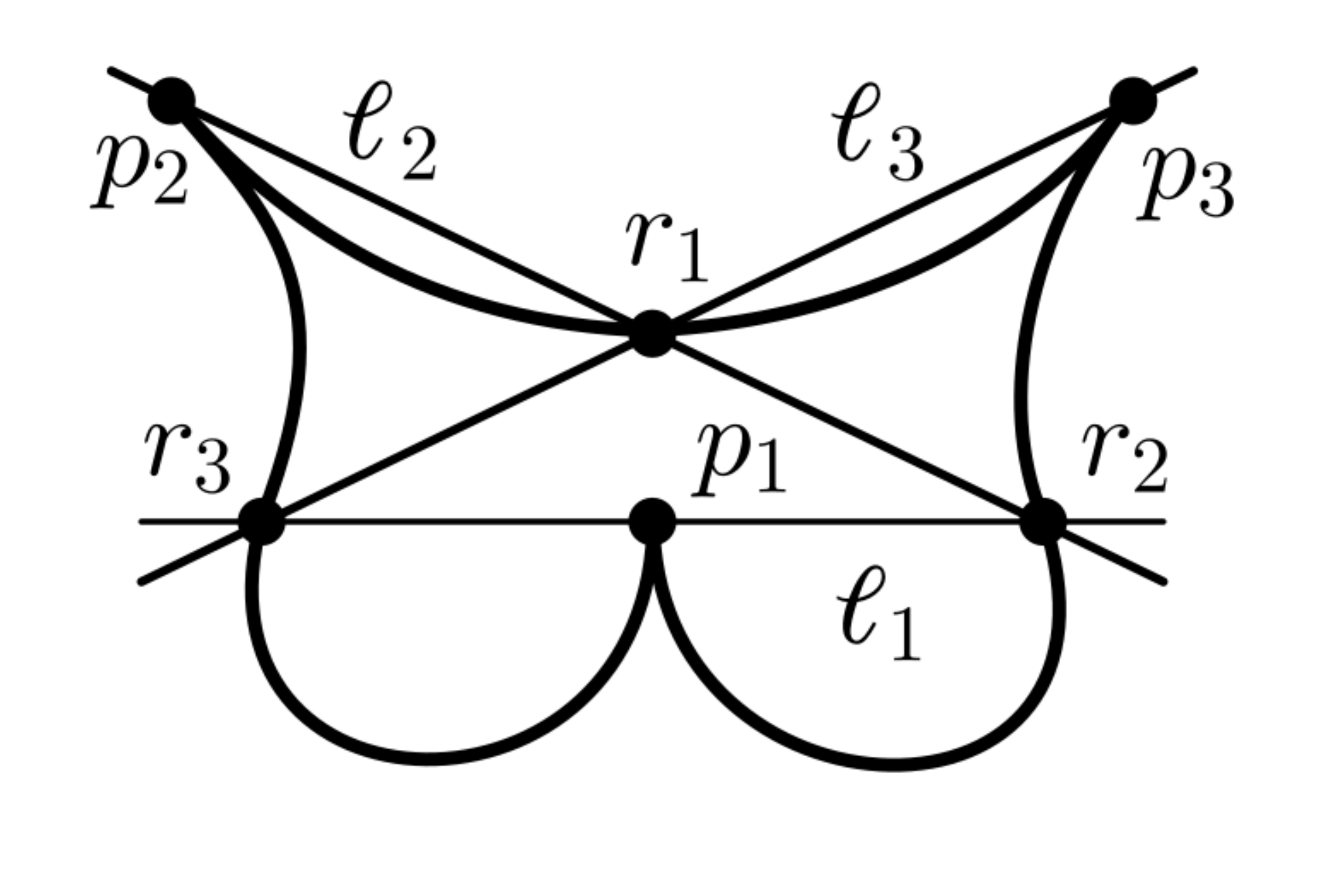}
			\caption{$\FZa(4,1)$}
		\end{subfigure}
		\caption{A construction of the quintic $\Qb$.}
		\label{fig:Qb_construction}
	\end{figure}
	
Assume that $\bar{E}$ is of type $\Qb$. Denote by $q_{1},q_{2},q_{3}$ the cusps of $\bar{E}$ and by $\ll_{jk}$ the line joining $q_{j}$ with $q_{k}$. Let $\sigma$ be the standard quadratic transformation centered at $q_{1},q_{2},q_{3}$. Put $\bar{C}\de \sigma_{*}\bar{E}$. Let $p_{j}$ be the point infinitely near to $q_{j}$ on $\bar{C}$ and let $\ll_{j}$ be the exceptional line of $\sigma^{-1}$ containing $p_{j}$. Put $r_{1}\de \sigma(\ll_{23})$ and $r_{j}\de \sigma(\ll_{1j})$ for $j\in \{2,3\}$. Lemma \ref{lem:special_lines}\ref{item:t} implies that the lines $\ll_{jk}$ are not tangent to $\bar{E}$, so $\ll_{jk}$ meets $\bar{E}\setminus \{q_{j},q_{k}\}$ once and transversally. Therefore, the points $p_{1},p_{2},p_{3},r_{1},r_{2},r_{3}$ lie on $\bar{C}$ and are distinct. The curve $\bar{C}$ is cuspidal, with cusps $p_{1}$, $p_{2}$, $p_{3}$. The multiplicity sequence of $p_{j}\in \bar{C}$ is the multiplicity sequence of $q_{j}\in\bar{E}$ shortened by the initial term, so $p_{j}\in\bar{C}$ is ordinary. It follows that $\bar{C}$ is a tricuspidal quartic. Because $r_{1},p_{j},r_{j}\in \ll_{j}$ for $j\in \{2,3\}$ and $p_{1},r_{2},r_{3}\in\ll_{1}$, the pair $(\bar{C},(p_{1},r_{1}))$ satisfies \eqref{eq:Cpr}.

Conversely, assume that $(\bar{C},(p_{1},r_{1}))$ is as in \eqref{eq:Cpr}. Then the standard quadratic transformation centered at $r_{1},r_{2},r_{3}$ maps $\bar{C}$ to a curve of type $\Qb$. Up to an automorphism of $\P^{2}$ this transformation is inverse to $\sigma$. Therefore, we have a one-to-one correspondence between classes of projective equivalence of pairs $(\bar{E},q_{1})$ and $(\bar{C},(p_{1},r_{1}))$.

By Lemma \ref{lem:quartic} it remains to show that the class of $(\bar{C},(p_{1},r_{1}))$ satisfying \eqref{eq:Cpr} is uniquely determined by the class of $(\bar{C},p_{1})$. Let $(\bar{C},p_{1})$ be as in \eqref{eq:quartic} and let $r_{1}$ be a smooth point of $\bar{C}$, so $r_{1}=\theta[\alpha:1]$ for some $\alpha\in \C\setminus \{0,1\}$. Then for $j\in \{2,3\}$ the lines $\ll_{j}$ joining $r_{1}$ with $p_{j}$ are given by $$\ll_{2}=\{(\alpha-1)^{2}x=z\}\text{\ \ and\ \ } \ll_{3}=\{(\alpha-1)^{2}x=\alpha^{2}y\}.$$ Now $\ll_{j}\cap\bar{C}=\{r_{1},p_{j},r_{j}\}$, where $r_{2}=\theta[2-\alpha:1]$ and $r_{3}=\theta[\alpha:2\alpha-1]$. We check that the points $p_{1}$, $r_{2}$, $r_{3}$ are collinear if and only if $(\alpha-1)^{2}=\alpha$. Then $r_{1}=[1:\alpha^{-1}:\alpha]$ and $\alpha=(3\pm\sqrt{5})/2$. The two possible points $r_{1}$, namely $[2:3+\sqrt{5}:3-\sqrt{5}]$ and $[2:3-\sqrt{5}:3+\sqrt{5}]$, are mapped to each other by the involution $[x:y:z]\mapsto [x:z:y]$ which fixes $(\bar{C},p_{1})$.
\end{proof}
 
\begin{lem}\label{lem:normalization_Aut} For every curve $C\subseteq \P^2$ other than a line, assigning to an automorphism of $(\P^2,C)$ its pullback to the normalization $\nu\:C^\nu\to C$ defines a monomorphism
\begin{equation}\Aut(\P^2,C)\hookrightarrow \Aut(C^\nu,\nu^{-1}(\Sing C)).
\end{equation}
\end{lem}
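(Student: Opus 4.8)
The plan is to build the homomorphism directly from the universal property of the normalization and then to establish injectivity by a fixed-point argument on $\P^2$. First I would define the map on an element $\phi\in\Aut(\P^2,C)$. Since $\phi$ is an automorphism of $\P^2$ carrying $C$ to $C$, its restriction $\phi|_C$ is an automorphism of $C$, and as automorphisms preserve singularities it satisfies $\phi|_C(\Sing C)=\Sing C$. By the universal property of the normalization $\nu\colon C^\nu\to C$, the automorphism $\phi|_C$ lifts to a \emph{unique} automorphism $\tilde\phi$ of $C^\nu$ characterized by $\nu\circ\tilde\phi=\phi|_C\circ\nu$. For $x\in\nu^{-1}(\Sing C)$ one has $\nu(\tilde\phi(x))=\phi|_C(\nu(x))\in\Sing C$, so $\tilde\phi$ preserves $\nu^{-1}(\Sing C)$ and hence $\tilde\phi\in\Aut(C^\nu,\nu^{-1}(\Sing C))$. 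The uniqueness of the lift gives at once $\widetilde{\phi\circ\psi}=\tilde\phi\circ\tilde\psi$ and $\tilde{\id}=\id$, so $\phi\mapsto\tilde\phi$ is a homomorphism of groups.

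It then remains to prove injectivity. If $\tilde\phi=\id_{C^\nu}$, then $\phi|_C\circ\nu=\nu\circ\tilde\phi=\nu$, and since $\nu$ is surjective this forces $\phi|_C=\id_C$, that is, $\phi$ fixes $C$ pointwise. I would therefore reduce the statement to the geometric claim that an element of $\mathrm{PGL}_3(\C)$ fixing an irreducible curve $C$ which is not a line must be the identity. Because $C$ is not a line it has degree at least $2$ and is not contained in any line (otherwise, being irreducible of dimension one, it would equal that line); in particular $C$ is infinite, and by B\'ezout every line meets $C$ in at most $\deg C$ points. I would then exhibit four points of $C$ in general position: since $C$ is not contained in a line, three of its points $p_1,p_2,p_3$ are non-collinear, and a fourth point $p_4\in C$ can be chosen off the three lines $\overline{p_1p_2}$, $\overline{p_1p_3}$, $\overline{p_2p_3}$, each of which carries only finitely many points of $C$. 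As $\phi$ fixes all four of these points and a projective automorphism of $\P^2$ fixing four points in general position is the identity, we conclude $\phi=\id$, proving the map is a monomorphism.

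The argument is essentially routine, so no step presents a serious obstacle; the only point requiring a little care is the existence of four points of $C$ in general position, which I would justify by combining irreducibility (so $C$ lies on no line) with the finiteness of line sections of $C$ coming from B\'ezout.
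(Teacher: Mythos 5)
Your proof is correct and follows essentially the same route as the paper: lift via the universal property of normalization, deduce the homomorphism property from uniqueness of the lift, and reduce injectivity to the fact that a projective automorphism fixing $C$ pointwise is the identity. Your four-points-in-general-position argument just spells out in detail what the paper compresses into the phrase \enquote{because $C$ spans $\P^2$}, and it is a valid (indeed more careful) justification of that step.
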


\begin{proof}Let $\sigma\in \Aut(\P^2,C)$. By the universal property of the normalization, the morphism $\sigma\circ \nu$ factors as $\sigma\circ \nu=\nu\circ\sigma^\nu$ for some $\sigma^\nu\:C^\nu\to C^\nu$. Since $\sigma^\nu$ is birational and lifts $\sigma_{|C}$, it is unique and we have $\sigma^\nu\in \Aut(C^{\nu},\nu^{-1}(\Sing C))$. It follows from the uniqueness that the assignment $\sigma\mapsto\sigma^\nu$ is a homomorphism. If $\sigma^\nu=\id_{C^\nu}$ then $\sigma_{|C}=\id$, so $\sigma=\id$, because $C$ spans $\P^2$.
\end{proof}

\begin{rem}[Properties of $\Qb$]\label{rem:Q3}
Let $\alpha=(3- \sqrt{5})/2$. We argue that $\Qb$ has a parameterization
\begin{equation}\label{eq:Qb_param}
	[u:v]\mapsto [u^{2}v^{2}(u- \alpha v):
	v^{2}(u-v)^{2}((1-\alpha)u+v):
	u^{2}(u-v)^2((\alpha-1)u+v)],
\end{equation}
and that
\begin{equation}
\Aut(\P^{2},\Qb)\cong\Z_{3}.
\end{equation}

The parameterization, call it $\nu\:\P^1\to \bar E$, follows from the construction in the proof of Proposition \ref{prop:Qb}, the three cusps are $[1:0:0]$, $[0:1:0]$ and $[0:0:1]$.

To compute the automorphism group note first that the automorphism $\epsilon([x:y:z])=[\alpha y:z:(\alpha-1)x]$ fixes $\bar E$ and cyclically permutes the cusps $q_1=\nu([1:1])$, $q_2=\nu([0:1])$ and $q_3=\nu([1:0])$, hence $\Z_{3}\cong\langle\epsilon\rangle \subseteq \Aut(\P^{2},\bar{E})$. Suppose that $\sigma \in  \Aut(\P^{2},\bar{E},q_{1})$ and $\sigma\neq \id$. Let $\sigma^\nu$ be as in Lemma \ref{lem:normalization_Aut}. We have $\sigma^{\nu}([u:v])=[v:u]$, so $\sigma([x:y:z])=[x:z:y]$. But this automorphism does not fix $\bar E$, as for instance, the inverse images of the unique points of intersection of $\bar E\setminus \{q_1,q_2\}$ with the lines through $q_1$ and $q_j$, $j\in\{2,3\}$, namely $[1-\alpha:1]$ and $[\alpha-1:1]$, are not mapped to each other; a contradiction.
\end{rem}

\begin{prop}
	\label{prop:Qa}
	Up to a projective equivalence there exists a unique rational cuspidal curve (quintic) of type $\Qa$.
\end{prop}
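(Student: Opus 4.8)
The plan is to establish a one-to-one correspondence between projective-equivalence classes of pairs $(\bar{E},q_1)$, where $\bar{E}$ is of type $\Qa$ and $q_1\in\bar{E}$ is the cusp with multiplicity sequence $(2,2,2)$, and projective-equivalence classes of suitably decorated tricuspidal quartics $\bar{C}=\FZa(4,1)$, exactly as in the proof of Proposition \ref{prop:Qb}. Since type $\Qa$ has cusps with multiplicity sequences $(2,2,2)$ and three copies of $(2)$, I would single out $q_1$ as the distinguished triple cusp and the three ordinary cusps $q_2,q_3,q_4$. The natural transformation to use is the standard quadratic Cremona transformation $\sigma$ centered at three of the four cusps; the key bookkeeping is to check which three points to blow up so that $\sigma_*\bar{E}$ becomes a tricuspidal quartic.

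First I would apply Lemma \ref{lem:special_lines}\ref{item:t} to the cusp $q_1$: the tangent line $\ll_1$ at $q_1$ satisfies $(\ll_1\cdot\bar{E})_{q_1}=\mu_1+\mu_1'=2+2=4$ and meets $\bar{E}\setminus\{q_1\}$ in exactly one further point, transversally, since $\deg\bar E=5$. This tangent line is the analogue of the special lines used in the $\Qb$ case and its transversality is what forces the correspondence to be well-defined. Next I would set $\bar{C}:=\sigma_*\bar{E}$ for $\sigma$ the quadratic transformation centered at the three ordinary cusps $q_2,q_3,q_4$. Blowing these up shortens each ordinary cusp's multiplicity sequence $(2)$ by its leading term, so after $\sigma$ these become smooth points; the degree drops from $5$ to $2\cdot 5-2-2-2=4$, and only the triple cusp $q_1$ survives, now contributing a cusp of multiplicity sequence $(2,2)$. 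I would then verify that after $\sigma$ the image has three ordinary cusps, making $\bar C$ a copy of $\FZa(4,1)$, and record the incidence data (the exceptional lines and the images of the various special lines $\ll_{1j}$, $\ll_{jk}$) that must be imposed, by analogy with condition \eqref{eq:Cpr}, to reconstruct $\bar{E}$ by the inverse Cremona map.

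The existence half then follows because $\FZa(4,1)$ exists and is projectively unique by Lemma \ref{lem:quartic}, and a generic choice of decoration can be reversed. The uniqueness half is where the real work lies: I must show that the decoration data on $(\bar{C},p_1)$ needed to produce type $\Qa$ is uniquely determined up to the residual automorphisms fixing $(\bar{C},p_1)$. As in the $\Qb$ proof, I would parameterize $\bar C$ by \eqref{eq:quartic}, write the distinguished smooth point as $\theta[\alpha:1]$, express the relevant collinearity/tangency incidence conditions as polynomial equations in $\alpha$, solve them, and check that all solutions are identified by $\Aut(\bar{C},p_1)$ via Lemma \ref{lem:normalization_Aut}.

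The main obstacle I anticipate is pinning down the exact incidence conditions characterizing type $\Qa$ after the Cremona contraction: unlike $\Qb$, which is fully symmetric in its three cusps, type $\Qa$ has one distinguished cusp of higher multiplicity, so the condition replacing \eqref{eq:Cpr} will involve the tangency of a line at a specific cusp rather than a collinearity of three residual points. Getting this condition down to a single equation in $\alpha$ with solutions permuted by $\Aut(\bar{C},p_1)$, and confirming that the apparent ambiguity collapses exactly as $\langle\epsilon\rangle\cong\Z_3$ or its relevant subgroup acts (cf.\ Remark \ref{rem:Q3}), is the delicate step. I would handle it by the same explicit computation in coordinates as in Proposition \ref{prop:Qb}, tracking the Cremona map on the parameter $[u:v]\in\P^1$ and verifying that the finitely many admissible $\alpha$ form a single orbit under the automorphisms fixing $(\bar{C},p_1)$, which yields projective uniqueness of $\Qa$.
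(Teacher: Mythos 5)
There is a genuine gap, and it lies exactly at the step you flagged as routine bookkeeping: the Cremona reduction you chose does not produce the Steiner quartic. Blowing up an ordinary cusp (multiplicity sequence $(2)$, an $A_2$-singularity) \emph{resolves} it---the proper transform of the curve is smooth there---so under the standard quadratic transformation centered at $q_2,q_3,q_4$ the three ordinary cusps disappear from the image, while the cusp $q_1$, which is not a center of $\sigma$, keeps its full multiplicity sequence $(2,2,2)$. Hence $\sigma_*\bar{E}$ is a quartic with a \emph{single} cusp of multiplicity sequence $(2,2,2)$ (an $A_6$-singularity), not a tricuspidal quartic. Your two assertions---that the ordinary cusps ``become smooth points'' with only $q_1$ surviving ``with multiplicity sequence $(2,2)$,'' and that ``the image has three ordinary cusps, making $\bar{C}$ a copy of $\FZa(4,1)$''---contradict each other, and both are false. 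Since the image is not $\FZa(4,1)$, Lemma \ref{lem:quartic} gives you nothing, and the whole transfer of existence and uniqueness collapses. Nor can this be repaired by a different choice of three cusps as centers: taking $q_1$ and two ordinary cusps yields a bicuspidal quartic with sequences $(2,2)$ and $(2)$, again not $\FZa(4,1)$.

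The paper's proof uses a different quadratic map, one with a single \emph{proper} base point: blow up three times at $q_1$ and its infinitely near points on the successive proper transforms of $\bar{E}$ (this resolves the $(2,2,2)$-cusp; the exceptional divisor is a chain $[2,2,1]$), then contract the proper transform of the tangent line $\ll_1$---which is a $(-1)$-curve because $(\ll_1\cdot\bar{E})_{q_1}=4$ by Lemma \ref{lem:special_lines}\ref{item:t}---together with the images of the first two exceptional curves. This transformation leaves the three ordinary cusps untouched, so they become the three cusps of $\bar{C}=\FZa(4,1)$, and it converts $q_1$ into a smooth point $s_1\in\bar{C}$ whose tangent line is \emph{bitangent} to $\bar{C}$; that bitangency, condition \eqref{eq:Cps}, is the decoration replacing your anticipated ``tangency at a specific cusp.'' Uniqueness of the decorated pair then follows from two facts: the Steiner quartic has at most one bitangent line (if there were two, projecting from their intersection point would give a degree-$4$ map on $\bar{C}$ with too much ramification, contradicting the Hurwitz formula), and the two tangency points of the unique bitangent $\{x+y+z=0\}$ are exchanged by the involution $[x:y:z]\mapsto[x:z:y]$ fixing $(\bar{C},p_1)$. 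If you want to salvage your outline, you must replace your $\sigma$ by this one-proper-base-point transformation and carry out the uniqueness step via the bitangent, not via collinearity or cusp-tangency conditions.
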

\begin{proof}
We describe a one-to-one correspondence between classes of projective equivalence of pairs $(\bar{E},q_{2})$ and  $(\bar{C},(p_{1},s_{1}))$, where $\bar{E}\subseteq \P^{2}$ is of type $\Qa$, $q_{2}\in \bar{E}$ is an ordinary cusp, $\bar{C}\subseteq \P^{2}$ is a tricuspidal quartic with cusps $p_{1}$, $p_{2}$, $p_{3}$ and $s_{1}\in \bar{C}$ is a point such that
	\begin{equation}\label{eq:Cps}
	\mbox{the line tangent to $\bar{C}$ at $s_{1}$ is tangent to $\bar{C}\setminus \{s_{1}\}$,}
	\end{equation}	
see Figure \ref{fig:Q4}, that is, this line is \emph{bitangent} to $\bar{C}$. Because $\deg\bar{C}=4$, it meets $\bar{C}$ in exactly two smooth points, with multiplicity $2$.
	\begin{figure}[htbp]
		\centering
		\begin{subfigure}{0.3\textwidth}
			\centering
			\includegraphics[scale=0.25]{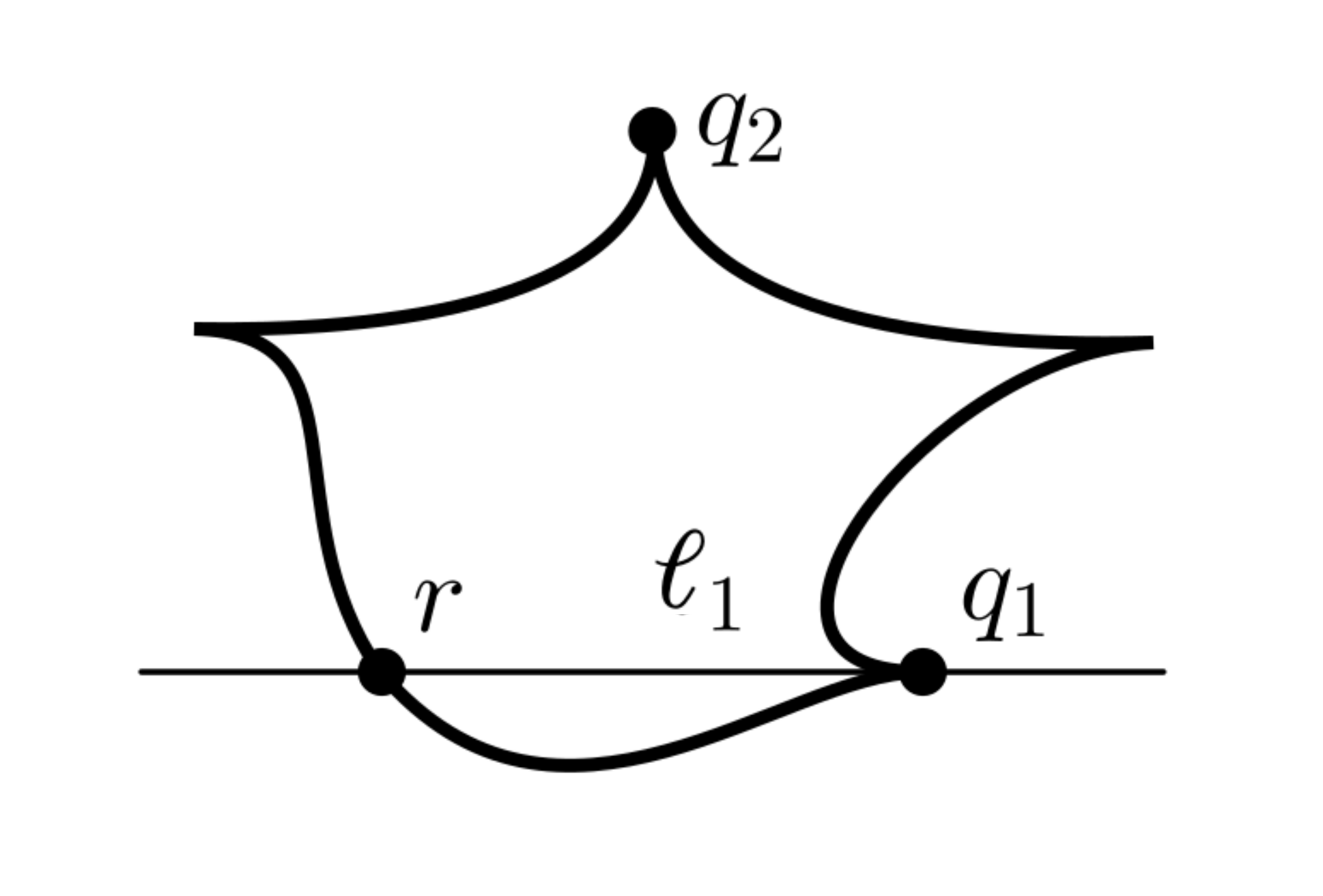}
			\caption{$\Qa$}
		\end{subfigure}$\longleftarrow$\hfill
		\begin{subfigure}{0.3\textwidth}
			\centering
			\includegraphics[scale=0.25]{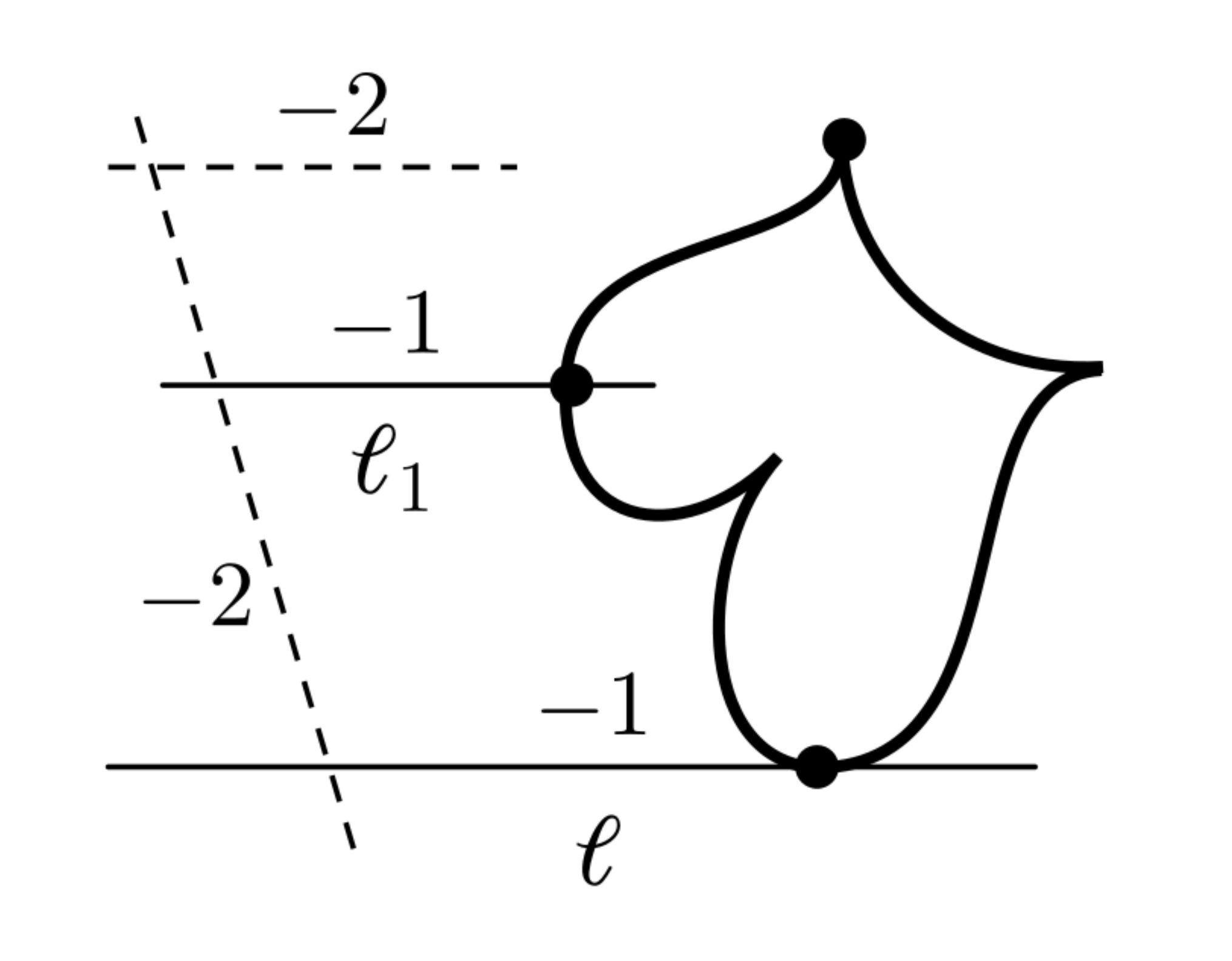}
		\end{subfigure}$\longrightarrow$\hfill
		\begin{subfigure}{0.3\textwidth}
			\centering
			\includegraphics[scale=0.25]{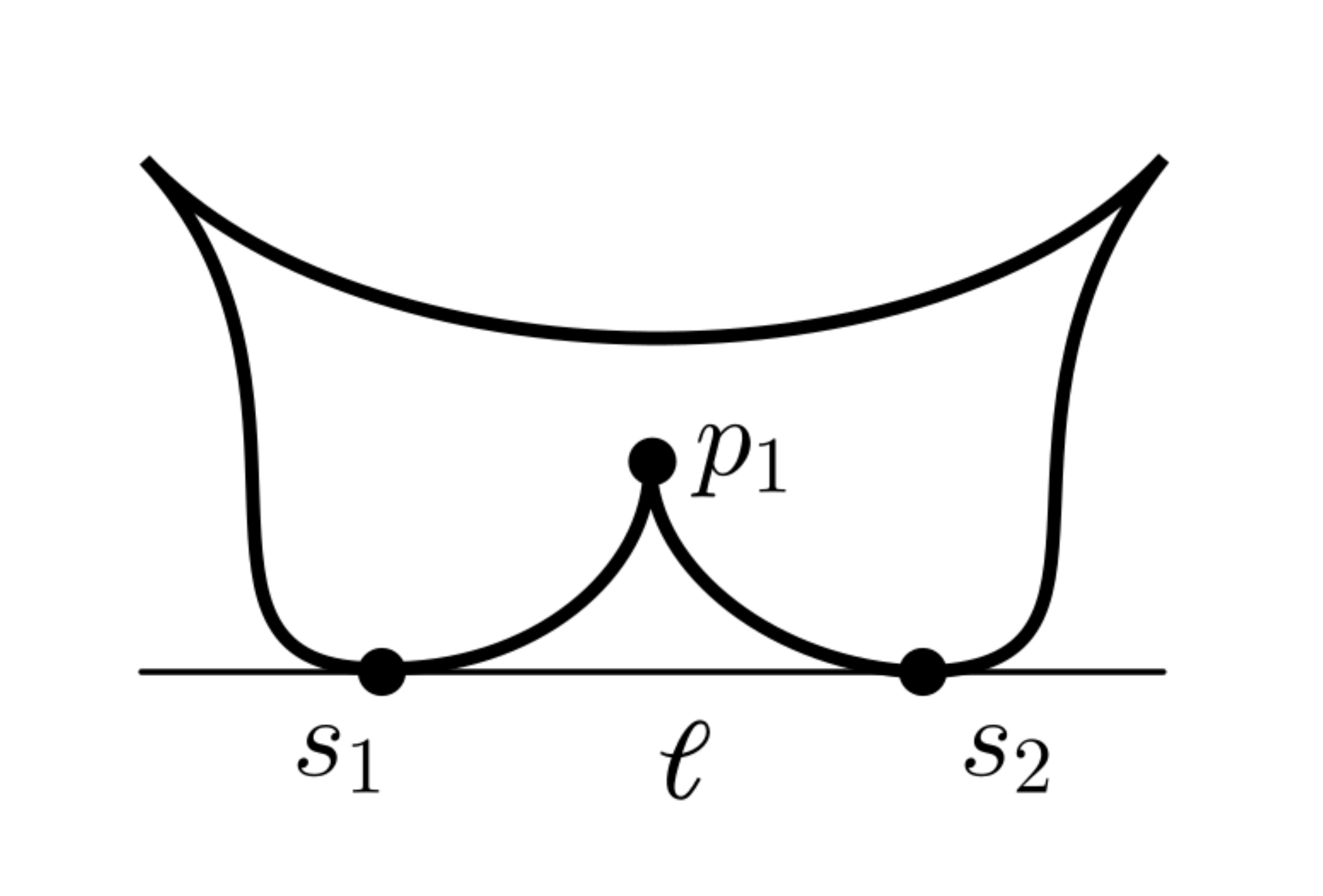}
			\caption{$\FZa(4,1)$}
		\end{subfigure}
		\caption{A construction of the quintic $\Qa$.}
		\label{fig:Q4}
	\end{figure}
	
Assume that $\bar{E}$ is of type $\Qa$. Recall that by Lemma \ref{lem:special_lines}\ref{item:t} the line $\ll_{1}$ tangent to the cusp $q_{1}\in \bar{E}$ with multiplicity sequence $(2,2,2)$ satisfies $(\ll_{1}\cdot\bar{E})_{q_{1}}=4$ and meets $\bar{E}\setminus \{q_{1}\}$ in a unique point, say $r$.  Blow up three times at $q_{1}$ and its infinitely near points on the proper transform of $\bar{E}$. The exceptional divisor is a chain $V_{1}+V_{2}+L=[2,2,1]$, meeting the proper transforms of $\ll_{1}$ and $\bar{E}$ in $V_{2}$ and $L$, respectively. Contract the proper transform of $\ll_{1}$, which is a $(-1)$-curve, and the image of $V_{1}+V_{2}$. Note that this contraction does not touch the proper transform of the germ of $\bar{E}$ at $q_{1}$, which is smooth and meets the image of $L$ with multiplicity $2$. Denote the resulting map by $\sigma\colon \P^{2}\map \P^{2}$ (it is a quadratic transformation with one proper base point, see \cite[Proposition 8.5.2]{AC_Cremona-maps}). Put $\bar{C}\de\sigma_{*}\bar{E}$, $p_{j}\de \sigma(q_{j+1})$ for $j\in \{1,2,3\}$ and $s_{1}\de \sigma(\ll_{1})$. Let $\ll$ be the image of $L$ and let $s_{2}\in \ll$ be the image of the point infinitely near to $q_{1}$, see Figure \ref{fig:Q4}. Then $\ll$ is a line, $s_{1},s_{2}\in \bar{C}$ are smooth and $(\bar{C}\cdot \ll)_{s_{k}}=2$ for $k\in\{1,2\}$. Since $\sigma$ restricts to an isomorphism $\P^{2}\setminus \ll_{1}\to\P^{2}\setminus \ll$, the curve $\bar{C}$ is a tricuspidal quartic with cusps $p_{1}$, $p_{2}$, $p_{3}$. Hence, $(\bar{C},(p_{1},s_{1}))$ satisfies \eqref{eq:Cps}. 

Conversely, let $(\bar{C},(p_{1},s_{1}))$ be as in \eqref{eq:Cps}. The line $\ll$ tangent to $\bar{C}_{1}$ at $s_{1}$ is bitangent to $\bar{C}$. Let $\sigma'\colon \P^{2}\map \P^{2}$ be a composition of three blowups at $s_{1}$ and its infinitely near points on the proper transforms of $\bar{C}$, followed by the contraction of the proper transform of $\ll$ and the images of the first and second exceptional curve. Then $\sigma'$ maps $\bar{C}$ to a curve of type $\Qa$. Up to an automorphism of $\P^{2}$ the map $\sigma'$ is inverse to $\sigma$. Therefore, we have a one-to-one correspondence between  classes of projective equivalence of pairs $(\bar{E},q_{2})$ and $(\bar{C},(p_{1},s_{1}))$.

By Lemma \ref{lem:quartic} it remains to show that the class of $(\bar{C},(p_{1},s_{1}))$ satisfying \eqref{eq:Cps} is uniquely determined by the class of $(\bar{C},p_{1})$. 
Let $(\bar{C},p_{1})$ be as in \eqref{eq:quartic}. Suppose that  $\bar{C}$ has two bitangent lines $\ll$, $\ll'$. Then the four points of tangency are distinct, so the projection from $\ll\cap \ll'$ restricts to a morphism $\bar{C}\to \P^{1}$ of degree $4$ ramified at the four points of $(\ll\cup \ll')\cap \bar{C}$ and at three cusps of $\bar{C}$. This contradicts the Hurwitz formula. Thus $\bar{C}$ has at most one bitangent line, say $\ll$. For $\bar{C}$ as in \eqref{eq:quartic} this is the line $\ll=\{x+y+z=0\}$, meeting $\bar{C}$ with multiplicity $2$ at
\begin{equation*}
s_{1}=\theta[-\zeta:1]=[1:\zeta^{2}:\zeta]\text{\ \ and\ \ } s_{2}=\theta[-\zeta^{2}:1]=[1:\zeta:\zeta^{2}],
\end{equation*}
where $\zeta=\exp(2\pi\imath /3)$. Therefore, the pairs $(\bar{C},(p_{1},s_{k}))$ for $k\in \{1,2\}$ satisfy \eqref{eq:Cps}. The result follows, because the points $s_{1}$, $s_{2}$ are mapped to each other by the involution $[x:y:z]\mapsto [x:z:y]$ which fixes $(\bar{C},p_{1})$.
\end{proof}

\begin{rem}[Properties of $\Qa$]\label{rem:Q4}
We argue that $\Qa$ has a parameterization 
\begin{equation}\label{eq:Qa-par}
[u:v]\mapsto [uv^{4}:v^{2}(u^{3}-v^{3}):u^{2}(u^{3}+2v^{3})], 
\end{equation}
 and that
\begin{equation}\label{eq:Aut(Q4)}
\Aut(\P^{2},\Qa)\cong\Z_{3}.
\end{equation}

The parameterization, call it $\nu\:\P^1\to \bar E$, follows from the construction from the proof of Proposition \ref{prop:Qa}, it is also given in \cite[2.3.10.6]{Namba_geometry_of_curves}. The curve $\bar{E}$ has a cusp with multiplicity sequence $(2,2,2)$ at $q_1=\nu([1:0])$ and ordinary cusps $q_{2+k}=\nu([-\zeta^{k}:\sqrt[3]{2}])$, where $\zeta=\exp(2\pi\imath/3)$ and $k\in \{0,1,2\}$.

To compute the automorphism group we use Lemma \ref{lem:normalization_Aut}. Note that  $\epsilon([x:y:z])=[\zeta x:y:\zeta^{2} z]$ fixes $\bar E$ and cyclically permutes the ordinary cusps, hence $\Z_{3}\cong\langle\epsilon\rangle \subseteq \Aut(\P^{2},\bar{E})$. If $\sigma \in  \Aut(\P^{2},\bar{E},q_{2})$ then $\sigma^\nu$ as in Lemma \ref{lem:normalization_Aut} fixes $[-1:\sqrt[3]{2}]$, $[1:0]$ and $\{[-\zeta:\sqrt[3]{2}], [-\zeta^2:\sqrt[3]{2}]\}$, which is possible only if $\sigma^\nu=\id_{\P^1}$. Thus \eqref{eq:Aut(Q4)} holds.
\end{rem}

\begin{rem}[Other proofs of existence for types $\Qb$, $\Qa$]\label{rem:BZ_quintics} \ 
\begin{enumerate}
	\item Let $\iota_{3},\iota_{4}\colon \C^{*}\to \C^{2}$ be the injective morphisms given, respectively, by (v) and (w) in \cite{BoZo-annuli}. Via an automorphism of $\C^{2}$ they are equivalent to $\bar{E}\setminus \ll_{1}\into \P^{2}\setminus \ll_{1}$, where $\bar{E}$ is of type $\Qb$ or $\Qa$, respectively, and $\ll_{1}$ is the line tangent to $q_{1}\in \bar{E}$.
	Indeed, let $j_{3},j_{4}\colon \C^{2}\into \P^{2}$ be embeddings given by 
	\begin{equation*}
	j_{3}(x,y)=[x:4(1+\sqrt{5})y-x^2:1],\quad  j_{4}(x,y)=[x:8y-x^2:1].
	\end{equation*}
	Then the closures of the images of $j_{3}\circ \iota_{3}$ and $j_{4}\circ\iota_{4}$ are of type $\Qb$ and $\Qa$, respectively. 

	\item\label{item:Moe_quintics} A construction of curves of types $\Qb$ and $\Qa$, similar to ours, that is,  using quadratic transformations applied to simple planar configurations, is given in \cite[Section 6.3]{Moe-cuspidal_MSc}.
\end{enumerate}
\end{rem}

\subsection{Existence and uniqueness for types $\FZb$, $\FE$ and $\cH$.}\label{sec:discussion}
The existence and uniqueness of the curves $\FZb(\gamma)$, $\gamma\geq 4$ is proved in \cite{FlZa_cusps_d-3} by showing that some quadratic transformation maps a curve of type $\FZb(\gamma)$ to a curve of type $\FZb(\gamma+1)$. The existence and uniqueness of Fenske curves $\FE$ is shown in \cite{Fenske_cusp_d-4} by a similar method. 

The type $\cH(\gamma)$ is realized by the closure of the embedding $\mathbf{sq-}(k)$, $k=\gamma-1$ from \cite[6.9.3]{CKR-Cstar_good_asymptote}, given by the formula in Theorem 8.2(iii) loc.\ cit. Indeed, that closure has two points at infinity, which are cusps described by Hamburger--Noether pairs $\binom{4}{4\gamma-2}\binom{2}{3}$ and $\binom{3\gamma-3}{3\gamma}\binom{3}{1}$, so their multiplicity sequences are $((4)_{\gamma-1},2,2,2)$ and $(3\gamma-3, (3)_{\gamma-1})$, respectively (cf.\ \cite[Lemma 2.11]{PaPe_Cstst-fibrations_singularities}). Hence it is of type $\cH(\gamma)$. This proves that curves of type $\cH(\gamma)$ do exist.

Assume that $\bar{E}$ is of type $\cH(\gamma)$, $\gamma \geq 3$. By Lemma \ref{lem:special_lines}\ref{item:l} the line $\ll_{12}$ joining the cusps of $\bar{E}$ does not meet the smooth part of $\bar{E}$, so $\bar{E}\setminus \ll_{12}\subseteq \P^{2}\setminus \ll_{12}$ is a proper embedding $\C^{*}\into \C^{2}$. By Lemma \ref{lem:special_lines}\ref{item:t} the line $\ll_{1}$ tangent to $q_{1}\in \bar{E}$ meets $\bar{E}\setminus \ll_{12}$ once and transversally, so it is a good asymptote for $\bar{E}\setminus \ll_{12}\subseteq \P^{2}\setminus \ll_{12}$. The classification \cite[Theorem 8.2]{CKR-Cstar_good_asymptote} of proper embeddings $\C^{*}\into \C^{2}$ which admit a good asymptote implies that the embedding $\bar{E}\setminus \ll_{12}\subseteq \P^{2}\setminus\ll_{12}$ is unique up to an automorphism of $\P^{2}\setminus \ll_{12}\cong \C^{2}$. To infer the projective uniqueness of $\bar{E}\subseteq \P^{2}$, one needs to show that any automorphism of $(\P^{2}\setminus \ll_{12},\bar{E}\setminus\ll_{12})$ extends to an automorphism of $(\P^{2},\bar{E})$. This is done in \cite[Lemma 4.6]{PaPe_Cstst-fibrations_singularities} for closures of some other embeddings, but the proof for $\cH(\gamma)$ is exactly the same. It relies of the fact that the surface $\P^{2}\setminus (\bar{E}\cup \ll_{12})$, being of log general type, admits a unique minimal log smooth completion, which in turn is uniquely determined by the singularities of $\bar{E}+\ll_{12}$. We leave the details to the reader.

\bigskip 
\begin{rem}[Other proofs of existence for types $\FZb$, $\FE$, and $\cH$]\label{rem:FZb-FE-H}\ 
\begin{enumerate}
\item\label{item:FZb} A curve $\bar{E}$ of type $\FZb(k+2)$ for $k\geq 2$ can be also obtained as the closure of the image of a singular embedding $\C^{*}\to \C^{2}$ given by \cite[(g)]{BoZo-annuli} via the standard embedding $\C^{2}\ni(x,y)\mapsto[x:y:1]\in \P^{2}$. Then the line $\{z=0\}$ at infinity is the line $\ll_{12}$ joining $q_{1},q_{2}\in \bar{E}$. Another way to get this curve is to note that the line $\ll_{1}$ tangent to $q_{1}\in \bar{E}$ meets $\bar{E}$ in two points, so $\bar{E}\setminus \ll_{1}\subseteq\P^{2}\setminus \ll_{1}$ is a singular embedding $\C^{*}\to \C^{2}$. One can check that it is given by \cite[(k)]{BoZo-annuli} via the embedding $(x,y)\mapsto [x:y-\sum_{l=1}^{k}a_{l}x^{k-l+2}:1]$, where $a_{1}=1$ and $a_{l+1}=(-1)^{l}\left(\tbinom{3l-2}{l}+4\tbinom{3l-2}{l-1}\right)-\sum_{r=1}^{l}(-1)^{r}\tbinom{3r}{r}a_{l+1-r}$ for $l\geq 1$.	
\item\label{item:FE} A curve $\bar{E}$ of type $\FE(k+3)$ for $k\geq 2$ can be obtained as the closure of the image of a singular embedding $\C^{*}\to \C^{2}$ given by \cite[(h)]{BoZo-annuli}  via the standard embedding $\C^{2}\ni(x,y)\mapsto[x:y:1]\in \P^{2}$. As in \ref{item:FZb}, the line $\{z=0\}$ at infinity is the line $\ll_{12}$ joining $q_{1},q_{2}\in \bar{E}$. Choosing for the line at infinity the one tangent to $q_{1}\in\bar{E}$, we get another singular embedding $\C^{*}\to \C^{2}$, and one can check that it is given by \cite[(p)]{BoZo-annuli} via the embedding $(x,y)\mapsto [x:y-\sum_{l=1}^{k}a_{l}x^{k-l+2}:1]$, where $a_{1}=1$ and $a_{l+1}=(-1)^{l}\left(\tbinom{4l-2}{l}+3\tbinom{4l-2}{l-1}\right)-\sum_{r=1}^{l}(-1)^{r}\tbinom{4r}{r}a_{l+1-r}$ for $l\geq 1$.
\item\label{item:H_BZ} Similarly, a curve $\bar{E}$ of type $\cH(k+1)$ for $k\geq 2$ can be obtained as the closure of the image of an embedding $\C^{*}\to \C^{2}$ given by \cite[(i)]{BoZo-annuli} via the standard embedding $\C^{2}\ni(x,y)\mapsto[x:y:1]\in \P^{2}$. Then the line at infinity joins the two cusps of $\bar{E}$. Again, one can check that $\bar{E}\setminus \ll_{1}\subseteq \bar{E}\setminus \ll_{1}$, where $\ll_{1}$ is the line tangent to $q_{1}\in\bar{E}$, is a singular embedding $\C^{*}\to \C^{2}$ given by  \cite[(o)]{BoZo-annuli} with parameters $(m,n)$ equal to $(1,k)$, via the embedding $\C^{2}\ni (x,y)\mapsto [y:x-\sum_{l=1}^{k}a_{l}y^{k-l+2}:1]\in \P^{2}$, where $a_{1}=1$ and $a_{l+1}=(-1)^{l}\left(\tbinom{4l-2}{l}-\tbinom{4l-2}{l-1}\right)-\sum_{r=1}^{l}(-1)^{r}\tbinom{4r}{r}a_{l+1-r}$ for $l\geq 1$.
\item\label{item:H_Bodnar} An inductive construction of the series $\cH$ by quadratic transformations was given recently by J.\ Bodnár, see \cite[Theorem 3.1(e)]{Bodnar_type_G_and_J}.		

\item\label{item:H_CKR} Theorem \ref{thm:main} and \cite[Theorem 1.3]{PaPe_Cstst-fibrations_singularities} yield the following characterization. A rational cuspidal curve $\bar{E}\subseteq\P^{2}$ with complement of log general type is of type $\cH(\gamma)$ for some $\gamma\geq 3$ if and only if the surface $\P^{2}\setminus \bar{E}$ admits no $\C^{**}$-fibration and $\bar{E}$ is a closure of the image of a smooth proper embedding $\C^{*}\into\C^{2}$ with a good asymptote. Note that \cite[Theorem 8.2(iii)]{CKR-Cstar_good_asymptote} allows $\gamma=k-1=2$ which we do not, because in this case $\bar{E}$ is of type $\cA(2,2,1)$ from \cite[Theorem 1.3]{PaPe_Cstst-fibrations_singularities}, so $\P^{2}\setminus \bar{E}$ has a $\C^{**}$-fibration.
\end{enumerate}
\end{rem}

\smallskip
\subsection{Existence and uniqueness for types $\cI$ and $\cJ$.}\label{sec:IJ}
To prove the existence and uniqueness of curves of types $\cI$ and $\cJ$ we first show the uniqueness of minimal models $(X_{\min},\tfrac{1}{2}D_{\min})$ of the corresponding log surfaces $(X_{0},\tfrac{1}{2}D_{0})$ which appeared in Section \ref{sec:possible_HN-types}. Recall that $\F_{2}\de\P(\O_{\P^{1}}(2)\oplus \O_{\P^{1}})$ and denote by $\tilde{\Delta}^{-}$ the negative section of $\F_{2}$.

\begin{lem}[Recovering $(Z,D_{Z})$, cf.\ \eqref{eq:theta}]\label{lem:Z}
The following configurations exist and are unique up to an isomorphism of pairs:
\begin{enumerate}
	\item\label{item:Z_J} $(\P^{2},\tilde{C}_{1}+\tilde{C}_{2}+\tilde{E}_{0})$, where $\tilde{C}_{1}$, $\tilde{C}_{2}$ are conics meeting with multiplicities $3$, $1$ and $\tilde{E}_{0}$ is a line tangent to $\tilde{C}_{1}$  and $\tilde{C}_{2}$ off $\tilde{C}_{1}\cap \tilde{C}_{2}$ (see Figure \ref{fig:J}).
	\item\label{item:Z_I} $(\F_{2},\tilde{\Delta}^{-}+\tilde{C}_{1}+\tilde{C}_{2}+\tilde{E}_{0})$, where $\tilde{C}_{1}$, $\tilde{C}_{2}$, $\tilde{E}_{0}$ are $1$-sections such that $\tilde{C}_{1}\cdot \tilde{\Delta}^{-}=1$, $\tilde{C}_{2}\cdot\tilde{\Delta}^{-}=\tilde{E}_{0}\cdot\tilde{\Delta}^{-}=0$, $\tilde{C}_{1}$ meets $\tilde{C}_{2}$ in two points (with multiplicities $1$, $2$) and $\tilde{E}_{0}$ meets each $\tilde{C}_{j}$, $j\in \{1,2\}$ in a unique point off $\tilde{C}_{1}\cap\tilde{C}_{2}$ (with multiplicity $4-j$), (see Figure \ref{fig:I}).
\end{enumerate}
\end{lem}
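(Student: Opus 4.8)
The plan is to treat the two cases separately, in each case exploiting the large automorphism group of the ambient surface to reduce the problem to a short list of parameters, and then to solve the resulting tangency conditions explicitly; existence will come out of the construction and uniqueness from the transitivity of the residual automorphisms.

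For part \ref{item:Z_J} I would first normalize the pair of conics. Since $\mathrm{PGL}_{3}$ acts transitively on smooth conics, I may take $\tilde{C}_{1}=\{y^{2}=xz\}$, parameterized by $[u:v]\mapsto[u^{2}:uv:v^{2}]$, with $\Aut(\P^{2},\tilde{C}_{1})\cong\mathrm{PGL}_{2}$ acting through this parameterization. The contact data single out the point $p_{2}$ of order-$3$ contact and the transversal point $p_{1}$; fixing these two points of $\tilde{C}_{1}\cong\P^{1}$ reduces the residual group to a one-dimensional torus. Placing $p_{2}=[1:0:0]$ and $p_{1}=[0:0:1]$, a direct computation shows that every conic meeting $\tilde{C}_{1}$ with contact $(3,1)$ at $(p_{2},p_{1})$ has the form $\tilde{C}_{2}=\{b(y^{2}-xz)+yz=0\}$ with $b\in\C^{*}$, the value $b=0$ giving the degenerate $\{yz=0\}$. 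The residual torus $[x:y:z]\mapsto[\lambda^{2}x:\lambda y:z]$ acts on this pencil through a nontrivial character of $\C^{*}$, hence transitively on the admissible parameters $b\in\C^{*}$; so the pair $(\tilde{C}_{1},\tilde{C}_{2})$ exists and is unique up to projective equivalence. Finally $\tilde{E}_{0}$ is the residual common tangent: the dual conics $\tilde{C}_{1}^{*},\tilde{C}_{2}^{*}$ meet in four points, of which the order-$3$ contact at $p_{2}$ accounts for exactly three (contact order is preserved by duality for smooth conics) while the transversal crossing at $p_{1}$ contributes none, leaving one common tangent $\tilde{E}_{0}$, which touches each conic at a point different from $p_{1},p_{2}$. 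This yields existence and uniqueness of the whole triple.

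For part \ref{item:Z_I} I would work in Cox coordinates $(t_{0},t_{1};w_{0},w_{1})$ on $\F_{2}$ with $\deg t_{i}=(1,0)$, $\deg w_{0}=(0,1)$, $\deg w_{1}=(2,1)$, so that $\tilde{\Delta}^{-}=\{w_{0}=0\}$ is the unique negative section (one checks $\tilde{\Delta}^{-2}=-2$), while $\Aut(\F_{2})$ is generated by the base changes in $\mathrm{PGL}_{2}$, the fiber scaling $w_{1}\mapsto\mu w_{1}$, and the translations $w_{1}\mapsto w_{1}+r(t_{0},t_{1})w_{0}$ with $r$ a binary quadratic, and has dimension $7$. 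The two sections $\tilde{C}_{2},\tilde{E}_{0}$ of class $\tilde{\Delta}^{-}+2F$ disjoint from $\tilde{\Delta}^{-}$ are $\{w_{1}+q\,w_{0}=0\}$ with $q$ a binary quadratic; a translation brings $\tilde{C}_{2}$ to $\{w_{1}=0\}$, and the condition $\tilde{E}_{0}\cdot\tilde{C}_{2}=2$ at one point forces the quadratic of $\tilde{E}_{0}$ to be a perfect square, normalized after a base change and a scaling to $\tilde{E}_{0}=\{w_{1}+t_{1}^{2}w_{0}=0\}$. Writing the last section as $\tilde{C}_{1}=\{\ell(t)w_{1}+C(t)w_{0}=0\}$ with $\ell$ linear and $C$ cubic, the condition $\tilde{C}_{1}\cdot\tilde{\Delta}^{-}=1$ holds automatically at $\{\ell=0\}$, while the two remaining tangencies translate into: $C$ has a double root (this is $\tilde{C}_{1}\cdot\tilde{C}_{2}=2+1$) and $C-t_{1}^{2}\ell$ is a perfect cube (this is $\tilde{C}_{1}\cdot\tilde{E}_{0}=3$ at one point). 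I would then solve this finite system to exhibit one configuration, compute the residual automorphism group (the stabilizer of the distinguished fiber together with the compatible scalings), and verify that it has the same dimension as the solution set and acts transitively on it, giving uniqueness.

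The main obstacle in both parts is identical: after the reduction one must show that the tangency equations cut out a single orbit of the residual group, not merely a set of the expected dimension. In part \ref{item:Z_I} this amounts to analyzing the simultaneous "double root'' condition on $C$ and "perfect cube'' condition on $C-t_{1}^{2}\ell$ and checking transitivity; in part \ref{item:Z_J} it is the duality count that the order-$3$ contact accounts for exactly three of the four dual intersection points. Throughout I would also record the genericity checks implicit in the statement — that the points of $\tilde{C}_{1}\cap\tilde{C}_{2}$, the contact point of $\tilde{E}_{0}$ with each $\tilde{C}_{j}$, and the point $\tilde{C}_{1}\cap\tilde{\Delta}^{-}$ are pairwise distinct as required — so that the resulting divisor is exactly the configuration described, matching the data produced by the almost minimalization in Proposition~\ref{prop:F2_I}.
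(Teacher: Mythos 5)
Your part (a) is essentially a correct proof, and it takes a genuinely different route from the paper's. You normalize the two conics first (the pencil $\{b(y^{2}-xz)+yz=0\}$ and the transitive action of the residual torus on $b\in\C^{*}$ are both correct) and then produce $\tilde{E}_{0}$ as the residual common tangent via duality; the paper instead normalizes the triple consisting of $\tilde{C}_{1}$, the tangent line $\tilde{E}_{0}$ and the two marked points $p,p'$ (rigid, since a conic with an ordered triple of distinct points on it is projectively unique), and then finds $\tilde{C}_{2}$ as the unique smooth member tangent to $\tilde{E}_{0}$ in the pencil of conics cutting the divisor $3p'+p$ on $\tilde{C}_{1}$. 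Your route does rest on two assertions you only state: that contact order of smooth conics is preserved by duality (the determinant of the dual pencil only shows the dual pair is of type $(3,1)$ or $(4)$; type $(4)$ must be excluded, e.g.\ by checking that type-$(4)$ pairs dualize to type-$(4)$ pairs and using that duality is an involution), and that the residual common tangent touches the conics away from $p_{1},p_{2}$ (a one-line Bezout argument). Both are easy, but they belong in the proof.

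The genuine gap is in part (b). Your normalization ($\tilde{C}_{2}=\{w_{1}=0\}$, $\tilde{E}_{0}=\{w_{1}+t_{1}^{2}w_{0}=0\}$, $\tilde{C}_{1}=\{\ell w_{1}+Cw_{0}=0\}$) and the translation of the hypotheses into ``$C$ has a double root'' and ``$C-t_{1}^{2}\ell$ is a perfect cube'' are correct, but at that point you only announce that you ``would solve this finite system'' and ``verify'' transitivity of the residual group. That verification is not a deferrable check: it is the entire content of both existence and uniqueness, and, as you yourself observe, equality of dimensions of the solution set and of the residual group proves nothing. As written, part (b) is a plan, not a proof. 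The plan does succeed: after using the translations $t_{0}\mapsto t_{0}+\beta t_{1}$ to place the osculation point of $\tilde{E}_{0}$ and $\tilde{C}_{1}$ at $[0:1]$ (it is distinct from $[1:0]$, since otherwise it would lie on $\tilde{C}_{1}\cap\tilde{C}_{2}$), one has $C=\kappa t_{0}^{3}+t_{1}^{2}\ell$ with $\ell=\ell_{0}t_{0}+\ell_{1}t_{1}$, the double-root condition becomes the cuspidal cubic $4\ell_{0}^{3}+27\kappa\ell_{1}^{2}=0$ in $[\ell_{0}:\ell_{1}:\kappa]$, whose admissible locus ($\kappa\neq0$, $\ell\neq0$) is parameterized by $(\ell_{0},\ell_{1},\kappa)=(-3\beta^{2},2\beta^{3},1)$, $\beta\in\C^{*}$, and the remaining one-parameter group acts by $\beta\mapsto\alpha\beta$ --- a single orbit, with the representative $\beta=1$, i.e.\ $C=(t_{0}-t_{1})^{2}(t_{0}+2t_{1})$, exhibiting existence and all genericity requirements. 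None of this appears in your text, and it is exactly what you flagged as the ``main obstacle''. Note finally that the paper avoids the computation altogether: it deduces (b) from (a) by the elementary transformation $\P^{2}\dashrightarrow\F_{2}$ obtained by blowing up the transversal intersection point $p$ of the two conics and the infinitely near point on the tangent line to $\tilde{C}_{2}$ at $p$, then contracting that tangent line; this carries one configuration to the other (and back), which is both shorter and explains why the two cases of the lemma come as a pair.
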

\begin{proof}
	\ref{item:Z_J} Since up to a projective equivalence there exists a unique conic with an ordered triple of distinct points on it, the triple $(\tilde{C}_{1}+\tilde{E}_{0},p,p')$, where $\tilde{E}_{0}$ is a line tangent to a conic $\tilde{C}_1$ and $p,p'$ are distinct points of $\tilde{C}_1\setminus \tilde{E}_{0}$, is unique up to a projective equivalence, say, $\tilde{C}_{1}=\{x^{2}=yz\}$, $\tilde{E}_{0}=\{z=0\}$, $p=[1:1:1]$ and $p'=[0:0:1]$. The pencil of conics tangent to $\tilde{C}_{1}$ with multiplicity $3$ at $p'$ and passing through $p$ is given by $$\{\lambda (x^{2}-yz)=\mu y(y-x)\}_{[\lambda:\mu] \in \P^{1}}.$$ It contains a unique smooth member tangent to $\tilde{E}_{0}$, namely $\tilde{C}_{2}=\{x^{2}-yz=4y(y-x)\}$ .
	
	\ref{item:Z_I}  Let $\tilde{C}_{1}+\tilde{C}_{2}+\tilde{E}_{0}$ be as in \ref{item:Z_J}. Denote by $p$ the point where $\tilde{C}_{1}$ and $\tilde{C}_{2}$ meet transversally and by $\ll$ the line tangent to $\tilde{C}_{2}$ at $p$. Let $\theta\colon \P^{2}\map \F_{2}$ be a blow up at $p$ and its infinitely near point on the proper transform of $\ll$ followed by the contraction of the latter. Then $\theta_{*}(\tilde{C}_{1}+\tilde{C}_{2}+\tilde{E}_{0})$ is as in \ref{item:Z_I}, where $\theta_{*}\tilde{C}_{1}$, $\theta_{*}\tilde{C}_{2}$ and $\theta_{*}\tilde{E}_{0}$ correspond to $\tilde{C}_{1}$, $\tilde{E}_{0}$ and $\tilde{C}_{2}$, respectively. Conversely, let $\tilde{C}_{1}+\tilde{C}_{2}+\tilde{E}_{0}$ be as in \ref{item:Z_I}, let $p'$ be the point where $\tilde{C}_{1}$ and $\tilde{C}_{2}$ meet transversally and let $F$ be the fiber through $p'$. Let $\eta\colon \F_{2}\map \P^{2}$ be a blowup at $p'$ followed by the contraction of the proper transform of $F+\tilde{\Delta}^{-}$. Then $\eta_{*}(\tilde{C}_{1}+\tilde{C}_{2}+\tilde{E}_{0})$ is as in \ref{item:Z_J}, where $\eta_{*}\tilde{C}_{1}$, $\eta_{*}\tilde{C}_{2}$ and $\eta_{*}\tilde{E}_{0}$ correspond to $\tilde{C}_{1}$, $\tilde{E}_{0}$ and $\tilde{C}_{2}$, respectively. Clearly, $\eta$ and $\theta$ are inverse to each other. Hence, \ref{item:Z_I} follows from \ref{item:Z_J} and from the universal property of blowing up.
\end{proof}

\begin{prop}\label{prop:IJ_existence}
	Planar rational cuspidal curves of types $\cI$ and $\cJ(k)$, $k\geq 2$ exist and are unique up to a projective equivalence.
\end{prop}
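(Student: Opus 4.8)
The plan is to prove existence and uniqueness simultaneously, by \emph{reverting} the process of almost minimalization analysed in Section~\ref{sec:possible_HN-types} and feeding into it the rigid configurations produced in Lemma~\ref{lem:Z}. The two types sit on opposite sides of the dichotomy of that section. For $\cI$ the minimal model is singular, so by Proposition~\ref{prop:F2_I} the intermediate surface $(Z,\tfrac12 D_Z)$ of \eqref{eq:theta} is the pair $(\F_{2},\tilde{\Delta}^{-}+\tilde{C}_{1}+\tilde{C}_{2}+\tilde{E}_{0})$ of Lemma~\ref{lem:Z}\ref{item:Z_I}, under the identification $\tilde{\Delta}^{-}=\psi^{+}(\hat{\Delta}^{-})$, $\tilde{C}_{j}=\psi^{+}(C_{j})$ and $\tilde{E}_{0}=\psi^{+}(E_{0})$. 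For $\cJ(k)$ the minimal model is smooth, so choosing the model with $n=2$ (Proposition~\ref{prop:J}; the model with $n=3$ of Proposition~\ref{prop:n3} is an equally valid alternative, see Remark~\ref{rem:J_twice}) we have $\Delta_{n}^{-}=0$, $Z=X_{\min}\cong\P^{2}$, and $(X_{\min},D_{\min})$ is the configuration $(\P^{2},\tilde{C}_{1}+\tilde{C}_{2}+\tilde{E}_{0})$ of Lemma~\ref{lem:Z}\ref{item:Z_J}. In both cases $\psi^{+}$ is a composition of blow-ups whose centres were pinned down in the cited propositions: they are the distinguished singular points of $D_{Z}$ (the two points where $\psi^{+}(C_{1})$ and $\psi^{+}(C_{2})$ meet, of multiplicities $1$ and $2$, for $\cI$; the two intersection points $p_{1},p_{2}$ of the conics, of multiplicities $1$ and $3$, for $\cJ$), together with their prescribed infinitely near points. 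The key observation is that these centres are \emph{intrinsic} to the pair $(Z,D_{Z})$, being determined by the intersection multiplicities of the marked components with no residual freedom.

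For existence I would start from the configuration of Lemma~\ref{lem:Z}, which is known to exist, and blow up precisely at the intrinsic centres just described, iterating over the prescribed infinitely near points. By construction the resulting pair $(X_{0},D_{0})$ has the weighted dual graph drawn in Figure~\ref{fig:I} (resp.\ Figure~\ref{fig:J}); in particular $D_{0}-E_{0}=Q_{1}+Q_{2}$ is a disjoint union of two chains and/or forks, each of which contracts to a smooth point by Lemma~\ref{lem:shape_of_contractible_chains}. Contracting $D_{0}-E_{0}$ then yields a smooth rational surface of Picard rank one, hence $\P^{2}$, carrying a cuspidal curve $\bar{E}$ whose two cusps have exactly the multiplicity sequences prescribed for $\cI$ (resp.\ $\cJ(k)$), read off from the graphs of $Q_{1},Q_{2}$. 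One should then confirm $\deg\bar{E}$ and the value $E^{2}=-3$ using Lemma~\ref{lem:HN-equations}, matching the data already recorded in the propositions.

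For uniqueness I would run this correspondence backwards. Given an arbitrary curve $\bar{E}$ of type $\cI$ (resp.\ $\cJ(k)$), its minimal weak resolution $(X_{0},D_{0})$ is canonical, and the analysis of Section~\ref{sec:possible_HN-types} (Proposition~\ref{prop:F2_I}, resp.\ Proposition~\ref{prop:J}) shows that almost minimalization followed by $\alpha_{n}^{+}$ lands it on a pair $(Z,D_{Z})$ of the form in Lemma~\ref{lem:Z}. Since the centres of $\psi^{+}$ are intrinsic to $(Z,D_{Z})$ and $\pi_{0}$ is merely the contraction of $D_{0}-E_{0}$, the pair $(\P^{2},\bar{E})$ is recovered from $(Z,D_{Z})$ by a procedure depending only on the isomorphism class of $(Z,D_{Z})$. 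By Lemma~\ref{lem:Z} there is a single such class, and any isomorphism between two such configurations respects the marked components, hence carries intrinsic centres to intrinsic centres; it therefore lifts through the blow-ups to an isomorphism of the two copies of $(X_{0},D_{0})$, and descends, after contracting $D_{0}-E_{0}$, to a projective equivalence of the two curves. This gives uniqueness.

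The main obstacle, and the point demanding the most care, will be establishing that this reverse construction is genuinely canonical: that every blow-up centre and every infinitely near point needed to rebuild $(X_{0},D_{0})$ is uniquely determined by the intersection data of $(Z,D_{Z})$, so that the correspondence between projective-equivalence classes of curves and isomorphism classes of the rigid configurations of Lemma~\ref{lem:Z} is a true bijection with no hidden moduli. Concretely this amounts to checking that the tangency directions and contact orders forced by the multiplicity sequences single out the infinitely near centres, and that no alternative placement of the $(-2)$-twigs or of the almost log exceptional curves $A,A'$ is compatible with the prescribed graph. Once this rigidity is secured, both existence and uniqueness follow formally from Lemma~\ref{lem:Z}.
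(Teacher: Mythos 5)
Your strategy is the same as the paper's: construct the curves by blowing up the rigid configurations of Lemma~\ref{lem:Z} and contracting $D_0-E_0$, and prove uniqueness by showing that a curve of type $\cI$ or $\cJ(k)$ and the corresponding marked configuration $(Z,D_Z)$ determine each other. But what you defer as \enquote{the main obstacle} is exactly the mathematical content of the paper's proof, and your proposal does not supply it; as written, this is a genuine gap rather than a verification left to the reader.

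Two arguments are missing. First, canonicity of the passage $\bar E\mapsto (Z,D_Z)$. The almost minimalization is genuinely non-unique: for type $\cJ$ the paper exhibits two runs on the same surface with non-isomorphic outcomes ($n=2$, two conics plus a line, versus $n=3$, a conic inscribed in a triangle; Example~\ref{ex:different_weak}, Remark~\ref{rem:J_twice}), so \enquote{choosing the model with $n=2$} presupposes both that such a run exists for an \emph{arbitrary} curve of type $\cJ(k)$ and that its output is independent of the remaining choices; neither is addressed. The paper resolves this by characterizing the curves $A,A'$ intrinsically on $X_0$: their intersection numbers with every component of $D_0$ are prescribed, the classes of these components freely generate $\NS_{\Q}(X_0)$ (because $\P^{2}\setminus\bar E$ is $\Q$-acyclic), so the numerical classes of $A$ and $A'$ are uniquely determined, and since $A^{2}<0$ and $(A')^{2}<0$ the curves themselves are unique. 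Only after this is $\psi^{+}=\alpha_{2}^{+}\circ\psi$, with $\psi$ the contraction of $\Exc\psi_{A}+\Exc\psi_{A'}$, a well-defined invariant of $(\P^{2},\bar E)$. Second, the reverse direction needs the statement that there is a \emph{unique} sequence of blowups over $\tilde C_{1}\cap\tilde C_{2}$ whose reduced total transform of $D_Z$ has the weighted graph of $D_0+A+A'$; the paper proves this by induction, using that the preimage of each centre is a chain with a unique $(-1)$-curve meeting the proper transform of $D_Z$ only in tips, so at every step the next centre is forced to be the common point of a specific pair of components of the preimage of $D_Z$. Your appeal to \enquote{intrinsic centres \dots with no residual freedom} names this statement but does not prove it. A smaller inaccuracy: for type $\cI$ the divisor $Q_{2}$ is a fork, so Lemma~\ref{lem:shape_of_contractible_chains}, which concerns chains, does not by itself give the contractibility of $D_0-E_0$; the paper instead checks contractibility by computing how $\psi^{+}$ changes the self-intersections of the components of $D_Z$.
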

\begin{proof}
	For a construction of a curve of type $\cI$ let $(Z,D_{Z})$ be as in Lemma \ref{lem:Z}\ref{item:Z_I}, see Figure \ref{fig:I}. Write $\tilde{C}_{1}\cap\tilde{C}_{2}=\{p,p'\}$  where $(\tilde{C}_{1}\cdot \tilde{C}_{2})_{p}=2$, $(\tilde{C}_{1}\cdot \tilde{C}_{2})_{p'}=1$. Let $\alpha_{2}^{+}\colon X_{2}\to Z$ be the blowup at $p$ and its infinitely near point on the proper transform of $D_{Z}$. Put $\hat{C}_{j}\de (\alpha_{2}^{+})^{-1}_{*}\tilde{C}_{j}$, $j\in \{1,2\}$ and let $\Upsilon_{2}\subseteq X_{2}$ be the last exceptional curve of $\alpha_{2}^{+}$, so $\Upsilon_{2}^{2}=-1$ and  $\Upsilon_{2}+\hat{C}_{1}+\hat{C}_{2}$ is circular. Now let $\psi\colon X_{0}\to X_{2}$ be the composition of blowups over $\hat{C}_{1}\cap\hat{C}_{2}$, $\hat{C}_{1}\cap\Upsilon_{2}$ whose centers are double points of the subsequent preimages of $\Upsilon_{2}+\hat{C}_{1}+\hat{C}_{2}$ (so $\psi^{*}(\Upsilon_{2}+\hat{C}_{1}+\hat{C}_{2})\redd$ is circular, too), such that $\psi^{-1}(\hat{C}_{1}\cap\hat{C}_{2})\redd$ and  $\psi^{-1}(\hat{C}_{1}\cap\Upsilon_{2})\redd$ are chains $[1,2]$ and $[3,2,1,3]$, respectively, meeting $\psi^{-1}_{*}\hat{C}_{1}$ in their first tips.
			
	For a construction of a curve of type $\cJ(k)$, $k\geq 2$ let $(Z,D_{Z})$ be as in Lemma \ref{lem:Z}\ref{item:Z_J}, see Figure \ref{fig:J}. Write $\tilde{C}_{1}\cap\tilde{C}_{2}=\{p,p'\}$  where $(\tilde{C}_{1}\cdot \tilde{C}_{2})_{p}=1$, $(\tilde{C}_{1}\cdot \tilde{C}_{2})_{p'}=3$. Blow up three times at $p'$ and its infinitely near points on the proper transforms of $D_{Z}$ and denote this morphism by $\alpha_{2}^{+}\colon X_{2}\to Z$. As before, put $\hat{C}_{j}\de (\alpha_{2}^{+})^{-1}_{*}\tilde{C}_{j}$, $j\in \{1,2\}$ and denote by  $\Upsilon_{2}\subseteq X_{2}$ the last exceptional curve of $\alpha_{2}^{+}$. Now let $\psi\colon X_{0}\to X_{2}$ be the composition of blowups over $\hat{C}_{1}\cap\hat{C}_{2}$, $\Upsilon_{2}\cap\hat{C}_{2}$ and at double points of the subsequent preimages of $\Upsilon_{2}+\hat{C}_{1}+\hat{C}_{2}$, such that $\psi^{-1}(\hat{C}_{1}\cap\hat{C}_{2})\redd$ and  $\psi^{-1}(\Upsilon_{2}\cap\hat{C}_{2})\redd$ are chains $[(2)_{k-1},1,k+1]$ and $[1,(2)_{k-1}]$, respectively, meeting $\psi^{-1}_{*}\hat{C}_{2}$ in their last tips. 
	
	For both types put $\psi^{+}=\alpha_{2}^{+}\circ \psi$, $E_{0}=(\psi^{+})^{-1}_{*}\tilde{E}_{0}$ and write $((\psi^{+})^{*}D_{Z})\redd=D_{0}+A+A'$, where $A$ and $A'$ are the $(-1)$-curves in the preimages of $p$ and $p'$, respectively. Computing the changes of self-intersection numbers of the components of $D_{Z}$ we infer that connected components of $D_{0}-E_{0}$ contract to smooth points. The resulting surface has Picard rank $$\rho(X_{0})-\#(D_{0}-E_{0})=\rho(X_{0})-\#((\psi^{+})^{*}D_{Z})\redd+3=\rho(Z)-\#D_{Z}+3=1,$$ so it is $\P^{2}$. Looking at the weighted graph of $D_{0}$ we see that the image of $E_{0}$ is a cuspidal curve of type $\cI$ and $\cJ(k)$, respectively. Therefore, such curves do exist.
	
	Let $\bar{E}\subseteq \P^{2}$ be of type $\cI$ or $\cJ(k)$ for some $k\geq 2$. We prove the projective uniqueness of $\bar{E}$ in each case. As before, let $\pi_{0}\colon (X_{0},D_{0})\to(\P^{2},\bar{E})$ be the minimal weak resolution. We use Notation \ref{not:graphs} for the components of $D_{0}$.  By the results of Section \ref{sec:kappa} the surface $\P^{2}\setminus \bar{E}$ satisfies \eqref{eq:assumption}. By Lemma \ref{lem:beta_flat}\ref{item:n0} the pair $(X_{0},\tfrac{1}{2}D_{0})$ is not almost minimal. In Section \ref{sec:possible_HN-types} we have shown that for such curves we can run the almost MMP for $(X_{0},\tfrac{1}{2}D_{0})$ as in Propositions \ref{prop:F2_I} and \ref{prop:J}. In particular, $n=2$ and the proper transforms $A,A'\subseteq X_{0}$ of the contracted almost log exceptional curves are $(-1)$-curves such that $A\cdot D_{0}=A'\cdot D_{0}=2$ and 
\begin{enumerate}
	\item\label{item:AA'_I} if $\bar{E}$ is of type $\cI$ then $A$ meets $\ftip{T_{1}}$ and $\ftip{T_{2}}$, and $A'$ meets $C_{1}$ and the $(-2)$-tip of $Q_{2}$ meeting $C_{2}$, see Figure \ref{fig:I}.
	\item\label{item:AA'_J} if $\bar{E}$ is of type $\cJ$ then $A$ meets $\ftip{T_{1}'}$ and $\ftip{T_{2}}$, and $A'$ meets $\ltip{T_{1}}$ and $\ftip{T_{2}'}$, see {Figure \ref{fig:J}}.
\end{enumerate}
The above properties specify the intersection numbers of $A$ and $A'$ with all the components of $D_{0}$. Since the N\'eron-Severi group $\NS_{\Q}(X_{0})$ is generated freely by the classes of components of $D_{0}$, the numerical classes of $A$ and $A'$ are uniquely determined. Since $A^{2}<0$ and $(A')^{2}<0$, the curves $A$, $A'$ are unique.

Now we prove that a morphism $\psi^{+}=\alpha_{2}^{+}\circ\psi\colon (X_{0},D_{0})\to (Z,D_{Z})$ as in Section \ref{sec:possible_HN-types}, see Figures \ref{fig:I} and \ref{fig:J}, is uniquely determined by $\bar{E}\subseteq \P^{2}$.  We check directly that in our cases $\Exc\psi_{A}$ and $\Exc\psi_{A'}$ are disjoint (see Lemmas \ref{lem:kappa=2} and \ref{lem:ale_pr-tr}\ref{item:Exc-psi_i-disjoint}). Let $\psi\colon (X_0,D_0)\to (X_2,D_2)$ be the contraction of $\Exc\psi_{A}+\Exc\psi_{A'}$. The morphism $\psi$ is uniquely determined by $(X_{0},D_{0}+A+A')$, see Definition \ref{def:psi_A}, hence by $(X_{0},D_{0})$, and in consequence by $\bar{E}\subseteq \P^{2}$. Let $\Upsilon_{2}+\Delta_{2}^{+}\subseteq X_{2}$ be as in Notation \ref{not:MMP} (see Figure \ref{fig:Upsilon}). Let $\alpha_{2}^{+}\colon (X_{2},D_{2})\to (Z,D_{Z})$ be the contraction of $\Upsilon_{2}+\Delta_{2}^{+}$ (in our cases $\Upsilon_{2}$ is a unique $(-1)$-curve in $D_{2}$ and $\Delta_{2}^{+}$ is a unique $(-2)$-twig of $D_{2}$ meeting $\Upsilon_{2}$). We check that for  $\bar{E}$ of type $\cI$ or $\cJ(k)$ the pair $(Z,D_{Z})$ is as in Lemma  \ref{lem:Z}\ref{item:Z_I} or \ref{item:Z_J}, respectively (see Propositions \ref{prop:F2_I} and \ref{prop:J}). Therefore, we have shown that a curve $\bar{E}\subseteq \P^{2}$ of type $\cI$ or $\cJ(k)$ uniquely determines, via $\psi^{+}=\alpha_{2}^{+}\circ\psi$, a pair $(Z,D_{Z})$ as in Lemma \ref{lem:Z}\ref{item:Z_I} or \ref{item:Z_J}, respectively, together with a pair of points $p,p'\in\tilde{C}_{1}\cap\tilde{C}_{2}$ which are centers of $\psi^{+}$.

Conversely, given a pair $(Z,D_{Z})$ as in Lemma \ref{lem:Z}\ref{item:Z_I} or \ref{item:Z_J}, there is a unique sequence of blowups over $\tilde{C}_{1}\cap\tilde{C}_{2}=\{p,p'\}$ such that the weighted graph of the total transform of $D_{Z}$ is the same as that of $D_{0}+A+A'$, where $D_{0}$ is as required for the minimal weak resolution of $\bar{E}\subseteq \P^{2}$ of type $\cI$ or $\cJ(k)$, respectively, and $A+A'$ is as in \ref{item:AA'_I} and \ref{item:AA'_J}. Indeed, since the preimage of each of these points is a chain which has a unique $(-1)$-curve and meets the proper transform of $D_{Z}$ in tips, we see by induction that the center of each blowup is uniquely determined as the common point of a specific pair of components of the preimage of $D_{Z}$. Therefore, we have  a one-to-one correspondence between the isomorphism classes of pairs $(Z,D_{Z})$ and $(X_{0},D_{0})$, and consequently, of pairs $(\P^{2},\bar{E})$ of respective type. Thus the projective uniqueness of $\bar{E}$ follows from the uniqueness of $(Z,D_{Z})$ proved in Lemma \ref{lem:Z}.
\end{proof}

\begin{rem}[An alternative way to obtain $A$ and $A'$]\label{rem:AA'}
In the above proof of the projective uniqueness of $\bar{E}\subseteq \P^{2}$ the key role is played by the curves  $A,A'\subseteq X_{0}$ used to reconstruct the process $\psi$ of almost minimalization. We have obtained them using the description of $\psi$ from Section \ref{sec:possible_HN-types}. Below we sketch how to find them directly using the geometry of $\bar{E}\subseteq \P^{2}$. As we will see, the curve $\pi_{0}(A)$ for type $\cI$ is a line and $\pi_{0}(A)$, $\pi_{0}(A')$ for type $\cJ$ are conics. But for type $\cI$ the curve $\pi_{0}(A')$ is a specific quartic, more difficult to see directly on $\P^{2}$.

\ref{item:AA'_I} (Type $\cI$, see Figure \ref{fig:I}). The curve $A$ is the proper transform of the line $\ll_{12}$ from Lemma \ref{lem:special_lines}\ref{item:l}. In order to construct $A'$ let $\theta$ be the contraction of $A$ and of all new $(-1)$-curves in the subsequent images of $D_{0}$ followed by the blowup at the image of $C_{2}\cap E_{0}$ and its infinitely near point on the proper transform of $E_{0}$. Let $V$ and $C_{2}\s$ be, respectively, the first and the second exceptional curve over $C_{2}\cap E_{0}$. Put $\hat{D}_{0}\de \theta_{*}D_{0}+V+C_{2}\s$. We have $(\theta_{*}E_{0})^{2}=E^{2}+\tau_{1}=0$, so $|\theta_{*}E_{0}|$ induces a $\P^{1}$-fibration, see Figure \ref{fig:I_fibration}. 
\begin{figure}[ht]
	\includegraphics[scale=0.3]{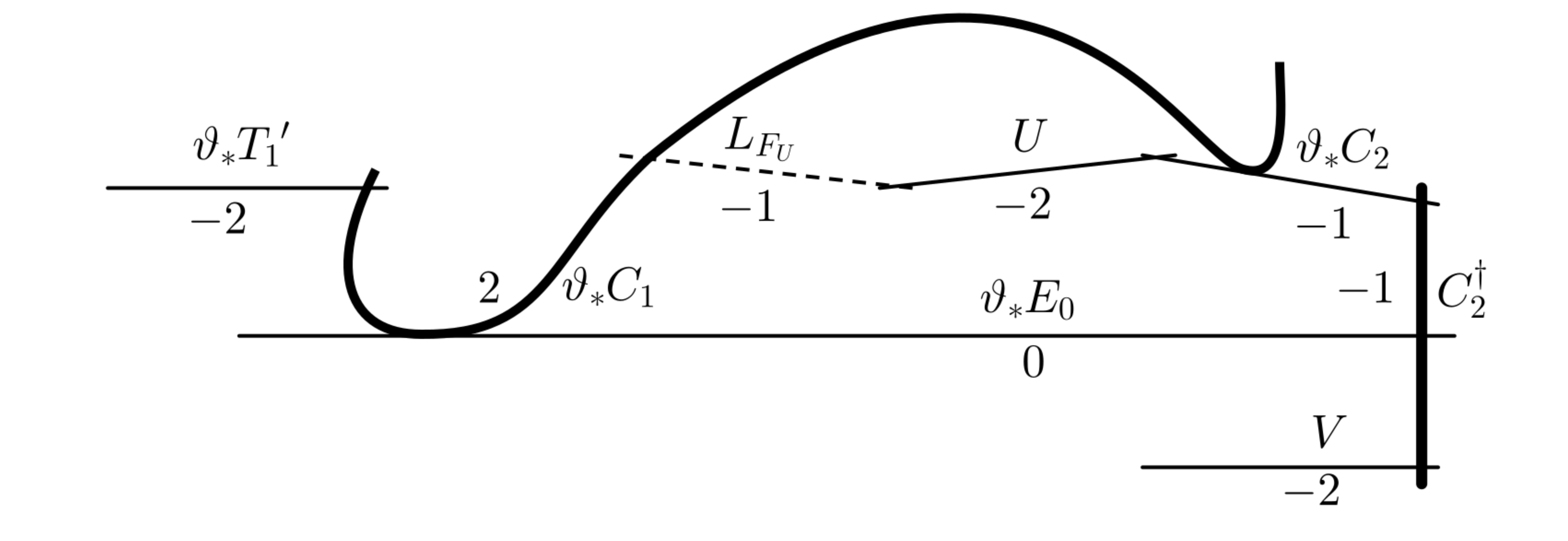}
	\caption{The $\P^{1}$-fibration from Remark \ref{rem:AA'}\ref{item:AA'_I}.}
		\label{fig:I_fibration}
\end{figure}
We use Notation \ref{not:fibrations_h_and_nu} for this $\P^{1}$-fibration. The horizontal part of $\hat{D}_{0}$ consists of a $3$-section $\theta_{*}C_{1}$ and a $1$-section $C_{2}\s$. The divisor $(\hat{D}_{0}-\theta_{*}E_{0})\vert$ has three connected components, namely $V=[2]$, $\theta_{*}T_{1}'=[2]$ and a chain $[2,1]$. The first component, say $U$, of the latter chain is a $(-2)$-tip of $\hat{D}_{0}$. The second one equals $\theta_{*}C_{2}$ and meets $\theta_{*}C_{1}$ and $C_{2}\s$ with multiplicities $2$ and $1$, respectively. In particular, the multiplicity $\mu(\theta_{*}C_{2})$ of $\theta_{*}C_{2}$ in the fiber equals $\mu(\theta_{*}C_{2})\theta_{*}C_{2}\cdot C_{2}\s\leq \theta_{*}E_{0} \cdot C_{2}\s=1$, so  $\mu(\theta_{*}C_{2})=1$. Denote by $F_{V}$ and $F_{U}$ the fibers containing $V$ and $U$, respectively. Lemma \ref{lem:fibrations-Sigma-chi} implies that every fiber $F\neq \theta_{*}E_{0}$ has a unique component, say $L_{F}$, not contained in $(\hat{D}_{0})\vert$. We have $\theta_{*}C_{2}\not\subseteq F_{V}$, because $V$ and $\theta_{*}C_{2}$ meet the same $1$-section $C_{2}\s$. Since $F_{V}\neq V+L_{F_{V}}$, it follows that $F_{V}$ contains $\theta_{*}T_{1}'$, so $F_{U}\wedge (\hat{D}_{0})\vert$ is connected. Because $\mu(\theta_{*}C_{2})=1$, $\theta_{*}C_{2}$ is a tip of $F_{U}$, so the contractibility of $F_{U}$ to a $0$-curve implies that $L_{F_{U}}^{2}=-1$ and that $L_{F_{U}}$ meets $(\hat{D}_{0})\vert$ only in $U$. Moreover, $$L_{F_{U}}\cdot C_{2}\s=1-(F_{U}-L_{F_{U}})\cdot C_{2}\s=0\text{\ \ and\ \ }L_{F_{U}}\cdot \theta_{*}C_{1}=3-(F_{U}-L_{F_{U}})\cdot \theta_{*}C_{1}=1,$$ so $A'\de \theta^{-1}_{*}L_{F_{U}}$ satisfies the required conditions.

\ref{item:AA'_J} (Type $\cJ(k)$, $k\geq 2$, see Figure \ref{fig:J}). Consider the decomposition $\pi_{0}=\theta\circ \eta$, where $\theta$ is a composition of a blow up at $q_{2}$ and three blowups over $q_{1}$ and its infinitely near points on the proper transforms of $\bar{E}$. Let $\ll_{1},\ll_{12}\subseteq \P^{2}$ be as in Lemma \ref{lem:special_lines}. Put  $L_{1}=\theta^{-1}_{*}\ll_{1}$, $L_{12}=\theta^{-1}_{*}\ll_{12}$, denote by $V_{j}$ the last exceptional curve of $\theta^{-1}$ over $q_{j}$, $j\in \{1,2\}$ and put $W=\Exc\theta-V_{1}-V_{2}$. Looking at the multiplicity sequences of $q_{1},q_{2}\in \bar{E}$ listed in Table \ref{table:nofibrations}, we show that for $j\in \{1,2\}$ the curve $\theta^{-1}_{*}\bar{E}$ has a cusp $q_{j}'\in V_{j}\setminus W$ with multiplicity sequence $(2)_{k}$. Recall from Lemma \ref{lem:special_lines}\ref{item:t} that $\ll_{1}$ meets $\bar{E}\setminus \{q_{1}\}$ once and transversally. We have $\deg\bar{E}=\mu_{1}+\mu_{2}+1$ and $\mu_{2}'=2>1$ (see Table \ref{table:nofibrations}), so arguing as in the proof of Lemma \ref{lem:special_lines} we show that $\ll_{12}$ meets $\bar{E}\setminus \{q_{1},q_{2}\}$ once and transversally, too. It follows that $L_{1}\cdot \theta^{-1}_{*}\bar{E}=L_{12}\cdot\theta^{-1}_{*}\bar{E}=1$. Since $W\cdot \theta^{-1}_{*}\bar{E}=0$, the linear system of $L_{1}+W+L_{12}=[1,2,2,1]$ induces a $\P^{1}$-fibration such that $\theta^{-1}_{*}\bar{E}$ is a $2$-section. Lemma \ref{lem:fibrations-Sigma-chi} implies that $L_{1}+W+L_{12}$ is the unique degenerate fiber. In particular, for $j\in \{1,2\}$ the fiber $F_{j}$ passing through $q_{j}'$ is smooth. Now $F_{j}\cdot W=0$, $F_{j}\cdot V_{j'}=1$ for $j'\in \{1,2\}$ and $F_{j}$ meets $\theta^{-1}_{*}\bar{E}$ only in $q_{j}'$, with multiplicity $2$. It follows that $\eta$ touches $\eta^{-1}_{*}F_{j}$ exactly once. Then $(\eta^{-1}_{*}F_{j})^{2}=-1$ and we see that $A\de\eta^{-1}_{*}F_{1}$, $A'\de\eta^{-1}_{*}F_{2}$ satisfy the required conditions.
\end{rem}

\begin{rem}[A new proof of existence and uniqueness of curves $\FZb$, $\FE$ and $\cH$] The above procedure can also be applied to construct rational cuspidal curves of types $\FZb$, $\FE$ and $\cH$ and to give a geometric proof of their projective uniqueness. We sketch the argument, leaving the details to the reader. First, one shows that the following configurations $(Z, D_{Z})$ are unique up to an isomorphism of pairs:
\begin{enumerate}
	\item\label{item:Z_FZb} $(\P^{2},\tilde{E}_{0}+\tilde{C}_{1}+\tilde{C}_{2})$, where $\tilde{E}_{0}$ is a cuspidal cubic and $\tilde{C}_{j}$, $j\in \{1,2\}$ is a line tangent to $\tilde{E}_{0}$ with multiplicity $j+1$, see Figure \ref{fig:FZb}.
	\item\label{item:Z_FE} $(\F_{2},\tilde{\Delta}^{-}+\tilde{C}_{1}+\tilde{C}_{2}+\tilde{E}_{0})$, where $\tilde{C}_{1}$ is a fiber of the unique $\P^{1}$-fibration of $\F_{2}$,
	$\tilde{C}_{2}$ is a $1$-section and $\tilde{E}_{0}$ is a rational cuspidal $2$-section such that $\tilde{C}_{2}+\tilde{E}_{0}$ is disjoint from $\tilde{\Delta}^{-}$, $\tilde{E}_{0}$ is tangent to $\tilde{C}_{1}$ off $\tilde{C}_{2}$ and meets $\tilde{C}_{2}$ with multiplicities $3$, $1$, see Figure \ref{fig:FE}.
	\item\label{item:Z_H} $(\P^{2},\tilde{E}_{0}+\tilde{C}_{1}+\tilde{C}_{2})$, where $\tilde{E}_{0}$, $\tilde{C}_{1}$ are conics meeting with multiplicities $3$, $1$ and $\tilde{C}_{2}$ is a line tangent to $\tilde{E}_{0}$ and $\tilde{C}_{1}$ off $\tilde{E}_{0}\cap \tilde{C}_{1}$, see Figure \ref{fig:H} (cf.\ Lemma \ref{lem:Z}\ref{item:Z_J}).	
\end{enumerate}	
For a construction of curves of type $\FZb$, $\FE$ or $\cH$ let $(Z,D_{Z})$ be as in \ref{item:Z_FZb}, \ref{item:Z_FE} or \ref{item:Z_H}, respectively. As in the proof of Proposition \ref{prop:IJ_existence}, we see that it is possible to choose a morphism $\psi^{+}\colon (X_{0},D_{0}+A+A')\to (Z,D_{Z})$, with weighted graphs as in Figures \ref{fig:FZb}, \ref{fig:FE} and \ref{fig:H}, respectively. Then the connected components of $D_{0}-(\psi^{+})^{-1}_{*}\tilde{E}_{0}$ contract to points on $\P^{2}$ and the image of $(\psi^{+})^{-1}_{*}\tilde{E}_{0}$ is a rational cuspidal curve of the respective type.

To see the projective uniqueness let $\bar{E}\subseteq \P^{2}$ be of type $\FZb$, $\FE$ or $\cH$. Then there exist unique $(-1)$-curves $A,A'\subseteq X_{0}$ such that $A\cdot D_{0}=A'\cdot D_{0}=2$, $A$ meets $\ftip{T_{1}}$ and $\ftip{T_{2}}$ and $A'$ meets $\ftip{T_{1}'}$ and $E_{0}$. Indeed, $A$ and $A'$ can be taken as the proper transforms of the lines $\ll_{12}$ and $\ll_{1}$ from Lemma \ref{lem:special_lines}, respectively. Again, since the numerical classes of $A$, $A'$ are uniquely determined by their intersection numbers with the components of $D_{0}$, we infer from the inequalities $A^{2}<0$, $(A')^{2}<0$ that $A$, $A'$ are unique. We check that $\psi^{+}\colon (X_{0},D_{0})\to (Z,D_{Z})$ defines a bijection between the classes of projective equivalence of $\bar{E}\subseteq \P^{2}$ and the isomorphism classes of pairs $(Z,D_{Z})$. Therefore, in each case $\bar{E}\subseteq\P^{2}$ is unique up to a projective equivalence.
\end{rem}

\begin{rem}[Other proofs of existence for types $\cI$ and $\cJ$]\label{rem:IJ}\ 
	\begin{enumerate}
		\item\label{item:BZ_IJ} A curve of type $\cI$ can also be obtained as the closure of the embedding $\C^{*}\into \C^{2}$ given by \cite[(t)]{BoZo-annuli} via the standard embedding $\C^{2}\ni (x,y)\mapsto [x:y:1]\in \P^{2}$. A curve of type $\cJ(k)$ for $k\geq 2$ can be obtained as a closure of the singular embedding $\C^{*}\to\C^{2}$ given by \cite[(n)]{BoZo-annuli}, with parameters $(l,m,n)$ equal to $(1,k+1,1)$. But for this one needs to use a very specific embedding $\C^{2}\into \P^{2}$, given by
		\begin{equation*}
		(x,y)\mapsto [x-y:4x-(x-y)^2:1].
		\end{equation*}
		\item\label{item:Bodnar_IJ}  M. Zaidenberg informed us that the existence of curves of type $\cJ(k)$ was most likely known to T.\ tom Dieck, who listed their multiplicity sequences in his private correspondence with H.\ Flenner in 1995. An inductive construction of this series by Cremona maps was given recently by J.\ Bodnár \cite[Theorem 3.1(c)]{Bodnar_type_G_and_J}.
		\item\label{item:KP_I} The classification of proper smooth embeddings  $\C^{*}\into \C^{2}$ will be established in a forthcoming article \cite{KoPa-SporadicCstar2}. The most exceptional case is the curve of type $\cI$. Together with  \cite[Theorem 1.3]{PaPe_Cstst-fibrations_singularities} one gets the following characterization. A rational cuspidal curve $\bar{E}\subseteq \P^{2}$ with a complement of log general type is of type $\cI$ if and only if the surface $\P^{2}\setminus \bar{E}$ admits no $\C^{**}$-fibration and $\bar{E}$ is the closure of the image of a smooth proper embedding $\C^{*}\into \C^{2}$ with no good asymptote.
	\end{enumerate}
\end{rem}

\smallskip
\subsection{A direct proof of nonexistence of $\C^{**}$-fibrations.}\label{sec:no_Cstst}

The proof of nonexistence of $\C^{**}$-fibrations of $\P^{2}\setminus \bar{E}$ given in the beginning of Section \ref{sec:existence} relies on the classification result \cite[Theorem 1.3]{PaPe_Cstst-fibrations_singularities}. Here we give a direct proof of this fact.

\setcounter{claim}{0}
\begin{prop}[Complements are not $\C^{**}$-fibered]\label{prop:no_Cstst}
	If $\bar{E}\subseteq \P^{2}$ is a rational cuspidal curve of one of the types listed in Definition \ref{def:our_curves} then $\P^{2}\setminus \bar{E}$ admits no $\C^{**}$-fibration.
\end{prop}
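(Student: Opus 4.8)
The plan is to argue by contradiction, turning a hypothetical $\C^{**}$-fibration of $\P^{2}\setminus\bar E$ into a $\P^{1}$-fibration of the minimal log resolution and then showing that the explicit shape of $D$ (Figures \ref{fig:FE}--\ref{fig:J}) is incompatible with it. First I would assume $\P^{2}\setminus\bar E$ carries a $\C^{**}$-fibration. As in the proof of Lemma \ref{lem:kappa=2}, and using \cite[Proposition 4.2]{PaPe_Cstst-fibrations_singularities}, one may choose it without base points on $X$, so that it extends to a $\P^{1}$-fibration $p\colon X\to\P^{1}$ whose general fiber $F$ meets $D$ transversally in exactly three reduced points, that is, $F\cdot D=3$ with $D\hor\cdot F=3$, the three points being the punctures of the general $\C^{**}$. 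Since $X\setminus D=\P^{2}\setminus\bar E$ is $\Q$-acyclic we have $\rho(X)=\#D$, so Lemma \ref{lem:fibrations-Sigma-chi} reduces to the counting identity
\begin{equation*}
\#D\hor+\nu=2+\sum_{b\in B^{*}}(\sigma(F_{b})-1),
\end{equation*}
in the notation of \ref{not:fibrations_h_and_nu}, where general fibers contribute nothing and each degenerate fiber contributes a nonnegative amount.

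The second step is to locate the distinguished components of $D$ relative to $p$. Recall that $D$ is a rational tree, that $E$ is a component with $E^{2}\leq -3$, and that $E$ meets each $(-1)$-curve $C_{j}'$, $j\in\{1,\dots,c\}$, so that $\beta_{D}(E)=c\in\{2,3,4\}$. By Lemma \ref{lem:singular_P1-fibers} the $(-1)$-curves inside a degenerate fiber are non-branching in the reduced fiber, which strongly restricts where the $C_{j}'$ and the other $(-1)$-curves visible in the figures may sit (as fiber tips, as sections, or forbidden as branching vertical $(-1)$-curves). I would then show that $E$ is horizontal: were $E$ vertical it would be a component of a degenerate fiber $F_{E}$, and its tree structure together with $E^{2}\leq -3$, $\beta_{D}(E)\geq 2$ and the contractibility of $F_{E}$ to a $0$-curve (Lemma \ref{lem:singular_P1-fibers}) forces a configuration that does not occur in the given graphs. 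Once $E$ is horizontal, $D\hor\cdot F=3$ leaves only a few possibilities for the remaining horizontal components (certain $C_{j}'$ or maximal-twig tips), all of which can be read off from the explicit weighted graphs.

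Finally I would run through the types of Definition \ref{def:our_curves} — grouping $\Qb,\Qa$ (where $q_{1}$ has multiplicity $2$), then $\FZb,\FE,\cH$, then $\cI,\cJ$ — and in each case combine the displayed identity with the admissible horizontal part and with the shapes of degenerate fibers forced by Lemma \ref{lem:singular_P1-fibers}. Because the graph of $D$ is completely known, the quantities $\#D\hor$, $\nu$ and the available degenerate fibers are determined up to small finite ambiguity, so that the requirement $\#D\hor+\nu=2+\sum_{b}(\sigma(F_{b})-1)$ cannot be met; equivalently, the vertical part of $D$ can never be assembled into fibers that account for $\rho(X)=\#D$ while keeping $F\cdot D=3$. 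This produces the desired contradiction and rules out the $\C^{**}$-fibration.

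I expect the main obstacle to be the bookkeeping of the degenerate fibers — namely controlling the correction term $\sum_{b}(\sigma(F_{b})-1)$ while simultaneously excluding the possibility that $E$ is vertical. These are precisely the steps that are genuinely type-dependent, where the snc tree structure of $D$ and the constraint $F\cdot D=3$ must be pushed hardest. A secondary, more technical point is the reduction to a base-point-free $\P^{1}$-fibration on $X$, which must be carried out so that the equalities $\rho(X)=\#D$ and $F\cdot D=3$ remain valid on the model actually used.
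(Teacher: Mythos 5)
Your proposal follows essentially the same route as the paper's proof of Proposition \ref{prop:no_Cstst}: assume a $\C^{**}$-fibration, extend it to a base-point-free $\P^{1}$-fibration on $X$ with $F\cdot D=3$, rule out $E$ being vertical via Lemma \ref{lem:singular_P1-fibers} and the counting identity of Lemma \ref{lem:fibrations-Sigma-chi}, and then contradict the explicit weighted graphs of the types in Definition \ref{def:our_curves}. The one organizational refinement in the paper that you should note is that it sharpens \enquote{$E$ is horizontal} to \enquote{$E$ is a $1$-section} (Claim \ref{cl:E-section}, using the Hurwitz formula), which forces $c=2$ and two vertical-or-section alternatives for the $C_{j}'$, so the final contradiction is obtained uniformly for types $\cH$, $\cI$, $\cJ$ from the branching component of $Q_{1}'$ rather than by the heavier type-by-type fiber bookkeeping you anticipate.
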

\begin{proof}
	Suppose that $\P^{2}\setminus \bar{E}$ has a $\C^{**}$-fibration. By \cite[Proposition 3.3]{PaPe_Cstst-fibrations_singularities} we can choose one with no base point on $X$, where $\pi\colon (X,D)\to (\P^{2},\bar{E})$ is the minimal log resolution. Then this fibration extends to a $\P^{1}$-fibration $X\to \P^{1}$ such that $F\cdot D=3$ for every fiber $F$. We use Notation \ref{not:fibrations_h_and_nu} and for the components of $D$ we use the notation introduced at the beginning of Section \ref{sec:existence}. In particular, $E=\pi^{-1}_{*}\bar{E}$, $Q_{j}'=\pi^{-1}(q_{j})\redd$ for $j\in \{1,\dots, c\}$ and $C_{j}'$ is the last component of $Q_{j}'$, that is, the unique $(-1)$-curve in $Q_{j}'$. Recall also that for types listed in Definition \ref{def:our_curves} we have $c\in \{2,3,4\}$ and $E^{2}\leq -3$, see Table \ref{table:nofibrations}.
	
	\begin{claim}\label{cl:E+C_j'-not-vertical}
		For every $j\in \{1,\dots, c\}$ the divisor $E+C_{j}'$ is not vertical.
	\end{claim}
	\begin{proof}
	Suppose that $E+C_{j}'$ is contained in some fiber, say, $F_{E}$. Because $\beta_{D}(C_{j}')=3$ and because fibers of $\P^{1}$-fibrations contain no branching $(-1)$-curves, we have $C_{j}'\cdot D\hor\geq 1$, and if the equality holds then $C_{j}'$ is not a tip of $F_{E}$, so its multiplicity $\mu(C_{j}')$ in $F_{E}$ is at least $2$. In any case, we get $\mu(C_{j}')C_{j}'\cdot D\hor\geq 2$. It follows that $C_{j'}'$ is horizontal for $j'\neq j$ and 
	\begin{equation*}
	\mu(C_{j}')C_{j}'\cdot D\hor \leq (F_{E}-E)\cdot D\hor= 3-(c-1)\leq 2,
	\end{equation*}
	so the equalities hold. In particular, $c=2$ and $D\hor$ meets $F_{E}$ only in $C_{j}'+E$. Since $D$ is connected, every connected component of $D\vert$ meets $D\hor$, so the set $F_{E}\cap D$ is connected. As a consequence, $F_{E}\subseteq D$: indeed, if $F_{E}$ has a component $L\not\subseteq D$ then $L\cdot D\hor=0$ and, since $F_{E}$ is a rational tree, $L\cdot D\vert=1$ and $L\cong \P^{1}$, contrary to Lemma \ref{lem:Qhp_has_no_lines}. We have $E\cdot (D\vert-C_{j}')=0$. If $C_{j}'$ is a tip of $F_{E}$ then $F_{E}=C_{j}'+E=[1,1]$ and if $C_{j}'$ is not a tip of $F_{E}$ then $\mu(C_{j}')=2$ and, since $C_{j}'$ is the unique $(-1)$-curve in $F_{E}$, $F_{E}=[2,1,2]$. In both cases we get a contradiction with $E^{2}\leq -3$.
\end{proof}

	\begin{claim}\label{cl:E-horizontal}
	The curve $E$ is horizontal and $D$ contains no fiber.
	\end{claim}
\begin{proof}
	Suppose that $E$ is contained in some fiber, say $F_{E}$. It follows from Claim \ref{cl:E+C_j'-not-vertical} that $C_{1}'+\dots+C_{c}'$ is horizontal, so $D\vert$ has no $(-1)$- or $0$-curves. In particular, $D$ contains no fiber. Recall that $E$ meets $D-E$ only in $C_{1}',\dots, C_{c}'$. As a consequence, $3=F_{E}\cdot D\hor \geq \mu(E) E\cdot D\hor = \mu(E)c\geq 2\mu(E)$, so $\mu(E)=1$ and $(F_{E}-E)\cdot D\hor=3-c$. The connectedness of $D$ gives $$b_{0}((F_{E}-E)\cap D)\leq (F_{E}-E)\cdot D\hor=3-c\leq 1,$$ so $(F_{E}-E)\cap D$ is connected. For every component $L$ of $F_{E}$ not contained in $D\vert$ we have $L\cong \P^{1}$, so $L\cdot D\geq 2$ by Lemma \ref{lem:Qhp_has_no_lines}. Because the fiber $F_{E}$ has no loops, such $L$ is unique and meets $D_{0}$ in $E$ and in $(F_{E}-E)\cap D$. In particular, the latter set is nonempty, so the above inequalities imply that $c=2$. There is no $(-1)$-curve in $D\vert$, so $L$ is a unique $(-1)$-curve in $F_E$. In particular, $\mu(L)\geq 2$. Both connected components of $F_{E}\cap D$ meet sections in $D$, so they contain components of multiplicity $1$. Lemma \ref{lem:singular_P1-fibers}\ref{item:adjoint_chain} implies that $F_{E}$ is a chain of type $[\gamma,1,(2)_{\gamma-1}]$, where $\gamma=-E^{2}\geq 3$, and meets $D\hor$ in tips. Thus $D$ contains a twig $V=[(2)_{\gamma-1}]$ and $L$ meets $\ftip{V}$. Because $c=2$, $\bar{E}$ is of type $\cH$, $\cI$ or $\cJ$. In the first case (see Figure \ref{fig:H}) the existence of $V$ implies that $\gamma=3$ and that $V$ meets $C_{1}'$, so $\pi(L)^{2}=6$; a contradiction. In the second case (see Figure \ref{fig:I}) we get that $V$ meets $C_{1}'$ and $\pi(L)^{2}=10$; a contradiction. In the third case (see Figure \ref{fig:J}) $\ftip{V}$ is either the first component of $Q_{1}'$, or, if $k\geq 3$, the fourth component of $Q_{1}'$ or the second component of $Q_{2}'$. We get $\pi(L)^{2}=0,3,1$, respectively, so the last case holds. Then $\pi(L)$ is a line and $\deg\bar{E}=(\pi(L)\cdot\bar{E})_{q_{2}}+1=2k+3$. But $\deg\bar{E}=4k+1$, so $k=1$; a contradiction. The second assertion follows because the intersection matrix of $D\vert\subseteq D-E$ is negative definite.
\end{proof}

\begin{claim}
	\label{cl:E-section}
	The curve $E$ is a $1$-section.
\end{claim}
\begin{proof}
	Claim \ref{cl:E-horizontal} and Lemma \ref{lem:fibrations-Sigma-chi} imply that $E$ is not a $3$-section. Suppose that $E$ is a $2$-section. Then $H\de D\hor - E$ is a section contained in, say, $Q_{j_0}'$ for some $j_{0}\in\{1,\dots, c\}$. Consequently, $Q_{j}'$ is vertical for every $j\neq j_{0}$. The restriction $p|_{E}\colon E\to \P^{1}$ has degree $2$ and is ramified at every $E\cap C_{j}'$ for $j\neq j_{0}$, so the Hurwitz formula gives $c\leq 3$. For $j\neq j_{0}$ let $F_{j}$ be the fiber containing $Q_{j}'$ and let $\mu_{j}$ be the multiplicity of $q_{j}\in\bar{E}$. Because our $\P^{1}$-fibration factors through the contraction of $Q_{j}'$, say $\pi_{j}$, the divisor $F_{j}-\pi_{j}^{-1}(q_{j})$ is effective, so the projection formula gives $2\leq \mu_{j}=\pi^{-1}_{j}(q_{j})\cdot E\leq F_{j}\cdot E=2$. It follows that $\mu_{j}=2$ and $F_{j}$ meets $E$ only in $Q_{j}'$, so $C_{j'}'\not \subseteq F_{j}$ for $j'\neq j$. Because $\bar{E}$ has at most three cusps and at most one of them is not semi-ordinary, $\bar{E}$ is of type $\Qb$ (see Table \ref{table:nofibrations}) and we may assume that $j_{0}=1$. By Lemma \ref{lem:fibrations-Sigma-chi} for $j\in \{2,3\}$ there is a unique component of $F_{j}$ not contained in $D\vert$, say $L_{j}$. Then $$2\geq \mu(\pi_{j}(L_{j})) \pi_{j}(L_{j})\cdot\pi_{j}(E)\geq \mu(L_{j})\mu_{j}=2\mu(L_{j}),$$ so equalities hold. In particular, $\mu(L_{j})=1$ and $\pi_{j}(L_{j})$ is not tangent to $\pi_{j}(E_{j})$, so $L_{j}$ meets the first component of $Q_{j}'$. The image of $F_{j}-L_{j}$ contains no $(-1)$-curves, so $\pi_{j}(F_{j})=[0]$, hence $L_{j}^{2}=-1$ and $(F_{j})\redd=L_{j}+Q_{j}'$. We have $H\subseteq Q_{1}'=[2,3,1,2]$. The connected components of $Q_{1}-H$ are contained in different fibers, other than $F_{2}$ and $F_{3}$. It follows that $H$ is a tip of $Q_{1}'$, for otherwise one of those connected components is a $(-2)$-curve and the fiber containing it has at least two components not contained in $D\vert$, contrary to Lemma \ref{lem:fibrations-Sigma-chi}. Recall that $L_{2}$ satisfies $L_{2}^{2}=-1$, $L_{2}\cdot D=2$ and $L_{2}$ meets $D$ in $H$ and in the first component of $Q_{2}'$. It follows that $H$ is in fact the first component of $Q_{1}'$. Indeed, otherwise the contractions of $Q_{1}'$ and $Q_{2}'$ touch $L_{2}$ three times and once, respectively, so $\pi(L_{2})^{2}=3$, which is impossible. Therefore, $\pi(L_{2})^{2}=1$, so $\pi(L_{2})$ is a line and $$5=\deg\bar{E}=\bar{E}\cdot \pi(L_{2})=(\bar{E}\cdot \pi(L_{2}))_{q_{1}}+(\bar{E}\cdot \pi(L_{2}))_{q_{2}}=2+2;$$ a contradiction.
\end{proof}

	If for some $j\in \{1,\dots, c\}$ the curve $C_{j}'$ is vertical then we denote by $F_{j}$ the fiber containing it. In this case  $\mu(C_{j}')=1$, because $C_{j}'$ meets $E$, so $C_{j}'$ is a tip of $F_{j}$. We infer that for every $j\in \{1,\dots, c\}$, $(Q_{j}')\hor$ contains $C_{j}'$ or a component of $Q_{j}'$ meeting $C_{j}'$. Since $c\geq 2$, we see that $c=2$ and $D\hor-E$ consists of two $1$-sections, one in each $Q_{j}'$. The curve $\bar{E}\subseteq \P^{2}$ is of type  $\cH$, $\cI$ or $\cJ$. We check that in all three cases (see Figures \ref{fig:H}, \ref{fig:I} and \ref{fig:J}), every $C_{j}'$ meets a unique twig of $D$, say $V_{j}$, which is in fact a $(-2)$-twig, and that the component $B_{1}$ of $Q_{1}'-V_{1}$ meeting $C_{1}'$ is branching in $Q_{1}'$.
	
\begin{claim}\label{cl:F_j}
	For every $j\in \{1,\dots, c\}$, either the curve $C_{j}'$ is horizontal or $F_{j}=C_{j}'+V_{j}+L_{j}=[1,2,\dots,2,1]$, where $L_{j}$ is a unique component of $F_{j}$ not contained in $D\vert$.
\end{claim}
\begin{proof}
	Assume $C_{j}'$ is vertical. Note that $C_{3-j}'\not\subseteq F_{j}$, because otherwise $F_{j}\cdot E\geq 2$, which is false. Since $C_{j}'$ meets two sections in $D\hor$, the connectedness of $D$ implies that the set $F_{j}\cap D$ has at most two connected components. Since $F_{j}$ is a rational tree, Lemma \ref{lem:Qhp_has_no_lines} implies that $\sigma(F_{j})=1$ and $b_{0}(F_{j}\cap D)=2$. Because $\mu(C_{j}')=1$ and $(F_{j}-C_{j}')\wedge D\vert$ contains no $(-1)$-curves, we have $L_{j}^{2}=-1$. Since both connected components of $F_{j}\cap D$ contain (or belong to, in case some of them is a point) a component of multiplicity $1$ in $F_j$, we infer that $F_{j}$ is a chain meeting $D\hor$ in tips. Therefore, $F_{j}$ contains a twig of $D$ meeting $C_{j}'$, namely the $(-2)$-twig $V_{j}$. Because $F_{j}$ contracts to a $0$-curve, we get $F_{j}=C_{j}'+V_{j}+L_{j}$, as claimed.
\end{proof}	
	
	Claim \ref{cl:F_j} implies for instance that the section contained in $Q_{1}'$ is $C_{1}'$ or $B_{1}$. Because $\beta_{Q_{1}'}(B_{1})=3$, in any case we get that $(Q_{1}')\vert$ has at least two connected components with no $(-1)$-curves, lying in different fibers. Lemma \ref{lem:fibrations-Sigma-chi} implies that one of these fibers, say $F$, has a unique component not contained in $D\vert$, say $L_{F}$. From Claim \ref{cl:F_j} we infer that $F\neq F_{1},F_{2}$, so $C_{1}',C_{2}'\not\subseteq F$. Hence,  $L_{F}$ is a unique $(-1)$-curve in $F$ and $(F\wedge D\vert)\cdot E=0$, so $E$ meets $F$ in $L_{F}$. But $\mu(L_{F})\geq 2$ and $E$ is a $1$-section by Claim \ref{cl:E-section}; a contradiction.
\end{proof}

\section{Geometric consequences}\label{sec:corollaries}

\subsection{Proof of Theorem \ref{thm:geometric}.} \label{sec:1.2=>1.1}

\setcounter{claim}{0}
Theorem \ref{thm:main} and \cite[Theorem 1.3]{PaPe_Cstst-fibrations_singularities} describe the possible types of $\bar{E}\subseteq \P^{2}$. In particular, since $\bar{E}$ is not an Orevkov curve, it has at least two cusps. We order those cusps as in Lemma \ref{lem:special_lines}. Let $\ll_{12}$ and $\ll_{1}$ be the lines defined there. 

\begin{claim}\label{cl:l=l_1}
We may assume that $\bar{E}$ is of type $\cG$, $\cJ$, $\Qb$ or $\Qa$.
\end{claim}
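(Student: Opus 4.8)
The plan is to dispatch in one stroke every type for which the line joining the two heaviest cusps already works, and to observe that these are exactly the types \emph{other than} $\cG,\cJ,\Qb,\Qa$. First I would recall from Theorem~\ref{thm:main} and \cite[Theorem~1.3]{PaPe_Cstst-fibrations_singularities} that, since $\bar E$ is not an Orevkov curve, it is of one of the types $\FZa,\FZb,\FE,\cA$--$\cG,\cH,\cI,\cJ,\Qb$ or $\Qa$, with cusps ordered as in Lemma~\ref{lem:special_lines}. The key bookkeeping remark is that $\cG,\cJ,\Qb,\Qa$ are precisely the types \emph{absent} from part \ref{item:l} of Lemma~\ref{lem:special_lines}. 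Hence for every other type the line $\ll_{12}$ meets $\bar E$ only at $q_1,q_2$, so $\ll:=\ll_{12}$ is a line through the heaviest cusp $q_1$ meeting $\bar E$ in exactly two points; this already verifies the opening assertion of Theorem~\ref{thm:geometric} and exhibits the proper injective morphism $\C^*\to\C^2$ for those types.

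Next I would pin down which case occurs. For the types that lie also in part \ref{item:t} of Lemma~\ref{lem:special_lines}, namely $\FZa,\FZb,\FE$ and $\cA$--$\cD,\cH$, the tangent line $\ll_1$ at $q_1$ is a \emph{second} line through $q_1$ meeting $\bar E$ in two points and meeting $\bar E\setminus\{q_1\}$ transversally. Thus there are at least two lines $\ll$ as in the theorem, and one of them ($\ll_{12}$) passes through two cusps, so case \ref{item:cor-asymptote} holds; the dichotomy ``two cusps versus more'' then separates $\cA$--$\cD,\cH$ from $\FZa,\FZb,\FE$ exactly as stated. This disposes of all of case \ref{item:cor-asymptote}.

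It remains to treat $\cE,\cF,\cI$, which lie in part \ref{item:l} but not in part \ref{item:t} of Lemma~\ref{lem:special_lines}; for these I would show that case \ref{item:cor-no-asymptote} holds, leaving only $\cG,\cJ,\Qb,\Qa$. Uniqueness of the special line I expect to get from B\'ezout together with the explicit multiplicity sequences: any line through $q_1$ meeting $\bar E$ in only two points forces a distribution of intersection multiplicities realised solely by $\ll_{12}$, the tangent $\ll_1$ being excluded precisely because these types are absent from part \ref{item:t}. The genuinely hard point, which I expect to be the main obstacle, is the ``no asymptote'' clause: that \emph{no} curve $\C^1\subseteq\P^2\setminus\ll_{12}$ meets $\bar E\setminus\ll_{12}$ in a single point. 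A line doing so would be a good asymptote for the $\C^*$-embedding $\bar E\setminus\ll_{12}\subseteq\P^2\setminus\ll_{12}$ (cf.\ Remark~\ref{rem:line}); I would rule such a curve out by invoking that $\cE,\cF,\cI$ are exactly the embeddings admitting no good asymptote (cf.\ Remark~\ref{rem:IJ}\ref{item:KP_I} and the classification cited there), and then upgrade from lines to arbitrary $\C^1$'s by the intersection-theoretic estimate that forces at least two intersection points away from $\ll_{12}$. Once $\cE,\cF,\cI$ are placed in case \ref{item:cor-no-asymptote}, the only types not yet handled are $\cG,\cJ,\Qb,\Qa$, which proves the claim.
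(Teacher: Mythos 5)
Your opening reduction is exactly the paper's: for every type other than $\cG$, $\cJ$, $\Qb$, $\Qa$, Lemma \ref{lem:special_lines}\ref{item:l} provides $\ll_{12}$, and for the types also covered by Lemma \ref{lem:special_lines}\ref{item:t} the tangent $\ll_{1}$ is a second special line, so case \ref{item:cor-asymptote} of Theorem \ref{thm:geometric} holds and only $\cE$, $\cF$, $\cI$ remain. The gaps are in how you finish these three types. For uniqueness of $\ll$, your exclusion of $\ll_{1}$ ``precisely because these types are absent from part \ref{item:t}'' is a non-argument: the lemma's silence on $\cE,\cF,\cI$ asserts nothing. Concretely, for type $\cI$ one has $(\ll_{1}\cdot\bar{E})_{q_{1}}=\mu_{1}+\mu_{1}'=12$ and $\deg\bar{E}=14$, so B\'ezout leaves total multiplicity $2$ on $\bar{E}\setminus\{q_{1}\}$, which could a priori be a single tangency point; likewise a non-tangent line through $q_{1}$ could a priori meet $\bar{E}$ at one smooth point with tangency of order $\deg\bar{E}-\mu_{1}$. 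Multiplicity sequences and B\'ezout cannot decide these alternatives; the paper settles them by a direct computation with the explicit parameterizations of $\cE,\cF,\cI$ from Borodzik--\.Zo{\l}\k{a}dek.

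For the ``no asymptote'' clause your plan also does not close. Remark \ref{rem:IJ}\ref{item:KP_I} characterizes only type $\cI$ (and rests on the forthcoming classification \cite{KoPa-SporadicCstar2}); it says nothing about $\cE$ or $\cF$, so ``$\cE,\cF,\cI$ are exactly the embeddings admitting no good asymptote'' is unsupported as stated. Moreover a good asymptote is itself a curve isomorphic to $\C^{1}$, so even granting that input you must still handle arbitrary curves $\uu$ with $\uu\setminus\ll_{12}\cong\C^{1}$, and your ``intersection-theoretic estimate'' is exactly the missing content. The paper's argument here is short and self-contained, and you should reproduce something like it: if $\uu$ is a line through $q_{j}$, then $(\uu\cdot\bar{E})_{q_{j}}=\deg\bar{E}-1$ would be a sum of initial terms of the multiplicity sequence of $q_{j}$, which one checks against Table \ref{table:nofibrations} (and Table 1 of the prequel) is impossible; if $\uu$ is not a line, then since it meets $\ll_{12}$ in the single point $q_{j}$ it must be tangent to $\ll_{12}$ there, while $\bar{E}$ is not (by Lemma \ref{lem:special_lines}\ref{item:l} it meets $\ll_{12}$ at $q_{j}$ with multiplicity exactly $\mu_{j}$), so $(\uu\cdot\bar{E})_{q_{j}}$ equals the product of the multiplicities of $q_{j}$ on the two curves, whence
\begin{equation*}
\deg\uu\cdot\deg\bar{E}-1=(\uu\cdot\bar{E})_{q_{j}}\leq (\deg\uu-1)(\deg\bar{E}-1),
\end{equation*}
forcing $\deg\uu+\deg\bar{E}\leq 2$; a contradiction. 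Without these two steps your proposal reduces the claim correctly but does not prove it.
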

\begin{proof}
Assume that $\bar{E}$ is not of one of the above types. By Lemma \ref{lem:special_lines}\ref{item:l} the line $\ll_{12}$ joining $q_{1}$ with $q_{2}$ meets $\bar{E}$ in exactly two points. 
If $\bar{E}$ is of one of the types listed in Theorem \ref{thm:geometric}\ref{item:cor-asymptote} then by Lemma \ref{lem:special_lines}\ref{item:t} the line $\ll_{1}$ tangent to $q_{1}\in \bar{E}$ meets $\bar{E}\setminus\{q_{1}\}$ exactly once and transversally. In particular, $\ll_{1}\neq \ll_{12}$ is another line through $q_{1}$ meeting $\bar{E}$ in exactly two points.  Thus we may assume that $\bar{E}$ is of type $\cE$, $\cF$ or $\cI$. We need to show that part \ref{item:cor-no-asymptote} of Theorem \ref{thm:geometric} holds. Using the parameterization of $\bar{E}$ given by \cite[(t) or (s)]{BoZo-annuli}, see \cite[Remark 4.14]{PaPe_Cstst-fibrations_singularities} and Remark \ref{rem:IJ}\ref{item:BZ_IJ}, we check that any line through $q_{1}$ other than $\ll_{12}$ meets $\bar{E}\setminus \{q_{1}\}$ in at least two points. Hence, $\ll$ is unique. Suppose that $\uu\subseteq \P^{2}$ is a curve such that $\uu\setminus \ll_{12}\cong \C^{1}$ and $(\uu\setminus \ll_{12})\cdot (\bar{E}\setminus \ll_{12})=1$. Then $\uu\cap \ll_{12}=\{q_{j}\}$ for some $j\in \{1,2\}$. If $\uu$ is a line then $(\uu\cdot \bar{E})_{q_{j}}=\deg\bar{E}-1$ is the sum of some number of initial terms of the multiplicity sequence of $q_{j}\in \bar{E}$. But we check directly (see \cite[Table 1]{PaPe_Cstst-fibrations_singularities} and Table \ref{table:nofibrations}) that the latter is impossible, hence $\uu$ is (a conic or a rational unicuspidal curve) tangent to $\ll_{12}$ at $q_{j}$. Because by Lemma \ref{lem:special_lines}\ref{item:t} $\bar{E}$ is not, the number $(\uu\cdot\bar{E})_{q_{j}}=\deg\uu\cdot\deg \bar{E}-1$ is the product of multiplicities of $q_j\in\uu$ and $q_j\in\bar{E}$. Hence, $\deg\uu\cdot\deg \bar{E}-1\leq (\deg\uu-1)(\deg\bar{E}-1)$, so $\deg\uu+\deg\bar{E}\leq 2$; a contradiction.
\end{proof}

Let us recall that a curve of type $\cG(\gamma)$, $\gamma\geq 3$ has degree $2\gamma-1$ and the multiplicity sequences of its cusps are $(\gamma-1)_{4}$ and $(2)_{\gamma-1}$, see \cite[Table 1]{PaPe_Cstst-fibrations_singularities}.

\begin{claim}\label{cl:conics}
	If $\bar{E}$ is of type $\cG$ then there exist unique non-degenerate conics $\mm$ and $\mm'$ such that $(\mm\cdot \bar{E})_{q_{1}}=2\deg\bar{E}-1$, $(\mm'\cdot \bar{E})_{q_{1}}=2\deg\bar{E}-2$ and $(\mm'\cdot \bar{E})_{q_{2}}=2$.
\end{claim}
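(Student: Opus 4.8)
The plan is to translate the three prescribed contact conditions into incidence conditions at infinitely near points and then play a parameter count off against B\'ezout's theorem. Write $d=\deg\bar E=2\gamma-1$, and let $p_0=q_1,p_1,p_2,\dots$ be the points infinitely near to $q_1$ lying on the branch of $\bar E$, i.e.\ the centers of the successive blowups in the minimal resolution over $q_1$; since the multiplicity sequence of $q_1$ is $(\gamma-1)_4$, we have $\operatorname{mult}_{p_i}\bar E=\gamma-1$ for $0\le i\le 3$ and $\operatorname{mult}_{p_i}\bar E=1$ for $i\ge 4$. For a curve $C$ smooth at $q_1$ meeting $\bar E$ only along this branch, Noether's formula gives $(C\cdot\bar E)_{q_1}=\sum_i \operatorname{mult}_{p_i}(C)\operatorname{mult}_{p_i}(\bar E)$, where $\operatorname{mult}_{p_i}(C)\in\{0,1\}$ equals $1$ exactly when the proper transform of $C$ passes through $p_i$. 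Hence a smooth conic through $p_0,\dots,p_4$ (and no further) has $(C\cdot\bar E)_{q_1}=4(\gamma-1)+1=2d-1$, a smooth conic through $p_0,p_1,p_2,p_3$ (but not $p_4$) has $(C\cdot\bar E)_{q_1}=4(\gamma-1)=2d-2$, and a smooth conic meeting $\bar E$ simply at $q_2$ has $(C\cdot\bar E)_{q_2}=2$ since $q_2$ has multiplicity $2$. So I would look for $\mm$ among the conics through $p_0,\dots,p_4$ and for $\mm'$ among the conics through $p_0,p_1,p_2,p_3$ and through $q_2$.

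Each passage through one (possibly infinitely near) point is a single linear condition on the six-dimensional space $H^0(\P^2,\O(2))$; in each case there are exactly five nested conditions, so the linear system of such conics is nonempty, which gives existence of at least one (a priori possibly reducible) conic of each kind. To obtain non-degeneracy and uniqueness I would first note that no line passes through $p_0,p_1,p_2$: the only line through $p_0,p_1$ is the tangent $\ll_1$, which by Lemma~\ref{lem:special_lines}\ref{item:t} satisfies $(\ll_1\cdot\bar E)_{q_1}=\mu_1+\mu_1'=2(\gamma-1)$ and so cannot pass through $p_2$, as that would force contact at least $3(\gamma-1)$. Consequently no union of two lines passes through $p_0,p_1,p_2$, and since the prescribed points for both $\mm$ and $\mm'$ include $p_0,p_1,p_2$, every conic satisfying the conditions is non-degenerate. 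Uniqueness then follows from B\'ezout: two distinct non-degenerate conics share no component and meet in $4$ points, whereas two conics through $p_0,\dots,p_4$ meet at $q_1$ with multiplicity $\ge 5$, and two conics through $p_0,p_1,p_2,p_3$ and $q_2$ meet with total multiplicity $\ge 4+1=5$; since $5>4$, each kind is unique.

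Finally I would pin down the exact intersection numbers. For $\mm'$ the four conditions at $q_1$ and the one at $q_2$ already account for $(2d-2)+2=2d=\mm'\cdot\bar E$, so by B\'ezout all inequalities are equalities and $(\mm'\cdot\bar E)_{q_1}=2d-2$, $(\mm'\cdot\bar E)_{q_2}=2$, as required. For $\mm$ we have $(\mm\cdot\bar E)_{q_1}\ge 2d-1$; were it $\ge 2d$, then $\mm$ would meet $\bar E$ only at $q_1$, so $\mm\setminus\{q_1\}\cong\C^1$ would be an affine line in $\P^2\setminus\bar E$, contradicting Lemma~\ref{lem:Qhp_has_no_lines} as the complement is of log general type by the hypothesis of Theorem~\ref{thm:geometric}; hence $(\mm\cdot\bar E)_{q_1}=2d-1$. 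The main obstacle I anticipate is the bookkeeping around degenerate conics and nested infinitely near points: one must keep track that ``passing through $p_{i+1}$'' presupposes passing through $p_i$, and verify that all reducible cases are excluded uniformly by the single fact that no line meets the branch of $\bar E$ at $q_1$ to order $3(\gamma-1)$.
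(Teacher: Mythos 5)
Your proposal takes a genuinely different route from the paper. The paper blows up only the four points $p_0,\dots,p_3$, observes that $\theta^{-1}_{*}\ll_1+\Exc\theta-V$ supports a degenerate fiber of a $\P^{1}$-fibration (the pencil of conics through $p_0,\dots,p_3$) for which $\theta^{-1}_{*}\bar{E}$ is a $2$-section, and then reads everything off the fibration: Lemma \ref{lem:fibrations-Sigma-chi} shows this is the \emph{only} degenerate fiber, so the fibers $F_1$ through $p_4$ and $F_2$ through $\theta^{-1}(q_2)$ are irreducible and their images are non-degenerate conics; the Hurwitz formula for the degree-$2$ map $\theta^{-1}_{*}\bar{E}\to\P^{1}$ controls tangency; and uniqueness is automatic since any conic with the prescribed contact lies in the pencil, i.e.\ is the image of a fiber. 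You replace this by the classical toolkit (Noether's formula, a dimension count for conics with five assigned infinitely near base points, B\'ezout for uniqueness), invoking Lemma \ref{lem:Qhp_has_no_lines} only to exclude $(\mm\cdot\bar{E})_{q_1}=2d$. This is more elementary and viable; what the fibration buys the paper is that irreducibility and the exact contact orders come for free, with no case analysis of degenerate members.

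The step you must actually carry out is the one you flag at the end, and your stated reason for it is not sufficient as written. Existence comes from the linear system with \emph{virtual} multiplicity $1$ at the five base points, and membership in that system is weaker than honest passage of proper transforms: the double line $2\ll_1$ \emph{does} satisfy the virtual conditions at $p_0,p_1,p_2,p_3$ (after blowing up $p_1$ its virtual transform contains the exceptional curve over $p_1$ with coefficient $2$, hence the one over $p_2$ with coefficient $1$), even though $\ll_1$ itself does not pass through $p_2$. So ``no line through $p_0,p_1,p_2$'' does not by itself exclude all degenerate members of the system, only those whose components would have to pass honestly through these points. The exclusion does work, but it uses the fifth condition in each case: for the $\mm$-system, $2\ll_1$ fails at $p_4$ (the coefficient of the exceptional curve over $p_3$ in the virtual transform drops to $0$), and for the $\mm'$-system it fails at $q_2$, since $q_2\notin\ll_1$ by Lemma \ref{lem:special_lines}\ref{item:t}; every other degenerate conic already fails at one of $p_0,\dots,p_3$. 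Once this bookkeeping is done, each of your two systems contains no degenerate conic, hence, being a linear subspace of $\P^{5}$ disjoint from the discriminant cubic, it is a single point; this gives existence, non-degeneracy and (again) uniqueness at one stroke, and the rest of your argument goes through as written.
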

\begin{proof}
The proof is similar to the one in Remark \ref{rem:AA'}(b). Let $\theta$ be a composition of four blowups at $q_{1}\in \bar{E}$ and its infinitely near points on the proper transforms of $\bar{E}$. Denote by $V$ the last exceptional curve of $\theta$. Lemma \ref{lem:special_lines}\ref{item:t} implies that $(\theta^{-1}_{*}\ll_{1})^{2}=-1$ and  $\theta^{-1}_{*}\ll_{1}\cdot \theta^{-1}_{*}\bar{E}=1$. The divisor $\Exc \theta=[2,2,2,1]$ meets $\theta^{-1}_{*}\ll_{1}$ only in the second component, transversally, and meets $\theta^{-1}_{*}\bar{E}$ only in $V$, with multiplicity $\gamma-1$. Hence, $\theta^{-1}_{*}\ll_{1}+\Exc\theta-V$ supports a fiber $F$ of a $\P^{1}$-fibration such that $\theta^{-1}_{*}\bar{E}$ is a $2$-section. Denote by $F_{1}$, $F_{2}$ the fibers passing through $V\cap\theta^{-1}_{*}\bar{E}$ and $\theta^{-1}(q_{2})$, respectively. Lemma \ref{lem:fibrations-Sigma-chi} implies that $F$ is the unique degenerate fiber, so $F_{1}$ and $F_{2}$ are smooth. This $\P^{1}$-fibration restricts to a morphism $\theta^{-1}_{*}\bar{E}\to \P^{1}$ of degree $2$, ramified at $\theta^{-1}_{*}\ll_{1}\cap \theta^{-1}_{*}\bar{E}$ and at $\theta^{-1}(q_{2})$. The Hurwitz formula implies that $F_{1}$ and $F_{2}$ are not tangent to $\theta^{-1}_{*}\bar{E}$. It follows that $\mm$, $\mm'$ are the required conics if and only if they are images of $F_{1}$ and $F_{2}$, respectively (see \cite[Figure 11]{PaPe_Cstst-fibrations_singularities}, where the proper transforms of $\mm$, $\mm'$ on $X$ are denoted by $L_{F_{1}}$ and $L_{F_{2}}$, respectively). 
\end{proof}

From now on we assume that $\bar{E}$ is as in Claim \ref{cl:l=l_1}. Clearly, there exists at least one $\ll$ through $q_1$ meeting $\bar E$ at two points, namely $\ll=\ll_{1}$. We will show in Claim \ref{cl:l_unique} that this is the only possibility. First, we make the following reduction.

\begin{claim}\label{cl:l_unique-reduction}
If $\bar E$ is not of type $\cG$ and a line $\ll'$ meets $\bar{E}\setminus \ll_{1}$ in one point then $\bar{E}$ is of type $\cJ$ and $\ll'\cap\ll_1\subseteq \bar E\setminus\{q_1\}$.
\end{claim}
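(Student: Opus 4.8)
The plan is to analyze the constraints imposed by $\ll'$ meeting $\bar E\setminus\ll_1$ in a single point, and to rule out all types except $\cJ$. First I would recall from Lemma \ref{lem:special_lines}\ref{item:t} that under the current assumption (that $\bar E$ is of type $\cG$, $\cJ$, $\Qb$, or $\Qa$, but here excluding $\cG$, so of type $\cJ$, $\Qb$, or $\Qa$) the tangent line $\ll_1$ at $q_1$ satisfies $(\ll_1\cdot\bar E)_{q_1}=\mu_1+\mu_1'=\deg\bar E-1$ and meets $\bar E\setminus\{q_1\}$ exactly once, transversally. The hypothesis that $\ll'$ meets $\bar E\setminus\ll_1$ in one point means, by the Bezout theorem, that $\ll'\cdot\bar E$ is concentrated on $\ll_1$ together with this single extra point, so $(\ll'\cdot\bar E)_{\ll_1\cap\bar E}=\deg\bar E-1$. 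I would split into the two cases according to where $\ll'$ meets $\bar E$ along $\ll_1$.

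The key dichotomy is whether $\ll'$ passes through $q_1$ or through the transversal intersection point $r:=(\ll_1\setminus\{q_1\})\cap(\bar E\setminus\{q_1\})$. If $\ll'$ passes through $q_1$, then $(\ll'\cdot\bar E)_{q_1}\geq\deg\bar E-1$; since this intersection multiplicity is a sum of at least two initial terms of the multiplicity sequence of $q_1$ (as $\ll'$ is a line through the cusp), I would compare with $\deg\bar E-1=\mu_1+\mu_1'$ and check that for types $\Qb$ and $\Qa$ the multiplicity sequences listed in Definition \ref{def:our_curves} do not admit an initial sum equal to $\deg\bar E-1$ other than $\mu_1+\mu_1'$ itself, forcing $\ll'=\ll_1$ — a contradiction with $\ll'$ meeting $\bar E\setminus\ll_1$. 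For type $\cJ(k)$, where $q_1$ has multiplicity sequence $(2k,2k,2k,(2)_k)$ and $\deg\bar E=4k+1$, I would verify that $\mu_1+\mu_1'=4k$ is forced and that the only point of $\bar E\setminus\ll_1$ that such a $\ll'$ can pick up is the transversal point, which lands on $\ll_1$; this is exactly the conclusion $\ll'\cap\ll_1\subseteq\bar E\setminus\{q_1\}$, i.e.\ $\ll'$ meets $\ll_1$ at $r$, a smooth point of $\bar E$ distinct from $q_1$.

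If instead $\ll'$ does not pass through $q_1$, then the portion of $\ll'\cdot\bar E$ supported on $\ll_1$ must be supported at the transversal point $r$, giving $(\ll'\cdot\bar E)_r=\deg\bar E-1$. Since $r$ is a smooth point of $\bar E$ and $\ll'$ is a line, this intersection multiplicity is at most $\deg\bar E$, and equality $\deg\bar E-1$ would make $\ll'$ almost totally tangent at a smooth point; I would use the Hurwitz formula for the projection from $q_1$ (or a direct Bezout argument against the existence of such a hyperflex compatible with the cusp structure) to eliminate $\Qb$ and $\Qa$ and to pin down that this can only occur in type $\cJ$, again with $\ll'\cap\ll_1=\{r\}\subseteq\bar E\setminus\{q_1\}$. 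I expect the main obstacle to be the type-$\cJ$ bookkeeping: one must confirm, using the explicit multiplicity sequences and the value $E^2=-3$ recorded in Table \ref{table:nofibrations}, that a line meeting $\bar E\setminus\ll_1$ once genuinely exists and genuinely forces its intersection with $\ll_1$ onto the smooth locus, rather than through $q_1$; the quintic cases $\Qb,\Qa$ should fall out quickly by the arithmetic of initial multiplicity-sequence sums against $\deg\bar E-1$, whereas the unbounded $\cJ(k)$ family requires the uniform Bezout estimate to be carried through cleanly.
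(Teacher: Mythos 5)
There is a genuine gap, and it occurs at your very first step. The hypothesis that $\ll'$ meets $\bar{E}\setminus \ll_{1}$ in one point is purely set-theoretic; you convert it into the multiplicity statement $(\ll'\cdot\bar{E})_{\ll_{1}\cap\bar{E}}=\deg\bar{E}-1$, i.e.\ you tacitly assume that the single point of $\ll'\cap(\bar{E}\setminus\ll_{1})$ contributes exactly $1$ to $\ll'\cdot\bar{E}$. Nothing justifies this, and it is false precisely in the configurations the claim is about: the paper's proof shows that (after renaming) the cusp $q_{2}$ lies on $\ll'$, so the extra point is a cusp of multiplicity $\mu_{2}\geq 2$ (for type $\cJ(k)$ it has multiplicity $2k$), not a point of multiplicity one. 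Consequently both branches of your dichotomy analyze a situation that need not occur, and the configurations that actually have to be excluded are never confronted. A concrete example: for type $\Qa$, a line through $q_{1}$ that is tangent to one of the ordinary cusps, say $q_{2}$, would meet $\bar{E}$ exactly in $q_{1}$ and $q_{2}$ with multiplicities $2+3=5=\deg\bar{E}$; this satisfies the hypothesis of the claim, violates both of its conclusions, and is perfectly consistent with every ``initial sums of multiplicity sequences'' computation. So no bookkeeping of the kind you propose can exclude it; genuine geometry is needed to show such a line does not exist on the actual quintic. (A smaller issue of the same nature: your dichotomy ``either $q_{1}\in\ll'$ or $r\in\ll'$'' presupposes $\ll'\cap\ll_{1}\in\bar{E}$, which requires Lemma \ref{lem:Qhp_has_no_lines}, since a priori $\ll'$ could meet $\bar{E}$ only at the one point off $\ll_{1}$.)

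The paper's proof is built exactly to handle arbitrary multiplicity at the extra point: one blows up $r=\ll'\cap\ll_{1}$ (which lies on $\bar{E}$ by Lemma \ref{lem:Qhp_has_no_lines}) and applies the Hurwitz formula to the morphism $\theta^{-1}_{*}\bar{E}\to\P^{1}$ of degree $\deg\bar{E}-\mu$ induced by the pencil of lines through $r$. Since $\ll_{1}+\ll'$ meets $\bar{E}$ in at most three points, almost all ramification sits on these two fibers, and one concludes that $\bar{E}$ has at most one cusp off $\ll_{1}+\ll'$, of multiplicity $2$. This kills $\Qa$ at once (it has three ordinary cusps off $\ll_{1}$, at most one of which can lie on $\ll'$), shows $q_{2}\in\ll'$, and for type $\cJ$ reduces the statement $q_{1}\notin\ll'$ to the arithmetic fact that $2k+1$ is not an initial sum of the sequence $(2k,(2)_{k},1,1)$ — note the parity check happens at $q_{2}$, not at $q_{1}$. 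The remaining case $\Qb$ also cannot be settled numerically: the paper rules it out by a second Hurwitz estimate combined with the symmetry $\Aut(\P^{2},\Qb)\cong\Z_{3}$ of Remark \ref{rem:Q3}, which transports $\ll'$ to a line through $q_{1}$ and produces a configuration already excluded. None of these mechanisms (Hurwitz for the pencil through $r$, the $\Z_{3}$ symmetry) appears in your outline, and they are indispensable.
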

\begin{proof}
	By Claim \ref{cl:l=l_1} $\bar E$ is of type $\cJ$, $\Qb$ or $\Qa$. By Lemma \ref{lem:special_lines}\ref{item:t} $\ll_1$ meets $\bar E\setminus\{q_1\}$ once and transversally. Let $r$ be the unique common point of $\ll'$ and $\ll_1$. By Lemma \ref{lem:Qhp_has_no_lines} $\ll'$ meets $\bar{E}$ in at least two points, so $r\in \bar{E}$. Denote by $\mu$ the multiplicity of $r\in \bar{E}$. Let $\theta$ be a blowup at $r$. Then $|\theta^{-1}_{*}\ll'|$ induces a $\P^{1}$-fibration which restricts to a morphism $g\colon \theta^{-1}_{*}\bar{E}\to \P^{1}$ of degree $\deg \bar E-\mu$.  For a point $p\in \theta^{-1}_{*}\bar{E}$ denote by $r_{p}$ the ramification index of $g$ at $p$. For any line through $r$ the sum of $r_{p}$ for $p$ contained in the proper transform of that line equals $\deg g$.
	Hence,
	\begin{equation*}
	\sum_{p\in\theta^{-1}_{*}(\ll_{1}+\ll')}(r_{p}-1)=2\deg g-\#(\theta^{-1}_{*}(\ll_{1}+\ll')\cap \theta^{-1}_{*}\bar{E})\geq 2\deg g-3,
	\end{equation*}
	where the last inequality follows from the assumption. The Hurwitz formula gives
	\begin{equation*}
	\sum_{p\not\in \theta^{-1}_{*}(\ll_{1}+\ll')}(r_{p}-1)\leq 2\deg g - 2 - (2\deg g-3)=1.
	\end{equation*}
	Thus $g$ has at most one ramification point off $\theta^{-1}_{*}(\ll_{1}+\ll')$, and its ramification index is $2$. Hence, $\bar{E}$ has at most one cusp off $\ll_{1}+\ll'$ and this cusp, if exists, has multiplicity $2$. It follows that $\bar{E}$ is of type $\cJ$ or $\Qb$, and (after renaming the cusps $q_2,\ldots, q_c$ if necessary) $q_{2}\in\ll'$. Consider the first case. If $q_{1}\in \ll'$ then $$(\ll'\cdot\bar{E})_{q_{2}}=\deg\bar{E}-(\ll'\cdot\bar{E})_{q_{1}}=4k+1-2k=2k+1$$ is a sum of some number of initial terms of the multiplicity sequence of $q_{2}\in\bar{E}$. But from Table \ref{table:nofibrations} we see that this is impossible, which proves the claim in this case. We are left with the case when $\bar{E}$ is of type $\Qb$. 
	If $\ll'$ is tangent to $\bar E$ at $q_2$ then $(\ll'\cdot\bar{E})_{q_{2}}=4$ and $(\ll'\cdot\bar{E})_{r}=1$, so $\deg g=4$ and $g$ is ramified at $\theta^{-1}(q_{3})$ and with index $4$ at $\theta^{-1}(q_{1})$ and $\theta^{-1}(q_{2})$. Since the latter contradicts the Hurwitz formula, we infer that $\ll'$ is not tangent to $\bar{E}$ at $q_{2}$, so $(\ll' \cdot\bar{E})_{r}=3$. By Lemma \ref{lem:special_lines}\ref{item:t} and Remark \ref{rem:Q3}, any line $\ll''$ through $q_{j}$ for $j\in \{1,2,3\}$  satisfies $(\ll''\cdot\bar{E})_{q_{j}}\in \{2,4\}$. Hence, $r$ is a smooth point of $\bar{E}$. By Remark \ref{rem:Q3} there is an automorphism  $\epsilon\in \Aut(\P^{2},\bar{E})$ such that $\epsilon(q_{2})=q_{1}$. The line $\epsilon(\ll')$ meets $\bar{E}$ only in two points, namely in $q_{1}$ and $\epsilon(r)$, hence $\bar{E}$ has two cusps off $\epsilon(\ll')$. But we have shown above that this is impossible; a contradiction.
\end{proof}

\begin{claim}\label{cl:l_unique}
	If a line $\ll'$ meets $\bar{E}\setminus \ll_{1}$ in one point then $q_{1}\not\in\ll'$ and $q_{2}\in\ll'$.
\end{claim}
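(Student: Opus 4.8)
The plan is to first pin down the shape of $\ll'\cap\bar{E}$, then dispatch the types $\cJ,\Qb,\Qa$ using the reduction already proved, and finally treat the single remaining type $\cG$ by a projection/Hurwitz argument supplemented by the explicit model. First I would note that $\ll'\neq\ll_1$, since $\ll_1$ meets $\bar{E}\setminus\ll_1$ in no points, whereas $\ll'$ meets it in one; thus $\{r\}\de\ll'\cap\ll_1$ is a single point. By Lemma \ref{lem:Qhp_has_no_lines} the line $\ll'$ cannot meet $\bar{E}$ in only one point (otherwise $\ll'\setminus\bar{E}\cong\C^1$ lies in the complement), so its second point of intersection with $\bar{E}$ must lie on $\ll_1$; hence $r\in\bar{E}$ and, by Lemma \ref{lem:special_lines}\ref{item:t}, $r\in\ll_1\cap\bar{E}=\{q_1,r_0\}$, where $r_0$ denotes the unique (smooth, transversal) point of $(\ll_1\setminus\{q_1\})\cap\bar{E}$. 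Writing $p$ for the asserted unique point of $\ll'\cap(\bar{E}\setminus\ll_1)$, this gives $\ll'\cap\bar{E}=\{r,p\}$ with $p\notin\ll_1$. The goal is then precisely to show $r=r_0$ (equivalently $q_1\notin\ll'$) and $p=q_2$.

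For $\bar{E}$ of type $\cJ$, $\Qb$ or $\Qa$ I would simply invoke Claim \ref{cl:l_unique-reduction}, which applies since these are not of type $\cG$. It forces $\bar{E}$ to be of type $\cJ$ and yields both $\ll'\cap\ll_1\subseteq\bar{E}\setminus\{q_1\}$ and $q_2\in\ll'$. Since $\ll'$ and $\ll_1$ both pass through $q_1$ whenever $q_1\in\ll'$, the inclusion $r=\ll'\cap\ll_1\in\bar{E}\setminus\{q_1\}$ excludes $q_1\in\ll'$; so $q_1\notin\ll'$, and $q_2\in\ll'$ is the remaining assertion. This settles every type except $\cG$.

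For $\bar{E}$ of type $\cG(\gamma)$ (degree $2\gamma-1$) the plan is to project from $r$ and apply the Hurwitz formula to the induced degree-$(\deg\bar{E}-\mathrm{mult}_r\bar{E})$ map $g\colon\bar{E}^{\nu}\to\P^1$ of the pencil of lines through $r$, exactly as in the proof of Claim \ref{cl:l_unique-reduction}. The clean case is $r=r_0$ with $\ll'$ transversal to $\bar{E}$ at $r_0$: here $\deg g=2\gamma-2$, the fibre over the direction of $\ll_1$ consists of the single point $\tilde{q}_1$ (using that $\ll_1$ is transversal at $r_0$) and is totally ramified because $(\ll_1\cdot\bar{E})_{q_1}=2\gamma-2=\deg g$ by Lemma \ref{lem:special_lines}\ref{item:t}, while the fibre over the direction of $\ll'$ is the single point $\tilde{p}$ with ramification $(\ll'\cdot\bar{E})_p=2\gamma-2=\deg g$. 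These two totally ramified fibres already account for $2(2\gamma-3)=4\gamma-6=2\deg g-2$ units of ramification, so $g$ has no further ramification; as the cusp $q_2$ carries ramification at least $\mu_2=2$, it must lie on $\ll_1\cup\ll'$, and since $q_2\notin\ll_1$ this forces $q_2=p\in\ll'$. Together with $r=r_0\neq q_1$ this gives both conclusions in the clean case.

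The hard part will be the two borderline configurations in which this Hurwitz count does not close: $r=q_1$ (projection from the high-multiplicity cusp), and $r=r_0$ with $\ll'$ tangent to $\bar{E}$ at $r_0$. In each, the totally ramified structure degenerates and the budget no longer excludes a cusp $q_2$ off $\ll_1\cup\ll'$; geometrically these correspond to a line meeting $\bar{E}$ in exactly two points with exceptionally high contact — either the line $\ll_{12}$ joining the two cusps, or a line tangent at $r_0$ and hyper-tangent at a smooth point. I expect these finitely many special lines to be excluded, and $q_2\in\ll'$ to be confirmed, by a direct computation with the explicit parameterization of $\cG$ from \cite{PaPe_Cstst-fibrations_singularities}, in the same spirit as the parameterization check used in the proof of Claim \ref{cl:l=l_1}; the uniqueness of the conics in Claim \ref{cl:conics} should also be available as a cross-check. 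This explicit exclusion for type $\cG$ is the main obstacle, as Hurwitz theory alone constrains but cannot eliminate the high-contact lines.
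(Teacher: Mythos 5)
Your setup, the dispatch of types $\Qb,\Qa$ via Claim \ref{cl:l_unique-reduction}, and the Hurwitz count in the clean case for type $\cG$ (projection from the smooth point of $\ll_1\cap\bar E$, two totally ramified fibres exhausting the budget $2\deg g-2$, forcing $q_2\in\ll'$) are all correct. But the proposal has a genuine gap exactly where you flag it: the two borderline configurations for type $\cG$ --- a line $\ll'$ through $q_1$, and the tangent line at your point $r_0$ with residual contact concentrated at a single point $p\neq q_2$ --- are never excluded. ``I expect these \dots to be excluded by a direct computation with the explicit parameterization'' is not a proof, and it is not a routine verification either: $\cG(\gamma)$ is an infinite family, so the computation would have to be carried out uniformly in $\gamma$, and no such uniform check of high-contact lines is available in the literature you cite. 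The paper resolves precisely these cases by a completely different mechanism: it applies the logarithmic Bogomolov--Miyaoka--Yau inequality to the snc-minimalization of the log resolution of $D+P$, where $P$ is the proper transform of $\ll_1+\ll'$, augmented by the auxiliary conic $\mm'$ of Claim \ref{cl:conics} in the case $\ll'\cap\bar E=\{q_1,q_2\}$, and derives numerical contradictions in each subcase ($\gamma\leq 2$; or $\gamma=3$ together with $\ind(\tilde{D})>2$; or $-3\leq (L')^{2}=1-\deg\bar{E}\leq -4$). This BMY step --- in particular the idea of throwing in $\mm'$ to make the inequality strong enough when $\ll'$ joins the two cusps --- is the core of the paper's argument and is entirely absent from your plan, so the proposal as it stands does not prove the claim.

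A secondary, fixable inaccuracy: for type $\cJ$ you assert that Claim \ref{cl:l_unique-reduction} ``yields \dots $q_2\in\ll'$'', but its statement gives only the type and $\ll'\cap\ll_1\subseteq\bar{E}\setminus\{q_1\}$; the membership $q_2\in\ll'$ appears inside its proof, not in the claim. You can close this either by quoting that intermediate step or by re-running your own projection argument from $r_0$: since both cusps of $\cJ(k)$ have multiplicity $2k\geq 4$, a cusp off $\ll_1\cup\ll'$ would contribute ramification at least $2k-1\geq 3$, which overflows the Hurwitz budget even in the tangent case. Note the contrast: for $\cG$ the second cusp has multiplicity $2$ and contributes only $1$, which is exactly why your Hurwitz method stalls on $\cG$ and on no other type.
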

\begin{proof}
	Either $\bar E$ is of type $\cG$ or $\bar E$ and $\ll'$ are as in Claim \ref{cl:l_unique-reduction}. In both cases Lemma \ref{lem:Qhp_has_no_lines} implies that $\ll'$ meets $\bar{E}$ in at least two points, so $\ll'\cap\ll_{1}\subseteq \bar{E}$. If $\ll'\cap\bar{E}=\{q_{1},q_{2}\}$ then $\bar E$ is of type $\cG$ and we put $P=\pi^{-1}_{*}(\ll_{1}+\ll'+\mm')$, where $\mm'$ is as in Claim \ref{cl:conics}. In other cases we put $P=\pi^{-1}_{*}(\ll_{1}+\ll')$.   Note that $D+P-\pi^{-1}_{*}\ll'$ is an snc divisor and $P-\pi^{-1}_{*}\ll'$ consists of disjoint $(-1)$-curves.  Let $$\pi'\colon (X',D')\to(X,D+P)$$ be the minimal log resolution, $P'\de (\pi')^{-1}_{*}P$ and let $(X',D')\to (\tilde{X},\tilde{D})$ be the (unique) snc-minimalization of $D'$. We have $\etop(\tilde{X}\setminus\tilde{D})=\etop(X\setminus(D+P))=\etop(X\setminus D)=1$, because $P\setminus D$ is a disjoint union of curves isomorphic to $\C^{*}$. Lemma \ref{lem:Qhp_has_no_lines} and \cite[6.24]{Fujita-noncomplete_surfaces} imply that the negative part of the Zariski--Fujita decomposition of $K_{\tilde{X}}+\tilde{D}$ equals $\Bk\tilde{D}$,	that is, the sum of barks of all maximal twigs of $\tilde{D}$, see Section \ref{sec:log_surfaces}. Put $\ind(\tilde{D})=-(\Bk\tilde{D})^{2}\geq 0$. Then the logarithmic Bogomolov-Miyaoka-Yau inequality reads as 
	\begin{equation}\label{eq:BMY}
	(K_{\tilde{X}}+\tilde{D})^{2}+\ind(\tilde{D})\leq 3.
	\end{equation}
	We will rewrite it using properties of $P$ and $\pi'$.
	
	Since the morphism $X'\to\tilde{X}$ is a composition of blowups with centers at the nodes of successive reduced total transforms of $\tilde{D}$, we have $(K_{\tilde{X}}+\tilde{D})^{2}=(K_{X'}+D')^{2}$. Moreover, $$D'\cdot (K_{X'}+D')=2p_{a}(D')-2=2\#P-2.$$ The non-nc points of $D+P$ are exactly the points of $\pi^{-1}(\ll'\cap\bar{E}\setminus \{q_{1},q_{2}\})$. To see this, recall that $D+P-\pi^{-1}_{*}\ll'$ is snc. Since $\ll'$ is smooth, $\pi^{-1}_{*}\ll'$ meets $D-E$ transversally, off $E$, each component of $D-E$ at most once. By definition, $\pi^{-1}_{*}\ll'$ is disjoint from $P-\pi^{-1}_{*}(\ll_{1}+\ll')$ and the common point of $\pi^{-1}_{*}\ll_{1}$ and $\pi^{-1}_{*}\ll'$, if exists, is contained in $E$, so it is not an nc point of $D-E$. Conversely, if $\pi^{-1}_{*}\ll'$  meets $E$ transversally at some point $r'$ then $r'\in \pi^{-1}_{*}\ll_{1}$, since otherwise $\ll'\cap \ll_{1}=\{q_{1}\}$ and $(\ll'\cdot\bar{E})_{q_{1}}=\deg\bar{E}-1=(\ll_{1}\cdot\bar{E})_{q_{1}}$, which is false. Let $\epsilon\de 2 -\#(\ll'\cap\{q_{1},q_{2}\})$ be the number of non-nc points of $D+P$. Each of these points is resolved by a sequence of blowups such that the centers of all but the first one are the nodes of respective total reduced transforms of $D$, hence $$K_{X'}\cdot(K_{X'}+D'-P')=K_{X}\cdot (K_{X}+D)-\epsilon.$$ By \cite[Lemma 4.3(i)]{Palka-minimal_models} $K_{X}\cdot(K_{X}+D)=h^{0}(2K_{X}+D)$, which in our cases vanishes, because $\kappa(K_{X}+\tfrac{1}{2}D)=-\infty$ (see \cite[Lemma 4.4]{PaPe_Cstst-fibrations_singularities} and Proposition \ref{prop:finding_C3st}). Therefore, \eqref{eq:BMY} reads as
	\begin{equation}\label{eq:BMY_1}
	K_{X'}\cdot P'+2\#P+\ind(\tilde{D})\leq 5+\epsilon.
	\end{equation}
	
	By the definition of a bark, the number $\ind(\tilde{D})$ equals the sum of coefficients of $\ftip{W}$ in $\Bk_{\tilde{D}}(W)$, taken over all maximal twigs $W$ of $\tilde{D}$. Hence \cite[II.3.3.4]{Miyan-OpenSurf} gives
	\begin{equation*}
	 \ind(\tilde D)=\sum_{W}\frac{d(W-\ftip{W})}{d(W)},
	\end{equation*}
	where $d(W)$ is as in Section \ref{sec:log_surfaces}. If $W$ is a $(-2)$-twig then the respective summand is $\tfrac{\#W}{\#W+1}\geq \tfrac{1}{2}$. We claim that $\ind(\tilde{D})>1.$ To see this, note that the map $X\map \tilde{X}$ does not touch the maximal $(-2)$-twigs of $D$ meeting $C_{1}'$, $C_{2}'$ (where $C_{j}'$ is the last component of $Q_{j}'\de \pi^{-1}(q_{j})\redd$, see the beginning of Section \ref{sec:existence}). If $q_{1}\not\in \ll'$ then it does not touch the first component of $Q_{1}'$, whose image becomes a maximal $(-2)$-twig of $\tilde{D}$. If $q_{1}\in \ll'$ then, since  $(\ll'\cdot \bar{E})_{q_{1}}<(\ll_{1}\cdot\bar{E})_{q_{1}}=\deg\bar{E}-1$, $\ll'$ is tangent to $\bar{E}$ at the other point of $\ll'\cap\bar{E}$, so the image of its resolution contains a twig of $\tilde{D}$. This proves $\ind(\tilde{D})>1$. Now \eqref{eq:BMY_1} gives
	$K_{X'}\cdot P'+2\#P\leq 3+\epsilon.$ 
	Let $L'$, $L_{1}$ and $M'$ be the proper transforms on $X'$ of $\ll'$, $\ll_{1}$ and $\mm'$, respectively. They are smooth and rational, so
	\begin{equation}\label{eq:BMY_2}
	-3-\epsilon \leq (L')^{2}+L_{1}^{2}+(P'\wedge M')^{2},
	\end{equation}
	where $P'\wedge M'=M'$ or $0$ depending on whether $\ll'\cap \bar{E}=\{q_{1},q_{2}\}$ or not. 
	
	Suppose that $q_{1}\in \ll'$. By Claim \ref{cl:l_unique-reduction} the curve $\bar{E}$ is of type $\cG(\gamma)$ for some $\gamma\geq 3$. Since the centers of $\pi'$ belong to $\pi^{-1}_*\ll'$, $\pi'$ does not touch $L_{1}$, so $L_{1}^{2}=-1$. Since $\ll'$ is not tangent to $\bar E$ at $q_1$, the contraction of $Q_{1}'$ touches $\pi^{-1}_{*}\ll'$ exactly once. Consider the case $q_{2}\in \ll'$. Then $\epsilon=0$, $P'\wedge M'=M'$ and $\pi'=\id$. We have $(\ll'\cdot\bar{E})_{q_{2}}=\deg\bar{E}-(\ll'\cdot\bar{E})_{q_{1}}=\gamma$, so the contraction of $Q_{2}'$ touches $L'$ exactly $\tfrac{1}{2}\gamma$ times, hence $(L')^2=-\tfrac{1}{2}\gamma$. It follows from \eqref{eq:BMY_2} that $\gamma\leq 2$; a contradiction. Consider the case $q_{2}\not\in\ll'$. Then $\epsilon=1$, $P'\wedge M'=0$ and  $\pi'$ touches $L'$ exactly $\deg\bar{E}-(\ll'\cdot\bar{E})_{q_{1}}=\gamma$ times, so $(L')^2=-\gamma$ and hence $\gamma=3$ by \eqref{eq:BMY_2}. The maximal twigs of $\tilde{D}$ are: the $(-2)$-tips meeting the images of $C_{1}'$, $C_{2}'$, the image of the other twig of $D$ contained in $Q_{2}'$, which is of type $[2,3]$, and the maximal $(-2)$-twig contained in the preimage of $\ll'\cap \bar{E}\setminus \{q_1\}$. Hence, $\ind(\tilde{D})>2$, which is in contradiction with \eqref{eq:BMY_1}.
	
	Hence, $q_{1}\not\in \ll'$. Suppose that  $q_{2}\not\in \ll'$. Then $\epsilon=2$. We have $P'\wedge M'=0$ and $\pi'$ touches $L_{1}$ exactly once, so $L_{1}^{2}=-2$ and the inequality \eqref{eq:BMY_2} reads as $(L')^2\geq -3$. But $\pi$ does not touch $\pi^{-1}_{*}\ll'$ and $\pi'$ touches $L'$ exactly $\deg\bar{E}$ times, so $-3\leq (L')^{2}=1-\deg\bar{E}\leq -4$; a contradiction.
\end{proof}

Claim \ref{cl:l_unique} implies in particular that $\ll$ as in Theorem \ref{thm:geometric} is unique, equal to $\ll_1$. Assume that $\uu\subseteq \P^{2}$ is a curve such that $\uu\setminus \ll_{1}\cong \C^{1}$ and $(\uu\setminus \ll_{1})\cdot (\bar{E}\setminus \ll_{1})=1$. By Claim \ref{cl:conics} it remains to show that $\bar{E}$ is of type $\cG$ and $\uu=\mm$.

\begin{claim}\label{cl:U-singular}
	The curve $\uu$ is tangent to $\bar{E}$ at $q_{1}$. We may assume that it is singular.
\end{claim}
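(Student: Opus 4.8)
The plan is to pin down the single point $p_{0}:=\uu\cap\ll_{1}$ and then read off the tangency from a local intersection count, using crucially that $\uu$ is unibranch at $p_{0}$. Since $\uu\setminus\ll_{1}\cong\C^{1}$, the curve $\uu$ is irreducible and rational, it meets $\ll_{1}$ in the single point $p_{0}$, and it is smooth away from $p_{0}$; moreover the normalization $\P^{1}\to\uu$ has exactly one point over $p_{0}$, so $\uu$ has a single local branch there. By B\'ezout $\uu\cdot\bar{E}=d\deg\bar{E}$ with $d:=\deg\uu$, while the hypothesis $(\uu\setminus\ll_{1})\cdot(\bar{E}\setminus\ll_{1})=1$ says that exactly $1$ of this intersection is realized off $\ll_{1}$; hence $(\uu\cdot\bar{E})_{p_{0}}=d\deg\bar{E}-1\geq 2$, so $p_{0}\in\bar{E}\cap\ll_{1}=\{q_{1},r\}$, where $r$ is the unique point of $(\ll_{1}\setminus\{q_{1}\})\cap\bar{E}$ provided by Lemma~\ref{lem:special_lines}\ref{item:t}.

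First I would rule out $p_{0}=r$. Here $\bar{E}$ is smooth at $r$ with tangent distinct from $\ll_{1}$, and $(\uu\cdot\ll_{1})_{r}=d$ because $\uu$ meets $\ll_{1}$ only at $r$. If $\uu$ were a line, then it meets $\bar{E}\setminus\ll_{1}$ in one point, so Claim~\ref{cl:l_unique} would force $q_{2}\in\uu$; but $q_{2}\notin\ll_{1}$ is a cusp lying off $\ll_{1}$, hence an affine intersection point of multiplicity $\geq 2$, contradicting $(\uu\setminus\ll_{1})\cdot(\bar{E}\setminus\ll_{1})=1$. For $d\geq 2$ the irreducibility of $\uu$ gives $\operatorname{mult}_{r}\uu=:m\leq d-1$, and since $\uu$ is unibranch its single tangent at $r$ agrees with at most one of the two transversal smooth germs $\ll_{1},\bar{E}$. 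In each of the three possibilities one of $(\uu\cdot\ll_{1})_{r}=d$ and $(\uu\cdot\bar{E})_{r}=d\deg\bar{E}-1$ collapses to $m\leq d-1$, which is absurd. Thus $p_{0}=q_{1}$.

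With $p_{0}=q_{1}$ the curve $\uu$ passes through $q_{1}$ and meets $\bar{E}\setminus\ll_{1}$ in one point, so again by Claim~\ref{cl:l_unique} it is not a line; hence $d\geq 2$ and $\operatorname{mult}_{q_{1}}\uu\leq d-1<d=(\uu\cdot\ll_{1})_{q_{1}}$. As $\uu$ is unibranch at $q_{1}$, this strict inequality forces its tangent there to be $\ll_{1}$, the tangent line of $\bar{E}$ at the cusp $q_{1}$; that is, $\uu$ is tangent to $\bar{E}$ at $q_{1}$. Finally, if $\uu$ is smooth then, being rational and planar of degree $d\geq 2$, it is a smooth conic; in that case $(\uu\cdot\bar{E})_{q_{1}}=2\deg\bar{E}-1$, which is the defining contact of the conic $\mm$ of Claim~\ref{cl:conics}, so we are in case~\ref{item:cor-G} of Theorem~\ref{thm:geometric}. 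Setting this case aside, we may assume from now on that $\uu$ is singular.

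The main obstacle is the exclusion of $p_{0}=r$: this is where the input $\uu\setminus\ll_{1}\cong\C^{1}$ must be used in full, both to guarantee that $\uu$ has a single branch at $p_{0}$ (so that the elementary tangent-cone bookkeeping applies) and to invoke Claim~\ref{cl:l_unique} in the degenerate case $d=1$. Everything else is a short local intersection-multiplicity computation combined with the bound $\operatorname{mult}\leq\deg-1$ for irreducible plane curves, and the genus bound that makes a smooth rational plane curve of degree $\geq 2$ a conic.
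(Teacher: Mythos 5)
The first half of your argument --- showing that $\uu$ meets $\ll_{1}$ only at $q_{1}$ and is tangent to $\bar{E}$ there --- is correct and is essentially the paper's proof: your three-way case analysis on the tangent direction of the unibranch germ of $\uu$ at $p_{0}$ is a reorganization of the paper's observation that the tangent line of $\uu$ at $r$ must equal $\ll_{1}$ (because $(\ll_{1}\cdot\uu)_{r}=\deg\uu$ is the largest possible local intersection of $\uu$ with a line), followed by the same product-of-multiplicities contradiction when $r\neq q_{1}$; your use of Claim~\ref{cl:l_unique} to exclude the line case also matches the paper.

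The genuine gap is in the second assertion, ``we may assume $\uu$ is singular''. At this stage $\bar{E}$ may be of type $\cG$, $\cJ$, $\Qb$ or $\Qa$ (Claim~\ref{cl:l=l_1}), and Claim~\ref{cl:conics} is a statement \emph{only about type $\cG$}: it produces a unique conic $\mm$ with $(\mm\cdot\bar{E})_{q_{1}}=2\deg\bar{E}-1$ \emph{when} $\bar{E}$ is of type $\cG$; it does not say that the existence of a smooth conic with this contact forces $\bar{E}$ to be of type $\cG$. Hence your conclusion ``so we are in case~\ref{item:cor-G} of Theorem~\ref{thm:geometric}'' is unjustified for the other three types: for $\cJ$, $\Qb$, $\Qa$ a smooth conic $\uu$ satisfying the hypotheses would be precisely a counterexample to case~\ref{item:cor-no-asymptote} of the theorem, which is what is being proved, so this possibility must be excluded, not set aside. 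The paper closes it by a type-specific verification: since $\uu$ is smooth, all its infinitely near points are free, so $(\uu\cdot\bar{E})_{q_{1}}$ is a sum of initial terms of the multiplicity sequence of $q_{1}$ taken over \emph{free} infinitely near points of $\bar{E}$; from the sequences $(2,2)$, $(2,2,2)$ and $(2k,2k,2k,(2)_{k})$ one computes that the maximal such contact is $5$, $7$ and $6k+2$ respectively, each strictly smaller than $2\deg\bar{E}-1=9,\,9,\,8k+1$, whereas for $\cG(\gamma)$ the maximum $4(\gamma-1)+1$ equals $2\deg\bar{E}-1$, and only then does the uniqueness in Claim~\ref{cl:conics} give $\uu=\mm$. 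This is the content of the paper's sentence ``Because $\uu$ is smooth, at most two of these summands are distinct. We check directly that this is possible only if $\bar{E}$ is of type $\cG$'', and it is the step missing from your proof. Note also that, contrary to your closing remark, it is this step --- not the exclusion of $p_{0}=r$ --- where the classification data of Definition~\ref{def:our_curves} genuinely enters.
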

\begin{proof}
	Write $\uu\cap\ll_{1}=\{r\}$. Since $\uu\cdot\bar{E}\geq 2$, we have $r\in \bar{E}$. By Claim \ref{cl:l_unique}, $\uu$ is not a line, so either $\uu$ is a conic or $r\in \uu$ is its unique singular point, a cusp. Denoting by $\ll_{r}$ the line tangent to $\uu$ at $r$ we get $\ll_{1}\cdot \uu=(\ll_{1}\cdot \uu)_{r}\leq (\ll_{r}\cdot \uu)_{r}\leq \ll_{r}\cdot \uu$, so the equalities hold and we have $\ll_{r}=\ll_{1}$. Suppose   that $r\neq q_{1}$.  By Lemma \ref{lem:special_lines}\ref{item:t}, $\ll_{1}$ is not tangent to $\bar{E}$ at $r$, so neither is $\uu$. Then the number $(\uu\cdot\bar{E})_{r}=\deg\uu\cdot\deg \bar{E}-1$ is the product of multiplicities of $r\in\uu$ and $r\in\bar{E}$, hence $\deg\uu\cdot\deg \bar{E}-1\leq (\deg\uu-1)(\deg\bar{E}-1)$, so $\deg\uu+\deg\bar{E}\leq 2$; a contradiction. Thus $\uu$ is tangent to $\bar{E}$ at $q_{1}$.

	Assume that $\uu$ is a conic. Then $(\uu\cdot\bar{E})_{q_{1}}=2\deg\bar{E}-1$, so the latter is a sum of some number of initial terms of the multiplicity sequence of $q_{1}\in\bar{E}$. Because $\uu$ is smooth, at most two of these summands are distinct. We check directly that this is possible only if $\bar{E}$ is of type $\cG$. Then Claim \ref{cl:conics} implies that $\uu=\mm$.
\end{proof}

By Claim \ref{cl:U-singular} we may, and will, assume that $\uu$ is a rational unicuspidal curve meeting $\ll_{1}$ only in the cusp $q_1\in\uu$. Let $\pi_{\uu}\colon (X_{\uu},D_{\uu})\to(\P^{2},\uu)$ be the minimal log resolution. 

\begin{claim}\label{cl:theta}
	We have a decomposition $\pi=\pi_{\uu}\circ \xi$ and $\xi$ touches $\pi^{-1}_{*}\uu$.
\end{claim}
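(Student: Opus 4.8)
The plan is to show that the minimal log resolution $\pi$ of $\bar{E}$ is itself a log resolution of $\uu$, so that the factorization $\pi=\pi_{\uu}\circ\xi$ follows from the minimality of $\pi_{\uu}$, and then to compare the two resolution towers over $q_1$ to see that $\xi$ must blow up a point lying on the proper transform of $\uu$. First I would pin down the intersection pattern of $\uu$ and $\bar{E}$. By hypothesis $\uu$ meets $\ll_1$ only at $q_1$ and $(\uu\setminus\ll_1)\cdot(\bar{E}\setminus\ll_1)=1$; since by Lemma \ref{lem:special_lines}\ref{item:t} the line $\ll_1$ meets $\bar{E}\setminus\{q_1\}$ only once and transversally, the cusps $q_2,\dots,q_c$ do not lie on $\ll_1$, and the unique point of $\uu\cap\bar{E}$ off $\ll_1$ is a transversal intersection at a smooth point of both curves. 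Hence $\uu$ passes through no cusp of $\bar{E}$ except $q_1$, and B\'ezout gives the key contact $(\uu\cdot\bar{E})_{q_1}=\deg\uu\cdot\deg\bar{E}-1$. Consequently $\pi$ is an isomorphism near $\uu$ away from $q_1$ (at the transversal point $\pi^{-1}_*\uu$ meets $E$ transversally, so $\pi^{-1}_*\uu+\Exc\pi$ is automatically snc there), and only the tower of blowups of $\pi$ over $q_1$ is relevant.

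Next I would compare, over $q_1$, the blowup towers of $\pi$ and of $\pi_{\uu}$. Both start with the blowup of $q_1$ and then proceed through its infinitely near points; because $\uu$ and $\bar{E}$ share the tangent line $\ll_1$ at $q_1$ (Claim \ref{cl:U-singular}) and have contact as large as $\deg\uu\cdot\deg\bar{E}-1$, the proper transforms of $\uu$ and of $\bar{E}$ run through a long common chain of infinitely near points. The heart of the argument is to check that every center of $\pi_{\uu}$ over $q_1$ — i.e. every infinitely near point at which the proper transform of $\uu$ is singular or fails to cross the exceptional locus transversally — lies on this common chain, and is therefore a center of $\pi$ as well, and that after these blowups the proper transform of $\uu$ is already smooth and crosses $\Exc\pi$ transversally. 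Granting this, $\pi^{-1}_*\uu+\Exc\pi$ is snc, so $\pi$ factors through the minimal log resolution $\pi_{\uu}$ of $\uu$, giving $\pi=\pi_{\uu}\circ\xi$.

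Finally, for the assertion that $\xi$ touches $\pi^{-1}_*\uu$, I would argue by a contact count on $X_{\uu}$. Writing $E_{\uu}$, $U_{\uu}$ for the proper transforms of $\bar{E}$, $\uu$ there, the centers of $\pi_{\uu}$ contribute $\sum_p (\mathrm{mult}_p\bar{E})(\mathrm{mult}_p\uu)$ to the intersection, and this sum is bounded by $(\uu\cdot\bar{E})_{q_1}=\deg\uu\cdot\deg\bar{E}-1$, the deficit being measured by the common infinitely near points lying beyond the resolution of the cusp of $\uu$. Since $\bar{E}$ is tangent to $\uu$ at $q_1$ and its own cusp is not yet resolved on $X_{\uu}$, such a point survives, so $E_{\uu}\cdot U_{\uu}\geq 2$; equivalently, $E_{\uu}$ and $U_{\uu}$ still meet non-transversally on $X_{\uu}$. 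The further blowups making up $\xi$, which resolve $\bar{E}$, must therefore be centered at a point of a proper transform of $U_{\uu}$, so $\xi$ is not an isomorphism near $\pi^{-1}_*\uu$, that is, $\xi$ touches $\pi^{-1}_*\uu$. The main obstacle is the middle step: converting the single numerical input $(\uu\cdot\bar{E})_{q_1}=\deg\uu\cdot\deg\bar{E}-1$ into the precise combinatorial statement that the resolution tower of the cusp of $\uu$ sits as an initial segment of that of the cusp $q_1\in\bar{E}$ and is strictly shorter, so that $\pi$ already resolves $\uu$ while $\xi$ still has a center on it.
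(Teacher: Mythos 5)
Your opening reduction is sound: since snc-ness of reduced total transforms is preserved by further blowups, the factorization $\pi=\pi_{\uu}\circ\xi$ is indeed equivalent to $\pi$ being a log resolution of $\uu$, so comparing the towers of infinitely near points over $q_{1}$ is the right framework. But both substantive steps are left open, and neither can be filled by the contact count you propose. Write $U$, $E_{\uu}$ for the proper transforms of $\uu$, $\bar{E}$ on $X_{\uu}$. For the containment of towers you only record that it is ``the heart of the argument''. The paper proves it by showing that $U$ meets $E_{\uu}$ on $\Exc\pi_{\uu}$, and this step is not numerical: it uses Tono's structure theorem for curves of Abhyankar--Moh--Suzuki type (applicable because $\uu\setminus\ll_{1}\cong\C^{1}$) to obtain the multiplicity sequence \eqref{eq:Tono_AMS} and the inequality $U^{2}=d_{s}\geq 0$, and then the log general type hypothesis: if the branches had separated, i.e.\ $U\cdot E_{\uu}=1$, then after $U^{2}$ blowups over $U\cap E_{\uu}$ the linear system of the proper transform of $U$ would induce a $\P^{1}$-fibration restricting to a $\C^{*}$-fibration of $\P^{2}\setminus\bar{E}$, contradicting $\kappa(\P^{2}\setminus\bar{E})=2$. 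Nothing in your argument invokes either ingredient, and the single equality $(\uu\cdot\bar{E})_{q_{1}}=\deg\uu\cdot\deg\bar{E}-1$ does not by itself prevent early separation.

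The ``touching'' part contains an actual error, not just a gap. You argue that $E_{\uu}$ and $U$ still meet non-transversally on $X_{\uu}$, hence the blowups of $\xi$ ``must be centered at a point of a proper transform of $U$''. This inference is false: the centers of $\pi$ are dictated by the failure of $E_{\uu}+\Exc\pi_{\uu}$ to be snc, and $U$ is \emph{not} a component of that divisor. Concretely, suppose $E_{\uu}$ is smooth at the common point of $U$, $E_{\uu}$ and the $(-1)$-curve of $\Exc\pi_{\uu}$, crosses that $(-1)$-curve transversally there, and is tangent to $U$ to high order. Then your intersection count is satisfied, yet $E_{\uu}+\Exc\pi_{\uu}$ is snc, $\pi$ has no further center over $q_{1}$, and $\xi$ does not touch $\pi^{-1}_{*}\uu$ -- the claim would fail. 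This scenario is precisely what the paper must exclude: it forces $q_{1}\in\uu$ and $q_{1}\in\bar{E}$ to have equal multiplicity sequences, which on comparing \eqref{eq:Tono_AMS} with Table \ref{table:nofibrations} is combinatorially possible exactly for type $\Qa$ with $\deg\uu=4$, and ruling that case out requires the log BMY inequality \eqref{eq:BMY} applied to the minimal log resolution of $(X_{\uu},D_{\uu}+E_{\uu})$. Note also that your stated justification (``its own cusp is not yet resolved on $X_{\uu}$'') is circular, being equivalent to the assertion that the tower of $\pi$ is strictly longer than that of $\pi_{\uu}$, i.e.\ to the touching statement itself. In short, the claim genuinely depends on global inputs -- log general type, Tono's theorem, the classification data, and BMY -- none of which appear in your proposal.
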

\begin{proof}
Let $U$ and $E_\uu$ be the proper transforms on $X_\uu$ of $\uu$ and $\bar E$, respectively. The curve $\uu$ is of Abhyankar--Moh--Suzuki type in the sense of \cite{Tono-equations_cusp_curves}. By Theorem 1.1(iv) loc.\ cit.\ there exist integers $s\geq 0$, $k_{1},\dots,k_{s+1}\geq 1$ such that putting $d_{i}=(k_{i+1}+1)\cdot\ldots\cdot(k_{s+1}+1)$ for $i\in \{0,\dots,s\}$, we have $\deg\uu=d_{0}$ and $q_{1}\in \uu$ has multiplicity sequence
\begin{equation}\label{eq:Tono_AMS}
(k_{1}d_{1},(d_{1})_{2k_{1}},\dots, k_{s}d_{s},(d_{s})_{2k_{s}},k_{s+1},(1)_{k_{s+1}})
\end{equation}
(including all $1$'s in the end). Lemma \ref{lem:HN-equations}\eqref{eq:3deg} gives
\begin{equation*}
U^{2}=3d_{0}-3\sum_{i=1}^{s}k_{i}d_{i}-2k_{s+1}-2=3d_{0}-3\sum_{i=1}^{s}(d_{i-1}-d_{i})-2d_{s}=d_{s}\geq 0.
\end{equation*}

Suppose first that $U$ does not meet $E_{\uu}$ on $\Exc\pi_{\uu}$. Then $U\cdot E_{\uu}=1$ and after $U^{2}\geq 0$ blowups over  $U\cap E_{\uu}$ the linear system of the proper transform of $U$ induces a $\P^{1}$-fibration which restricts to a $\C^{*}$-fibration of $\P^{2}\setminus \bar{E}$. This gives $\kappa(\P^{2}\setminus \bar{E})\leq 1$; a contradiction.

The minimal log resolutions of the cusps $q_j\in \bar E$, $j\in \{2,\dots, c\}$ do not touch $\uu$, so it is enough to show that the common point of $E_\uu$ and $\Exc\pi_{\uu}$ is not a point of normal crossings of $E_\uu+\Exc\pi_{\uu}$. Suppose it is. Since $\pi_\uu$ is minimal, the last $(-1)$-curve in $\Exc \pi_\uu$ meets two components of $D_\uu-U$. It follows that $q_{1}\in \bar{E}$ and $q_{1}\in \uu$ have the same multiplicity sequences. The last two terms (except the $1$'s) in the multiplicity sequence of $q_{1}\in \bar{E}$ are equal (see Table \ref{table:nofibrations}), so in \eqref{eq:Tono_AMS} we have $s\geq 1$ and $k_{s+1}=1$, hence $d_s=2$ and the sequence is $(2k_{s},(2)_{2k_{s}},1,1)$ if $s=1$ or ends with
\begin{equation*}
(k_{s-1}d_{s-1},(d_{s-1})_{2k_{s-1}},2k_{s},(2)_{2k_{s}},1,1),\ \  d_{s-1}=2(k_s+1)
\end{equation*}
if $s\geq 2$. Looking at Table \ref{table:nofibrations} we see that this is possible only if $k_{s}=1$ and $s=1$. In this case $\bar{E}$ is of type $\Qa$ and $\deg \uu=d_{0}=4$. As a consequence,  $(\bar{E}\cdot\uu)_{q_{1}}=\deg\bar{E}\cdot\deg\uu-1=19$. We have $I(q_{1})=3\cdot 4+2\cdot 1=14$ (see Lemma \ref{lem:HN-equations}), so $U$ meets $E_{\uu}$ on $\Exc\pi_{\uu}$ with multiplicity $19-I(q_{1})=5$. It follows that the minimal log resolution $\pi'\colon (X',D')\to (X_\uu,D_\uu+E_\uu)$ is a composition of minimal log resolutions of the cusps $\pi_{\uu}^{-1}(q_{j})\in E_\uu$, $j\in\{2,3,4\}$ with five blowups over the point of tangency of $U$ and $E_\uu$ on $\Exc \pi_\uu$. We argue as in the proof of Claim \ref{cl:l_unique}. Because $\uu\setminus \bar{E}\cong \C^{*}$, we have 
\begin{equation*}
\etop(X'\setminus D')=\etop (\P^{2}\setminus (\bar{E}+\uu))=\etop(\P^{2}\setminus \bar{E})=1.
\end{equation*}
Put $U'=(\pi')^{-1}_*U$. Since each of the five blowups touches the image of $U'$, we have  $(U')^{2}=(\deg\uu)^{2}-I(q_{1})-5=-3$, so $D'$ is snc-minimal and $K_{X'}\cdot U'=-2-(U')^{2}=1$. The center of each of those blowups is a node of the respective preimage of $D$, so as before we obtain $K_{X'}\cdot(K_{X'}+D'-U')=K_{X}\cdot(K_{X}+D)=0$. This gives $(K_{X'}+D')^{2}=K_{X'}\cdot U'+D'\cdot(K_{X}'+D')=1$. The twigs of $D'$ are exactly the proper transforms of the twigs of $D$, so $\ind(D')=3\cdot\left(\tfrac{1}{3}+\tfrac{1}{2}\right)+\tfrac{1}{2}+\tfrac{5}{7}>2$. This contradicts the log BMY inequality (see \eqref{eq:BMY}).
\end{proof}

Since $\uu$ is singular and $\pi_\uu$ is a minimal log resolution, the unique $(-1)$-curve $V$ in $D_\uu-U$ meets $U$ and two other components of $D_\uu$. By Claim \ref{cl:theta}, $E_\uu$ meets $\Exc \pi_\uu$ at the point $V\cap U$, which is the center of the next blowup in the decomposition of $\pi$. It follows that $Q_{1}'$ is not a chain, so $\bar{E}$ is of type $\cJ(k)$ for some $k\geq 2$. The divisor $D_\uu$ is snc, so the proper transform of $U$ on $X$ meets a tip of $Q_{1}'$, the same as $C_{1}'$ (see Figure \ref{fig:J_line}). As a consequence, the multiplicity sequence of $q_{1}\in \uu$ equals $(k)_{3}$, we have $\deg \uu=\uu\cdot \ll_1=(\uu\cdot\ll_{1})_{q_{1}}=2k$ and $$\uu\cdot \bar E-1=(\uu\cdot\bar{E})_{q_{1}}=3\cdot 2k\cdot k+k\cdot 2\cdot 1+1=6k^{2}+2k+1.$$ Then $2k\cdot(4k+1)=\deg\uu\cdot\deg\bar{E}=6k^{2}+2k+2$, so $k=1$; a contradiction. \qed

\smallskip
\subsection{Log deformations and the proof of Theorem \ref{thm:geom_conseq}.}

We recall some results on logarithmic deformations. Let $T$ be an snc divisor on a smooth projective surface $V$. The deformation theory of the pair $(V,T)$ is described in terms of the cohomology of the logarithmic tangent sheaf $\lts{V}{T}$, that is, the sheaf of those $\O_{V}$-derivations which preserve the ideal sheaf of $T$. The number $h^{1}(\lts{V}{T})$ is the number of moduli for log deformations of the pair $(V,T)$ and $H^{2}(\lts{V}{T})$ is the space of obstructions for extending infinitesimal deformations, see \cite[Lemma 1.1]{FZ-deformations}. We need the following lemma.

\begin{lem}[Properties of $\lts{V}{T}$, \cite{FZ-deformations}]\label{lem:rig}  
	Let $S$ be a smooth surface and let $(V,T)$ be some log smooth completion of $S$. Then
	\begin{enumerate}
		\item\label{item:rig_bl} The number $h^{2}(\lts{V}{T})$ depends only on $S$. 
		\item\label{item:rig_surg} If $L\subseteq T$ is a $(-1)$-curve then $h^{i}(\lts{V}{(T-L)})=h^{i}(\lts{V}{T})$ for $i\geq 0$. 
		\item\label{item:rig_C**} If $V$ admits a $\P^{1}$-fibration such that $F\cdot T\leq 3$ for a fiber $F$ then $H^{2}(\lts{V}{T})=0$.
		\item\label{item:rig_chi} $\chi(\lts{V}{T})=K_{V}\cdot (K_{V}+T)+2\chi(\O_{V})-\etop(V)+r-\sum_{i=1}^{r}p_{a}(T_{i})$,  where $T_{1},\dots, T_{r}$ are the components of $T$.
		\item\label{item:rig_chi_QHP} If $S$ is $\Q$-acyclic then $\chi(\lts{V}{T})=K_{V}\cdot (K_{V}+T)$.
	\end{enumerate}
\end{lem}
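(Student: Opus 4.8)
The statement to prove is Lemma \ref{lem:rig}, which collects five properties of the logarithmic tangent sheaf $\lts{V}{T}$. Since this is attributed to \cite{FZ-deformations}, the proof will mostly consist of citing that reference and supplying short arguments where the specialization is needed. I present a plan part by part.

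\textbf{Plan.} The core computational facts are the Riemann--Roch type formula \ref{item:rig_chi} and the blowup invariance \ref{item:rig_bl}, \ref{item:rig_surg}; the remaining parts are consequences. First, for \ref{item:rig_chi} I would invoke the Hirzebruch--Riemann--Roch theorem for the locally free sheaf $\lts{V}{T}$ on the smooth projective surface $V$, whose Chern classes are computable from the exact sequence
\begin{equation*}
0\to \lts{V}{T}\to \mathcal{T}_{V}\to \bigoplus_{i=1}^{r}\O_{T_{i}}(T_{i})\to 0,
\end{equation*}
using $c_{1}(\mathcal{T}_{V})=-K_{V}$ and $c_{2}(\mathcal{T}_{V})=\etop(V)$. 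Computing $\chi(\lts{V}{T})=\tfrac{1}{2}c_{1}(\lts{V}{T})\cdot(c_{1}(\lts{V}{T})-K_{V})+\chi(\O_{V})$ and simplifying with adjunction $2p_{a}(T_{i})-2=T_{i}\cdot(K_{V}+T_{i})$ yields the stated formula; this is the content of \cite[Lemma 1.1]{FZ-deformations}. Part \ref{item:rig_chi_QHP} then follows immediately: if $S$ is $\Q$-acyclic then each $T_{i}$ is rational (so $p_{a}(T_{i})=0$), the dual graph of $T$ is a tree, and $\etop(V)=\#T+2$ together with $\chi(\O_{V})=1$ makes the last three terms of \ref{item:rig_chi} cancel, leaving $\chi(\lts{V}{T})=K_{V}\cdot(K_{V}+T)$.

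\textbf{The invariance parts.} For \ref{item:rig_bl} and \ref{item:rig_surg} I would use the behaviour of $\lts{V}{T}$ under blowups at points of $T$, which is the key technical input of \cite{FZ-deformations}. If $\sigma\colon V'\to V$ is the blowup at a point $p$ of $T$ and $T'$ is the reduced total transform of $T$ (snc, with the exceptional $(-1)$-curve added), then there is a natural isomorphism relating $\lts{V'}{T'}$ and $\sigma^{*}\lts{V}{T}$ whose higher direct images vanish, giving $h^{i}(\lts{V'}{T'})=h^{i}(\lts{V}{T})$ for all $i\geq 0$. Since any two log smooth completions of $S$ are dominated by a common one obtained by such blowups, $h^{2}(\lts{V}{T})$ depends only on $S$, proving \ref{item:rig_bl}. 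Part \ref{item:rig_surg} is the local statement that contracting a $(-1)$-curve $L\subseteq T$ meeting the rest of $T$ in at most two points preserves all $h^{i}$; this is precisely the reverse of the blowup computation, so it reduces to the same local cohomology vanishing.

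\textbf{The vanishing criterion and the main obstacle.} Part \ref{item:rig_C**} is the one requiring genuine argument. Given a $\P^{1}$-fibration $V\to B$ with $F\cdot T\leq 3$ for a general fiber $F$, I would prove $H^{2}(\lts{V}{T})=0$ by Serre duality: $h^{2}(\lts{V}{T})=h^{0}(\Omega_{V}^{1}(\log T)\otimes\O_{V}(K_{V}))$, and I would show this space vanishes by analysing the restriction of log one-forms to the general fiber. The bound $F\cdot T\leq 3$ controls the degree of $\Omega_{F}^{1}(\log(T\cap F))\otimes\O_{F}(K_{V})$ on $F\cong\P^{1}$, forcing it to be a line bundle of negative degree and hence with no sections; a relative-to-base spectral sequence or direct pushforward argument along the fibration then kills the global $H^{2}$. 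I expect this fiberwise degree estimate, together with correctly handling the twist by $K_{V}$ and the contribution of degenerate fibers, to be the main obstacle; everything else is either a direct citation of \cite{FZ-deformations} or a formal consequence of the two facts above. Throughout, I would lean on Lemma \ref{lem:Qhp_has_no_lines} implicitly only insofar as it guarantees the $\Q$-acyclic hypotheses in \ref{item:rig_chi_QHP} are the ones relevant to our applications.
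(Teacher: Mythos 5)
Your treatment of parts \ref{item:rig_chi} and \ref{item:rig_chi_QHP} coincides with the paper's: the paper also uses the exact sequence $0\to \lts{V}{T}\to \mathcal{T}_{V}\to \bigoplus_{i}\mathcal{N}_{T_{i}/V}\to 0$ (Kawamata) together with Riemann--Roch and the Noether formula, and deduces \ref{item:rig_chi_QHP} from $\chi(\O_{V})=1$, $\etop(V)=r+2$, $p_{a}(T_{i})=0$; whether one applies Hirzebruch--Riemann--Roch to $\lts{V}{T}$ directly or uses additivity of $\chi$ is immaterial. For parts \ref{item:rig_bl}, \ref{item:rig_surg}, \ref{item:rig_C**} the paper simply cites \cite{FZ-deformations} (Lemma 1.5(5), Propositions 1.7(3), 6.2), whereas you attempt proofs; your Serre-duality/fiberwise plan for \ref{item:rig_C**} is workable (on a general fiber $F$ both the sub $\mathcal{N}^{\vee}_{F/V}\otimes\O(K_{V})|_{F}\cong\O_{\P^{1}}(-2)$ and the quotient $\Omega^{1}_{F}(\log (T\cap F))\otimes\O(K_{V})|_{F}\cong\O_{\P^{1}}(T\cdot F-4)$ have negative degree, so sections vanish on general fibers and hence globally; degenerate fibers play no role).

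However, your argument for \ref{item:rig_bl} and \ref{item:rig_surg} contains a genuine error. The claim that for a blowup $\sigma\colon V'\to V$ at any point of $T$, with $T'$ the reduced total transform, one has $h^{i}(\lts{V'}{T'})=h^{i}(\lts{V}{T})$ for \emph{all} $i$, is false for \emph{outer} blowups (centers at smooth points of $T$): there the formula \ref{item:rig_chi} gives $\chi(\lts{V'}{T'})=\chi(\lts{V}{T})-1$, since $K_{V'}\cdot(K_{V'}+T')=K_{V}\cdot(K_{V}+T)-1$, $\etop$ and $r$ each increase by $1$, and $\sum p_{a}$ is unchanged; so at most $h^{0}$ and $h^{2}$ (not $h^{1}$) can be invariant. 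This only affects \ref{item:rig_bl} cosmetically, since there one needs invariance of $h^{2}$ alone, which does hold (one has $\sigma_{*}\lts{V'}{T'}\subseteq\lts{V}{T}$ with skyscraper quotient and $R^{1}\sigma_{*}=0$). But it breaks your proof of \ref{item:rig_surg}: you have also misread that statement. It does not assert invariance under \emph{contraction} of a superfluous $(-1)$-curve (and indeed, for a $(-1)$-curve with $\beta_{T}(L)=1$ the contraction is an outer blowdown, under which the $h^{i}$ are \emph{not} all preserved, by the Euler characteristic count above); it asserts that deleting $L$ from the boundary \emph{on the same surface} $V$ changes no $h^{i}$, with no hypothesis on $\beta_{T}(L)$. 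The correct proof is via the exact sequence
\begin{equation*}
0\to \lts{V}{T}\to \lts{V}{(T-L)}\to \mathcal{N}_{L/V}\to 0,
\end{equation*}
whose quotient is the full (untwisted) normal bundle: in local coordinates with $L=\{x=0\}$ the quotient is generated by the class of $\partial_{x}$ both at points of $L\setminus(T-L)$ and at the nodes $L\cap(T-L)$. Since $L$ is a $(-1)$-curve, $\mathcal{N}_{L/V}\cong\O_{\P^{1}}(-1)$ has vanishing cohomology in all degrees, which gives \ref{item:rig_surg}; this is also exactly where the hypothesis $L^{2}=-1$ enters, something your reduction never uses.
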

\begin{proof}
	\ref{item:rig_bl}, \ref{item:rig_surg}, \ref{item:rig_C**} are shown in \cite[Lemma 1.5(5) and Propositions 1.7(3), 6.2]{FZ-deformations}.
	
		\ref{item:rig_chi} Since $T$ is snc, the exact sequence \cite[Proposition 1(2)]{Kawamata_deformations} reads as  
		\begin{equation*}
		0\to \lts{V}{T}\to \mathcal{T}_{V} \to \bigoplus_{i=1}^{r}\mathcal{N}_{T_{i}/V}\to 0,
		\end{equation*}
		so $\chi(\lts{V}{T})=\chi(\mathcal{T}_{V})-\sum_{i=1}^{r}\chi(\mathcal{N}_{T_{i}/V})$. We have $c_{2}(\mathcal{T}_{V})=\etop(V)$ and $-c_{1}(\mathcal{T}_{V})=c_{1}(\Omega^{1}_{V})=c_{1}(\bigwedge^{2}\Omega_{V}^{1})=K_{V}$, hence using the Riemann-Roch theorem and the Noether formula we get $\chi(\mathcal{T}_{V})=2\chi(\O_{V})+K_{V}^{2}-\etop(V).$ Since $\mathcal{N}_{T_{i}/V}\cong \O_{T_{i}}(T_{i}^{2})$, the Riemann-Roch theorem and the adjunction formula give
		\begin{equation*}
		\chi(\mathcal{N}_{T_{i}/V})=\chi(\O_{T_{i}}(T_{i}^{2}))=T_{i}^{2}-p_{a}(T_{i})+1=p_{a}(T_{i})-K_{V}\cdot T_{i}-1.
		\end{equation*}
		
		\ref{item:rig_chi_QHP} \cite[Lemma 1.3(5)]{FZ-deformations}. If $V\setminus T$ is $\Q$-acyclic then $V$ is rational, the components of $T$ are rational and freely generate $H_{2}(V,\Q)$. We obtain $\chi(\O_{V})=1$ and $\etop(V)=2+r$. 
		Thus \ref{item:rig_chi_QHP} follows from \ref{item:rig_chi}. 
\end{proof}

In \cite{FZ-deformations} Flenner and Zaidenberg made the following conjecture (cf. \cite[1.3]{Zaidenberg-open_problems}).

\begin{conjecture}[The Strong Rigidity Conjecture]\label{conj:rig}
Let $S$ be a $\Q$-acyclic surface of log general type. Then for a minimal log smooth completion $(X,D)$ of $S$ we have $H^{i}(\lts{X}{D})=0$ for $i\geq 0$.	
\end{conjecture}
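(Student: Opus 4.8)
The final statement to prove is Theorem~\ref{thm:geom_conseq}, and in particular its part~\ref{item:rig}, namely that $H^{2}(\lts{X}{D})=0$, together with the deduction of the Strong Rigidity Conjecture. The plan is to leverage the full classification established in Theorem~\ref{thm:main} together with the fibration criterion of Lemma~\ref{lem:rig}\ref{item:rig_C**}. First I would observe that by Lemma~\ref{lem:rig}\ref{item:rig_bl} the number $h^{2}(\lts{X}{D})$ depends only on the surface $S=\P^2\setminus\bar E$, not on the chosen log smooth completion, so I am free to work with any convenient completion of $S$. The key idea is that for \emph{every} curve $\bar E$ in the classification I have a $\P^1$-fibration on a suitable completion $(V,T)$ of $S$ with $F\cdot T\leq 3$ for a general fiber $F$, and then Lemma~\ref{lem:rig}\ref{item:rig_C**} gives $H^{2}(\lts{V}{T})=0$ immediately. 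Transporting this back to $(X,D)$ via Lemma~\ref{lem:rig}\ref{item:rig_bl}, or via the blow-up/down invariance in Lemma~\ref{lem:rig}\ref{item:rig_surg}, yields the desired vanishing on the minimal log resolution.

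\textbf{Case analysis via Theorem~\ref{thm:geom_conseq}\ref{item:fibr}.} The natural organizing principle is part~\ref{item:fibr} of the very theorem being proved: $S$ has a fibration over $\P^1$ or $\C^1$ with general fiber $\C^1$, $\C^*$, $\C^{**}$ or $\C^{***}$, and in the latter three cases the fibration can be chosen with no base point on $X$. If the general fiber is $\C^1$ then $S$ is not of log general type (Lemma~\ref{lem:Qhp_has_no_lines}), so either $\kappa(S)\leq 1$ and the rigidity statement is vacuous or proved by other means, or we are in the log general type case covered below. When $S$ is of log general type satisfying the Negativity Conjecture, Proposition~\ref{prop:finding_C3st} supplies, on the minimal log resolution $(X,D)$ itself, a $\P^1$-fibration satisfying \eqref{eq:C3st-condition}, in particular with $D\cdot F=4$. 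This is one fiber too many for Lemma~\ref{lem:rig}\ref{item:rig_C**} as stated. The remedy is to enlarge the boundary rather than shrink it: I would add to $X$ a suitable curve meeting $D$ so as to pass to a completion where a fiber meets the boundary in at most three points, or more directly exploit that the general fiber of the induced fibration on $S$ is $\C^{***}$ (i.e.\ $\C^1$ minus three points, coming from $D\cdot F=4$ with one component of $F$ interior), and run the $\C^{**}$-type argument on the associated $\C^{**}$- or $\C^*$-fibration obtained after contracting one boundary section.

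\textbf{The main obstacle} is bridging the gap between the $F\cdot T\leq 3$ hypothesis of Lemma~\ref{lem:rig}\ref{item:rig_C**} and the $D\cdot F=4$ fibrations that Proposition~\ref{prop:finding_C3st} naturally produces. The cleanest route, which I expect to be decisive, is to note that removing a $(-1)$-curve from the boundary does not change $h^{2}$ (Lemma~\ref{lem:rig}\ref{item:rig_surg}), and that a $0$-curve section or a superfluous component in a fiber can be contracted so as to drop $F\cdot T$ by one without disturbing the isomorphism type of the open surface. Concretely, for each of the finitely many series in Definition~\ref{def:our_curves} (and the $\C^{**}$-fibered curves of \cite{PaPe_Cstst-fibrations_singularities}) I would identify, using the explicit weighted graphs in Figures~\ref{fig:FE}--\ref{fig:J}, a completion on which some $\P^1$-fibration meets the boundary at most three times; since there are only finitely many types and each carries an explicit fibration, this is a finite check. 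Once $H^{2}(\lts{X}{D})=0$ is established, the Strong Rigidity Conjecture~\ref{conj:rig} follows for these surfaces because $H^{1}(\lts{X}{D})$ vanishes as well: one computes $\chi(\lts{X}{D})=K_X\cdot(K_X+D)$ by Lemma~\ref{lem:rig}\ref{item:rig_chi_QHP}, evaluates this to be $0$ using $h^{0}(2K_X+D)=K_X\cdot(K_X+D)$ (the formula of \cite[Lemma 4.3(i)]{Palka-minimal_models}) together with $\kappa(K_X+\tfrac12 D)=-\infty$, and then $h^{0}(\lts{X}{D})=0$ holds since $S$ is of log general type, so $h^{1}=h^{0}-\chi+h^{2}=0$.
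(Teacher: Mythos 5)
Your plan is essentially the paper's own proof (Proposition \ref{prop:rig}): reduce everything to the vanishing of $H^{2}(\lts{X}{D})$ via $h^{0}(\lts{X}{D})=0$ (finiteness of $\Aut(X,D)$ in log general type) and $\chi(\lts{X}{D})=K_{X}\cdot(K_{X}+D)=h^{0}(2K_{X}+D)=0$, and then kill $H^{2}$ type by type through the classification, using Lemma \ref{lem:rig}\ref{item:rig_bl},\ref{item:rig_surg} to add/remove $(-1)$-curves and contract, and Lemma \ref{lem:rig}\ref{item:rig_C**} once a $\P^{1}$-fibration with boundary intersection at most $3$ is found. The case-by-case check you defer is exactly the content of the paper's argument: for types $\cI$ and $\cJ$ one removes the \emph{horizontal} $(-1)$-curve $C_{2}'$ and exhibits an explicit fiber, for types $\FE$ and $\cH$ one first adds the lines $A$, $A'$ to the boundary and snc-minimalizes before removing a $(-1)$-curve, and for types $\Qb$, $\Qa$, $\FZb$ the paper simply quotes Flenner--Zaidenberg.
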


As discussed in \cite[Conjecture 2.6]{Palka-minimal_models}, Negativity Conjecture \ref{conj} implies the Weak Rigidity Conjecture, which asserts that $\chi(\lts{X}{D})=0$. A posteriori, from our classification we can deduce the following stronger result for $\Q$-acyclic surfaces which are complements of planar rational cuspidal curves.

\begin{prop}[Negativity implies Strong Rigidity] \label{prop:rig}
	Let $\bar{E}\subseteq \P^{2}$ be a rational cuspidal curve such that $\P^{2}\setminus \bar{E}$ is of log general type. If $\bar{E}\subseteq \P^{2}$ satisfies  Negativity Conjecture \ref{conj} then it satisfies  Strong Rigidity Conjecture \ref{conj:rig}.
\end{prop}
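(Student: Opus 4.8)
The plan is to show that $H^i(\lts{X}{D})=0$ for all $i\geq 0$ by combining our classification with the deformation-theoretic machinery recalled in Lemma \ref{lem:rig}. The key observation is that the Negativity Conjecture gives us two decisive inputs: first, by Theorem \ref{thm:main}, $\bar E$ either has a $\C^{**}$-fibration of its complement or is one of the finitely many (families of) types in Definition \ref{def:our_curves}; second, in the log general type case we already know the Weak Rigidity statement $\chi(\lts{X}{D})=0$ holds, since by Lemma \ref{lem:rig}\ref{item:rig_chi_QHP} we have $\chi(\lts{X}{D})=K_X\cdot(K_X+D)$, and by \cite[Lemma 4.3(i)]{Palka-minimal_models} this equals $h^0(2K_X+D)$, which vanishes because $\kappa(K_X+\tfrac12 D)=-\infty$ (established for all our curves in Proposition \ref{prop:finding_C3st}). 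Thus it suffices to prove $H^2(\lts{X}{D})=0$, as this is exactly part \ref{item:rig} of Theorem \ref{thm:geom_conseq}; then $h^0=h^1=h^2=0$ follows from $\chi=0$ together with $h^2=0$ and the standard fact $h^0(\lts{X}{D})=0$ for a surface of log general type (the log tangent sheaf of such a surface has no global sections).

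The heart of the matter is therefore the vanishing $H^2(\lts{X}{D})=0$, and the plan is to produce, on the minimal log resolution $(X,D)$, a $\P^1$-fibration for which a general fiber $F$ satisfies $F\cdot D\leq 3$, and then invoke Lemma \ref{lem:rig}\ref{item:rig_C**}. Here I must be slightly careful, because Lemma \ref{lem:rig}\ref{item:rig_bl},\ref{item:rig_surg} tell us that $h^2(\lts{V}{T})$ depends only on the surface $S=V\setminus T$ and is unchanged under removing $(-1)$-curves from $T$; so I am free to pass from the minimal log completion to any convenient log smooth completion of $\P^2\setminus\bar E$, in particular to one carrying an explicit $\P^1$-fibration. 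For curves whose complement admits a $\C^{**}$-fibration (case \ref{item:thm_C**}) this is immediate, since after extending to a $\P^1$-fibration with no base points we may arrange a completion $(V,T)$ with $F\cdot T=3$. For the del Pezzo types (case \ref{item:thm_del-Pezzo}) I would instead reuse the $\P^1$-fibrations already constructed in the proof of Proposition \ref{prop:finding_C3st}: there we exhibited, for each type in Definition \ref{def:our_curves}, a $\P^1$-fibration of $X$ satisfying condition \eqref{eq:C3st-condition}, which in particular has $F\cdot D=4$. The index-$4$ fibrations are not directly usable in Lemma \ref{lem:rig}\ref{item:rig_C**}, so the plan is to modify the completion: blow up a suitable base point / contract a horizontal $(-1)$-curve in a fiber so that on the new completion $(V,T)$ the generic fiber meets $T$ in only three points, while keeping $V\setminus T\cong \P^2\setminus\bar E$.

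The step I expect to be the main obstacle is precisely this reduction from an index-$4$ fibration to an index-$\leq 3$ one while staying within an snc completion of the \emph{same} open surface. Since by Lemma \ref{lem:Qhp_has_no_lines} the complement $\P^2\setminus\bar E$ contains no curve isomorphic to $\C^1$, one cannot simply drop to a $\C^1$- or $\C^{**}$-fibration of the complement itself — indeed no such fibration exists. The resolution is that Lemma \ref{lem:rig}\ref{item:rig_C**} only needs a $\P^1$-fibration of the \emph{projective} surface $V$ with small fiber–intersection with $T$, not a fibration of the affine part; so the $\C^{***}$-fibration built in Proposition \ref{prop:finding_C3st}, whose general fiber meets $\bar E$'s completion in four points with one of them on a horizontal $(-2)$-component, can be converted by contracting that horizontal component (which is a $(-1)$-curve after an elementary transformation along the fiber) into a fibration of index $3$ on a different, still snc, completion. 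Because Lemma \ref{lem:rig}\ref{item:rig_bl} guarantees $h^2$ is a birational invariant of $S$, any such completion computes the same $h^2(\lts{X}{D})$, and the vanishing follows. Finally, once $H^2(\lts{X}{D})=0$ is established, Proposition \ref{prop:rig} follows by noting that the chain of implications (Negativity $\Rightarrow$ Weak Rigidity $\Rightarrow \chi=0$) combined with $h^0=h^2=0$ forces $h^1=0$ as well, which is the full content of the Strong Rigidity Conjecture \ref{conj:rig}.
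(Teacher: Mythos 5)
Your reduction to the vanishing $H^{2}(\lts{X}{D})=0$ (via $h^{0}(\lts{X}{D})=0$ for log general type and $\chi(\lts{X}{D})=h^{0}(2K_{X}+D)$), and your treatment of the $\C^{**}$-fibered case, both match the paper. The gap is in the cases of Definition \ref{def:our_curves}, and it is not a technical wrinkle but a genuine obstruction. You propose to produce a $\P^{1}$-fibration with $F\cdot T\leq 3$ on some snc completion $(V,T)$ \emph{of the same open surface} $\P^{2}\setminus\bar{E}$, and then invoke Lemma \ref{lem:rig}\ref{item:rig_bl},\ref{item:rig_C**}. No such completion exists: if $V\setminus T\cong\P^{2}\setminus\bar{E}$, then a $\P^{1}$-fibration of $V$ with $F\cdot T\leq 3$ restricts to a fibration of $\P^{2}\setminus\bar{E}$ whose general fiber is $\P^{1}$ with at most three points removed, i.e.\ a $\P^{1}$-, $\C^{1}$-, $\C^{*}$- or $\C^{**}$-fibration; the first three are excluded because $\kappa(\P^{2}\setminus\bar{E})=2$, and the last by Proposition \ref{prop:no_Cstst}. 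So your claim that Lemma \ref{lem:rig}\ref{item:rig_C**} ``only needs a fibration of the projective surface, not of the affine part'' is a false dichotomy: once the open surface is fixed, the two are the same thing, and this is precisely why Proposition \ref{prop:finding_C3st} could achieve $F\cdot D=4$ but never $3$. Your proposed rescue (an elementary transformation making the horizontal component a $(-1)$-curve, then contracting it) does not escape this: either the operation produces another completion of the same surface, and the obstruction above applies, or it changes the open surface, and then Lemma \ref{lem:rig}\ref{item:rig_bl} no longer gives equality of $h^{2}$. Note also that the horizontal component in \eqref{eq:C3st-condition} is a $(-2)$-curve, so Lemma \ref{lem:rig}\ref{item:rig_surg} cannot be applied to it directly.

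The missing idea is that one must deliberately \emph{change the open surface} using Lemma \ref{lem:rig}\ref{item:rig_surg}, which preserves all $h^{i}$ when a $(-1)$-curve is added to or removed from the boundary. This is what the paper does: for types $\FE$ and $\cH$ it adds to $D$ the $(-1)$-curves $A,A'$ (the proper transforms of the lines $\ll_{12},\ll_{1}$ from Lemma \ref{lem:special_lines}), snc-minimalizes, and removes a further $(-1)$-curve, after which an explicit chain in the new boundary supports a fiber meeting it with total multiplicity $3$; for types $\cI$ and $\cJ$ it removes $C_{2}'$ (and for $\cJ$ adds the proper transform of $\ll_{1}$), again reaching $F\cdot T=3$; for $\Qb$, $\Qa$, $\FZb$ it simply cites \cite[Corollary 2.4]{FlZa_cusps_d-3}. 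The resulting boundaries are no longer boundaries of $\P^{2}\setminus\bar{E}$, which is exactly what makes fiber index $3$ attainable; the cohomological identification along the way rests on Lemma \ref{lem:rig}\ref{item:rig_surg}, not on Lemma \ref{lem:rig}\ref{item:rig_bl}. Without this step your argument cannot be completed for any of the del Pezzo types.
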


\begin{proof}
	 Since $X\setminus D$ is of log general type, by \cite[Theorem 11.12]{Iitaka_AG} $\Aut(X,D)$ is finite, so $(X,D)$ has no infinitesimal automorphisms, that is, $H^{0}(\lts{X}{D})=0$. Hence by Lemma \ref{lem:rig}\ref{item:rig_chi_QHP}
	 \begin{equation*}
	 h^{2}(\lts{X}{D})-h^{1}(\lts{X}{D})=\chi(\lts{X}{D})=K_{X}\cdot (K_{X}+D)=h^{0}(2K_{X}+D)\geq 0,
	 \end{equation*}
	 where the last equality follows from \cite[Lemma 4.3(i)]{Palka-minimal_models}. Hence it suffices to show that
	 \begin{equation}\label{eq:rig}
	 H^{2}(\lts{X}{D})=0.
	 \end{equation}
	
	If $\P^{2}\setminus \bar{E}$ has a $\C^{**}$-fibration then it extends to a $\P^1$-fibration of some blowup of $X$, in which case \eqref{eq:rig} follows from Lemma \ref{lem:rig}\ref{item:rig_bl},\ref{item:rig_C**}.  Therefore, by Theorem \ref{thm:main} we may assume that $\bar{E}\subseteq \P^{2}$ is of one of the types listed in Definition \ref{def:our_curves}. For types $\Qb$, $\Qa$ or $\FZb$ the equality \eqref{eq:rig} follows from \cite[Corollary 2.4]{FlZa_cusps_d-3}, so it remains to consider the types $\FE$, $\cH$, $\cI$ and $\cJ$.  We use the notation from the beginning of Section \ref{sec:existence}, in particular, for $j\in \{1,\dots, c\}$ we denote by $C_{j}'$ the unique $(-1)$-curve in $Q_{j}'=\pi^{-1}(q_{j})\redd$.
	
	Consider the types $\FE$ and $\cH$ (see Figures \ref{fig:FE}, \ref{fig:H}). Let $A,A'$ be the proper transforms on $X$ of the lines $\ll_{12}$, $\ll_{1}$ from Lemma 4.2. We have $(A')^{2}=A^{2}=-1$ and $A'\cdot D= A\cdot D=2$. Moreover, $A$ meets $D$ only in the first components of $Q_{1}'$ and $Q_{2}'$ and $A'$ meets $D$ only in $E$ and in the second component of $Q_{1}'$. For $j\in \{1,2\}$ denote by $B_{j}$ the branching component of $Q_{j}'$ and by $T_{j}'$ the $(-2)$-twig of $Q_{j}'$ meeting $B_{j}$ (cf. Notation \ref{not:graphs}). Let $\psi'\colon (X,D+A+A')\to (X',D')$ be an snc-minimalization. 
	By Lemma \ref{lem:rig}\ref{item:rig_bl},\ref{item:rig_surg}
	\begin{equation*}
	h^{2}(\lts{X}{D})=h^{2}(\lts{X}{(D+A+A')})=h^{2}(\lts{X'}{D'})=h^{2}(\lts{X'}{(D'-\psi' (B_{1}))}),
	\end{equation*}
	where the last equality holds since in both cases $\psi' (B_{1})$ is a  $(-1)$-curve. We have
	\begin{equation*}
	\psi'_{*}(Q_{2}'-T_{2}')=[2,1,2]\mbox{ for type } \FE\mbox{ and }\psi'_{*}(Q_{2}'-T_{2}')=[1,3,1,2]\mbox{ for type }\cH.
	\end{equation*}
	Each of these chains supports a fiber $F$ of a $\P^{1}$-fibration of $X'$ which is met by $(D'-\psi' (B_{1}))\hor$ once in $\psi' (C_{2}')$ and once in $\psi' (B_{2})$. We have $\mu(\psi' (C_{2}'))=2$ and $\mu(\psi' (B_{2}))=1$ (where $\mu(C)$ denotes the multiplicity of $C$ in $F$), so $F\cdot (D'-\psi' (B_{1}'))=3$. Hence  \eqref{eq:rig} follows from Lemma \ref{lem:rig}\ref{item:rig_C**}.
	
	We are left with the types $\cI$ and $\cJ$ (see Figures \ref{fig:I}, \ref{fig:J_line}). Let $V_{1}$ be the $(-2)$-twig of $D$ meeting $C_{1}'$.  As before, Lemma \ref{lem:rig}\ref{item:rig_surg} gives
	\begin{equation*}
	h^{2}(\lts{X}{D})=h^{2}(\lts{X}{(D-C_{2}')}).
	\end{equation*}
	Consider the type $\cI$. Then $V_{1}+C_{1}'+E=[2,2,1,3]$ supports a fiber $F$ of a $\P^{1}$-fibration of $X$ which is met $(D-C_{2}')\hor$ only once, in $C_{1}'$. Since $\mu(C_{1}')=3$, we have $F\cdot(D-C_{2}')=3$ and again we deduce \eqref{eq:rig} from Lemma \ref{lem:rig}\ref{item:rig_C**}. 
	Consider the type $\cJ$. Let $A''$ be the proper transform of the line $\ll_{1}$ from Lemma 4.2. Then $(A'')^{2}=-1$, $A''\cdot D=2$ and $A''$ meets $D$ only in $E$ and in the second component of $Q_{1}'$. Now $V_{1}+C_{1}'+E+A''=[2,1,3,1]$ supports a fiber $F$ of a $\P^{1}$-fibration of $X$ which is met by $(D-C_{2}')\hor$ only once in $C_{1}'$ and once in $A''$. Since $\mu(C_{1}')=2$ and $\mu(A'')=1$, we have $F\cdot (D-C_{2}')=3$ and \eqref{eq:rig} follows again from Lemma \ref{lem:rig}\ref{item:rig_C**}.
\end{proof}
\smallskip 

\begin{proof}[Proof of Theorem \ref{thm:geom_conseq}]\

\ref{item:fibr} If $\kappa\de \kappa(\P^{2}\setminus \bar{E})=-\infty$ then by \cite[Theorem 3.1.3.2]{Miyan-OpenSurf} $\P^{2}\setminus \bar{E}$ has a $\C^{1}$-fibration. If $\kappa\in \{0,1\}$ then \cite[Proposition 2.6]{Palka-minimal_models} implies that $\P^{2}\setminus \bar{E}$ has a $\C^{*}$-fibration, which by \cite[Proposition 4.2]{PaPe_Cstst-fibrations_singularities} can be chosen without base points on $X$. Assume $\kappa=2$. If $\P^{2}\setminus \bar{E}$ has a $\C^{**}$-fibration then by \cite[Proposition 4.2]{PaPe_Cstst-fibrations_singularities} it has one with no base point on $X$. In the other case, Proposition \ref{prop:finding_C3st} gives a $\C^{***}$-fibration of $\P^{2}\setminus \bar{E}$ with no base point on $X$. Because $\P^{2}\setminus \bar{E}$ is rational, the base of each of the above fibrations is   rational, and hence an open subset of $\P^{1}$. The components of $D$ are linearly independent in $\NS_{\Q}(X)$, so after possibly resolving a base point in case of a $\C^{1}$-fibration, the preimage of $D$ contains at most one fiber. Hence the base is in fact $\P^{1}$ or $\C^{1}$.

\ref{item:c>=4} If $\bar{E}$ has at least three cusps then $\kappa=2$ by \cite{Wakabayashi-cusp}, so \ref{item:c>=4} follows from \cite[Theorem 1.2]{PaPe_Cstst-fibrations_singularities} and Theorem \ref{thm:main}; see \eqref{eq:Qa-par} for the parameterization. 

\ref{item:C1_or_C*} If $\kappa=2$ then \ref{item:C1_or_C*} follows from Theorem \ref{thm:geometric}. Assume $\kappa\leq 1$. If  $\bar{E}$ has at least two cusps then by \cite{Wakabayashi-cusp,Tsunoda_cusps_complement_kod_0} it has exactly two and  $\kappa=1$. Then by \cite[Theorem 4.1.1]{Tono_doctoral_thesis} there is a line meeting $\bar{E}$ in exactly one point. 

\ref{item:rig} The case $\kappa=2$ is treated in Proposition \ref{prop:rig}, so we may assume that $\kappa\leq 1$. By \cite[Proposition 2.6]{Palka-minimal_models} some log resolution of $(X',D')\to \P^2\setminus \bar E$ has a $\P^1$-fibration such that $F\cdot D'\leq 2$ for a fiber $F$. Then \ref{item:rig} follows from Lemma \ref{lem:rig}\ref{item:rig_bl},\ref{item:rig_C**}.
\end{proof}

\bigskip

In Table \ref{table:nofibrations} below we summarize numerical data for rational cuspidal curves satisfying \eqref{eq:assumption}. For each cusp we list both its multiplicity sequence and a sequence of \emph{Hamburger-Noether pairs}. We write the latter in the standard way in the sense of  \cite[Section 2D]{PaPe_Cstst-fibrations_singularities}. The sequence of Hamburger-Noether pairs is a convenient replacement of the sequence of multiplicities or of the Puiseux pairs (see \cite{Russell_HN_pairs} for a detailed treatment). It is more directly related to the geometry of the resolution. Relations between those sequences are explained in \cite[Lemmas 2.11 and 5.1]{PaPe_Cstst-fibrations_singularities}.

\begin{landscape}
	\thispagestyle{empty}
	
	\begin{small}
	\begin{table}	\addcontentsline{toc}{section}{TABLE: Classification (del Pezzo case)}
		{\renewcommand{\arraystretch}{2}
			\begin{tabular}{>{$}r<{$}|>{$}c<{$}|>{$}c<{$}|>{$}c<{$}|>{$}c<{$}|>{$}c<{$}|>{$}c<{$}|c}
				& c & \mbox{degree} & -E^{2} & \mbox{(standard) HN pairs } & \mbox{multiplicity sequences} & \mbox{parameters} & \mbox{references} \\ \hline
				\Qb & 3 & 5 & 5 & \binom{5}{2},\quad \binom{5}{2}, \quad \binom{5}{2} &  (2,2),\quad (2,2),\quad (2,2) & & \cite[2.3.10.8]{Namba_geometry_of_curves} \\ \hline
				\Qa & 4 & 5 & 7 & \binom{7}{2},\quad \binom{3}{2}, \quad \binom{3}{2}, \quad \binom{3}{2} &  (2,2,2),\quad (2),\quad (2),\quad (2) & & \cite[2.3.10.6]{Namba_geometry_of_curves} \\ \hline
				\FE(\gamma) & 3 & 3\gamma-5 & \gamma & \binom{3\gamma-6}{3\gamma-9}\binom{3}{1}, \quad \binom{4\gamma-10}{4}\binom{2}{1}, \quad \binom{3}{2} &  (3(\gamma-3),(3)_{\gamma-3}),\quad  ((4)_{\gamma-3},2,2),\quad (2) & \gamma\geq 5 & \cite{Fenske_cusp_d-4}, \cite[(h)]{BoZo-annuli} \\ \hline
				\FZb(\gamma) & 3 & 2\gamma-1 & \gamma & \binom{2\gamma-2}{2\gamma-4}\binom{2}{1},\quad \binom{3\gamma-5}{3}, \quad \binom{3}{2} &  (2(\gamma-2),(2)_{\gamma-2}),\quad ((3)_{\gamma-2}),\quad (2) & \gamma\geq 4 & \cite{FlZa_cusps_d-3}, \cite[(g)]{BoZo-annuli} \\ \hline
				\cH(\gamma) & 2 & 3\gamma+1 & \gamma & \binom{3\gamma}{3\gamma-3}\binom{3}{1}, \quad \binom{4\gamma-2}{4}\binom{2}{3}&   (3(\gamma-1),(3)_{\gamma-1}),\quad ((4)_{\gamma-1},2,2,2) & \gamma\geq 3 & \cite[(ii.3)]{CKR-Cstar_good_asymptote}, \cite[(i)]{BoZo-annuli} \\ \hline 
				\cI & 2 & 14 & 3 &   \binom{15}{6}\binom{3}{1},\quad \binom{12}{8}\binom{4}{2}\binom{2}{1}& (6,6,3,3), \quad (8,4,4,2,2) & & \cite[(t)]{BoZo-annuli}, \cite[(c)]{KoPa-SporadicCstar2}  \\ \hline
				\cJ(k) & 2 & 4k+1 & 3 & \binom{6k+2}{2k}\binom{2}{1}, \quad  \binom{2k+2}{2k}\binom{2}{1} &  (2k,2k,2k,(2)_{k}),\quad  (2k,(2)_{k}) & k\geq 2 & \cite[IV]{tomDieck_letter}, 
				\cite[(c)]{Bodnar_type_G_and_J}, Section \ref{sec:IJ} \\ \hline
			\end{tabular}
		}
		\vspace{2em}
		\caption{Numerical data for rational cuspidal curves satisfying \eqref{eq:assumption}, that is, satisfying  Negativity Conjecture \ref{conj} and having a complement of log general type which does not admit a $\C^{**}$-fibration.}
		\label{table:nofibrations}
	\end{table}
	\end{small}
\end{landscape}

\bibliographystyle{amsalpha}
\bibliography{bibl2018}

\end{document}